\title{Global well-posedness of the stochastic Abelian-Higgs equations in two dimensions}
\author{Bjoern Bringmann}
\address{Bjoern Bringmann, School of Mathematics, Institute for Advanced Study, Princeton, NJ 08540 \& Department of Mathematics, Princeton University, Princeton, NJ 08544}
\email{bjoern@ias.edu}
\author{Sky Cao}
\address{Sky Cao, Department of Mathematics, Massachusetts Institute of Technology, Cambridge, MA 02139}
\email{skycao@mit.edu}
\newcommand{\covd}{\mbf{D}}
\newcommand{\spacetime}{J}
\newcommand{\leray}{\mrm{P}_\perp}
\newcommand{\qkernel}{Q}
\newcommand{\gaugerenorm}{\frac{1}{8\pi}}
\newcommand{\resgauge}[1]{R_{\mrm{gauge, #1}}}
\newcommand{\solutionmap}{\mbb{S}}
\newcommand{\spc}{\mrm{spc}}
\newcommand{\resnorm}{\mrm{cshe}}
\newcommand{\moll}{\chi}
\newcommand{\massp}{p^{\mrm{m}}}
\newcommand{\eucmoll}{\chi^{\mrm{euc}}}
\newcommand{\eucheat}{p^{\mrm{euc}}}
\newcommand{\Cgauge}{\mathsf{C}_{\mrm{g}}}
\newcommand{\power}{p}
\newcommand{\st}{\mrm{st}}
\newcommand{\mDuh}{\Duh_{\mrm{m}}}
\begin{document}

\begin{abstract}
We prove the global well-posedness of the stochastic Abelian-Higgs equations in two dimensions. The proof is based on a new covariant approach, which consists of two parts: First, we introduce covariant stochastic objects. The covariant stochastic objects and their multi-linear interactions are controlled using covariant heat kernel estimates. Second, we control nonlinear remainders using a covariant monotonicity formula, which is inspired by earlier work of Hamilton.
\end{abstract}

\maketitle

\tableofcontents

\section{Introduction}

In the last decade, there has been tremendous interest in singular stochastic partial differential equations (SPDEs). The local well-posedness of many singular SPDEs can now be shown using general methods, such as regularity structures \cite{BCCH21,BHZ19,CH16,H14}, para-controlled calculus \cite{GIP15}, renormalization-group methods \cite{K16,D21}, or the diagram-free approach of \cite{LOT21,LOTT21,OSSW18,OW19}. In particular, the local well-posedness has been established for the following models\footnote{For simplicity, we omit all renormalization terms below.} with additive space-time white noise: 
\begin{enumerate}[label=(\roman*)]
\item The parabolic $\Phi^4_3$-model \cite{CC18,H14} which, for $\phi\colon [0,\infty) \times \T^3 \rightarrow \R$, is given by
\begin{equs}\label{intro:eq-Phi43}
\partial_t \phi = \Delta \phi - \phi^3 + \xi.
\end{equs}
\item The stochastic Navier-Stokes equations \cite{DPD02,ZZ15} in dimension $d=2$ and $d=3$ which, for $u\colon [0,\infty) \times \T^d \rightarrow \R^3$, is given by 
\begin{equs}\label{intro:eq-SNS}
\partial_t u &= \Delta u + \leray \big( u \cdot \nabla u \big) + \leray \xi.
\end{equs}
In the above, $\leray$ denotes the Leray-projection, which is defined in \eqref{prelim:eq-leray} below.
\item The stochastic Yang-Mills-Higgs equations \cite{BC23,CCHS22,CCHS22+,S21} in dimension $d=2$ and $d=3$ which, for $A\colon [0,\infty) \times \T^d \rightarrow \frkg^d$ and $\phi\colon [0,\infty) \times \T^d \rightarrow V$, are given by 
\begin{equation}\label{intro:eq-SYMH}
\begin{cases}
\begin{aligned}
\partial_t A &= - \covd_A^\ast F_A - 
B (\covd_A \phi, \phi) + \xi, \\ 
\partial_t \phi &= - \covd_A^\ast \covd_A \phi  - |\phi|^2 \phi + \zeta.     
\end{aligned}
\end{cases}
\end{equation}
In the above, $\frkg$ is a Lie algebra, $V$ is a vector space, and $B\colon V \times V \rightarrow \frkg$ is a certain bilinear map, see e.g. \cite[(1.5)]{CCHS22+}. Furthermore, $\covd_A$ is the covariant derivative and $F_A$ is the curvature tensor.
\end{enumerate}

In contrast to local well-posedness, global well-posedness of singular SPDEs cannot be established using general methods, and instead relies heavily on the structure of the equation. For the parabolic $\Phi^4_3$-model, global well-posedness was first shown in \cite{HM18} using discrete approximations of \eqref{intro:eq-Phi43} and Bourgain's globalization argument \cite{B94}. Due to the use of Bourgain's globalization argument, the proof in \cite{HM18} relies on information about the $\Phi^4_3$-measure \cite{GJ73}, which concerns its discrete approximations, rigorous construction, and moments. In stochastic quantization \cite{PW81}, singular SPDEs such as \eqref{intro:eq-Phi43} are meant to be used to construct Gibbs measures (or Euclidean quantum field theories). From a stochastic quantization perspective, it is therefore of significant interest to prove the global well-posedness of \eqref{intro:eq-Phi43} directly from the evolution equation. Without using the $\Phi^4_3$-measure, the global well-posedness of \eqref{intro:eq-Phi43} was obtained in \cite{MW17,MW2020,GH21}. The arguments in \cite{MW17,MW2020,GH21} rely heavily on the damping effect from the $-\phi^3$-term in \eqref{intro:eq-Phi43}, which makes the solution come down from infinity (see e.g. \cite[Section 1.4]{MW17} or Remark \ref{decay:rem-ODE}). In particular, it is shown that the solution of \eqref{intro:eq-Phi43} obeys uniform-in-time estimates.  \\ 

For the stochastic Navier-Stokes equation in dimension $d=2$, global well-posedness can be proven by using the invariant Gibbs measure \cite{DPD02}. The Gibbs measure corresponding to \eqref{intro:eq-SNS} is given by spatial white noise, which can be constructed using elementary methods. Despite the simplicity of the Gibbs measure corresponding to \eqref{intro:eq-SNS}, it is still of interest to prove the global well-posedness of \eqref{intro:eq-SNS} directly from the evolution equation, since such an argument can likely be used for a more general class of stochastic forcing terms.  In \cite{HR23}, the authors prove the global well-posedness of \eqref{intro:eq-SNS} in dimension $d=2$ with stochastic forcing of the form $\xi+\zeta$, where $\xi$ is a space-time white noise and $\zeta$ is a perturbation which is slightly smoother than $\xi$. Since the nonlinearity in \eqref{intro:eq-SNS} has no damping effect, the proof of global well-posedness is then more involved than for the parabolic $\Phi^4_2$ and $\Phi^4_3$-models. The argument in \cite{HR23} relies on energy estimates, a dynamic high-low decomposition, and techniques from para-controlled calculus. In addition to global well-posedness, the argument also yields a double exponential growth estimate of the solution. The $d=3$ case has been studied using quite different techniques in \cite{HZZ2022, HZZ2023a, HZZ2023b},  where the authors show global existence but non-uniqueness. \\ 

In the setting of the stochastic Yang-Mills-Higgs equation \eqref{intro:eq-SYMH}, global well-posedness is an important open problem. In a recent breakthrough \cite{CS23}, Chevyrev and Shen proved the global well-posedness of the pure stochastic Yang-Mills equations, i.e., 
\begin{equation}
\partial_t A = - \covd_A^\ast F_A + \xi,
\end{equation}
in two dimensions, as well as the invariance of the 2D pure Yang-Mills measure under the dynamics. Their approach is to show that the lattice dynamics converge as the mesh size tends to zero to the continuum dynamics, which was previously constructed in \cite{CCHS22}. They apply Bourgain's globalization argument to the lattice dynamics, and a key input they use is the fact that the 2D pure Yang-Mills lattice measure is exactly solvable. This enables them to obtain the moment estimates needed for Bourgain's argument. As mentioned in \cite[Section 1.1]{CS23}, a key barrier in extending their work to more general models such as 2D or 3D Yang-Mills-Higgs is the fact that the analogous moment estimates are still open for these other models\footnote{In addition, neither the 2D Yang-Mills-Higgs, the 3D pure Yang-Mills, nor the 3D Yang-Mills-Higgs continuum measures have been constructed.}. \\

In this work, we consider the stochastic Yang-Mills-Higgs equations for the Abelian gauge group $G=U(1)$, which are often called the stochastic Abelian-Higgs equations. Using the gauge symmetry of the Abelian-Higgs equations, we can\footnote{For more details regarding the gauge symmetry of the Abelian-Higgs equations, we refer the reader to Subsection \ref{section:introduction-setting} below.} impose the Coulomb gauge condition $\partial_j A^j=0$. 
On the torus $\T^2$, the Coulomb gauge condition does not determine the gauge of $A$ entirely, but the remaining group of gauge transformations is discrete and finite-dimensional, see e.g. \eqref{intro:eq-group-action-Zd} below. 
In the Coulomb gauge, the stochastic Abelian-Higgs equations can be written as\footnote{As in \eqref{intro:eq-Phi43}, \eqref{intro:eq-SNS}, and \eqref{intro:eq-SYMH}, we omit the renormalization terms in \eqref{intro:eq-SAH}. For a more detailed formulation of the stochastic Abelian-Higgs equations, which includes the renormalization terms, we refer the reader to Subsection \ref{section:introduction-setting}.}
\begin{equation}\tag{SAH}\label{intro:eq-SAH}
\begin{cases}
\begin{aligned}
\ptl_t A &= \Delta A  -\leray \Im(\bar{\phi} \covd_A \phi) + \leray \xi, \\
\ptl_t \phi &= \covd_A^j \covd_{A, j}\phi  -|\phi|^2 \phi + \zeta. \\
\end{aligned}
\end{cases}
\end{equation}
The Gibbs measure corresponding to \eqref{intro:eq-SAH} for $d = 2, 3$ has been constructed in \cite{BFS-I,BFS-II,BFS-III,King-I,King-II}. Additionally, moment estimates for $d=2$ have been proven in the recent work \cite{CC2022}. Thus, it may be possible to apply Bourgain's globalization argument (either in the continuum or on the lattice) in order to show global well-posedness of \eqref{intro:eq-SAH}. On the other hand, in the spirit of stochastic quantization, it is interesting to show global well-posedness without using any knowledge of the continuum or lattice measures. This is the main result of the present article, which we state informally as follows (see Theorem \ref{intro:thm-abelian-higgs} for the precise version).

\begin{theorem}[Informal version]\label{intro:thm-informal}
The stochastic Abelian-Higgs equations in two dimensions is globally well-posed. Furthermore, the solution obeys gauge invariant, uniform-in-time bounds. 
\end{theorem}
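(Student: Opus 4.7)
The plan is to fix the Coulomb gauge $\partial_j A^j = 0$, as indicated in the introduction, and to combine a local solution theory with a global a priori estimate that respects the covariant structure of \eqref{intro:eq-SAH}. Following a Da Prato--Debussche decomposition, I would write $A = \Psi + a$ and $\phi = Z + \varphi$, where $\Psi$ solves $(\partial_t - \Delta)\Psi = \leray \xi$ and $Z$ solves $(\partial_t - \Delta) Z = \zeta$ on $\T^2$, so that $(a, \varphi)$ satisfies a remainder system driven by renormalized polynomial expressions in $(\Psi, Z)$. Local well-posedness for $(a, \varphi)$ then follows from standard paracontrolled or regularity-structures arguments. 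Since the residual gauge group in Coulomb gauge is discrete and finite dimensional, gauge invariant bounds follow as soon as we control $(a, \varphi)$ uniformly in time.

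The reason a direct application of standard methods does not yield uniform-in-time estimates is that expressions such as $\covd_A Z = \partial Z - \mathrm{i} A Z$ couple the rough stochastic object $Z$ to the rough gauge field $A$, producing dangerous products whose control degrades in time. To sidestep this, the first main step is to replace $Z$ by a \emph{covariant stochastic object} $Z^{\covd}$ solving the linear covariant heat equation $(\partial_t - \covd_A^j \covd_{A, j}) Z^{\covd} = \zeta$, and to build higher tree objects by iterating a covariant Duhamel operator against $\Psi$ and $Z^{\covd}$. Their Schauder-type bounds are then obtained from covariant heat kernel estimates: first one controls the heat kernel of $-\covd_A^j \covd_{A, j}$ in terms of the flat kernel and Sobolev norms of $A$; then one propagates this to all covariant trees, proving that they satisfy, uniformly in time, the same space-time regularity estimates as their flat counterparts.

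For the remainder, the plan is to derive a covariant monotonicity formula in the spirit of Hamilton's Bernstein-type estimates for geometric heat flows. I would test the $\varphi$-equation against $-\covd_A^j \covd_{A, j} \varphi + |\varphi|^2 \varphi$ and, after commuting $\covd_A$ past $\partial_t$ and absorbing the resulting curvature terms $F_A$, obtain a differential inequality of the schematic form
\begin{equation*}
\partial_t \mathcal{E}(a, \varphi) + c\bigl( \|\covd_A \varphi\|_{L^2}^2 + \|\varphi\|_{L^4}^4 \bigr) \leq \mathcal{Q}\bigl( \text{covariant stochastic objects} \bigr),
\end{equation*}
where $\mathcal{E}$ is a gauge invariant covariant Sobolev quantity modeled on the classical Ginzburg--Landau energy and $\mathcal{Q}$ is polynomial in the covariant data. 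The coercive left-hand side encodes the damping from the Higgs potential, which, as in the $\Phi^4_2$ analysis of \cite{MW17}, is the mechanism that pushes $\mathcal{E}$ down from large values. Since $a$ itself has no damping, it is controlled via the parabolic equation it satisfies with source given by the Leray-projected current $\leray \Im(\bar\phi \covd_A \phi)$, which is estimated using the covariant bounds just obtained on $\varphi$, thereby closing the loop.

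The hardest part, I expect, is making this monotonicity quantitatively compatible with the covariant stochastic calculus: the forcing $\mathcal{Q}$ must be bounded, in appropriate negative-regularity norms and uniformly in time, by a small multiple of the coercive term plus a constant depending only on the stochastic data. This will require covariant heat kernel estimates that remain stable as the Coulomb-gauge $A$ becomes rough, a careful bookkeeping of which covariant trees are absorbed by $\|\covd_A \varphi\|_{L^2}^2 + \|\varphi\|_{L^4}^4$ versus which feed back into $\mathcal{E}$, and verification that $\mathcal{E}$ and its bounds are invariant under the residual discrete gauge group, so that the final uniform-in-time control descends to genuinely gauge-invariant bounds on the solution.
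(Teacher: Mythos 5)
Your high-level strategy matches the paper's philosophy — covariant stochastic objects, covariant heat-kernel estimates, and a Hamilton-style covariant monotonicity formula — but there is a concrete gap that would stop the argument from working. You propose to define the covariant stochastic object $Z^{\covd}$ via $(\partial_t - \covd_A^j\covd_{A,j})Z^{\covd} = \zeta$ with $A$ the full solution of the system. This object is not a Gaussian polynomial of the noise: $A$ itself is a nonlinear, anticipating functional of $\zeta$, so $Z^{\covd}$ cannot be represented as a Wiener-chaos element with a deterministic kernel, and the hypercontractivity/diamagnetic-inequality machinery that gives good $C_t^0\Cs_x^{-\kappa}$ bounds with small dependence on the connection simply does not apply. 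The paper addresses exactly this issue: it works on small time intervals $[t_0,t_0+\tau]$ and uses as connection the deterministic (given $\Fc_{t_0}$) linear heat flow $\Blin = e^{(t-t_0)\Delta}\pregA_0$ of the initial data, which is probabilistically independent of $\zeta|_{[t_0,t_0+\tau]}$, so that the covariant heat kernel $p_{\Blin}$ is a frozen object and $\philinear[\Blin]$ lives in a fixed Wiener chaos. Without this adapted decomposition, the covariant heat-kernel estimates cannot be brought to bear.

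A second, related gap: you describe the global-in-time control as coming from a single coercive differential inequality $\partial_t \mathcal{E} + c(\|\covd_A\varphi\|_{L^2}^2+\|\varphi\|_{L^4}^4) \leq \mathcal{Q}$, with $a$ controlled afterwards through the Leray-projected current. But $A$ has no damping of its own, and the only decay available for it is the mean-zero heat-semigroup decay of $\Blin$, which occurs on a fixed spectral-gap time scale, while the damping of $\psi$ from $-|\phi|^{q-1}\phi$ operates on a time scale $\sim\|\psi\|^{-(q-1)}$. These two mechanisms are incompatible if one works with a single energy quantity over a fixed time horizon: the time-step must be chosen adaptively, $\tau \sim \max(\|A\|^{\alpha},\|\phi\|^{\beta})^{-1}$ with carefully tuned exponents, so that both $\|\Blin\|$ decays, $\|\psi\|_{L^r}$ decays, and the remainder $Z$ — driven by the resonant part of $\ovl{\philinear[\Blin]}\covd_\Blin\philinear[\Blin]$ and by $\ovl{\psi}\covd_A\psi$ — stays small, and these three statements are proved together by a bootstrap. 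Your proposal gestures at the right ingredients (covariant monotonicity, Ginzburg--Landau damping, control of $a$ through the current) but does not identify this iterative, adaptive-time-scale structure, which is where the argument actually closes. In particular, the covariant monotonicity formula in the paper is used with a backwards heat-kernel weight $K$ to obtain both an $L^r$ decay estimate and a localized bound on $\covd_{A_{\lo}}\psi_{\lo}$ feeding into the estimate of $Z$; a single $H^1$-type Bernstein inequality as you sketch would not supply the latter.
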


The main novelty of our article is a covariant approach, which utilizes the geometric nature of \eqref{intro:eq-SAH}. In our covariant approach, all estimates of the scalar field $\phi$ will be based on the covariant Laplacian $\covd_A^j \covd_{A,j}$ instead of the standard Laplacian $\partial^j \partial_j$. In Section \ref{section:cshe}, we first control covariant stochastic objects using estimates for the covariant heat-kernel. In Sections \ref{section:Abelian-Higgs} and \ref{section:decay}, we then control nonlinear remainders using covariant monotonicity formulas. It seems likely that our covariant approach may not only be useful for the stochastic Abelian-Higgs equations \eqref{intro:eq-SAH}, but may also be useful for other stochastic geometric evolution equations. \\

We believe that Theorem \ref{intro:thm-informal} is the main step towards a PDE-construction of the Abelian-Higgs measure in two dimensions which, as mentioned above, has previously been studied in \cite{BFS-I,BFS-II,BFS-III,King-I,King-II}. However, in order to combine our gauge invariant, uniform-in-time bounds (see Theorem \ref{intro:thm-abelian-higgs}.\ref{intro:item-AH-2}) with a Krylov-Bogoliubov argument, one first needs to
construct a Markov process on the orbit space determined by the discrete group of gauge transformations from \eqref{intro:eq-group-action-Zd} below. Since the construction of such a Markov process would be quite separate from our PDE-methods, we do not pursue it in this article. See \cite{CCHS22, CCHS22+} for such a construction for 2D and 3D Yang--Mills--Higgs.

\subsection{Setting}\label{section:introduction-setting}

In the following, we restrict ourselves to the spatial dimension $d=2$. We recall that \eqref{intro:eq-SAH} contains the cubic nonlinearity $-|\phi|^2 \phi$. Instead of treating only the cubic nonlinearity, however, this article covers all power-type nonlinearities $-|\phi|^{q-1} \phi$, where $q\geq 3$ is an odd integer. We believe that this makes it easier to follow the estimates in Section \ref{section:decay}, since the meaning of many exponents and parameters is more clear when expressed in terms of $q$ (see also Remark \ref{decay:rem-result}.\ref{decay:item-q}).

\subsubsection{Covariant derivatives, curvature tensor, and energy}
To ensure that all expressions are well-defined, we temporarily restrict ourselves to smooth connection one-forms and scalar fields. That is, we restrict ourselves  to the smooth state space
\begin{equation}\label{intro:eq-state}
\state = C^\infty(\T^2 \rightarrow \R^2) \times C^\infty(\T^2 \rightarrow \C). 
\end{equation}
The state space can be equipped with the standard $L^2$-inner product, which turns $\state$ into a pre-Hilbert space. For all $(A,\phi) \in \state$, we define the covariant derivatives $(\covd_A^j \phi)_{1\leq j\leq 2}$ and curvature tensor $(F_A^{jk})_{1\leq j,k\leq 2}$ as 
\begin{equation}\label{intro:eq-covariant-and-curvature}
\covd_A^j \phi := \partial^j \phi + \icomplex A^j \phi \qquad \text{and} \qquad 
F_A^{jk} = \partial^j \hspace{-0.2ex} A^k - \partial^k \hspace{-0.2ex}  A^j. 
\end{equation}
From the definitions in \eqref{intro:eq-covariant-and-curvature}, we directly obtain that
\begin{equation}\label{intro:eq-covariant-curvature-identity}
\big( \covd_A^j \covd_A^k - \covd_A^k \covd_A^j \big) \phi = \icomplex F_A^{jk} \phi. 
\end{equation}
From the identity in \eqref{intro:eq-covariant-curvature-identity}, it follows that the curvature tensor  $(F_A^{jk})$ captures the non-commutativity of the covariant derivative operators $(\covd_A^j)_{j \in [2]}$. Using the covariant derivatives and curvature tensor, we can then define the energy of the Abelian-Higgs model, which is given by 
\begin{equation}\label{intro:eq-energy}
E(A,\phi) := \int_{\T^2} \dx \bigg( \frac{|F_A|^2}{4} + \frac{|\covd_A \phi|^2}{2} + \frac{|\phi|^{q+1}}{q+1} \bigg),
\end{equation}
where $|F_A|^2 = F_{A,jk} F_A^{jk}$ and $|\covd_A \phi|^2 = \overline{\covd_{A,j} \phi} \,\covd_A^j \phi$.

\subsubsection{Gauge transformations and Coulomb gauge}
The (additive) group of smooth gauge transformations for the Abelian-Higgs equations is given by
\begin{equation}
\gaugegroup = \big\{ g \in C^\infty(\R^2 \rightarrow \R)\colon \, \nabla g \textup{ and } e^{-ig} \textup{ are periodic }\big\}.
\end{equation}
The addition on $\gaugegroup$ is defined using the standard addition of functions, i.e., $(g_1+g_2)(x)=g_1(x)+g_2(x)$. The action of the group $\gaugegroup$ on the smooth state space $\state$ is defined as 
\begin{equation}\label{intro:eq-group-action}
(A,\phi) \mapsto \big( A^g, \phi^g \big) := \big( A + \nabla g, e^{-\icomplex g} \phi \big).
\end{equation}
We note that, for any $g\in \gaugegroup$ and $(A,\phi)\in\state$, it then holds that
\begin{equation}\label{intro:eq-gauge-covariance}
\covd_{A^g} \phi^g = \big( \covd_A \phi \big)^g, \quad F_{A^g} = F_{A}, \quad \text{and} \quad 
E(A^g, \phi^g) = E(A,\phi). 
\end{equation}
Due to \eqref{intro:eq-gauge-covariance}, the group action \eqref{intro:eq-group-action} is called the gauge symmetry of the Abelian-Higgs equations. Similar as the energy, all physical observables are invariant under the gauge symmetry \eqref{intro:eq-group-action}. For this reason, one is often not interested in any individual state $(A,\phi) \in \state$, but rather its gauge orbit $\{ (A^g,\phi^g) \colon g \in \gaugegroup \}$. 

In order to study dynamics corresponding to the energy \eqref{intro:eq-energy}, such as gradient flows or Langevin equations, it is helpful to impose a gauge condition\footnote{A different approach is to first derive the dynamics corresponding to \eqref{intro:eq-energy} and then use DeTurck's trick, see e.g. the deterministic literature \cite{DeT83,D85} or the stochastic literature \cite{BC23,CCHS22,CCHS22+}.}. In this article, we impose the Coulomb gauge condition 
\begin{equation}\label{intro:eq-Coulomb-condition}
\partial_j A^j =0. 
\end{equation}
We recall that, since $\partial_j (A^g)^j=\partial_j A^j +\Delta g$, any connection one-form $A$ is gauge equivalent to a connection one-form $A^g$ satisfying the Coulomb condition \eqref{intro:eq-Coulomb-condition}. Our reason for imposing the Coulomb condition \eqref{intro:eq-Coulomb-condition} is that, in the Coulomb gauge, the nonlinearity in the Abelian-Higgs equations exhibits a null-structure (see e.g. Lemma \ref{prelim:lem-null}). Instead of working on the smooth state space $\state$ from \eqref{intro:eq-state}, we may now work on 
\begin{equation}\label{intro:eq-state-coulomb}
\statecoulomb := \Big\{ (A,\phi) \in \state \colon \partial_j A^j =0 \Big\}. 
\end{equation}
The Coulomb condition in \eqref{intro:eq-Coulomb-condition}, however, does not determine the gauge entirely. To explain this, we first introduce the group homomorphism $\Z^2 \rightarrow \gaugegroup, n \mapsto n\cdot x$, which allows us to identify $\Z^2$ with a subgroup of $\gaugegroup$. Then, we define a group action of $\Z^2$ on the state space $\state$ as 
\begin{equation}\label{intro:eq-group-action-Zd}
(A,\phi) \mapsto \big( A^n, \phi^n \big) := ( A+ n , e^{-\icomplex n \cdot x}\phi). 
\end{equation}
Since $n\in \Z^2$ is constant in $x$, it clearly holds that $\partial_j n^j=0$, and thus the discrete gauge symmetry \eqref{intro:eq-group-action-Zd} does not affect the Coulomb condition \eqref{intro:eq-Coulomb-condition}. Since $\Z^2$ is discrete, it is difficult to eliminate the gauge freedom from \eqref{intro:eq-group-action-Zd} by imposing further gauge conditions. However, since $\Z^2$ is finite-dimensional, the gauge freedom from \eqref{intro:eq-group-action-Zd} does not prevent us from proving well-posedness, and therefore does not need to be eliminated. To address it, we instead state our main estimates (from Theorem \ref{intro:thm-abelian-higgs}.\ref{intro:item-AH-2}) using gauge invariant norms. The gauge invariant norms are defined as
\begin{equation}\label{intro:eq-gauge-invariant-norms}
\begin{aligned}
\big\| A \big\|_{\GCs^{-\kappa}(\T^2 \rightarrow \R^2)}
&:= \inf_{n\in \Z^2} \big\| A +n\big\|_{\Cs_x^{-\kappa}}, 
\hspace{5ex}
\big\| A \big\|_{\GBeta(\T^2 \rightarrow \R^2)}
:= \inf_{n\in \Z^2} \big\| A + n \big\|_{\Beta}, \\ 
\big\| \phi \big\|_{\GCs^{-\kappa}(\T^2\rightarrow \C)}
&:= \sup_{n\in \Z^2} \big\| e^{-\icomplex n \cdot x} \phi \big\|_{\Cs_x^{-\kappa}},
\end{aligned}
\end{equation}
where the $\Cs_x^{-\kappa}$ and $\Beta$-norms are as in Definition \ref{prelim:def-besov} below.

\subsubsection{The stochastic Abelian-Higgs equations}
We now restrict the energy \eqref{intro:eq-energy} to the state space $\statecoulomb$ from \eqref{intro:eq-state-coulomb}. Our main interest then lies in the Langevin equation corresponding to the energy \eqref{intro:eq-energy}, which is formally given by the stochastic Abelian-Higgs equations in \eqref{intro:eq-SAH}. 
Due to the low regularity of the space-time white noises $\xi$ and $\zeta$, however, 
the rigorous formulation of the stochastic Abelian-Higgs equations \eqref{intro:eq-SAH} has to be based on smooth approximations of $\xi$ and $\zeta$. To this end, we let $N\in \dyadic$ be a truncation parameter and define 
\begin{equs}\label{eq:smoothed-noise}
\xi_{\leq N}(t,x) := \int_{\T^2} \dy \chi_{\leq N}(x-y) \xi(t,y)
\qquad \text{and} \qquad 
\zeta_{\leq N}(t,x) := \int_{\T^2} \dy \chi_{\leq N}(x-y) \zeta(t,x),
\end{equs}
where the convolution kernel is as in Definition \ref{def:mollifiers}. We emphasize that, while $\xi_{\leq N}$ and $\zeta_{\leq N}$ are smooth in the spatial variables, they are both white in time.
We can now introduce our smooth approximation of \eqref{intro:eq-SAH}, in which we also include necessary renormalization terms. It is given by 
\begin{equation}\tag{$\text{SAH}_{N}$}\label{intro:eq-SAH-smooth}
\begin{cases}
\begin{aligned}
\partial_t A_{\leq N} &= \Delta A_{\leq N} - \leray \Im \big( \overline{\phi_{\leq N}} \covd_{A_{\leq N}}\phi_{\leq N} \big) + \Cgauge A_{\leq N}+ \leray \xi_{\leq N}, \\ 
\partial_t \phi_{\leq N} &= \big( \covd_{A_{\leq N}}^j \covd_{A_{\leq N},j} + 2 \sigma_{\leq N}^2 \big) \phi_{\leq N} - \biglcol |\phi_{\leq N}|^{q-1} \phi_{\leq N} \bigrcol + \zeta_{\leq N}, \\ 
A_{\leq N}(0) &= A_0, \quad \phi_{\leq N}(0)=\phi_0.
\end{aligned}
\end{cases}
\end{equation}
The constant $\Cgauge$ in the $\Cgauge A_{\leq N}$-term is explicitly given by $\Cgauge =\frac{1}{8\pi}$. This term is needed to cancel a resonance in the derivative nonlinearity. Without the $\Cgauge A_{\leq N}$-term, the limit would not be gauge covariant with respect to the discrete gauge transformation \eqref{intro:eq-state-coulomb}.
The renormalization constant $\sigma_{\leq N}^2$ and Wick-ordered nonlinearity $ \biglcol |\phi_{\leq N}|^{q-1} \phi_{\leq N} \bigrcol$ in \eqref{intro:eq-SAH-smooth} are defined as
\begin{equs}
\sigma_{\leq N}^2 &= \big\|e^s  (\moll_{\leq N} \ast_y p)(s, y; 0, 0)\big\|_{L_s^2 L_y^2((-\infty, 0] \times \T^2)}^2, \label{eq:sigma-N-squared}\\
\biglcol |\phi_{\leq N}|^{q-1} \phi_{\leq N} \bigrcol 
&= H_{\frac{q+1}{2}, \frac{q-1}{2}}\big( \phi_{\leq N}, \overline{\phi_{\leq N}}, \sigma_{\leq N}^2 \big), \label{eq:wick-ordered-products} 
\end{equs}
where $\moll_{\leq N}$ is a mollifier as in Definition \ref{def:mollifiers}, $\ast_y$ denotes convolution in the $y$ variable, $p(s, y; 0, 0)$ is the heat kernel on $\T^2$, and the Hermite polynomials $(H_{m, n})_{m, n \geq 0}$ are as in Section \ref{section:preliminary-hermite}.

\subsubsection{The covariant stochastic heat equation} In our analysis of the stochastic Abelian-Higgs equations \eqref{intro:eq-SAH}, we use covariant stochastic objects, which are defined using the covariant stochastic heat equation. To state the covariant stochastic heat equation, we first let $\Blin \colon [0,\infty) \times \T^2 \rightarrow \R^2$ be a connection one-form. At this point, we neither assume that $\Blin$ satisfies an evolution equation nor that $\Blin$ satisfies the Coulomb gauge condition (see, however, Theorem \ref{intro:thm-cshe} and Remark \ref{intro:rem-cshe}). For all $N\in \dyadic$, we then define the covariant stochastic object $\philinear[\Blin,\leqN]$ as the solution of 
\begin{equation}\tag{CSHE}\label{intro:eq-cshe}
\begin{cases}
\begin{aligned}
\big( \partial_t - \covd_B^j \covd_{B,j} + 1 \big) \philinear[\Blin,\leqN] &=  \zeta_{\leq N}, \\ 
\philinear[\Blin,\leqN] (0) &=0. 
\end{aligned}
\end{cases}
\end{equation}
The massive $\philinear[\Blin,\leq N]$-term has been included in \eqref{intro:eq-cshe} since it will be convenient in Section \ref{section:decay} below, but this is not too important for our argument\footnote{The reason for this is that we can include the massive term $m^2\phi$ in the linear operator by replacing $-|\phi|^{q-1} \phi$ with $-|\phi|^{q-1} \phi+m^2 \phi$. For any fixed $m\geq 0$, the damping effect of $-|\phi|^{q-1} \phi+m^2 \phi$ is essentially as strong as the damping effect of $-|\phi|^{q-1} \phi$, and thus this replacement makes no difference in our nonlinear estimates.}.

\subsection{Main results}\label{section:introduction-results}

In Theorem \ref{intro:thm-informal} above, we stated a formal version of our main theorem, which concerns the global well-posedness of the stochastic Abelian-Higgs equations in two dimensions. Equipped with the definitions from Subsection \ref{section:introduction-setting}, we can now state a rigorous version of our main theorem.

\begin{theorem}[Stochastic Abelian-Higgs equations]\label{intro:thm-abelian-higgs}
Let $q\geq 3$ be an odd integer, let $\kappa,\eta\in (0,1)$ and $r\geq 1$ be as in  
\eqref{prelim:eq-parameter-new-eta-nu}-\eqref{prelim:eq-parameter-new-r}, and let $\gamma=\frac{2}{7}$. Furthermore, let $A_0 \in \Beta(\T^2 \rightarrow \R^2)$ and let $\phi_0 \in L_x^r(\T^2\rightarrow \C)$. 
Then, the stochastic Abelian-Higgs equations almost surely have a global solution $(A,\phi)$, which is defined as the unique global limit of the solutions $(A_{\leq N},\phi_{\leq N})$ of \eqref{intro:eq-SAH-smooth} in $C_{t,\textup{loc}} \Cs_x^{-\kappa}([0,\infty)\times \T^2 \rightarrow \R^2 \times \C)$. Furthermore, we have the following properties:
\begin{enumerate}[label=(\roman*)]
\item \label{intro:item-AH-1} (Exponential bounds)  There exists a constant $C=C(\kappa,\eta,r,q)>0$ such that, for all $p\geq 1$ and $t\geq 0$, it holds that 
\begin{equation*}
 \E \Big[ \max \Big( \big\| A(t) \big\|_{\Cs_x^{-\kappa}}^\gamma, 
\big\| \phi(t) \big\|_{\Cs_x^{-\kappa}} \Big)^p \Big]^{\frac{1}{p}}  
\leq C e^{Ct}  \Big( \E \Big[ \max \Big( \big\| A(0) \big\|_{\Beta}^\gamma, 
\big\| \phi(0)\big\|_{L_x^r} \Big)^p \Big]^{\frac{1}{p}} 
+  p^{\frac{1}{\kappa}} \Big).   
\end{equation*} 
\item\label{intro:item-AH-2} (Gauge invariant, uniform-in-time bounds) There exist constants $C=C(\kappa,\eta,r,q)>0$ and $c=c(\kappa,\eta,r,q)$ such that, for all $p\geq 1$ and $t\geq $0, it holds that
\begin{align*}
&\, \E \Big[ \max \Big( \big\| A(t) \big\|_{\GCs^{-\kappa}(\T^2 \rightarrow \R^2)}^\gamma, 
\big\| \phi(t) \big\|_{\GCs^{-\kappa}(\T^2 \rightarrow \C)} \Big)^p \Big]^{\frac{1}{p}}  \\
\leq&\,  C e^{-ct}  \E \Big[ \max \Big( \big\| A(0) \big\|_{\GBeta}^\gamma, 
\big\| \phi(0)\big\|_{L_x^r} \Big)^p \Big]^{\frac{1}{p}} 
+ C p^{\frac{1}{\kappa}}. 
\end{align*}
\item\label{intro:item-AH-3} (Gauge covariance) The solution is covariant with respect to the discrete gauge transformation \eqref{intro:eq-group-action-Zd}. 
\end{enumerate}
\end{theorem}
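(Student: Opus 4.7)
The plan is to combine the covariant stochastic-object bounds of Section \ref{section:cshe} with the covariant monotonicity formula of Section \ref{section:decay}. The smoothed system \eqref{intro:eq-SAH-smooth} is a classical SDE, so pathwise existence and uniqueness of $(A_{\leq N},\phi_{\leq N})$ on a maximal interval is standard; the task is to propagate uniform-in-$N$ moment bounds in the norms of \ref{intro:item-AH-1}-\ref{intro:item-AH-2}, to establish $C_{t,\textup{loc}}\Cs_x^{-\kappa}$-convergence to a continuum limit $(A,\phi)$, and to check equivariance under \eqref{intro:eq-group-action-Zd}. First I would decompose $A_{\leq N} = \mrm{L}_{\leq N} + B_{\leq N}$, where $\mrm{L}_{\leq N}$ is the linear stochastic convolution with forcing $\leray \xi_{\leq N} + \Cgauge A_{\leq N}$, and replace the ``free part'' of $\phi_{\leq N}$ by the covariant object from \eqref{intro:eq-cshe} taken with $B$ equal to (a smoothed version of) $A_{\leq N}$. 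Substituting into \eqref{intro:eq-SAH-smooth}, the renormalizations $\sigma_{\leq N}^2$ and $\Cgauge$ cancel the singular multilinear contributions, and the resulting remainder system for $(B_{\leq N}, \phi^\sharp_{\leq N})$ lives in much more regular spaces. Its forcing consists of multilinear expressions in the covariant stochastic objects, all controlled uniformly in $N$ by Section \ref{section:cshe}.

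Global well-posedness and the exponential bound \ref{intro:item-AH-1} then follow by applying the covariant monotonicity formula of Section \ref{section:decay} to the remainder system. Testing the equation for $\phi^\sharp_{\leq N}$ against $|\phi^\sharp_{\leq N}|^{q-1}\phi^\sharp_{\leq N}$ produces a damping of size $-c\|\phi^\sharp_{\leq N}\|_{L_x^{q+1}}^{q+1}$, which dominates the stochastic forcing modulo terms of at most polynomial moments, while the mass in \eqref{intro:eq-cshe} together with the $\Cgauge A$-term provide the corresponding damping for $B_{\leq N}$. This yields an ODE-type inequality as in Remark \ref{decay:rem-ODE} whose integration produces the uniform-in-time estimate \ref{intro:item-AH-2}; a local-existence iteration on bounded intervals then upgrades this to the exponential-in-time estimate \ref{intro:item-AH-1}. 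Convergence as $N\to\infty$ to a limit $(A,\phi)$ in $C_{t,\textup{loc}} \Cs_x^{-\kappa}$ follows from standard stability estimates for the remainder system, combined with the probabilistic convergence of the covariant stochastic objects supplied by Section \ref{section:cshe}. The exponent $\gamma = \frac{2}{7}$ should emerge from the interpolation needed to convert the $L_x^{q+1}$-type monotonicity bounds on $\phi^\sharp$ into the negative-regularity $\Cs_x^{-\kappa}$-bounds on $A$ via the covariant smoothing estimates.

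The main obstacle is that the covariant monotonicity estimate, being intrinsically gauge-invariant, only bounds the gauge-covariant quantities $F_A$, $\covd_A \phi$, and $|\phi|$, while the Coulomb condition leaves the residual discrete gauge freedom $\Z^2$ from \eqref{intro:eq-group-action-Zd}. This is precisely why \ref{intro:item-AH-2} must be stated in the gauge-invariant norms \eqref{intro:eq-gauge-invariant-norms}, and why the covariant bounds can be transferred to $\GCs^{-\kappa}$ and $\GBeta$ only after infimizing over $n\in \Z^2$ at each time. That infimization is legitimate because of \ref{intro:item-AH-3}, which I would prove by checking term-by-term that \eqref{intro:eq-SAH-smooth} commutes with \eqref{intro:eq-group-action-Zd}: the heat operator, the Leray projection, and the Wick polynomial are manifestly equivariant, while the derivative nonlinearity $\leray\Im(\bar\phi \covd_A \phi)$ is equivariant only after adding the counterterm $\Cgauge A_{\leq N}$ with $\Cgauge = \frac{1}{8\pi}$, whose specific value is fixed precisely to cancel the non-covariant resonance that would otherwise survive as $N\to\infty$.
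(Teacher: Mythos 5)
Your plan contains a serious error on the central point. You propose to take the covariant stochastic object $\philinear[\Blin]$ with $\Blin$ equal to ``(a smoothed version of) $A_{\leq N}$''. This is exactly the choice that cannot be made: the connection $A_{\leq N}$ is adapted to the noise $\zeta$, so the covariant heat kernel $p_{A_{\leq N}}$ and the white noise $\zeta$ that feeds into $\philinear[\Blin]$ are probabilistically dependent, and none of the moment estimates of Section \ref{section:cshe} (which all require the kernel to be deterministic given the noise, so that Gaussian hypercontractivity applies) can be invoked. This obstruction is precisely what forces the paper to work on short intervals $[t_0,t_0+\tau]$ and, on each such interval, take $\Blin$ to be the \emph{linear heat flow} $e^{(t-t_0)\Delta}\pregA_0$ of the (high-regularity part of the) initial datum $A(t_0)$, since $A(t_0)$ is $\mathcal F_{t_0}$-measurable and hence independent of $\zeta\big|_{[t_0,t_0+\tau]}$. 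Without this step there is no valid route from Theorem \ref{intro:thm-cshe} to uniform-in-$N$ moment bounds on the Abelian--Higgs system.

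Once you are committed to building $\philinear[\Blin]$ from data at the start of each short interval, the globalization necessarily becomes an iteration over time scales $\tau$ that shrink with the size of the data. Your sketch skips this entirely, yet it is the main content: $\tau$ is chosen of order $\max(\|\pregA_0\|_{\Beta}^{\alpha},\|\pregphi_0\|_{L_x^r}^{\beta})^{-1}$ with $\alpha=\tfrac34$, $\beta=\tfrac52$ (Definition \ref{decay:def-tau}), because $\tau$ must be short enough that the monotonicity-formula estimates for $\psi_{\lo}$ and the Schauder estimates for $Z$ close as a bootstrap, but long enough that the heat flow damps $B$ and the power nonlinearity damps $\psi_{\lo}$ by a non-negligible amount. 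The exponent $\gamma=\tfrac27$ is not an interpolation exponent between $L^{q+1}$ and $\Cs_x^{-\kappa}$; it is forced by the compatibility constraints among $\alpha$, $\beta$, $\gamma$ collected in Remark \ref{decay:rem-parameters}. The paper then iterates Proposition \ref{decay:prop-short} over these variable intervals (Lemmas \ref{decay:lem-exponential-growth} and \ref{decay:lem-decay-unit}), controlling the number of steps and the accumulated error, and it is only there that the decay estimate \eqref{prelim:eq-heat-flow-decay} for mean-zero heat flows enters; this in turn is why the $\Lcbigdot^p_x$-norms are introduced, another non-trivial feature your sketch omits.

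Two smaller but still substantive misreadings: the term $\Cgauge A_{\leq N}$ with $\Cgauge=\frac{1}{8\pi}>0$ is a \emph{growth} term at the level of the linear $A$-dynamics, not a damping term; its role is entirely to cancel the resonance in the derivative nonlinearity so that the limit is gauge covariant (Theorem \ref{thm:gauge-covariance} and Lemma \ref{lemma:constant-A-limit-resonance}), and the same cancellation is what makes the resonant estimate in Proposition \ref{cshe:prop-resonant} depend only on $\|\Blin\|^{1/2+}$. Similarly, the massive term in \eqref{intro:eq-cshe} is a technical convenience, not a damping mechanism for $B$. Finally, the monotonicity formula of Proposition \ref{prop:monotonicity} is applied to $\psi_{\lo}$ (not to the whole remainder $\phi^\sharp$) after a further high/low-frequency splitting $\psi=\psi_{\lo}+\psi_{\hi}+\varphi_{\hi}$, which is needed because the negative-regularity parts of the initial data and of $\philinear[\hi]$ would otherwise make the covariant $L_t^2L_x^2$-derivative norms infinite; that additional decomposition, and the non-covariant, para-controlled treatment of the high-frequency pieces in Appendix \ref{section:high}, are not visible in your outline either.
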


\begin{remark}\label{remark:intro-thm-abelian-higgs} We make the following remarks regarding Theorem \ref{intro:thm-abelian-higgs}.
\begin{enumerate}[label=(\alph*)]
\item  (Regularity of initial data) 
In \ref{intro:item-AH-1}, the solution is controlled in the weaker $\Cs_x^{-\kappa}\times \Cs_x^{-\kappa}$-norm while the initial data is controlled in the stronger $\Beta \times L_x^r$-norm. Using local theory and (our estimates from Section \ref{section:decay}), this discrepancy can likely be removed, but the exponent of the resulting $\max \big( \big\| A(0) \big\|_{\Cs_x^{-\kappa}}^\gamma, 
\big\| \phi(0)\big\|_{\Cs_x^{-\kappa}} \big)$-term then needs to be slightly larger than $p$. 
\item (Polynomial growth) While \ref{intro:item-AH-1} only contains an exponential growth estimate, one can obtain a polynomial growth estimate from our argument (see Remark \ref{decay:rem-polynomial}). However, since the exponential bound makes the proofs of \ref{intro:item-AH-1} and \ref{intro:item-AH-2} more similar, we did not pursue this in detail. 
\item (Tail-behavior) From \ref{intro:item-AH-2}, it follows that the tails of linear, gauge invariant observables in $A$ or $\phi$ can be controlled by $\exp(-c\lambda^{\gamma \kappa})$ or $\exp(-c\lambda^\kappa)$, respectively.  However, the energy \eqref{intro:eq-energy} suggests that these tails should be even controlled by $\exp(-c\lambda^2)$ or $\exp(-c\lambda^{q+1})$. The analogous tail estimates for the $\Phi^4_3$-measure were recently obtained by Hairer and Steele \cite{HS22}, and it would be interesting to see whether their method can be applied to \eqref{intro:eq-SAH}. 
\item\label{item:gauge-covariance} (Gauge covariance) The gauge covariance statement in  \ref{intro:item-AH-3} is made more precise in Section \ref{section:gauge-covariance}. Since the discrete gauge transformation from \eqref{intro:eq-group-action-Zd} is rather simple, the proof of gauge covariance is much simpler than in our earlier work \cite{BC23} on the stochastic Yang-Mills equations. For this reason, gauge covariance will not be discussed further in this introduction. 
\end{enumerate}
\end{remark}

As will be discussed in Subsection \ref{section:introduction-argument}, our main theorem (Theorem \ref{intro:thm-abelian-higgs}) is obtained using a covariant approach to the stochastic Abelian-Higgs equations. As part of our covariant approach, we require estimates of the covariant stochastic object $\philinear[\Blin]$, which was defined in \eqref{intro:eq-cshe}. Since we believe that our estimates of the covariant stochastic object $\philinear[\Blin]$ are of independent interest, we record them in a separate theorem. To this end, we first make the following definition.

\begin{definition}[Norm on $\Blin$]\label{intro:def-norm}
For any time $T>0$ and connection one-form $\Blin\colon [0,T]\times \T^2 \rightarrow \R^2$, we define 
\begin{equs}\label{intro:eq-norm}
\|\Blin\|_{T, \resnorm} :=&~~ \| \Blin\|_{L_t^\infty L_x^\infty([0, T] \times \T^2)} + \|\nabla \Blin\|_{L_t^2 L_x^\infty([0, T] \times \T^2)} +\|\ptl_j \Blin^j \|_{L_t^\infty L_x^\infty([0, T] \times \T^2)} \\
&+ \|\ptl_t \Blin\|_{L_t^1 L_x^\infty([0, T] \times \T^2)} + \|(\ptl_j (F_\Blin)^{kj})_{k \in [2]} \|_{L_t^1 L_x^\infty([0, T] \times \T^2)} .
\end{equs}
In the case $T=1$, we also write 
\begin{equs}
\| \Blin \|_{\resnorm} := \| \Blin \|_{1,\resnorm}.
\end{equs}
\end{definition}

We note that if $\Blin$ satisfies the Coulomb gauge condition $\partial_j \Blin^j=0$, then the third summand in \eqref{intro:eq-norm} is equal to zero and the argument in the fifth summand in \eqref{intro:eq-norm} is given by 
$\ptl_j (F_\Blin)^{kj} = -\Delta \Blin^k$. Equipped with Definition \ref{intro:def-norm}, we can now state our estimates for the covariant stochastic heat equation. We emphasize here that for this theorem, we do not require that $\Blin$ is in the Coulomb gauge.

\begin{theorem}[Covariant stochastic heat equation]\label{intro:thm-cshe}
Let $\kappa,\eta\in (0,1)$ be as in \eqref{prelim:eq-parameter-new-eta-nu} and \eqref{prelim:eq-parameter-new-kappa-kappa-j} and let $\Blin\in C_t^0 \Cs_x^\eta([0,1]\times \T^2\rightarrow \R^2)$ be a connection one-form such that $\ptl_j \Blin^j \in C_t^0 \Cs_x^\eta([0, 1] \times \T^2 \ra \R)$. Then, for all $N \in \dyadic$, there exists a unique solution $\philinear[\Blin,\leqN][r][]$ of \eqref{intro:eq-cshe} in $C_t^0 \Cs_x^{-\kappa}([0,1]\times \T^2 \rightarrow \C)$. Furthermore, we have the following estimates: 
\begin{enumerate}[label=(\roman*)]
\item\label{item:thm-cshe-polynomial} (Power-type nonlinearity) 
For $\power \geq 1$, it holds that
\begin{equs}\label{intro:eq-cshe-linear}
\E \bigg[ \sup_{N \in \dyadic} \Big\|\philinear[\Blin,\leqN] \Big\|_{C_t^0 \Cs_x^{-\kappa}([0,1]\times \T^2)}^{\power} \bigg]^{\frac{1}{\power}} 
\lesssim \power^{\frac{1}{2}} \big( 1+ \|\Blin\|_{\resnorm}^{2\kappa}\big).
\end{equs}
For $k\geq 2$, $\power\geq 1$, it holds that
\begin{equs}\label{intro:eq-cshe-power}
\E \bigg[ \sup_{N \in \dyadic} \bigg\| t^\kappa \biglcol\,  \Big| \philinear[\Blin,\leqN] \Big|^{2k-2} \philinear[\Blin,\leqN] \bigrcol \bigg\|_{C_t^0 \Cs_x^{-\kappa}([0,1]\times \T^2)}^\power \bigg]^{\frac{1}{\power}} 
\lesssim \power^{k-\frac{1}{2}} \big( 1+ \|\Blin\|_{\resnorm}^{4(k-1)\kappa}\big).
\end{equs}
\item\label{item:thm:-difference-in-linear-objects}(Difference in linear objects) For all $\power \geq 1$ and $\alpha \in [0,1-\kappa]$, it holds that 
\begin{equs}\label{intro:eq-cshe-difference}
\E\Bigg[ \sup \Big\|  \philinear[\Blin, \leqN] - \philinear[\leqN] \Big\|_{C_t^0 \Cs_x^{\alpha}([0, 1] \times \T^2)}^\power \Bigg]^{1/\power} \lesssim \power^{1/2} \|\Blin\|_{\resnorm}^{\alpha} \big(1 + \|\Blin\|_{\resnorm}^{\kappa}\big). 
\end{equs}
\item\label{item:cshe-derivative-nonlinearity} (Derivative nonlinearity) For all $\power \geq 1$ and $\alpha \in [0,1-\kappa]$, it holds that 
\begin{equs}\label{intro:eq-cshe-derivative}
\E \bigg[ \sup_{\substack{N \in \dyadic \\ N \geq \|B\|_{\resnorm}^3}} \bigg\|  \Im \Duh \Big[\,  \overline{\philinear[\Blin,\leqN]} \covd_\Blin \philinear[\Blin,\leqN][r][] - \gaugerenorm B \Big] \bigg\|_{C_t^0 \Cs_x^{\alpha} \cap C_t^{\alpha/2} \Cs_x^0([0, 1] \times \T^2)}^\power \bigg]^{\frac{1}{\power}} \lesssim \power \big(1 + \|B\|_{\resnorm}^{\frac{1}{2}+\kappa}\big).
\end{equs}
\end{enumerate}
\end{theorem}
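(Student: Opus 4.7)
The plan is to reduce all three parts to covariant heat-kernel estimates combined with Wiener chaos hypercontractivity. Let $p_B(t, x; s, y)$ denote the integral kernel of the covariant semigroup generated by $\covd_\Blin^j \covd_{\Blin, j} - 1$, so that the solution to \eqref{intro:eq-cshe} has the Wiener integral representation $\philinear[\Blin, \leqN](t, x) = \int_0^t \int_{\T^2} (p_B \ast_y \moll_{\leq N})(t, x; s, y) \, \zeta(\mathrm{d}s, \mathrm{d}y)$. The main analytic input, which I would establish first by Duhamel-expanding $p_B$ around the flat heat kernel $p$ and iterating standard heat-kernel manipulations, is: (a) pointwise Gaussian bounds on $p_B$ and its first two spatial derivatives, with constants polynomial in $\|\Blin\|_{\resnorm}$; and (b) a quantitative difference bound of the form $|p_B(t, x; s, y) - p(t - s, x - y)| \lesssim \|\Blin\|_{\resnorm}^{\alpha} (t - s)^{\alpha/2} G(t - s, x - y)$ for $\alpha \in [0, 1]$, where $G$ is a Gaussian factor. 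All subsequent estimates will reduce to (a) and (b).

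For part \ref{item:thm-cshe-polynomial}, the second moment $\E[|\Delta_k \philinear[\Blin, \leqN](t, x)|^2]$ equals the squared $L^2_{s, y}$-norm of $\Delta_k (p_B \ast_y \moll_{\leq N})(t, x; \cdot, \cdot)$, which (a) controls by $2^{2 k \kappa}(1 + \|\Blin\|_{\resnorm}^{4 \kappa})$. Since $\philinear[\Blin, \leqN]$ lies in the first Wiener chaos, Nelson's hypercontractivity upgrades this to the $\power^{1/2}$-moment bound \eqref{intro:eq-cshe-linear}, while a Kolmogorov continuity argument in time, combined with telescoping in $N \mapsto 2N$, delivers the uniform-in-$N$ supremum inside the expectation. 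For the Wick power \eqref{intro:eq-cshe-power}, I would expand the Wick-ordered monomial into the $(2 k - 1)$-th Wiener chaos via Hermite-polynomial identities, reducing the moment estimate to graph-sums of contractions of the covariance kernel $K_B := \E[\philinear[\Blin, \leqN] \overline{\philinear[\Blin, \leqN]}]$. The bounds (a) then control these contractions, the $t^\kappa$-weight absorbs the short-time singularity of $K_B$ on the diagonal, and Nelson's bound for the $(2 k - 1)$-th chaos supplies the $\power^{k - 1/2}$ factor.

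For part \ref{item:thm:-difference-in-linear-objects}, the difference $\philinear[\Blin, \leqN] - \philinear[\leqN]$ is again a Wiener integral, now against the kernel difference $p_B - p$. Bound (b) applied to this kernel, followed by a second-moment computation and hypercontractivity, yields the $\|\Blin\|_{\resnorm}^{\alpha}$-scaling with spatial Besov regularity $\alpha$ and the $\power^{1/2}$-moment factor; the $(1 + \|\Blin\|_{\resnorm}^{\kappa})$ correction absorbs a Besov-loss factor in passing from $L^2_{s,y}$ to $\Cs_x^\alpha$. As a structural check, differentiating both equations yields the identity
\begin{equs}
(\partial_t - \Delta + 1)\big( \philinear[\Blin, \leqN] - \philinear[\leqN] \big) = 2 \icomplex \Blin^j \partial_j \philinear[\Blin, \leqN] + \icomplex (\partial_j \Blin^j) \philinear[\Blin, \leqN] - |\Blin|^2 \philinear[\Blin, \leqN],
\end{equs}
whose right-hand side, by part \ref{item:thm-cshe-polynomial}, has the expected $\|\Blin\|_{\resnorm}$-scaling when evaluated in a suitable parabolic space.

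Part \ref{item:cshe-derivative-nonlinearity} is the core difficulty, and is where the renormalization $\gaugerenorm \Blin$ enters. I would decompose the bilinear object $\overline{\philinear[\Blin, \leqN]} \covd_\Blin \philinear[\Blin, \leqN]$ into its $0$-th and $2$-nd order Wiener chaos components. The $2$-nd chaos is Wick-ordered and admits a hypercontractive moment bound which, via (a), reduces to a contraction uniformly bounded in $N$; the Duhamel integral $\Duh$ then produces the parabolic regularity $C_t^0 \Cs_x^\alpha \cap C_t^{\alpha/2} \Cs_x^0$ via standard Schauder estimates. The $0$-th chaos is the expectation of the product, and writing $\covd_\Blin = \nabla + \icomplex \Blin$, its imaginary part separates into a current-type piece that vanishes by the complex symmetry of the noise and the resonant piece $\Blin \cdot K_B(t, x; t, x)$. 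The key claim, and the main obstacle of the entire theorem, is that after applying $\Duh$ and subtracting the finite counterterm $\gaugerenorm \Blin$, one obtains a convergent object with $\|\Blin\|_{\resnorm}^{1/2 + \kappa}$-scaling. This requires a delicate short-time asymptotic analysis of $p_B$ on the diagonal: using bound (b) to separate the $\Blin$-linear resonance responsible for $\gaugerenorm = \frac{1}{8\pi}$ from the remaining flat and higher-order contributions, and exploiting the hypothesis $N \geq \|\Blin\|_{\resnorm}^3$ to ensure that the smoothing scale $N^{-1}$ is well below the natural scale of the $\Blin$-dependent corrections.
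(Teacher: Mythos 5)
Your overall skeleton (Wiener-integral representation, second-moment kernel estimates, hypercontractivity, resonant/non-resonant splitting of the bilinear term) matches the paper's, but the central analytic input you propose is the one the paper explicitly says \emph{cannot} work. You plan to establish your bounds (a) and (b) "by Duhamel-expanding $p_B$ around the flat heat kernel $p$ and iterating standard heat-kernel manipulations." This is a perturbative treatment of the $\icomplex\Blin$-term inside $\covd_\Blin$: each iterate of the Duhamel series contributes at least one power of $\|\Blin\|_{\resnorm}$, so "pointwise Gaussian bounds with constants polynomial in $\|\Blin\|_{\resnorm}$" is the best you can hope for. But that is not good enough: \eqref{intro:eq-cshe-linear} requires only a $\|\Blin\|_{\resnorm}^{2\kappa}$ loss, \eqref{intro:eq-cshe-difference} requires $\|\Blin\|_{\resnorm}^\alpha$ with $\alpha$ as small as you like, and \eqref{intro:eq-cshe-derivative} requires $\|\Blin\|_{\resnorm}^{1/2+\kappa}$. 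A polynomial constant in a pointwise Gaussian bound destroys all three. Remark~\ref{intro:rem-cshe} in the paper makes exactly this point. The actual mechanism is the diamagnetic inequality $|p_{\Blin}(w;z)| \leq p(w;z)$ (Lemma~\ref{kernel:lem-diamagnetic-heat-kernel}), which is \emph{uniform} in $\Blin$, together with covariant energy estimates derived from the monotonicity formula (Proposition~\ref{prop:monotonicity}, and its consequences such as Corollaries~\ref{kernel:cor-energy-estimate-dual} and \ref{cor:dual-two-sided-time-weighted-estimate}). These give $B$-independent $L^2_{s,y}$-control on $p_\Blin$ and $\covd_\Blin p_\Blin$; the small fractional powers of $\|\Blin\|_{\resnorm}$ are then bought by interpolating against weak one-power-of-$\Blin$ estimates obtained via the variation-of-constants formula in $u$ (Corollary~\ref{cor:covariant-heat-kernel-derivative-varying-connection}), not by a Duhamel series in $\Blin$.

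The same issue propagates to part \ref{item:cshe-derivative-nonlinearity}. You correctly identify the resonant $0$-th chaos as the source of the $\gaugerenorm\Blin$ counterterm and correctly note that the short-time diagonal asymptotics of $p_\Blin$ must be analyzed. But you then propose to "use bound (b) to separate the $\Blin$-linear resonance" --- and (b) as you state it carries a factor $\|\Blin\|_{\resnorm}^\alpha$ that, when integrated against the $(t-s)^{-1}$ on-diagonal singularity, cannot produce the claimed $\|\Blin\|_{\resnorm}^{1/2+\kappa}$ without an additional ingredient. What actually closes this is a two-estimate interpolation on each dyadic time-scale $\rho$: a $\rho^{-1/2}$-bound with \emph{no} $\Blin$-dependence (from the diamagnetic inequality and the covariant $L^2$-estimates on $\covd_\Blin p_\Blin$, Lemma~\ref{lemma:non-A-dependent-estimate}), and a $\rho^{1/2}(1+\|\Blin\|_{\resnorm})$-bound from the perturbation $\Blin_u = (1-u)\Blin(z)+u\Blin$ that freezes $\Blin$ at the observation point (Lemma~\ref{lemma:resonant-part-A-dependent-estimate}, using the explicit constant-$\Blin$ heat kernel of Lemma~\ref{lemma:constant-A-heat-kernel-formulas} and the limiting resonance computation of Lemma~\ref{lemma:constant-A-limit-resonance}). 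Summing the geometric mean over $\rho$ gives $\|\Blin\|_{\resnorm}^{1/2+}$. Without some version of this $\rho$-interpolation between a $\Blin$-free and a one-power-of-$\Blin$ estimate, the exponent $\frac12+\kappa$ is out of reach. So the resonant-part argument has a genuine gap beyond the perturbative-expansion issue. The non-resonant $2$nd-chaos part of your plan, by contrast, is plausible once you have $\Blin$-uniform $L^2$ bounds on $\covd_\Blin p_\Blin$, and that is essentially how the paper proceeds (Proposition~\ref{prop:derivative-nonlinearity-non-resonant}).
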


\begin{remark}\label{intro:rem-cshe}
We make the following remarks regarding Theorem \ref{intro:thm-cshe}. 
\begin{enumerate}[label=(\alph*)]
\item\label{intro:item-choice-B} (Choice of $\Blin$) 
In our proof of Theorem \ref{intro:thm-abelian-higgs}, it seems tempting to use Theorem \ref{intro:thm-cshe} with $\Blin:= A$, where $A$ is as in the stochastic Abelian-Higgs equations \eqref{intro:eq-SAH}. However, this choice is not allowed, since the connection one-form $A$ and space-time white noise $\zeta$ are probabilistically dependent. To circumvent this, we set up an iteration procedure over small time-intervals and, on each time-interval, choose $B$ as the linear heat evolution of the initial data of $A$. For more details, we refer the reader to Sections \ref{section:introduction-argument}, \ref{section:Abelian-Higgs}, and  \ref{section:decay}. We also remark that this use of independence is inspired by recent works on random dispersive equations, see e.g. \cite{B21,DNY19}. 
\item (Dependence on $\Blin$) The most important aspect of Theorem \ref{intro:thm-cshe} is that the estimates \eqref{intro:eq-cshe-linear}-\eqref{intro:eq-cshe-derivative} exhibit good dependence on $\Blin$, i.e., only involve small powers of $\| \Blin \|_{\resnorm}$. Otherwise, it would not be possible to use the estimates \eqref{intro:eq-cshe-linear}-\eqref{intro:eq-cshe-derivative} in the proof of Theorem \ref{intro:thm-abelian-higgs}. The proof of this good dependence on $\Blin$ relies heavily on our covariant estimates, and cannot be obtained if the contributions of the $ \icomplex\Blin$-term in $\covd_\Blin$ are treated perturbatively.
\item As a follow up to the previous item, we note that in item \ref{item:cshe-derivative-nonlinearity}, the restriction $N \geq \|B\|_{\resnorm}^3$ is only so that the resulting bound is of a nice form. As a consequence of the arguments of Section \ref{section:cshe}, we could also take a supremum over all $N \in \dyadic$, at a cost of a more complicated bound -- see in particular Proposition \ref{cshe:prop-resonant}. Moreover, as a result of that proposition, we have that if we restrict to a smaller time interval $[0, \tau]$, then we may also gain a power like $\tau^{\frac{3}{4}-}$, which will be crucial for our globalization arguments in Sections \ref{section:Abelian-Higgs} and \ref{section:decay}.
\item (Counterterm in the derivative nonlinearity) The $\gaugerenorm B$ counterterm in the derivative nonlinearity is crucial for obtaining an estimate with good dependence on $B$. Without it, we would only expect to obtain a bound that depends on one whole power of $B$, which, as mentioned in the previous item, would be fatal for our global existence proof strategy. Interestingly, the constant $\gaugerenorm$ is precisely the one which ensures gauge covariance of \eqref{intro:eq-SAH} (recall Remark \ref{remark:intro-thm-abelian-higgs}\ref{item:gauge-covariance}). Ultimately, this is due to the fact that the same calculation arises both when proving gauge covariance and bounding the resonant part of the derivative nonlinearity (see Remark \ref{remark:two-calculations}). We believe that there is some underlying geometric explanation for this.
\item (Convergence) The estimates in \eqref{intro:eq-cshe-power}, \eqref{intro:eq-cshe-difference}, and \eqref{intro:eq-cshe-derivative} 
 yield uniform control in the truncation parameter $N \in \dyadic$. From our argument, we can also extract the convergence of the power-type and derivative nonlinearities as $N \toinf$, see e.g. Lemmas \ref{lemma:difference-linear-object-N-regularity} and \ref{lemma:derivative-nonlinearity-high-frequency-estimate}.
\item\label{item:time-weights} (Time weights in item \ref{item:thm-cshe-polynomial}) We separated the cases $k = 1$ (linear object) and $k \geq 2$ (general polynomial object) in item \ref{item:thm-cshe-polynomial}, because in the latter case, we need to renormalize. However, there is a slight discrepancy with how we defined the Wick-ordered products in \eqref{eq:wick-ordered-products}, in that we chose to use the stationary variance $\sigma^2_{\leq N}$ for the non-stationary objects (recall $\philinear[B, \leqN](0) = 0$). This discrepancy results in the time weight in the norm on the higher degree polynomial objects.
\end{enumerate}
\end{remark}

Equipped with the statements of our main theorems (Theorem \ref{intro:thm-abelian-higgs} and \ref{intro:thm-cshe}), we now turn to a discussion of the ideas used in their proofs.

\subsection{Overview of the argument}\label{section:introduction-argument}

The core of our argument consists of a covariant monotonicity formula, which is inspired by Hamilton's monotonicity formulas for geometric parabolic flows \cite{Ham93}. To state a version of Hamilton's monotonicity formula in our setting, we let $A\colon [0,1]\times \T^2 \rightarrow \R^2$ be a connection one-form, let $G\colon [0,1] \times \T^2 \rightarrow \R^2$ be a forcing term, and let $\varphi\colon [0,1]\times \T^2 \rightarrow \C$ 
be a solution of the covariant heat equation
\begin{equation}\label{intro:eq-argument-che}
\big( \partial_t - \covd_A^j \covd_{A,j} \big) \varphi = G. 
\end{equation}
Furthermore, we let $K\colon [0,1]\times \T^2 \rightarrow \R$ be a non-negative solution of  $(\partial_t + \partial^j \partial_j) K=0$, i.e., a backwards heat equation. In Proposition \ref{prop:monotonicity} below, it is then shown that 
\begin{equation}\label{intro:eq-argument-che-phisquare}
 \partial_t \big( K |\varphi|^2 \big) =
  K \Delta |\varphi|^2  -  (\Delta K) |\varphi|^2 - 2 K  | \covd_A \varphi |^2 + 2 K \Re ( \overline{\varphi} G ). 
\end{equation}
We note that the first and second summand on the right-hand side of \eqref{intro:eq-argument-che-phisquare} contain the Laplacian $\Delta$, which does not depend on $A$. By integrating \eqref{intro:eq-argument-che-phisquare} and using Hölder's inequality, one can obtain the estimate 
\begin{equation}\label{intro:eq-argument-estimate-example}
\big\| K^{\frac{1}{2}} \varphi \big\|_{L_t^\infty L_x^2([0,1]\times \T^2)}
+ \big\| K^{\frac{1}{2}} \covd_A \varphi \big\|_{L_t^2 L_x^2([0,1]\times \T^2)}
\lesssim \big\| \big( K^{\frac{1}{2}}\varphi \big)(0) \big\|_{L_x^2(\T^2)}
+ \big\| K^{\frac{1}{2}} G \big\|_{L_t^1 L_x^2([0,1]\times \T^2)}. 
\end{equation}
The important aspect of \eqref{intro:eq-argument-estimate-example} is that it can be used to obtain bounds on $\varphi$ and $\covd_A \varphi$ which do not depend on the size of $A$. We strongly emphasize that \eqref{intro:eq-argument-estimate-example} is only one of many covariant estimates for solutions of \eqref{intro:eq-argument-che} 
and that we need several variants of \eqref{intro:eq-argument-estimate-example}. For example, we need variants of \eqref{intro:eq-argument-estimate-example}  which use different norms of $G$, include additional time-weights, involve higher powers of $\varphi$, or make use of the damping term in \eqref{intro:eq-SAH}.
In the following, we discuss how covariant monotonicity formulas, together with additional ingredients, can be used to  first prove Theorem \ref{intro:thm-cshe} and then prove Theorem \ref{intro:thm-abelian-higgs}. 

\subsubsection{The covariant stochastic heat equation}
As in Theorem \ref{intro:thm-cshe}, we let $B\in C_t^0 \Cs_x^\eta([0,1]\times \T^2)$ be a connection one-form. Then, we let $p_B(s,y;t,x)$ be the kernel of the covariant heat equation
\begin{equation}\label{intro:eq-cshe-covariant-heat}
\big( \partial_t - \covd_B^j \covd_{B,j} \big) \varphi = G. 
\end{equation}
We further let $\massp_B(s,y;t,x)$ be the kernel of the massive variant of \eqref{intro:eq-cshe-covariant-heat}, which is explicitly given by $\massp_B(s,y;t,x)=e^{-(t-s)} p_B(s,y;t,x)$. The additional $e^{-(t-s)}$-factor can safely be ignored in the proof of Theorem \ref{intro:thm-cshe}, but it will be convenient in our global estimates (see Section \ref{section:decay}). From the definition of the covariant stochastic object $\philinear[\Blin]$, we obtain that
\begin{equation}\label{intro:eq-cshe-covariant-object}
\philinear[\Blin](t,x) = \iint \ds \dy \,  \massp_B(s,y;t,x) \zeta(s,y). 
\end{equation}
Due to the representation in \eqref{intro:eq-cshe-covariant-object}, all estimates in Theorem \ref{intro:thm-cshe} can be phrased in terms of the covariant heat kernel $p_B$. For this, it is necessary that the covariant heat kernel $p_B$ and space-time white noise $\zeta$ are probabilistically independent, which is tied to Remark \ref{intro:rem-cshe}.\ref{intro:item-choice-B}. \\

In our proofs of \ref{item:thm-cshe-polynomial} and \ref{item:thm:-difference-in-linear-objects} in Theorem \ref{intro:thm-cshe}, i.e., our estimates of $\scalebox{0.9}{$\philinear[\Blin]$}$,
$\scalebox{0.9}{$\biglcol \, |\philinear[\Blin]|^{2k-2}\,  \philinear[\Blin]\bigrcol$}$, and 
$\scalebox{0.9}{$\philinear[\Blin]-\philinear[]$}$,
the most important ingredient is the diamagnetic inequality (see Lemmas \ref{prelim:lem-diamagnetic} and \ref{kernel:lem-diamagnetic-heat-kernel}). The diamagnetic inequality implies the pointwise estimate
\begin{equation}\label{intro:eq-cshe-diagmagnetic}
\big| p_B(s,y;t,x) \big| \leq p(s,y;t,x), 
\end{equation}
where $p$ is the kernel of the standard heat equation. We emphasize that \eqref{intro:eq-cshe-diagmagnetic} is uniform in the connection one-form $B$. Another ingredient is a resolvent identity for the difference $p_B-p$, which allows us to trade powers of $\| B \|_{\resnorm}$ to obtain more detailed information on $p_B$. Together with Gaussian hypercontractivity, the diamagnetic inequality and resolvent identity then yield both \ref{item:thm-cshe-polynomial} and \ref{item:thm:-difference-in-linear-objects} in Theorem \ref{intro:thm-cshe}. \\

In order to prove \ref{item:cshe-derivative-nonlinearity} in Theorem \ref{intro:thm-cshe}, i.e., our estimates of the derivative nonlinearity $\Duh \Im (\, \scalebox{0.9}{$\overline{\philinear[\Blin]} \covd_\Blin \philinear[\Blin]$})$, we need to work harder. To this end, we first note that \eqref{intro:eq-argument-estimate-example} implies the operator estimate
\begin{equation}\label{intro:eq-cshe-operator}
\bigg\| K^{\frac{1}{2}}(t,x) 
\iint \ds \dy \, \big( \covd_B p_B \big)(s,y;t,x) G(s,y) \bigg\|_{L_t^2 L_x^2([0,1]\times \T^2)}
\lesssim \big\| K^{\frac{1}{2}} G \big\|_{L_s^1 L_y^2([0,1]\times \T^2)}. 
\end{equation}
In \eqref{intro:eq-cshe-operator}, the covariant derivative $\covd_\Blin$ acts on the $x$-variable of $p_\Blin$. Similar as for \eqref{intro:eq-argument-estimate-example}, we emphasize that \eqref{intro:eq-cshe-operator} is only one of many estimates involving $\covd_\Blin p_B$, and different variants of \eqref{intro:eq-cshe-operator} are proven in Corollary \ref{kernel:cor-energy-estimate-dual}, 
Lemma \ref{lemma:two-derivative-energy-estimate}, and Corollary \ref{cor:dual-two-sided-time-weighted-estimate} below. In order to control $\Duh \Im (\, \scalebox{0.9}{$\overline{\philinear[\Blin]} \covd_\Blin \philinear[\Blin]$})$, we then use a decomposition into non-resonant and resonant parts. For the non-resonant part, we need to prove estimates of the form\footnote{The expression in \eqref{intro:eq-cshe-non-resonant} is not exactly the expression needed in the proof of Theorem \ref{intro:thm-cshe}.\ref{item:cshe-derivative-nonlinearity}, since it can only be used to control $C_t^0 \Cs_x^{-\kappa}$-norms and does not include the mollification kernels from the definition of $\zeta_{\leq N}$. For the correct expression, we refer the reader to Section \ref{section:derivative-nonlinearity-non-resonant}.}
\begin{equation}\label{intro:eq-cshe-non-resonant}
\sup_{t_0\in [0,1]} \sup_{x_0\in \T^2}
\bigg\| \iint \dt \dx \, p(t,x;t_0,x_0) \overline{p_B(s_1,y_1;t,x)} \big( \covd_B p_B \big)(s_2,y_2;t,x) \bigg\|_{L_{s_1}^2 L_{s_2}^2 L_{y_1}^2 L_{y_2}^2} \lesssim 1.
\end{equation}
Our estimate of \eqref{intro:eq-cshe-non-resonant} primarily relies on a dual version of \eqref{intro:eq-cshe-operator}. In using \eqref{intro:eq-cshe-operator}, we often choose $K(t,x)$ as $p(t,x;t_0,x_0)$, which is used to address the singular integral kernel of the Duhamel integral. In particular, we make use of the backwards heat flow in \eqref{intro:eq-argument-che-phisquare}. In order to treat the resonant part of $\Duh \Im (\, \scalebox{0.9}{$\overline{\philinear[\Blin]} \covd_\Blin \philinear[\Blin]$})$, we need to prove estimates of the form\footnote{Similar as \eqref{intro:eq-cshe-non-resonant}, \eqref{intro:eq-cshe-resonant} is not exactly the expression needed in the proof of Theorem \ref{intro:thm-cshe}.\ref{item:cshe-derivative-nonlinearity}. In the correct replacement of \eqref{intro:eq-cshe-resonant}, the mollification kernels resulting from the definition of $\zeta_{\leq N}$ play an important role, since they are responsible for the $\Cgauge\Blin$-term.}
\begin{equation}\label{intro:eq-cshe-resonant}
\sup_{t_0\in[0,1]} \sup_{x\in \T^2}
\bigg| \iiiint \ds \dy \dt \dx \, p(t,x;t_0,x_0) \overline{p_B(s,y;t,x)} \big( \covd_B p_B \big)(s,y;t,x) \bigg| \lesssim \big( 1+ \| B \|_{\resnorm} \big)^{\frac{1}{2}+\varepsilon}. 
\end{equation}
In our treatment of \eqref{intro:eq-cshe-resonant}, we introduce a time-scale $\rho>0$ and restrict the integral in \eqref{intro:eq-cshe-resonant} to the region $|t-s|\sim \rho$. Using covariant estimates (see Lemma \ref{lemma:non-A-dependent-estimate}), which are derived from variants of \eqref{intro:eq-cshe-operator},
we can estimate \eqref{intro:eq-cshe-resonant} by $\rho^{-\frac{1}{2}}$. Despite the poor dependence on $\rho$, this estimate is incredibly useful, since it does not depend on the size of $\Blin$. Using perturbative estimates (see Lemma \ref{lemma:resonant-part-A-dependent-estimate}), we can also estimate \eqref{intro:eq-cshe-resonant} by $\rho^{\frac{1}{2}} (1+\| \Blin\|_{\resnorm})$. While the latter estimate has a good dependence on $\rho$, it costs one power of $\| \Blin \|_{\resnorm}$. By interpolating the covariant and perturbative estimates, one then obtains the desired estimate \eqref{intro:eq-cshe-resonant}.

\begin{remark}[Non-resonant and resonant parts]
Using a refined version of \eqref{intro:eq-cshe-non-resonant}, one can prove for all $\alpha \in [0,1-\kappa]$, all $\varepsilon>0$, and all $\power\geq 2$ that 
\begin{equation}\label{intro:eq-cshe-non-resonant-final}
\E \bigg[ \sup_{N\in \dyadic} \bigg\| \leray \Im \Duh \Big[  \biglcol \, \overline{\philinear[\Blin,\leqN]} \covd_\Blin \philinear[\Blin,\leqN][r][] \bigrcol \hspace{-0.2ex} \Big] \bigg\|_{C_t^0 \Cs_x^{\alpha} \cap C_t^{\alpha/2} \Cs_x^0([0, 1] \times \T^2)}^\power \bigg]^{\frac{1}{\power}} 
\lesssim \power \big( 1 + \| \Blin \|_{\resnorm} \big)^\varepsilon,
\end{equation}
where $\biglcol\, \scalebox{0.9}{$\overline{\philinear[\Blin,\leqN]} \covd_\Blin \philinear[\Blin,\leqN][r][]$} \bigrcol \hspace{-0.2ex}$ denotes the non-resonant part of $ \scalebox{0.9}{$\overline{\philinear[\Blin,\leqN]} \covd_\Blin \philinear[\Blin,\leqN][r][]$}$. The $\varepsilon$-loss in \eqref{intro:eq-cshe-non-resonant-final} is due to the supremum over $N\in \dyadic$. By comparing Theorem \ref{intro:thm-cshe}.\ref{item:cshe-derivative-nonlinearity} and \eqref{intro:eq-cshe-non-resonant-final}, one sees that the non-resonant part obeys a better estimate than the resonant part. It is possible that this is just an artifact of our proof for the resonant part.
\end{remark}

\begin{remark}[Heat kernel estimates]
While heat kernels and their estimates have been extensively studied (see e.g. the survey \cite{SC10}), it seems that estimates of the form \eqref{intro:eq-cshe-non-resonant} and \eqref{intro:eq-cshe-resonant} have not previously appeared in the literature, which may have two reasons: First, while the expressions in \eqref{intro:eq-cshe-non-resonant} and \eqref{intro:eq-cshe-resonant} appear naturally in the proof of Theorem \ref{intro:thm-cshe}.\ref{item:cshe-derivative-nonlinearity}, the same expressions do not seem to appear in other problems involving heat kernels. Second, in most problems involving heat kernels, the connection one-form $\Blin$ in $\covd_{\Blin}^j \covd_{\Blin,j}$ (or the Riemannian metric $g$ in the Laplace-Beltrami operator $\Delta_g$) is viewed as given, and the dependence of the estimates on the size of $\Blin$ is therefore not tracked. In order to prove Theorem \ref{intro:thm-abelian-higgs}, however, the good dependence of \eqref{intro:eq-cshe-non-resonant} and \eqref{intro:eq-cshe-resonant} on the size of $\Blin$ is essential. 
\end{remark}

\subsubsection{The stochastic Abelian-Higgs equations}\label{section:globalization-argument-intro}
The main ingredients in the proof of Theorem \ref{intro:thm-abelian-higgs} are 
our estimates for the covariant stochastic heat equation (Theorem \ref{intro:thm-cshe}) and covariant monotonicity formulas. To use the former, we need to isolate a component of the connection one-form $A$ which is probabilistically independent of the space-time white noise $\zeta$. This can be done by first working on small time-intervals and then iterating in time. In the following, we focus on our Ansatz and decay estimates on small time-intervals, which are the most important part of the proof of Theorem \ref{intro:thm-abelian-higgs}. \\

Let $t_0 \in [0,\infty)$ and assume, for simplicity, that 
\begin{equation}\label{intro:eq-ah-initial}
A(t_0) \in \Beta \qquad \text{and} \qquad \phi(t_0)\in L_x^r. 
\end{equation}
While neither of the assumptions in \eqref{intro:eq-ah-initial} will be satisfied for any $t_0>0$, the low-regularity terms in $A$ and $\phi$ are, at least morally, of size $\sim 1$, and can therefore be safely ignored. We now let $\tau$ be a small time scale which will be chosen in \eqref{intro:eq-ah-tau} below. We emphasize that the choice of $\tau$ will not be determined by the local theory of \eqref{intro:eq-SAH}; instead, it will be determined by our non-perturbative estimates of $A$ and $\phi$. On the small time-interval $[t_0,t_0+\tau]$, we decompose the connection one-form $A$ as 
\begin{equation}\label{intro:eq-ah-decomposition-A}
A(t) = \linear\hspace{-1mm}(t) + B(t) + Z(t). 
\end{equation}
The first term $\linear$ is the linear stochastic object determined by $\leray \xi$. The second term $\Blin$ is the linear heat flow of\footnote{In Section \ref{section:Abelian-Higgs} and \ref{section:decay}, the initial data of $\Blin$ will only be the high-regularity part of $A(t_0)$. Similarly, the initial data of $\psi$ below will only be the high-regularity part of $\phi(t_0)$. For the purpose of this introduction, however, this makes no difference.} $A(t_0)$, i.e., $B=e^{(t-t_0)\Delta} A(t_0)$. We note that, since $A$ is adapted, $\Blin$ is probabilistically independent of the stochastic forcing $\zeta|_{[t_0,t_0+\tau]}$. Finally, the third term $Z$ is a nonlinear remainder, which solves a nonlinear heat equation. We decompose\footnote{For technical reasons, the decomposition of $\phi$ is further split into low and high-frequency components, see e.g. Section \ref{section:AH-Ansatz}. For the purpose of this introduction, this makes no difference.} the scalar field $\phi$ as 
\begin{equation}\label{intro:eq-ah-decomposition-phi}
\phi(t) = \philinear [\Blin]\hspace{-0.5mm} (t) + \psi(t). 
\end{equation}
The first term $\philinear[\Blin]$ is the covariant stochastic object corresponding to the connection one-form $\Blin$ and the second term $\psi$ is a nonlinear remainder with initial data $\psi(t_0)=\phi(t_0)$.  \\

In order to prove Theorem \ref{intro:thm-abelian-higgs}, we then want to show that both\footnote{This is actually not true when the sizes of $A(t_0)$ and $\phi(t_0)$ are very different, but this is addressed by the maxima in Theorem \ref{intro:thm-abelian-higgs}. It is also not true when the mean of $A(t_0)$ is very large, but this is addressed by the exponential growth in Theorem \ref{intro:thm-abelian-higgs}.\ref{intro:item-AH-1}.}
$A$ and $\phi$ exhibit decay on the small time-interval $[t_0,t_0+\tau]$.  To this end, we then want to show the following three different statements:  
\begin{enumerate}[leftmargin=12ex,label=(\Roman*)]
\item\label{intro:item-ah-B} $\| B(t) \|_{\Beta}$ decreases significantly on $[t_0,t_0+\tau]$. 
\item\label{intro:item-ah-psi} $\| \psi(t) \|_{L_x^r}$ decreases significantly on $[t_0,t_0+\tau]$. 
\item\label{intro:item-ah-Z} $\| Z(t) \|_{\Beta}$ stays small on $[t_0,t_0+\tau]$.
\end{enumerate}
The phrases ``decreases significantly" and ``stays small" are purposefully kept vague, since the precise formulation is rather technical (see Section \ref{section:decay-short}). 
The decay of $B$ is obtained using standard estimates for the heat flow $e^{(t-t_0)\Delta}$. The decay of $\psi$ and smallness of $Z$ have to be proven together, since their evolution equations are strongly coupled. By using covariant monotonicity formulas involving $\psi$, we obtain estimates of the schematic form 
\begin{align}
\frac{1}{r} \partial_t \big\| \psi(t) \big\|_{L_x^r}^r 
+ \big\| \psi(t) \big\|_{L_x^{r+q-1}}^{r+q-1}
&\lesssim \| Z \|_{L_t^\infty \Beta}  ^{\frac{2}{q} (r+q-1)} + \big\{\, \textup{l.o.t.} \big\}, \label{intro:eq-ah-psi-1} \\
\big\| K^{\frac{1}{2}} \covd_A \psi \big\|_{L_t^2 L_x^2}
&\lesssim \| \psi(t_0) \|_{L_x^r} + \tau^{\frac{1}{2}} 
\| Z \|_{L_t^\infty \Beta}   + \big\{\, \textup{l.o.t.} \big\}, \label{intro:eq-ah-psi-2} 
\end{align}
where $K$ is a certain solution of the backwards heat equation. From the evolution equation for $Z$, we directly obtain that
\begin{equation}\label{intro:eq-ah-Z}
\big\| Z \big\|_{L_t^\infty \Beta}  
\lesssim  \Big\| \leray \Im \Duh \Big[\,  \overline{\philinear[\Blin,\leqN]} \covd_\Blin \philinear[\Blin,\leqN][r][] - \tfrac{1}{8\pi} \Blin \Big] \Big\|_{L_t^\infty \Beta}  
+  \Big\| \leray \Im \Duh \Big[\,  \overline{\psi} \covd_A \psi \Big] \Big\|_{L_t^\infty \Beta}   + \big\{\, \textup{l.o.t.} \big\}.
\end{equation}
The first term on the right-hand side of \eqref{intro:eq-ah-Z} can be estimated using our bounds for the covariant stochastic object $\philinear[\Blin]$, i.e.,  Theorem \ref{intro:thm-cshe}. It then remains to close estimates for $\psi$ and $Z$ using \eqref{intro:eq-ah-psi-1}, \eqref{intro:eq-ah-psi-2}, and \eqref{intro:eq-ah-Z}, which can be done.

In order to obtain \ref{intro:item-ah-B}, \ref{intro:item-ah-psi}, and \ref{intro:item-ah-Z}, the time-scale $\tau$ can be neither too small nor too large. If $\tau$ is too small, then we obtain no significant decay of $\Blin$ or $\psi$, and if $\tau$ is too large, then we cannot close our estimates of $\psi$ and $Z$. The choice of $\tau$ has to be just right, and we choose\footnote{The actual definition of $\tau$ differs slightly from \eqref{intro:eq-ah-tau}, see Definition \ref{decay:def-tau}.}
\begin{equation}\label{intro:eq-ah-tau}
\tau \sim \max \big( \| A(t_0) \|_{\Beta}^{\frac{3}{4}}, \| \phi(t_0) \|_{L_x^r}^{\frac{5}{2}} \big)^{-1}. 
\end{equation}
The choice of the exponents in \eqref{intro:eq-ah-tau} is far from obvious but will be well-motivated in Remark \ref{decay:rem-parameters} below.

\subsection{Further discussion and future directions}
\label{section:introduction-further}

We mention several other relevant works that did not quite fit into the introductory discussion. Although we did not introduce the generalized parabolic Anderson model, we note that there are recent works \cite{CdLFW24,SZZ24} which show global well-posedness for this model. In another direction, the paper \cite{MM2022} manages to define the operator $\covd_{A_0}^j \covd_{A_0, j}$ (with suitable renormalization) when $A_0$ has the distribution of a Gaussian free field. Note that in our ansatz \eqref{intro:eq-ah-decomposition-A} for $A$, the most singular part $\linear$ is precisely distributed as a Gaussian free field at any fixed point in time. Thus, it is natural to wonder if the results of \cite{MM2022} could help towards showing the global well-posedness of \eqref{intro:eq-SAH}. This is not clear to us, because the main aspect of our globalization argument (outlined in Section \ref{section:globalization-argument-intro}) is to understand the effects of $\Blin$, $Z$, $\philinear[\Blin]$, and $\psi$, as opposed to the effects of $\linear[]$. \\

Next, we discuss possible future directions. We consider a general singular SPDE, which is written as 
\begin{equation}\label{intro:eq-general-SPDE}
\partial_t \phi - \mathcal{L} \phi = \mathcal{N}(\phi,\covd \phi) + \xi, \qquad \phi(0)=\phi_0.
\end{equation}
In \eqref{intro:eq-general-SPDE}, $\phi\colon [0,\infty) \times \T^d \rightarrow \R^m$ is the unknown, $\phi_0 \colon \T^d \rightarrow \R^m$ is the initial data, and $d,m\geq 1$ are dimensions. Furthermore, $\mathcal{L}$ is a linear differential operator, $\mathcal{N}=\mathcal{N}(\phi,p)\colon \R^{m} \times \R^{m\times d}\rightarrow \R^m$ is a nonlinearity, and $\xi\colon [0,\infty)\times \T^d \rightarrow \R^m$ is a stochastic forcing. 

To apply the strategy of this article to \eqref{intro:eq-general-SPDE}, we first let $\varphi\colon [0,\infty) \times \T^d\rightarrow \R$ be a deterministic evolution, such as the solution of 
\begin{alignat*}{3}
\partial_t \varphi - \mathcal{L} \varphi &=0, \qquad & \qquad \varphi(0)=\phi_0, \\ 
\text{or} \qquad \partial_t \varphi - \mathcal{L} \varphi &=\mathcal{N}(\varphi,\covd \varphi), \qquad & \qquad \varphi(0)=\phi_0. 
\end{alignat*}
Then, we let $\philinear[\varphi]\colon [0,\infty)\times \T^d\rightarrow \R^m$ be the linear stochastic object corresponding to the linearization\footnote{In our analysis of the stochastic Abelian-Higgs model \eqref{intro:eq-SAH}, we treat the connection one-form $A$ and scalar field $\phi$ differently. For this reason, \eqref{intro:eq-cshe} is only a partial linearization of \eqref{intro:eq-SAH}.} of \eqref{intro:eq-general-SPDE} around $\varphi$, i.e., we let $\philinear[\varphi]$ be the solution of 
\begin{equation}\label{intro:eq-general-object}
\partial_t \philinear[\varphi] - \mathcal{L} \,  \philinear[\varphi] 
- \big( \partial_\phi\hspace{0.05ex} \mathcal{N} \big)(\varphi,\covd \varphi)  \philinear[\varphi]
- \big( \partial_p\hspace{0.05ex}  \mathcal{N} \big) (\varphi,\covd \varphi) \covd  \philinear[\varphi]
=\xi, \qquad \philinear[\varphi](0)=0. 
\end{equation}
Once the linear stochastic object $\philinear[\varphi]$ has been defined, one naturally arrives at the following two questions:
\begin{enumerate}[label=(\roman*)]
\item\label{intro:item-general-1} Are there estimates of $\philinear[\varphi]$ which exhibit good dependence on $\varphi$?
\item\label{intro:item-general-2} If so, can the estimates be used to prove global well-posedness of \eqref{intro:eq-general-SPDE}?
\end{enumerate}
In this article, we gave an affirmative answer to both \ref{intro:item-general-1} and \ref{intro:item-general-2} for the two-dimensional, stochastic Abelian-Higgs model \eqref{intro:eq-SAH}. For other singular SPDEs, such as the two-dimensional  
Yang-Mills-Higgs equations, both \ref{intro:item-general-1} and \ref{intro:item-general-2} are interesting open problems. We thus hope that, in addition to proving Theorem \ref{intro:thm-abelian-higgs}, our article leads to future research on  \eqref{intro:eq-general-SPDE} and \eqref{intro:eq-general-object}. \\ 

\textbf{Acknowledgements:} The authors thank Martin Hairer, Tom Mrowka, Hao Shen, Wenhao Zhao, Rongchan Zhu and Xiangchan Zhu for helpful and interesting discussions. B.B. was partially supported by the NSF under Grant No. DMS-1926686. S.C. was partially supported by a Minerva Research Foundation membership while at IAS, as well as by the NSF under Grant No. DMS-2303165.

\section{Preliminaries}\label{section:preliminaries} 

Before the start of our argument, we need to make preparations. In Section \ref{section:parameters}, we introduce several parameters which will be used for the rest of the article. In Section \ref{section:preliminary-useful}, we recall several useful formulas from differential geometry. In Sections \ref{section:prelimary-harmonic} and \ref{section:preliminary-probability}, we recall basic facts and notation from harmonic analysis and probability theory, respectively. Finally, in Section \ref{section:preliminary-hermite}, we recall basic properties of Hermite polynomials.

\subsection{Parameters}\label{section:parameters}
In the following, all parameters are allowed to be chosen depending on the exponent $q$ from Theorem \ref{intro:thm-abelian-higgs}, which is viewed as given. First, we define parameters $\eta$ and $\nu$ satisfying
\begin{equation}\label{prelim:eq-parameter-new-eta-nu}
0 < \nu \ll 1 \qquad \text{and} \qquad \eta := \nu^{100}.
\end{equation}
Loosely speaking, we let $\nu$ be sufficiently small and then let $\eta$ be much smaller than any reasonable power of $\nu$. Second, we define parameters $(\eta_j)_{j=1}^3$ as
\begin{equation}\label{prelim:eq-parameter-new-eta-j}
\eta_1 := \eta^{10} \qquad \text{and} \qquad \eta_{j+1} := \eta_j^{10} \quad \text{for } j=2,3.
\end{equation}
Third, we define parameters $\kappa$ and $(\kappa_j)_{j=1}^4$ as 
\begin{equation}\label{prelim:eq-parameter-new-kappa-kappa-j}
\kappa:= \nu^{10} \kappaone, \qquad \kappaone := 100 \eta \eta_3, \qquad \text{and} \qquad \kappa_{j+1}= \nu^{-10} \kappa_j \quad \text{for } j=1,2,3. 
\end{equation}
We note that, due to \eqref{prelim:eq-parameter-new-eta-nu}, \eqref{prelim:eq-parameter-new-eta-j}, and \eqref{prelim:eq-parameter-new-kappa-kappa-j}, it holds that
\begin{equation*}
\kappafour = \nu^{-30} \kappaone = 100 \nu^{-30} \eta \eta_3 \ll \eta_3 
\qquad \text{and} \qquad \eta \kappafour = \eta \nu^{-40} \kappa \ll \kappa. 
\end{equation*}
Finally, we define the parameter $r\in [1,\infty)$ as 
\begin{equation}\label{prelim:eq-parameter-new-r}
r := \frac{1}{\kappa^{10}}.
\end{equation}
As a consequence of \eqref{prelim:eq-parameter-new-eta-nu}, \eqref{prelim:eq-parameter-new-eta-j}, \eqref{prelim:eq-parameter-new-kappa-kappa-j}, and \eqref{prelim:eq-parameter-new-r}, we obtain that 
\begin{equation}\label{prelim:eq-parameter-ordering}
0 < \frac{1}{r} \ll \kappa \ll \kappaone \ll \kappatwo \ll \kappathree \ll \kappafour \ll \eta_3 \ll \eta_2 \ll \eta_1 \ll \eta .
\end{equation}

We further introduce constants $(c_j)_{j=0}^7$ such that $c_0$ is sufficiently small depending on the parameters from 
 \eqref{prelim:eq-parameter-new-eta-nu}-\eqref{prelim:eq-parameter-new-r} 
 and such that, for all $1\leq j \leq 7$, $c_j$ is sufficiently small depending on both $c_{j-1}$ and the parameters from  
 \eqref{prelim:eq-parameter-new-eta-nu}-\eqref{prelim:eq-parameter-new-r}. To simplify the notation, we also introduce constants $(C_j)_{j=0}^7$, which are defined as
\begin{equs}
C_j := c_j^{-1}. 
\end{equs}
For any non-negative numbers $A,B\geq 0$, we  write $A\lesssim B$ if there exists an implicit constant $C$, depending only on the parameters in  \eqref{prelim:eq-parameter-new-eta-nu}-\eqref{prelim:eq-parameter-new-r}, such that $A\leq C B$. Unless stated otherwise, the implicit constant $C$ in our definition of the symbol $\lesssim\,$ is never allowed to depend on $(c_j)_{j=0}^7$ and $(C_j)_{j=0}^7$.

\subsection{Formulas from differential geometry}\label{section:preliminary-useful}
In the following lemmas, we recall basic formulas and estimates from differential geometry.

\begin{lemma}[Diamagnetic inequality]\label{prelim:lem-diamagnetic}
Let $A\colon \T^2 \rightarrow \R^2$, let $\phi\colon \T^2 \rightarrow \C$, and let $1\leq j \leq 2$. Then, we have the pointwise estimate
\begin{equation*}
|\ptl_j (|\phi|)| \leq |\covd_{A, j} \phi|.
\end{equation*}
\end{lemma}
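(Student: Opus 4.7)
The plan is to reduce the inequality to the pointwise identity
\begin{equs}
|\phi| \, \partial_j(|\phi|) = \Re\bigl(\overline{\phi} \, \covd_{A,j}\phi\bigr),
\end{equs}
from which the claim follows by $|\Re(z)|\leq |z|$ and division by $|\phi|$. The only subtlety is handling the set where $\phi$ vanishes, since $|\phi|$ is merely Lipschitz there and $\partial_j(|\phi|)$ is not defined classically at those points. I would deal with this by the standard regularization $|\phi|_\eps := \sqrt{|\phi|^2 + \eps^2}$.

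First, I would compute $\partial_j(|\phi|_\eps^2) = \partial_j(\bar\phi\phi) = \overline{\partial_j \phi}\, \phi + \bar\phi\, \partial_j \phi = 2\Re(\bar\phi \, \partial_j\phi)$. Since $A_j$ is real, $\bar\phi \cdot (\icomplex A_j \phi) = \icomplex A_j |\phi|^2$ is purely imaginary, and therefore
\begin{equs}
\Re\bigl(\bar\phi \, \partial_j\phi\bigr) = \Re\bigl(\bar\phi\,(\partial_j \phi + \icomplex A_j \phi)\bigr) = \Re\bigl(\bar\phi\, \covd_{A,j}\phi\bigr).
\end{equs}
On the other hand, $\partial_j(|\phi|_\eps^2) = 2|\phi|_\eps \, \partial_j(|\phi|_\eps)$, so combining,
\begin{equs}
|\phi|_\eps \, \partial_j(|\phi|_\eps) = \Re\bigl(\bar\phi\, \covd_{A,j}\phi\bigr).
\end{equs}

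Next, by Cauchy--Schwarz I would bound the right-hand side by $|\phi|\,|\covd_{A,j}\phi| \leq |\phi|_\eps \, |\covd_{A,j}\phi|$. Dividing by the strictly positive quantity $|\phi|_\eps$ yields
\begin{equs}
\bigl|\partial_j(|\phi|_\eps)\bigr| \leq |\covd_{A,j}\phi|
\end{equs}
pointwise, uniformly in $\eps>0$. Finally, I would send $\eps \to 0$: since $|\phi|_\eps \to |\phi|$ uniformly and the bound on $\partial_j(|\phi|_\eps)$ is uniform in $\eps$, the limit $\partial_j(|\phi|)$ exists in the distributional (and a.e.\ classical) sense and inherits the same bound.

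No part of this is hard; the only point worth caring about is the behavior at the zero set of $\phi$, which is precisely what the $\eps$-regularization is designed to handle. I would present the proof in two short displays: the algebraic identity above, followed by the Cauchy--Schwarz step and the $\eps \to 0$ limit.
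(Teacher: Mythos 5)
The paper does not actually supply a proof of this lemma; it simply remarks that the statement is standard and cites \cite[Theorem 7.21]{LL97}. Your argument is correct and is essentially the proof given there: the $\eps$-regularization $|\phi|_\eps=\sqrt{|\phi|^2+\eps^2}$, the pointwise identity $|\phi|_\eps\,\partial_j(|\phi|_\eps)=\Re(\bar\phi\,\covd_{A,j}\phi)$ (using that $A_j$ is real so the $\icomplex A_j|\phi|^2$ term is purely imaginary), the Cauchy--Schwarz step, and passage to the limit $\eps\to 0$. Nothing to add.
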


The statement in Lemma \ref{prelim:lem-diamagnetic} is standard, see e.g. \cite[Theorem 7.21]{LL97}. 

\begin{lemma}[Formulas for derivatives]\label{prelim:lem-derivatives}
Let $\phi,\psi \colon \T^2 \rightarrow \C$ and let $A,B\colon \T^2 \rightarrow \R^2$. Then, we have the following identities:
\begin{enumerate}[label=(\roman*)]
\item\label{prelim:item-product} (Product formulas) For all $1\leq j \leq 2$, it holds that
\begin{align*}
\partial_j \big( \, \overline{\phi} \psi \big) &= \overline{\covd_{A,j} \phi}\, \psi + \overline{\phi} \, \covd_{A,j} \psi, \\ 
\covd_{A,j} \big( \phi \psi \big) &= (\partial_j \phi)  \psi + \phi \covd_{A,j} \psi. 
\end{align*}
Furthermore, if $\partial_k B^k=0$, then it holds that
\begin{equation*}
\covd_{A,k} \big( B^k \phi \big) = B^k \big( \covd_{A,k} \phi \big).
\end{equation*}
\item\label{prelim:item-difference} (Difference of covariant Laplacians) It holds that
\begin{equs}
(\covd_A^j \covd_{A, j} - \covd_B^j \covd_{B, j}) \phi = 2\icomplex \covd_B^j((A - B)_j \phi) - \icomplex (\ptl^j (A - B)_j) \phi - |A - B|^2 \phi.
\end{equs}
\item\label{prelim:item-Bochner} (Bochner formula)  It holds that
\begin{equs}
\frac{1}{2} \Delta(|\phi|^2) = \mrm{Re}\big( \overline{\phi} \covd_A^j \covd_{A, j} \phi \big) + |\covd_A \phi|^2.
\end{equs}
\end{enumerate}
\end{lemma}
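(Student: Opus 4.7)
The plan is to verify each of the three identities by direct computation starting from the definition $\covd_{A,j}\phi = \partial_j \phi + \icomplex A_j \phi$. All three items are purely algebraic and so the proof is essentially bookkeeping, but I will organize the verifications so that (iii) reuses (i).

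For item (i), I would substitute the definition of the covariant derivative into the right-hand sides. For $\partial_j(\bar{\phi}\psi)$, the expansion gives $(\partial_j\bar{\phi} - \icomplex A_j \bar{\phi})\psi + \bar{\phi}(\partial_j \psi + \icomplex A_j \psi)$, in which the $\pm \icomplex A_j$ terms cancel, leaving the ordinary Leibniz rule. For $\covd_{A,j}(\phi\psi)$, I would note that $(\partial_j + \icomplex A_j)(\phi\psi) = (\partial_j\phi)\psi + \phi(\partial_j\psi + \icomplex A_j\psi)$, where the connection term is absorbed into one factor. The third formula is the usual product rule $\partial_k(B^k \phi) = (\partial_k B^k)\phi + B^k \partial_k \phi$ together with the hypothesis $\partial_k B^k = 0$ and the identity $\icomplex A_k B^k \phi = B^k (\icomplex A_k \phi)$.

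For item (ii), I would expand
\begin{equs}
\covd_A^j \covd_{A,j}\phi = (\partial^j + \icomplex A^j)(\partial_j \phi + \icomplex A_j \phi) = \Delta \phi + \icomplex (\partial^j A_j)\phi + 2\icomplex A^j \partial_j\phi - |A|^2 \phi,
\end{equs}
and analogously for $B$. Subtracting yields $\icomplex (\partial^j (A-B)_j)\phi + 2\icomplex (A-B)^j \partial_j \phi - (|A|^2 - |B|^2)\phi$. On the right-hand side of the claim I would expand $2\icomplex \covd_B^j((A-B)_j \phi) = 2\icomplex \partial^j((A-B)_j \phi) - 2 B^j (A-B)_j \phi$, and then verify $2 B \cdot (A-B) + |A-B|^2 = |A|^2 - |B|^2$, after which both sides agree.

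For item (iii), I would apply the first product formula from (i) with $\psi = \phi$ to obtain $\partial_j(|\phi|^2) = \partial_j(\bar{\phi}\phi) = \overline{\covd_{A,j}\phi}\, \phi + \bar{\phi}\, \covd_{A,j}\phi = 2\,\mrm{Re}(\bar{\phi}\, \covd_{A,j}\phi)$. Applying $\partial^j$ and using the same product formula twice more (once with $\psi = \covd_{A,j}\phi$) yields $\Delta(|\phi|^2) = 2\,\mrm{Re}(\bar{\phi}\,\covd_A^j\covd_{A,j}\phi) + 2 |\covd_A \phi|^2$, from which the Bochner formula follows after dividing by two. There is no genuine obstacle here; the only care needed is tracking the index positions in (ii) and keeping consistent conventions for complex conjugation in (i) and (iii).
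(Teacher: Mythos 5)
Your proof is correct. The paper omits the proof of this lemma (stating only that the formulas follow from direct computations), and your verification by expanding the definition $\covd_{A,j}\phi = \partial_j\phi + \icomplex A_j\phi$, together with the neat reuse of the first product formula in (i) to derive the Bochner identity in (iii), is exactly the kind of direct computation the authors intend.
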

The formulas in Lemma \ref{prelim:lem-derivatives} can be obtained from direct computations and we omit the details.

\begin{lemma}[Evolution of covariant derivatives]\label{lemma:D-A-phi-equation}
Let $T>0$, let $A\colon [0,T]\times \T^2 \rightarrow \R^2$, let $\phi\colon [0,T]\times \T^2 \rightarrow \C$, and let $G\colon [0,T]\times \T^2\rightarrow \C$. Furthermore, assume that the covariant heat equation 
\begin{equs}
(\ptl_t - \covd_A^j \covd_{A, j})\phi = G
\end{equs}
is satisfied. For all $1\leq k\leq 2$, the covariant derivative $\covd_A^k \phi$ then solves the covariant heat equation
\begin{equs}\label{eq:D-A-phi-equation}
(\ptl_t - \covd_A^j \covd_{A, j}) \covd_A^k \phi = 2 \icomplex (F_A)^{kj} \covd_{A, j} \phi + \icomplex \big( \ptl_t A^k + \ptl_j (F_A)^{kj} \big) \phi + \covd_A^k G.
\end{equs}
\end{lemma}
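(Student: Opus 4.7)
The plan is to derive \eqref{eq:D-A-phi-equation} by a direct computation of two commutators---a temporal commutator $[\ptl_t, \covd_A^k]\phi$ and a spatial commutator $[\covd_A^j \covd_{A,j}, \covd_A^k]\phi$---and then to substitute the given evolution equation in the form $\covd_A^j\covd_{A,j}\phi = \ptl_t \phi - G$.

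For the temporal part, unfolding $\covd_A^k\phi = \ptl^k\phi + \icomplex A^k\phi$ and differentiating in $t$ immediately yields
\begin{equs}
\ptl_t(\covd_A^k \phi) = \covd_A^k \ptl_t \phi + \icomplex(\ptl_t A^k)\phi.
\end{equs}
For the spatial part, the essential input is the curvature identity \eqref{intro:eq-covariant-curvature-identity}, rewritten in the form $\covd_A^j\covd_A^k\phi = \covd_A^k\covd_A^j\phi + \icomplex (F_A)^{jk}\phi$. Applying $\covd_A^j$ once more (summed over $j$), invoking this identity again on the first resulting term, and using the ordinary Leibniz rule on the second---valid because $(F_A)^{jk}$ is a real-valued function, so that $\covd_A^j\big((F_A)^{jk}\phi\big) = (\ptl^j (F_A)^{jk})\phi + (F_A)^{jk}\covd_{A,j}\phi$---I obtain
\begin{equs}
\covd_A^j\covd_{A,j}\covd_A^k\phi = \covd_A^k \covd_A^j\covd_{A,j}\phi + 2\icomplex(F_A)^{jk}\covd_{A,j}\phi + \icomplex\big(\ptl^j(F_A)^{jk}\big)\phi.
\end{equs}

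Combining the two identities and substituting $\covd_A^j\covd_{A,j}\phi = \ptl_t\phi - G$ then gives
\begin{equs}
(\ptl_t - \covd_A^j\covd_{A,j})\covd_A^k\phi = \covd_A^k G + \icomplex(\ptl_t A^k)\phi - 2\icomplex(F_A)^{jk}\covd_{A,j}\phi - \icomplex\big(\ptl^j(F_A)^{jk}\big)\phi.
\end{equs}
Finally, the antisymmetry $(F_A)^{jk} = -(F_A)^{kj}$ flips the signs of the last two terms into $2\icomplex(F_A)^{kj}\covd_{A,j}\phi$ and $\icomplex\ptl_j(F_A)^{kj}\phi$ respectively, and we arrive at \eqref{eq:D-A-phi-equation} as stated.

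The argument is essentially bookkeeping and presents no real obstacle; the only care required is in tracking indices and the sign conventions for the curvature tensor. For presentation, I would likely isolate the intermediate identity
\begin{equs}
[\covd_A^j\covd_{A,j}, \covd_A^k]\phi = 2\icomplex(F_A)^{jk}\covd_{A,j}\phi + \icomplex\big(\ptl^j(F_A)^{jk}\big)\phi
\end{equs}
as a short preliminary observation, since this is the Bianchi-type identity which supplies the curvature-coupling term and the divergence-of-curvature term on the right-hand side of \eqref{eq:D-A-phi-equation}, while the $\ptl_t A^k$ term has the purely kinematic origin noted above.
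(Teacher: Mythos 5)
Your proof is correct, and it fills in exactly the ``direct computation from \eqref{intro:eq-covariant-curvature-identity}'' that the paper alludes to before omitting the details; the decomposition into a temporal commutator and a spatial Bianchi-type commutator is the natural way to organize that calculation and matches the paper's implied route. One very small note: the Leibniz identity $\covd_A^j(f\phi) = (\ptl^j f)\phi + f\covd_A^j\phi$ holds for any scalar $f$, not only real-valued $f$ (unfold $\covd_A^j$ and expand), so the parenthetical appeal to $(F_A)^{jk}$ being real-valued is harmless but unnecessary.
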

Evolution equations for covariant derivatives such as \eqref{eq:D-A-phi-equation} are standard in the analysis of geometric flows, see e.g. \cite[Chapter 6.1]{Tao06} for similar evolution equations. Since \eqref{eq:D-A-phi-equation} follows directly from \eqref{intro:eq-covariant-curvature-identity} and a direct computation, we omit the proof.

\subsection{Harmonic analysis and Euclidean heat flow}\label{section:prelimary-harmonic} 
In this subsection, we recall basic facts from harmonic analysis and basic estimates for the Euclidean heat flow. 

\subsubsection{Fourier transform and Littlewood-Paley projections} For notational convenience, we define the complex exponential $\e\colon \R \rightarrow \C$ by $\e(y)=\exp(\icomplex y)$ for all $y\in \R$. Furthermore, for all $n\in \Z^2$, we define $\e_{n}\colon \T^2 \rightarrow \C$ by $\e_{n}(x):=  \e(n\cdot x)$, and we let $\langle n \rangle := \sqrt{1 + |n|^2}$, where $|n|$ is the Euclidean norm of $n$.
For any distribution $\phi \colon \T^2\rightarrow \C$, we define the Fourier transform $\widehat{\phi} \colon \Z^2\rightarrow \C$ by 
\begin{equation}\label{eq:fourier-transform}
\widehat{\phi}(n) := \frac{1}{(2\pi)^2} \int_{\T^2} \dx \phi(x) \overline{\e_{n}(x)}
\end{equation}
for all $n\in \Z^2$.
We also define the Fourier transform on $\R^2$. For any Schwartz function $\phi \colon \R^2 \ra \C$, define the Fourier transform $\widehat{\phi} \colon \R^2 \ra \C$ by
\begin{equs}
\widehat{\phi}(\xi) := \frac{1}{(2\pi)^2} \int_{\R^2} \dx \phi(x) e^{-\icomplex \xi \cdot x}.
\end{equs}
Note that with the factor of $\frac{1}{(2\pi)^2}$ in the definition, the Fourier inversion formula reads:
\begin{equs}\label{eq:fourier-inversion}
\phi(x) = \int_{\R^2} d\xi \widehat{\phi}(\xi) e^{\icomplex \xi \cdot x}.
\end{equs}

Let $\rho \colon \R^2 \rightarrow [0,1]$ be a smooth radial cut-off function which is radially non-increasing, and which satisfies 
\begin{equation}\label{eq:rho-normalization}
\rho(x)= 1 \text{ for $|x| \leq 1$} \qquad \text{and} \qquad 
\rho(x) =0 \text{ for $|x| > \frac{9}{8}$.}
\end{equation}
For all $N > 0$ (possibly real), we define $\rho_{\leq N} \colon \R^2 \rightarrow \R$ by 
\begin{equation*}
\rho_{\leq N} (\xi) = \rho \big( \xi / N\big).    
\end{equation*}
Furthermore, we define
\begin{equation*}
\rho_1 = \rho_{\leq 1} \qquad \text{and} \qquad 
\rho_{N} = \rho_{\leq N} - \rho_{\leq N/2} 
\quad \text{for all } N\geq 2.
\end{equation*}
Note that $\rho_N \geq 0$ since $\rho$ is radially non-increasing. 

\begin{definition}[Mollifiers]\label{def:mollifiers}
Define $\moll \colon \R^2 \ra \R$ by 
\begin{equs}
\eucmoll(x) := \widehat{\rho}(x) = \frac{1}{(2\pi)^2} \int_{\R^2} d\xi \rho(\xi) e^{-\icomplex \xi \cdot x}.
\end{equs}
For $N > 0$ (possibly real), define $\eucmoll_{\leq N}(\cdot) := N^2 \eucmoll(N \cdot)$. Define $\moll, \moll_{\leq N} \colon \T^2 \ra \R$ as the periodizations of $\moll, \moll_{\leq N}$:
\begin{equs}
\moll(x) := \sum_{n \in \Z^2} \eucmoll(x + 2\pi n), \quad \moll_{\leq N}(x) := \sum_{n \in \Z^2} \eucmoll_{\leq N}(x + 2\pi n).
\end{equs}
Define also $\moll_N := \moll_{\leq N} - \moll_{\leq N/2}$.
\end{definition}
\begin{remark}[Properties of $\eucmoll$ and $\moll$]
We observe the following properties of $\eucmoll$ and $\moll$.
\begin{enumerate}[label=(\alph*)]
\item Since the Fourier transform on $\R^2$ preserves Schwartz functions and radial functions, $\eucmoll$ is Schwartz and radial. 
\item\label{item:fourier-transform-of-mollifier} As a consequence of the Fourier inversion formula \eqref{eq:fourier-inversion}, we have that
\begin{equs}\label{eq:fourier-inversion-rho}
\rho(\xi) = \int_{\R^2} dx \eucmoll(x) e^{\icomplex \xi \cdot x}.
\end{equs}
Since $\rho(0) = 1$, we have that
$\int \dx \eucmoll(x) = \rho(0) = 1$, and so the same is true for $\eucmoll_{\leq N}$, $\moll$, $\moll_{\leq N}$.
\item From \eqref{eq:fourier-inversion-rho}, and the fact that $\rho$ is radial, we have that $\widehat{\moll_{\leq N}} = \frac{1}{(2\pi)^2} \rho_{\leq N}$. Consequently, for $N \in \dyadic$, $\moll_{\leq N}$ (resp. $\moll_N$) is the convolution kernel of the Littlewood-Paley operator $P_{\leq N}$ (resp. $P_N$), to be defined in \eqref{prelim:eq-LWP}.
\end{enumerate}
\end{remark}

Finally, we define the Littlewood-Paley operators $(P_{\leq N})_{N \in \dyadic}$ and $(P_{N})_{N \in \dyadic}$ by 
\begin{equation}\label{prelim:eq-LWP}
\widehat{P_{\leq N} \phi}(n) = \rho_{\leq N}(n) \widehat{\phi}(n) 
\qquad \text{and} \qquad
\widehat{P_{N} \phi}(n) = \rho_{N}(n) \widehat{\phi}(n)
\end{equation}
for all distributions $\phi \colon \T^2 \rightarrow \C$ and all $n\in \Z^2$. Note that $\moll_{\leq N}$ and $\moll_N$ are precisely the convolution kernels of $P_{\leq N}$ and $P_N$, respectively.

\subsubsection{Function spaces}

We now introduce the function spaces which will be used throughout the article. For reasons that will be discussed below (see Remark \ref{prelim:rem-decay-fail}), we need to treat the mean and mean-zero components of functions separately and we need to be careful with the precise definitions of our norms. 
For any distribution $\phi\colon \T^2 \rightarrow \C$, we define the mean and mean-zero components by 
\begin{equation}\label{prelim:eq-mean-mean-zero}
\sfint \phi := \frac{1}{(2\pi)^2} \int_{\T^2} \dx \phi(x) \qquad \text{and} \qquad 
\bigdot{\phi} := \phi - \sfint \phi,
\end{equation}
respectively. For $1\leq p \leq \infty$, we define the $L_x^p$-norms as
\begin{alignat*}{3}
\| f \|_{L_x^p} &:= \Big( \int_{\T^2} \dx \, |f(x)|^p \Big)^{\frac{1}{p}} \qquad &\qquad \quad \text{if } 1\leq p < \infty, \\
\| f \|_{L_x^p} &:=  \operatorname{ess\hspace{0.2ex}sup}\displaylimits_{x\in \T^2} |f(x)| \qquad &\qquad \quad \text{if } p=\infty.
\end{alignat*}
We note that the $L_x^p$-norms are defined using non-normalized Lebesgue measure and it therefore holds that $\| 1 \|_{L_x^p}=(2\pi)^{\frac{2}{p}}$. In the following, we define the $\Lc^p_x$-norms, which will be equivalent but not necessarily identical to the $L_x^p$-norms. The $\Lc_x^p$-norms and corresponding spaces are purpose-built so that, in the proof of Lemma \ref{prelim:lem-heat-flow-bound-decay}, our interpolation argument yields exact constants. 

\begin{definition}[\protect{$\Lcbigdot^p_x$ and $\Lc^p_x$-norms}]\label{prelim:def-lc}
For $p\in \{1,2,\infty\}$, we define 
\begin{equation}\label{prelim:eq-lc-1-2-inf}
\Lcbigdot^p_x := \Big\{ \psi \in L_x^p\colon \sfint \psi = 0 \Big\} 
\qquad \text{and} \qquad 
\big\| \psi \big\|_{\Lcbigdot^p_x} := \big\| \psi \big\|_{L_x^p}.
\end{equation}
For $1<p<2$, we define 
\begin{equation*}
\Lcbigdot^p_x := \big( \Lcbigdot^1_x , \Lcbigdot^2_x \big)_{\theta_p} \qquad \text{and} \qquad 
\big\| \psi \big\|_{\Lcbigdot^p_x} := \big\| \psi \big\|_{( \Lcbigdot^1_x , \Lcbigdot^2_x )_{\theta_p}},
\end{equation*}
where the space $(\Lcbigdot^1_x , \Lcbigdot^2_x )_\theta$ is the complex interpolation space of $\Lcbigdot^1_x$ and  $\Lcbigdot^2_x$
(see e.g. \cite[Theorem 4.1.2]{BL76}), the norm $\| \cdot \|_{(\Lcbigdot^1_x , \Lcbigdot^2_x)_{\theta}}$ is the corresponding interpolation norm, 
and the parameter $\theta_p\in [0,1]$ is determined by $\frac{1}{p}=\frac{1-\theta_p}{1}+\frac{\theta_p}{2}$. Similarly, for  $2<p<\infty$, we define
\begin{equation*}
\Lcbigdot^p_x := \big( \Lcbigdot^2_x , \Lcbigdot^\infty_x \big)_{\theta_p} \qquad \text{and} \qquad 
\big\| \psi \big\|_{\Lcbigdot^p_x} := \big\| \psi \big\|_{( \Lcbigdot^2_x , \Lcbigdot^\infty_x )_{\theta_p}},
\end{equation*}
where the parameter $\theta_p\in[0,1]$ is determined by $\frac{1}{p}=\frac{1-\theta_p}{2}+\frac{\theta_p}{\infty}$. Furthermore, for all $1\leq p \leq \infty$, we define
\begin{equation}\label{prelim:eq-lc-inhom}
\Lc^p_x := \Big\{ z + \psi \colon z \in \C, \psi \in \Lcbigdot^p_x \Big\} 
\qquad \text{and} \qquad \| z + \psi \|_{\Lc^p_x} := \max \Big( \| z \|_{L_x^p}, \| \psi \|_{\Lcbigdot^p_x} \Big).
\end{equation}
\end{definition}

From the definition, it directly follows that $\Lcbigdot^p_x\subseteq \Lcbigdot^1_x$ for all $p\geq 1$, and hence elements of $\Lcbigdot^p_x$ are mean-zero functions. Using abstract interpolation theory, one readily obtains the following equivalence of norms.

\begin{lemma}[\protect{Equivalence of $L_x^p$ and $\Lc_x^p$-norms}]\label{prelim:lem-lp-equivalence}
For all $1\leq p \leq \infty$, it holds that 
\begin{equation*}
L_x^p = \Lc_x^p \qquad \text{and} \qquad 
\frac{1}{2} \| \cdot \|_{L_x^p} \leq \| \cdot\|_{\Lc_x^p} \leq 2 \| \cdot \|_{L_x^p}.
\end{equation*}
\end{lemma}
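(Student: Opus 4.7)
The plan is to first establish that $\Lcbigdot^p_x$ coincides with the mean-zero subspace of $L^p_x$ with norms equivalent up to a factor of $2$, and then deduce the full statement using the direct-sum decomposition underlying $\Lc^p_x$.

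For $p \in \{1,2,\infty\}$, the identification $\Lcbigdot^p_x = \{\psi \in L^p_x : \sfint \psi = 0\}$ with equal norms is immediate from \eqref{prelim:eq-lc-1-2-inf}. For $1 < p < 2$ and $2 < p < \infty$, I would invoke complex interpolation. The mean-zero projection $P\psi := \psi - \sfint \psi$ is bounded on $L^{p_0}_x$ for each $p_0 \in \{1,2,\infty\}$ with $\|P\|_{L^{p_0}_x \to L^{p_0}_x} \leq 2$ (by H\"older's inequality to control $|\sfint \psi|$; on $L^2_x$ the norm is in fact $1$, since $P$ is then an orthogonal projection). By Calder\'on's theorem, $(L^{p_0}_x, L^{p_1}_x)_{\theta_p} = L^p_x$ isometrically for the endpoint pairs $(p_0,p_1)\in\{(1,2),(2,\infty)\}$, so applying the interpolation functor to $P$ yields a bounded map $P \colon L^p_x \to \Lcbigdot^p_x$ (the codomain carrying the interpolation norm of Definition \ref{prelim:def-lc}) with $\|P\| \leq 2$. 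Conversely, the isometric endpoint inclusions $\Lcbigdot^{p_0}_x \hookrightarrow L^{p_0}_x$ interpolate to a contractive inclusion $\Lcbigdot^p_x \hookrightarrow L^p_x$. Since $P\psi = \psi$ whenever $\sfint \psi = 0$, these two bounds together give $\|\psi\|_{L^p_x} \leq \|\psi\|_{\Lcbigdot^p_x} \leq 2\|\psi\|_{L^p_x}$ and in particular identify $\Lcbigdot^p_x$ with the mean-zero subspace of $L^p_x$.

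For the full statement, given any $f \in L^p_x$, decompose $f = z + \psi$ with $z := \sfint f \in \C$ and $\psi := \dot f = Pf$. H\"older's inequality yields $\|z\|_{L^p_x} = (2\pi)^{2/p}|\sfint f| \leq \|f\|_{L^p_x}$, while the projection bound gives $\|\psi\|_{\Lcbigdot^p_x} = \|Pf\|_{\Lcbigdot^p_x} \leq 2\|f\|_{L^p_x}$ (trivially so when $p \in \{1,2,\infty\}$). Taking the maximum yields $\|f\|_{\Lc^p_x} \leq 2 \|f\|_{L^p_x}$. Conversely, by the triangle inequality and the contractive inclusion $\Lcbigdot^p_x \hookrightarrow L^p_x$,
\begin{equation*}
\|f\|_{L^p_x} \leq \|z\|_{L^p_x} + \|\psi\|_{L^p_x} \leq \|z\|_{L^p_x} + \|\psi\|_{\Lcbigdot^p_x} \leq 2\|f\|_{\Lc^p_x}.
\end{equation*}

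The main (and only mildly nontrivial) point is the use of complex interpolation in the first step, combined with the fact that the endpoint norms of $P$ are all bounded by $2$; once these inputs are in place, the rest is bookkeeping. The whole proof hinges on the boundedness of $P$ being \emph{uniform} across all endpoint $L^{p_0}_x$ spaces, which is why the factor $2$ can be preserved throughout; if the endpoint norms of $P$ had blown up, one would only recover equivalence with worse constants.
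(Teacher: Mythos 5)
Your proof is correct and follows essentially the same route as the paper: bound the mean-zero projection and the inclusion at the endpoint exponents $p\in\{1,2,\infty\}$, apply complex interpolation (the paper cites abstract results in \cite{BL76} where you invoke Calder\'on's theorem), and then assemble the full $\Lc^p_x$ equivalence from the direct-sum decomposition with the triangle inequality. The only difference is cosmetic presentation of the bookkeeping at the end.
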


\begin{proof}
We first consider $p\in \{1,2,\infty\}$ and define the linear maps
\begin{equation*}
S \colon L^p_x \rightarrow \Lcbigdot^p_x, \, \phi \mapsto \phi - \sfint \phi 
\qquad \text{and} \qquad 
T \colon \Lcbigdot^p_x \rightarrow L^p_x, \, \psi \mapsto \psi. 
\end{equation*}
Using \eqref{prelim:eq-lc-1-2-inf}, one directly obtains that\footnote{We remark that  $\| S \|_{L_x^\infty \rightarrow \Lcbigdot_x^\infty}= 2$, i.e., the above estimate is sharp. To see this, let $\varepsilon>0$ be arbitrarily small and consider any $\phi_\varep\colon \T^2 \rightarrow [-1,1]$ whose mean is $\varepsilon$-close to $1$ and yet still satisfies $\inf_{x\in \T^2} \phi_\varepsilon =-1$.}
\begin{alignat}{5}
\| S \|_{L_x^1 \rightarrow \Lcbigdot_x^1} &\leq 2, \quad & \quad 
\| S \|_{L_x^2 \rightarrow \Lcbigdot_x^2} &\leq 1, \quad & \quad 
\| S \|_{L_x^\infty \rightarrow \Lcbigdot_x^\infty} &\leq 2, \label{prelim:eq-S-bound} \\
\| T \|_{\Lcbigdot_x^1 \rightarrow L_x^1} &\leq 1, \quad & \quad 
\| T \|_{\Lcbigdot_x^2 \rightarrow L_x^2} &\leq 1, \quad & \quad 
\| T \|_{\Lcbigdot_x^\infty \rightarrow L_x^\infty} &\leq 1. \label{prelim:eq-T-bound}
\end{alignat}
Using abstract interpolation theory \cite[Definition 2.4.3, Theorem 4.1.2, and Theorem 5.1.1]{BL76}, it then follows for all $1\leq p \leq \infty$ that $S\colon L_x^p \rightarrow \Lcbigdot_x^p$, $T\colon \Lcbigdot_x^p \rightarrow L_x^p$, 
\begin{equation*}
\|  S \|_{L_x^p \rightarrow \Lcbigdot_x^p} \leq 2, \qquad \text{and} \qquad  \| T \|_{\Lcbigdot_x^p \rightarrow L_x^p} \leq 1. 
\end{equation*}
For all $1\leq p \leq \infty$, it then directly follows that $L_x^p=\C+\Lcbigdot_x^p=\Lc_x^p$ and that, for all $\psi \in L_x^p$ with zero mean, 
\begin{equation*}
 \| \psi \|_{L_x^p} \leq \| \psi \|_{\Lcbigdot_x^p} \leq 2 \| \psi \|_{L_x^p}.
\end{equation*}
Together with \eqref{prelim:eq-lc-inhom} and the triangle inequality, this yields the desired equivalence of norms.
\end{proof}

\begin{definition}[Besov and H\"{o}lder spaces]\label{prelim:def-besov}
For all $\alpha \in \R$, $1\leq p,q\leq \infty$, and $\phi\colon \T^2 \rightarrow \C$, we define the norm 
\begin{equs}\label{prelim:eq-besov}
\big\| \phi \big\|_{\Bc^{\alpha,p,q}_x} 
:= \max \Big( \big| \sfint \phi \big|, \,  \big\| N^\alpha P_N \bigdot{\phi} \big\|_{\ell_N^q \Lcbigdot_x^p} \Big). 
\end{equs}
We define the corresponding function space $\Bc^{\alpha,p,q}_x$ as the closure of $C^\infty_x(\T^2)$ with respect to the $\Bc^{\alpha,p,q}_x$-norm. To simplify the notation, we also define the function spaces $\Bc^{\alpha,p}_x:= \Bc^{\alpha,p,\infty}_x$ and  $\Cs_x^{\alpha}:=\Bc^{\alpha,\infty,\infty}_x$.
\end{definition}

\begin{remark}[\protect{Equivalence of Besov-norms}]
From Lemma \ref{prelim:lem-lp-equivalence} and Definition \ref{prelim:def-besov}, it directly follows that
\begin{equation}\label{prelim:eq-besov-equivalence}
\big\| \phi \big\|_{\Bc^{\alpha,p,q}_x} \sim  \big\| N^\alpha P_N \phi \big\|_{\ell_N^q L_x^p}. 
\end{equation}
The more complicated expression in \eqref{prelim:eq-besov} will only be relevant in Lemma \ref{prelim:lem-heat-flow-bound-decay} and, as a consequence, Proposition \ref{decay:prop-short} below. In all other parts of this article, we simply use the equivalence \eqref{prelim:eq-besov-equivalence}.
\end{remark}

\begin{definition}[H\"{o}lder in time, Besov in space]\label{prelim:lem-hoelder-in-time}
Let $T > 0$, let $\alpha \in [0, 1)$, let $\beta \in \R$, and let $p, q \in [1, \infty]$. For any $\phi\colon [0,T] \times \T^2 \rightarrow \C$, we define the norm 
\begin{equs}\label{prelim:eq-hoelder-in-time}
\|\phi\|_{C_t^\alpha \Bc_x^{\beta, p, q}([0, T] \times \T^2)} := \sup_{t \in [0, T]} \|\phi(t)\|_{\Bc_x^{\beta, p, q}} + \sup_{\substack{s, t \in [0, T] \\ s \neq t}} \frac{\|\phi(t) - \phi(s)\|_{\Bc_x^{\beta, p, q}}}{|t-s|^\alpha}.
\end{equs}
The corresponding function space $C_t^\alpha \Bc_x^{\beta, p, q}([0, T] \times \T^2)$ is defined as the closure of $C^\infty([0,T]\times \T^2)$ with respect to the norm in \eqref{prelim:eq-hoelder-in-time}.
\end{definition}

\begin{remark}[Subscript and superscript notation]
In the literature, Besov-spaces are usually denoted by $\Bc^{\alpha}_{p,q}$ instead of  $\Bc^{\alpha,p,q}_x$. We chose the latter notation because, as in Definition \ref{prelim:lem-hoelder-in-time}, we often encounter expressions such as $C_t^0 \Bc^{\alpha,p,q}_x$
or $L_t^p \Bc^{\alpha,p,q}_x$, in which the subscripts $t$ and $x$ are important.
\end{remark}

\subsubsection{Heat flow estimates}

We now turn to properties involving the Euclidean heat flow $e^{t\Delta}$, where $t\geq 0$. In Section \ref{section:cshe}, we need to estimate various covariant objects. In these estimates, it is convenient to work with a heat-flow characterization of Besov spaces, since it combines nicely with our weighted energy estimates of Section \ref{section:monotonicity}. To this end, we state the following characterization.

\begin{lemma}[Heat kernel characterization of Besov spaces]\label{lemma:besov-space-heat-kernel-characterization}
We have that
\begin{equs}
\| \phi \|_{\Cs_x^{-\alpha}} &\lesssim \sup_{u \in (0, 1]} u^{\frac{\alpha}{2}} \|e^{u \Delta} \phi \|_{L_x^\infty}, ~~ \alpha > 0 \\
\|\phi \|_{\Cs_x^{1-\beta}} &\lesssim 
\|e^{\Delta} \phi \|_{L_x^\infty} + \sup_{u \in (0, 1]} u^{\frac{\beta}{2}} \|\nabla e^{u \Delta} \phi \|_{L_x^\infty}, ~~ \beta \in (0, 1).
\end{equs}
\end{lemma}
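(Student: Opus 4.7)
The plan is to reduce both inequalities to the standard Littlewood-Paley trick of writing, for each $N\in\dyadic$,
\begin{equs}
P_N \phi = \big(P_N e^{-N^{-2}\Delta}\big)\, e^{N^{-2}\Delta}\phi,
\end{equs}
and exhibiting the operator on the left factor as a convolution with an $L_x^1$-bounded kernel. Its Fourier multiplier $\rho_N(\xi) e^{N^{-2}|\xi|^2}$ is smooth, supported on $|\xi|\lesssim N$, and obeys symbol bounds uniform in $N$ after rescaling by $N$; together with Poisson summation this yields the key uniform-in-$N$ estimate $\|P_N\phi\|_{L_x^\infty}\lesssim \|e^{N^{-2}\Delta}\phi\|_{L_x^\infty}$.

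For the first inequality I multiply by $N^{-\alpha} = (N^{-2})^{\alpha/2}$ to obtain, for every $N\geq 2$,
\begin{equs}
N^{-\alpha}\|P_N\phi\|_{L_x^\infty} \lesssim (N^{-2})^{\alpha/2}\|e^{N^{-2}\Delta}\phi\|_{L_x^\infty}\leq \sup_{u\in(0,1]} u^{\alpha/2}\|e^{u\Delta}\phi\|_{L_x^\infty}.
\end{equs}
The $N=1$ low-frequency piece $\|P_1\bigdot{\phi}\|_{L_x^\infty}$ is handled by the same factorization applied at $N=1$ (the multiplier $\rho_{\leq 1}(\xi)e^{|\xi|^2}$ is smooth and compactly supported), and the mean satisfies $|\sfint \phi| = |\sfint e^{\Delta}\phi|\leq \|e^\Delta\phi\|_{L_x^\infty}$; both are absorbed by the $u=1$ endpoint of the right-hand side. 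Finally, Lemma \ref{prelim:lem-lp-equivalence} converts the $\Lcbigdot_x^\infty$ norms in Definition \ref{prelim:def-besov} into ordinary $L_x^\infty$ norms.

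For the second inequality the refinement is to factor a derivative out of $P_N$ when $N\geq 2$. Since $\rho_N$ is supported on $|\xi|\sim N$, its multiplier can be written as $\sum_{j=1}^2 (i\xi_j)\cdot N^{-1} m_N^j(\xi)$ with $m_N^j(\xi) := -iN |\xi|^{-2}\xi_j \rho_N(\xi)$ smooth, compactly supported on $|\xi|\sim N$, and satisfying uniform-in-$N$ symbol bounds after rescaling. Composing with $e^{-N^{-2}\Delta}$ preserves these bounds, so Poisson summation gives
\begin{equs}
\|P_N\phi\|_{L_x^\infty}\lesssim N^{-1}\|\nabla e^{N^{-2}\Delta}\phi\|_{L_x^\infty}.
\end{equs}
Multiplying by $N^{1-\beta}$ and taking the supremum over $N\geq 2$ produces the bound by $\sup_{u\in(0,1]} u^{\beta/2}\|\nabla e^{u\Delta}\phi\|_{L_x^\infty}$, while the $N=1$ and mean contributions are again controlled by $\|e^\Delta\phi\|_{L_x^\infty}$ exactly as in the first case.

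The argument is essentially routine; the only points requiring care are the uniform-in-$N$ $L_x^1$ kernel bound on $\T^2$ (which follows from Poisson-summing a Schwartz $\R^2$-kernel with dyadic rescaling) and the bookkeeping between mean and mean-zero components enforced by Definition \ref{prelim:def-besov}, which I handle by treating $|\sfint\phi|$ and the $N=1$ low-frequency piece separately and absorbing both into the $u=1$ endpoint.
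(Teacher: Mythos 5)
Your proof is correct. The paper omits the proof of this lemma, citing \cite[Theorem 2.34]{BCD2011}, and your argument is the standard one used there: factor $P_N = (P_N e^{-N^{-2}\Delta})\,e^{N^{-2}\Delta}$, invoke the uniform-in-$N$ $L^1$ kernel bound (Schwartz on $\R^2$ after rescaling, then Poisson summation to pass to $\T^2$), and for the positive-regularity case extract a derivative from $P_N$ using that $\rho_N$ vanishes near the origin. Your treatment of the mean and the $N=1$ piece correctly respects the form of Definition \ref{prelim:def-besov}, and the observation that $\sfint \phi = \sfint e^{\Delta}\phi$ closes the bound on the mean. No gaps.
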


We omit the proof of Lemma \ref{lemma:besov-space-heat-kernel-characterization}; see e.g. \cite[Theorem 2.34]{BCD2011} for the proof of a similar estimate. In the next lemma, we state bounds and decay estimates for the Euclidean heat flow. 

\begin{lemma}[Boundedness and decay of heat flow]\label{prelim:lem-heat-flow-bound-decay}
Let $\alpha\in \R$, let $1\leq p\leq \infty$, and let $t\in [0,\infty)$. Furthermore, let $\phi,\psi\colon \T^2 \rightarrow \C$
and assume that $\psi$ has mean-zero, i.e., $\sfint \psi=0$. Then, it holds that
\begin{equation}\label{prelim:eq-heat-flow-bound}
\big\| e^{t\Delta} \phi \big\|_{\Balphap} 
\leq \big\| \phi \big\|_{\Balphap}. 
\end{equation}
Furthermore,  if $1<p<\infty$ and $c_p>0$ is a sufficiently small constant, then it holds that 
\begin{equation}\label{prelim:eq-heat-flow-decay}
\big\| e^{t\Delta} \psi \big\|_{\Balphap} \leq e^{-c_p t} \big\| \psi \big\|_{\Balphap}.
\end{equation} 
Finally, it holds that\footnote{In \eqref{prelim:eq-heat-flow-bound-gc}, we can always assume that $\alpha\leq 0$, since otherwise the $\GCs^\alpha$-norm is only finite for $\phi=0$.}
\begin{equation}\label{prelim:eq-heat-flow-bound-gc}
\big\| e^{t\Delta} \phi \big\|_{\GCs^\alpha} \lesssim \big\| \phi \big\|_{\GCs^\alpha}, 
\end{equation}
where $\GCs^{\alpha}$ is as in \eqref{intro:eq-gauge-invariant-norms}.
\end{lemma}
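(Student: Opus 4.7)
The plan is to prove all three parts by reducing to the case of the underlying $\Lcbigdot^p_x$-norms and exploiting the fact that heat flow commutes with both $P_N$ and with the mean/mean-zero decomposition. The key structural point is that the definition of $\Lcbigdot^p_x$ via complex interpolation was chosen precisely so that contractions at the endpoints $p \in \{1,2,\infty\}$ give contractions at intermediate $p$ with \emph{exact} constants, which is needed for \eqref{prelim:eq-heat-flow-bound} and \eqref{prelim:eq-heat-flow-decay}.

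For \eqref{prelim:eq-heat-flow-bound}, first note that $e^{t\Delta}$ preserves the mean and commutes with each $P_N$, so it suffices to show that $e^{t\Delta}\colon \Lcbigdot^p_x \to \Lcbigdot^p_x$ is a contraction. At $p=1$ and $p=\infty$ this follows from the positivity of the heat kernel and $\int_{\T^2} p_t\,dx = 1$, while at $p=2$ it is Plancherel. The three endpoint contraction bounds then transfer to all $1<p<\infty$ via the complex interpolation identity in Definition \ref{prelim:def-lc} and the standard interpolation theorem (\cite[Thm 4.1.2]{BL76}) for linear operators. Applying this norm bound to each $N^\alpha P_N \dot\phi$ in the $\ell^\infty_N$ norm yields \eqref{prelim:eq-heat-flow-bound}.

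For \eqref{prelim:eq-heat-flow-decay}, the same reduction applies, with the additional observation that on mean-zero $L^2$ functions, $\|e^{t\Delta}\psi\|_{L^2}^2 = \sum_{n\neq 0} e^{-2t|n|^2}|\widehat\psi(n)|^2 \leq e^{-2t}\|\psi\|_{L^2}^2$ since $|n|^2 \geq 1$ for $n \in \Z^2\setminus\{0\}$. Thus $e^{t\Delta}$ has operator norm $\leq e^{-t}$ on $\Lcbigdot^2_x$ while remaining a contraction on $\Lcbigdot^1_x$ and $\Lcbigdot^\infty_x$. Complex interpolation then gives, for $1<p<2$, the bound $e^{-\theta_p t}$ on $\Lcbigdot^p_x$ with $\theta_p\in(0,1)$ as in Definition \ref{prelim:def-lc}; the case $2<p<\infty$ is analogous with exponent $1-\theta_p$. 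Taking $c_p := \min(\theta_p, 1-\theta_p)>0$ (appropriately on each side) and applying to each $N^\alpha P_N\psi$ (using that $\psi$ and hence $e^{t\Delta}\psi$ is mean-zero, so the mean term in \eqref{prelim:eq-besov} vanishes) yields \eqref{prelim:eq-heat-flow-decay}. The main obstacle is simply the bookkeeping to ensure that the $\Lcbigdot^p_x$-interpolation norms appearing in the definition of $\Balphap$ are used in a way compatible with the interpolation theorem; this is why the slightly unusual norm in \eqref{prelim:eq-besov} is essential here.

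For \eqref{prelim:eq-heat-flow-bound-gc}, we split according to the two types of gauge-invariant norms in \eqref{intro:eq-gauge-invariant-norms}. For the additive one-form norm, the identity $e^{t\Delta}(A+n)=e^{t\Delta}A + n$ together with \eqref{prelim:eq-heat-flow-bound} (applied with $p=\infty$) directly gives $\inf_n\|e^{t\Delta}A+n\|_{\Cs^{-\kappa}_x} \leq \inf_n\|A+n\|_{\Cs^{-\kappa}_x}$. For the multiplicative scalar norm, set $U_n\phi := e^{-in\cdot x}\phi$ and consider the conjugated operator $T_{n,t}:=U_n e^{t\Delta} U_n^{-1}$, which has Fourier multiplier $e^{-t|m+n|^2}$ and integral kernel $e^{-in\cdot(x-y)}p_t(x,y)$. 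Since $|e^{-in\cdot(x-y)}|=1$, the pointwise bound $|T_{n,t}f|\leq e^{t\Delta}|f|$ shows $T_{n,t}$ is an $L^\infty$-contraction uniformly in $n$. Using the heat-kernel characterization of Lemma \ref{lemma:besov-space-heat-kernel-characterization} and the identity $e^{u\Delta}U_n e^{t\Delta}\phi = T_{n,t} e^{u\Delta} U_n\phi$, we get $\|U_n e^{t\Delta}\phi\|_{\Cs^{-\kappa}_x}\lesssim \sup_{u\in(0,1]} u^{\kappa/2}\|T_{n,t} e^{u\Delta}U_n\phi\|_{L^\infty_x} \lesssim \sup_u u^{\kappa/2}\|e^{u\Delta}U_n\phi\|_{L^\infty_x}\lesssim \|U_n\phi\|_{\Cs^{-\kappa}_x}$, and taking $\sup_n$ yields \eqref{prelim:eq-heat-flow-bound-gc}.
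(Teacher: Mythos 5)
Your argument for \eqref{prelim:eq-heat-flow-bound} and \eqref{prelim:eq-heat-flow-decay} is correct and mirrors the paper's proof step for step: reduce to $\Lcbigdot^p_x$ via the definition of the $\Balphap$-norm, establish contraction (resp.\ decay) at the endpoints $p=1,2,\infty$ by heat-kernel positivity/normalization and Plancherel, and transfer to intermediate $p$ by the abstract interpolation theorem, which is precisely why Definition~\ref{prelim:def-lc} was set up the way it was. Your explicit computation $\sum_{n\neq 0} e^{-2t|n|^2}|\widehat\psi(n)|^2 \leq e^{-2t}\|\psi\|_{L_x^2}^2$ and the observation that $c_p$ can be taken as $\theta_p$ or $1-\theta_p$ depending on the side is a small refinement but not a different route.

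For \eqref{prelim:eq-heat-flow-bound-gc} you take a genuinely different and equally valid route. The paper works in frequency space: it commutes the modulation $e^{\icomplex n\cdot x}$ through $P_N$ to produce the shifted projection $P_{N;n}$ (multiplier $\rho_N(\cdot - n)$), which then commutes with $e^{t\Delta}$, and concludes from the $L^\infty$-contractivity of $e^{t\Delta}$ applied to $P_{N;n}\phi$. You instead conjugate the heat flow by the modulation, observe that the resulting operator $T_{n,t}=U_n e^{t\Delta}U_n^{-1}$ has kernel $e^{-\icomplex n\cdot(x-y)}p_t(x,y)$ whose modulus is dominated by the heat kernel itself (so $T_{n,t}$ is an $L^\infty$-contraction uniformly in $n$), note that $T_{n,t}$ and $e^{u\Delta}$ commute as Fourier multipliers, and close via the heat-kernel characterization of $\Cs_x^{-\kappa}$ from Lemma~\ref{lemma:besov-space-heat-kernel-characterization} together with the smoothing estimate of Lemma~\ref{lemma:heat-flow-smoothing} for the reverse bound. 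Both approaches work; yours is more physical-space in flavor and foreshadows the kernel-based techniques of Section~\ref{section:cshe}, while the paper's is shorter and trivially covers the borderline case $\alpha = 0$ (your use of Lemma~\ref{lemma:besov-space-heat-kernel-characterization} technically requires $-\alpha > 0$, though this edge case is irrelevant for the way \eqref{prelim:eq-heat-flow-bound-gc} is used). One minor typo to note: the multiplier of $T_{n,t}$ is $e^{-t|m-n|^2}$ rather than $e^{-t|m+n|^2}$, but this has no effect on the argument.
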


\begin{proof}
We first prove \eqref{prelim:eq-heat-flow-bound} and \eqref{prelim:eq-heat-flow-decay}. Since $e^{t\Delta}$ preserves the mean and commutes with Littlewood-Paley projections, it holds that
\begin{equation*}
\big\| e^{t\Delta} \phi \big\|_{\Bc^{\alpha,p}_x}
= \max \Big( \big| \sfint \phi \big|, \big\| N^\alpha \big\| e^{t\Delta} P_N \bigdot{\phi} \big\|_{\Lcbigdot^p_x} \big\|_{\ell^\infty_N} \Big).
\end{equation*}
It therefore suffices to show that, for all mean-zero $\psi$, we have 
\begin{equation*}
\big\| e^{t\Delta} \psi \big\|_{\Lcbigdot^p_x} \leq \big\|  \psi \big\|_{\Lcbigdot^p_x} 
\qquad \text{and, if $1<p<\infty$, that } \quad \big\| e^{t\Delta} \psi \big\|_{\Lcbigdot^p_x} \leq e^{-c_p t} \big\|  \psi \big\|_{\Lcbigdot^p_x}.
\end{equation*}
By abstract interpolation theory \cite[Theorem 4.1.2]{BL76} and Definition \ref{prelim:def-lc}, it then suffices to prove that, for all mean-zero $\psi$, 
\begin{equation*}
\big\| e^{t\Delta} \psi \big\|_{\Lcbigdot^1_x} \leq \big\|  \psi \big\|_{\Lcbigdot^1_x}, \qquad      
\big\| e^{t\Delta} \psi \big\|_{\Lcbigdot^2_x} \leq e^{-ct} \big\|  \psi \big\|_{\Lcbigdot^2_x}, \qquad      \text{and} \qquad 
\big\| e^{t\Delta} \psi \big\|_{\Lcbigdot^\infty_x} \leq \big\|  \psi \big\|_{\Lcbigdot^\infty_x},
\end{equation*}
where $c>0$ is an absolute constant. By Definition \ref{prelim:def-lc}, the $\Lcbigdot_x^p$-norms are equal to $L_x^p$-norms for $p=1,2,\infty$. It therefore suffices to prove that, for all mean-zero $\psi$,
\begin{equation}\label{prelim:eq-heat-p1}
\big\| e^{t\Delta} \psi \big\|_{L^1_x} \leq \big\|  \psi \big\|_{L^1_x}, \qquad      
\big\| e^{t\Delta} \psi \big\|_{L^2_x} \leq e^{-ct} \big\|  \psi \big\|_{L^2_x}, \qquad      \text{and} \qquad 
\big\| e^{t\Delta} \psi \big\|_{L^\infty_x} \leq \big\|  \psi \big\|_{L^\infty_x}.
\end{equation}
The first and third estimate in \eqref{prelim:eq-heat-p1} are satisfied since the heat kernel is $L_x^1$-normalized and both estimates do not rely on the mean-zero condition. In contrast, the second estimate in \eqref{prelim:eq-heat-p1} relies on the mean-zero condition. It can be easily obtained using the Fourier-transform and Plancherel's formula.

It remains to prove the $\GCs^\alpha$-estimate \eqref{prelim:eq-heat-flow-bound-gc}. Since \eqref{prelim:eq-heat-flow-bound-gc} involves an implicit constant, the precise definition of the $\Bc^{\alpha,p}_x$-norms is less important than in the proof of \eqref{prelim:eq-heat-flow-bound} and \eqref{prelim:eq-heat-flow-decay}. In order to prove \eqref{prelim:eq-heat-flow-bound-gc}, we let $P_{N;n}$ be the Littlewood-Paley projection to frequencies at a distance $\sim N$ from $n\in \Z^2$, i.e., we let $P_{N;n}$ be the Fourier-multiplier with symbol $\rho_N(\cdot-n)$. For any $n\in \Z^2$ and $N\in \dyadic$, it holds that 
\begin{align*}
    &\, \big\| P_N \big( e^{-\icomplex n x} e^{t\Delta}\phi \big) \big\|_{L_x^\infty}
    = \big\| e^{\icomplex n x} P_N \big( e^{-\icomplex n x} e^{t\Delta}\phi \big) \big\|_{L_x^\infty} \\ 
    =&\,  \big\| P_{N;n} e^{t\Delta}\phi \big\|_{L_x^\infty}
    = \big\|  e^{t\Delta} P_{N;n} \phi \big\|_{L_x^\infty}
    \leq \big\|  P_{N;n} \phi \big\|_{L_x^\infty}
    \leq N^{-\alpha} \big\| \phi \big\|_{\GCs^{\alpha}}.
\end{align*}
By taking a supremum over $n\in \Z^2$ and $N\in \dyadic$ and using \eqref{prelim:eq-besov-equivalence}, this directly implies \eqref{prelim:eq-heat-flow-bound-gc}. 
\end{proof}

\begin{remark}\label{prelim:rem-decay-fail} 
In the proof of Proposition \ref{decay:prop-short} below, the absolute constant $C=1$ in \eqref{prelim:eq-heat-flow-decay} is crucial, i.e., we cannot make use of the estimate $\| e^{t\Delta} \psi\|_{\Balphap}\leq C_p e^{-c_p t} \| \psi\|_{\Balphap}$ for constants $C_p>1$. The reason is that we use \eqref{prelim:eq-heat-flow-decay} to obtain non-trivial decay over short time scales, and therefore the $C_p$-loss would 
outweigh the $e^{-c_pt}$-gain. This is the reason for introducing the $\Lcbigdot^p_x$-norms from Definition \ref{prelim:def-lc}. 

The definition of the $\Lcbigdot^p_x$-norms could be avoided if, for $1<p<\infty$ and mean-zero $\psi\colon \T^2 \rightarrow \C$, it holds that
\begin{equation}\label{prelim:eq-lp-decay-question}
\big\| e^{t\Delta} \psi \big\|_{L_x^p} \leq e^{-c_p t} \big\| \psi\big\|_{L_x^p}.
\end{equation}
Unfortunately, it is unclear to the authors whether \eqref{prelim:eq-lp-decay-question} holds and, if it holds, how it can be proven. We were unable to obtain \eqref{prelim:eq-lp-decay-question} from the Riesz-Thorin interpolation theorem since, if it is applied to the operator $e^{t\Delta}-\sfint e^{t\Delta}$, then \eqref{prelim:eq-S-bound} leads to a loss of a constant factor. We were also unable to obtain \eqref{prelim:eq-lp-decay-question} from a monotonicity formula for $\| e^{t\Delta} \psi\|_{L_x^p}^p$ since it is unclear to us whether $\int_{\T^2}  |\psi|^{p-2} |\nabla \psi|^2 \dx$ controls $\int_{\T^2}  |\psi|^{p}\dx $. 
We also remark that \eqref{prelim:eq-lp-decay-question} fails for $p=\infty$.  To see this, 
let $\psi\colon \T^2 \rightarrow [-1,1]$ be any mean-zero function satisfying $\psi(x)=1$ for all $|x|\leq\frac{\pi}{8}$. Using the explicit formula for the Euclidean heat kernel, it is then easy to show for all $0\leq t \leq 1$ that
\begin{equation}\label{prelim:eq-step-function-heat}
\big( e^{t \Delta} \psi \big)(0) \geq 1 - C \exp \big( - c \, t^{-1} \big), 
\end{equation}
where $C\geq 1$ and $c>0$ are absolute constants. From \eqref{prelim:eq-step-function-heat}, one then sees that $\| e^{t\Delta} \psi\|_{L_x^\infty}$ initially decays much slower than $e^{-c^\prime t}$ for any constant $c^\prime>0$.  
\end{remark}

Next, we state a classical smoothing property of the heat flow.

\begin{lemma}[Smoothing of the heat flow]\label{lemma:heat-flow-smoothing}
There is a constant $C$ such that for all $\alpha \leq \beta \in \R$, $p, q\in [1, \infty]$, $t \in (0, 1]$, we have that
\begin{equs}
\|e^{t\Delta} \phi\|_{\mc{B}_x^{\beta, p, q}} \leq C (\beta-\alpha)^{\frac{\beta-\alpha}{2}} t^{-\frac{(\beta-\alpha)}{2}} \|\phi\|_{\mc{B}_x^{\alpha, p, q}}.
\end{equs}
\end{lemma}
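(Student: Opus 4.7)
The plan is to reduce to a Littlewood--Paley block estimate and then perform an elementary one-variable optimization. By the norm equivalence \eqref{prelim:eq-besov-equivalence} and mean-preservation of $e^{t\Delta}$, it suffices to treat the mean and Littlewood--Paley pieces of the Besov norm separately. The mean satisfies $|\sfint e^{t\Delta}\phi| = |\sfint \phi|$, which is dominated by $\|\phi\|_{\mc{B}_x^{\alpha,p,q}}$ (and the prefactor $t^{-(\beta-\alpha)/2}\geq 1$ is harmless since $t \in (0,1]$). The main task is therefore to prove
\begin{equation*}
\big\|N^\beta P_N e^{t\Delta}\phi\big\|_{\ell^q_N L^p_x} \lesssim (\beta-\alpha)^{(\beta-\alpha)/2}\, t^{-(\beta-\alpha)/2}\,\big\|N^\alpha P_N \phi\big\|_{\ell^q_N L^p_x}.
\end{equation*}

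Since $e^{t\Delta}$ commutes with $P_N$, the core ingredient is the frequency-localized bound $\|e^{t\Delta} P_N \psi\|_{L^p_x} \lesssim e^{-c N^2 t}\,\|P_N\psi\|_{L^p_x}$ for $N \in \dyadic$ with $N \geq 2$, together with the trivial bound $\|e^{t\Delta} P_1\psi\|_{L^p_x} \leq \|P_1\psi\|_{L^p_x}$. To establish the former I would control the $L^1_x(\T^2)$-norm of the convolution kernel $K_{N,t}$ of $e^{t\Delta}P_N$, whose Fourier coefficients equal $e^{-t|n|^2}\tilde{\rho}_N(n)$ for a smooth cutoff $\tilde{\rho}_N$ equal to one on $\operatorname{supp}\rho_N$ and supported in $\{|\xi|\sim N\}$. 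On this support $|n|^2\gtrsim N^2$, so I factor $e^{-t|n|^2}\tilde{\rho}_N(n) = e^{-cN^2t}\cdot m_N(n)$; the remaining multiplier $m_N$ rescales (via $\xi = n/N$) to a uniformly bounded Schwartz family, whose periodization has uniformly bounded $L^1$ norm on $\T^2$. Young's inequality then delivers the block bound.

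Combining these block bounds and taking $\ell^q_N$ reduces the lemma to estimating $\sup_{N \in \dyadic} N^{\beta-\alpha} e^{-cN^2 t}$ uniformly. Setting $y = N^2$ and $s = (\beta-\alpha)/2$, elementary calculus (optimizing at $y_\ast = s/(ct)$) gives
\begin{equation*}
\sup_{y\geq 1} y^{s} e^{-cyt} \;\leq\; \Big(\frac{s}{ect}\Big)^{s} \;\leq\; C\,(\beta-\alpha)^{(\beta-\alpha)/2}\,t^{-(\beta-\alpha)/2},
\end{equation*}
with an absolute constant $C$, provided $c$ is taken large enough that $2ec \geq 1$; the value $c = 1/4$ is admissible since $\operatorname{supp}\rho_N \subset \{|n|\geq N/2\}$ for $N \geq 2$.

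The only subtle point is that one must preserve the damping factor $e^{-cN^2 t}$ through Young's inequality rather than collapsing it to $1$, and then explicitly optimize in $N$ as above, in order to produce the sharp exponent $(\beta-\alpha)/2$ together with the correct prefactor $(\beta-\alpha)^{(\beta-\alpha)/2}$. Everything else is standard.
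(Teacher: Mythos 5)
Your argument is the standard Littlewood--Paley proof, which is essentially the route the paper takes: the paper simply cites \cite[Lemma 2.4]{BCD2011} together with the norm equivalence \eqref{prelim:eq-besov-equivalence}, and your proposal re-derives exactly that block estimate. The one inaccuracy is the claim that $c=1/4$ is admissible. After rescaling, the multiplier is $m_N(N\eta)=e^{-tN^2(|\eta|^2-c)}\,\tilde\rho(\eta)$, where $\tilde\rho$ must equal $1$ on $\{1/2\le|\eta|\le 9/8\}$ and hence has support extending below $|\eta|=1/2$, say down to an inner radius $a<1/2$. Uniform boundedness of the Schwartz seminorms (and hence of the $L^1$ norm of the periodized kernel, via a polynomial-times-decaying-exponential bound on each Leibniz term) requires $|\eta|^2-c$ to stay nonnegative on $\operatorname{supp}\tilde\rho$, i.e.\ $c<a^2<1/4$; with $c=1/4$ the first derivative of $m_N(N\cdot)$ is of size $tN^2$ near the inner edge of the annulus, which is unbounded. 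The fix is immediate: take $a$ close to $1/2$ (so that $a^2>1/(2e)$) and any $c\in[1/(2e),a^2)$; then $2ec\ge 1$, so your optimization $\sup_{y\ge 1}y^{s}e^{-cyt}\le (s/(ect))^{s}\le (2s)^{s}t^{-s}$ (with $s=(\beta-\alpha)/2$) still produces exactly the stated prefactor. With this small correction the proof is complete and matches the reference.
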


We omit the proof of Lemma \ref{lemma:heat-flow-smoothing}; it follows from \cite[Lemma 2.4]{BCD2011} and \eqref{prelim:eq-besov-equivalence}. We now turn to estimates of the Duhamel integral corresponding to the Euclidean heat flow. For any $F\colon [0,\infty) \times \T^2 \rightarrow \C$, we define its Duhamel integral
\begin{equation}\label{prelim:eq-Duhamel}
\Duh \big[ F \big](t) := \int_0^t \ds \,  e^{(t-s) \Delta} F(s). 
\end{equation}
In order to state our estimate of \eqref{prelim:eq-Duhamel}, we also let $p(s,y;t,x)$ be the Euclidean heat kernel, i.e., the integral kernel of $e^{(t-s)\Delta}$. Using the Euclidean heat kernel, the Duhamel integral in \eqref{prelim:eq-Duhamel} can also be written as 
\begin{equation*}
\Duh \big[ F \big](t,x) 
:= \int_0^t \int_{\T^2} \ds \dy \, p(s,y;t,x) F(s,y). 
\end{equation*}

\begin{lemma}[Duhamel integral estimate with heat-kernel weights]\label{prelim:lem-Duhamel-weighted}
Let $I \subseteq [0,1]$ be a compact interval. Furthermore, let 
$\alpha \in [0,1]$, $\theta \in [0,1)$, and $q,r \in [0,1]$ satisfy
\begin{equation}\label{prelim:eq-Duhamel-weighted-assumption}
\frac{\alpha}{2} + \frac{1}{q} + \frac{1}{r} < 1 + \theta. 
\end{equation}
Then, with $\gamma = 1+\theta - (\frac{\alpha}{2} + \frac{1}{q} + \frac{1}{r})$, it holds that 
\begin{equation}\label{prelim:eq-Duhamel-weighted}
\big\| \Duh \big[ F \big] \big\|_{C_t^0 C_x^\alpha(I\times \T^2)} + \big\| \Duh \big[ F \big] \big\|_{C_t^{\frac{\alpha}{2}} C_x^0(I\times \T^2)}
\lesssim |I|^{\gamma} \big\| p^\theta(s,y;t,x) F(s,y) \big\|_{L_t^\infty L_x^\infty L_s^q L_y^r(I\times \T^2 \times I \times \T^2)}.
\end{equation}
\end{lemma}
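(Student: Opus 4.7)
The plan is to expand the Duhamel integral as
\begin{equs}
\Duh[F](t,x) = \int_I \int_{\T^2} p(s,y;t,x) F(s,y) \, ds \, dy,
\end{equs}
factor the kernel as $p = p^{1-\theta} \cdot p^\theta$, and apply H\"{o}lder's inequality in $(s,y)$ with exponents conjugate to $(q,r)$ to absorb the $p^\theta F$ factor into the right-hand side of \eqref{prelim:eq-Duhamel-weighted}. The core computation is then a Gaussian integral estimate for $p^{1-\theta}$. Using the explicit form of the Euclidean heat kernel (and noting that, for $|I|\leq 1$, the periodization on $\T^2$ only contributes lower-order corrections due to Gaussian decay), one obtains $\|p^\sigma(s,y;t,x)\|_{L_y^{r'}(\T^2)} \lesssim (t-s)^{-\sigma + 1/r'}$ whenever the $L^{r'}$-integrability holds. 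Taking $\sigma = 1-\theta$ and integrating in $s$ across $I$ yields
\begin{equs}
\big\| p^{1-\theta}(s,y;t,x) \big\|_{L_s^{q'} L_y^{r'}(I\times \T^2)} \lesssim |I|^{1/q' + 1/r' - (1-\theta)} = |I|^{1+\theta - 1/q - 1/r},
\end{equs}
where the $s$-integral converges thanks to the assumption, which in particular forces $1/q + 1/r < 1 + \theta$. This already handles the $C_t^0 C_x^0$-bound, giving $|I|^{\gamma + \alpha/2}$, which is stronger than needed.

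For the spatial H\"{o}lder seminorm, I would use the standard heat-kernel H\"{o}lder estimate
\begin{equs}\label{plan:hoelder-x}
\big| p(s,y;t,x) - p(s,y;t,x') \big| \lesssim |x-x'|^\alpha (t-s)^{-\alpha/2} \bigl[ p(s,y;t,x) + p(s,y;t,x') \bigr],
\end{equs}
which comes from interpolating the trivial pointwise bound against the gradient estimate $|\nabla_x p| \lesssim (t-s)^{-1/2} p$ (after absorbing the Gaussian mismatch between $x$ and $x'$ into the sum on the right). Plugging \eqref{plan:hoelder-x} into the Duhamel expression and redoing the $p^{1-\theta}\cdot p^\theta$ split and H\"{o}lder step introduces an extra factor $(t-s)^{-\alpha/2}$ in the $L_s^{q'}$-integral. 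The convergence condition becomes precisely $\alpha/2 + 1/q + 1/r < 1 + \theta$, and the resulting $|I|$-power drops from $1+\theta - 1/q - 1/r$ to $\gamma$, as desired.

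For the temporal H\"{o}lder seminorm I would split, for $t' < t$,
\begin{equs}
\Duh[F](t,x) - \Duh[F](t',x) = \int_{t'}^t e^{(t-s)\Delta} F(s) \, ds + \int_0^{t'} \bigl[ e^{(t-s)\Delta} - e^{(t'-s)\Delta} \bigr] F(s) \, ds.
\end{equs}
The short-time piece is bounded by the $C_t^0 C_x^0$-estimate applied on an interval of length $|t-t'|$, which produces $|t-t'|^{\gamma + \alpha/2} \leq |t-t'|^{\alpha/2} |I|^\gamma$. The long-time piece uses the time-H\"{o}lder bound
\begin{equs}
\bigl| p(s,y;t,x) - p(s,y;t',x) \bigr| \lesssim |t-t'|^{\alpha/2} (t'-s)^{-\alpha/2} \bigl[ p(s,y;t,x) + p(s,y;t',x) \bigr],
\end{equs}
obtained by interpolating the trivial bound against $|\partial_u p(s,y;u,x)| \lesssim (u-s)^{-1} p(s,y;u,x)$, after which the same H\"{o}lder argument as above applies verbatim.

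The main obstacle is bookkeeping the Gaussian integrals cleanly: getting the precise exponent $\gamma$ requires tracking the interplay between $(t-s)^{-\alpha/2}$ (or $(t'-s)^{-\alpha/2}$), the Gaussian decay in $y$, and the $s$-integrability, and verifying that the convergence threshold coincides exactly with the hypothesis $\alpha/2 + 1/q + 1/r < 1 + \theta$. Beyond this, the periodization from $\R^2$ to $\T^2$ needs to be checked but is harmless on time scales $|I|\leq 1$, and the remaining steps are routine applications of H\"{o}lder.
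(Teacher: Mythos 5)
Your proposal is correct and yields the stated estimate, but it takes a genuinely different route from the paper's own proof, so a comparison is warranted.

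The paper decomposes the Duhamel integral via a smooth partition of unity $(\chi_\delta)_{\delta\in 2^{-\mathbb{N}_0}}$ in the time variable, proves separate $L_t^\infty L_x^\infty$ bounds on $\Duh_\delta[F]$, $\nabla_x\Duh_\delta[F]$, and $\partial_t\Duh_\delta[F]$ with clean powers of $\delta$, interpolates those operator bounds to get $C_x^\alpha$ and $C_t^{\alpha/2}$, and sums over scales $\delta\lesssim |I|$. You instead prove the $C_x^0$ bound by H\"older directly, and then handle the H\"older seminorms via the pointwise heat-kernel H\"older estimates (in space and in time), plugging those back into the $p^{1-\theta}\cdot p^\theta$ split. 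Both strategies work and rely on the same underlying Gaussian kernel facts; the paper's dyadic-in-time approach has the advantage that each $\Duh_\delta$ involves only a single time scale, so the $C_x^1$ and $\partial_t$ bounds are computed at a single spacetime point against a single Gaussian weight, and the interpolation to fractional regularity is deferred to the level of operator norms. Your approach interpolates the kernel pointwise first, which is more direct but requires more careful bookkeeping of the Gaussian constants, as discussed next.

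One point deserves emphasis. As written, \eqref{plan:hoelder-x} with the \emph{exact} Gaussian $p$ on the right is false: in the regime $|x-x'|\ll\sqrt{t-s}$ but $|x-y|\gg\sqrt{t-s}$, the left side can be of order $(|x-y|\,|x-x'|/(t-s))\,p$, which beats $(|x-x'|/\sqrt{t-s})^\alpha\,p$ when $\alpha<1$. The correct form replaces $p$ on the right by a Gaussian $p_C$ with a strictly smaller decay rate $C<1$; you implicitly acknowledge this ("absorbing the Gaussian mismatch"). The place this actually matters in your argument is the subsequent division by $p^\theta(s,y;t,x)$: the ratio $p_C/p^\theta$ is a decaying Gaussian only when $C>\theta$. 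Since the intended parameter range is $\theta\in[0,1)$ and the lemma is invoked in the paper with $\theta=1/2$, you must argue that $C$ can be chosen arbitrarily close to $1$ (at the expense of the implicit constant depending on $\theta$), which is true but should be stated explicitly. The same caveat applies to the time-H\"older pointwise estimate on the long-time piece. In contrast, the paper's dyadic decomposition sidesteps this issue: in $\nabla_x\Duh_\delta[F]$ and $\partial_t\Duh_\delta[F]$, the gradient/time derivative of $p$ and the weight $p^{-\theta}$ are evaluated at the same spacetime point, so the only slippage comes from absorbing a polynomial factor like $|x-y|/\sqrt{t-s}$ into $p^{1-\theta}$, which works uniformly for all $\theta<1$.
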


The weighted norms on the right-hand side of \eqref{prelim:eq-Duhamel-weighted} are used since they also appear in our covariant monotonicity formula for the covariant heat equation (see Proposition \ref{prop:monotonicity}). Since the proof of Lemma \ref{prelim:lem-Duhamel-weighted} cannot easily be found in the literature, we provide it in Appendix \ref{appendix:misc}. 

\subsubsection{Leray projection and para-product operators}
As discussed in the introduction, we primarily work with connection one-forms in the Coulomb gauge, which leads to the Leray-projection $\leray$ in \eqref{intro:eq-SAH} and \eqref{intro:eq-SAH-smooth}. For any $A\colon \T^2 \rightarrow \R^2$, we define its Leray-projection $\leray A$ as 
\begin{equation}\label{prelim:eq-leray}
\leray A := A - \nabla \big( \Delta^{-1} \nabla \cdot A \big) 
\qquad \text{or, equivalently,} \qquad 
\big( \leray A \big)^j := A^j - \Delta^{-1} \partial^j \partial_k A^k. 
\end{equation}
In the following lemma, we study the mapping properties of the Leray-projection $\leray$. 

\begin{lemma}[Mapping properties of Leray-projection]\label{prelim:lem-leray}
Let $p\in (1,\infty)$ and let $\alpha \in \R$. Then, it holds that 
\begin{equation*}
\big\| \leray \big\|_{L^p_x \rightarrow L^p_x} \lesssim 1 
\qquad \text{and} \qquad \big\| \leray \big\|_{\Cs_x^{\alpha}\rightarrow \Cs_x^\alpha}
\lesssim 1. 
\end{equation*}
\end{lemma}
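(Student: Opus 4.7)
The plan is to write the Leray projection as $\leray = I - R$, where $R = (R^{jk})$ is the vector-valued operator with components $R^{jk} := \Delta^{-1} \partial^j \partial_k$, interpreted as the Fourier multiplier on $\T^2$ with symbol
\begin{equs}
m_{jk}(n) = -\frac{n^j n_k}{|n|^2} \quad \text{for } n \in \Z^2 \setminus \{0\}, \qquad m_{jk}(0) = 0.
\end{equs}
Since the identity operator is obviously bounded on every $L_x^p$ and $\Cs_x^\alpha$, it suffices to prove the analogous bounds for $R$.

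First I would establish the $L_x^p$ bound for $1 < p < \infty$. The symbol $m_{jk}$ is smooth on $\R^2 \setminus \{0\}$ and homogeneous of degree zero; in particular, it satisfies the Mikhlin--H\"ormander condition $|n|^{|\beta|}|\partial^\beta m_{jk}(n)| \lesssim 1$ for all multi-indices $\beta$ and all $n \neq 0$. By the Mikhlin multiplier theorem on the torus (or equivalently by identifying $R^{jk} = -R_j R_k$ as a composition of two Riesz transforms and invoking Calder\'on--Zygmund theory), $R^{jk}$ is bounded on $L_x^p(\T^2)$ for every $p \in (1,\infty)$, which yields the first bound.

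Next, for the $\Cs_x^\alpha$ bound I would use the Littlewood--Paley characterization of $\Cs_x^\alpha = \Bc^{\alpha,\infty,\infty}_x$ together with Lemma~\ref{prelim:lem-lp-equivalence}: it is enough to prove that $\|P_N R^{jk} f\|_{L_x^\infty} \lesssim \|P_N f\|_{L_x^\infty}$ uniformly in $N \in \dyadic$, since then, after observing that $R^{jk}$ preserves the mean (as $m_{jk}(0) = 0$, so $\sfint \leray A = \sfint A$) and commutes with every Littlewood--Paley projection, the bound on the full norm in Definition~\ref{prelim:def-besov} follows by taking suprema in $N$ and multiplying by $N^\alpha$. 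To prove the uniform $L_x^\infty$ bound, note that $P_N R^{jk}$ is the Fourier multiplier with symbol $m_{jk}(n)\rho_N(n)$. For $N \geq 2$, this symbol is supported in an annulus $|n| \sim N$ where $m_{jk}$ is smooth; writing $m_{jk}(n)\rho_N(n) = \widetilde{\Psi}(n/N)$ for a fixed Schwartz bump $\widetilde{\Psi}$ on $\R^2$ (independent of $N$), a standard argument via Poisson summation realizes $P_N R^{jk}$ as convolution on $\T^2$ with a kernel whose $L_x^1$-norm is bounded by $\|\widehat{\widetilde\Psi}\|_{L^1(\R^2)}$, uniformly in $N$; the case $N = 1$ is a single finite-dimensional multiplier and is trivial. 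Young's inequality then gives the desired uniform bound.

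The only mildly delicate point is the compatibility with the refined definition of $\Cs_x^\alpha$ through the $\Lcbigdot_x^\infty$-norms, but since $R^{jk}$ sends mean-zero functions to mean-zero functions (as its symbol vanishes at $n=0$), and Lemma~\ref{prelim:lem-lp-equivalence} gives $\|\cdot\|_{\Lcbigdot_x^\infty} \sim \|\cdot\|_{L_x^\infty}$ up to an absolute constant on the subspace of mean-zero functions, this causes no loss. I do not anticipate any real obstacle: both bounds reduce to very classical multiplier-theoretic facts applied to the smooth, degree-zero homogeneous symbol $n^j n_k/|n|^2$.
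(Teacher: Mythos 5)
Your proposal is correct and takes essentially the same route as the paper: the $L^p$ bound via the Mikhlin multiplier theorem (equivalently, compositions of Riesz transforms), and the $\Cs_x^\alpha$ bound by reducing to uniform $L_x^\infty$ bounds on the Littlewood--Paley pieces, observing that the composed symbol is a fixed smooth compactly supported bump rescaled by $N$. The paper simply works with $\leray P_N$ directly instead of splitting off $\leray = I - R$, and omits the Poisson-summation and mean-zero details you spell out, but the argument is the same.
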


\begin{proof}
The first estimate $\| \leray \|_{L^p_x \rightarrow L^p_x} \lesssim 1$  follows directly from Mikhlin's multiplier theorem. Due to the definition of the $\Cs_x^\alpha$-norm, the second estimate can be reduced to proving that, for all $N\in \dyadic$, $\| \leray P_N \|_{L_x^\infty \rightarrow L_x^\infty}\lesssim 1$. The latter estimate holds trivially since $\leray P_N$ can be written as a Fourier multiplier with symbol $\widetilde{\rho}(\cdot/N)$, where $\widetilde{\rho}\colon \R^2 \rightarrow \R$ is a smooth, compactly supported function.
\end{proof}

In the next definition, we introduce para-product operators. The para-product operators will primarily be needed in Section \ref{section:gauge-covariance} and Appendix \ref{section:high}, which rely on the local well-posedness theory of \eqref{intro:eq-SAH}. 

\begin{definition}[Para-product operators]\label{prelim:def-para}
For all smooth functions $\phi,\psi \colon \T^2 \rightarrow \C$, we define
\begin{alignat}{2}
\phi \parall \psi &:= \sum_{\substack{M,N\colon \\ M \ll N}} 
P_M \phi \, P_N \psi, \hspace{10ex}
\phi \parasim \psi &:= \sum_{\substack{M,N\colon \\ M \sim N}} 
P_M \phi \, P_N \psi, \\  
\phi \paragg \psi &:= \sum_{\substack{M,N\colon \\ M \gg N}} 
P_M \phi \, P_N \psi, \hspace{10ex}
\phi \paransim \psi &:= \sum_{\substack{M,N\colon \\ M \not \sim N}} 
P_M \phi \, P_N \psi.
\end{alignat}
\end{definition}

We now state several standard estimates for the para-product operators from the previous definition.

\begin{lemma}[Para-product estimates]\label{prelim:lem-para-besov}
Let $\alpha,\alpha_1,\alpha_2\in \R$ and let $p,p_1,p_2\in [1,\infty]$ satisfy $\frac{1}{p}\geq \frac{1}{p_1}+\frac{1}{p_2}$. Furthermore, let $f,g\colon \T^2 \rightarrow \C$ and let $\Pi\in \big\{ \parall, \parasim, \paragg, \times \big\}$ be a para-product. Then, the estimate 
\begin{equs}
\big\| \Pi(f,g) \big\|_{\Bc^{\alpha,p}_x}\lesssim \big\| f \big\|_{\Bc^{\alpha_1,p_1}_x}
\big\| g \big\|_{\Bc^{\alpha_2,p_2}}
\end{equs}
is satisfied under either of the following four conditions:
\begin{enumerate}[label=(\roman*)]
\item (Low$\times$high-estimate) If $\Pi=\parall$, 
\begin{equs}
\alpha\leq \alpha_2, \qquad \text{and} \qquad \alpha_1+\alpha_2 > \alpha. 
\end{equs}
\item (high$\times$high-estimate) If $\Pi=\parasim$, 
\begin{equs}
\alpha\leq \alpha_1+\alpha_2, \qquad \text{and} \qquad \alpha_1+\alpha_2 > 0. 
\end{equs}
\item (High$\times$low-estimate) If $\Pi=\paragg$, 
\begin{equs}
\alpha\leq \alpha_1, \qquad \text{and} \qquad \alpha_1+\alpha_2 > \alpha. 
\end{equs}
\item\label{prelim:item-product-estimate} (Product-estimate) If $\Pi=\times$, 
\begin{equs}
\alpha\leq \min(\alpha_1,\alpha_2) \qquad \text{and} \qquad \alpha_1+\alpha_2 >0. 
\end{equs}
\end{enumerate}
\end{lemma}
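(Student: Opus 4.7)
The plan is to prove each of the four estimates by a standard Littlewood--Paley decomposition, working with the equivalent characterization $\|\phi\|_{\Bc^{\alpha,p}_x} \sim \|N^\alpha P_N \phi\|_{\ell_N^\infty L_x^p}$ from \eqref{prelim:eq-besov-equivalence} so that the mean component need not be tracked separately. For each paraproduct $\Pi \in \{\parall, \parasim, \paragg\}$, I will first analyze the Fourier support of the dyadic summands $P_M f \cdot P_N g$, then apply H\"older's inequality together with the bounded-domain embedding $L_x^{p_3} \hookrightarrow L_x^p$ (valid because $\T^2$ has finite measure and $\frac{1}{p} \geq \frac{1}{p_1} + \frac{1}{p_2}$), and finally sum over the dyadic scales using the stated conditions on $\alpha, \alpha_1, \alpha_2$.

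For the low$\times$high estimate (i), the key observation is that when $M \ll N$, the product $P_M f \cdot P_N g$ has Fourier support in an annulus of radius $\sim N$, so that
\begin{equation*}
P_K(f \parall g) \;=\; \sum_{N \sim K} \sum_{M \ll N} P_K\bigl(P_M f \cdot P_N g\bigr).
\end{equation*}
H\"older's inequality applied to each summand yields $\|P_M f \cdot P_N g\|_{L_x^p} \lesssim M^{-\alpha_1} N^{-\alpha_2} \|f\|_{\Bc^{\alpha_1,p_1}_x} \|g\|_{\Bc^{\alpha_2,p_2}_x}$, and summing in $M \ll N$ produces a factor $\lesssim N^{\max(-\alpha_1, 0)}$, with at most a logarithmic loss at the endpoint $\alpha_1 = 0$. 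The resulting bound for $K^\alpha \|P_K(f \parall g)\|_{L_x^p}$ is then controlled under $\alpha \leq \alpha_2$ (and the strict inequality $\alpha_1 + \alpha_2 > \alpha$ absorbs the logarithm at the endpoint). The high$\times$low estimate (iii) follows by symmetry. For the high$\times$high estimate (ii), the product $P_M f \cdot P_N g$ with $M \sim N$ has Fourier support in a ball of radius $\sim N$, so the outer frequency $K$ is restricted by $K \lesssim N$; H\"older again gives a summand bound of $\lesssim N^{-\alpha_1 - \alpha_2}$, and the hypothesis $\alpha_1 + \alpha_2 > 0$ makes the geometric series $\sum_{N \gtrsim K} N^{-\alpha_1 - \alpha_2}$ converge to $\lesssim K^{-\alpha_1 - \alpha_2}$, which matches the requirement $\alpha \leq \alpha_1 + \alpha_2$.

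The product estimate (iv) then follows by decomposing $fg = f \parall g + f \parasim g + f \paragg g$ and applying (i)--(iii); the hypotheses $\alpha \leq \min(\alpha_1, \alpha_2)$ and $\alpha_1 + \alpha_2 > 0$ clearly imply all the conditions needed for each of the three pieces. The main technical subtlety will be the endpoint $\alpha_1 = 0$ (respectively $\alpha_2 = 0$) in (i) and (iii), where the naive summation over the low frequency produces a logarithmic loss; this is absorbed precisely by the \emph{strict} inequality $\alpha_1 + \alpha_2 > \alpha$ in the hypotheses. A secondary but routine point is verifying that the H\"older step works with the inequality $\frac{1}{p} \geq \frac{1}{p_1} + \frac{1}{p_2}$ rather than equality, by inserting the finite-measure embedding on $\T^2$ before applying H\"older.
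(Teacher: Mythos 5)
Your proof is correct and follows the standard Littlewood--Paley argument for Bony's paraproduct estimates; the paper itself does not supply a proof of Lemma~\ref{prelim:lem-para-besov}, instead citing \cite[Lemma 2.1]{GIP15}, which uses precisely the decomposition-plus-H\"older-plus-dyadic-summation strategy you describe, so your proposal matches the cited reference. One small clarification for case (i): you attribute the role of the strict inequality $\alpha_1 + \alpha_2 > \alpha$ solely to absorbing the logarithm at the endpoint $\alpha_1 = 0$, but as your own intermediate bound $\lesssim N^{\max(-\alpha_1,0)}$ makes clear, the strict inequality also carries the argument in the regime $\alpha_1 < 0$, where the final exponent $\alpha - \alpha_1 - \alpha_2$ must be negative and the condition $\alpha \leq \alpha_2$ alone does not suffice (take $\alpha = 1$, $\alpha_2 = 1$, $\alpha_1 = -\tfrac{1}{2}$); the computation you set up handles this correctly, but the concluding sentence misstates which hypothesis is doing the work in that subcase.
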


For a proof of (similar estimates as in) Lemma \ref{prelim:lem-para-besov}, see e.g. \cite[Lemma 2.1]{GIP15}. 

\subsection{Probability theory}\label{section:preliminary-probability}

The $\R^2$-valued space-time white noise $\xi = (\xi_1, \xi_2)$ and $\C$-valued space-time white noise $\zeta$
are random distributions such that for any $A \in C^\infty(\R \times \T^2 \rightarrow \R^2)$ and $\phi \in C^\infty(\R\times \T^2 \rightarrow~\C)$ with compact support, we have that 
\begin{equs}\label{eq:white-noise}
\iint \dt \dx A^j(t,x) \xi_j(t,x) \sim \mathsf{N}(0, \|A\|_{L_x^2}^2)
\quad \text{and} \quad 
\iint \dt \dx \phi(t,x) \zeta(t,x) \sim \mathsf{N}_\C\big(0, \|\phi\|_{L_x^2}^2\big).
\end{equs}
Here, $\mathsf{N}(0, \sigma^2)$ is the law of a real-valued Gaussian with mean zero and variance $\sigma^2$ and $\mathsf{N}_\C(0, \sigma^2)$ is the law of $X + \icomplex Y$, where $X, Y \stackrel{i.i.d.}{\sim} \mathsf{N}(0, \sigma^2/2)$. Explicitly, in Fourier space, $\xi$ and $\zeta$ may be represented as:
\begin{equs}\label{eq:space-time-white-noise-fourier}
\xi(t, x) = \frac{1}{2\pi} \sum_{n \in \Z^2} \e_n(x) dW_\xi(t, n) \qquad \text{and} \qquad  \zeta(t, x) = \frac{1}{2\pi} \sum_{n \in \Z^2} \e_n(x) dW_{\zeta}(t, n),
\end{equs}
where $((W_\xi(t, n))_{t \in \R}, n \in \Z^2)$ and $((W_\zeta(t, n))_{t \in \R}, n \in \Z^2)$ are independent of each other and distributed as follows. For each $n \in \Z^2$, $(W_\xi(t, n))_{t \in \R}$ is a standard two-sided $\C^2$-valued Brownian motion. The $(W_\xi(t, n))_{n \in \Z^2}$ are i.i.d., modulo the condition that $W_\xi(t, -n) = \ovl{W_\xi(t, n)}$ (which reflects the fact that $\xi$ is $\R^2$-valued). The processes $(W_\zeta(t, n))_{t \in \R}$ are i.i.d. $\C$-valued Brownian motions.

\begin{remark}
The factor of $\frac{1}{2\pi}$ in \eqref{eq:space-time-white-noise-fourier} is a normalization arising because (1) by definition, the random variable obtained by testing $\e_n$ against $\R$-valued or $\C$-valued spatial white noise has variance $\|\e_n\|_{L_x^2}^2 = (2\pi)^2$ (2) in our definition of the Fourier transform \eqref{eq:fourier-transform}, there is the $\frac{1}{(2\pi)^2}$ factor. Combining these two facts, we see that any Fourier coefficient of $\R$-valued or $\C$-valued spatial white noise has variance $\frac{1}{(2\pi)^2}$.
\end{remark}

We let $(\mc{F}_t)_{t \geq 0}$ be the filtration generated by $\xi, \zeta$, which is the same as the filtration generated by the Brownian motions $((W_\xi(t, n))_{t \in \R}, n \in \Z^2)$ and  $((W_\zeta(t, n)_{t \in \R}, n \in \Z^2)$.

In the next lemma, we state a simple estimate involving moments, which will be used in the proofs of Lemma~\ref{decay:lem-exponential-growth} and Lemma~\ref{decay:lem-decay-unit}.

\begin{lemma}[Comparison of moments]\label{prelim:lem-comparison}
Let $X$ and $Y$ be two non-negative random variables, let $D>0$, and let  $k\geq 1$. Furthermore, assume that there exists events $(E_\lambda)_{\lambda\in \dyadic}$ such that, for each $\lambda\in\dyadic$, 
\begin{equs}\label{prelim:eq-comparison-condition}
\ind_{E_\lambda} X \leq \ind_{E_\lambda} Y + D \lambda^k
\qquad \text{and} \qquad \bP \big( \Omega \backslash E_\lambda \big) \leq \exp \big( - \lambda^2\big). 
\end{equs}
Then, there exists a constant $C_k\geq 1$ such that, for all $p\geq 1$, we have the estimate
\begin{equs}\label{prelim:eq-comparison}
\E \big[ X^p \big]^{\frac{1}{p}} \leq \E \big[ Y^p \big]^{\frac{1}{p}} + C_k D p^{\frac{k}{2}}.
\end{equs}
\end{lemma}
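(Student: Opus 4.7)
The plan is to decouple the influence of the good and bad events. Using the pointwise identity $X \leq Y + (X - Y)_+$, which holds because both sides agree when $X > Y$ and the right-hand side dominates when $X \leq Y$, Minkowski's inequality gives
\begin{equs}
\E[X^p]^{1/p} \leq \E[Y^p]^{1/p} + \E\big[(X-Y)_+^p\big]^{1/p},
\end{equs}
and it remains to prove $\E[(X-Y)_+^p]^{1/p} \leq C_k D p^{k/2}$.

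To control the tails of $(X-Y)_+$, I would observe that the hypothesis $\ind_{E_\lambda} X \leq \ind_{E_\lambda} Y + D\lambda^k$ forces the inclusion
\begin{equs}
\{(X-Y)_+ > D\lambda^k\} \subseteq \Omega \setminus E_\lambda \qquad \text{for every } \lambda \in \dyadic,
\end{equs}
so that $\bP((X-Y)_+ > D\lambda^k) \leq \exp(-\lambda^2)$ at dyadic scales. To extend this to arbitrary $\lambda \geq 1$, approximate such a $\lambda$ by the largest dyadic $\lambda' \in \dyadic$ with $\lambda' \leq \lambda$, so that $\lambda' \geq \lambda/2$ and $D(\lambda')^k \leq D\lambda^k$; monotonicity of the distribution function then yields
\begin{equs}
\bP\big((X-Y)_+ > D\lambda^k\big) \leq \exp(-(\lambda')^2) \leq \exp(-\lambda^2/4), \qquad \lambda \geq 1.
\end{equs}

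The final step is a layer-cake computation. With the substitution $t = D\lambda^k$,
\begin{equs}
\E\big[(X-Y)_+^p\big] = p \int_0^\infty t^{p-1} \bP\big((X-Y)_+ > t\big) \, dt \leq D^p + pk D^p \int_1^\infty \lambda^{kp-1} e^{-\lambda^2/4}\, d\lambda,
\end{equs}
where the first summand accounts for the trivial tail bound on $\lambda \in (0,1]$. The remaining Gaussian moment integral reduces, via the change of variables $u = \lambda^2/4$, to a multiple of $\Gamma(kp/2)$, which by Stirling satisfies $\Gamma(kp/2)^{1/p} \leq C_k \, p^{k/2}$. Taking $p$-th roots and absorbing constants into $C_k$ yields the claim.

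The argument is essentially routine; the only minor technical point is the dyadic-to-continuous interpolation used to upgrade the tail bound, and this is handled by the elementary monotonicity observation above. There is no substantive obstacle.
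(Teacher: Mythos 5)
Your proof is correct, and it takes a genuinely different route from the paper. The paper's argument partitions $\Omega$ into the disjoint shells $E_\lambda \setminus E_{<\lambda}$ (where $E_{<\lambda}:=\bigcup_{\mu<\lambda}E_\mu$), writes $X = \sum_\lambda \ind_{E_\lambda\setminus E_{<\lambda}} X$, bounds each piece by $\ind_{E_\lambda\setminus E_{<\lambda}}Y + D\lambda^k$, and then splits via Minkowski before summing the probability bounds. Your approach instead starts from the deterministic inequality $X\le Y+(X-Y)_+$, reducing everything to a tail estimate on $(X-Y)_+$; the hypothesis is then read off directly as $\{(X-Y)_+>D\lambda^k\}\subseteq\Omega\setminus E_\lambda$, and you close with a layer-cake computation and a $\Gamma$-function estimate. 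What your route buys: it avoids introducing the auxiliary shell decomposition and the implicit appeal to $\bP(\bigcup_\lambda E_\lambda)=1$, and it makes the source of the $p^{k/2}$ factor more transparent (it is literally a Gaussian moment). What the paper's route buys: it never leaves the dyadic grid, so there is no need for the dyadic-to-continuous interpolation step you insert; the sum over $\lambda$ converges directly from the probability bound. Both are routine once the right first step is found, and both are correct; neither has a gap.

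One small stylistic remark: the substitution bookkeeping in your layer-cake step is slightly compressed (the Jacobian $Dk\lambda^{k-1}$ from $t=D\lambda^k$ and the split at $\lambda=1$ are stated but not displayed), and the claim $\Gamma(kp/2)^{1/p}\le C_k p^{k/2}$ deserves the one-line Stirling justification if this were to appear in print. Neither affects correctness.
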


\begin{proof} We define the events $(E_{<\lambda})_{\lambda\in \dyadic}$ by 
\begin{equs}
E_{<1} := \emptyset \qquad \text{and} \qquad E_{<\lambda} = \bigcup_{\substack{ \mu < \lambda\,\, }} E_{\mu}\quad \text{for all } \lambda \geq 2. 
\end{equs}
Using that $\sum_{\lambda\in\dyadic} \ind_{E_{\lambda}\backslash E_{<\lambda}}=1$ almost surely and using the upper bound on $X$ from \eqref{prelim:eq-comparison-condition}, we obtain that 
\begin{equs}\label{prelim:eq-comparison-p1}
  \E \big[ X^p \big]^{\frac{1}{p}}
  = \E \Big[ \Big( \sum_{\lambda \in \dyadic} \ind_{E_{\lambda}\backslash E_{<\lambda}} X \Big)^{p} \Big]^{\frac{1}{p}} 
  \leq \E \Big[ \Big( \sum_{\lambda \in \dyadic} \ind_{E_{\lambda}\backslash E_{<\lambda}} Y  \Big)^{p} \Big]^{\frac{1}{p}}
  + D \E \Big[ \Big( \sum_{\lambda \in \dyadic} \ind_{E_{\lambda}\backslash E_{<\lambda}} \lambda^k  \Big)^{p} \Big]^{\frac{1}{p}}.  
\end{equs}
The first summand in \eqref{prelim:eq-comparison-p1} is equal to $\E[Y^p]^{\frac{1}{p}}$. 
Using the probability estimate from \eqref{prelim:eq-comparison-condition}, the second summand in \eqref{prelim:eq-comparison-p1} can be estimated using 
\begin{equation*}
\E \Big[ \Big( \sum_{\lambda \in \dyadic} \ind_{E_{\lambda}\backslash E_{<\lambda}} \lambda^k  \Big)^{p} \Big]^{\frac{1}{p}} 
\leq 
  \sum_{\lambda\in \dyadic} \lambda^k \bP \big( E_{\lambda}\backslash E_{<\lambda} \big)^{\frac{1}{p}}
  \leq 1 + \sum_{\substack{\lambda\in \dyadic \colon \\ \lambda \geq 2}}
  \lambda^k \exp \Big( - \frac{1}{p} \Big( \frac{\lambda}{2} \Big)^2 \Big) \lesssim_k p^{\frac{k}{2}}. \qedhere
\end{equation*}
\end{proof}

\subsection{Complex Hermite polynomials}\label{section:preliminary-hermite}
In the following, we consider the complex Hermite polynomials  $(H_{m, n}(z, \bar{z}; \sigma^2))_{m, n \geq 0}$. Here, $\sigma^2\geq 0$ is a parameter, which will now be omitted from the notation. 
The complex Hermite polynomials are defined inductively via
\begin{equs}
H_{m+1, n}(z, \bar{z}) &= z H_{m, n}(z, \bar{z}) - n \sigma^2 H_{m, n-1}(z, \bar{z}), \\
H_{m, n+1}(z, \bar{z}) &= \bar{z} H_{m, n}(z, \bar{z}) - m \sigma^2 H_{m-1, n}(z, \bar{z}), \\
H_{0, 0} &\equiv 1, H_{m, 0}(z, \bar{z}) = z^m, H_{0, n}(z, \bar{z}) = \bar{z}^n.
\end{equs}
Note that this recursive definition arises directly from the product formula for multiple stochastic integrals. The recursion has appeared for instance in \cite{Ghanmi2013} which studies the complex Hermite polynomials, see in particular equations \cite[(3.5) and (3.7)]{Ghanmi2013}.  

We will need the following binomial expansion property of the complex Hermite polynomials. Since this exact result may not be so easy to find in the literature, we provide a proof in Appendix \ref{appendix:misc}.

\begin{lemma}\label{lemma:complex-hermite-polynomial-expansion}
For $m, n \geq 0$, we have that
\begin{equs}\label{eq:complex-hermite-polynomial-expansion}
H_{m, n}(z+w, \ovl{z+w}) = \sum_{k_1=0}^m \sum_{k_2 = 0}^n \binom{m}{k_1} \binom{n}{k_2} w^{m-k_1} \bar{w}^{n-k_2} H_{k_1, k_2}(z, \bar{z}).
\end{equs}
\end{lemma}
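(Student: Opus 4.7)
The plan is to introduce a bivariate generating function for the complex Hermite polynomials and then read off the claimed expansion from a factorization identity. Define
\[
F(s, t; z, \bar z) := \sum_{m, n \geq 0} H_{m,n}(z, \bar z; \sigma^2) \, \frac{s^m t^n}{m!\, n!},
\]
viewed either as a formal power series or (since each $H_{m,n}$ is a polynomial in $z,\bar z$) as an entire analytic function of $(s,t)$. I claim that
\[
F(s, t; z, \bar z) = \exp\bigl(sz + t\bar z - s t\,\sigma^2\bigr).
\]
To verify this, I would check that the exponential on the right satisfies both of the defining recursions. Computing $\partial_s$ of the exponential produces $(z - t\sigma^2)$ times itself; matching the coefficient of $s^m t^n/(m!\,n!)$ on both sides (and using the reindexing $t\cdot t^n/n! = (n+1)\,t^{n+1}/(n+1)!$) yields exactly $H_{m+1,n} = z H_{m,n} - n\sigma^2 H_{m,n-1}$. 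The analogous computation with $\partial_t$ yields the second recursion, and the initial data $H_{0,0}=1$, $H_{m,0} = z^m$, $H_{0,n} = \bar z^n$ match $F(0,0) = 1$ together with the standard exponential expansions obtained by setting $t = 0$ or $s = 0$.

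Given the generating function identity, the lemma follows by an algebraic factorization. Substituting $(z,\bar z) \mapsto (z+w, \ovl{z+w})$ and splitting the exponent gives
\[
F(s, t; z+w, \ovl{z+w}) = e^{sz + t\bar z - st\sigma^2} \cdot e^{sw} \cdot e^{t\bar w} = F(s, t; z, \bar z) \cdot e^{sw} \cdot e^{t\bar w}.
\]
Expanding the two single-variable exponentials as power series in $s$ and $t$ respectively and multiplying by the expansion of $F(s,t;z,\bar z)$, the coefficient of $s^m t^n/(m!\,n!)$ on the right-hand side is
\[
\sum_{k_1 = 0}^{m} \sum_{k_2 = 0}^{n} \frac{m!}{k_1!\,(m-k_1)!} \cdot \frac{n!}{k_2!\,(n-k_2)!} \, w^{m-k_1}\, \bar w^{n-k_2}\, H_{k_1, k_2}(z, \bar z),
\]
which matches the claim once the factorials are rewritten as binomial coefficients. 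Comparing coefficients of $s^m t^n/(m!\,n!)$ on both sides then gives \eqref{eq:complex-hermite-polynomial-expansion}.

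As an alternative, one could proceed by double induction on $(m,n)$: apply the defining recursion to $H_{m,n}(z+w, \ovl{z+w})$, split $(z+w) = z + w$, invoke the inductive hypothesis, and use the rearrangement $zH_{k_1,k_2} = H_{k_1+1,k_2} + k_2\sigma^2 H_{k_1,k_2-1}$ together with Pascal's rule $\binom{m-1}{k_1-1} + \binom{m-1}{k_1} = \binom{m}{k_1}$. In that route the $\sigma^2$ contributions cancel pairwise after reindexing one sum, and the $z$ and $w$ contributions combine via Pascal's identity into the claimed coefficient. There is no serious obstacle in either approach; the main bookkeeping point is matching factorials when extracting generating-function coefficients (or, in the inductive version, verifying that the boundary terms at $k_1 \in \{0, m\}$ and $k_2 \in \{0, n\}$ are correctly accounted for by interpreting binomial coefficients with out-of-range arguments as zero).
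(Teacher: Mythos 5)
Your proof is correct but takes a genuinely different route from the paper's. The paper performs a direct double induction on $(m,n)$: it applies the defining recursion to write $H_{m+1,n}(z+w,\ovl{z+w}) = (z+w)H_{m,n}(z+w,\ovl{z+w}) - n\sigma^2 H_{m,n-1}(z+w,\ovl{z+w})$, splits the $(z+w)$-factor into a $z$-piece and a $w$-piece, invokes the inductive hypothesis on each of the three resulting terms, reindexes so that the $\sigma^2$ contributions recombine via $n\binom{n-1}{k_2} = (k_2+1)\binom{n}{k_2+1}$ into a single term involving $H_{k_1,k_2}$ of shifted index, and finally merges the $z$- and $w$-pieces by Pascal's rule $\binom{m}{k_1-1}+\binom{m}{k_1}=\binom{m+1}{k_1}$. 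Your approach instead identifies the closed form of the bivariate exponential generating function $F(s,t;z,\bar z)=\sum_{m,n}H_{m,n}\,s^m t^n/(m!\,n!)=\exp(sz+t\bar z-st\sigma^2)$ — verifying both recursions by differentiating in $s$ and in $t$ — and then deduces the lemma from the algebraic factorization $F(s,t;z+w,\ovl{z+w})=F(s,t;z,\bar z)\,e^{sw}\,e^{t\bar w}$ by extracting the coefficient of $s^m t^n/(m!\,n!)$. Both arguments are complete; the generating-function route is more conceptual and explains \emph{why} the identity is a binomial convolution (the additive shift $z\mapsto z+w$ multiplies the generating function by pure exponentials in $s$ and $t$ separately), at the price of establishing the exponential closed form first. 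The paper's induction is more elementary in that it manipulates only the recursion, with no appeal to generating functions. Your closing remark about an alternative double induction with Pascal's identity and pairwise cancellation of $\sigma^2$-terms is essentially a sketch of the paper's actual proof.
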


\section{Covariant monotonicity formula and the covariant heat kernel}\label{section:monotonicity}

This section forms the backbone of the paper. As outlined in Section \ref{section:introduction-argument}, we prove a covariant monotonicity formula (Proposition \ref{prop:monotonicity}). We then derive many consequences of this formula in Section \ref{section:energy-estimates} which are later used in Section \ref{section:cshe} to prove Theorem \ref{intro:thm-cshe}. The covariant monotonicity formula will also be crucially used later in Section \ref{section:Abelian-Higgs} to obtain covariant estimates for the stochastic Abelian-Higgs equations, which will be the basis of our globalization argument in Section \ref{section:decay}  to prove Theorem \ref{intro:thm-abelian-higgs}. Besides the covariant monotonicity formula, in this section we also prove various properties of the covariant heat kernel (to be defined in Section~\ref{section:energy-estimates}).

\subsection{Covariant monotonicity formula}

The following covariant monotonicity formula and its proof are inspired by Hamilton's monotonicity formulas \cite{Ham93}. 

\begin{proposition}[Covariant monotonicity formula]\label{prop:monotonicity}
Let $T > 0$. Let $A \colon [0, T] \times \T^2 \ra \R^2$ be a time-varying connection one-form, let $G \colon [0, T] \times \T^2 \ra \C$, and let $\phi\colon [0,T]\times \T^2 \rightarrow \C$ be a solution to
\begin{equs}
(\ptl_t - \covd_A^j \covd_{A, j}) \phi = G.
\end{equs}
Furthermore, let $K\colon [0,T]\times \T^2 \rightarrow \R$ be a solution to the backwards heat equation, i.e. 
\begin{equs}
(\ptl_t + \Delta) K = 0.
\end{equs}
Then, it holds that 
\begin{equs}
\ptl_t \Big(K \frac{|\phi|^2}{2}\Big) &= -(\Delta K) \frac{|\phi|^2}{2} + K \Delta \Big(\frac{|\phi|^2}{2} \Big) - K |\covd_A\phi|^2 + K \mrm{Re}(\bar{\phi} G).
\end{equs}
More generally, for any $p \geq 2$,
\begin{equs}
\ptl_t \Big(K \frac{|\phi|^{p}}{p} \Big) = -(\Delta K) \frac{|\phi|^{p}}{p} &+ K \Delta\Big(\frac{|\phi|^{p}}{p}\Big) \\
&-K \Big(\frac{p-2}{4} \Big| |\phi|^{\frac{p-4}{2}} \nabla (|\phi|^2)\Big|^2 + |\phi|^{p-2} |\covd_A\phi|^2 - |\phi|^{p-2} \mrm{Re}(\bar{\phi} G)\Big).
\end{equs}
\end{proposition}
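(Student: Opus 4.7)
The proposition is an essentially algebraic identity, and my plan is to derive both formulas by a direct application of the product rule in $t$, the equations satisfied by $\phi$ and $K$, and a Bochner-type identity for $|\phi|^p$ built on Lemma~\ref{prelim:lem-derivatives}\ref{prelim:item-Bochner}. I do not expect any real obstacle: the only thing to get right is the chain-rule computation of $\Delta(|\phi|^p)$ in terms of $\Delta(|\phi|^2)$ and $|\nabla(|\phi|^2)|^2$.

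I would start with the $p=2$ case as a warm-up. Applying $\partial_t$ to $K |\phi|^2/2$ and using $\partial_t K = -\Delta K$ (backward heat) together with
\begin{equs}
\partial_t \tfrac{|\phi|^2}{2} = \Re\bigl(\bar\phi\,\partial_t\phi\bigr) = \Re\bigl(\bar\phi\,\covd_A^j\covd_{A,j}\phi\bigr) + \Re(\bar\phi G),
\end{equs}
I would then substitute the Bochner formula $\Re(\bar\phi\,\covd_A^j\covd_{A,j}\phi) = \tfrac{1}{2}\Delta(|\phi|^2) - |\covd_A\phi|^2$ from Lemma~\ref{prelim:lem-derivatives}\ref{prelim:item-Bochner}. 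Collecting terms yields exactly the asserted identity for $p=2$.

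For general $p \geq 2$, the key intermediate step is a Bochner-type formula for $|\phi|^p$. Writing $u := |\phi|^2$ and applying the chain rule to $u^{p/2}$ gives
\begin{equs}
\Delta\bigl(|\phi|^p\bigr) = \tfrac{p(p-2)}{4}|\phi|^{p-4}\bigl|\nabla(|\phi|^2)\bigr|^2 + \tfrac{p}{2}|\phi|^{p-2}\Delta(|\phi|^2).
\end{equs}
Substituting the Bochner formula for $\Delta(|\phi|^2)$ and dividing by $p$ then produces the identity
\begin{equs}
|\phi|^{p-2}\Re\bigl(\bar\phi\,\covd_A^j\covd_{A,j}\phi\bigr) = \Delta\Bigl(\tfrac{|\phi|^p}{p}\Bigr) - \tfrac{p-2}{4}\bigl|\,|\phi|^{\frac{p-4}{2}}\nabla(|\phi|^2)\bigr|^2 - |\phi|^{p-2}|\covd_A\phi|^2,
\end{equs}
which is the weighted Bochner identity adapted to the $L^p$-setting.

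To finish, I would apply the product rule to $K|\phi|^p/p$, use $\partial_t K = -\Delta K$ and
\begin{equs}
\partial_t\tfrac{|\phi|^p}{p} = |\phi|^{p-2}\Re(\bar\phi\,\partial_t\phi) = |\phi|^{p-2}\Re\bigl(\bar\phi\,\covd_A^j\covd_{A,j}\phi\bigr) + |\phi|^{p-2}\Re(\bar\phi G),
\end{equs}
and then substitute the weighted Bochner identity above to replace the $\covd_A^j\covd_{A,j}\phi$ term. The $-(\Delta K)|\phi|^p/p$ and $K\Delta(|\phi|^p/p)$ terms appear automatically, and the remaining nonpositive quantities (the $\frac{p-2}{4}$ term and the $|\covd_A\phi|^2$ term) combine with the $K\Re(\bar\phi G)$ source into exactly the expression in the statement. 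One mild technicality is justifying the pointwise manipulations where $\phi$ vanishes when $p<4$; this can be handled by a standard $\varepsilon$-regularization $(|\phi|^2 + \varepsilon)^{p/2}$ and passing to the limit, which I would mention only briefly.
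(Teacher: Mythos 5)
Your proposal is correct and follows essentially the same route as the paper's proof: compute $\partial_t(|\phi|^p/p)$ via the equation and the Bochner formula, convert $|\phi|^{p-2}\Delta(|\phi|^2)$ into $\Delta(|\phi|^p/p)$ plus the gradient term by the chain rule, then apply the product rule together with $\partial_t K = -\Delta K$. The only cosmetic difference is that the paper first derives $\partial_t(|\phi|^2/2)$ and then multiplies by $|\phi|^{p-2}$, whereas you package the same algebra as a ``weighted Bochner identity''; your remark about $\varepsilon$-regularization for $p<4$ is a reasonable extra caution that the paper omits (and is anyway harmless since $|\phi|^{\frac{p-4}{2}}\nabla(|\phi|^2)$ is controlled by $2|\phi|^{\frac{p-2}{2}}|\nabla\phi|$).
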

\begin{proof}
We first compute (using the Bochner formula from Lemma \ref{prelim:lem-derivatives}.\ref{prelim:item-Bochner})
\begin{equs}
\ptl_t \Big(\frac{|\phi|^2}{2} \Big) = \mrm{Re}\big( \bar{\phi} \ptl_t \phi \big) = \Delta \Big(\frac{|\phi|^2}{2}\Big)  -|\covd_A \phi|^2 + \mrm{Re}(\bar{\phi} G).
\end{equs}
From this, we obtain
\begin{equs}
\ptl_t \Big(\frac{|\phi|^{p}}{p} \Big) = \frac{1}{p} \ptl_t ((|\phi|^2)^{\frac{p}{2}}) &= (|\phi|^2)^{\frac{p-2}{2}} \ptl_t \Big(\frac{|\phi|^2}{2}\Big) = |\phi|^{p-2} \Big(\Delta \Big(\frac{|\phi|^2}{2}\Big)  -|\covd_A \phi|^2 + \mrm{Re}(\bar{\phi} G)\Big) \\
&= \Delta \Big(\frac{|\phi|^{p}}{p}\Big) - \frac{p-2}{4} \Big| |\phi|^{\frac{p-4}{2}} \nabla (|\phi|^2)\Big|^2 - |\phi|^{p-2} |\covd_A\phi|^2 + |\phi|^{p-2} \mrm{Re}(\bar{\phi} G).
\end{equs}
Here, we used that
\begin{equs}
\Delta \Big(\frac{|\phi|^{p}}{p}\Big) = |\phi|^{p-2} \Delta\Big(\frac{|\phi|^2}{2} \Big) + \frac{p-2}{4}\Big| |\phi|^{\frac{p-4}{2}} \nabla (|\phi|^2)\Big|^2.
\end{equs}
The formula for $\ptl_t(K |\phi|^{p}/p)$ now follows by the product rule and the assumption on $K$.
\end{proof}

\subsection{Energy estimates for the covariant heat kernel}\label{section:energy-estimates}

In this subsection, we collect the various energy estimates that will be needed when estimating the covariant stochastic objects in Section \ref{section:cshe}. These energy estimates are the key to obtaining bounds on the covariant stochastic objects which have minimal dependence on the connection.

\begin{notation}[Space-time variables]\label{notation:space-time-variables}
For brevity, we will often write $z = (t, x)$, $w = (s, y)$. For an additional spacetime variable, we will write $v = (t_v, x_v)$. We will always assume that $s < t_v < t$, even if this is not explicitly stated.
\end{notation}

\begin{notation}[Space-time norms]
In this paper, we will use the usual notation $L_t^p L_x^q$ or $L_s^p L_y^q$ to denote space-time norms. When $p = q$, we will often shorten this to $L_z^p$ or $L_w^p$.
\end{notation}

\begin{definition}[Covariant heat kernel]
Given $A \colon [0, T] \times \T^2 \ra \R^2$, let $p_A(w; z)$ be the covariant heat kernel on $[0, T] \times \T^2$, i.e. the fundamental solution to the covariant heat equation $(\ptl_t - \covd_A^j \covd_{A, j}) \phi = 0$. In other words, $p_A(w; z)$ is defined by the following property. For $s \in (0, T)$, and $\phi_0 \in C^\infty(\T^2 \rightarrow \C)$ , let $\phi(z) = \int p_A((s, y); z) \phi_0(y)$ for $z \in [s, T] \times \T^2$. Then $(\ptl_t - \covd_A^j \covd_{A, j}) \phi = 0$ on $(s, T] \times \T^2$, and $\phi(s) = \phi_0$.

We will write $p = p_0$ to be the usual heat kernel, i.e. the covariant heat kernel when $A \equiv 0$. 

We will write covariant derivatives of the covariant heat kernel as $\covd_{A(z)} p_A(w; z)$, which really means that the derivative acts on the $x$ variable in $z$. Similarly, when we write $\covd_{-A(w)} p_A(w; z)$, we mean that the derivative acts on the $y$ variable in $w$.
\end{definition}

\begin{remark}
For continuous $A \colon [0, T] \times \T^2 \ra \R^2$, the existence of the covariant heat kernel $p_A$ follows directly by the Feynman-Kac-It\^{o} formula (Lemma \ref{lemma:feynman-kac-ito-formula}) below.
\end{remark}

We next state the Feyman-Kac-It\^{o} formula, which gives a probabilistic path-integral representation of the covariant heat kernel. This representation will not play a huge role in the paper, but it may help give a concrete feel for the kernel, and we find it intrinsically interesting. This formula is well known in certain parts of the literature, see e.g. \cite{Guneysu2010, Norris1992, Simon2005}. We provide a self-contained proof in \mbox{Appendix \ref{appendix:misc}}, assuming only knowledge of It\^{o} calculus, because (1) the reader unfamiliar with stochastic analysis on manifolds may find it hard to read these papers, (2) these papers technically do not consider the case where the underlying connection is time-varying, though the proof directly extends to this case\footnote{The fact that the proof works even for time-varying connections was also observed in \cite{HasNab2018} in the setting of Ricci flow.}.

\begin{lemma}[Feynman-Kac-It\^{o} formula]\label{lemma:feynman-kac-ito-formula}
Let $T > 0$, $A \colon [0, T] \times \T^2 \ra \R^2$ be continuous, and further suppose that $\ptl_j A^j : [0, T] \times \T^2 \ra \R^2$ is continuous. For $w, z \in [0, T] \times \T^2$, let $\E_{w \ra z}$ be expectation with respect to a Brownian bridge $W$ in $\T^2$ of rate\footnote{I.e. $dW_t^i dW_t^i = 2 dt$ for $i \in [2]$.} $2$ which goes from $y$ to $x$ on the interval $[s, t]$. Then, we have that 
\begin{equs}
p_A(w; z) = p(w; z) \E_{w \ra z} \bigg[ \exp\bigg(-\icomplex \int_{s}^t A(u, W_{u}) \cdot dW_{u} - \icomplex \int_s^t (\ptl_j A^j)(u, W_u) du \bigg)\bigg],
\end{equs}
where $\int_s^t A(u, W_u) \cdot dW_u = \int_s^t A_j(u, W_u) \cdot dW_u^j$ is an It\^{o} integral.
\end{lemma}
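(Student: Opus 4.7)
The plan is to prove the formula first for smooth $A$ by combining an It\^{o} martingale argument with a duality identity, and then to extend to merely continuous $A$ by mollification. The starting point is to run an unconditioned rate-$2$ Brownian motion $W$ in $\T^2$ with $W_s = y$, and to introduce the bounded multiplicative functional
\begin{equs}
M_u := \exp\Bigl(-\icomplex \int_s^u A(r, W_r) \cdot dW_r - \icomplex \int_s^u (\partial_j A^j)(r, W_r)\, dr\Bigr).
\end{equs}
The central claim I aim to establish is that whenever $\psi\colon [s,t] \times \T^2 \to \C$ is smooth and solves the \emph{backward} covariant heat equation $(\partial_u + \covd_A^j \covd_{A,j})\psi = 0$, the process $u \mapsto M_u \bar\psi(u, W_u)$ is a bounded martingale under $\P_{W_s = y}$. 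A duality argument will then convert the resulting integral identity into the claimed kernel representation.

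To verify the martingale claim, I will apply It\^{o}'s formula. Using $dW^i dW^j = 2\delta^{ij}\, du$, an elementary calculation gives $dM_u = M_u(-\icomplex A_j\, dW^j - \icomplex(\partial_j A^j)\, du - |A|^2\, du)$, while $d\bar\psi(u, W_u) = (\partial_u \bar\psi + \Delta \bar\psi)\, du + \partial_j \bar\psi\, dW^j$ with cross-variation $-2\icomplex M_u A^j \partial_j \bar\psi\, du$. Collecting drift terms via the product rule and using the algebraic identity
\begin{equs}
\covd_{-A}^j \covd_{-A,j} f \,=\, \Delta f - 2\icomplex A^j \partial_j f - \icomplex(\partial_j A^j) f - |A|^2 f,
\end{equs}
the $du$ contribution reduces to $M_u(\partial_u \bar\psi + \covd_{-A}^j \covd_{-A,j}\bar\psi)\, du$. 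Since complex conjugation sends $\covd_A$ to $\covd_{-A}$, the backward equation for $\psi$ is equivalent to $(\partial_u + \covd_{-A}^j \covd_{-A,j})\bar\psi = 0$, so the drift vanishes. The bound $|M_u| = 1$ then promotes the local martingale $M_u \bar\psi(u, W_u)$ to a true martingale, and equating expectations at $u = s$ and $u = t$ yields $\bar\psi(s, y) = \E_{W_s = y}[M_t \bar\psi_0(W_t)]$ with $\psi_0 := \psi(t, \cdot)$.

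Next, I will exploit duality to recover a statement about $p_A$. If $\phi$ solves the forward equation $(\partial_u - \covd_A^j \covd_{A,j})\phi = 0$ with $\phi(s) = \phi_0$, then integration by parts shows that $\covd_A^j \covd_{A,j}$ is Hermitian symmetric with respect to the pairing $\langle \phi, \psi\rangle := \int_{\T^2} \phi \bar\psi\, dx$, whence $\partial_u \langle \phi(u), \psi(u)\rangle = 0$ and $\int \phi(t, x)\bar\psi_0(x)\, dx = \int \phi_0(y)\bar\psi(s, y)\, dy$. Substituting the martingale identity for $\bar\psi(s, y)$ and disintegrating the law of $W$ under $\P_{W_s = y}$ as $p(s, y; t, x)\, dx$ on the endpoint tensored with the Brownian bridge law $\P_{(s, y) \to (t, x)}$, both sides of the duality become pairings of $\phi_0 \otimes \bar\psi_0$ against an integral kernel. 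Since $\phi_0$ and $\bar\psi_0$ range over a dense class of test functions, the kernels must agree, and this forces
\begin{equs}
p_A((s, y); (t, x)) = p(s, y; t, x)\, \E_{(s, y) \to (t, x)}[M_t].
\end{equs}

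The hardest part will be the extension from smooth to merely continuous $A$, since the It\^{o} computation and the above energy-type argument for $\psi$ implicitly require some regularity. I will handle this by mollification: pick smooth $A_\epsilon \to A$ with $\partial_j A_\epsilon^j \to \partial_j A^j$ uniformly on $[0, T]\times \T^2$, which is possible by the hypotheses. Under the unconditioned Brownian motion, the It\^{o} isometry gives $\int_s^\cdot A_\epsilon \cdot dW \to \int_s^\cdot A \cdot dW$ in $L^2$, the divergence integrals converge by uniform convergence, and since the imaginary exponent keeps $|M_u^{(\epsilon)}| = 1$, bounded convergence upgrades this to convergence of $M_t^{(\epsilon)} \to M_t$ in $L^p$ for every finite $p$; disintegration then yields convergence of the bridge expectations for $p(s,y;t,x)\, dx$-a.e.\ $x$. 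On the PDE side, continuous dependence of the covariant heat kernel on the connection (obtained from Lemma~\ref{prelim:lem-derivatives}.\ref{prelim:item-difference} together with Duhamel's principle) gives $p_{A_\epsilon} \to p_A$ locally uniformly on $\{s < t\}$. Passing to the limit in the identity established for smooth connections completes the proof.
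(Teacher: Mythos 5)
Your argument is correct, and it arrives at the result by a genuinely different route than the paper. The paper works with the \emph{forward} covariant heat equation, shows that $P_u \phi(T-u, W_u)$ (with a Stratonovich exponential $P_u$) is a martingale under the free Brownian law, and then applies time-reversal identities for the Brownian bridge and the Stratonovich integral to convert $\phi(T,x) = \E_x[P_T\phi(0,W_T)]$ into the bridge representation for the kernel; only at the end does it invoke It\^{o}--Stratonovich conversion to recover the statement as written. You instead run the \emph{backward} equation for $\psi$, keep everything in It\^{o} form from the outset, and replace the explicit bridge time-reversal by a self-adjointness / duality pairing $\langle\phi(u),\psi(u)\rangle$ which is conserved in $u$. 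Both computations hinge on the same algebraic identity expressing the full drift as $\partial_u\bar\psi + \covd_{-A}^j\covd_{-A,j}\bar\psi$ (equivalently, the corresponding forward statement in the paper), and both conclude the smooth case by disintegrating the Brownian law over its endpoint. What the duality route buys you is that you never have to track how the stochastic exponential transforms under path reversal --- a delicate point that the paper handles via the Stratonovich formulation precisely because that integral reverses cleanly; the cost is that you need the Hermitian symmetry $\langle\covd_A^j\covd_{A,j}\phi,\psi\rangle = \langle\phi,\covd_A^j\covd_{A,j}\psi\rangle$, which you correctly obtain from integration by parts and the anti-Hermiticity of $\covd_{A,j}$. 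The mollification step is essentially the same as in the paper's proof, and your reasoning there (It\^{o} isometry for the martingale part, uniform convergence for the drift, $|M_u^{(\epsilon)}|=1$ and bounded convergence, plus stability of the kernel under perturbations of the connection) is sound. One point worth making explicit when you write it up: the identity $\partial_u\langle\phi(u),\psi(u)\rangle = 0$ and the a.e.\ identification of kernels ultimately identify $p_A$ for \emph{all} $(y,x)$, which follows because both sides of the claimed identity are jointly continuous in $(y,x)$ for $s<t$; as stated, ``the kernels must agree'' could be read as only an a.e.\ statement.
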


\begin{remark}\label{remark:covariant-heat-kernel-ito-stratonovich}
In the case that $A$ is sufficiently regular (say, $C^1$ in both space and time), we have by It\^{o}-Stratonovich conversion that
\begin{equs}
\int_{s}^t A(u, W_{u}) \cdot dW_{u} + \int_s^t (\ptl_j A^j)(u, W_u) du = \int_s^t A(u, W_u) \circ dW_u,
\end{equs}
where the right hand side is a Stratonovich integral. (Note the lack of the usual $\frac{1}{2}$ factor in the finite-variation term on the left hand side since we took $W$ to be rate-$2$.) In some sense, the Stratonovich integral is the better stochastic integral to use here, because it is more geometric. On the other hand, given our minimal regularity assumptions on $A$, it is less clear whether the Stratonovich integral is well-defined, while the It\^{o} integral is well-defined by the usual $L^2$ isometry arguments. We note here for later use that if $A \equiv a \in \R^2$ is constant, then by the properties of the Stratonovich integral, we have that
\begin{equs}
\int_s^t A(u, W_u) \circ dW_u = a \cdot (W_t - W_s).
\end{equs}
\end{remark}

\begin{definition}[Massive covariant heat kernel]
Given $A \colon [0, T] \times \T^2 \ra \R^2$, let $\massp_A(w; z)$ be the massive covariant heat kernel on $[0, T] \times \T^2$, i.e. the fundamental solution to the massive covariant heat equation $(\ptl_t - \covd_A^j \covd_{A, j} + 1) \phi = 0$. 
\end{definition}

\begin{remark}\label{remark:massive-kernel}
We have the basic relation
\begin{equs}\label{eq:massive-heat-kernel-formula}
\massp_A(w; z) = e^{-(t-s)} p_A(w; z).
\end{equs}
In this section, most estimates we prove will be stated with the non-massive kernel $p_A$. Due to relation \eqref{eq:massive-heat-kernel-formula}, all results will transfer over in a trivial way to $\massp_A$. We prefer to prove the results with $p_A$ to make it clear that the massive term is not needed for such estimates. 

The massive kernel $\massp_A$ will only really enter in Section \ref{section:cshe}, since the covariant linear object $\philinear[A, \leqN]$ was defined as a solution to the massive covariant stochastic heat equation (recall equation \eqref{intro:eq-cshe}). The fact that the results of this section are proven with $p_A$ means that the estimates that we will later prove for $\philinear[A, \leqN]$ and other objects in Section \ref{section:cshe} would still hold if we had defined $\philinear[A, \leqN]$ using the non-massive equation. The massive term is only needed in Sections \ref{section:Abelian-Higgs} and \ref{section:decay}, where it becomes technically convenient.
\end{remark}

\begin{lemma}[Diamagnetic inequality for the heat kernel]\label{kernel:lem-diamagnetic-heat-kernel}
Let $T > 0$ and $A\colon [0,T] \times \R^2 \rightarrow \R^2$ be continuous and $C^1$ on $(0, T] \times \T^2$. For all $w, z \in [0, T] \times \T^2$, it holds that
\begin{equs}
\big| p_A(w;z) \big| \leq p(w;z). 
\end{equs}
\end{lemma}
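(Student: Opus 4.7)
The proof should be essentially immediate from the Feynman--Kac--It\^{o} formula stated in Lemma \ref{lemma:feynman-kac-ito-formula}, which represents the covariant heat kernel as
\begin{equs}
p_A(w;z) = p(w;z)\, \E_{w\to z}\!\left[ \exp\!\Big(-\icomplex \int_{s}^t A(u,W_u) \cdot dW_u - \icomplex \int_s^t (\ptl_j A^j)(u,W_u)\, du\Big)\right].
\end{equs}
My plan is to observe that since $A\colon [0,T]\times \T^2 \to \R^2$ is real-valued and $W$ is a real-valued Brownian bridge, the It\^{o} integral $\int_s^t A(u,W_u)\cdot dW_u$ is real; similarly, since $\ptl_j A^j$ is real-valued, the finite-variation integral is real. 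Hence the entire exponent inside the expectation is purely imaginary, so the integrand has modulus exactly equal to $1$ pointwise along each Brownian bridge path.

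The estimate $|p_A(w;z)|\leq p(w;z)$ then follows by pulling absolute values inside the expectation:
\begin{equs}
|p_A(w;z)| = p(w;z)\left| \E_{w\to z}\!\left[e^{\icomplex \Phi}\right]\right| \leq p(w;z)\, \E_{w\to z}\!\left[|e^{\icomplex \Phi}|\right] = p(w;z),
\end{equs}
where $\Phi$ denotes the (real-valued) exponent and where we used that $p(w;z) \geq 0$.

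I do not anticipate any serious obstacle. The only minor point to verify is that the hypotheses of Lemma \ref{lemma:feynman-kac-ito-formula} are met, namely that $A$ is continuous on $[0,T]\times \T^2$ and $\ptl_j A^j$ is continuous; both are granted by the assumption that $A$ is continuous on $[0,T]\times\T^2$ and $C^1$ on $(0,T]\times\T^2$ (the It\^o integral is well-defined thanks to the $L^2$-boundedness of $A$ on $[s,t]\subset (0,T]$ for $s>0$, and the case $s=0$ follows by passing to the limit using continuity of $p_A$ in its time variables). I also note that this argument is the heat-kernel analogue of the classical pointwise diamagnetic inequality (Lemma \ref{prelim:lem-diamagnetic}): in both cases, replacing $\partial$ by $\covd_A$ introduces only a phase that can be absorbed in absolute value.
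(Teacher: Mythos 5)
Your proof is correct and follows essentially the same route the paper takes: the paper simply remarks that the result "follows directly from the Feynman-Kac-It\^{o} formula," and your argument supplies exactly the expected details (the real-valuedness of $A$ and $\partial_j A^j$ makes the exponent purely imaginary, so the integrand has unit modulus, and the bound follows by pulling absolute values inside the expectation). The paper also mentions an alternative route via the monotonicity formula, but that is not the primary one.
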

\begin{proof}
This follows directly from the Feynman-Kac-It\^{o} formula (Lemma \ref{lemma:feynman-kac-ito-formula}). Alternatively, one may use the monotonicity formula (Proposition \ref{prop:monotonicity}) and then standard arguments, see e.g. the proof of \cite[Proposition 2.7]{CG2015}.
\end{proof}

\begin{notation}
Let $I \sse \R$ and $J := I \times \T^2$. For space-time functions $\phi_1, \psi_1 \colon J \ra \C$, and spatial functions $\phi_2, \psi_2 \colon \T^2 \ra \C$, we write 
\begin{equs}
(\phi_1, \psi_1)_{L_z^2(J)} &:= \int_J \dz \phi_1(z) \ovl{\psi_1(z)}, \quad \quad (\phi_2, \psi_2)_{L_x^2} := \int_{\T^2} \dx \phi_2(x) \ovl{\psi_2(x)}.
\end{equs}
\end{notation}

\begin{lemma}[Time reversal and the backwards equation]\label{lemma:heat-kernel-time-reversal}
We have that $\ovl{p_A(\cdot; z)}$ is the fundamental solution to the backwards covariant heat equation, i.e.
\begin{equs}
(\ptl_s + \covd_{A(w)}^j \covd_{A(w), j}) \ovl{p_A(w; z)} = 0.
\end{equs}
\end{lemma}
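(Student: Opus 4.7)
The plan is to exploit the fundamental-solution characterization of $p_A$ together with duality in the spatial variable. I will fix $z_0 = (t_0, x_0)$ and let $\phi \colon [s_0, t_0] \times \T^2 \to \C$ be an arbitrary solution to the forward covariant heat equation $(\partial_\tau - \covd_{A}^j \covd_{A, j}) \phi = 0$ with smooth initial data at $s_0$. By the defining property of $p_A$ together with the usual semigroup property, one has
\begin{equs}
\phi(t_0, x_0) = \int_{\T^2} p_A((s, y); z_0) \, \phi(s, y) \, dy
\end{equs}
for every $s \in [s_0, t_0]$, and in particular the right-hand side is independent of $s$.

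The core step is to differentiate this identity in $s$, substitute $\partial_s \phi = \covd_{A}^j \covd_{A, j} \phi$, and then transfer the spatial derivatives from $\phi$ onto $p_A$ via integration by parts in $y$. A short computation with the non-conjugated pairing shows that the transpose of $\covd_{A}^j$ is $-\covd_{-A}^j$, and hence the transpose of $\covd_{A}^j \covd_{A, j}$ is $\covd_{-A}^j \covd_{-A, j}$. Since the identity holds for every solution $\phi$ of the forward equation and therefore for arbitrary $\phi(s, \cdot)$, I will be able to conclude the pointwise identity
\begin{equs}
(\partial_s + \covd_{-A(w)}^j \covd_{-A(w), j}) \, p_A(w; z_0) = 0.
\end{equs}

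To reach the stated backwards equation I will take complex conjugates of both sides of the previous display and use the intertwining relation $\overline{\covd_{-A}^j f} = \covd_{A}^j \overline{f}$, which immediately gives $\overline{\covd_{-A}^j \covd_{-A, j} p_A} = \covd_{A}^j \covd_{A, j} \overline{p_A}$ and hence the claim. The only mild technicality is justifying the exchange of $\partial_s$ with the spatial integral and the $y$-integration by parts; both are standard and follow from the continuity of $A$ together with parabolic regularity of $p_A$ on $(s_0, t_0] \times \T^2$, so they should not be the main obstacle. An alternative would be to verify the identity directly from the Feynman--Kac--It\^o representation in Lemma~\ref{lemma:feynman-kac-ito-formula}, which gives $\overline{p_A(w;z)} = p_{-A}(w;z)$ and reduces the claim to the behavior of $p_{-A}$ in its first argument, but the duality argument above is shorter and avoids any stochastic analysis.
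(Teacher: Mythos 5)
Your argument is correct and reaches the conclusion by a route that is recognisably a duality argument, but differs in execution from the paper's. The paper introduces the a priori unknown backward fundamental solution $K_A$, performs a space-time integration by parts over the strip $[s,t]\times\T^2$ with the Hermitian pairing $(\phi,\psi)=\int\phi\ovl{\psi}$ (for which $\covd_A^j$ is formally skew-adjoint, so no sign-flipped connection appears), and then plugs in $\phi=p_A(w;\cdot)$, $\psi=K_A(\cdot;z)$ so that the bulk terms vanish and the boundary terms collapse to $p_A(w;z)=\ovl{K_A(w;z)}$. That version pairs two singular kernels and so requires a limiting argument at both endpoints, which the paper addresses by citation. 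Your proof instead differentiates the propagation identity $\phi(t_0,x_0)=\int p_A((s,y);z_0)\phi(s,y)\,dy$ in $s$, using the bilinear pairing $\int fg$ (under which the transpose of $\covd_A^j$ is $-\covd_{-A}^j$), integrates by parts in space only, and identifies the PDE satisfied by $p_A(\cdot;z_0)$ before conjugating. This avoids introducing $K_A$ altogether and pairs the singular kernel against a smooth test solution, which is somewhat more self-contained; the price is that you need to justify differentiating the $s$-integral and taking $\phi(s,\cdot)$ arbitrary, which you correctly flag as the residual technicality. Both routes rest on the same underlying integration-by-parts facts for $\covd_A$; your observation that the Feynman--Kac--It\^o formula gives $\ovl{p_A}=p_{-A}$ and could be used instead is also sound, and is essentially what the paper notes in the remark following the lemma.
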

\begin{proof}
Let $\phi, \psi \colon [s, t] \times \T^2 \ra \C$, and integrate by parts to obtain
\begin{equs}
((\ptl_t - \covd_A^j \covd_{A, j}) \phi, \psi) = (\phi, -(\ptl_t + \covd_A^j \covd_{A, j}) \psi) + (\phi(t), \psi(t))_{L_x^2} - (\phi(s), \psi(s))_{L_x^2}.
\end{equs}
Now, let $K_A(w; z)$ denote the fundamental solution to the backwards covariant heat equation, so that with $z$ fixed, 
\begin{equs}
-(\ptl_s + \covd_{A(w)}^j \covd_{A(w), j}) K_A(w; z) = 0.
\end{equs}
Fix $w = (s, y)$, $z = (t, x)$ with $s < t$. Let $\phi(\cdot) = p_A(w; \cdot)$ and $\psi(\cdot) = K_A(\cdot; z)$. By the integration by parts identity, we formally have that
\begin{equs}
0 = (\phi(t), \psi(t))_{L_x^2} - (\phi(s), \psi(s))_{L_x^2} = p_A(w; z) - \ovl{K_A(w; z)},
\end{equs}
which is the desired result. To make this argument rigorous, one needs a limiting argument to handle the singularities of $p_A$ and $K_A$ at their endpoints, see e.g. the proof of \cite[Lemma 2.9(2)]{HeinNab2014}.
\end{proof}

\begin{remark}
An alternative probabilistic way to prove Lemma \ref{lemma:heat-kernel-time-reversal} would be to start from the Feynman-Kac-It\^{o} formula and use time-reversal properties of the Stratonovich integral. 
\end{remark}

Let $I = [t_0, t_1] \sse \R$ be a compact interval. For brevity, let $\spacetime := I \times \T^2$. In the following, suppose that $\phi \colon J \ra \C$ is a solution to
\begin{equs}
(\ptl_t - \covd_A^j \covd_{A, j}) \phi  = G, ~~ \phi(t_0) = 0.
\end{equs}

We first prove a relatively simple energy estimate, and then later turn to more refined energy estimates.

\begin{lemma}\label{kernel:lem-energy-estimate}
Let $K$ be a positive solution to the backwards heat equation on $I \times \T^2$. We have that
\begin{equs}
\|K^{1/2}\phi\|_{L_t^\infty L_x^2(\spacetime)} + \|K^{1/2} \covd_A \phi\|_{L_t^2 L_x^2(\spacetime)} \lesssim \|K^{1/2} G\|_{L_t^1L_x^2(\spacetime)}.
\end{equs}
\end{lemma}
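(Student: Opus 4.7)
The plan is to apply the covariant monotonicity formula from Proposition \ref{prop:monotonicity} with $p = 2$ and integrate in space, using the crucial cancellation between the two $\Delta$-terms to eliminate all dependence on the connection $A$ from the linear part of the resulting ODE.

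Concretely, I would start by noting that since $K$ solves the backwards heat equation, Proposition \ref{prop:monotonicity} yields
\begin{equs}
\ptl_t \Big(K \tfrac{|\phi|^2}{2}\Big) = -(\Delta K) \tfrac{|\phi|^2}{2} + K \Delta \Big(\tfrac{|\phi|^2}{2} \Big) - K |\covd_A\phi|^2 + K \mrm{Re}(\bar{\phi} G).
\end{equs}
Integrating over $\T^2$ and integrating by parts twice in the second term, the two terms involving $\Delta K$ cancel exactly, giving the clean identity
\begin{equs}
\tfrac{1}{2} \tfrac{d}{dt}\|K^{1/2} \phi(t)\|_{L_x^2}^2 + \|K^{1/2} \covd_A \phi(t)\|_{L_x^2}^2 = \int_{\T^2} K \mrm{Re}(\bar{\phi} G)\, dx.
\end{equs}
This cancellation is the heart of the argument, and it is why the monotonicity formula yields bounds independent of the size of $A$.

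By Cauchy--Schwarz, the right-hand side is bounded by $\|K^{1/2} \phi(t)\|_{L_x^2} \|K^{1/2} G(t)\|_{L_x^2}$. Setting $f(t) := \|K^{1/2} \phi(t)\|_{L_x^2}$ (with $f(t_0) = 0$), a standard argument (or a regularization $\sqrt{f^2 + \varepsilon}$ to handle the zero set of $f$) gives $f'(t) \leq \|K^{1/2} G(t)\|_{L_x^2}$, so integrating from $t_0$ yields $f(t) \leq \|K^{1/2} G\|_{L_t^1 L_x^2(\spacetime)}$, which is the desired $L_t^\infty L_x^2$ bound.

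For the $\covd_A \phi$ term, I would integrate the energy identity over $[t_0, t_1]$ to obtain
\begin{equs}
\|K^{1/2} \covd_A \phi\|_{L_t^2 L_x^2(\spacetime)}^2 \leq \|K^{1/2} \phi\|_{L_t^\infty L_x^2(\spacetime)} \|K^{1/2} G\|_{L_t^1 L_x^2(\spacetime)},
\end{equs}
and then insert the bound just obtained for the first factor and take square roots. There is essentially no obstacle here once the monotonicity formula is in hand; the only mild care needed is justifying the differential inequality for $f$ near its zero set, which is entirely routine. The mildly delicate point is the integration by parts on $\T^2$ for the $K\Delta(|\phi|^2/2)$ term, which requires sufficient regularity of $\phi$ and $K$ — this is guaranteed in our setup since $A$, $G$, and the data may be assumed smooth when setting up the a priori estimate, and the bound then passes to the limit by density.
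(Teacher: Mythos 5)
Your proof is correct and takes essentially the same approach as the paper: both integrate the covariant monotonicity formula (Proposition \ref{prop:monotonicity}) over space, exploit the cancellation between the $(\Delta K)|\phi|^2$ and $K\Delta(|\phi|^2)$ terms, apply Cauchy--Schwarz to the $K\Re(\bar\phi G)$ term, and then close the estimate. The only cosmetic difference is that you close via a Gronwall-type differential inequality $f'(t)\leq\|K^{1/2}G(t)\|_{L_x^2}$, whereas the paper uses Young's inequality followed by a kick-back argument; both are routine and equivalent here.
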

\begin{proof}
By the monotonicity formula (Proposition \ref{prop:monotonicity}), we have that
\begin{equs}
\frac{1}{2} \|K^{1/2} \phi\|_{L_t^\infty L_x^2(J)}^2 + \|K^{1/2} \covd_A \phi \|_{L_t^2L_x^2(J)}^2 \leq 2 \|K \mrm{Re}(\bar{\phi} G)\|_{L_t^1 L_x^1(J)}.
\end{equs}
The result follows by estimating
\begin{equs}
\|K \mrm{Re}(\bar{\phi} G)\|_{L_t^1 L_x^1(J)} \leq \frac{1}{8} \|K^{1/2} \phi\|_{L_t^\infty L_x^2(J)}^2 + 2 \|K^{1/2} G\|_{L_t^1 L_x^2(J)}^2
\end{equs}
and using a kick-back argument. 
\end{proof}

To estimate the various covariant stochastic objects in Section \ref{section:cshe}, we will not directly use the energy estimate from Lemma \ref{kernel:lem-energy-estimate}, but rather we will need the dual version of this estimate, which we state next. Similarly, we will need to dualize the more refined energy estimates that we later prove in this subsection.

\begin{corollary}[Dual version of energy estimate]\label{kernel:cor-energy-estimate-dual}
Let $I\subseteq [0,\infty)$ be a compact interval, let $J:= I \times \T^2$, and let $K\colon J  \rightarrow (0,\infty)$ be a positive solution of the backwards heat equation. Then, it holds for all $H\colon J \rightarrow \C$ that
\begin{equs}\label{kernel:eq-energy-estimate-dual}
\bigg\| K^{-\frac{1}{2}}(w) \int dz \,  \covd_A p_A(w;z) H(z) \bigg\|_{L_s^\infty L_y^2(J)} 
\lesssim \Big\| K^{-\frac{1}{2}}(z) H(z) \Big\|_{L_z^2(J)}.
\end{equs}
\end{corollary}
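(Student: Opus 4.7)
The plan is to derive \eqref{kernel:eq-energy-estimate-dual} as a duality statement for the operator appearing implicitly in the energy estimate of Lemma \ref{kernel:lem-energy-estimate}. Given $G \colon J \to \C$, the Duhamel representation of the solution $\phi$ with $\phi(t_0) = 0$ reads $\phi(z) = \int_J dw\, p_A(w;z) G(w) \ind_{s<t}$, so that
\begin{equs}
(\covd_A \phi)(z) = \int_J dw\, \covd_A p_A(w;z) G(w) \ind_{s<t}.
\end{equs}
Lemma \ref{kernel:lem-energy-estimate}, restricted to its second summand, therefore says that the integral operator
\begin{equs}
(T \tilde G)(z) := K^{1/2}(z) \int_J dw\, \covd_A p_A(w;z) K^{-1/2}(w) \tilde G(w) \ind_{s<t}
\end{equs}
(obtained by conjugating the Duhamel operator with multiplication by $K^{\pm 1/2}$) is bounded from $L_s^1 L_y^2(J)$ to $L_z^2(J)$, with norm $\lesssim 1$.

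Next, I will use that the dual of $L_s^1 L_y^2(J)$ is $L_s^\infty L_y^2(J)$ (since $L_y^2$ is a Hilbert space, hence reflexive and self-dual). A standard Fubini computation using the complex $L^2$ pairing $\langle f,g\rangle = \int f \bar g$ shows that the adjoint $T^* \colon L_z^2(J) \to L_s^\infty L_y^2(J)$ is given by
\begin{equs}
(T^* \tilde H)(w) = K^{-1/2}(w) \int_J dz\, K^{1/2}(z) \overline{\covd_A p_A(w;z)}\, \tilde H(z) \ind_{s<t},
\end{equs}
and that $\|T^*\| = \|T\| \lesssim 1$. Since the $L_s^\infty L_y^2$ norm is invariant under complex conjugation of $\tilde H$ (and the kernel $\covd_A p_A$), the same bound holds for the operator obtained by removing the complex conjugate on $\covd_A p_A$ and replacing $\tilde H$ by $\bar{\tilde H}$.

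Substituting $\tilde H(z) = K^{-1/2}(z) H(z)$ (so that the weight $K^{1/2}(z)$ in $T^*$ cancels and the right-hand side of the desired estimate becomes $\|\tilde H\|_{L_z^2} = \|K^{-1/2} H\|_{L_z^2}$) produces exactly \eqref{kernel:eq-energy-estimate-dual}. The only real content is the energy estimate of Lemma \ref{kernel:lem-energy-estimate}; everything else is bookkeeping.

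\textbf{Main obstacle.} There is no serious obstacle here — this is a routine $L^1 \to L^\infty$ duality argument — but one must be vigilant in two respects: (i) the weights $K^{\pm 1/2}$ must be placed on the correct variable so that the indicator $\ind_{s<t}$ (equivalently, the causality built into $p_A$) is preserved on both sides; (ii) the complex conjugation arising from the complex $L^2$ pairing must be tracked, though since only the absolute value of $\covd_A p_A$ enters the resulting norm inequality, it ultimately does not affect the constant.
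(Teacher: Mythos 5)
Your proposal is correct and is essentially the paper's argument recast in operator-theoretic language: the paper pairs $K^{-1/2}(w)\int dz\,\covd_A p_A(w;z)H(z)$ against an arbitrary $G\in L^1_s L^2_y$, applies Fubini and Cauchy--Schwarz, invokes Lemma \ref{kernel:lem-energy-estimate}, and takes the supremum over $\|G\|_{L^1_s L^2_y}\leq 1$ — which is exactly what your identification of the conjugated operator $T$ and its adjoint $T^*$ encodes. The only cosmetic difference is that the paper uses the bilinear pairing $\int f\,g$ rather than the sesquilinear one, so the complex-conjugation bookkeeping you flag never surfaces there; your observation that it is immaterial is correct.
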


\begin{proof}
The estimate \eqref{kernel:eq-energy-estimate-dual} follows from Lemma \ref{kernel:lem-energy-estimate} and duality. To be more precise, 
let $G\in L_s^1 L_y^2(J \rightarrow \C)$. Then, using Cauchy-Schwarz and Lemma \ref{kernel:lem-energy-estimate}, it holds that 
\begin{align*}
&\,\Big| \int \dw \, G(w) \Big( K^{-\frac{1}{2}}(w) \int \dz \, \covd_A p_A(w;z) H(z) \Big) \Big| \\
=&\, \Big| \int \dz\,  H(z) \Big( \int \dw\,  \covd_A p_A(w;z) K^{-\frac{1}{2}}(w) G(w) \Big| \\ 
\leq&\, \big\| K^{-\frac{1}{2}}(z) H(z) \big\|_{L_z^2(J)} \Big\| K^{\frac{1}{2}}(z) \int \dw \covd_A p_A(w;z) \, K^{-\frac{1}{2}}(w) G(w) \Big\|_{L_z^2} \\
\lesssim&\, \big\| K^{-\frac{1}{2}}(z) H(z) \big\|_{L_z^2(J)} \big\| G(w) \big\|_{L_s^1 L_y^2(J)}.
\end{align*}
By taking a supremum over all $G\in L_s^1 L_y^2(J\rightarrow \C)$ satisfying $\| G\|_{L_s^1 L_y^2(J)}\leq 1$, we then obtain the desired estimate \eqref{kernel:eq-energy-estimate-dual}.
\end{proof}

Next, we state and prove an energy estimate involving two covariant derivatives of the covariant heat kernel.

\begin{lemma}\label{lemma:two-derivative-energy-estimate}
Let $I\subseteq [0,\infty)$ be a compact interval, let $J:= I \times \T^2$, 
and let $H\colon J \rightarrow \C$. Then, we have that 
\begin{equs}
\bigg\| \int dz H(z) \covd_{-A(w)} \covd_{A(z)} p_A(w; z)\bigg\|_{L_w^2(J)} \lesssim \|H(z)\|_{L_z^2(J)}.
\end{equs}
\end{lemma}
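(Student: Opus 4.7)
The plan is to establish the stronger bound $\|TH\|_{L_w^2(J)} \leq \|H\|_{L_z^2(J)}$, where I write $TH(w) := \int dz \, H(z) \, \covd_{-A(w)}\covd_{A(z)} p_A(w;z)$. By density it suffices to treat smooth $H$; the two key moves are then (i) to introduce an auxiliary function $\Psi$ and identify the evolution equation it satisfies in $w$, and (ii) to close via an energy identity with two integrations by parts in $y$.

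First, since $\covd_{-A(w)}$ acts only in the $w$-variable, it commutes with the $z$-integral, giving
\begin{equs}
TH(w) = \covd_{-A(w)} \Psi(w), \qquad \Psi(w) := \int dz \, H(z) \, \covd_{A(z)} p_A(w; z).
\end{equs}
To derive the evolution equation for $\Psi(s,y)$, I would differentiate in $s$. The boundary contribution at the lower limit $t = s$ is handled using $p_A(w;z)|_{t=s} = \delta(y-x)$: integrating the distribution $\covd_{A(z), k} \delta(y-x)$ against $H(s,x)$ in $x$ yields $-\covd_{-A(w), k} H(s,y)$. The bulk term is treated via Lemma \ref{lemma:heat-kernel-time-reversal}, whose conjugate gives $(\ptl_s + \covd_{-A(w)}^j \covd_{-A(w), j}) p_A(w;z) = 0$ for $s < t$; note that $\covd_{A(z)}$ commutes with $\ptl_s$ since $A(z)$ depends on the $t$-component of $z$, not $s$. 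Combining the two contributions produces the backwards evolution equation
\begin{equs}
(\ptl_s + \covd_{-A(w)}^j \covd_{-A(w), j}) \Psi(w) = \covd_{-A(w)} H(w), \qquad \Psi(t_1, \cdot) = 0.
\end{equs}

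Second, I would integrate by parts in $y$ and then substitute the above equation to obtain
\begin{equs}
\|TH\|_{L_w^2(J)}^2 &= \int dw \, |\covd_{-A(w)} \Psi|^2 = -\mrm{Re}\int dw \, \bar{\Psi} \, \covd_{-A(w)}^j \covd_{-A(w), j} \Psi \\
&= \mrm{Re}\int dw \, \bar{\Psi} \, \ptl_s \Psi \; - \; \mrm{Re}\int dw \, \bar{\Psi} \, \covd_{-A(w)} H.
\end{equs}
The first term on the right-hand side equals $\tfrac{1}{2}\int dy \, [|\Psi|^2]_{s=t_0}^{s=t_1} = -\tfrac{1}{2}\|\Psi(t_0)\|_{L_y^2}^2 \leq 0$ by the boundary condition at $t_1$, while a second integration by parts in $y$ turns the remaining term into
\begin{equs}
-\mrm{Re}\int dw \, \bar{\Psi} \, \covd_{-A(w)} H = \mrm{Re}\int dw \, H \, \overline{\covd_{-A(w)}\Psi} = \mrm{Re}(H, TH)_{L^2(J)}.
\end{equs}
Discarding the non-positive $\Psi(t_0)$-term and applying Cauchy--Schwarz yields $\|TH\|_{L_w^2}^2 \leq |(H, TH)_{L^2(J)}| \leq \|H\|_{L_z^2} \|TH\|_{L_w^2}$, whence the desired bound (in fact with sharp constant one).

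The main obstacle I anticipate is the careful bookkeeping of signs and conjugations: one must verify that the $\delta$-singularity of $p_A$ at $t = s$ interacts with $\covd_{A(z)}$ (after integration by parts in $x$) so as to produce precisely the source $\covd_{-A(w)} H$ of the evolution equation for $\Psi$, rather than something off by a sign or a spurious $A$-dependent term. The entire calculation must also avoid any $\ptl_t A$ or $F_A$ contributions that would arise from a direct $H^2$-type energy estimate based on Lemma \ref{lemma:D-A-phi-equation}, since the stated bound is uniform in the connection; routing through the backwards equation for $\Psi$ via Lemma \ref{lemma:heat-kernel-time-reversal} is exactly what delivers this uniformity.
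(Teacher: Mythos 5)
Your proof is correct, and it is the dual (direct) reformulation of the argument in the paper. The paper fixes $k$, takes an arbitrary test function $G \in L_w^2(J)$, observes that $\phi(z) := \int dw\, p_A(w;z)\,\covd_{A(w)}^k G(w)$ solves the \emph{forward} covariant heat equation $(\partial_t - \covd_A^j\covd_{A,j})\phi = \covd_A^k G$, applies the already-established monotonicity formula (Proposition \ref{prop:monotonicity}) together with a kick-back to get $\|\covd_A\phi\|_{L_z^2} \lesssim \|G\|_{L_w^2}$, and then closes by duality after one integration by parts in $w$. You instead work directly with $\Psi(w) := \int dz\, H(z)\,\covd_{A(z)} p_A(w;z)$, derive the \emph{backward} equation $(\partial_s + \covd_{-A(w)}^j\covd_{-A(w),j})\Psi = \covd_{-A(w)} H$ with $\Psi(t_1) = 0$, and run the energy identity by hand. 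The two are precisely $L^2$-dual to each other. Your route is self-contained but requires the $\delta$-boundary computation (to produce the source $\covd_{-A(w)}H$, not something with a stray sign or $A$-dependent term); I checked this computation and the signs do work out: $\lim_{t\downarrow s}\int dx\, H(t,x)\,\covd_{A(t,x),k} p_A(s,y;t,x) = -\covd_{-A(s,y),k}H(s,y)$, so the Leibniz boundary term contributes $+\covd_{-A(w),k}H$, and the bulk term, via the conjugate of Lemma \ref{lemma:heat-kernel-time-reversal}, contributes $-\covd_{-A(w)}^j\covd_{-A(w),j}\Psi_k$. The paper's route avoids this $\delta$-bookkeeping at the price of an extra duality step, since the monotonicity formula was already proved; both deliver the $A$-uniform bound with constant one.

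One minor presentational caveat: when you write $\|TH\|_{L_w^2}^2 = -\Re\int dw\,\bar\Psi\,\covd_{-A(w)}^j\covd_{-A(w),j}\Psi$ and then $-\Re\int dw\,\bar\Psi\,\covd_{-A(w)}H = \Re(H, TH)_{L^2}$, you are (implicitly) fixing the index $k$ on $\Psi_k$ and pairing the $\covd_{-A,k}$-component of $TH$ with $H$, i.e.\ $\Re(H, (TH)_{kk})_{L^2}$; Cauchy--Schwarz then gives $\leq \|H\|\,\|(TH)_{kk}\| \leq \|H\|\,\|TH\|$, which is still what you need. It would be worth stating explicitly that $k$ is fixed throughout, as the paper does.
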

\begin{proof}
Fix $k \in [2]$. Let $G\in L_w^2(J)$ and let $\phi \colon J \rightarrow \C^2$ be the solution of the covariant heat equation
$(\partial_t - \covd_A^j \covd_{A,j}) \phi = \covd_{A}^k G$. Using the monotonicity formula (Proposition \ref{prop:monotonicity}), 
we have that 
\begin{equs}
\frac{1}{2} \|\phi \|_{L_t^\infty L_x^2(J)}^2 + \|\covd_A \phi\|_{L_z^2(J)}^2 \leq 2 \big| (\phi, \covd_{A}^k G)_{L_z^2(J)} \big| = 2 \big| (\covd_{A}^k \phi, G)_{L_z^2(J)} \big| \leq \frac{1}{2}\|\covd_A^k \phi\|_{L_z^2(J)}^2 + 2 \|G\|_{L_w^2(J)}^2.
\end{equs}
Using a kick-back argument, this implies that $\| \covd_A \phi \|_{L_z^2(J)} \lesssim \| G \|_{L_w^2(J)}$.
It then follows that 
\begin{align*}
&\, \Big| \int \dw G(w) \int \dz H(z) \covd_{-A(w)}^k \covd_{A(z)} p_A(w; z)  \Big| \\ 
=&\, \Big| \int \dw \covd_{A(w)}^k G(w) \int \dz H(z) \covd_{A(z)} p_A(w;z) \Big| 
= \Big| \int \dz H(z) \covd_{A(z)} \phi(z) \Big| \\
\lesssim&\, \big\| H(z) \big\|_{L_z^2(J)} \big\| \covd_A \phi (z) \big\|_{L_z^2(J)}
\lesssim \big\| H(z) \big\|_{L_z^2(J)} \big\| G \big\|_{L_w^2(J)}. 
\end{align*}
After taking a supremum over all $G\in L_w^2(J)$ satisfying $\| G \|_{L_w^2(J)}\leq 1$, we obtain the desired estimate. 
\end{proof}

The following result is a more refined version of Lemma \ref{kernel:lem-energy-estimate}, which includes time weights at both endpoints. These time weights provide more flexibility in the estimates, which will be crucial for obtaining the right bounds in Section \ref{section:cshe}.

\begin{lemma}[Two-sided time-weighted estimate]\label{lemma:two-sided-time-weighted-estimate}
Let $\alpha, \beta > 0$, let $\lambda \in [0, \frac{1}{2}]$, and let $J = [t_0, t_1] \times \T^2$.  Suppose $K \in C^\infty([t_0, t_1) \times \T^2)$ is a positive solution to the backwards heat equation on $J$, i.e. that $(\ptl_t + \Delta)K = 0$ and $K > 0$. Then for any $G \colon J \ra \C$, we have that
\begin{equs}\label{eq:two-sided-time-weighted-estimate}
\bigg\| (t - t_0)^{-\frac{\alpha}{2}} (t_1 - t)^{\frac{\beta}{2}} K^{1/2}(z) &\int dw \covd_{A(z)} p_A(w; z) G(w) \bigg\|_{L_z^2(J)} \leq \\
&\min(\alpha, \beta)^{-\frac{1}{2}} \bigg\| (s - t_0)^{-\frac{\alpha}{2} + \lambda} (t_1 - s)^{\frac{\beta}{2} + \frac{1}{2}-\lambda} K^{1/2}(w) G(w) \bigg\|_{L_w^2(J)}.
\end{equs}
\end{lemma}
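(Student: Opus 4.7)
The plan is to apply the covariant monotonicity formula (Proposition~\ref{prop:monotonicity}) with the explicit time weight $\omega(t) := (t-t_0)^{-\alpha}(t_1-t)^\beta$ multiplying $K$, and then close the estimate via a Young-type splitting combined with a weighted AM--GM bound on $\omega^2/|\omega'|$. Writing $\phi(z) := \int dw\, p_A(w;z) G(w)$, so that $(\ptl_t - \covd_A^j \covd_{A,j})\phi = G$ with $\phi(t_0)=0$, the left-hand side of \eqref{eq:two-sided-time-weighted-estimate} is exactly $\|(t-t_0)^{-\alpha/2}(t_1-t)^{\beta/2} K^{1/2} \covd_A \phi\|_{L_z^2(J)}$.

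First I would apply Proposition~\ref{prop:monotonicity} to $K|\phi|^2/2$ and integrate over $\T^2$, so that the Laplacian terms cancel by spatial integration by parts; this gives the pointwise-in-time identity $\tfrac{d}{dt}\int K|\phi|^2/2\,dx + \int K|\covd_A\phi|^2\,dx = \int K\,\mrm{Re}(\bar\phi G)\,dx$. Multiplying by $\omega(t)$ and integrating over $[t_0,t_1]$, note that $\omega'(t) = -\omega(t)\bigl[\alpha/(t-t_0)+\beta/(t_1-t)\bigr] \leq 0$. Integration by parts in time, together with $\phi(t_0)=0$, $\omega(t_1)=0$ (which uses $\beta>0$), and the vanishing of the $t\to t_0^+$ boundary contribution (to be discussed below), yields
\begin{equs}
\int \omega K|\covd_A\phi|^2 + \int |\omega'| K|\phi|^2/2 \leq \int \omega K|\phi||G|.
\end{equs}
Applying Young's inequality with the splitting $\omega = |\omega'|^{1/2}\cdot(\omega^2/|\omega'|)^{1/2}$ then bounds the right-hand side by $\tfrac{1}{2}\int |\omega'|K|\phi|^2 + \tfrac{1}{2}\int (\omega^2/|\omega'|)K|G|^2$; the $|\omega'|K|\phi|^2$ contributions cancel exactly between the two sides, leaving $\int \omega K|\covd_A\phi|^2 \leq \tfrac{1}{2}\int (\omega^2/|\omega'|) K|G|^2$.

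The remaining step is a pointwise bound on $\omega^2/|\omega'|$. Writing $a = t-t_0$ and $b = t_1-t$,
\begin{equs}
\frac{\omega^2}{|\omega'|} = \frac{a^{1-\alpha}b^{\beta+1}}{\alpha b + \beta a} = a^{-\alpha+2\lambda}b^{\beta+1-2\lambda} \cdot \frac{a^{1-2\lambda}b^{2\lambda}}{\alpha b + \beta a}.
\end{equs}
The weighted AM--GM inequality applied as $2\lambda \cdot (\alpha b/(2\lambda)) + (1-2\lambda)\cdot(\beta a/(1-2\lambda)) \geq (\alpha b/(2\lambda))^{2\lambda}(\beta a/(1-2\lambda))^{1-2\lambda}$ (with the endpoints $\lambda\in\{0,1/2\}$ handled directly via $\alpha b + \beta a \geq \beta a$ or $\geq \alpha b$, respectively), combined with the elementary bounds $(2\lambda)^{2\lambda}(1-2\lambda)^{1-2\lambda} \leq 1$ and $\alpha^{2\lambda}\beta^{1-2\lambda} \geq \min(\alpha,\beta)$, gives $\omega^2/|\omega'| \leq \min(\alpha,\beta)^{-1} a^{-\alpha+2\lambda}b^{\beta+1-2\lambda}$. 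Substituting this bound into the previous inequality and taking square roots delivers \eqref{eq:two-sided-time-weighted-estimate}.

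The main obstacle is not the algebra but rather justifying that the $t \to t_0^+$ boundary contribution $\omega(t)\int K(t)|\phi(t)|^2/2\,dx$ vanishes, since $\omega(t_0)=+\infty$ whenever $\alpha > 0$. I would handle this by a density argument: first prove the estimate for $G$ compactly supported in $(t_0,t_1)\times\T^2$ (so that $\phi$ vanishes near $t_0$ and the boundary term is trivially zero), and then extend by approximation, using that the estimate is trivially true whenever its right-hand side is infinite.
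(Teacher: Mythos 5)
Your proof is essentially the same as the paper's: both apply the covariant monotonicity formula, multiply by the weight $v(t)=(t-t_0)^{-\alpha}(t_1-t)^\beta$, integrate in time, apply a Young-type inequality to the cross term, kick back, and close via a weighted AM--GM with weights $(2\lambda,1-2\lambda)$. Your exact-cancellation splitting $\omega=|\omega'|^{1/2}(\omega^2/|\omega'|)^{1/2}$ versus the paper's $\min(\alpha,\beta)$-weighted Young followed by the pointwise bound $(t-t_0)^{-2\lambda}(t_1-t)^{-(1-2\lambda)}\leq(t-t_0)^{-1}+(t_1-t)^{-1}$ come to the same thing.

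One point to tighten: you dismiss the $t_1$ boundary contribution by noting $\omega(t_1)=0$, but $K$ is a backward heat solution and typically blows up at $t_1$ (in the applications $K(z)=p(z;z_1)$), so the product $\omega(t)\int K(t)|\phi(t)|^2\,dx$ is a priori indeterminate. Your density reduction does repair this implicitly---with $G$ compactly supported in $(t_0,t_1)$, $\phi$ is bounded up to $t_1$ and $\int_{\T^2}K(t,\cdot)\,dx$ is conserved, so the boundary contribution is $O(\omega(t))\to 0$---but you only invoke the compact support to handle the $t_0$ end, and you should state explicitly that it also controls $t_1$. The paper sidesteps $t_1$ entirely by integrating over $J_\varepsilon=[t_0,t_1-\varepsilon]\times\T^2$, dropping the non-negative boundary term at $t_1-\varepsilon$, and then taking $\varepsilon\downarrow 0$ via monotone convergence; it also establishes the $t_0$ limit directly via the diamagnetic inequality rather than by density, though your density argument is a valid alternative.
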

\begin{proof}
Let $(\ptl_t - \covd_A^j \covd_{A, j}) \phi = G$, $\phi(t_0) = 0$. By the monotonicity formula (Proposition \ref{prop:monotonicity}), we have that
\begin{equs}
\ptl_t \big(\|K^{1/2} \phi(t)\|_{L_x^2}^2\big) = -2 \|K^{1/2} \covd_A \phi(t)\|_{L_x^2}^2 + (K^{1/2} G(t), K^{1/2} \phi(t) )_{L_x^2}, ~~ t \in (t_0, t_1).
\end{equs}
Define the weight $v(t) := (t - t_0)^{-\alpha} (t_1 - t)^{\beta}$. We may then compute
\begin{equs}
\ptl_t \Big( v(t) \|K^{1/2} \phi(t)\|_{L_x^2}^2\Big) = &-2 v(t) \|K^{1/2} \covd_A \phi(t)\|_{L_x^2}^2 + v(t) (K^{1/2} G(t), K^{1/2} \phi(t) )_{L_x^2} \\
&- v(t) \|K^{1/2} \phi(t)\|_{L_x^2}^2 \big(\alpha (t-t_0)^{-1} + \beta (t_1 - t)^{-1}\big).
\end{equs}
Let $\varep > 0$ and let $J_\varep := [t_0, t_1 - \varep] \times \T^2$. Upon integrating over $t \in (t_0, t_1 - \varep)$, we obtain
\begin{equs}
2 \Big\|v^{1/2} K^{1/2} \covd_A \phi\Big\|_{L_z^2(J_\varep)}^2 + \Big\| (\alpha(t-t_0)^{-1} + \beta (t_1 - t)^{-1})^{\frac{1}{2}} v^{1/2} K^{1/2} \phi\Big\|_{L_z^2(J_\varep)}^2 = \Big(v^{1/2} K^{1/2} G, v^{1/2} K^{1/2} \phi\Big)_{L_z^2(J_\varep)}.
\end{equs}
Here, we assumed that 
\begin{equs}\label{eq:singular-time-weight-initial-time-is-still-zero}
\lim_{t \downarrow t_0} v(t) \|K^{1/2} \phi(t)\|_{L_x^2}^2 = 0,
\end{equs}
which will be shown at the end of this proof.  Moving on, we estimate
\begin{equs}
\Big(v^{1/2}& K^{1/2} G, v^{1/2} K^{1/2} \phi\Big)_{L_z^2(J_\varep)}  \\
&\leq\Big\| v^{1/2} (s - t_0)^{\lambda} (t_1 - s)^{\frac{1}{2}-\lambda} K^{1/2} G\Big\|_{L_w^2(J_\varep)} \Big\| v^{1/2} (t - t_0)^{-\lambda} (t_1 - t)^{-(\frac{1}{2}-\lambda)} K^{1/2} \phi\Big\|_{L_z^2(J_\varep)} \\
&\leq \frac{1}{2 \min(\alpha, \beta)} \Big\| v^{1/2} (s - t_0)^{\lambda} (t_1 - s)^{\frac{1}{2}-\lambda} K^{1/2} G\Big\|_{L_w^2(J_\varep)}^2 \\
&~~~~~~~~+ \frac{1}{2}\min(\alpha, \beta) \Big\| v^{1/2} (t - t_0)^{-\lambda} (t_1 - t)^{-(\frac{1}{2}-\lambda)} K^{1/2} \phi\Big\|_{L_z^2(J_\varep)}^2.
\end{equs}
By estimating
\begin{equs}
(t - t_0)^{-2\lambda} (t_1 - t)^{-(1-2\lambda)} \leq 
(t - t_0)^{-1} + (t_1 - t)^{-1}, 
\end{equs}
we may bound
\begin{equs}
\min(\alpha, \beta) \Big\| v^{1/2} (t - t_0)^{-\lambda} (t_1 - t)^{-(\frac{1}{2}-\lambda)} K^{1/2} \phi\Big\|_{L_z^2(J_\varep)}^2 \leq  \Big\| (\alpha(t-t_0)^{-1} + \beta (t_1 - t)^{-1})^{\frac{1}{2}} v^{1/2} K^{1/2} \phi\Big\|_{L_z^2(J_\varep)}^2.
\end{equs}
We thus obtain, for all $\varep > 0$,
\begin{equs}
\Big\| v^{1/2} K^{1/2} \covd_A \phi\Big\|_{L_z^2(J_\varep)}^2 &\leq \frac{1}{4 \min(\alpha, \beta)} \Big\| v^{1/2} (s - t_0)^{\lambda} (t_1 - s)^{\frac{1}{2}-\lambda} K^{1/2} G\Big\|_{L_w^2(J)}^2 .
\end{equs}
(Note the norm on the RHS is over $J$ instead of $J_\varep$.) Taking $\varep \downarrow 0$ and applying monotone convergence, we obtain the desired result.

It remains to show the claim \eqref{eq:singular-time-weight-initial-time-is-still-zero}. It suffices to assume that the norm on $G$ appearing in the RHS of \eqref{eq:singular-time-weight-initial-time-is-still-zero} is finite, because otherwise the lemma is trivial. We note that by the diamagnetic inequality (Lemma \ref{kernel:lem-diamagnetic-heat-kernel}), we have that for $t \in [t_0, t_1]$,
\begin{equs}
|\phi(t,x)| \leq \int_{t_0}^t \ds \int_{\T^2}  \hspace{-0.5ex} \dy \,  \big| p_A(s,y;t,x) \big| \big| G(s,y) \big| 
\leq \int_{t_0}^t \ds \int_{\T^2} \hspace{-0.5ex} \dy \, p(s,y;t,x) \big| G(s,y) \big| 
= \int_{t_0}^t \ds  \Big( e^{(t-s)\Delta} \big|G(s)\big| \Big)(x)
\end{equs}
and thus
\begin{equs}
\|\phi(t)\|_{L_x^2} \leq \int_{t_0}^t \|G(s)\|_{L_x^2} ds &\lesssim \bigg(\int_{t_0}^t (s - t_0)^{\alpha - 2\lambda} ds\bigg)^{\frac{1}{2}} \Big\|(s-t_0)^{-\frac{\alpha}{2} + \lambda} G\Big\|_{L_w^2([t_0, t] \times \T^2)} \\
&\lesssim_\alpha (t-t_0)^{\frac{\alpha}{2}} \Big\|(s-t_0)^{-\frac{\alpha}{2} + \lambda} G\Big\|_{L_w^2([t_0, t] \times \T^2)} .
\end{equs}
In the last inequality, we used that $\alpha > 0$ and $\lambda \in [0, \frac{1}{2}]$. To finish, observe that by our assumption on $K$, there is some uniform lower bound $K(z) \geq c > 0$ for all $z = (t, x)$ with $t$ bounded away from $t_1$, and thus we have the bound
\begin{equs}
\Big\|(s-t_0)^{-\frac{\alpha}{2} + \lambda} G\Big\|_{L_w^2([t_0, t] \times \T^2)}  \lesssim \Big\|(s - t_0)^{-\frac{\alpha}{2} + \lambda} (t_1 - s)^{\frac{\beta}{2} + \frac{1}{2}-\lambda} K^{\frac{1}{2}}(w) G\Big\|_{L_w^2([t_0, t] \times \T^2)},
\end{equs}
for all $t$ sufficiently close to $t_0$. Since we assumed that the norm on $G$ appearing in the RHS \eqref{eq:two-sided-time-weighted-estimate} is finite, the RHS above goes to zero as $t \downarrow t_0$ by dominated convergence. The claim \eqref{eq:singular-time-weight-initial-time-is-still-zero} now follows.
\end{proof}

We state the dual version of the two-sided time-weighted estimate.

\begin{corollary}[Dual two-sided time-weighted estimate]\label{cor:dual-two-sided-time-weighted-estimate}
Let $\alpha, \beta > 0$ and let $\lambda \in [0, \frac{1}{2}]$. Furthermore, let $J = [t_0, t_1] \times \T^2$ and let $K \in C^\infty([t_0, t_1) \times \T^2)$ be a positive solution to the backwards heat equation on $J$, i.e. that $(\ptl_t + \Delta)K = 0$ and $K > 0$. Then for any $H \colon J \ra \C$, we have that
\begin{equs}
\bigg\| (s - t_0)^{\frac{\alpha}{2} - \lambda}& (t_1 - s)^{-\frac{\beta}{2} - (\frac{1}{2}-\lambda)} K(w)^{-\frac{1}{2}} \int dz H(z) \covd_{A(z)} p_A(w; z) \bigg\|_{L_w^2(J)} \leq \\
&\min(\alpha, \beta)^{-\frac{1}{2}} \bigg\|(t - t_0)^{\frac{\alpha}{2}} (t_1 - t)^{-\frac{\beta}{2}} K(z)^{-\frac{1}{2}} H(z)\bigg\|_{L_z^2(J)}.
\end{equs}
\end{corollary}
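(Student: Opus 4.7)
The plan is to deduce this corollary from Lemma \ref{lemma:two-sided-time-weighted-estimate} by duality, mirroring how Corollary \ref{kernel:cor-energy-estimate-dual} was obtained from Lemma \ref{kernel:lem-energy-estimate}. Writing the left-hand side as a supremum over test functions, I would take an arbitrary $G \in L_w^2(J)$ with $\|G\|_{L_w^2(J)} \le 1$ and analyze the pairing
\begin{equs}
I := \int_J dw\, G(w)\, (s-t_0)^{\frac{\alpha}{2}-\lambda}(t_1-s)^{-\frac{\beta}{2}-(\frac{1}{2}-\lambda)} K(w)^{-\frac{1}{2}} \int_J dz\, H(z)\, \covd_{A(z)} p_A(w;z).
\end{equs}

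Introducing the auxiliary function $\tilde G(w) := (s-t_0)^{\frac{\alpha}{2}-\lambda}(t_1-s)^{-\frac{\beta}{2}-(\frac{1}{2}-\lambda)} K(w)^{-\frac{1}{2}} G(w)$ and swapping the order of integration, I would rewrite $I = \int_J dz\, H(z)\, F(z)$ with $F(z) := \int_J dw\, \tilde G(w)\, \covd_{A(z)} p_A(w;z)$. Inserting a factor of $(t-t_0)^{\frac{\alpha}{2}}(t_1-t)^{-\frac{\beta}{2}} K(z)^{-\frac{1}{2}}$ against its reciprocal and applying Cauchy-Schwarz in the $z$-variable yields
\begin{equs}
|I| \le \big\|(t-t_0)^{\frac{\alpha}{2}}(t_1-t)^{-\frac{\beta}{2}} K(z)^{-\frac{1}{2}} H(z)\big\|_{L_z^2(J)} \cdot \big\|(t-t_0)^{-\frac{\alpha}{2}}(t_1-t)^{\frac{\beta}{2}} K(z)^{\frac{1}{2}} F(z)\big\|_{L_z^2(J)}.
\end{equs}
The first factor is exactly the right-hand side of the desired inequality (modulo the prefactor $\min(\alpha,\beta)^{-\frac{1}{2}}$), while the second factor is precisely the quantity controlled by Lemma \ref{lemma:two-sided-time-weighted-estimate} applied to $\tilde G$ in place of $G$.

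The decisive algebraic observation is that the weights $(s-t_0)^{-\frac{\alpha}{2}+\lambda}(t_1-s)^{\frac{\beta}{2}+\frac{1}{2}-\lambda} K^{\frac{1}{2}}(w)$ appearing on the right-hand side of Lemma \ref{lemma:two-sided-time-weighted-estimate} are exact reciprocals of the weights defining $\tilde G$, so the bound collapses to $\min(\alpha,\beta)^{-\frac{1}{2}} \|G\|_{L_w^2(J)} \le \min(\alpha,\beta)^{-\frac{1}{2}}$. Taking the supremum over $G$ then gives the corollary. The only genuine work required is the bookkeeping to verify that the exponents really do cancel, but since the exponents in the statement were designed precisely to make the duality clean, no further analysis is needed beyond invoking Lemma \ref{lemma:two-sided-time-weighted-estimate}.
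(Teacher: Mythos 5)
Your proposal is correct and follows essentially the same duality argument as the paper: test against $G$, swap the order of integration, insert the weight $\eta(z)^{-1}\eta(z)$ with $\eta(z)=(t-t_0)^{-\alpha/2}(t_1-t)^{\beta/2}K(z)^{1/2}$, apply Cauchy--Schwarz, and invoke Lemma~\ref{lemma:two-sided-time-weighted-estimate}; your observation that the weights in the definition of $\tilde G$ exactly invert those appearing on the right-hand side of that lemma is precisely the algebraic cancellation the paper uses. The only cosmetic difference is that the paper abbreviates the cancellation using the auxiliary function $\eta$ while you spell it out explicitly.
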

\begin{proof} 
To simplify the notation, we define $\eta(z) := (t - t_0)^{-\frac{\alpha}{2}} (t_1 - t)^{\frac{\beta}{2}} K(z)^{\frac{1}{2}}$. Given $G \colon  J \ra \C$, we test
\begin{equs}
\Big( &(s - t_0)^{\frac{\alpha}{2} - \lambda} (t_1 - s)^{-\frac{\beta}{2} - (\frac{1}{2}-\lambda)} K(w)^{-\frac{1}{2}} \int dz H(z) \covd_{A(z)} p_A(w; z), G(w)\Big)_{L_w^2(J)}  \\
&= \int dz H(z) \eta(z)^{-1}  \eta(z)  \int dw \eta(w)^{-1} (s - t_0)^{-\lambda} (t_1 - s)^{-(\frac{1}{2}-\lambda)} G(w) \covd_{A(z)} p_A(w; z) \\
&\leq \bigg\| \eta^{-1} H\bigg\|_{L_z^2(J)} \bigg\| \eta(z) \int dw \eta(w)^{-1}  (s - t_0)^{-\lambda} (t_1 - s)^{-(\frac{1}{2}-\lambda)} G(w) \covd_{A(z)} p_A(w; z)\bigg\|_{L_z^2(J)} \\
&\leq  \| \eta^{-1} H\|_{L_z^2(J)}  \|G\|_{L_w^2(J)},
\end{equs}
where we applied the two-sided time-weighted estimate (Lemma \ref{lemma:two-sided-time-weighted-estimate}) in the final inequality. After taking the supremum over all $G\in L^2_w(J)$ satisfying $\| G \|_{L_w^2(J)}\leq 1$, this yields the desired result.
\end{proof}

\begin{corollary}\label{cor:dual-two-sided-weighted-estimate}
Let $J = [t_0, t_1] \times \T^2$ and let $\lambda \in [0, \frac{1}{2}]$. Let $z_1 = (t_1, x)$. Then for any $H \colon J \ra \C$, we have that
\begin{equs}
\bigg\|\int dz H(z) \covd_{A(z)} p_A(w; z)\bigg\|_{L_w^2(J)} \lesssim \lambda^{-\frac{1}{2}} \bigg\| (t - t_0)^{\lambda} (t_1 - t)^{-\lambda} p(z; z_1)^{-\frac{1}{2}} H(z)\bigg\|_{L_z^2(J)}.
\end{equs}
\end{corollary}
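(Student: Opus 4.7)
The plan is to obtain the estimate via duality, reducing it to a single application of the two-sided time-weighted energy estimate (Lemma \ref{lemma:two-sided-time-weighted-estimate}) with the Euclidean heat kernel $p(\,\cdot\,; z_1)$ playing the role of the weight $K$. By duality for the $L_w^2(J)$-norm, it suffices to show that for every $G \in L_w^2(J)$ with $\|G\|_{L_w^2(J)} \leq 1$,
\begin{equs}
\bigg| \int dw\, G(w) \int dz\, H(z)\, \covd_{A(z)} p_A(w; z) \bigg| \lesssim \lambda^{-1/2} \big\| (t-t_0)^{\lambda} (t_1-t)^{-\lambda} p(z; z_1)^{-1/2} H(z) \big\|_{L_z^2(J)}.
\end{equs}
After swapping the order of integration via Fubini, the function $\phi(z) := \int dw\, G(w)\, p_A(w; z)$ is the Duhamel solution of the covariant heat equation $(\partial_t - \covd_A^j \covd_{A,j}) \phi = G$ with $\phi(t_0) = 0$, so the left-hand side is equal to $|\int dz\, H(z) \covd_{A(z)} \phi(z)|$.

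The next step is to apply Cauchy--Schwarz with the splitting
\begin{equs}
H(z) \covd_{A(z)} \phi(z) = \big[ (t-t_0)^{\lambda} (t_1-t)^{-\lambda} p(z;z_1)^{-1/2} H(z) \big] \cdot \big[ (t-t_0)^{-\lambda} (t_1-t)^{\lambda} p(z;z_1)^{1/2} \covd_{A(z)} \phi(z) \big],
\end{equs}
so that it remains to estimate the $L_z^2(J)$-norm of the second bracket. For this I take $K(z) := p(z; z_1)$, which is a smooth, positive solution of the backwards heat equation on $[t_0, t_1) \times \T^2$, and apply Lemma \ref{lemma:two-sided-time-weighted-estimate} with $\alpha = \beta = 2\lambda$ and the lemma's interpolation parameter equal to $\lambda$. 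With these choices the exponents on the left match the desired weight $(t-t_0)^{-\lambda}(t_1-t)^{\lambda} p(z;z_1)^{1/2}$, and the right-hand side of the lemma simplifies to
\begin{equs}
\lambda^{-1/2} \big\| (t_1 - s)^{1/2}\, p(w; z_1)^{1/2}\, G(w) \big\|_{L_w^2(J)}.
\end{equs}

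To close the argument, I will use the standard Gaussian upper bound for the heat kernel on $\T^2$, namely $p(w; z_1) \lesssim (t_1 - s)^{-1}$ uniformly on $J$, so that $(t_1 - s)^{1/2} p(w; z_1)^{1/2} \lesssim 1$; combined with $\|G\|_{L_w^2(J)} \leq 1$ this produces the bound $\lambda^{-1/2}$ and completes the proof. Since each step is either an application of Cauchy--Schwarz, a direct invocation of Lemma \ref{lemma:two-sided-time-weighted-estimate}, or a standard heat kernel estimate, there is no serious obstacle; the only point requiring attention is choosing the parameters $\alpha$, $\beta$, and the weight $K = p(\,\cdot\,; z_1)$ so that the weights on the two sides align correctly, which in particular exploits that with $K = p(\,\cdot\,; z_1)$ the factor $p(w; z_1)^{1/2}$ arising on the right-hand side combines with $(t_1-s)^{1/2}$ into a bounded function of $w$.
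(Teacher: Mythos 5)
Your proof is correct and follows essentially the same route as the paper: both hinge on choosing $K=p(\cdot;z_1)$, $\alpha=\beta=2\lambda$, interpolation parameter $\lambda$, and the observation that $(t_1-s)\,p(w;z_1)\lesssim 1$. The only difference is cosmetic — you redo the duality argument by testing against $G$ and invoking Lemma \ref{lemma:two-sided-time-weighted-estimate} directly, whereas the paper simply cites the already-dualized Corollary \ref{cor:dual-two-sided-time-weighted-estimate} after inserting $1\lesssim(t_1-s)^{-1/2}p(w;z_1)^{-1/2}$.
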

\begin{proof}
Observe that $p(w; z_1) \lesssim (t_1 - s)^{-1}$, so that we may bound $1 \lesssim (t_1 - s)^{-\frac{1}{2}} p(w; z_1)^{-\frac{1}{2}}$. Then apply the dual two-sided time-weighted estimate (Corollary \ref{cor:dual-two-sided-time-weighted-estimate}) in the case $\alpha = \beta = 2\lambda$ and $K(w) = p(w; z_1)$. 
\end{proof}

We will also need a weighted estimate where we only include an initial time weight or endpoint time weight. We thus state the following sequence of results. The proofs are omitted as they are very similar to before.

\begin{lemma}[Initial time-weighted estimate]\label{kernel:lem-time-weighted-dual}
Let $J = [t_0, t_1] \times \T^2$. For $G \colon J \ra \C$, we have that
\begin{equs}
\bigg\|(t - t_0)^{-\frac{1}{2}} \int dw G(w) \covd_{A(z)} p_A(w; z) \bigg\|_{L_z^2(J)} \lesssim \|G\|_{L_w^2(J)}.
\end{equs}
For $H \colon J \ra \C$, we also have the dual estimate
\begin{equs}
\bigg\|\int dz H(z) \covd_{A(z)} p_A(w; z)\bigg\|_{L_w^2(J)} \lesssim \Big\|(t - t_0)^{\frac{1}{2}} H\Big\|_{L_z^2(J)}.
\end{equs}
\end{lemma}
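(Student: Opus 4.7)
The two estimates in the lemma are dual to each other via the pairing argument used in Corollary \ref{kernel:cor-energy-estimate-dual}, so the plan is to prove the first estimate directly and deduce the second by duality. For the first estimate, I would introduce the auxiliary function $\phi$ solving the covariant heat equation $(\partial_t - \covd_A^j\covd_{A,j})\phi = G$ with $\phi(t_0) = 0$, so that $\int dw\, G(w) \covd_{A(z)} p_A(w;z) = \covd_A\phi(z)$ and the target inequality becomes the weighted energy bound $\|(t-t_0)^{-1/2}\covd_A\phi\|_{L_z^2(J)} \lesssim \|G\|_{L_w^2(J)}$.

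The two-sided time-weighted estimate (Lemma \ref{lemma:two-sided-time-weighted-estimate}) does not directly yield this because taking $\beta \to 0$ in its statement causes the constant $\min(\alpha,\beta)^{-1/2}$ to blow up. Instead, I would apply the monotonicity formula (Proposition \ref{prop:monotonicity}) with $K \equiv 1$ to obtain
\begin{equs}
\partial_t\Big(\tfrac{1}{2}\|\phi(t)\|_{L_x^2}^2\Big) = -\|\covd_A\phi(t)\|_{L_x^2}^2 + \mathrm{Re}(G(t),\phi(t))_{L_x^2},
\end{equs}
then, for a small regularization $\varepsilon>0$, multiply by the weight $(t-t_0+\varepsilon)^{-1}$ and compute the time derivative of $(t-t_0+\varepsilon)^{-1}\|\phi(t)\|_{L_x^2}^2/2$ via the product rule. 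Integrating over $[t_0,t_1]$ and using $\phi(t_0)=0$ to discard the boundary term, this produces an identity in which both $\int (t-t_0+\varepsilon)^{-2}\|\phi\|_{L_x^2}^2\, dt$ and $\int (t-t_0+\varepsilon)^{-1}\|\covd_A\phi\|_{L_x^2}^2\, dt$ appear on the left-hand side with favorable signs.

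Applying Cauchy--Schwarz on the right-hand side as $\|G\|_{L_z^2}\cdot \|\phi/(t-t_0+\varepsilon)\|_{L_z^2}$ and comparing with the $\int (t-t_0+\varepsilon)^{-2}\|\phi\|_{L_x^2}^2$ term, a standard kick-back argument yields $\|\phi/(t-t_0+\varepsilon)\|_{L_z^2} \lesssim \|G\|_{L_z^2}$. Substituting this back into the identity bounds the covariant derivative term by $\|G\|_{L_z^2}^2$, and sending $\varepsilon \downarrow 0$ via monotone convergence concludes the proof of the first estimate. The second estimate then follows by testing against an arbitrary $G \in L_w^2(J)$ exactly as in the proof of Corollary \ref{kernel:cor-energy-estimate-dual}.

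The main subtlety, and the reason for the regularization $\varepsilon$, is justifying that the boundary term of the form $(t-t_0)^{-1}\|\phi(t)\|_{L_x^2}^2$ at $t=t_0$ vanishes; this is ultimately ensured by $\phi(t_0)=0$ together with the Cauchy--Schwarz bound $\|\phi(t)\|_{L_x^2} \lesssim (t-t_0)^{1/2}\|G\|_{L_w^2([t_0,t]\times\T^2)}$, which comes from the diamagnetic inequality (Lemma \ref{kernel:lem-diamagnetic-heat-kernel}).
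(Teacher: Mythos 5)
Your proof is correct and is exactly the kind of argument the paper has in mind when it writes that Lemma \ref{kernel:lem-time-weighted-dual} is ``very similar to before,'' i.e.\ to the weighted energy argument behind Lemma \ref{lemma:two-sided-time-weighted-estimate}: differentiate $v(t)\|\phi(t)\|_{L_x^2}^2$ with $v(t)=(t-t_0)^{-1}$, notice that $v'(t)=-v(t)(t-t_0)^{-1}$ produces the sign-definite term $\int(t-t_0)^{-2}\|\phi\|_{L_x^2}^2$, push all of the singular weight onto $\phi$ in the cross term, and kick back. In effect you are running the two-sided proof with $\alpha=1$, $\lambda=\frac{1}{2}$, $K\equiv 1$, and without the $(t_1-t)^\beta$ factor — which avoids the spurious $\min(\alpha,\beta)^{-1/2}$ degeneration in the stated form of that lemma, precisely as you observe — and the dual estimate then follows by testing against $G\in L_w^2(J)$ as in Corollary \ref{kernel:cor-energy-estimate-dual}. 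One small remark: since you regularize the weight to $(t-t_0+\varepsilon)^{-1}$ and integrate over all of $[t_0,t_1]$, the boundary term at $t_0$ is $\varepsilon^{-1}\|\phi(t_0)\|_{L_x^2}^2=0$ outright from $\phi(t_0)=0$, so the auxiliary bound $\|\phi(t)\|_{L_x^2}\lesssim(t-t_0)^{1/2}\|G\|_{L_w^2}$ from the diamagnetic inequality is not actually needed for that step (it would be needed if you instead truncated the time interval to $[t_0+\varepsilon,t_1]$, mirroring how the paper handles the $t_1$-endpoint in the two-sided lemma); it is, however, still the right estimate to guarantee $\|(t-t_0+\varepsilon)^{-1}\phi\|_{L_z^2}<\infty$, which you implicitly use in the kick-back.
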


\begin{lemma}[Endpoint time-weighted estimate]\label{lemma:endpoint-time-weighted-estimate}
Let $J = [t_0, t_1] \times \T^2$. Suppose $K \in C^\infty([t_0, t_1) \times \T^2)$ is a positive solution to the backwards heat equation on $J$, i.e. that $(\ptl_t + \Delta)K = 0$ and $K > 0$. Suppose $\phi\colon J \rightarrow \C$ satisfies
\begin{equs}
(\ptl_t - \covd_A^j \covd_{A, j}) \phi = G, ~~ \phi(0) = 0.
\end{equs}
Then for $\alpha > 0$, $t_2 \geq t_1$, we have that
\begin{equs}
\Big\|(t_2 - t)^{\frac{\alpha}{2}} K^{\frac{1}{2}} \covd_A \phi \Big\|_{L_z^2(J)} \lesssim \alpha^{-\frac{1}{2}} \Big\|(t_2 - s)^{\frac{\alpha+1}{2}} K^{\frac{1}{2}} G\Big\|_{L_w^2(J)}.
\end{equs}
For $H \colon J \ra \C$, we also have the dual estimate
\begin{equs}\label{eq:dual-endpoint-time-weighted-estimate}
\bigg\|(t_2 - s)^{-\frac{\alpha+1}{2}}  K(w)^{-\frac{1}{2}}\int dz  H(z) \covd_{A(z)} p_A(w; z)  \bigg\|_{L_w^2(J)} \lesssim \alpha^{-\frac{1}{2}}\Big\|(t_2 - t)^{-\frac{\alpha}{2}} K(z)^{-\frac{1}{2}}H(z)\Big\|_{L_z^2(J)}.
\end{equs}
\end{lemma}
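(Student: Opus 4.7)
The plan is to mimic the proof of Lemma \ref{lemma:two-sided-time-weighted-estimate}, but with the single-sided weight $v(t) := (t_2-t)^{\alpha}$ in place of the two-sided weight $(t-t_0)^{-\alpha}(t_1-t)^{\beta}$. First, I would apply the monotonicity formula of Proposition \ref{prop:monotonicity} (with $p=2$) to $K|\phi|^2/2$, which yields
\begin{equs}
\ptl_t \big( v(t)\|K^{1/2}\phi(t)\|_{L_x^2}^2 \big)
&= -2v(t)\|K^{1/2}\covd_A\phi(t)\|_{L_x^2}^2 + 2v(t)\big(K^{1/2}G(t),K^{1/2}\phi(t)\big)_{L_x^2} \\
&\quad\, - \alpha(t_2-t)^{\alpha-1}\|K^{1/2}\phi(t)\|_{L_x^2}^2 .
\end{equs}
Integrating over $t \in [t_0, t_1-\varep]$ and using $\phi(t_0)=0$ together with the non-negativity of the boundary term $v(t_1-\varep)\|K^{1/2}\phi(t_1-\varep)\|_{L_x^2}^2$ (which we discard) gives
\begin{equs}
2\big\| v^{1/2}K^{1/2}\covd_A\phi\big\|_{L_z^2(J_\varep)}^2 + \alpha\big\|(t_2-t)^{\frac{\alpha-1}{2}}K^{1/2}\phi\big\|_{L_z^2(J_\varep)}^2 \leq 2 \big( v K^{1/2}G, K^{1/2}\phi\big)_{L_z^2(J_\varep)}.
\end{equs}

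The key algebraic step is to split the weight in the cross term as $v = (t_2-s)^{\frac{\alpha+1}{2}} \cdot (t_2-s)^{\frac{\alpha-1}{2}}$. Applying Cauchy--Schwarz followed by Young's inequality, I get
\begin{equs}
2\big(vK^{1/2}G,K^{1/2}\phi\big)_{L_z^2(J_\varep)} \leq \frac{2}{\alpha}\big\|(t_2-s)^{\frac{\alpha+1}{2}}K^{1/2}G\big\|_{L_w^2(J)}^2 + \frac{\alpha}{2}\big\|(t_2-t)^{\frac{\alpha-1}{2}}K^{1/2}\phi\big\|_{L_z^2(J_\varep)}^2.
\end{equs}
The second term is absorbed into the left-hand side, leaving the claimed bound after sending $\varep \downarrow 0$ via monotone convergence. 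The dual estimate \eqref{eq:dual-endpoint-time-weighted-estimate} is then obtained by a standard duality argument as in Corollary \ref{kernel:cor-energy-estimate-dual} and Corollary \ref{cor:dual-two-sided-time-weighted-estimate}: for any test function $G\in L_w^2(J)$, write the pairing $\bigl((t_2-s)^{-\frac{\alpha+1}{2}}K(w)^{-1/2}\int dz\, H(z)\covd_{A(z)}p_A(w;z),\, G(w)\bigr)_{L_w^2(J)}$ as $\bigl(H(z),\, \covd_{A(z)}\phi(z)\bigr)_{L_z^2(J)}$ where $\phi$ solves the covariant heat equation with forcing $(t_2-s)^{-\frac{\alpha+1}{2}}K(w)^{-1/2}G(w)$, apply Cauchy--Schwarz, and invoke the primary estimate just proven.

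The main subtlety, as in the proof of Lemma \ref{lemma:two-sided-time-weighted-estimate}, is justifying the boundary term at $t=t_1$: if $K$ is permitted to blow up as $t\uparrow t_1$ (for example, the intended application $K(z)=p(z;z_1)$ with $z_1=(t_1,x_1)$), then the quantity $\|K^{1/2}\phi(t_1)\|_{L_x^2}^2$ need not be finite a priori. This is handled exactly as before by working on the truncated cylinder $J_\varep=[t_0,t_1-\varep]\times\T^2$, where $K$ is smooth and bounded, discarding the non-negative boundary contribution $v(t_1-\varep)\|K^{1/2}\phi(t_1-\varep)\|_{L_x^2}^2$, and finally sending $\varep\downarrow 0$. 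The decay of $v$ near $t_0$ that required a separate argument in the two-sided case is absent here, since $v(t_0)=(t_2-t_0)^{\alpha}$ is bounded and $\phi(t_0)=0$, so no delicate initial-time limiting is needed at the lower endpoint.
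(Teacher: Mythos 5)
Your proof is correct and is precisely the adaptation the paper has in mind — the authors omit the proof with the remark that it is "very similar to before," referring to Lemma \ref{lemma:two-sided-time-weighted-estimate}, and that is exactly what you do: run the same monotonicity-formula argument with the single-sided weight $v(t)=(t_2-t)^\alpha$, split the cross term via $v=(t_2-t)^{\frac{\alpha+1}{2}}(t_2-t)^{\frac{\alpha-1}{2}}$, absorb with Young's inequality (paying the factor $\alpha^{-1/2}$), and truncate to $J_\varep$ to justify discarding the non-negative boundary term. Your observation that the lower endpoint is now trivial (since $v(t_0)$ is bounded and $\phi(t_0)=0$, unlike the singular weight $(t-t_0)^{-\alpha}$ in the two-sided case) is accurate, as is the note that when $t_2>t_1$ the boundary term at $t_1-\varep$ does not vanish but is still non-negative and can simply be dropped.
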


\begin{remark}
We chose to state the endpoint time-weighted estimate in a slightly more general form, in that instead of $t_1 - t$ for the time-weight, we have $t_2 - t$ for some $t_2 \geq t_1$. The proof of this slightly more general statement is exactly the same. This more general statement is needed in the proof of Lemma \ref{lemma:difference-linear-object-spatial-regularity} later on.
\end{remark}

By combining the dual estimate \eqref{eq:dual-endpoint-time-weighted-estimate} and the proof of Corollary \ref{cor:dual-two-sided-time-weighted-estimate}, we directly obtain the following estimate.

\begin{corollary}\label{cor:dual-endpoint-time-weighted-estimate}
Let $\alpha > 0$, $J = [t_0, t_1] \times \T^2$, $t_2 \geq t_1$, and $z_2 = (t_2, x_2)$. Then, we have that 
\begin{equs}
\bigg\|(t_2 - s)^{-\frac{\alpha}{2}} \int dz H(z) \covd_{A(z)} p_A(w; z)\bigg\|_{L_w^2(J)} \lesssim \alpha^{-\frac{1}{2}} \bigg\|(t_2 - t)^{-\frac{\alpha}{2}} p(z; z_2)^{-\frac{1}{2}}H(z)\bigg\|_{L_z^2(J)}.
\end{equs}
\end{corollary}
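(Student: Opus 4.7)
\textbf{Proof plan for Corollary \ref{cor:dual-endpoint-time-weighted-estimate}.} The plan is to apply the dual endpoint time-weighted estimate \eqref{eq:dual-endpoint-time-weighted-estimate} with the specific choice of backwards heat weight
\begin{equs}
K(w) := p(w; z_2),
\end{equs}
exactly as in the proof of Corollary \ref{cor:dual-two-sided-weighted-estimate}, where the analogous substitution $K(w)=p(w;z_1)$ was used. First I would verify that this choice is admissible: since $z_2 = (t_2, x_2)$ with $t_2 \geq t_1$, the kernel $p(w; z_2)$ is smooth and strictly positive for $w = (s, y) \in [t_0, t_1) \times \T^2$ (the singularity at $s = t_2$ lies outside $J$, or, if $t_2 = t_1$, only at the boundary which is excluded in the hypothesis of Lemma \ref{lemma:endpoint-time-weighted-estimate}), and it solves $(\ptl_s + \Delta_y) K = 0$ on this region by Lemma \ref{lemma:heat-kernel-time-reversal} (specialized to $A \equiv 0$). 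So Lemma \ref{lemma:endpoint-time-weighted-estimate} applies and yields
\begin{equs}
\bigg\|(t_2 - s)^{-\frac{\alpha+1}{2}}  p(w; z_2)^{-\frac{1}{2}}\int dz\, H(z) \covd_{A(z)} p_A(w; z)\bigg\|_{L_w^2(J)} \lesssim \alpha^{-\frac{1}{2}}\Big\|(t_2 - t)^{-\frac{\alpha}{2}} p(z; z_2)^{-\frac{1}{2}}H(z)\Big\|_{L_z^2(J)}.
\end{equs}

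To convert this into the desired bound, I would dominate the weight on the left-hand side using the standard upper heat-kernel bound $p(w; z_2) \lesssim (t_2 - s)^{-1}$, which on the two-torus holds uniformly for $t_2 - s$ in any bounded range (the context of this section). Taking square roots and rearranging, this gives the pointwise inequality
\begin{equs}
(t_2 - s)^{-\frac{\alpha}{2}} \lesssim (t_2 - s)^{-\frac{\alpha+1}{2}}\, p(w; z_2)^{-\frac{1}{2}},
\end{equs}
so that the $L_w^2(J)$-norm on the left-hand side of the statement is controlled by the left-hand side of the displayed inequality above. Combining these two steps yields the claim. There is no genuine obstacle here; this is a direct repetition of the argument used to pass from Corollary \ref{cor:dual-two-sided-time-weighted-estimate} to Corollary \ref{cor:dual-two-sided-weighted-estimate}, only with the endpoint time weight version of the estimate in place of the two-sided one and with $z_2$ outside the interval $[t_0, t_1]$ instead of at the right endpoint.
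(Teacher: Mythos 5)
Your proposal is correct and matches the paper's intended argument exactly: the paper itself states that Corollary \ref{cor:dual-endpoint-time-weighted-estimate} is obtained by combining \eqref{eq:dual-endpoint-time-weighted-estimate} with the device used in Corollary \ref{cor:dual-two-sided-weighted-estimate}, namely taking $K(w) = p(w; z_2)$ and absorbing $K^{-1/2}$ on the left via the on-diagonal bound $p(w; z_2) \lesssim (t_2 - s)^{-1}$. Your checks that $p(\cdot\,; z_2)$ is an admissible backwards heat weight (smooth and positive on $[t_0, t_1)\times\T^2$ by Lemma \ref{lemma:heat-kernel-time-reversal} with $A\equiv 0$) and that the on-diagonal bound holds on the bounded time ranges relevant to this section are also correct.
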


Next, we prove an energy estimate which involves the difference between the covariant and non-covariant heat kernels $p_A$ and  $p$.

\begin{lemma}[Difference between $p_A$ and $p$]\label{lemma:difference-covariant-and-euclidean-heat-kernel}
Let $J = [t_0, t_1] \times \T^2$, $G \colon J \ra \C$. Let $\phi, \psi \colon J \ra \C$ be respective solutions to the covariant and non-covariant heat equations with forcing $G$:
\begin{equs}
(\ptl_t - \covd_A^j \covd_{A, j}) \phi = G, ~~ (\ptl_t - \Delta) \psi = G.
\end{equs}
Then 
\begin{equs}
\|\phi - \psi\|_{L_t^\infty L_x^2(J)} &+ \|\nabla(\phi - \psi)\|_{L_z^2(J)}  \lesssim \|\phi(t_0) - \psi(t_0)\|_{L_x^2(\T^2)} + \|A \phi\|_{L_z^2(J)} + \|A\psi\|_{L_z^2(J)} + \|(\ptl_j A^j) \psi\|_{L_t^1 L_x^2(J)}.
\end{equs}
\end{lemma}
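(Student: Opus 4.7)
Set $u := \phi - \psi$. Since $(\ptl_t - \covd_A^j \covd_{A,j})\phi = G = (\ptl_t - \Delta)\psi$, the difference satisfies
\[
(\ptl_t - \covd_A^j \covd_{A,j}) u = (\covd_A^j \covd_{A,j} - \Delta)\psi.
\]
Applying Lemma~\ref{prelim:lem-derivatives}\ref{prelim:item-difference} with $B=0$, the right-hand side is $R := 2\icomplex\, \ptl^j(A_j \psi) - \icomplex(\ptl^j A_j)\psi - |A|^2 \psi$. So $u$ solves a covariant heat equation with initial data $u(t_0) = \phi(t_0) - \psi(t_0)$ and a forcing $R$ that is one derivative too rough for the plain energy estimate of Lemma~\ref{kernel:lem-energy-estimate}; the crux of the proof is to absorb that derivative via integration by parts.

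Next I apply the monotonicity formula (Proposition~\ref{prop:monotonicity}) with $K \equiv 1$ and $p = 2$, integrate over $\T^2$, and get
\[
\tfrac{d}{dt}\, \tfrac{1}{2}\|u\|_{L_x^2}^2 + \|\covd_A u\|_{L_x^2}^2 \;=\; \Re \int_{\T^2} \bar u\, R\, \dx.
\]
The only troublesome piece of $\Re \int \bar u\, R\, \dx$ is $-2\Im\int \bar u\, \ptl^j(A_j \psi)\, \dx$. Integrating by parts moves the derivative onto $\bar u$, and then the identity $\ptl^j \bar u = \overline{\covd_A^j u} + \icomplex A^j \bar u$ converts $\ptl^j \bar u$ into $\overline{\covd_A^j u}$ plus an $|A|^2$ correction that exactly combines with the $-|A|^2 \psi$ term in $R$. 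After Cauchy--Schwarz, this yields the pointwise-in-time bound
\[
\Re \int \bar u\, R\, \dx \;\lesssim\; \|\covd_A u\|_{L_x^2}\|A\psi\|_{L_x^2} + \|A\psi\|_{L_x^2}\|A u\|_{L_x^2} + \|(\ptl^j A_j)\psi\|_{L_x^2}\|u\|_{L_x^2}.
\]

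Integrating over $[t_0, t]$ and then taking a supremum in $t$, I use Young's inequality to kick back $\tfrac{1}{2}\|\covd_A u\|_{L_z^2}^2$ from the first product and $\tfrac{1}{2}\|u\|_{L_t^\infty L_x^2}^2$ from the last product, while the middle term is estimated with $\|Au\|_{L_z^2} \le \|A\phi\|_{L_z^2} + \|A\psi\|_{L_z^2}$. This yields
\[
\|u\|_{L_t^\infty L_x^2(J)} + \|\covd_A u\|_{L_z^2(J)} \;\lesssim\; \|u(t_0)\|_{L_x^2} + \|A\phi\|_{L_z^2(J)} + \|A\psi\|_{L_z^2(J)} + \|(\ptl^j A_j)\psi\|_{L_t^1 L_x^2(J)}.
\]
Finally, to pass from $\covd_A u$ to $\nabla u$, I use $\nabla u = \covd_A u - \icomplex A u$ and the triangle inequality, again bounding $\|Au\|_{L_z^2} \le \|A\phi\|_{L_z^2} + \|A\psi\|_{L_z^2}$, which gives the stated estimate. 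The main obstacle, as noted, is the derivative loss in $R$; the combination of integration by parts and the covariant expansion of $\ptl^j \bar u$ is what turns this apparent derivative loss into an acceptable cross term controlled by $\|\covd_A u\|_{L_x^2}\|A\psi\|_{L_x^2}$.
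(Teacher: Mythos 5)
Your proof is correct and follows essentially the same route as the paper's: both derive the equation for $u=\phi-\psi$, apply the monotonicity formula with $K\equiv 1$, expand $(\covd_A^j\covd_{A,j}-\Delta)\psi$, integrate by parts to move the derivative off $A_j\psi$, and close with Cauchy--Schwarz, Young, and a kick-back; the only cosmetic difference is that you convert $\nabla u$ to $\covd_A u + \text{(lower order)}$ inside the integration-by-parts step, while the paper keeps $\nabla(\phi-\psi)$ throughout and performs that conversion only at the end.
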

\begin{proof}
The difference satisfies the following forced heat equation
\begin{equs}
(\ptl_t - \covd_A^j \covd_{A, j})(\phi - \psi) = (\covd_{A}^j \covd_{A, j} - \Delta) \psi .
\end{equs}
By the monotonicity formula (Proposition \ref{prop:monotonicity}) and integration by parts, we obtain
\begin{equs}
\frac{1}{2}\ptl_t \|\phi - \psi \|_{L_x^2}^2 &= - \|\covd_A(\phi - \psi)\|_{L_x^2}^2 + \big(\phi - \psi, (\covd_A^j \covd_{A, j} - \Delta)\psi \big)_{L_x^2} \\
&= -\|\covd_A (\phi - \psi)\|_{L_x^2}^2 + \big(\phi - \psi, 2\icomplex \ptl_j (A^j \psi) - \icomplex (\ptl_j A^j) \psi - |A|^2 \psi\big)_{L_x^2} \\
&= -\|\covd_A (\phi - \psi)\|_{L_x^2}^2 - \big(\ptl_j(\phi - \psi), 2\icomplex A^j \psi\big)_{L_x^2} - \big(\phi - \psi, \icomplex (\ptl_j A^j) \psi\big)_{L_x^2} - \big(\phi - \psi, |A|^2 \psi\big)_{L_x^2}.
\end{equs}
By integrating over time and applying H\"{o}lder and Young, we obtain
\begin{equs}
\|\phi - \psi\|_{L_t^\infty L_x^2}^2 + \|\covd_A(&\phi - \psi)\|_{L_z^2}^2 \leq \\
&\frac{1}{4} \|\nabla (\phi - \psi)\|_{L_z^2}^2 + \frac{1}{4} \|\phi - \psi\|_{L_t^\infty L_x^2}^2 + C \Big(\|(\ptl_j A^j) \psi\|_{L_t^1 L_x^2}^2 + \|A \phi\|_{L_z^2}^2 + \|A\psi\|_{L_z^2}^2\Big).
\end{equs}
To finish, use that $\|\nabla (\phi - \psi)\|_{L_z^2} \leq \|\covd_A(\phi - \psi)\|_{L_z^2} + \|A(\phi - \psi)\|_{L_z^2}$, and apply a kick-back argument. 
\end{proof}

Lemma \ref{lemma:difference-covariant-and-euclidean-heat-kernel} leads to the following corollary.

\begin{corollary}\label{cor:gradient-pA-p-l2-spacetime-bound}
Let $z = (t, x) \in [0,1] \times \T^2$, $u \in (0, 1]$. Let $p_u(x, x') := p((0, x); (u, x'))$ be the spatial heat kernel at time $u$. We have that (here $z' = (t, x')$)
\begin{equs}
\bigg\| \int dx' p_u(x, x') \nabla_{y} (p_A(w; z') - p(w; z'))\bigg\|_{L_w^2([0, t] \times\T^2)} \lesssim \big(\|A\|_{L_z^\infty([0, t] \times \T^2)} + \|\ptl_j A^j\|_{L_t^2 L_x^\infty([0, t] \times \T^2)}\big)\sqrt{\log u^{-1}}.
\end{equs}
\end{corollary}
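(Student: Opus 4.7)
The plan is to reduce the estimate to Lemma \ref{lemma:difference-covariant-and-euclidean-heat-kernel} by integrating the kernels $p_A(w; z')$ and $p(w; z')$ against $p_u(x, \cdot)$ in the $x'$ variable and then time-reversing in $s$. Concretely, I will introduce
$$\Phi(w) := \int_{\T^2} dx' \, p_u(x, x') \, p_A(w; z'), \qquad \Psi(w) := \int_{\T^2} dx' \, p_u(x, x') \, p(w; z'),$$
so that the norm on the left-hand side of the corollary is exactly $\|\nabla_y(\Phi - \Psi)\|_{L_w^2([0,t] \times \T^2)}$. By Lemma \ref{lemma:heat-kernel-time-reversal} and its non-covariant analogue, $\overline{\Phi}$ and $\overline{\Psi}$ solve the backwards covariant and backwards Euclidean heat equations, respectively, on $[0, t] \times \T^2$, both with terminal condition $p_u(x, \cdot)$ at time $s = t$. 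Setting $\tilde A(s, y) := A(t - s, y)$ together with $\tilde\phi(s, y) := \overline{\Phi}(t - s, y)$ and $\tilde\psi(s, y) := \overline{\Psi}(t - s, y)$, the substitution $s \mapsto t - s$ converts these into forward equations on $[0, t] \times \T^2$ driven by the connection $\tilde A$, with common initial datum $p_u(x, \cdot)$.

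Next, I will apply Lemma \ref{lemma:difference-covariant-and-euclidean-heat-kernel} to $\tilde\phi - \tilde\psi$. The initial-data term drops out since $\tilde\phi(0) = \tilde\psi(0)$, and after undoing the time reversal (which preserves each of the spacetime $L_s^p L_y^q$-norms on $[0, t] \times \T^2$), this yields
$$\|\nabla_y(\Phi - \Psi)\|_{L_w^2} \lesssim \|A \Phi\|_{L_w^2} + \|A \Psi\|_{L_w^2} + \|(\partial_j A^j)\, \Psi\|_{L_s^1 L_y^2}.$$
The diamagnetic inequality for the covariant heat kernel (Lemma \ref{kernel:lem-diamagnetic-heat-kernel}) gives $|\Phi(w)| \leq \Psi(w)$ pointwise, so the first two terms on the right are controlled by $\|A\|_{L_z^\infty([0,t] \times \T^2)} \|\Psi\|_{L_w^2}$, and Cauchy--Schwarz in time bounds the third by $\|\partial_j A^j\|_{L_t^2 L_x^\infty([0,t] \times \T^2)} \|\Psi\|_{L_w^2}$. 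Everything then reduces to showing $\|\Psi\|_{L_w^2([0,t] \times \T^2)} \lesssim \sqrt{\log u^{-1}}$.

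For this final bound, the semigroup property of the Euclidean heat kernel gives $\Psi(s, y) = p((s, y); (t + u, x))$, and in two spatial dimensions one has $\|p((s, \cdot); (t + u, x))\|_{L_y^2(\T^2)}^2 \lesssim (t + u - s)^{-1}$. Integrating in $s \in [0, t]$ then produces $\|\Psi\|_{L_w^2}^2 \lesssim \log((t+u)/u) \lesssim \log u^{-1}$, which supplies the desired $\sqrt{\log u^{-1}}$ factor. There is no real technical obstacle here; the only conceptual step is the time-reversal manipulation that recasts the given integral, which on its face involves backwards evolution of $p_u(x, \cdot)$ under both $p_A$ and $p$, as a forward-equation difference estimate to which Lemma \ref{lemma:difference-covariant-and-euclidean-heat-kernel} applies directly, with the two-dimensional logarithmic divergence of the heat kernel's $L_y^2$-norm producing the logarithmic factor at the end.
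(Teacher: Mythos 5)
Your proof is correct and follows essentially the same path as the paper's: introduce the smoothed kernels, invoke Lemma \ref{lemma:heat-kernel-time-reversal} to identify them as backwards solutions, time-reverse to land in the setting of Lemma \ref{lemma:difference-covariant-and-euclidean-heat-kernel}, and then control the right-hand side using the diamagnetic inequality together with the logarithmic $L^2_w$ bound on $\Psi(w) = p(w;(t+u,x))$. The only cosmetic difference is that the paper builds the complex conjugate into the definition of $\tilde\phi$ from the start rather than taking it afterward, and leaves the final $\sqrt{\log u^{-1}}$ estimate implicit, whereas you spell it out via the semigroup property.
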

\begin{proof}
Define $\tilde{\phi}, \tilde{\psi} \colon [0,t] \times \T^2\ra \C$ by $\tilde{\phi}(w) := \int dx' p_u(x, x') \ovl{p_A(w; z')}$, $\tilde{\psi}(w):= \int dx' p_u(x, x') p(w;z')$. Since $\ovl{p_A(\cdot;z)}$ is the fundamental solution to the covariant backwards heat equation (Lemma \ref{lemma:heat-kernel-time-reversal}), we have that
\begin{equs}
(\ptl_t + \covd_A^j \covd_{A, j}) \tilde{\phi} = 0.
\end{equs}
We also have that $(\ptl_t + \Delta) \tilde{\psi} = 0$, and $\tilde{\phi}(t) = \tilde{\psi}(t) = p_u(x, \cdot)$. 
Now define the time-reversals $\phi(\tau) := \tilde{\phi}(t - \tau)$, $\psi(\tau) := \tilde{\psi}(t - \tau)$, $B := A(t - \tau)$. One may check that $\phi, \psi$ respectively solve the covariant and usual heat equations. Thus by Lemma \ref{lemma:difference-covariant-and-euclidean-heat-kernel}, 
we have that
\begin{equs}
\|\nabla_y (\phi - \psi)\|_{L_w^2} \lesssim \|B \phi\|_{L_z^2} + \|B\psi\|_{L_z^2} + \|(\ptl_j B^j) \psi\|_{L_t^1 L_x^2}.
\end{equs}
By the diamagnetic inequality (Lemma \ref{kernel:lem-diamagnetic-heat-kernel}), we have the pointwise bound $|\phi| \leq \psi$, and so
\begin{equs}
\max \big( \|B\phi\|_{L_z^2}, \|B\psi\|_{L_z^2} \big) \leq \|B\|_{L_z^\infty} \|\psi\|_{L_z^2} \leq \|B\|_{L_z^\infty} \sqrt{\log u^{-1}},
\end{equs}
where in the final inequality, we used that the initial data is $\psi(0) = p_u(x, \cdot)$. The term $\|(\ptl_j B^j) \psi\|_{L_t^1 L_x^2}$ may be bounded similarly. To finish, simply observe that 
\begin{equs}
\|\nabla_y (\phi - \psi)\|_{L_w^2} = \|\nabla_y (\ovl{\phi} - \ovl{\psi})\|_{L_w^2} = \bigg\| \int dx' p_u(x, x') \nabla_{y} (p_A(w; z') - p(w; z'))\bigg\|_{L_w^2([0, t] \times\T^2)},
\end{equs}
as well as $\|B\|_{L_z^\infty} = \|A\|_{L_z^\infty}$ and $\|\ptl_j B^j\|_{L_t^2 L_x^\infty} = \|\ptl_j A^j\|_{L_t^2 L_x^\infty}$.
\end{proof}

\begin{lemma}\label{lemma:covd-pa-l2-spacetime-norm} 
Let $\alpha > 0$. For $w = (s, y)$ and $s < t_0 < s+1$, we have that
\begin{equs}
\bigg\|(t-s)^{\frac{1+\alpha}{2}} \covd_{A(z)} p_A(w; z)\bigg\|_{L_z^2([s, t_0] \times \T^2)} \lesssim_\alpha (t_0 - s)^{\frac{\alpha}{2}}.
\end{equs}
Additionally, for $\rho > 0$, we have that
\begin{equs}
\bigg\|\ind(|t-s| \in [\rho/10, 10\rho]) \covd_{A(z)} p_A(w; z)\bigg\|_{L_z^2([s, t_0] \times \T^2)} \lesssim \rho^{-\frac{1}{2}}. 
\end{equs}
Dually, for $z = (t,x)$, and $s_0 < t$, we have that
\begin{equs}
\Big\|(t-s)^{\frac{1+\alpha}{2}} \covd_{-A(w)} p_A(w; z)\Big\|_{L_w^2([s_0, t]\times\T^2)} &\lesssim_\alpha (t-s_0)^{\frac{\alpha}{2}}, \\
\bigg\|\ind(|t - s| \in [\rho/10, 10\rho])\covd_{-A(w)} p_A(w; z)\bigg\|_{L_w^2([s_0, t] \times \T^2)} &\lesssim \rho^{-\frac{1}{2}}.
\end{equs}
\end{lemma}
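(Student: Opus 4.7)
The plan is to apply the covariant monotonicity formula (Proposition~\ref{prop:monotonicity}) directly to $\phi(z) := p_A(w;z)$, viewed as the solution of the forward covariant heat equation $(\partial_t - \covd_A^j \covd_{A,j})\phi = 0$ on $(s,\infty)\times \T^2$, and then exploit the diamagnetic inequality to control the resulting initial data in $L^2_x$. For each of the four estimates, the only real work is a dyadic-in-time decomposition of the weight, together with a time-reversal argument (via Lemma~\ref{lemma:heat-kernel-time-reversal}) in order to pass from the $\covd_{A(z)}$ estimates to the $\covd_{-A(w)}$ ones.

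\medskip

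\noindent\textbf{Basic building block.} For any $s < t_a < t_b$ with $t_b \le s+1$, apply Proposition~\ref{prop:monotonicity} with $p=2$, $G=0$, and $K\equiv 1$. Integrating over $[t_a,t_b]\times \T^2$, the two Laplacian terms cancel after integration by parts in $x$, and one obtains
\begin{equs}
\|\covd_A p_A(w;\cdot)\|_{L_z^2([t_a,t_b]\times \T^2)}^2 \;\le\; \tfrac{1}{2}\|p_A(w;(t_a,\cdot))\|_{L_x^2}^2.
\end{equs}
By the diamagnetic inequality (Lemma~\ref{kernel:lem-diamagnetic-heat-kernel}) and the standard $L_x^2$-bound for the Euclidean heat kernel,
\begin{equs}
\|p_A(w;(t_a,\cdot))\|_{L_x^2}^2 \;\le\; \|p(w;(t_a,\cdot))\|_{L_x^2}^2 \;\lesssim\; (t_a-s)^{-1},
\end{equs}
so that $\|\covd_A p_A(w;\cdot)\|_{L_z^2([t_a,t_b]\times\T^2)} \lesssim (t_a-s)^{-1/2}$.

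\medskip

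\noindent\textbf{First estimate.} Decompose $(s,t_0]$ into the dyadic intervals $I_k := [\,s+2^{-k-1}(t_0-s),\, s+2^{-k}(t_0-s)\,]$ for $k \ge 0$, on which $t-s \sim 2^{-k}(t_0-s)$. Applying the building block on each $I_k$ with $t_a$ equal to the left endpoint gives
\begin{equs}
\big\|(t-s)^{\frac{1+\alpha}{2}} \covd_{A(z)} p_A(w;z)\big\|_{L_z^2(I_k\times\T^2)}^2 \;\lesssim\; \bigl(2^{-k}(t_0-s)\bigr)^{1+\alpha} \bigl(2^{-k}(t_0-s)\bigr)^{-1} \;=\; \bigl(2^{-k}(t_0-s)\bigr)^{\alpha}.
\end{equs}
Summing over $k\ge 0$ and using $\alpha>0$ to make the geometric series converge yields a bound of size $\lesssim_\alpha (t_0-s)^{\alpha}$, and taking the square root gives the desired $(t_0-s)^{\alpha/2}$.

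\medskip

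\noindent\textbf{Second estimate and the dual estimates.} For the $\rho$-localized bound, we may assume $t_0-s \ge \rho/10$ (otherwise the indicator vanishes on $[s,t_0]$). Then the support of $\ind(|t-s|\in[\rho/10,10\rho])$ within $[s,t_0]$ lies in $[s+\rho/10,t_0]$, so a single application of the building block with $t_a := s+\rho/20$ gives $\|\covd_{A(z)} p_A(w;z)\|_{L_z^2([t_a,t_0]\times\T^2)} \lesssim \rho^{-1/2}$, as required. For the two dual estimates, we use Lemma~\ref{lemma:heat-kernel-time-reversal}, which says that $\psi(w) := \overline{p_A(w;z)}$ solves the backwards covariant heat equation in $w$. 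Setting $\tilde{A}(\tau,y) := A(t-\tau,y)$ and $\tilde{\phi}(\tau,y) := \psi(t-\tau,y)$, the function $\tilde{\phi}$ satisfies the forward covariant heat equation with connection $\tilde{A}$, and $|\covd_{-A(w)} p_A(w;z)| = |\covd_{\tilde{A}} \tilde{\phi}(t-s,y)|$. The exact same dyadic argument (now dyadic towards $s = t$ in reverse time) then yields both dual estimates.

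\medskip

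\noindent The main (and essentially only) potential obstacle is keeping track of the constant $1$ bound $s < t_0 < s+1$ which ensures $(t_a-s)^{-1}$ is the sharp $L^2_x$ heat-kernel bound; everything else is a clean consequence of the monotonicity formula combined with the diamagnetic inequality, and so the argument proceeds in an entirely $A$-independent manner, which is exactly what is needed for the applications in Section~\ref{section:cshe}.
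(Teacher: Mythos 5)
Your proof is correct, and it uses the same three key ingredients as the paper's proof: the monotonicity formula with $K\equiv 1$ applied to $p_A$ itself, the diamagnetic inequality to control $\|p_A(w;(t_a,\cdot))\|_{L_x^2}$, and a time-reversal argument via Lemma~\ref{lemma:heat-kernel-time-reversal} for the dual estimates. The only difference is stylistic: where you decompose $(s,t_0]$ dyadically and apply the unweighted building block
\begin{equs}
\|\covd_A p_A(w;\cdot)\|_{L_z^2([t_a,t_b]\times \T^2)}^2 \le \tfrac{1}{2}\|p_A(w;(t_a,\cdot))\|_{L_x^2}^2 \lesssim (t_a-s)^{-1}
\end{equs}
on each block and then sum the geometric series, the paper instead multiplies the pointwise monotonicity identity directly by $(t-s)^{1+\alpha}$ before integrating, which produces
\begin{equs}
\bigg\|(t-s)^{\frac{1+\alpha}{2}} \covd_{A(z)} p_A(w; z)\bigg\|_{L_z^2}^2 \lesssim (1+\alpha)\Big\|(t-s)^{\frac{\alpha}{2}} p_A(w; z)\Big\|_{L_z^2}^2 \lesssim_\alpha (t_0-s)^{\alpha}
\end{equs}
in a single step and avoids the dyadic sum. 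The paper also derives the second (localized) estimate as a corollary of the first with $\alpha = 1$, whereas you give a self-contained one-line argument via the building block with $t_a=s+\rho/20$; again this is a matter of taste. Both routes give the same $\alpha$-dependence of the constant (degenerating as $\alpha \downarrow 0$) and are entirely uniform in $A$, as required. No gaps.
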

\begin{proof}
For the first estimate, we apply the monotonicity formula (Proposition \ref{prop:monotonicity}) with $K\equiv 1$ to $p_A$ itself and obtain
\begin{equs}
\frac{1}{2} \ptl_t \big((t-s)^{1+\alpha}|p_A(w;z)|^2\big) = -(t-s)^{1+\alpha} |\covd_{A(z)} p_A(w; z)|^2 + (1+\alpha)(t-s)^\alpha |p_A(w; z)|^2.
\end{equs}
Upon integrating over $z$, we obtain
\begin{equs}
\bigg\|(t-s)^{\frac{1+\alpha}{2}} \covd_{A(z)} p_A(w; z)\bigg\|_{L_z^2([s, t_0] \times \T^2)} \lesssim \Big\|(t-s)^{\frac{\alpha}{2}} p_A(w; z)\Big\|_{L_z^2([s, t_0] \times \T^2)}  \lesssim (t_0 - s)^{\frac{\alpha}{2}}.
\end{equs}
This shows the first estimate. For the second estimate, apply the first estimate with $\alpha = 1$ (say) to obtain
\begin{equs}
\rho \bigg\|\ind(|t-s| \in [\rho/10, 10\rho]) \covd_{A(z)} p_A(w; z)\bigg\|_{L_z^2([s, s+1] \times \T^2)} \lesssim \bigg\|(t-s) \covd_{A(z)} p_A(w; z)\bigg\|_{L_z^2([s, s + 10\rho] \times \T^2)} \lesssim \rho^{\frac{1}{2}}.
\end{equs}
For the last two estimates, we use the fact that $\ovl{p_A(\cdot; z)}$ is the fundamental solution to the backwards covariant heat equation (Lemma \ref{lemma:heat-kernel-time-reversal}) and a time-reversal argument as in the proof of Corollary \ref{cor:gradient-pA-p-l2-spacetime-bound}. 
\end{proof}

\subsection{Covariant derivatives of the covariant heat kernel}

In this subsection, we relate covariant derivatives of $p_A(w; z)$ in $z$ and $w$.

\begin{lemma}[Expansion of $\covd_A p_A$]\label{lemma:covd-pA-expansion}
For $s < t$, $w = (s, y)$, $z = (t, x)$, and $k \in [2]$, we have that
\begin{equs}
\covd^k_{A(z)} p_A(w; z) = - \covd^k_{-A(w)} p_A(w; z) &+ 2\icomplex \int dv p_A(v; z) (F_A)^{kj}(v) \covd_{A(v), j} p_A(w; v) \\
&+\icomplex \int dv p_A(v; z) (\ptl_t A^k + \ptl_j F_A^{kj})(v) p_A(w; v).
\end{equs}
\end{lemma}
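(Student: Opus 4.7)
The strategy is to fix $w = (s,y)$ and view both sides of the claimed identity as functions of $z = (t,x)$ on $(s, T] \times \T^2$. I will show that
\begin{equation*}
\Psi(z) := \covd_{A(z)}^k p_A(w; z) + \covd_{-A(w)}^k p_A(w; z)
\end{equation*}
solves a forced covariant heat equation in $z$ whose initial data vanishes at $t \downarrow s$, and then apply Duhamel's formula to identify $\Psi$ with the integral on the right-hand side of the claimed identity.

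Since $p_A(w; \cdot)$ solves the homogeneous covariant heat equation in its endpoint variable, Lemma \ref{lemma:D-A-phi-equation} (applied with $\phi(\cdot) := p_A(w; \cdot)$ and $G \equiv 0$) yields, for $t > s$,
\begin{equation*}
(\ptl_t - \covd_{A(z)}^j \covd_{A(z), j}) \covd_{A(z)}^k p_A(w; z) = 2\icomplex (F_A)^{kj}(z) \covd_{A(z),j} p_A(w;z) + \icomplex (\ptl_t A^k + \ptl_j (F_A)^{kj})(z)\, p_A(w;z) =: F(z).
\end{equation*}
On the other hand, the operator $\covd_{-A(w)}^k = \ptl_y^k - \icomplex A^k(w)$ acts only on the $w$-variables, so it commutes with the covariant heat operator in $z$; consequently, $\covd_{-A(w)}^k p_A(w; z)$ solves the homogeneous covariant heat equation in $z$. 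Together, $\Psi$ solves $(\ptl_t - \covd_{A(z)}^j \covd_{A(z),j}) \Psi(z) = F(z)$ on $(s, T] \times \T^2$. For the initial data, as $t \downarrow s$ we have $p_A(w; \cdot) \to \delta_y$, and using $\ptl_y^k \delta_y = -\ptl_x^k \delta_y$ together with continuity of $A$ (so that $A(z)|_{z=w} = A(w)$), we find
\begin{equation*}
\covd_{A(z)}^k p_A(w; z) \longrightarrow \ptl_x^k \delta_y + \icomplex A^k(w)\delta_y, \qquad \covd_{-A(w)}^k p_A(w; z) \longrightarrow -\ptl_x^k \delta_y - \icomplex A^k(w)\delta_y,
\end{equation*}
whose sum vanishes. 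Duhamel's formula therefore gives $\Psi(z) = \int_s^t dv\, p_A(v; z) F(v)$, which after substituting the definition of $F$ is exactly the stated identity (rewritten so that $\covd_{A(z)}^k p_A(w;z)$ sits on the left).

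The only real technical delicacy is making the cancellation of the initial data rigorous, since $p_A(w; \cdot)$ is only a distribution at $t = s$. I plan to avoid this by applying Duhamel's formula on the shifted interval $[s+\varep, t]$, where all quantities are smooth, pair both sides against an arbitrary test function, and then send $\varep \downarrow 0$: the boundary contribution from time $s+\varep$ reduces to a pairing with $\Psi(s+\varep, \cdot)$, and the computation above shows that this pairing converges to zero. The rest of the terms converge by dominated convergence using the diamagnetic bound $|p_A| \leq p$ (Lemma \ref{kernel:lem-diamagnetic-heat-kernel}) and the $L^2$-bound for $\covd_A p_A$ from Lemma \ref{lemma:covd-pa-l2-spacetime-norm}, so this approximation step is routine.
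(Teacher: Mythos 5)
Your proposal is correct, and it is a genuinely different argument from the one in the paper, though both hinge on the same key ingredient (Lemma \ref{lemma:D-A-phi-equation}). The paper proceeds by duality: it fixes an arbitrary smooth source $G$, lets $\phi$ solve the forced covariant heat equation with zero initial data, writes $\covd_A^k \phi$ in two ways (once via $\phi(z) = \int p_A(w;z) G(w)\,dw$, once via Duhamel applied to the evolution equation from Lemma \ref{lemma:D-A-phi-equation}), integrates by parts in $w$ to convert $\int p_A(w;z)\,\covd_{A(w)}^k G(w)\,dw$ into $-\int \covd_{-A(w)}^k p_A(w;z)\, G(w)\,dw$, and then concludes since $G$ is arbitrary. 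You instead fix $w$, treat the kernel as a solution of a PDE in $z$, and run Duhamel directly on the quantity $\Psi(z) = \covd_{A(z)}^k p_A(w;z) + \covd_{-A(w)}^k p_A(w;z)$, after checking that the singular initial data cancels. The paper's route buys you that the auxiliary solution $\phi$ is smooth everywhere (since it has zero initial data), so the $t\downarrow s$ singularity never appears and no $\varep$-regularization is needed; the cost is the extra layer of testing against $G$. Your route is more direct and kernel-intrinsic, but it forces you to justify the cancellation of two distributional initial data, which is precisely the technical delicacy you flag. The $\varep$-shift you sketch is the right fix, and the diamagnetic bound and Lemma \ref{lemma:covd-pa-l2-spacetime-norm} do supply enough integrability to take the limit, so this is a valid alternative proof.
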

\begin{remark}
If $A$ satisfies the Coulomb condition, then $\Delta A^k = -\ptl_j F_A^{kj}$, and so $\ptl_t A^k + \ptl_j F_A^{kj} = (\ptl_t - \Delta) A^k$. Thus, if $A$ also solves the heat equation, then the last term above drops out.
\end{remark} 

\begin{proof}
Let $G \colon [0, T] \times \T^2 \ra \R$ be smooth. Let $\phi$ be the solution to
\begin{equs}
(\ptl_t - \covd_A^j \covd_{A, j}) \phi = G, ~~ \phi(0) = 0.
\end{equs}
By Lemma \ref{lemma:D-A-phi-equation}, we have that
\begin{equs}
(\ptl_t - \covd_A^j \covd_{A, j}) \covd_A^k \phi = \covd_A^k G + 2\icomplex (F_A)^{kj} \covd_{A, j}  \phi + \icomplex (\ptl_t A^k + \ptl_j F_A^{kj}) \phi. 
\end{equs}
For brevity, let $H^k(w) := \ptl_t A^k(w) + \ptl_j F_A^{kj}(w)$. The above identity implies that
\begin{equs}
\int dw ~\covd_{A(z)}^k p_A(w; z) G(w) = ( \covd_A^k \phi)(z) = \int dw ~  p_A(w; z) \covd_{A(w)}^k G(w) &+ \int dv ~ p_A(v; z)  2\icomplex (F_A)^{kj}(v) \covd_{A(v), j} \phi (v) \\
&+ \int dv ~ p_A(v; z) \icomplex H^k(v) \phi(v).
\end{equs}
By integration by parts in the spatial variable, we have that
\begin{equs}
 \int dw ~  p_A(w; z) \covd_{A(w)}^k G(w) = - \int dw~ \Big( \covd_{-A(w)}^k p_A(w; z)\Big) G(w).
\end{equs}
The second term may be written
\begin{equs}
\int dv ~ p_A(v; z)  2\icomplex (F_A)^{kj}(v) \covd_{A, j} \phi (v) &= 2\icomplex \int dv ~ p_A(v; z) (F_A)^{kj}(v) \int dw ~ \covd_{A(v), j} p_A(w; v) G(w) \\
&= 2\icomplex \int dw ~ G(w) \int dv ~ p_A(v; z) (F_A)^{kj}(v) \covd_{A(v), j} p_A(w; v).
\end{equs}
Similarly, the third term may be written
\begin{equs}
\int dv ~ p_A(v; z) \icomplex H^k(v) \phi(v) =  \icomplex \int dw~ G(w) \int dv p_A(v; z) H^k(v) p_A(w; v).
\end{equs}
The desired result now follows since $G$ was arbitrary.
\end{proof}

\subsection{Perturbations of the covariant heat kernel}

In this subsection, we derive some formulas for $\ptl_u p_{A_u}(w; z)$ when the connection $A_u$ depends on an auxiliary parameter $u$. These formulas will be used in Section \ref{section:cshe} when we estimate the covariant stochastic objects.

\begin{lemma}\label{lemma:ptl-u-phi-u-evolution} 
Let $(A_u)_{u \in [0, 1]}\colon [0,\infty)\times \T^2 \rightarrow \R^2$ be a family of connection one-forms indexed by $u\in [0,1]$ and let $G\colon [0,\infty) \times \T^2 \rightarrow \C$. For each $u$, let $\phi_u\colon [0,\infty) \times \T^2 \rightarrow \C$ be the solution to the covariant heat equation corresponding to $A_u$, i.e.
\begin{equs}
(\ptl_t - \covd_{A_u}^j \covd_{A_u, j}) \phi_u = G. 
\end{equs} 
Then, the time-evolution of $\ptl_u \phi_u$ is given by
\begin{equs}
(\ptl_t - \covd_{A_u}^j \covd_{A_u, j}) (\ptl_u \phi_u) =  2 \icomplex (\ptl_u A_u^j) \covd_{A_u, j} \phi_u + \icomplex (\ptl_u \ptl_j A_{u}^j) \phi_u.
\end{equs}
\end{lemma}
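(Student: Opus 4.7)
The plan is to differentiate the covariant heat equation directly in $u$ and track the commutator terms, analogously to how the evolution equation for $\covd_A^k \phi$ in Lemma \ref{lemma:D-A-phi-equation} is derived. Since $G$ is independent of $u$, applying $\ptl_u$ to $(\ptl_t - \covd_{A_u}^j \covd_{A_u, j}) \phi_u = G$ gives
\begin{equs}
\ptl_t(\ptl_u \phi_u) \;=\; \ptl_u\big( \covd_{A_u}^j \covd_{A_u, j} \phi_u \big),
\end{equs}
so the whole problem reduces to commuting $\ptl_u$ past $\covd_{A_u}^j \covd_{A_u, j}$ and identifying the error.

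The key identity is the commutator
\begin{equs}
[\ptl_u, \covd_{A_u}^j]\, \psi \;=\; \icomplex\,(\ptl_u A_u^j)\, \psi,
\end{equs}
which follows at once from $\covd_{A_u}^j = \ptl^j + \icomplex A_u^j$. I would apply this twice: first, $\ptl_u(\covd_{A_u, j}\phi_u) = \icomplex(\ptl_u A_{u,j})\phi_u + \covd_{A_u,j}(\ptl_u\phi_u)$, and second, with $\psi = \covd_{A_u,j}\phi_u$,
\begin{equs}
\ptl_u\big(\covd_{A_u}^j \covd_{A_u, j}\phi_u\big)
= \icomplex(\ptl_u A_u^j)\,\covd_{A_u,j}\phi_u
+ \covd_{A_u}^j\!\big(\icomplex(\ptl_u A_{u,j})\phi_u\big)
+ \covd_{A_u}^j \covd_{A_u, j}(\ptl_u\phi_u).
\end{equs}
Using the product rule (Lemma \ref{prelim:lem-derivatives}.\ref{prelim:item-product}) on the middle term with the scalar coefficient $\ptl_u A_{u,j}$, one gets $\icomplex(\ptl_u\ptl_j A_u^j)\phi_u + \icomplex(\ptl_u A_{u,j})\covd_{A_u}^j\phi_u$. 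The two commutator contributions combine into $2\icomplex(\ptl_u A_u^j)\covd_{A_u,j}\phi_u$.

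Collecting everything yields
\begin{equs}
(\ptl_t - \covd_{A_u}^j\covd_{A_u,j})(\ptl_u\phi_u)
\;=\; 2\icomplex(\ptl_u A_u^j)\covd_{A_u,j}\phi_u \;+\; \icomplex(\ptl_u\ptl_j A_u^j)\phi_u,
\end{equs}
which is the desired identity. There is essentially no obstacle here — the computation is elementary and the only thing to be careful about is the bookkeeping with the real-valued coefficients $\ptl_u A_{u,j}$, for which the scalar product rule (rather than the covariant one) is the appropriate tool. Smoothness of $(A_u)_{u\in[0,1]}$ and $\phi_u$ in $u$, needed to justify interchanging $\ptl_u$ with $\ptl_t$ and with spatial derivatives, is implicit in the assumption that $\ptl_u \phi_u$ is a well-defined object; in the applications in Section \ref{section:cshe} the family $(A_u)$ will be built explicitly from an interpolation so that this regularity is manifest.
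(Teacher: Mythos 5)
Your proof is correct and matches the paper's own computation: both simply differentiate the equation in $u$, use that $\ptl_u \covd_{A_u}^j$ acts as multiplication by $\icomplex(\ptl_u A_u^j)$, and apply the product rule to the middle term to split off the $\icomplex(\ptl_u\ptl_j A_u^j)\phi_u$ contribution. The commutator framing is just a minor repackaging of the paper's two-term Leibniz expansion; the content is identical.
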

\begin{proof}
For each $u$, by assumption $\phi_u$ solves
\begin{equs}
\ptl_t \phi_u = \covd_{A_u}^j \covd_{A_u, j} \phi_u  + G.
\end{equs}
Taking $u$-derivatives on both sides, we obtain
\begin{equs}
\ptl_t (\ptl_u \phi_u) &= \ptl_u \big( \covd_{A_u}^j \covd_{A_u, j} \phi_u\big)   \\
&= \covd_{A_u}^j \covd_{A_u, j} (\ptl_u \phi_u) + (\ptl_u \covd_{A_u}^j) \covd_{A_u, j} \phi_u + \covd_{A_u}^j (\ptl_u \covd_{A_u, j}) \phi_u  \\
&= \covd_{A_u}^j \covd_{A_u, j} (\ptl_u \phi_u) + \icomplex (\ptl_u A_u^j) \covd_{A_u, j} \phi_u + \covd_{A_u}^j (\icomplex (\ptl_u A_{u, j}) \phi_u) \\
&=\covd_{A_u}^j \covd_{A_u, j} (\ptl_u \phi_u) + 2 \icomplex (\ptl_u A_u^j) \covd_{A_u, j} \phi_u + \icomplex (\ptl_u \ptl_j A_{u}^j) \phi_u ,
\end{equs}
as desired.
\end{proof}

\begin{corollary}\label{cor:covariant-heat-kernel-derivative-varying-connection}
Let $(A_u)_{u \in [0, 1]}\colon [0,\infty)\times \T^2 \rightarrow \R^2$ be a family of connection one-forms indexed by $u\in [0,1]$. For any $w,z\in [0,\infty)\times \T^2$, we then have that 
\begin{equs}
\ptl_u p_{A_u}(w; z) = &2\icomplex \int dv \,  p_{A_u}(v; z) (\ptl_u A_u^j)(v) \covd_{A_u(v), j} p_{A_u}(w; v) +\icomplex \int dv \,  p_{A_u}(v; z) (\ptl_u \ptl_j A_u^j(v)) p_{A_u}(w; v).
\end{equs}
The same identity holds with $p_{A_u}$ replaced by $\massp_{A_u}$ everywhere.
\end{corollary}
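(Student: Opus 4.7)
The plan is to apply Lemma \ref{lemma:ptl-u-phi-u-evolution} with a fixed initial condition and then read off the kernel identity by testing against arbitrary data. Fix $w = (s, y)$ and let $\phi_0 \in C^\infty(\T^2 \to \C)$ be arbitrary. For each $u \in [0, 1]$, define $\phi_u$ on $[s, \infty) \times \T^2$ as the solution to
\begin{equs}
(\ptl_t - \covd_{A_u}^j \covd_{A_u, j}) \phi_u = 0, \qquad \phi_u(s, \cdot) = \phi_0.
\end{equs}
Then by definition of the covariant heat kernel, $\phi_u(z) = \int dy'\, p_{A_u}(s, y'; z) \phi_0(y')$.

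Next, since the initial data $\phi_0$ does not depend on $u$, we have $(\ptl_u \phi_u)(s, \cdot) = 0$. Applying Lemma \ref{lemma:ptl-u-phi-u-evolution} with $G \equiv 0$, $\ptl_u \phi_u$ solves
\begin{equs}
(\ptl_t - \covd_{A_u}^j \covd_{A_u, j}) (\ptl_u \phi_u) = 2\icomplex (\ptl_u A_u^j)\covd_{A_u, j} \phi_u + \icomplex (\ptl_u \ptl_j A_u^j) \phi_u,
\end{equs}
with zero initial data at time $s$. Duhamel's principle therefore gives
\begin{equs}
(\ptl_u \phi_u)(z) = &\int dv\, p_{A_u}(v; z) \Big( 2\icomplex (\ptl_u A_u^j)(v) (\covd_{A_u(v), j} \phi_u)(v) + \icomplex (\ptl_u \ptl_j A_u^j)(v) \phi_u(v) \Big),
\end{equs}
where the integral is over $v = (t_v, x_v)$ with $t_v \in (s, t)$.

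Finally, substitute the expressions $\phi_u(v) = \int dy'\, p_{A_u}(s, y'; v) \phi_0(y')$ and $(\covd_{A_u(v), j} \phi_u)(v) = \int dy'\, \covd_{A_u(v), j} p_{A_u}(s, y'; v) \phi_0(y')$ into the Duhamel formula, and compare with $(\ptl_u \phi_u)(z) = \int dy'\, (\ptl_u p_{A_u})(s, y'; z) \phi_0(y')$, which follows by differentiating the kernel representation of $\phi_u$ in $u$ (justified by smoothness of $u \mapsto A_u$). Since $\phi_0$ is arbitrary, we may identify kernels in the $y'$ variable and, after relabeling $y' \mapsto y$, obtain the stated identity for $\ptl_u p_{A_u}(w; z)$. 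The massive case follows immediately: the relation $\massp_{A_u}(w; z) = e^{-(t-s)} p_{A_u}(w; z)$ and the chain rule of kernel composition $\massp_{A_u}(v; z) \massp_{A_u}(w; v) = e^{-(t-s)} p_{A_u}(v; z) p_{A_u}(w; v)$ show that the factor $e^{-(t-s)}$ pulls out of both sides identically, so replacing $p_{A_u}$ by $\massp_{A_u}$ throughout preserves the formula.

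The only real obstacle is the routine justification that $u \mapsto p_{A_u}(w; z)$ is differentiable with the derivative commuting through spatial integration; this is standard given that $A_u$ is smooth in $u$ and the covariant heat kernels satisfy the Gaussian pointwise bound from Lemma \ref{kernel:lem-diamagnetic-heat-kernel}, which provides enough domination to differentiate under the integral sign.
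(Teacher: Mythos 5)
Your proof is correct and follows essentially the same approach as the paper: both apply Lemma \ref{lemma:ptl-u-phi-u-evolution}, integrate the resulting evolution equation for $\ptl_u\phi_u$ via Duhamel, and then identify kernels by testing against arbitrary data. The only difference is a dual choice of which data to make arbitrary — you fix $w=(s,y)$ by varying the initial condition $\phi_0$ at time $s$ with zero forcing, whereas the paper varies the forcing $G$ with zero initial data at time $0$, which recovers the identity in all of $w=(s,y)$ at once rather than for each fixed $s$; both are valid and the difference is purely cosmetic.
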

\begin{proof}
Let $G$ be arbitrary, and let $\phi_u$ be the solution to 
\begin{equs}
(\ptl_t - \covd_{A_u}^j \covd_{A_u, j}) \phi_u = G, ~~ \phi_u(0) = 0.
\end{equs}
By Lemma \ref{lemma:ptl-u-phi-u-evolution}, we have that
\begin{equs}\label{eq:covariant-heat-kernel-derivative-varying-connection}
\ptl_u \phi_u(z) = 2\icomplex \int dv \, p_{A_u}(v; z) (\ptl_u A_u^j)(v) \covd_{A_u(v), j} \phi_u(v) + \icomplex \int dv \, p_{A_u}(v; z) (\ptl_u \ptl_j A_u^j(v)) \phi_u(v). 
\end{equs}
By inserting into both sides of \eqref{eq:covariant-heat-kernel-derivative-varying-connection} the identity 
\begin{equs}
\phi_u(v) = \int dw ~p_{A_u}(w; v) G(w),
\end{equs}
we obtain
\begin{equs}
\int dw  ~\ptl_u p_{A_u}(w; z) G(w) = &~2 \icomplex \int dw  ~G(w) \int dv  ~ p_{A_u}(v; z) (\ptl_u A_u^j)(v) \covd_{A_u(v), j} p_{A_u}(w; v) \\
&~+ \icomplex \int dw ~G(w) \int dv ~ p_{A_u}(v; z) (\ptl_u \ptl_j A_u^j(v)) p_{A_u}(w; v).
\end{equs}
The desired result now follows since $G$ was arbitrary. The identity for $\massp_{A_u}$ follows by Remark \ref{remark:massive-kernel}.
\end{proof}

The following result is a direct consequence of Corollary \ref{cor:covariant-heat-kernel-derivative-varying-connection}.

\begin{corollary}\label{cor:p-A-u-expansion}
Let $A\colon [0,\infty) \times \T^2 \rightarrow \R^2$ be a connection one-form and let 
$z_0 = (t_0,x_0) \in [0,\infty) \times \T^2$ be fixed. For $u \in [0, 1]$, define
\begin{equs}
A_u(w) := A_{u, z_0}(w) := (1-u) A(z_0) + u A(w).
\end{equs}
Then, it holds that 
\begin{equs}
\ptl_u p_{A_u}(w; z) = 2\icomplex \int dv~ p_{A_u}(v; z) (A_j(v) - A_j(z_0))\covd_{A_u(v)}^j p_{A_u}(w; v) + \icomplex \int dv p_{A_u}(v; z) \ptl_j A^j(v) p_{A_u}(w; v).
\end{equs}
The same identity holds with $p_{A_u}$ replaced by $\massp_{A_u}$ everywhere.
\end{corollary}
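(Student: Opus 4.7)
The plan is to apply Corollary \ref{cor:covariant-heat-kernel-derivative-varying-connection} directly to the specific one-parameter family $A_u(w) := (1-u) A(z_0) + u A(w)$. The two ingredients that enter the formula in Corollary \ref{cor:covariant-heat-kernel-derivative-varying-connection} are the $u$-derivatives $\ptl_u A_u^j$ and $\ptl_u \ptl_j A_u^j$, so the first step is to compute these for our affine family. Since $A(z_0)$ is constant in $w$, we get
\begin{equs}
\ptl_u A_u^j(v) = A^j(v) - A^j(z_0), \qquad \ptl_u \ptl_j A_u^j(v) = \ptl_j A^j(v),
\end{equs}
where in the second identity the contribution of the constant $A^j(z_0)$ vanishes after applying $\ptl_j$.

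The second (and only remaining) step is to substitute these two expressions into the right-hand side of the formula in Corollary \ref{cor:covariant-heat-kernel-derivative-varying-connection}, which directly produces the claimed identity for $\ptl_u p_{A_u}(w;z)$. The analogous statement for $\massp_{A_u}$ follows either from the corresponding assertion for $\massp_{A_u}$ already recorded in Corollary \ref{cor:covariant-heat-kernel-derivative-varying-connection}, or, equivalently, from the relation $\massp_{A_u}(w;z) = e^{-(t-s)} p_{A_u}(w;z)$ of Remark \ref{remark:massive-kernel} combined with the fact that the prefactor $e^{-(t-s)}$ does not depend on $u$ and hence passes through the computation unchanged.

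There is no genuine obstacle here: the corollary is a pure substitution, and the only hypothesis to check is that the family $(A_u)_{u \in [0,1]}$ has enough regularity (in both $u$ and in spacetime) for Corollary \ref{cor:covariant-heat-kernel-derivative-varying-connection} to apply. This is automatic, since the dependence on $u$ is affine and the spacetime regularity of $A_u$ is inherited from that of $A$.
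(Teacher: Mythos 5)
Your proof is correct and follows exactly the route the paper intends: the paper states the corollary as an immediate consequence of Corollary \ref{cor:covariant-heat-kernel-derivative-varying-connection}, and your two computations $\ptl_u A_u^j(v) = A^j(v) - A^j(z_0)$ and $\ptl_u \ptl_j A_u^j(v) = \ptl_j A^j(v)$ are precisely the substitutions needed. The remark about the massive kernel $\massp_{A_u}$ is also handled correctly.
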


We finish this section off with the following corollary which will be needed in Section \ref{section:cshe}.

\begin{corollary}\label{cor:pA-p-L2-spacetime-norm}
Let $z = (t, x)$. We have that
\begin{equs}
\Big\|p_A(w; z) - p(w; z)\Big\|_{L_w^2([0, t] \times \T^2)} \lesssim t^{\frac{1}{2}} \big( \|A\|_{L_z^\infty([0, t] \times \T^2)} + \|\ptl_j A^j\|_{L_t^2 L_x^\infty([0, t] \times \T^2)}\big).
\end{equs}
\end{corollary}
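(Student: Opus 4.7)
The plan is to exploit the Feynman--Kac--It\^o representation (Lemma \ref{lemma:feynman-kac-ito-formula}) in order to obtain a pointwise bound on $p_A(w;z) - p(w;z)$ in terms of the Euclidean heat kernel $p(w;z)$. Under sufficient regularity of $A$ (which can then be removed by approximation), Lemma \ref{lemma:feynman-kac-ito-formula} gives
\begin{equs}
p_A(w;z) - p(w;z) = p(w;z) \, \E_{w \to z}\big[e^{-\icomplex X_A} - 1\big], \quad X_A := \int_s^t A(u, W_u) \cdot dW_u + \int_s^t (\ptl_j A^j)(u, W_u) \, du,
\end{equs}
where $W$ is a rate-$2$ Brownian bridge on $\T^2$ from $y$ to $x$ on the interval $[s, t]$. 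The key observation is that $A$ and $\ptl_j A^j$ are real-valued, so $X_A$ is real, and hence the argument of the exponential is purely imaginary. This allows the elementary linearization $|e^{\icomplex x} - 1| \leq |x|$ for $x \in \R$, which together with Jensen's inequality produces the pointwise estimate
\begin{equs}
|p_A(w; z) - p(w; z)| \leq p(w; z) \bigg( \E_{w \to z} \bigg| \int_s^t A(u, W_u) \cdot dW_u \bigg| + \E_{w \to z} \int_s^t |(\ptl_j A^j)(u, W_u)| \, du \bigg).
\end{equs}

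Next, I would bound each expectation by a constant times $(t-s)^{1/2}$ and the appropriate norm of $A$. For the stochastic integral, the It\^o isometry for the rate-$2$ Brownian bridge together with Cauchy--Schwarz gives
\begin{equs}
\E_{w \to z} \bigg| \int_s^t A(u, W_u) \cdot dW_u \bigg| \leq \bigg( 2 \int_s^t \E_{w \to z}|A(u, W_u)|^2 \, du \bigg)^{1/2} \lesssim (t-s)^{1/2} \, \|A\|_{L_z^\infty([0,t] \times \T^2)},
\end{equs}
and for the finite-variation term, a pointwise $L^\infty_x$ bound followed by Cauchy--Schwarz in time gives $\lesssim (t-s)^{1/2} \|\ptl_j A^j\|_{L_t^2 L_x^\infty([0,t] \times \T^2)}$. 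Combining these yields
\begin{equs}
|p_A(w; z) - p(w; z)| \lesssim (t-s)^{1/2} \, p(w; z) \cdot \big(\|A\|_{L_z^\infty} + \|\ptl_j A^j\|_{L_t^2 L_x^\infty}\big).
\end{equs}

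To conclude, I would square this pointwise bound and integrate over $w = (s, y) \in [0, t] \times \T^2$, using the standard heat kernel estimate $\|p(s, \cdot; t, x)\|_{L^2(\T^2)}^2 \lesssim (t-s)^{-1}$ (valid uniformly in the regime $t \leq 1$ of interest, via a direct computation on the torus or periodization of the Euclidean kernel). The resulting time integral is exactly $\int_0^t (t-s) \cdot (t-s)^{-1} \, ds = t$, which produces the claimed $t^{1/2}$ factor. There is no real obstacle: the entire proof is essentially mechanical once one identifies the Feynman--Kac representation as the right starting point. The only conceptual ingredient is recognizing that the imaginary nature of the exponent permits the sharp linearization $|e^{\icomplex x} - 1| \leq |x|$, which is crucial for obtaining an estimate linear (rather than quadratic) in $\|A\|$. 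This same ingredient is, of course, what underpins the diamagnetic inequality used throughout this section.
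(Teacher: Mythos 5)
Your approach via the Feynman--Kac--It\^{o} formula is a genuinely different route from the paper's, which instead differentiates $p_{A_u}$ in an auxiliary parameter $u$ (Corollary \ref{cor:covariant-heat-kernel-derivative-varying-connection}) and then applies the covariant energy estimates of the section together with the diamagnetic inequality. The probabilistic route is appealing and can be made to work, but as written your proof has a real gap.

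The problem is the invocation of ``It\^{o} isometry for the rate-$2$ Brownian bridge.'' The Brownian bridge is not a martingale: its semimartingale decomposition contains a finite-variation drift term, roughly $\frac{x - W_u}{t-u}\,du$, in addition to a rate-$2$ martingale part. Only the martingale part satisfies the It\^{o} isometry. When you write
$\E_{w\to z}\big|\int_s^t A(u,W_u)\cdot dW_u\big| \leq \big(2\int_s^t \E_{w\to z}|A(u,W_u)|^2\,du\big)^{1/2}$,
you are silently dropping the drift contribution, and the resulting pointwise bound $|p_A(w;z)-p(w;z)|\lesssim (t-s)^{1/2}\,p(w;z)(\|A\|_{L^\infty}+\cdots)$ is simply false. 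This is most transparently seen by taking $A\equiv a\in\R^2$ constant. Then $\int_s^t a\cdot dW_u = a\cdot(W_t - W_s) = a\cdot(x-y)$ deterministically (this is also visible from Remark \ref{remark:covariant-heat-kernel-ito-stratonovich}), and Lemma \ref{lemma:constant-A-heat-kernel-formulas} gives $p_A(w;z)/p(w;z) = e^{\icomplex a\cdot(y-x)}$ up to periodization. Hence $|p_A(w;z)-p(w;z)|\sim p(w;z)\,|a\cdot(y-x)|$, which can exceed $(t-s)^{1/2}p(w;z)|a|$ by an arbitrarily large factor in the regime $|y-x|\gg (t-s)^{1/2}$.

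The situation is rescuable: the drift contributes a term of order $\|A\|_{L^\infty}(|x-y| + (t-s)^{1/2})$ to $\E_{w\to z}|X_A|$, so the correct pointwise bound carries an extra $|x-y|$ factor. When you square and integrate against $p(w;z)^2\lesssim (t-s)^{-1}p(w;z)$ in $w$, the second moment $\int p(s,y;t,x)|x-y|^2\,dy \lesssim (t-s)$ compensates exactly, and the final $L_w^2$ estimate $\lesssim t^{1/2}(\|A\|_{L^\infty}+\|\partial_j A^j\|_{L_t^2 L_x^\infty})$ survives. But you must either (i) explicitly decompose the bridge It\^{o} integral into its martingale and drift parts and treat the drift as above, or (ii) postpone the bound and integrate out the starting point $y$ first, invoking the unconditioned It\^{o} isometry together with a time-reversal argument. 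Either way, the step is not ``mechanical'' as you claim: the subtlety you are glossing over is precisely the one that distinguishes Brownian bridge from Brownian motion.
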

\begin{proof}
For brevity, let $J := [0, t] \times \T^2$. For $u \in [0, 1]$, define $A_u := u A$. We then have by Corollary \ref{cor:covariant-heat-kernel-derivative-varying-connection} that
\begin{equs}
p_{A}(w; z) - p(w; z) &= \int_0^1 du \ptl_u p_{A_u}(w; z) \\
&= \int_0^1 du \bigg(2\icomplex \int dv p_{A_u}(v; z) A^j(v) \covd_{A_u(v), j} p_{A_u}(w; v) + \icomplex \int dv p_{A_u}(v; z) (\ptl_j A^j(v)) p_{A_u}(w; v)\bigg).
\end{equs}
It follows that
\begin{equs}
\Big\|p_A(w; z) &- p(w; z)\Big\|_{L_w^2(J)} \lesssim \\
& \int_0^1 du \bigg\|\int dv p_{A_u}(v; z) A^j(v) \covd_{A_u(v), j} \label{eq:diff-heat-kernel-intermediate-term-1} p_{A_u}(w; v)\bigg\|_{L_w^2(J)} \\
&+ \int_0^1 du \bigg\|\int dv p_{A_u}(v; z) (\ptl_j A^j(v)) p_{A_u}(w; v)\bigg\|_{L_w^2(J)} \label{eq:diff-heat-kernel-intermediate-term-2}.
\end{equs}
We bound each term separately. By Corollary \ref{cor:dual-endpoint-time-weighted-estimate}, we have that
\begin{equs}
\eqref{eq:diff-heat-kernel-intermediate-term-1} &\lesssim  \int_0^1 du \|(t - s)^{\frac{1}{4}}\|_{L_w^\infty(J)} \bigg\|(t-s)^{-\frac{1}{4}}\int dv p_{A_u}(v; z) A^j(v) \covd_{A_u(v), j} p_{A_u}(w; v)\bigg\|_{L_w^2(J)} \\
&\lesssim t^{\frac{1}{4}}\int_0^1 du \Big\|(t-t_v)^{-\frac{1}{4}} p(v; z)^{-\frac{1}{2}} p_{A_u}(v; z) A(v) \Big\|_{L_v^2(J)} \\
&\lesssim t^{\frac{1}{4}} \|A\|_{L_z^\infty(J)} \Big\|(t-t_v)^{-\frac{1}{4}} p(v; z)^{\frac{1}{2}} \Big\|_{L_v^2(J)} \lesssim t^{\frac{1}{2}} \|A\|_{L_z^\infty(J)},
\end{equs}
which is acceptable. In the second-to-last inequality, we used the diamagnetic inequality (Lemma \ref{kernel:lem-diamagnetic-heat-kernel}) to bound $|p_{A_u}(v; z)| \leq p(v; z)$ for all $u$. For the other term, first note that for fixed $u, w$, we have the pointwise bound 
\begin{equs}
\bigg|\int dv p_{A_u}(v; z) (\ptl_j A^j(v)) p_{A_u}(w; v)\bigg| \leq \int_s^t dt_v \|\ptl_j A^j(t_v)\|_{L_x^\infty} p(w; z) \lesssim (t-s)^{\frac{1}{2}} p(w; z) \|\ptl_j A^j\|_{L_t^2 L_x^\infty(J)},
\end{equs}
where we applied the diamagnetic inequality $|p_{A_u}(v; z)| \leq p(v; z)$ and $|p_{A_u}(w; v)| \leq p(w; v)$, as well as the semigroup property of the heat kernel $p$. We thus obtain
\begin{equs}
\eqref{eq:diff-heat-kernel-intermediate-term-2} \lesssim \|\ptl_j A^j\|_{L_t^2 L_x^\infty(J)} \Big\|(t-s)^{\frac{1}{2}} p(w; z)\Big\|_{L_w^2(J)} \lesssim t^{\frac{1}{2}} \|\ptl_j A^j\|_{L_t^2 L_x^\infty(J)},
\end{equs}
which is acceptable. The desired result now follows.
\end{proof}

\section{Gauge covariance}\label{section:gauge-covariance}

In this section, we show that solutions to \eqref{intro:eq-SAH} are gauge covariant, when the $A$-counterterm $\Cgauge A_{\leq N}$ appearing in \eqref{intro:eq-SAH-smooth} is precisely $\Cgauge = \gaugerenorm$. Moreover, this constant is uniquely specified in a certain sense -- see Remark \ref{remark:gauge-renorm-uniquely-determined}. Because many of the results of this section have analogs in our previous paper \cite{BC23}, we will often just state the results and refer to the locations of \cite{BC23} which contain the proofs.

\begin{definition}[Data to solution map]
For $\Cgauge \in \R$, let $\mbb{S}(A_0, \phi_0, \xi, \zeta; \Cgauge)$ be the data-to-solution map for \eqref{intro:eq-SAH}, which is defined by
\begin{equs}
\mbb{S}(A_0, \phi_0, \xi, \zeta; \Cgauge) := \lim_{N \toinf} \mbb{S}(A_0, \phi_0, \xi_{\leq N}, \zeta_{\leq N}; \Cgauge),
\end{equs}
where $\mbb{S}(A_0, \phi_0, \xi_{\leq N}, \zeta_{\leq N}; \Cgauge)$ is the solution to \eqref{intro:eq-SAH-smooth}. Here, the limit is in $C_t^0 \Cs_x^{-\kappa}([0, T] \times \T^2 \rightarrow \R^2) \times C_t^0 \Cs_x^{-\kappa}([0, T] \times \T^2 \rightarrow \C)$, for the maximal $T > 0$ for which the limit exists. 
\end{definition}

In the following, recall from \eqref{intro:eq-group-action-Zd} that for $n \in \Z^2$, we write $(A^n, \phi^n) = (A + n, e^{-\icomplex n \cdot x} \phi)$. We will also write $(A, \phi)^n := (A^n, \phi^n)$.

\begin{theorem}[Gauge covariance of \eqref{intro:eq-SAH}]\label{thm:gauge-covariance}
Let $n_0 \in \Z^2$. We have that 
\begin{equs}
\solutionmap\Big(A_0, \phi_0, \xi, \zeta; \gaugerenorm\Big)^{n_0} \stackrel{a.s.}{=} \solutionmap\Big(A_0^{n_0}, \phi_0^{n_0}, \xi, \zeta^{n_0}; \gaugerenorm\Big).
\end{equs}
In particular, if $\mbb{S}(A_0, \phi_0, \xi, \zeta; \gaugerenorm)$ exists on some interval $[0, T]$, then so does $\mbb{S}(A_0^{n_0}, \phi_0^{n_0}, \xi, \zeta^{n_0}; \gaugerenorm)$, and vice-versa.
\end{theorem}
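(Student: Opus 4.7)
Both sides of the identity are defined as $N \to \infty$ limits of solutions to \eqref{intro:eq-SAH-smooth}, so the strategy is a stability argument at the mollified level. Let $(A_{\leq N}, \phi_{\leq N})$ denote the smoothed solution with data $(A_0, \phi_0)$ and noises $(\xi_{\leq N}, \zeta_{\leq N})$, and let $(B_{\leq N}, \psi_{\leq N})$ denote the smoothed solution with data $(A_0^{n_0}, \phi_0^{n_0})$ and noises $(\xi_{\leq N}, \zeta^{n_0}_{\leq N})$, where $\zeta^{n_0}_{\leq N} := \chi_{\leq N} \ast \zeta^{n_0}$. It suffices to show $(A_{\leq N}^{n_0}, \phi_{\leq N}^{n_0}) - (B_{\leq N}, \psi_{\leq N}) \to 0$ in $C_t^0 \Cs_x^{-\kappa}$ as $N \to \infty$, since this yields the almost-sure identification of their respective limits.

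Using the pointwise identities $\covd_{A + n_0}(e^{-\icomplex n_0 \cdot x}\phi) = e^{-\icomplex n_0 \cdot x}\covd_A \phi$, $F_{A + n_0} = F_A$, and the phase-equivariance $H_{m, n}(e^{-\icomplex \theta}z, e^{\icomplex \theta}\bar z; \sigma^2) = e^{-\icomplex (m - n)\theta} H_{m, n}(z, \bar z; \sigma^2)$ of Lemma \ref{lemma:complex-hermite-polynomial-expansion} with $(m, n) = (\frac{q+1}{2}, \frac{q-1}{2})$ (so that $m - n = 1$), I would first derive the PDE satisfied by the gauge-transformed pair $(A_{\leq N}^{n_0}, \phi_{\leq N}^{n_0}) = (A_{\leq N} + n_0, e^{-\icomplex n_0 \cdot x}\phi_{\leq N})$. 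This PDE agrees with \eqref{intro:eq-SAH-smooth} for data $(A_0^{n_0}, \phi_0^{n_0})$ up to two discrepancies: a deterministic constant source $-\gaugerenorm\, n_0$ in the $A$-equation, arising from rewriting the counter-term as $\gaugerenorm A_{\leq N} = \gaugerenorm A_{\leq N}^{n_0} - \gaugerenorm n_0$, and a replacement of the $\phi$-noise $\zeta^{n_0}_{\leq N}$ by $e^{-\icomplex n_0 \cdot x}\zeta_{\leq N} = e^{-\icomplex n_0 \cdot x}(\chi_{\leq N; n_0} \ast \zeta)$, where $\chi_{\leq N; n_0}(y) := e^{\icomplex n_0 \cdot y}\chi_{\leq N}(y)$. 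The latter differs from $\zeta^{n_0}_{\leq N}$ only at Fourier frequencies $|m| \sim N$.

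The crux is to show that these two discrepancies cancel in the limit, which is where the choice $\Cgauge = \gaugerenorm$ becomes essential. By the resonant-part computation of Section \ref{section:cshe} (specifically Proposition \ref{cshe:prop-resonant} and the observation in Remark \ref{remark:two-calculations}), the Duhamel integral of the derivative nonlinearity produces a local term with prefactor exactly $\gaugerenorm$, and the shift of mollifier from $\chi_{\leq N}$ to $\chi_{\leq N; n_0}$ contributes precisely $\gaugerenorm\, n_0$ to this prefactor, cancelling the deterministic source identified above. Once this cancellation is in place, the difference of the two PDEs reduces to a perturbation driven by the remaining high-frequency noise discrepancy, which vanishes in the limit, and a Gronwall argument on the mild formulation (in the spirit of the stability theory developed in \cite{BC23}) then closes the proof. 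The main obstacle is thus a clean comparison of the resonance calculations for the two mollifiers uniformly in $N$, which in essence is the same computation that fixes $\Cgauge = \gaugerenorm$ in the first place. The \emph{in particular} statement on maximal existence follows immediately, since $(A, \phi) \mapsto (A^{n_0}, \phi^{n_0})$ is a bijection preserving every norm used in the local solution theory.
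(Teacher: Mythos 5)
Your proposal matches the paper's approach: gauge-transform one side, identify the $\gaugerenorm n_0$ source discrepancy and the high-frequency mollifier mismatch, and use the resonance cancellation (which uniquely fixes $\Cgauge = \gaugerenorm$) together with the local stability theory of \cite{BC23} to close the argument. The paper carries this out by reducing to convergence of the enhanced data sets in the para-controlled ansatz and then a Lipschitz estimate on the fixed-point map, with the resonance computed in Fourier space (Lemma \ref{lemma:resonant-parts-converge-fourier-calculation}); you reference the equivalent real-space version (Lemma \ref{lemma:constant-A-limit-resonance}, linked via Remark \ref{remark:two-calculations}) — note your citation to Proposition \ref{cshe:prop-resonant} should instead be to Lemma \ref{lemma:constant-A-limit-resonance}, and the phase-equivariance of complex Hermite polynomials is a separate standard fact rather than the content of Lemma \ref{lemma:complex-hermite-polynomial-expansion}.
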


To begin, we review the local theory for the stochastic Abelian-Higgs equation. We make the following solution ansatz:
\begin{equs}
A_{\leq N} &= \linear[\leqN][r][\st] + \Aquadratic[\leqN][r][\st] + X_{\leq N} , \\
\phi_{\leq N} &= \philinear[\leqN][r][\st] + 2\icomplex \mixedquadratic[\leqN][r][\st] + \eta_{\leq N} + \psi_{\leq N},
\end{equs}
where we define the stationary objects
\begin{align}
\linear[\leqN][r][\st](t) &:= \int_{-\infty}^t ds  e^{(t-s)(\Delta-1)} \leray \xi_{\leq N}(s) , \quad \philinear[\leqN][r][\st](t) := \int_{-\infty}^t ds e^{(t-s)(\Delta-1)} \zeta_{\leq N}(s), \label{eq:linear-object-stationary}\\
\Aquadraticnlst[\leqN] &:= \big|\,\linear[\leqN][r][\st]\big|^2 - 2 \sigma^2_{\leq N}, \\
\Aquadratic[\leqN][r][\st](t) &:= - \leray 
 \int_{-\infty}^t ds e^{(t-s)(\Delta - 1)} \Im\Big( \philinear[\leqN][r][\st](s) \label{eq:Aquadratic} \covd \philinear[\leqN][r][\st](s)\Big), \\
\mixedquadratic[\leqN][r][\st](t)  &:= \int_{-\infty}^t ds e^{(t-s)(\Delta - 1)} \linear[\leqN][r][\st, j](s) \ptl_j \philinear[\leqN][r][\st](s), \label{eq:Aquadratic-stationary}
\end{align}
and where $X_{\leq N}, \eta_{\leq N}, \psi_{\leq N}$ solve the following system. The nonlinear remainder $X_{\leq N}$ solves
\begingroup
\allowdisplaybreaks
\begin{align}\label{eq:para-sah}
(\ptl_t - &\Delta) X_{\leq N} =
-  \leray\Big( 4 \Im\big(\icomplex \ovl{\philinear[\leqN][r][\st]} \covd \mixedquadratic[\leqN][r][\st]\big) +  \sigma_{\leq N}^2 \linear[\leqN][r][\st] \Big) - \leray\Big(  \linear[\leqN][r][\st] \big(\big|\philinear[\leqN][r][\st]\big|^2 - \sigma_{\leq N}^2\big)\Big)  \\
&- \leray \bigg(4 (A_{\leq N} - \linear[\leqN][r][\st])^j \Big(\Im\Big( \icomplex \ovl{\philinear[\leqN][r][\st]} \covd \Duh\big(\ptl_j \philinear[\leqN][r][\st]\big)\Big) + \sigma_{\leq N}^2 \Big)\bigg) \label{eq:para-sah-e2} \\
&- \leray \Big((A_{\leq N}- \linear[\leqN][r][\st])(\big|\philinear[\leqN][r][\st]\big|^2 - \sigma_{\leq N}^2)\Big) 
\label{eq:para-sah-e3}\\
&+ 4\leray  \Im\Big( \icomplex \ovl{\philinear[\leqN][r][\st]}  \Big((A_{\leq N} - \linear[\leqN][r][\st])^j \paragtrsim \covd  \Duh(\ptl_j \philinear[\leqN][r][\st])\Big)\Big)  \\
&- 4 \leray \Im\Big(\icomplex \ovl{\philinear[\leqN][r][\st]} \Big((\covd (A_{\leq N} - \linear[\leqN][r][\st])^j) \parall \Duh(\ptl_j \philinear[\leqN][r][\st])\Big)\Big)    \\
&-4 \leray \Im\bigg(\icomplex \ovl{\philinear[\leqN][r][\st]} \covd  \Big(\Duh \Big((A_{\leq N} - \linear[\leqN][r][\st])^j \parall \ptl_j \philinear[\leqN][r][\st]\Big) - (A_{\leq N} - \linear[\leqN][r][\st])^j \parall \Duh(\ptl_j \philinear[\leqN][r][\st])\Big)\bigg) \label{gauge:eq-commutator-term} \\
&- 2 \leray \Im\Big(\ovl{\philinear[\leqN][r][\st]} \covd \psi_{\leq N}\Big) - \leray \Im\Big(\ovl{(\phi_{\leq N} - \philinear[\leqN][r][\st])} \covd (\phi_{\leq N} - \philinear[\leqN][r][\st])\Big) \\
& - \leray \Big(\ovl{\philinear[\leqN][r][\st]}\linear[\leqN][r][\st] (\phi_{\leq N} - \philinear[\leqN][r][\st]) + \philinear[\leqN][r][\st] \linear[\leqN][r][\st] \ovl{(\phi_{\leq N} - \philinear[\leqN][r][\st])} \Big) \\
&-2\leray\Big(\Re\Big(\ovl{\philinear[\leqN][r][\st]} (\phi_{\leq N} - \philinear[\leqN][r][\st])\Big) (A_{\leq N} - \linear[\leqN][r][\st]) \Big)\\
&- \leray \Big(\big|\phi_{\leq N} - \philinear[\leqN][r][\st] \big|^2 A_{\leq N}\Big) \\
&+ \linear[\leqN][r][\st] + \Aquadratic[\leqN][r][\st] + \Cgauge A_{\leq N}, 
\end{align}
the para-controlled component $\eta_{\leq N}$ solves
\begin{align}
(\ptl_t - \Delta ) \eta_{\leq N} = &~2\icomplex (A_{\leq N} - \linear[\leqN][r][\st])^j \parall \ptl_j \philinear[\leqN][r][\st], 
\end{align}
and the nonlinear remainder $\psi_{\leq N}$ solves 
\begin{align}
(\ptl_t - \Delta)\psi_{\leq N} = &~2\icomplex (A_{\leq N}- \linear[\leqN][r][\st])^j \paragtrsim \ptl_j \philinear[\leqN][r][\st] + 2\icomplex \ptl_j (A_{\leq N}^j (\phi_{\leq N} - \philinear[\leqN][r][\st])) \\
&- \Aquadraticnlst[\leqN] \philinear[\leqN][r][\st] - \Aquadraticnlst[\leqN](\phi_{\leq N} - \philinear[\leqN][r][\st]) - 2  \, 
\linear[\leqN][r][\st,\scalebox{1.2}{$j$}]\philinear[\leqN][r][\st] \big( A_{\leq N} - \linear[\leqN][r][\st] \big)_j  \\
&- 2 \,  \linear[\leqN][r][\st,\scalebox{1.2}{$j$}] \big( A_{\leq N} - \linear[\leqN][r][\st]  \big)_j  (\phi_{\leq N} - \philinear[\leqN][r][\st]) - |A_{\leq N} - \linear[\leqN][r][\st]|^2 \phi_{\leq N} \\
&- \sum_{j=0}^{\frac{q+1}{2}} \sum_{k=0}^{\frac{q-1}{2}} \binom{\frac{q+1}{2}}{j} \binom{\frac{q-1}{2}}{k} \biglcol\, \philinear[\leqN][r][\st, j] \ovl{\philinear[\leqN][r][\st, k]}\bigrcol\, (\phi_{\leq N} - \philinear[\leqN][r][\st])^{\frac{q+1}{2} - j} \ovl{(\phi_{\leq N} - \philinear[\leqN][r][\st])}^{\frac{q-1}{2} - k} \\
&+ \philinear[\leqN][r][\st] + \mixedquadratic[\leqN][r][\st].
\end{align}
Finally, the initial data is:
\begin{equs}\label{eq:local-theory-initial-data}
(X_{\leq N}(0), \eta_{\leq N}(0), \psi_{\leq N}(0)) = ~ \Big(A_0 - \linear[\leqN][r][\st](0) - \Aquadratic[\leqN][r][\st](0), 0, \phi_0 - \philinear[\leqN][r][\st](0) - \mixedquadratic[\leqN][r][\st](0)\Big).
\end{equs}
\endgroup

\begin{remark}\label{remark:local-theory-remarks}
We make the following remarks about the local theory.
\begin{enumerate}[label=(\alph*)]
    \item\label{item:A-equation-no-diverging-counterterm} Observe that there is in fact no divergent renormalization in the $X$-equation, because the counterterms appearing in \eqref{eq:para-sah}, \eqref{eq:para-sah-e2}, and \eqref{eq:para-sah-e3} add up to zero. This cancellation is essentially the same as the cancellation observed in \cite{BC23, CCHS22}. We note that this only holds in two dimensions. In three dimensions, the $A$-equation has a diverging counterterm -- see \cite{CCHS22+}.
    \item\label{item:null-form-estimate} By the null-form estimate from Lemma \ref{prelim:lem-null} below, if $A_{\leq N} \in \Cs_x^{-\kappa}$ is in the Coulomb gauge and $\phi_{\leq N} - \philinear[\leqN][r][\st] \in \Cs_x^{1-2\kappa}$, then $\ptl_j (A_{\leq N}^j (\phi_{\leq N} - \philinear[\leqN][r][\st])) \in \Cs_x^{-5\kappa}$. This justifies placing this term in the equation for the smooth remainder $\psi$ (which will essentially be in $\Cs_x^{2-}$).
    \item\label{item:stationary-objects} In \eqref{eq:linear-object-stationary}-\eqref{eq:Aquadratic-stationary}, we defined the stochastic objects to be stationary, which matches with our previous paper \cite{BC23}, and which is also convenient because then the counterterms are constant in time. However, in Sections \ref{section:cshe}, \ref{section:Abelian-Higgs}, and \ref{section:decay}, we primarily work with non-stationary objects. The reason is that, since the connection one-form $\Blin$ is time-dependent, it would be difficult to introduce a stationary version of the covariant linear stochastic object $\philinear[\Blin]$ from \eqref{intro:eq-cshe-covariant-object}.
\end{enumerate}
\end{remark}

\begin{lemma}[Null-form estimate]\label{prelim:lem-null}
Let $\alpha,\alpha_1,\alpha_2 \in \R$ and $p,p_1,p_2\in [1,\infty]$ satisfy $\frac{1}{p} \geq \frac{1}{p_1}+\frac{1}{p_2}$, 
\begin{equs}\label{prelim:eq-null-assumption}
\alpha \leq \min(\alpha_1,\alpha_2-1), \quad \alpha_1 + \alpha_2 >0, \quad 
\text{and} \quad \alpha_1 + \alpha_2 - 1 > \alpha.  
\end{equs}
Furthermore, let $A\colon \T^2 \rightarrow \R^2$ satisfy the Coulomb gauge condition $\partial_{j} A^j =0$ and let $\phi \colon \T^2 \rightarrow \C$. Then, it holds that
\begin{equs}\label{prelim:eq-null}
\big\| A^j \partial_j \phi \big\|_{\Bc^{\alpha,p}_x}
\lesssim \big\| A \big\|_{\Bc^{\alpha_1,p_1}_x} \big\| \phi \big\|_{\Bc^{\alpha_2,p_2}_x}.
\end{equs}
\end{lemma}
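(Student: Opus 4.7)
The plan is to paraproduct-decompose $A^j \partial_j \phi$ and invoke the Coulomb condition $\partial_j A^j = 0$ exactly at the (resonant) frequency configuration where the bare paraproduct estimates of Lemma~\ref{prelim:lem-para-besov} would fail. Concretely, I will write
\begin{equation*}
A^j \partial_j \phi \,=\, A^j \parall \partial_j \phi \,+\, A^j \parasim \partial_j \phi \,+\, A^j \paragg \partial_j \phi.
\end{equation*}

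For the low$\times$high term $A^j \parall \partial_j \phi$, I would view $\partial_j \phi$ as an element of $\Bc^{\alpha_2-1,p_2}_x$ and invoke Lemma~\ref{prelim:lem-para-besov}(i); the required conditions $\alpha \leq \alpha_2-1$ and $\alpha_1+(\alpha_2-1)>\alpha$ are precisely those in \eqref{prelim:eq-null-assumption}. The high$\times$low term $A^j \paragg \partial_j \phi$ is handled symmetrically via Lemma~\ref{prelim:lem-para-besov}(iii), using $\alpha \leq \alpha_1$ (which follows from $\alpha \leq \min(\alpha_1,\alpha_2-1) \leq \alpha_1$) together with $\alpha_1+\alpha_2-1>\alpha$. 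In both cases the H\"older scaling $\tfrac{1}{p} \geq \tfrac{1}{p_1}+\tfrac{1}{p_2}$ is exactly the hypothesis of the paraproduct lemma.

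The one subtle step is the high$\times$high term $A^j \parasim \partial_j \phi$: a direct application of Lemma~\ref{prelim:lem-para-besov}(ii) would demand $\alpha_1+\alpha_2-1>0$, whereas we are only given $\alpha_1+\alpha_2>0$. Here the null structure enters. Because $\partial_j$ commutes with every Littlewood-Paley projection, the identity
\begin{equation*}
\partial_j \bigl( A^j \parasim \phi \bigr) \,=\, (\partial_j A^j) \parasim \phi \,+\, A^j \parasim \partial_j \phi
\end{equation*}
holds exactly, and the Coulomb condition annihilates the first summand. Consequently $A^j \parasim \partial_j \phi = \partial_j(A^j \parasim \phi)$, so that losing one derivative in $\Bc^{\alpha,p}_x$ and then invoking Lemma~\ref{prelim:lem-para-besov}(ii) at regularity $\alpha+1$ yields the desired bound: the conditions $\alpha+1 \leq \alpha_1+\alpha_2$ (equivalent to $\alpha_1+\alpha_2-1 \geq \alpha$, which is weaker than the strict inequality assumed) and $\alpha_1+\alpha_2>0$ are both among the hypotheses in \eqref{prelim:eq-null-assumption}.

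There is no genuine obstacle here; the only conceptual point I want to flag is that the \emph{naive} rewriting $A^j \partial_j \phi = \partial_j(A^j \phi)$ followed by Lemma~\ref{prelim:lem-para-besov}\ref{prelim:item-product-estimate} is wasteful, since it would require $\alpha+1 \leq \min(\alpha_1,\alpha_2)$, i.e.\ $\alpha \leq \alpha_1-1$, a full derivative stronger than the given $\alpha \leq \alpha_1$. Applying the null identity only on the $\parasim$ piece, and treating the $\parall$ and $\paragg$ pieces by standard paraproduct estimates (which already accommodate one derivative falling on $\phi$), preserves the sharp regularity count demanded by \eqref{prelim:eq-null-assumption}.
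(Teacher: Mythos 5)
Your proof is correct and follows essentially the same route as the paper: paraproduct-decompose, handle $\parall$ and $\paragg$ by the bare paraproduct lemma treating $\partial_j\phi$ at regularity $\alpha_2-1$, and use the Coulomb condition via $A^j \parasim \partial_j\phi = \partial_j(A^j \parasim \phi)$ to handle the high$\times$high piece at regularity $\alpha+1$. The closing remark about why the global rewriting $\partial_j(A^j\phi)$ would be wasteful is accurate and a nice observation, but not part of the argument.
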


\begin{remark}
The Coulomb gauge condition $\partial_j A^j=0$ in Lemma \ref{prelim:lem-null} is necessary. Without the Coulomb gauge condition, the condition $\alpha_1 + \alpha_2 >0$ in \eqref{prelim:eq-null-assumption} has to be replaced by the stronger condition $\alpha_1+\alpha_2>1$. 
\end{remark}
\begin{remark}[Null-forms] 
Since $A$ is in the Coulomb gauge, the nonlinearity $A^j \partial_j \phi$ can be written as 
\begin{equation*}
A^j \partial_j \phi = \big( \Delta^{-1} \partial_k F_A^{kj} \big) \partial_j \phi = \frac{1}{2} Q_{jk}\big(\phi, \Delta^{-1} F_A^{jk} \big),
\end{equation*}
where $Q_{jk}(\varphi,\psi):= \partial_j \varphi \partial_k \psi - \partial_k \varphi \partial_j \psi$. The $Q_{jk}$ are called null-forms and have been  extensively studied in the context of wave equations, see e.g. \cite{KM93,KS02}. For instance, it is well-known that if $\varphi$ and $\psi$ are solutions of the linear wave equation on Euclidean space, then the null-forms $Q_{jk}(\varphi,\psi)$ satisfy better Strichartz estimates than the products $\partial_j \varphi \partial_k \psi$. Since Lemma \ref{prelim:lem-null} is better than the product estimate from Lemma \ref{prelim:lem-para-besov}.\ref{prelim:item-product-estimate}, we therefore also refer to Lemma \ref{prelim:lem-null} as a null-form estimate. 
\end{remark}

\begin{proof}
We first estimate
\begin{equs}\label{prelim:eq-null-p1}
\big\| A^j \partial_j \phi \big\|_{\Bc^{\alpha,p}_x} 
\leq \big\| A^j \parall \partial_j \phi \big\|_{\Bc^{\alpha,p}_x}
+ \big\| A^j \parasim \partial_j \phi \big\|_{\Bc^{\alpha,p}_x} 
+ \big\| A^j \paragg \partial_j \phi \big\|_{\Bc^{\alpha,p}_x}. 
\end{equs}
We now estimate the three summands in \eqref{prelim:eq-null-p1} using Lemma \ref{prelim:lem-para-besov}. Using $\alpha\leq \alpha_2 -1$ and $\alpha_1+(\alpha_2-1)>\alpha$, we obtain
\begin{equs}
\big\| A^j \parall \partial_j \phi \big\|_{\Bc^{\alpha,p}_x}
\lesssim 
\big\| A \big\|_{\Bc^{\alpha_1,p_1}_x} \big\|  \nabla \phi \big\|_{\Bc^{\alpha_2-1,p_2}_x}
\lesssim \big\| A \big\|_{\Bc^{\alpha_1,p_1}_x} \big\| \phi \big\|_{\Bc^{\alpha_2,p_2}_x}. 
\end{equs}
Using the Coulomb gauge condition $\partial_j A^j=0$, $\alpha_1+\alpha_2>0$, and $\alpha_1 + \alpha_2 > \alpha+1$, we obtain 
\begin{equs}
\big\| A^j \parasim \partial_j \phi \big\|_{\Bc^{\alpha,p}_x}
= \big\|\partial_j \big( A^j \parasim \phi \big) \big\|_{\Bc^{\alpha,p}_x}
\lesssim \big\| A \parasim \phi \big\|_{\Bc^{\alpha+1,p}_x} 
\lesssim \big\| A \big\|_{\Bc^{\alpha_1,p_1}_x} \big\| \phi \big\|_{\Bc^{\alpha_2,p_2}_x}.
\end{equs}
Finally, using $\alpha\leq \alpha_1$ and $\alpha_1 + (\alpha_2-1)>\alpha$, we obtain
\begin{equation*}
 \big\| A^j \paragg \partial_j \phi \big\|_{\Bc^{\alpha,p}_x}
 \lesssim \big\| A \big\|_{\Bc^{\alpha_1,p_1}_x} \big\|  \nabla \phi \big\|_{\Bc^{\alpha_2-1,p_2}_x}
\lesssim \big\| A \big\|_{\Bc^{\alpha_1,p_1}_x} \big\| \phi \big\|_{\Bc^{\alpha_2,p_2}_x}. \qedhere
\end{equation*} 
\end{proof}

The following discussion is very similar to the beginning of \cite[Section 5]{BC23}. Define the enhanced data set
\begin{equation}\label{eq:enhanced-data-set}
\begin{aligned}
\Xi_{\leq N} := \bigg(\linear[\leqN][r][\st], &\Aquadraticnlst[\leqN],  \linear[\leqN][r][\st] \philinear[\leqN][r][\st],  \Aquadratic[\leqN][r][\st], \mixedquadratic[\leqN][r][\st],  \Aquadraticnlst[\leqN] \philinear[\leqN][r][\st], \\
& \bigg(\Im\Big(\icomplex \ovl{\philinear[\leqN][r][\st]} \ptl_i \ptl_j \Duh\big(\philinear[\leqN][r][\st]\big)\Big) + \frac{\delta_{ij}}{4} \sigma_{\leq N}^2\bigg)_{i \in [2]}, \\
& \bigg(\Im\Big(\icomplex \ovl{\philinear[\leqN][r][\st]} \ptl^i \mixedquadratic[\leqN][r][\st]\Big) + \frac{1}{4} \sigma_{\leq N}^2 \linear[\leqN][r][\st, i]\bigg)_{i \in [2]} \\
&:\philinear[\leqN][r][\st, j] \ovl{\philinear[\leqN][r][\st, k]}\colon 0 \leq j \leq \frac{q+1}{2}, 0 \leq k \leq \frac{q-1}{2}
\bigg) .
\end{aligned}
\end{equation}

\begin{notation}
Let $n_q := 8 + \big(\frac{q+1}{2}\big)\big(\frac{q-1}{2}\big)$, which is the number of stochastic objects in $\Xi$ (we count different components of a connection as a single stochastic object, so for instance, $\linear[\leqN][r][\st] = (\linear[\leqN][r][\st, j])_{j \in [2]}$ counts as one object.)
\end{notation}

Define $\dg_{\mrm{poly}} \in \N^{(q+1)(q-1)/4}$ and
$\dg \in \N^{n_q}$ by
\begin{equs}
\dg_{\mrm{poly}} := \Big( j+k,  ~~ 0\leq j \leq \frac{q+1}{2}, 0 \leq k \leq \frac{q-1}{2}\Big),~~
\dg := \Big(1, 2, 2, 2, 2, 3, 2, 3, \dg_{\mrm{poly}}\Big).
\end{equs}
Define also $\reg_{\mrm{poly}} \in \R^{(q+1)(q-1)/4}$ and $\reg \in \R^{n_q}$ by
\begin{equs}
\reg_{\mrm{poly}} &:= \Big(-(j+k)\kappa, ~~ 0\leq j \leq \frac{q+1}{2}, 0 \leq k \leq \frac{q-1}{2}\Big),  \\
 \reg &:= \Big(-\kappa, -2\kappa, -2\kappa, 1-2\kappa, 1-2\kappa, -3\kappa, -2\kappa, -3\kappa, \reg_{\mrm{poly}}\Big).
\end{equs}
Define also $\spc \in \{\R^2, \C^2, \C\}^{n_q}$ to index the space that each object takes values in. So for instance, $\spc(1) = \R^2, \spc(2) = \R^2, \spc(3) = \C^2$, etc. Recall the weighted space $\Sc^{1-2\kappa}$ from \cite[Definition 2.11]{BC23}. Define the data space
\begin{equs}
\mc{D}([0, T]) := &\bigg(\prod_{j =1}^3 C_t^0 \Cs_x^{\reg(j)}([0, T] \times \T^2, \spc(j))\bigg) \times \Big(  C_t^0 \Cs_x^{\reg(4)}([0, T] \times \T^2, \spc(4)) \cap \Sc^{1-2\kappa}([0, T])\Big) \\
&\times \prod_{j=5}^{n_q} C_t^0 \Cs_x^{\reg(j)}([0, T] \times \T^2, \spc(j)).
\end{equs}
\begin{remark}
Note that the index $j = 4$ corresponds to the object $\scalebox{0.9}{$\Aquadratic[][r][\st]$}$. The reason why we need the extra $\Sc^{1-2\kappa}$-norm for this object is because we need time regularity of $A_{\leq N} - \linear[\leqN][r][\st]$ to estimate the commutator
\begin{equs}
\Duh\Big( (A_{\leq N} - \linear[\leqN][r][\st]) \parall \ptl_j \philinear[\leqN][r][\st]\Big) - (A_{\leq N} - \linear[\leqN][r][\st]) \parall \Duh\big(\ptl_j \philinear[\leqN][r][\st]\big)
\end{equs}
appearing in \eqref{gauge:eq-commutator-term}, and hence we need time regularity of $\scalebox{0.9}{$\Aquadratic[][r][\st]$}$. 
For a similar remark, see \cite[Remark~5.3]{BC23}. 
\end{remark}

Define the metric $d_T$ on $\mc{D}([0, T])$ by
\begin{equs}
d_T((S_j, j \in [n_q]), (\tilde{S}_j, j \in [n_q])) := \sum_{j \in [n_q]} \Big\|S_j - \tilde{S}_j\Big\|_{C_t^0 \Cs_x^{\reg(j)}}^{1/\dg(j)} + \Big\|S_4 - \tilde{S}_4\Big\|_{\Sc^{1-2\kappa}}^{\frac{1}{2}},
\end{equs}
Here, the purpose of the degrees is only so that $d_1(\Xi_{\leq N}, 0)$ satisfies a sub-Gaussian tail bound, instead of a more general stretched-exponential tail bound (see also \cite[Remark 5.3]{BC23}). For $R \geq 1$, define also the ball
\begin{equs}
\mc{D}_R([0, T]) := \{S \in \mc{D}([0, T]) \colon d_T(0, S) \leq R\}.
\end{equs}

\begin{example}[Fourier space representations of objects]\label{example:objects-fourier-representation}
To help the reader relate back to our previous work \cite{BC23}, we briefly describe how we would write some of the stochastic objects in $\Xi_{\leq N}$ in the notation of \cite{BC23}. Recalling the harmonic analysis and probability theory preliminaries from Subsections \ref{section:prelimary-harmonic} and \ref{section:preliminary-probability}, we would write the linear object (recall that $\langle n \rangle^2 = |n|^2 + 1$) 
\begin{equs}\label{eq:philinear-fourier-representation}
\philinear[\leqN][r][\st](z) = \frac{1}{2\pi}\sum_{n \in \Z^2} \rho_{\leq N}(n) \e_n(x) \int_{-\infty}^t e^{-(t-s)\langle n \rangle^2} dW_\zeta(s, n),
\end{equs}
where $((W_\zeta(t, n))_{t \in \R})_{n \in \Z^2}$ is as in \eqref{eq:space-time-white-noise-fourier}. 

For a quadratic object, we would for instance write (in the following, $n_{12} = n_1 + n_2$) 
\begin{equs}
\Aquadratic[\leqN][r][\st](z_0) = \frac{1}{(2\pi)^2}
&\sum_{n_1, n_2 \in \Z^2} \rho_{\leq N}(n_1) \rho_{\leq N}(n_2) \e_{n_{12}}(x_0) ~\times\\
&\int_{-\infty}^{t_0} dt e^{-(t_0 - t) \langle n_{12} \rangle^2} \int_{-\infty}^t \int_{-\infty}^t ds_1 ds_2 e^{-(t-s_1) \langle n_1 \rangle^2} e^{-(t-s_2) \langle n_2 \rangle^2} dW_\zeta(s, n_1) dW_\zeta(s, n_2).
\end{equs}
This is very similar to the representation given by \cite[Lemma 5.15]{BC23}.
\end{example}

We next state the analog of \cite[Proposition 5.4]{BC23}.

\begin{proposition}[Control of enhanced data set]\label{prop:control-of-enhanced-data-set}
Let $0<c\ll 1$ be a sufficiently small absolute constant and let $R \geq 1$. Then, there exists an event $E_R$ satisfying
\begin{equation*}
\mathbb{P} \big( E_R \big) \geq 1 - c^{-1} \exp \Big( - c R^2\Big), 
\end{equation*}
and such that, on this event,
\begin{align}\label{objects:eq-enhanced-main-estimate}
\sup_{N \in \dyadic} \dc_1 \big( \Xi_{\leq N}, 0 \big) \leq R \qquad \text{and} \qquad 
\lim_{M,N\rightarrow \infty} 
\dc_1 \big( \Xi_{\leq M}, \Xi_{\leq N} \big) =0. 
\end{align}
\end{proposition}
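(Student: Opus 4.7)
The plan is to run the standard Wiener chaos/hypercontractivity machinery on each of the $n_q$ components of $\Xi_{\leq N}$ separately and then union bound. I will first observe that each component $\Xi_{\leq N}^{(j)}$ lies in the inhomogeneous Wiener chaos of order $\dg(j)$ over the Gaussian noises $(W_\xi, W_\zeta)$ from \eqref{eq:space-time-white-noise-fourier}: this is obvious for $\scalebox{0.9}{$\linear[\leqN][r][\st]$}$ and $\scalebox{0.9}{$\philinear[\leqN][r][\st]$}$ from \eqref{eq:philinear-fourier-representation}, and follows for the remaining quadratic, cubic, and $(j{+}k)$-linear expressions by inspecting the Fourier representations exemplified in Example~\ref{example:objects-fourier-representation}, keeping in mind that each Wick-ordered combination and each counterterm $\sigma_{\leq N}^2$ removes precisely the diagonal contractions.

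For each $j$, I will compute the second moment $\mathbb{E}\bigl[|P_M (\Xi_{\leq N}^{(j)})(t,x)|^2\bigr]$ explicitly in Fourier via pairing against the Brownian motions $(W_\xi(t,n),W_\zeta(t,n))_{n\in\Z^2}$, collapsing everything to a finite sum of space-time integrals against Gaussian heat-kernel symbols $e^{-(t-s)\langle n\rangle^2}$ and frequency cutoffs $\rho_{\leq N}(n)$. The renormalization constants appearing in the $2$nd, $6$th, $7$th, and $8$th entries of \eqref{eq:enhanced-data-set} are exactly the resonances produced when one pairs $\scalebox{0.9}{$\philinear[\leqN][r][\st]$}$ with $\scalebox{0.9}{$\overline{\philinear[\leqN][r][\st]}$}$ (or with its derivative) at coincident frequencies; subtracting them renders the remaining covariance summable in $N$ with the expected regularity loss, giving a bound of the schematic form $M^{-2\reg(j)-\varepsilon}$ uniformly in $N\geq M$.

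Gaussian hypercontractivity then upgrades $L^2_\omega$ to $L^p_\omega$ at a cost of at most $p^{\dg(j)/2}$, and a Kolmogorov/Besov-embedding argument (applied both in space using Littlewood-Paley blocks $P_M$, and in time using the Duhamel formula to estimate increments) converts these moment bounds into bounds on the Besov-H\"older norms $C_t^0\Cs_x^{\reg(j)}$ that define $\mc{D}([0,1])$; for the fourth component this also produces the $\Sc^{1-2\kappa}$-bound since the extra time regularity needed is already present in $\scalebox{0.9}{$\Aquadratic[\leqN][r][\st]$}$. Markov's inequality then yields $\mathbb{P}(\|\Xi_{\leq N}^{(j)}\|\geq \lambda)\leq C_p \lambda^{-p} p^{p\,\dg(j)/2}$, and optimizing over $p$ produces the stretched-exponential tail $\exp(-c\lambda^{2/\dg(j)})$. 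Because the metric $d_T$ weights the $j$-th component by the exponent $1/\dg(j)$, after this rescaling every component contributes a genuinely sub-Gaussian tail, so the union bound on $E_R := \{\sup_N d_1(\Xi_{\leq N},0)\leq R\}$ yields the desired probability $\geq 1-c^{-1}\exp(-cR^2)$.

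For the Cauchy statement, I would repeat the same chaos-by-chaos analysis on the differences $\Xi_{\leq M}^{(j)}-\Xi_{\leq N}^{(j)}$, using that $\rho_{\leq M}-\rho_{\leq N}$ is supported at frequencies $\gtrsim \min(M,N)$ to gain a positive power $\min(M,N)^{-\varepsilon}$, and then applying Borel-Cantelli along a dyadic subsequence together with the already-established uniform bound. The only nontrivial point in the whole argument is the variance calculation for the seventh and eighth components, namely the derivative-type objects $\Im(\icomplex\,\scalebox{0.9}{$\overline{\philinear[\leqN][r][\st]}$}\,\partial_i\partial_j \Duh(\scalebox{0.9}{$\philinear[\leqN][r][\st]$}))+\tfrac{\delta_{ij}}{4}\sigma_{\leq N}^2$ and its mixed cousin: here one must verify, in Fourier, that the $\tfrac{1}{4}\sigma_{\leq N}^2$-counterterm precisely cancels the $N\to\infty$ divergence coming from the diagonal pairing $n_1=-n_2$ in the Duhamel integral; this is the same calculation that, as highlighted in Remark~\ref{remark:two-calculations}, underlies gauge covariance and fixes the constant $\Cgauge=\tfrac{1}{8\pi}$. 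This cancellation is the essential technical point, and once it is in place the rest of the proof is a routine hypercontractivity plus Kolmogorov/Besov argument as outlined above.
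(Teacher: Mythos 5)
Your proposal is correct and takes essentially the same route as the paper, whose proof is a sketch deferring component-by-component to the chaos/hypercontractivity estimates of \cite[Section 5]{BC23}; what you describe (Fourier-side variance computation with renormalization cancelling the diagonal contractions, Gaussian hypercontractivity, Kolmogorov/Besov embedding, the degree-weighted metric making each component sub-Gaussian, and a $\min(M,N)^{-\varepsilon}$-gain plus Borel--Cantelli for the Cauchy statement) is exactly that scheme. One small imprecision worth flagging: the $\sigma^2_{\leq N}$-cancellation for the seventh and eighth objects is a \emph{divergent} diagonal-pairing calculation and is related to, but not literally the same as, the finite-limit computation of Lemmas~\ref{lemma:resonant-parts-converge-fourier-calculation} and~\ref{lemma:constant-A-limit-resonance} that fixes $\Cgauge=\gaugerenorm$; your plan of verifying the cancellation directly in Fourier is nonetheless the right one.
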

\begin{proof}[Proof sketch]
We briefly map out the results of \cite[Section 5]{BC23} which give control on the stochastic objects in $\Xi_{\leq N}$. 
\begin{enumerate}[itemsep=3mm]
    \item Control of the linear objects $\linear[][r][\st], \philinear[][r][\st]$ follows by \cite[Lemma 5.6]{BC23}.
    \item The quadratic objects $\Aquadraticnlst, \linear[][r][\st] \philinear[][r][\st]$ are similar to \cite[Lemma 5.8]{BC23}.
    \item The quadratic objects $\Aquadratic[][r][\st], \mixedquadratic[][r][\st]$ are similar to \cite[Lemma 5.13]{BC23}.
    \item The cubic object $\Aquadraticnlst \philinear[][r][\st]$ is simlar to to \cite[Lemma 5.11]{BC23}. 
    \item The object $\bigg(\Im\Big(\icomplex \ovl{\philinear[\leqN][r][\st]} \ptl_i \ptl_j \Duh\big(\philinear[\leqN][r][\st]\big)\Big) + \frac{\delta_{ij}}{4} \sigma_{\leq N}^2\bigg)_{i \in [2]}$ is similar to \cite[Lemma 5.18]{BC23}. 
    \item The object $\bigg(\Im\Big(\ovl{\philinear[\leqN][r][\st]} \ptl^i \mixedquadratic[\leqN][r][\st]\Big) + \frac{1}{2} \sigma_{\leq N}^2 \linear[\leqN][r][\st, i]\bigg)_{i \in [2]}$ is similar to \cite[Lemma 5.21]{BC23}.
    \item Finally, while the general degree polynomial objects $\biglcol\,\philinear[][r][\st, j] \ovl{\philinear[][r][\st, k]}\bigrcol$ are not handled in \cite{BC23}, control of these objects is by now classical, see e.g. \cite[Lemma 3.2]{DPD02}\footnote{Technically, this reference looks at real-valued objects, but the extension to the complex case is straightforward.}.
\end{enumerate}
\end{proof}

By small variations of the nonlinear estimates in \cite[Section 6]{BC23}, we have the following result.

\begin{proposition}[Local well-posedness of the para-controlled stochastic Abelian Higgs equations]\label{prop:lwp-para-sah}
Let $\Cgauge \in \R$. Let $C\geq 1$ be a sufficiently large absolute constant, let $R,S\geq 1$, and let $0<\tau\leq C^{-1} (RS)^{-C}$. Furthermore, suppose $\|A_0\|_{\Cs_x^{-\kappa}}, \|\phi_0\|_{\Cs_x^{-\kappa}} \leq S$ and suppose  $\Xi_{\leq N}\in \Dc_R([0,\tau])$. Then, there exists a unique solution 
\begin{equation*}
(X_{\leq N}, \eta_{\leq N}, \psi_{\leq N}) \in (\Sc^{2-5\kappa} \times \Sc^{1-2\kappa}\times \Sc^{2-5\kappa}) ([0,\tau])
\end{equation*}
of the para-controlled stochastic Abelian Higgs equations \eqref{eq:para-sah}-\eqref{eq:local-theory-initial-data}. Furthermore, it can be written as
\begin{equation*}
(X_{\leq N}, \eta_{\leq N}, \psi_{\leq N}) = \mbb{S}_{R, S, \tau}(\Xi_{\leq N}, A_0, \phi_0),
\end{equation*}
where 
\begin{equation*}
\mbb{S}_{R, S, \tau} \colon \mc{D}_R([0, \tau]) \times \mc{B}_{\leq S}^{-\kappa} \ra (\Sc^{2-5\kappa} \times \Sc^{1-2\kappa}\times \Sc^{2-5\kappa}) ([0,\tau])
\end{equation*}
is Lipschitz continuous. Here, 
\begin{equation*}
\mc{B}_{\leq S}^{-\kappa} = \big\{(A_0, \phi_0) \colon A_0 \in \Cs_x^{-\kappa}(\T^2 \rightarrow \R^2), \phi_0 \in \Cs_x^{-\kappa}(\T^2 \rightarrow \C), \|A_0\|_{\Cs_x^{-\kappa}}, \|\phi_0\|_{\Cs_x^{-\kappa}} \leq S \big\}    
\end{equation*}
is the ball of radius $S$ for the space of initial data.
\end{proposition}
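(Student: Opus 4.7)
The plan is to run a Banach fixed-point argument in the product space $\mathcal{X}_\tau := (\Sc^{2-5\kappa} \times \Sc^{1-2\kappa} \times \Sc^{2-5\kappa})([0,\tau])$. I would recast \eqref{eq:para-sah}--\eqref{eq:local-theory-initial-data} in Duhamel form with the prescribed initial data, and, given $\Xi_{\leq N} \in \mathcal{D}_R([0,\tau])$ and $(A_0,\phi_0) \in \mc{B}_{\leq S}^{-\kappa}$, define a map $\Psi$ on a ball of radius $M=M(R,S)$ in $\mathcal{X}_\tau$. The target will be to show that for $\tau \leq C^{-1}(RS)^{-C}$ with $C$ large enough, $\Psi$ maps the ball into itself and contracts; uniqueness and Lipschitz dependence then follow automatically.

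\textbf{Nonlinear estimates.} The right-hand sides of the $X$, $\eta$, and $\psi$ equations split into three categories. First, terms depending only on $\Xi_{\leq N}$ (e.g.\ $\linear[\leqN][r][\st]+\Aquadratic[\leqN][r][\st]$, the Wick powers $\biglcol\,\philinear[\leqN][r][\st,j]\,\overline{\philinear[\leqN][r][\st,k]}\bigrcol$, and the renormalized derivative objects collected in $\Xi_{\leq N}$) are directly controlled by the norm of $\Xi_{\leq N}$ in $\mathcal{D}_R$. Second, multilinear couplings between $\Xi_{\leq N}$ and the unknowns are handled by the para-product estimates of Lemma~\ref{prelim:lem-para-besov}, the heat-flow smoothing of Lemma~\ref{lemma:heat-flow-smoothing}, and the Duhamel bound of Lemma~\ref{prelim:lem-Duhamel-weighted}. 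The critical term $2\icomplex\,\partial_j(A_{\leq N}^j(\phi_{\leq N}-\philinear[\leqN][r][\st]))$ in the $\psi$-equation is a priori borderline, but once one uses that $A_{\leq N}$ is in the Coulomb gauge, the null-form estimate of Lemma~\ref{prelim:lem-null} places it in $C_t^0 \Cs_x^{-5\kappa}$, which is then absorbed by Duhamel into $\Sc^{2-5\kappa}$. Third, the purely polynomial terms in the unknowns close without difficulty because $X$, $\eta$, and $\psi$ all have positive spatial regularity. Every contribution to $\Psi$ inherits a positive power of $\tau$ from either Duhamel smoothing or time integration, which drives the contraction.

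\textbf{Main obstacles.} Two points require care. The first is the cancellation highlighted in Remark~\ref{remark:local-theory-remarks}.\ref{item:A-equation-no-diverging-counterterm}: the would-be divergent renormalization constants coming from \eqref{eq:para-sah}, \eqref{eq:para-sah-e2}, and \eqref{eq:para-sah-e3} must sum to zero, leaving only the finite $\Cgauge A_{\leq N}$-term. I would check this algebraically so that every remaining term has a genuine $N$-uniform bound against $d_1(\Xi_{\leq N},0)$. The second and harder issue is the commutator term \eqref{gauge:eq-commutator-term}, which is where time regularity of $A_{\leq N}-\linear[\leqN][r][\st]$ enters, and ultimately why $\Aquadratic[\leqN][r][\st]$ is controlled in $\Sc^{1-2\kappa}$ and $X_{\leq N}$ is sought in $\Sc^{2-5\kappa}$. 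The key input is a standard commutator estimate of the form
\begin{equs}
\Big\| \Duh\big( f \parall \partial_j g \big) - f \parall \Duh\big(\partial_j g\big) \Big\|_{C_t^0 \Cs_x^{\alpha}} \lesssim \|f\|_{\Sc^{\beta}} \|g\|_{C_t^0 \Cs_x^{\gamma}},
\end{equs}
with parameters tuned to $\alpha = 2-5\kappa$, $\beta = 1-2\kappa$, and $\gamma = -\kappa$, in the spirit of \cite{GIP15} and \cite{BC23}. Verifying this estimate in the form needed here is the main technical task.

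\textbf{Conclusion and Lipschitz continuity.} Assembling the multilinear bounds above, for $\tau \leq C^{-1}(RS)^{-C}$ with $C$ sufficiently large one obtains $\|\Psi(u)\|_{\mathcal{X}_\tau} \leq M$ and $\|\Psi(u)-\Psi(\tilde u)\|_{\mathcal{X}_\tau} \leq \tfrac{1}{2}\|u-\tilde u\|_{\mathcal{X}_\tau}$, so Banach's fixed point theorem gives the unique solution $(X_{\leq N},\eta_{\leq N},\psi_{\leq N}) = \mbb{S}_{R,S,\tau}(\Xi_{\leq N},A_0,\phi_0)$. Lipschitz dependence of $\mbb{S}_{R,S,\tau}$ on $(\Xi_{\leq N},A_0,\phi_0)$ is obtained by applying the same multilinear estimates to the equation for the difference of two solutions, using that each nonlinear term is (multi)linear in its arguments and the remaining terms are smooth polynomials of the unknowns, which are locally Lipschitz on the fixed ball. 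The explicit form of the parameters in the metric $d_\tau$ (note the degrees $\dg(j)$) ensures that the dependence can be stated in a clean Lipschitz form rather than a Hölder one.
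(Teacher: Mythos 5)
Your proposal is correct and follows essentially the same route as the paper's (quite terse) proof sketch: a contraction mapping argument in $\Sc^{2-5\kappa} \times \Sc^{1-2\kappa} \times \Sc^{2-5\kappa}$, with the null-form estimate (Lemma \ref{prelim:lem-null}) singled out as the key new ingredient for the term $\ptl_j\big(A_{\leq N}^j(\phi_{\leq N}-\philinear[\leqN][r][\st])\big)$ and the remaining multilinear/commutator estimates deferred to the standard para-controlled framework of \cite[Section~6]{BC23}. The paper gives only a one-paragraph sketch where you spell out more of the structure, but the argument is the same.
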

\begin{proof}[Sketch]
As usual, Proposition \ref{prop:lwp-para-sah} is proven by a contraction mapping argument. From the para-controlled stochastic Abelian Higgs equations \eqref{eq:para-sah}-\eqref{eq:local-theory-initial-data}, one defines a map $M(\cdot, \cdot, \cdot; \Xi_{\leq N}, A_0, \phi_0)$, which is shown via standard estimates to be a self-map and $\frac{1}{2}$-contraction on $(\Sc^{2-5\kappa} \times \Sc^{1-2\kappa} \times \Sc^{2-5\kappa})([0, \tau])$. See \cite[Section 6]{BC23} for more details. The only difference is that we use the null-form estimate Lemma \ref{prelim:lem-null} in order to estimate $\ptl_j\big(A^j_{\leq N}(\phi_{\leq N} - \philinear[\leqN][r][\st])\big)$ -- recall Remark \ref{remark:local-theory-remarks}\ref{item:null-form-estimate}.
\end{proof}

We state the following corollary of Proposition \ref{prop:lwp-para-sah}, whose proof is omitted. This will allow us to work directly with the limiting solution in our globalization arguments of Sections \ref{section:Abelian-Higgs} and \ref{section:decay}.

\begin{corollary}\label{cor:blowup-characterization}
Let $\Cgauge \in \R$. Let $(A, \phi) = \mbb{S}(A_0, \phi_0, \xi, \zeta; \Cgauge)$ and $(A_{\leq N}, \phi_{\leq N}) = \mbb{S}(A_0, \phi_0, \xi_{\leq N}, \zeta_{\leq N};\Cgauge)$. If, for some $T > 0$, $(A, \phi)$ exists in $C_t^0 \Cs_x^{-\kappa}([0, T] \times \T^2)$, then for $N$ large enough, $(A_{\leq N}, \phi_{\leq N})$ also exists in $C_t^0 \Cs_x^{-\kappa}([0, T]\times T^2)$, and furthermore,
\begin{equs}
\lim_{N \toinf} \Big( \|A_{\leq N} - A\|_{C_t^0 \Cs_x^{-\kappa}([0, T] \times \T^2) } + \|\phi_{\leq N} - \phi\|_{C_t^0 \Cs_x^{-\kappa}([0, T] \times \T^2)} \Big) = 0.
\end{equs}
\end{corollary}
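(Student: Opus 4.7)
The plan is to combine the local well-posedness Proposition \ref{prop:lwp-para-sah} with the control on the enhanced data set from Proposition \ref{prop:control-of-enhanced-data-set} through a bootstrap argument over a finite partition of $[0, T]$. The existence conclusion is essentially automatic since the smoothed system has global-in-time smooth solutions for every fixed $N$; the substantive content is the convergence on the full interval $[0, T]$, which is what one obtains by iterating the local theory.

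To set up, first fix $S_0 < \infty$ with $\| A(t) \|_{\Cs_x^{-\kappa}} + \| \phi(t) \|_{\Cs_x^{-\kappa}} \leq S_0$ on $[0, T]$, and, on the full-measure event supplied by Proposition \ref{prop:control-of-enhanced-data-set}, fix $R < \infty$ with $\Xi_{\leq N} \in \mc{D}_R([0, T])$ for every $N \in \dyadic$ and $(\Xi_{\leq N})_N$ Cauchy in $\mc{D}([0, T])$. Set $S_* := 2 S_0$ and let $\tau_* = \tau_*(R, S_*) > 0$ be the time-step produced by Proposition \ref{prop:lwp-para-sah}, giving a Lipschitz solution map $\mbb{S}_{R, S_*, \tau_*}$. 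Partition $[0, T]$ into $M := \lceil T / \tau_* \rceil$ sub-intervals $[t_j, t_{j+1}]$. I would then argue by induction on $0 \leq j \leq M$ that there is a threshold $N_j$ such that, for all $N \geq N_j$,
\begin{equs}
\| A_{\leq N} - A \|_{C_t^0 \Cs_x^{-\kappa}([0, t_j] \times \T^2)} + \| \phi_{\leq N} - \phi \|_{C_t^0 \Cs_x^{-\kappa}([0, t_j] \times \T^2)} \leq \tfrac{S_0}{10}.
\end{equs}
This bound forces $\| A_{\leq N}(t_j) \|_{\Cs_x^{-\kappa}} + \| \phi_{\leq N}(t_j) \|_{\Cs_x^{-\kappa}} \leq S_*$, so $\mbb{S}_{R, S_*, \tau_*}$ applies on $[t_j, t_{j+1}]$ to both the limiting enhanced data and $\Xi_{\leq N}|_{[t_j, t_{j+1}]}$ with initial data $(A(t_j), \phi(t_j))$ and $(A_{\leq N}(t_j), \phi_{\leq N}(t_j))$ respectively. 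Combining the Cauchy property of $\Xi_{\leq N}$, Lipschitz continuity of the solution map, and the inductive closeness of initial data produces a threshold $N_{j+1}$ for which the bound above holds on $[0, t_{j+1}]$ as well.

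The main subtlety is the consistency of the reconstructed limit on each sub-interval: applying $\mbb{S}_{R, S_*, \tau_*}$ to the limiting enhanced data yields \emph{some} solution on $[t_j, t_{j+1}]$, and one must identify this with the restriction of $(A, \phi)$ to that sub-interval. This identification follows from the uniqueness clause of Proposition \ref{prop:lwp-para-sah} together with the fact that both candidates are limits in $C_t^0 \Cs_x^{-\kappa}([t_j, t_{j+1}] \times \T^2)$ of the smoothed approximations $(A_{\leq N}, \phi_{\leq N})$. Once all $M$ iterations have been carried out, taking $N_M$ as the threshold yields the desired convergence on the full interval $[0, T]$, which proves the corollary.
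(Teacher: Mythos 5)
The paper omits the proof entirely, stating only that the corollary follows from Proposition \ref{prop:lwp-para-sah}, so there is no written argument to compare against. Your bootstrap over a finite partition of $[0,T]$, driven by the Lipschitz solution map from Proposition \ref{prop:lwp-para-sah} and the a.s.\ uniform control and Cauchy property of the enhanced data from Proposition \ref{prop:control-of-enhanced-data-set}, is the natural way to flesh this out and is almost certainly what the authors had in mind, and I believe the argument is sound.

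A few remarks. First, the opening claim that ``the smoothed system has global-in-time smooth solutions for every fixed $N$'' is not established in the paper and is not actually automatic: the smoothed noise is only spatially regular, and one would still need an a priori estimate to rule out blowup of \eqref{intro:eq-SAH-smooth}. Fortunately you do not need this claim — your inductive bound $\|A_{\leq N}(t_j)\|_{\Cs_x^{-\kappa}} + \|\phi_{\leq N}(t_j)\|_{\Cs_x^{-\kappa}} \leq S_*$ is precisely what lets $\mbb{S}_{R,S_*,\tau_*}$ extend the smoothed solution past $t_j$, so the bootstrap supplies existence rather than presupposing it. It would be cleaner to drop that sentence. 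Second, your ``identification'' worry resolves itself once one keeps track of the definition of $(A,\phi)$: on each sub-interval the output of $\mbb{S}_{R,S_*,\tau_*}$ applied to the limiting data is, by the Lipschitz estimate, the $N\to\infty$ limit of $\mbb{S}_{R,S_*,\tau_*}$ applied to $(\Xi_{\leq N}, A_{\leq N}(t_j), \phi_{\leq N}(t_j))$, and by uniqueness for the smooth system this latter quantity is exactly $(A_{\leq N},\phi_{\leq N})|_{[t_j,t_{j+1}]}$; the identification with $(A,\phi)|_{[t_j,t_{j+1}]}$ is then immediate from how the limiting solution is defined, and does not require invoking the conclusion. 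Third, two small technical points you implicitly rely on but do not flag: Proposition \ref{prop:control-of-enhanced-data-set} is stated on the unit time interval, so if $T>1$ one needs to patch the enhanced data control across unit intervals; and when re-applying the local theory on $[t_j,t_{j+1}]$ the para-controlled ansatz and its initial data \eqref{eq:local-theory-initial-data} must be re-centered at $t_j$, which is unproblematic because the stochastic objects in $\Xi_{\leq N}$ are stationary. None of these affect the correctness of your approach.
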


Now to prove gauge covariance, it suffices to show the following. First, for $N \in \dyadic$, $n_0 \in \Z^2$, define the following modified noise:
\begin{equs}
\tilde{\zeta}_{\leq N}(w) := e^{\icomplex n_0 \cdot y} (e^{-\icomplex n_0 \cdot } \zeta)_{\leq N}(w) =  e^{\icomplex n_0 \cdot y} \int dy' e^{-\icomplex n_0 \cdot y'} \zeta(w')   \moll_{\leq N}(y - y') = \int dy' \zeta(w')  \moll_{\leq N}(y - y') e^{\icomplex n_0 \cdot (y - y')}.
\end{equs}
Here, $w = (s, y)$ and $w' = (s, y')$.

\begin{remark}\label{remark:gauge-transformation-noise-shifts-symbol}
In Fourier space, the modified noise $\tilde{\zeta}_{\leq N}$ can be represented as follows. Recall from equation \eqref{eq:space-time-white-noise-fourier} that $\zeta(t) = \frac{1}{2\pi} \sum_n \e_n dW_\zeta(t, n)$. Then
\begin{equs}
\zeta_{\leq N}(t) = \frac{1}{2\pi} \sum_n \rho_{\leq N}(n) \e_n dW_\zeta(t, n) , \qquad \text{ while } \qquad \tilde{\zeta}_{\leq N}(t) = \frac{1}{2\pi} \sum_n \rho_{\leq N}(n - n_0) \e_n dW_\zeta (t, n)  .
\end{equs}
That is, the argument of the symbol $\rho_{\leq N}$ is simply shifted by $n_0$.
\end{remark}

Let $\tilde{A}_{\leq N}, \tilde{\phi}_{\leq N}$ be the solution to the following modified version \eqref{intro:eq-SAH-smooth}:
\begin{align}\label{eq:modified-sah-smoothed}
(\ptl_t - \Delta) \tilde{A}_{\leq N} &= -\leray \Im(\ovl{\tilde{\phi}_{\leq N}} \covd_{\tilde{A}_{\leq N}} \tilde{\phi}_{\leq N}) + \Cgauge \big( \tilde{A}_{\leq N} + n_0 \big) + \leray \xi_{\leq N},  \\
(\ptl_t - \covd_{\tilde{A}_{\leq N}}^j \covd_{\tilde{A}_{\leq N}, j} - 2\sigma_{\leq N}^2) \tilde{\phi}_{\leq N} &= -\biglcol\,|\tilde{\phi}_{\leq N}|^{q-1} \tilde{\phi}_{\leq N}\bigrcol\, + \tilde{\zeta}_{\leq N}, \\
(\tilde{A}(0), \tilde{\phi}(0)) &= (A_0, \phi_0),
\end{align}
where $\biglcol\,|\tilde{\phi}_{\leq N}|^{q-1} \tilde{\phi}_{\leq N}\bigrcol\, = H_{\frac{q+1}{2}, \frac{q-1}{2}}(\tilde{\phi}_{\leq N}, \ovl{\tilde{\phi}_{\leq N}}, \sigma^2_{\leq N})$ and $\sigma^2_{\leq N}$ is as in \eqref{eq:wick-ordered-products}. Here, the only differences are the additional $\Cgauge n_0$ term in the $A$-equation and the modified noise $\tilde{\zeta}_{\leq N}$ in the $\phi$-equation. This equation is chosen so that $(\tilde{A}_{\leq N}, \tilde{\phi}_{\leq N})$ has the following property.

\begin{lemma}\label{lemma:gauge-covariance-sah-computation}
Let $\Cgauge \in \R$. Then, the gauge transformed pair $(\tilde{A}_{\leq N}, \tilde{\phi}_{\leq N})^{n_0}$ is a solution to \eqref{intro:eq-SAH-smooth} with initial data $(A_0, \phi_0)^{n_0}$ and noise $(\xi_{\leq N}, (\zeta^{n_0})_{\leq N})$. Put more succinctly,
\begin{equs}
(\tilde{A}_{\leq N}, \tilde{\phi}_{\leq N})^{n_0} = \solutionmap\Big(A_0^{n_0}, \phi_0^{n_0}, \xi_{\leq N}, ( \zeta^{n_0})_{\leq N}; \Cgauge \Big).
\end{equs}
\end{lemma}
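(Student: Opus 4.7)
The proof is a direct verification: set $\hat{A} := \tilde{A}_{\leq N} + n_0$ and $\hat{\phi} := e^{-\icomplex n_0 \cdot x} \tilde{\phi}_{\leq N}$, and check term-by-term that $(\hat{A},\hat{\phi})$ satisfies \eqref{intro:eq-SAH-smooth} driven by $(\xi_{\leq N},(\zeta^{n_0})_{\leq N})$ with initial data $(A_0^{n_0},\phi_0^{n_0})$. Matching of the initial data is immediate from $(A(0),\phi(0))=(A_0,\phi_0)$. For the $\hat{A}$-equation, since $n_0$ is constant in $(t,x)$, $(\ptl_t-\Delta)\hat{A}=(\ptl_t-\Delta)\tilde{A}_{\leq N}$; the gauge identities in \eqref{intro:eq-gauge-covariance} give $\covd_{\hat{A}}\hat{\phi}=e^{-\icomplex n_0\cdot x}\covd_{\tilde{A}_{\leq N}}\tilde{\phi}_{\leq N}$, so $\leray\Im(\overline{\hat{\phi}}\covd_{\hat{A}}\hat{\phi})=\leray\Im(\overline{\tilde{\phi}_{\leq N}}\covd_{\tilde{A}_{\leq N}}\tilde{\phi}_{\leq N})$; and the extra $\Cgauge n_0$ term present in the $\tilde{A}$-equation is precisely what is needed to turn $\Cgauge\tilde{A}_{\leq N}$ into $\Cgauge\hat{A}$. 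Inserting the $\tilde{A}$-equation \eqref{eq:modified-sah-smoothed} then yields the desired identity.

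For the $\hat{\phi}$-equation, the same gauge identity \eqref{intro:eq-gauge-covariance} gives $\covd_{\hat{A}}^j\covd_{\hat{A},j}\hat{\phi}=e^{-\icomplex n_0\cdot x}\covd_{\tilde{A}_{\leq N}}^j\covd_{\tilde{A}_{\leq N},j}\tilde{\phi}_{\leq N}$, and $\ptl_t\hat{\phi}=e^{-\icomplex n_0\cdot x}\ptl_t\tilde{\phi}_{\leq N}$. To handle the Wick-ordered nonlinearity, I will prove the homogeneity identity
\begin{equs}
H_{m,n}\bigl(e^{-\icomplex\alpha}z,e^{\icomplex\alpha}\bar z;\sigma^2\bigr)=e^{-\icomplex(m-n)\alpha}H_{m,n}(z,\bar z;\sigma^2)
\end{equs}
by induction on $m+n$ using the recursion in Section \ref{section:preliminary-hermite}; the base cases $H_{0,0}=1$, $H_{m,0}=z^m$, $H_{0,n}=\bar z^n$ are immediate, and the inductive step follows directly from the two-term recursion. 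Specializing to $(m,n)=(\tfrac{q+1}{2},\tfrac{q-1}{2})$ (so that $m-n=1$) and $\alpha=n_0\cdot x$, and using that $|\hat{\phi}|^2=|\tilde{\phi}_{\leq N}|^2$ means the Wick constant $\sigma_{\leq N}^2$ is unchanged, yields $\bigrcol|\hat{\phi}|^{q-1}\hat{\phi}\biglcol=e^{-\icomplex n_0\cdot x}\bigrcol|\tilde{\phi}_{\leq N}|^{q-1}\tilde{\phi}_{\leq N}\biglcol$. Finally, the definition of $\tilde{\zeta}_{\leq N}$ gives $e^{-\icomplex n_0\cdot y}\tilde{\zeta}_{\leq N}(s,y)=(e^{-\icomplex n_0\cdot}\zeta)_{\leq N}(s,y)=(\zeta^{n_0})_{\leq N}(s,y)$. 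Multiplying the $\tilde{\phi}$-equation through by $e^{-\icomplex n_0\cdot x}$ thus produces exactly the $\hat{\phi}$-equation with forcing $(\zeta^{n_0})_{\leq N}$.

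The only step that is more than bookkeeping is the homogeneity of $H_{m,n}$ under the phase rotation $z\mapsto e^{-\icomplex\alpha}z$, which is where the specific structure of the complex Hermite polynomials enters; this is the main (and essentially only) obstacle, and it is resolved by the short induction described above. Everything else reduces to the elementary facts that the gauge shift by $n_0$ is locally constant in space-time and that $\leray$, $\Delta$, and $\ptl_t$ are translation invariant, together with the already-established gauge covariance identities \eqref{intro:eq-gauge-covariance}.
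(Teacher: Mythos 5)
Your proof is correct and follows essentially the same route as the paper's: direct verification of both equations by applying the gauge identities \eqref{intro:eq-gauge-covariance}, using that $n_0$ is constant (so the extra $\Cgauge n_0$ term in \eqref{eq:modified-sah-smoothed} converts $\Cgauge\tilde{A}_{\leq N}$ into $\Cgauge\tilde{A}_{\leq N}^{n_0}$), and using $\tilde{\zeta}_{\leq N}^{n_0}=(\zeta^{n_0})_{\leq N}$. The one place where you are more explicit than the paper is worth flagging: the paper's display $\bigl((\ptl_t-\covd_{\tilde A}^j\covd_{\tilde A,j}-2\sigma_{\leq N}^2)\tilde\phi\bigr)^{n_0}=-\biglcol|\tilde\phi^{n_0}|^{q-1}\tilde\phi^{n_0}\bigrcol+\tilde\zeta_{\leq N}^{n_0}$ silently uses $\bigl(\biglcol|\tilde\phi|^{q-1}\tilde\phi\bigrcol\bigr)^{n_0}=\biglcol|\tilde\phi^{n_0}|^{q-1}\tilde\phi^{n_0}\bigrcol$, i.e. the phase-homogeneity $H_{m,n}(e^{-\icomplex\alpha}z,e^{\icomplex\alpha}\bar z;\sigma^2)=e^{-\icomplex(m-n)\alpha}H_{m,n}(z,\bar z;\sigma^2)$ with $m-n=1$, which you prove by induction on the recursion. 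That is the right way to close that gap, and it is indeed the only step requiring an argument rather than bookkeeping.
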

\begin{proof}
Since $\tilde{A}_{\leq N}^{n_0} = \tilde{A}_{\leq N} + n_0$, we have that
\begin{equs}
(\ptl_t - \Delta) \tilde{A}_{\leq N}^{n_0} = (\ptl_t - \Delta) \big( \tilde{A}_{\leq N} + n_0 \big) &= -\leray \Im(\ovl{\tilde{\phi}_{\leq N}} \covd_{\tilde{A}_{\leq N}} \tilde{\phi}_{\leq N}) + \Cgauge \big(  \tilde{A}_{\leq N} + n_0 \big) + \leray \xi_{\leq N} \\
&= -\leray \Im(\ovl{\tilde{\phi}_{\leq N}^{n_0}} \covd_{\tilde{A}_{\leq N}^{n_0}} \tilde{\phi}_{\leq N}^{n_0}) + \Cgauge \tilde{A}_{\leq N}^{n_0} + \leray \xi_{\leq N} .
\end{equs}
Next, by covariance of covariant derivatives, we may compute
\begin{equs}
(\ptl_t - \covd_{\tilde{A}_{\leq N}^{n_0}}^j \covd_{\tilde{A}_{\leq N}^{n_0}, j} - 2\sigma^2_{\leq N}) \tilde{\phi}_{\leq N}^{n_0} &= \Big((\ptl_t - \covd_{\tilde{A}_{\leq N}}^j \covd_{\tilde{A}_{\leq N}, j} - 2\sigma^2_{\leq N}) \tilde{\phi}_{\leq n}\Big)^{n_0} \\
&= - \biglcol\,|\tilde{\phi}_{\leq N}^{n_0}|^{q-1} \tilde{\phi}_{\leq N}^{n_0}\bigrcol\, + \tilde{\zeta}_{\leq N}^{n_0}.
\end{equs}
To finish, note that by the definition of $\tilde{\zeta}_{\leq N}$, we have that $\tilde{\zeta}_{\leq N}^{n_0} = (\zeta^{n_0})_{\leq N}$. 
\end{proof}

Lemma \ref{lemma:gauge-covariance-sah-computation} leads directly to the following corollary, which gives gauge covariance of \eqref{intro:eq-SAH} provided one can show that $(A_{\leq N}, \phi_{\leq N})$ and $(\tilde{A}_{\leq N}, \tilde{\phi}_{\leq N})$ approach each other as $N \toinf$.

\begin{corollary}\label{cor:A-tilde-A-converge-implies-gauge-covariant}
Let $\tau_0 > 0$ be an interval on which
\begin{equs}\label{eq:A-phi-tilde-A-phi-zero}
\lim_{N \toinf} \Big\|(A_{\leq N}, \phi_{\leq N}) - (\tilde{A}_{\leq N}, \tilde{\phi}_{\leq N})\Big\|_{C_t^0 \Cs_x^{-\kappa}([0, \tau_0] \times \T^2)} = 0,
\end{equs}
and such that $\lim_{N \toinf} (A_{\leq N}, \phi_{\leq N})$ exists in $C_t^0 \Cs_x^{-\kappa}([0, \tau_0] \times \T^2)$. Then on the same interval,
\begin{equs}
\mbb{S}\Big(A_0, \phi_0, \xi, \zeta; \Cgauge\Big)^{n_0} = \mbb{S}\Big(A_0^{n_0}, \phi_0^{n_0}, \xi, \zeta^{n_0}; \Cgauge\Big).
\end{equs}
\end{corollary}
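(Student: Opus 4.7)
The argument is essentially a continuity and bookkeeping argument that pushes the identity in Lemma~\ref{lemma:gauge-covariance-sah-computation} to the limit. First I would observe that under the hypothesis that $\lim_{N\to\infty}(A_{\leq N},\phi_{\leq N})$ exists in $C_t^0\Cs_x^{-\kappa}([0,\tau_0]\times\T^2)$, the very definition of the data-to-solution map gives
\begin{equs}
\solutionmap(A_0,\phi_0,\xi,\zeta;\Cgauge) = \lim_{N\to\infty}(A_{\leq N},\phi_{\leq N})
\end{equs}
on $[0,\tau_0]$. Combined with assumption \eqref{eq:A-phi-tilde-A-phi-zero}, this immediately yields that $(\tilde{A}_{\leq N},\tilde{\phi}_{\leq N})$ also converges in $C_t^0\Cs_x^{-\kappa}$ on $[0,\tau_0]$ to the same limit.

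Second, I would apply the discrete gauge transformation. The map $(A,\phi)\mapsto(A^{n_0},\phi^{n_0})=(A+n_0,e^{-\icomplex n_0\cdot x}\phi)$ is continuous on $C_t^0\Cs_x^{-\kappa}$: adding a constant is obviously continuous, and multiplication by the smooth bounded function $e^{-\icomplex n_0\cdot x}$ is continuous on $\Cs_x^{-\kappa}$ by standard paraproduct or multiplier arguments (Lemma~\ref{prelim:lem-para-besov}). Consequently,
\begin{equs}
\lim_{N\to\infty}(\tilde{A}_{\leq N},\tilde{\phi}_{\leq N})^{n_0} = \solutionmap(A_0,\phi_0,\xi,\zeta;\Cgauge)^{n_0}
\end{equs}
in $C_t^0\Cs_x^{-\kappa}$ on $[0,\tau_0]$.

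Third, I would invoke Lemma~\ref{lemma:gauge-covariance-sah-computation}, which identifies $(\tilde{A}_{\leq N},\tilde{\phi}_{\leq N})^{n_0}$ with $\solutionmap(A_0^{n_0},\phi_0^{n_0},\xi_{\leq N},(\zeta^{n_0})_{\leq N};\Cgauge)$, i.e.\ with the smoothed solution of \eqref{intro:eq-SAH-smooth} driven by initial data $(A_0^{n_0},\phi_0^{n_0})$ and noise $(\xi,\zeta^{n_0})$. The previous step shows this smoothed solution converges in $C_t^0\Cs_x^{-\kappa}$ on $[0,\tau_0]$, so by the very definition of the data-to-solution map this limit equals $\solutionmap(A_0^{n_0},\phi_0^{n_0},\xi,\zeta^{n_0};\Cgauge)$. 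Stringing the equalities together gives the desired identity.

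The only point requiring a modicum of care is the continuity of $(A,\phi)\mapsto(A^{n_0},\phi^{n_0})$ on $C_t^0\Cs_x^{-\kappa}$, and in particular the fact that multiplication by $e^{-\icomplex n_0\cdot x}$ preserves $\Cs_x^{-\kappa}$; but since $e^{-\icomplex n_0\cdot x}$ is smooth with all derivatives bounded, this is a direct consequence of the paraproduct estimates of Lemma~\ref{prelim:lem-para-besov}, so there is no genuine analytic obstacle in the proof.
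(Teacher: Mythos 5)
Your proof is correct and follows essentially the same chain of equalities as the paper's: the definition of the data-to-solution map as a limit of mollified solutions, the exact gauge-covariance identity of Lemma~\ref{lemma:gauge-covariance-sah-computation}, the hypothesis \eqref{eq:A-phi-tilde-A-phi-zero}, and the continuity of the discrete gauge transformation on $C_t^0\Cs_x^{-\kappa}$. You merely traverse the chain in the opposite direction from the paper and make the (routine, but implicit in the paper) continuity of $(A,\phi)\mapsto(A^{n_0},\phi^{n_0})$ explicit.
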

\begin{proof}
We have that
\begin{equs}
\mbb{S}\Big(A_0^{n_0}, \phi_0^{n_0}, \xi, \zeta^{n_0}; \Cgauge \Big) &= \lim_{N \toinf}\mbb{S}\Big( A_0^{n_0}, \phi_0^{n_0}, \xi_{\leq N}, (\zeta^{n_0})_{\leq N}; \Cgauge\Big) = \lim_{N \toinf} ~(\tilde{A}_{\leq N}, \tilde{\phi}_{\leq N})^{n_0} \\
&= \lim_{N \toinf} ~(A_{\leq N}, \phi_{\leq N})^{n_0} = \mbb{S}\Big(A_0, \phi_0, \xi, \zeta; \Cgauge\Big)^{n_0},
\end{equs}
as desired.
\end{proof}

Clearly, to prove gauge covariance of \eqref{intro:eq-SAH} (Theorem \ref{thm:gauge-covariance}), it now just suffices to show \eqref{eq:A-phi-tilde-A-phi-zero}. To begin towards this, define the following modified linear object:
\begin{equs}
\glinear[\leqN][r][\st](t) := \int_{-\infty}^t ds e^{(t-s)(\Delta-1)} \tilde{\zeta}_{\leq N}(s).
\end{equs}
Define the modified quadratic objects
\begin{equs}\label{eq:gAquadratic-modified}
\gAquadratic[\leqN][r][\st](t) &:=  
\int_{-\infty}^t ds e^{(t-s)(\Delta - 1)} \Big( - \leray\Im\Big( \glinear[\leqN][r][\st](s) \covd \glinear[\leqN][r][\st](s)\Big) + \Cgauge n_0\Big), \\
\gphiquadratic[\leqN][r][\st](t)  &:= \int_{-\infty}^t ds e^{(t-s)(\Delta - 1)} \linear[\leqN][r][\st, j](s) \ptl_j \glinear[\leqN][r][\st](s), 
\end{equs}
Note here that we place the extra $\Cgauge n_0$-term appearing in \eqref{eq:modified-sah-smoothed} in the quadratic object $\scalebox{0.9}{$\gAquadratic[\leqN][r][\st]$}$. By including the $\Cgauge$-term in \eqref{eq:gAquadratic-modified}, as well as choosing $\Cgauge=\gaugerenorm$ below, we will be able to prove Lemma \ref{lemma:convergence-of-enhanced-data-sets} below.  
Define the gauge transformed enhanced data set
\begin{equs}
\Xi^g_{\leq N} := \bigg(\linear[\leqN][r][\st], &\Aquadraticnlst[\leqN], \linear[\leqN][r][\st] \glinear[\leqN][r][\st], \gAquadratic[\leqN][r][\st],  \gphiquadratic[\leqN][r][\st], \Aquadraticnlst[\leqN]\glinear[\leqN][r][\st],    \\
& \bigg(\Im\Big(\icomplex \ovl{\glinear[\leqN][r][\st]} \ptl_i \ptl_j \Duh\big(\glinear[\leqN][r][\st]\big)\Big) + \frac{\delta^{ij}}{4} \sigma_{\leq N}^2\bigg)_{i \in [2]}, \\
& \bigg(\Im\Big(\ovl{\glinear[\leqN][r][\st]} \ptl^i \gphiquadratic[\leqN][r][\st]\Big) + \frac{1}{4} \sigma_{\leq N}^2 \linear[\leqN][r][\st, i]\bigg)_{i \in [2]}, \\
&\biglcol\, \glinear[\leqN][r][\st, j] \ovl{\glinear[\leqN][r][\st, k]}\bigrcol 0 \leq j \leq \frac{q+1}{2}, 0 \leq k \leq \frac{q-1}{2}\bigg) .
\end{equs}
\begin{remark}\label{remark:same-ansatz}
Due to our definition of the modified quadratic object from \eqref{eq:gAquadratic-modified}, we can then make the exact same ansatz for $(\tilde{A}_{\leq N}, \tilde{\phi}_{\leq N})$ as we did for $(A_{\leq N}, \phi_{\leq N})$, where now we use the enhanced data set $\Xi_{\leq N}^g$ in place of $\Xi_{\leq N}$ to define the para-controlled system for $(\tilde{X}_{\leq N}, \tilde{\eta}_{\leq N}, \tilde{\psi}_{\leq N})$.
\end{remark}

The crux of the argument for showing \eqref{eq:A-phi-tilde-A-phi-zero} is the following result about enhanced data sets.

\begin{lemma}[Convergence of the enhanced data sets]\label{lemma:convergence-of-enhanced-data-sets}
Let $\Cgauge=\gaugerenorm$. Then, we have that 
\begin{equs}
\lim_{N \toinf} d_1(\Xi_{\leq N}, \Xi_{{\leq N}}^g) \stackrel{a.s.}{=} 0.
\end{equs}
\end{lemma}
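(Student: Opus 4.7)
The plan is to prove convergence componentwise along the tuples defining $\Xi_{\leq N}$ and $\Xi^g_{\leq N}$. The first two components ($\linear[\leqN][r][\st]$ and $\Aquadraticnlst[\leqN]$) appear identically in both, so nothing is required there. For the remaining components, the $g$-version differs only by replacing $\philinear[\leqN][r][\st]$ with $\glinear[\leqN][r][\st]$. By Remark \ref{remark:gauge-transformation-noise-shifts-symbol}, these two linear objects have Fourier symbols $\rho_{\leq N}(n)$ and $\rho_{\leq N}(n-n_0)$ respectively. I would first establish $\philinear[\leqN][r][\st] - \glinear[\leqN][r][\st] \to 0$ in $C_t^0 \Cs_x^{-\kappa}$ via a second-moment calculation in Fourier: the symbol difference $\rho_{\leq N}(n) - \rho_{\leq N}(n-n_0)$ vanishes on $|n| \ll N$ and has size $O(|n_0|/N)$ on the transition annulus $|n| \sim N$, giving an $L^2_\omega$ bound of polynomial decay in $N$. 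Gaussian hypercontractivity and Borel-Cantelli then upgrade this to almost sure convergence along $N \in \dyadic$.

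The same strategy, applied to the explicit Fourier representations (as in Example \ref{example:objects-fourier-representation}), handles $\linear[\leqN][r][\st] \glinear[\leqN][r][\st]$, $\gphiquadratic[\leqN][r][\st]$, $\Aquadraticnlst[\leqN] \glinear[\leqN][r][\st]$, and the other higher-derivative quadratic objects in the tuple. For the Wick-ordered monomials $\biglcol\, \glinear[\leqN][r][\st, j] \ovl{\glinear[\leqN][r][\st, k]} \bigrcol$, the renormalization constant $\sigma^2_{\leq N}$ is the exact variance of $\philinear[\leqN][r][\st]$ but not of $\glinear[\leqN][r][\st]$, so I need to verify that this mismatch is harmless in the limit. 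This follows from dominated convergence applied to
\begin{equs}
\E \big|\glinear[\leqN][r][\st](z)\big|^2 - \sigma^2_{\leq N} = \frac{1}{(2\pi)^2} \sum_{n \in \Z^2} \frac{|\rho_{\leq N}(n-n_0)|^2 - |\rho_{\leq N}(n)|^2}{2 \langle n \rangle^2},
\end{equs}
where the summand is pointwise bounded by $2/\langle n \rangle^2$ (summable) and tends to $0$ pointwise as $N \to \infty$.

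The main new content lies in proving $\gAquadratic[\leqN][r][\st] - \Aquadratic[\leqN][r][\st] \to 0$, where the value $\Cgauge = \gaugerenorm$ becomes essential. The difference decomposes into a second-chaos part (handled by the argument of the first paragraph) plus a deterministic constant coming from pairing $\ovl{\glinear[\leqN][r][\st]}$ with $\partial_j \glinear[\leqN][r][\st]$ at coincident points. Since the corresponding expectation for $\philinear[\leqN][r][\st]$ vanishes by $n \mapsto -n$ antisymmetry, the residue is exactly
\begin{equs}
\E \, \mrm{Im}\big( \ovl{\glinear[\leqN][r][\st]}(z)\, \partial_j \glinear[\leqN][r][\st](z) \big) = \frac{1}{(2\pi)^2} \sum_{n \in \Z^2} \frac{n_j \big( |\rho_{\leq N}(n-n_0)|^2 - |\rho_{\leq N}(n)|^2 \big)}{2 \langle n \rangle^2}.
\end{equs}
Taylor-expanding $|\rho_{\leq N}(n-n_0)|^2 - |\rho_{\leq N}(n)|^2 = -n_0 \cdot \nabla(\rho^2)(n/N)/N + O(|n_0|^2/N^2)$ and converting the resulting Riemann sum into an integral as $N \to \infty$ yields the limit $-\frac{n_{0,k}}{2(2\pi)^2} \int_{\R^2} \xi_j \partial_k \rho^2(\xi)/|\xi|^2 \, d\xi$. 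Integration by parts combined with the distributional identity $\partial_j \partial_k \log|\xi| = \mrm{p.v.}\bigl(\delta_{jk}/|\xi|^2 - 2 \xi_j \xi_k/|\xi|^4\bigr) + \pi \delta_{jk} \delta_0$ on $\R^2$ (itself a consequence of $\Delta \log|\xi| = 2\pi \delta_0$) evaluates this integral to $\gaugerenorm \cdot n_{0,j} = n_{0,j}/(8\pi)$, with the principal value part vanishing by angular symmetry. Since $\leray n_0 = n_0$ (constant vectors are divergence-free), the counterterm $+\Cgauge n_0 = +\gaugerenorm n_0$ in \eqref{eq:gAquadratic-modified} exactly cancels this resonance, and the Duhamel integral then yields $\gAquadratic[\leqN][r][\st] - \Aquadratic[\leqN][r][\st] \to 0$. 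The main obstacle is this resonance identity, since it is the only point at which the specific value $\Cgauge = \gaugerenorm$ is forced; once it is in place, the componentwise convergences assemble through the definition of $d_1$ to conclude.
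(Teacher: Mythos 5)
Your proposal follows essentially the same route as the paper: componentwise comparison in Fourier, Taylor expansion of the shifted symbol to gain a factor of $N^{-1}$, and an explicit resonance computation that pins down $\Cgauge = \gaugerenorm$. Two points of departure are worth flagging. First, for the resonance integral you evaluate $\int \xi_j \partial_k(\rho^2)/|\xi|^2$ via the distributional identity $\partial_j\partial_k \log|\xi| = \mathrm{p.v.}(\cdots) + \pi\delta_{jk}\delta_0$, whereas the paper observes by rotational invariance that the $2\times 2$ matrix $\int (\eucmoll)^{*(2)}(u)\,uu^T/|u|^2$ is a multiple of the identity and fixes the constant by taking the trace; both are correct and give $\gaugerenorm$. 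Second, for the Wick-ordered monomials your variance-mismatch observation (that $\E|\glinear[\leqN][r][\st]|^2 - \sigma^2_{\leq N} \to 0$) is true but secondary: the actual comparison between $H_{m,n}(\glinear[\leqN][r][\st];\sigma^2_{\leq N})$ and $H_{m,n}(\philinear[\leqN][r][\st];\sigma^2_{\leq N})$ cannot be done by naively expanding the difference of powers (the individual unrenormalized monomials are ill-defined), and the variance-mismatch estimate by itself doesn't transfer the convergence of linear objects to the polynomial level. The missing ingredient is the binomial identity for complex Hermite polynomials (Lemma \ref{lemma:complex-hermite-polynomial-expansion}), which writes $H_{m,n}(z+w) - H_{m,n}(z)$ as a sum in which every term carries at least one factor of $w = \glinear[\leqN][r][\st] - \philinear[\leqN][r][\st]$ (which you already control in $\Cs_x^{1-\kappa}$) multiplying a well-defined $H_{k_1,k_2}(\philinear[\leqN][r][\st];\sigma^2_{\leq N})$. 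Once that expansion is in hand, the variance mismatch is irrelevant because both sides use the same $\sigma^2_{\leq N}$. Everything else in your argument is sound.
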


\begin{remark}\label{remark:gauge-renorm-uniquely-determined}
From \eqref{eq:gAquadratic-modified}, we see that there can be at most one constant $\Cgauge$ for which Lemma \ref{lemma:convergence-of-enhanced-data-sets} holds, and thus $\Cgauge=\frac{1}{8\pi}$ is uniquely specified in this sense. Of course, it would be interesting to have a more intrinsic characterization of the uniqueness of $\gaugerenorm$, similar to what is shown in \cite{CS23} for 2D pure Yang--Mills, but we do not pursue that here.
\end{remark}

\begin{proof}
Recall the representation \eqref{eq:philinear-fourier-representation} of the linear object. Then by Remark \ref{remark:gauge-transformation-noise-shifts-symbol}, we have that
\begin{equs}
\philinear[\leqN][r][\st](z) - \glinear[\leqN][r][\st](z) = \frac{1}{2\pi} \sum_n \big(\rho_{\leq N}(n) - \rho_{\leq N}(n - n_0)\big) \e_n(x) \int_{-\infty}^t e^{-(t-s)\langle n\rangle^2} dW(s, n).
\end{equs}
By Taylor expanding the symbol $\rho_{\leq N}$, we may write $\rho_{\leq N}(n) - \rho_{\leq N}(n - n_0) = n_0 \nabla \rho_{\leq N}(\tilde{n}) = \frac{n_0}{N} N \nabla \rho_{\leq N}(\tilde{n})$. Here, we group $N \nabla \rho_{\leq N}(\tilde{n})$, because this defines a multiplier operator of order 1 (for instance, the corresponding convolution kernel has $L^1$ norm $\sim 1$). Thus, we have gained a power of $N$. Now by slightly modifying the proof of control on the linear object from \cite[Lemma 5.6]{BC23}, this extra $N$ factor can be simultaneously used to gain almost one derivative, as well as show convergence to zero, i.e. to show that:
\begin{equs}\label{eq:linear-objects-converge}
\lim_{N \toinf} \Big\|\philinear[\leqN][r][\st] - \glinear[\leqN][r][\st]\Big\|_{C_t^0 \Cs_x^{1-\kappa}([0, 1] \times \T^2)} \stackrel{a.s.}{=} 0.
\end{equs}
Given this, the convergence of all polynomial objects in $C_t^0 \Cs_x^{-\kappa}([0, 1] \times \T^2)$ just follows from the identity \eqref{eq:complex-hermite-polynomial-expansion}, which gives
\begin{equs}
H_{m, n}\Big(\glinear[\leqN][r][\st], \ovl{\glinear[\leqN][r][\st]}; \sigma_{\leq N}^2\Big) -& H_{m, n}\Big(\philinear[\leqN][r][\st], \ovl{\philinear[\leqN][r][\st]}; \sigma_{\leq N}^2\Big) = \\
&\sum_{\substack{0 \leq j \leq m \\ 0 \leq k \leq n \\ (j, k) \neq (0, 0)}} \binom{m}{j} \binom{n}{k} \Big( \glinear[\leqN][r][\st] - \philinear[\leqN][r][\st] \Big)^{j} \ovl{\Big(\glinear[\leqN][r][\st] - \philinear[\leqN][r][\st] \Big)}^k H_{j, k}\Big(\philinear[\leqN][r][\st], \ovl{\philinear[\leqN][r][\st]}; \sigma_{\leq N}^2\Big).
\end{equs}
The convergence to zero of all other objects (besides $\scalebox{0.9}{$\Aquadratic[][r][\st] - \gAquadratic[][r][\st]$}$) can be shown by similar considerations: we simply Taylor expand $\rho_{\leq N}(n) - \rho_{\leq N}(n - n_0)$ and gain a factor of $N$, which then directly gives convergence to zero (for the other objects, we do not worry about gaining derivatives, which makes seeing the convergence even easier).

It remains to discuss the difference of quadratic objects $\scalebox{0.9}{$\Aquadratic[][r][\st] - \gAquadratic[][r][\st]$}$, which does not automatically follow by the same considerations, due to the slightly modified definition of $\scalebox{0.9}{$\gAquadratic[][r][\st]$}$ (equation \eqref{eq:gAquadratic-modified}). However, since the modified definition only affects the resonant part of 
$\scalebox{0.9}{$\gAquadratic[][r][\st]$}$, the non-resonant parts of these objects may be shown to converge in $C_t^0 \Cs_x^{1-\kappa}$ by the same Taylor expansion considerations as before. Thus, we look at the resonant parts. We separate this argument out into Lemma \ref{lemma:resonant-parts-converge-fourier-calculation}. Conditional on this lemma, which we prove next, the desired result follows.
\end{proof}

\begin{lemma}\label{lemma:resonant-parts-converge-fourier-calculation}
Let $\Cgauge=\gaugerenorm$. Then, we have that
\begin{equs}
\lim_{N \toinf} \bigg\| \E\Big[ \Aquadratic[\leqN][r][\st] - \gAquadratic[\leqN][r][\st]\Big] \bigg\|_{C_t^0 \Cs_x^{1-\kappa}([0, 1] \times \T^2)} = 0.
\end{equs} 
\end{lemma}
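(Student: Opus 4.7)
The statement reduces to an explicit Fourier-side computation. By stationarity of $\scalebox{0.9}{$\philinear[\leqN][r][\st]$}$ and $\scalebox{0.9}{$\glinear[\leqN][r][\st]$}$, the resonant expectations $\E[\Im(\ovl{\philinear[\leqN][r][\st]}\nabla\philinear[\leqN][r][\st])](s,y)$ and $\E[\Im(\ovl{\glinear[\leqN][r][\st]}\nabla\glinear[\leqN][r][\st])](s,y)$ are constants in $(s,y)$. Since $\leray$ acts as the identity on constant vector fields and $\int_{-\infty}^t e^{-(t-s)}\,ds = 1$, it follows that $\E[\Aquadratic(t)] = -U_N$ and $\E[\gAquadratic(t)] = -V_N + \Cgauge n_0$, where, using the Fourier representation from Example~\ref{example:objects-fourier-representation}, the complex Gaussian covariance $\E[dW_\zeta\,dW_\zeta]=0$, and Remark~\ref{remark:gauge-transformation-noise-shifts-symbol},
\begin{equation*}
U_{N,j} = \frac{1}{(2\pi)^2}\sum_{n \in \Z^2} \frac{n_j\,\rho_{\leq N}(n)^2}{2\langle n\rangle^2}, \qquad V_{N,j} = \frac{1}{(2\pi)^2}\sum_{n \in \Z^2} \frac{n_j\,\rho_{\leq N}(n-n_0)^2}{2\langle n\rangle^2}.
\end{equation*}
The first sum vanishes by the symmetry $n \mapsto -n$, so the problem reduces to proving $V_N \to \Cgauge n_0 = n_0/(8\pi)$ as $N \to \infty$.

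To extract this limit, I would use the vanishing of $U_N$ to replace $\rho_{\leq N}(n-n_0)^2$ in $V_N$ by the difference $\rho_{\leq N}(n-n_0)^2 - \rho_{\leq N}(n)^2$, and then apply the fundamental theorem of calculus to rewrite the difference as $-n_{0,k}\int_0^1(\ptl_k\rho_{\leq N}^2)(n-un_0)\,du$. The resulting summand is supported in the dyadic annulus $|n|\sim N$ with amplitude $O(N^{-1})$, so the mass in $\langle n\rangle^2$ is negligible there. After rescaling $n = N\xi$ and recognizing the rescaled sum as a Riemann sum for a smooth compactly-supported integrand, it converges in the limit $N\to\infty$ to
\begin{equation*}
-\frac{n_{0,k}}{2(2\pi)^2}\int_{\R^2}\frac{\xi_j\,\ptl_k\rho^2(\xi)}{|\xi|^2}\,d\xi \;=\; \frac{n_{0,k}}{2(2\pi)^2}\int_{\R^2}\rho^2(\xi)\,\ptl_k\bigg(\frac{\xi_j}{|\xi|^2}\bigg)\,d\xi
\end{equation*}
after an integration by parts.

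The remaining integral is evaluated via the distributional identity in $\R^2$
\begin{equation*}
\ptl_k\bigg(\frac{\xi_j}{|\xi|^2}\bigg) \;=\; \ptl_j\ptl_k\log|\xi| \;=\; \pi\,\delta_{jk}\,\delta_0 \;+\; \mathrm{p.v.}\bigg(\frac{\delta_{jk}}{|\xi|^2} - \frac{2\xi_j\xi_k}{|\xi|^4}\bigg),
\end{equation*}
which follows from $\Delta\log|\xi| = 2\pi\delta_0$ together with the symmetry of mixed second derivatives. The principal-value part has vanishing angular average on each sphere and hence integrates to zero against the radial $\rho^2$; only the delta contribution survives, yielding $\pi\delta_{jk}\rho^2(0) = \pi\delta_{jk}$ (recall $\rho(0)=1$). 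This gives $V_{N,j} \to n_{0,j}/(8\pi) = \Cgauge n_{0,j}$, as required, and shows that the precise value $\gaugerenorm$ is exactly what is needed to cancel the resonance.

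\textbf{Main obstacle.} The key technical step is the Riemann-sum-to-integral convergence, since the summand is supported on the dyadic scale $|n|\sim N$ and one must quantify the discretization error uniformly in the shift parameter $u\in[0,1]$. Because $\rho$ is smooth and compactly supported, standard Poisson summation estimates show that the error decays faster than any polynomial in $N^{-1}$, but care is needed to handle the low-frequency region where the mass in $\langle n\rangle^2$ is not negligible. A cleaner alternative would be to use the heat-kernel representation $\langle n\rangle^{-2} = \int_0^\infty e^{-t\langle n\rangle^2}\,dt$ and directly analyze the resulting Gaussian sums via Jacobi theta functions, with the constant $1/(8\pi)$ emerging from the short-time regime.
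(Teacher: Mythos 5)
Your proof is correct and follows essentially the same route as the paper: reduce the Duhamel integral to the constant resonances, express them in Fourier space, use the vanishing of the unshifted resonance together with an exact (FTC) form of the Taylor expansion, rescale $n = N\xi$, and pass to a continuum integral as $N \to \infty$. The one genuine departure is the evaluation of the limiting integral. The paper observes that $\int_{\R^2} \frac{f'(|u|)}{|u|} \frac{u u^T}{|u|^2}\,du$ must be a multiple of the identity by rotational symmetry and extracts the multiple by taking the trace, which reduces to a one-dimensional integral of $f'$; you instead integrate by parts and invoke the distributional identity $\ptl_j\ptl_k \log|\xi| = \pi\,\delta_{jk}\,\delta_0 + \mathrm{p.v.}(\cdots)$, with the p.v.\ part dropping out against the radial $\rho^2$. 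The two are related by precisely that integration by parts, since $\ptl_k\rho^2(u) = f'(|u|) u_k/|u|$, and both isolate $\rho^2(0)=1$ as the source of the constant. Your version makes the appearance of $\frac{1}{8\pi}$ more transparent (it is half the $2\pi$ from $\Delta\log = 2\pi\delta_0$, divided by $(2\pi)^2$), at the modest cost of having to justify the distributional IBP; the paper's trace trick avoids distributions altogether. The Riemann-sum-to-integral step you flag as the main obstacle is handled at the same level of informality in the paper, so this is not a gap relative to the reference argument.

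One small bookkeeping remark worth making in a write-up: the identity $\E[dW_\zeta\, dW_\zeta] = 0$ for the $\C$-valued noise is what kills the non-resonant pairings, and the Leray projector is indeed the identity on spatial constants since the $\nabla\Delta^{-1}\nabla\cdot$ correction vanishes on the zero Fourier mode; you use both facts correctly but only the first is stated explicitly in the paper's proof.
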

\begin{proof} 
By the definitions  of the quadratic objects (\eqref{eq:Aquadratic} and \eqref{eq:gAquadratic-modified}), and recalling Example \ref{example:objects-fourier-representation}, we have that
\begin{equs}\label{eq:resonant-part-representation-intermediate}
\E\Big[ \Big( \Aquadratic[\leqN][r][\st]- \gAquadratic[\leqN][r][\st]\Big)(z_0)\Big] = \mDuh \Big(-\leray  \Big( \E\Big[\Im\Big(\ovl{\philinear[\leqN][r][\st]} \covd \philinear[\leqN][r][\st]\Big)\Big] - \E\Big[\Im\Big(\ovl{\glinear[\leqN][r][\st]} \covd \glinear[\leqN][r][\st]\Big) \Big]\Big) - \gaugerenorm n_0\Big)(z_0), \qquad 
\end{equs}
where $\mDuh$ denotes the Duhamel operator corresponding to the massive Laplacian $\Delta - 1$. We have that
\begin{equs}
R_{\leq N} &:= \E\Big[\Im\Big(\ovl{\philinear[\leqN][r][\st]} \covd \philinear[\leqN][r][\st]\Big)(z)\Big] - \E\Big[\Im\Big(\ovl{\glinear[\leqN][r][\st]} \covd \glinear[\leqN][r][\st]\Big)(z)\Big]  \\
&= \frac{1}{(2\pi)^2} \sum_{n \in \Z^2} \big(\rho_{\leq N}(n)^2 - \rho_{\leq N}(n - n_0)^2 \big) n \int_{-\infty}^t  e^{-2(t-s)\langle n \rangle^2} ds \\
&= \frac{1}{(2\pi)^2} \sum_{n \in \Z^2} \big(\rho_{\leq N}(n)^2 - \rho_{\leq N}(n - n_0)^2 \big) \frac{n}{2\langle n \rangle^2} . 
\end{equs}
In particular, note that $R_{\leq N}$ is constant in $x$. From Taylor expansion, we obtain that
\begin{equation*}
\rho_{\leq N}(n)^2 - \rho_{\leq N}(n - n_0)^2 = 2 \rho_{\leq N}(n) \nabla \rho_{\leq N}(n) \cdot n_0 + O(N^{-2}). 
\end{equation*}
By also using that $\nabla \rho_{\leq N}(n)$ is supported on $|n| \sim N$ and $\langle n \rangle^{-2} - |n|^{-2} = O(|n|^{-4})$, we further obtain that  (here $o(1)$ is with respect to $N \toinf$)
\begin{equs}
R_{\leq N} = \frac{1}{(2\pi)^2} \sum_{n\in \Z^2} \frac{\rho_{\leq N}(n) n \nabla \rho_{\leq N}(n) \cdot n_0}{\langle n \rangle^2} + o(1) 
&= \frac{1}{(2\pi)^2} \bigg(\sum_{n\in \Z^2} \frac{\rho_{\leq N}(n) n (\nabla \rho_{\leq N}(n))^T}{|n|^2}\bigg) n_0 + o(1).
\end{equs}
Next, recall (from Section \ref{section:prelimary-harmonic}) that $\rho_{\leq N}(n) = \rho(\frac{n}{N})$, so that $\nabla \rho_{\leq N} = N^{-1} (\nabla \rho) (\frac{n}{N})$. Moreover, note that $\rho \nabla \rho = \frac{1}{2} \nabla (\rho^2)$. Since $\rho$ is radial, so is $\rho^2$, and so let $f \colon\R \ra \R$ be such that $\rho^2(x) = f(|x|)$. Then $\nabla (\rho^2)(x) = f'(|x|) \frac{x}{|x|}$. From this, we obtain that
\begin{equs}
\frac{1}{(2\pi)^2} \sum_{n\in \Z^2} \frac{\rho_{\leq N}(n) n (\nabla \rho_{\leq N}(n))^T}{|n|^2} &= \frac{1}{8\pi^2} \sum_{n \in \Z^2} \frac{n (\nabla (\rho^2) (\frac{n}{N}))^T }{N |n|^2} = \frac{1}{8\pi^2} \sum_{n \in \Z^2} f'\Big(\frac{|n|}{N}\Big) \frac{nn^T}{N |n|^3} \\
&=\frac{1}{8\pi^2} N^{-2} \sum_{n \in N^{-1} \Z^2} \frac{f'(|n|)}{|n|} \frac{nn^T}{|n|^2} \\
&\ra 
\frac{1}{8\pi^2} \int_{\R^2} du \frac{f'(|u|)}{|u|} \frac{uu^T}{|u|^2} , \label{eq:limiting-resonance-gauge-covariance-proof}
\end{equs}
where the final limit is as $N \toinf$. Note that \eqref{eq:limiting-resonance-gauge-covariance-proof} commutes with any orthogonal matrix, and thus we have that $\eqref{eq:limiting-resonance-gauge-covariance-proof} = c I$ for some constant $c$. To determine the constant, we may take trace and compute
\begin{equs}
\Tr(\eqref{eq:limiting-resonance-gauge-covariance-proof}) = \frac{1}{8\pi^2} \int_{\R^2} du \frac{f'(|u|)}{|u|} = \frac{2\pi}{8\pi^2} \int_0^\infty f'(r) dr = - \frac{1}{4\pi} f(0).
\end{equs}
From \eqref{eq:rho-normalization}, we have that $f(0) = \rho^2(0) = 1$, and thus we obtain $\Tr(\eqref{eq:limiting-resonance-gauge-covariance-proof}) = - \frac{1}{4\pi}$. From this, we obtain $\eqref{eq:limiting-resonance-gauge-covariance-proof} = -\gaugerenorm I$. Combining the previous few results, and recalling that $R_{\leq N}$ is constant, we have that
\begin{equs}
\mDuh \Big(-\leray R_{\leq N} - \gaugerenorm n_0\Big) = \mDuh \Big(- R_{\leq N} - \gaugerenorm n_0 \Big) &= \mDuh \Big( o(1)\Big) \\
&\ra 0 \text{ in $C_t^0 \Cs_x^{1-\kappa}([0, 1] \times \T^2)$ as $N \toinf$.}
\end{equs}
Recalling \eqref{eq:resonant-part-representation-intermediate}, this shows the desired result.
\end{proof}

\begin{remark}\label{remark:two-calculations}
In Section \ref{section:cshe}, we will essentially redo the calculation in the proof of Lemma \ref{lemma:resonant-parts-converge-fourier-calculation}, except in real space -- see Lemma \ref{lemma:constant-A-limit-resonance}. We decided to keep both calculations, because in the present section, we are relying heavily on our previous work \cite{BC23}, where all the analysis was done in Fourier space. Thus for the sake of consistency, we also perform the calculation in Fourier space. By contrast, in Section \ref{section:cshe}, we will exclusively work in real space (see the discussion at the beginning of that section for why), so again for the sake of consistency there, we later perform the calculation in real space.
\end{remark}

\begin{proof}[Proof of Theorem \ref{thm:gauge-covariance}]
To wrap up the proof of gauge covariance, we proceed very similarly to the arguments in \cite[Section 7.5]{BC23} (in fact, the present situation is much simpler since we never need to consider time-dependent gauge transformations). As previously remarked, by Corollary \ref{cor:A-tilde-A-converge-implies-gauge-covariant} we reduce to showing \eqref{eq:A-phi-tilde-A-phi-zero}. Let $(X_{\leq N}, \eta_{\leq N}, \psi_{\leq N})$ be the solution to the para-controlled stochastic Abelian Higgs equation (as in \eqref{eq:para-sah}-\eqref{eq:local-theory-initial-data}). Let $(\tilde{X}_{\leq N}, \tilde{\eta}_{\leq N}, \tilde{\psi}_{\leq N})$ be the solution corresponding to $(\tilde{A}_{\leq N}, \tilde{\phi}_{\leq N})$. By convergence of the enhanced data sets (Lemma \ref{lemma:convergence-of-enhanced-data-sets}), it suffices to show that 
\begin{equs}
\lim_{N \toinf} \Big\| (X_{\leq N}, \eta_{\leq N}, \psi_{\leq N}) - (\tilde{X}_{\leq N}, \tilde{\eta}_{\leq N}, \tilde{\psi}_{\leq N})\Big\|_{\Sc^{2-5\kappa} \times \Sc^{1-2\kappa} \times \Sc^{2-5\kappa}} = 0.
\end{equs}
As in the proof sketch of Proposition \ref{prop:lwp-para-sah}, let $M(\cdot, \cdot, \cdot; \Xi_{\leq N}, A_0, \phi_0)$ be the contraction map for which $S_{\leq N} := (X_{\leq N}, \eta_{\leq N}, \psi_{\leq N})$ is a fixed point. Since we made the same ansatz for $(\tilde{A}_{\leq N}, \tilde{\phi}_{\leq N})$ as we did for $(A_{\leq N}, \phi_{\leq N})$ (Remark \ref{remark:same-ansatz}), we have that $\tilde{S}_{\leq N} := (\tilde{X}_{\leq N}, \tilde{\eta}_{\leq N}, \tilde{\psi}_{\leq N})$ is a fixed point of the map $M(\cdot, \cdot, \cdot; \Xi_{\leq N}^g, A_0, \phi_0)$. We thus have that (for brevity, we omit the subscript $\Sc^{2-5\kappa} \times \Sc^{1-2\kappa} \times \Sc^{2-5\kappa}$ in the norms, as well as the initial data $A_0, \phi_0$ in the map $M$)
\begin{equs}
\Big\|S_{\leq N} - \tilde{S}_{\leq N}\Big\| &= \Big\|M(S_{\leq N}; \Xi_{\leq N}) - M(\tilde{S}_{\leq N}; \Xi_{\leq N}^g)\Big\|\\
&\leq  \Big\|M(S_{\leq N}; \Xi_{\leq N}) - M(\tilde{S}_{\leq N}; \Xi_{\leq N})\Big\| + \Big\|M(\tilde{S}_{\leq N}; \Xi_{\leq N}) - M(\tilde{S}_{\leq N}; \Xi_{\leq N}^g)\Big\| \\
&\leq \frac{1}{2} \big\|S_{\leq N} - \tilde{S}_{\leq N}\big\|+ C \big\|\tilde{S}_{\leq N}\big\| d_1(\Xi_{\leq N}, \Xi_{\leq N}^g).
\end{equs}
Here the final inequality follows by standard estimates which shows that $M$ is Lipschitz in all of its variables. Taking $N \toinf$, the desired result follows.
\end{proof}

\section{Covariant stochastic heat equation}\label{section:cshe}

In this section, we analyze the covariant stochastic heat equation \eqref{intro:eq-cshe}, and in particular prove Theorem~\ref{intro:thm-cshe}. As emphasized in Remark \ref{intro:rem-cshe}, the main point will be to prove estimates with as little dependence on $\Blin$ as possible. The quantities we need to estimate are geometric in nature, and thus it makes sense to try to estimate them in a way that respects the geometry. This is the main philosophy.  

In particular, because we are trying to be as geometric as possible, we will go away from the Fourier methods used in the local theory (Section \ref{section:gauge-covariance}), and instead work in real space.

We start by setting some notation which will be used throughout this section.

\begin{notation}[Spatial kernels]\label{notation:heat-kernel-space-only}
Our notation for heat kernels is $p(w; z)$, because we often deal with spacetime convolutions. When we just want to denote a spatial convolution, it will be helpful to have the notation $p_u(x, x') := p((0, x); (u, x'))$ which is the convolution kernel for the operator $e^{u \Delta}$.

Additionally, we will write $p_u(w; z) := \int dx' p_u(x, x') p_u(w; z')$, which is a smoothed space-time heat kernel. This is a slight clash of notation, but we hope it is clear: whenever we use variables $v, w, z$, we mean the space-time kernel, while whenever we use variables $x, x', y$, we mean the spatial kernel.
\end{notation}

\begin{notation}[Moments]
In this section, we will write $\|\cdot\|_{L^\power(\Omega)}$ instead of $\E[|\cdot|^\power]^{\frac{1}{\power}}$. We find that this leads to shorter and nicer displays, and it combines well when we take expectations of norms.
\end{notation}

\begin{notation}[Additional space-time variables]
Recall Notation \ref{notation:space-time-variables} for our notation involving space-time variables. In this section, we will often need to deal with variables at the same time but with different spatial coordinates. Thus, we will often write $z' = (t, x')$, $w' = (s, y')$, $w_1 = (s, y_1)$, $w_2 = (s, y_2)$, etc. We stress that the additional space-time variables always have the same time variable as the corresponding original space-time variable, e.g. the time variable in $z'$ is the same as the one in $z$.
\end{notation}

Next, we define smoothed versions of the covariant heat kernel. Recall the mollifiers $\moll_{\leq N}$ and $\moll_N$ from Definition \ref{def:mollifiers}.

\begin{definition}\label{def:smoothed-covariant-heat-kernel}
For $N \in \dyadic$, $w, z \in [0, 1] \times \T^2$, define
\begin{equs}
p_{\Blin, \leq N}(w; z) := \int dy' \chi_{\leq N}(y - y') p_{\Blin}(w'; z), ~~ p_{\Blin, N}(w; z) := \int dy' \chi_N(y - y') p_{\Blin}(w'; z),
\end{equs}
and similarly define $\massp_{\Blin, \leq N}, \massp_{\Blin, N}$ using the massive kernel $\massp_{\Blin}$.
\end{definition}

\begin{remark}
Recalling Remark \ref{remark:massive-kernel}, we will freely apply the results of Section \ref{section:monotonicity} which hold for the kernel $p_{\Blin}$, even when our expressions involve $\massp_{\Blin}$. This is because $\massp_{\Blin} = e^{-(t-s)} p_{\Blin}$ (equation \eqref{eq:massive-heat-kernel-formula}), so we can always upper bound $\massp_{\Blin}$ by $p_{\Blin}$. Similarly, we have $\covd_{{\Blin}(z)} \massp_{\Blin}(w; z) = e^{-(t-s)} \covd_{{\Blin}(z)} p_{\Blin}(w; z)$, $\covd_{-{\Blin}(w)} \massp_{\Blin}(w; z) = e^{-(t-s)} \covd_{-{\Blin}(w)} p_{\Blin}(w; z)$.

Additionally, by the diamagnetic inequality (Lemma \ref{kernel:lem-diamagnetic-heat-kernel}), we will freely upper bound $|\massp_{\Blin}(w; z)| \leq |p_{\Blin}(w; z)| \leq p(w; z)$. We may not always explicitly say this.
\end{remark}

\begin{remark}[Young's convolution inequality]
Throughout Section \ref{section:cshe}, we will apply Young's convolution inequality at many points, without explicitly saying so. One example where we apply this inequality is to bound, for a given kernel $K(w)$, and any $N \in \dyadic$,
\begin{equs}
\bigg\|\int dw \int dy' \moll_{\leq N}(y - y') K(w')\bigg\|_{L_w^2} \lesssim \|K(w)\|_{L_w^2}.
\end{equs}
(Here we used that  $\|\moll_{\leq N}\|_{L^1} \lesssim 1$.) The point here is that in cases where we can expect a bound which is uniform in $N$, we can just apply Young's inequality and bound the corresponding quantity at $N = \infty$ (i.e. without noise mollification). In summary, if there is ever a point where the $N$ parameter disappears from line to line, just know that we applied Young's convolution inequality.
\end{remark}



Recall the small parameter $\kappa$ that we fixed in Section \ref{section:parameters}.

\subsection{Power-type nonlinearity}

In this subsection, we will prove Theorem \ref{intro:thm-cshe}\ref{item:thm-cshe-polynomial} and \ref{item:thm:-difference-in-linear-objects}. To begin, we state the following estimate for the usual (non-covariant) polynomial objects. 

\begin{proposition}[Classical estimates for non-covariant objects]\label{prop:classical-estimate-non-covariant-object}
For $j, k \geq 0$, $\power \in [1, \infty)$, we have that 
\begin{equs}
\bigg\| \sup_{N \in \dyadic} \Big\| t^\kappa \biglcol \, \philinear[\leqN][r][j] \ovl{\philinear[\leqN][r][k]}\bigrcol\,\Big\|_{C_t^0 \Cs_x^{-\kappa}([0, 1] \times \T^2)}  \bigg\|_{L^\power(\Omega)} \lesssim \power^{\frac{j+k}{2}}.
\end{equs}
\end{proposition}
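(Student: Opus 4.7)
The plan is to combine three classical ingredients: Gaussian hypercontractivity (to produce the $\power^{(j+k)/2}$-scaling), the heat-kernel characterization of Besov spaces from Lemma \ref{lemma:besov-space-heat-kernel-characterization} (to reduce the Besov norm to a pointwise-in-$u$ estimate), and a direct Wick/Feynman second-moment computation. Since these are the non-covariant objects, the covariant heat kernel is just the Euclidean one and everything will be essentially explicit in Fourier.

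First, I will invoke Lemma \ref{lemma:besov-space-heat-kernel-characterization} to reduce the control of $\big\| t^\kappa \biglcol \philinear[\leqN][r][j] \ovl{\philinear[\leqN][r][k]} \bigrcol \big\|_{C_t^0 \Cs_x^{-\kappa}}$ to bounding $\sup_{u \in (0,1]} u^{\kappa/2} \big\| t^\kappa \, e^{u\Delta} \biglcol \philinear[\leqN][r][j] \ovl{\philinear[\leqN][r][k]} \bigrcol \big\|_{L_t^\infty L_x^\infty}$. A routine Kolmogorov/chaining argument will then reduce this joint supremum (over $N$, $u$, $t$, $x$) to $L^\power(\Omega)$-moment bounds of the pointwise random variable $u^{\kappa/2} t^\kappa \big(e^{u\Delta}\biglcol \philinear[\leqN][r][j]\ovl{\philinear[\leqN][r][k]}\bigrcol\big)(t,x)$ at fixed parameters, together with standard increments in $(t,x,u,N)$. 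Since this random variable lies in the inhomogeneous Wiener chaos of order at most $j+k$, Nelson's hypercontractivity will bound its $L^\power(\Omega)$-moment by $\power^{(j+k)/2}$ times its $L^2(\Omega)$-moment, yielding the required power of $\power$.

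The remaining $L^2(\Omega)$-moment I will compute by Wick's theorem. Expanding $\philinear[\leqN](t,x)$ as a stochastic integral against the $\C$-valued white noise $\zeta$ and using Lemma \ref{lemma:complex-hermite-polynomial-expansion} to rewrite the stationary-variance polynomial $H_{j,k}(\cdot,\cdot;\sigma_{\leq N}^2)$ in terms of the non-stationary one with the pointwise variance $c_N(t):=\E[|\philinear[\leqN](t,x)|^2]$, the second moment becomes a finite sum of Feynman-diagram integrals built from $c_N(t)$, the variance mismatch $\sigma_{\leq N}^2 - c_N(t)$, and the $e^{u\Delta}$-smoothed diagonal covariance. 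Standard 2D heat-kernel estimates will give $c_N(t) \lesssim \log(2+N)$, $\sigma_{\leq N}^2 - c_N(t) \lesssim \log(1+t^{-1})$, and a $\log(2+(N\wedge u^{-1/2}))$-bound for the smoothed diagonal; the weights $t^\kappa$ and $u^{\kappa/2}$ will absorb all of these logarithms uniformly. The supremum over $N\in\dyadic$ is then produced by showing that $\{\biglcol \philinear[\leqN][r][j]\ovl{\philinear[\leqN][r][k]}\bigrcol\}_N$ is Cauchy in $L^\power(\Omega; C_t^0\Cs_x^{-\kappa})$ at a polynomial-in-$N$ rate, via the same hypercontractive second-moment estimate applied to $P_N$-localized differences of the linear object.

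The main technical obstacle I anticipate is the variance mismatch in the third step. Because the renormalization is performed with the stationary variance $\sigma_{\leq N}^2$ while $\philinear[\leqN]$ itself starts from zero initial data, the Feynman expansion will produce subleading contributions proportional to positive powers of $\sigma_{\leq N}^2 - c_N(t)$, which diverge logarithmically as $t\downarrow 0$. It is precisely the $t^\kappa$ time-weight, rather than any gain in spatial regularity, that controls these contributions, consistent with the discussion in Remark \ref{intro:rem-cshe}\ref{item:time-weights}.
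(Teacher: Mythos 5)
Your plan is correct in spirit but takes a genuinely different and more computational route than the paper. The paper introduces a \emph{stationary} auxiliary object $\philinear[\leqN][r][\mrm{st}]$ (the same evolution seeded at time zero with a mollified complex Gaussian free field $\Phi_{\leq N}$), cites the classical uniform-in-$N$ estimate for stationary Wick powers from [DPD02, Lemma 3.2], and then controls the discrepancy via Lemma~\ref{lemma:complex-hermite-polynomial-expansion} with $z=\philinear[\leqN][r][\mrm{st}]$ and $w=D_{\leq N}$, the deterministic heat flow of $\Phi_{\leq N}$; each resulting product is estimated by H\"{o}lder, with the $t^{\kappa}$-weight absorbing exactly the $t^{-\kappa}$ smoothing cost of pushing $D_{\leq N}$ into a positive-regularity space. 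You instead propose a self-contained Feynman/Wick second-moment calculation combined with hypercontractivity and a chaining reduction; this avoids black-boxing the stationary result, at the cost of carrying out the full diagrammatic bookkeeping. One imprecision to fix: Lemma~\ref{lemma:complex-hermite-polynomial-expansion} is an expansion of $H_{m,n}(z+w,\overline{z+w})$ in powers of $w$ at a \emph{fixed} variance parameter, not a variance-change identity, so it cannot by itself convert $H_{j,k}(\cdot,\cdot;\sigma_{\leq N}^2)$ into $H_{j,k}(\cdot,\cdot;c_N(t))$. You would need either a separate variance-change formula, or to combine Lemma~\ref{lemma:complex-hermite-polynomial-expansion} with an auxiliary Gaussian average over $w$ with variance proportional to $\sigma_{\leq N}^2-c_N(t)$ (nonnegative since the non-stationary variance increases to the stationary one). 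Alternatively, adopting the paper's decomposition $\philinear[\leqN]=\philinear[\leqN][r][\mrm{st}]+D_{\leq N}$ lets you apply Lemma~\ref{lemma:complex-hermite-polynomial-expansion} directly at fixed $\sigma_{\leq N}^2$, so your Wick computation then only needs the cleaner stationary object. With that adjustment the proposal goes through.
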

\begin{proof}
Using an auxiliary random variable $\Phi$, independent of everything else, define the stationary object
\begin{equs}
(\ptl_t - \Delta + 1)\philinear[\leqN][r][\mrm{st}] = \zeta_{\leq N}, ~~ \philinear[\leqN][r][\mrm{st}](0) = \Phi_{\leq N},
\end{equs}
where $\Phi_{\leq N} = \moll_{\leq N} * \Phi$,
and the law of $\Phi$ is chosen so that the law of $\philinear[\leqN][r][\mrm{st}](t)$ is stationary in time (so $\Phi$ has the law of a complex Gaussian free field). With the stationary object, we have the classical estimate (see e.g. \cite[Lemma 3.2]{DPD02})
\begin{equs}
\bigg\| \sup_{N \in \dyadic} \Big\|\biglcol \, \Big(\philinear[\leqN][r][\mrm{st}]\Big)^j \ovl{\Big(\philinear[\leqN][r][\mrm{st}]\Big)^k}\bigrcol\,\Big\|_{C_t^0 \Cs_x^{-\kappa}([0, 1] \times \T^2)}  \bigg\|_{L^\power(\Omega)} \lesssim  \power^{\frac{j+k}{2}}.
\end{equs}
Note that $\philinear[\leqN](t) - \philinear[\leqN][r][\mrm{st}](t) = e^{t(\Delta - 1)} \Phi_{\leq N} =: D_{\leq N} $. Using the complex Hermite polynomial expansion from Lemma~\ref{lemma:complex-hermite-polynomial-expansion}, we have that
\begin{equs}
\biglcol\, \philinear[\leqN][r][j] \ovl{\philinear[\leqN][r][k]}\bigrcol\, - \biglcol \, \Big(\philinear[\leqN][r][\mrm{st}]\Big)^j \ovl{\Big(\philinear[\leqN][r][\mrm{st}]\Big)^k}\bigrcol\, = \sum_{\substack{0 \leq \ell_1 \leq j \\ 0 \leq \ell_2 \leq k \\ (\ell_1, \ell_2) \neq (0, 0)}} \binom{j}{\ell_1} \binom{k}{\ell_2} \biglcol \, \Big(\philinear[\leqN][r][\mrm{st}]\Big)^{\ell_1}\ovl{\Big(\philinear[\leqN][r][\mrm{st}]\Big)^{\ell_2}}\bigrcol\, D_{\leq N}^{j - \ell_1} \ovl{D_{\leq N}}^{k - \ell_2}.
\end{equs}
Using that $D_{\leq N}$ is the linear heat flow of a (mollified) Gaussian free field and using Lemma \ref{lemma:heat-flow-smoothing}, 
we obtain for all $\kappa>0$ and $\power\geq 1$ that 
\begin{equs}
\bigg\| \sup_{N \in \dyadic} \Big\|t^\kappa D_{\leq N}\Big\|_{C_t^0 \Cs_x^{3\kappa/2}([0, 1] \times \T^2)} \bigg\|_{L^\power(\Omega)} \lesssim \Big\| \|\Phi\|_{\Cs_x^{-\kappa/2}(\T^2)} \Big\|_{L^\power(\Omega)} \lesssim \power^{\frac{1}{2}}.
\end{equs}
The desired result now follows by combining the two estimates with the expansion formula.
\end{proof}

We next note that Theorem \ref{intro:thm-cshe}\ref{item:thm-cshe-polynomial} follows directly from Proposition \ref{prop:classical-estimate-non-covariant-object} and Theorem \ref{intro:thm-cshe}\ref{item:thm:-difference-in-linear-objects}.

\begin{proof}[Proof of Theorem \ref{intro:thm-cshe}\ref{item:thm-cshe-polynomial}]
By the complex Hermite polynomial expansion (Lemma \ref{lemma:complex-hermite-polynomial-expansion}), we have that
\begin{equs}
\biglcol\,\Big|\philinear[{\Blin},\leqN]\Big|^{2k-2} \philinear[{\Blin}, \leqN]\bigrcol\, =  \sum_{j_1=0}^{k} \sum_{j_2 = 0}^{k-1} \binom{k}{j_1} \binom{k-1}{j_2} \Big(\biglcol\,\philinear[\leqN][r][j_1] \ovl{\philinear[\leqN][r][j_2]}\bigrcol\,\Big)  \Big(\philinear[{\Blin},\leqN] - \philinear[\leqN]\Big)^{k - j_1} \Big(\ovl{\philinear[{\Blin},\leqN] - \philinear[\leqN]}\Big)^{k-1-j_2}.
\end{equs}
The desired result now follows by combining the estimates on the non-covariant objects (Proposition \ref{prop:classical-estimate-non-covariant-object}) with Theorem \ref{intro:thm-cshe}\ref{item:thm:-difference-in-linear-objects} in the case $\alpha = 2\kappa$.
\end{proof}

We next turn towards the proof of Theorem \ref{intro:thm-cshe}\ref{item:thm:-difference-in-linear-objects}, whose proof occupies most of the remainder of this subsection. We begin with our analysis of the covariant linear object $\philinear[{\Blin}]$, defined by equation \eqref{intro:eq-cshe}. We first show the following estimate of the object at negative time and spatial regularity, which is completely independent of ${\Blin}$. In the following, note that (recall Definition \ref{def:smoothed-covariant-heat-kernel})
\begin{equs}\label{eq:philinear-formula}
\philinear[{\Blin},\leqN](z) = \int dw~ \zeta(w) \massp_{{\Blin}, \leq N}(w; z).
\end{equs}

\begin{lemma}[${\Blin}$-independent bound for the covariant linear object]\label{lemma:A-independent-bound-covariant-linear-object}
For $\power, \power_1 \in [1, \infty)$, $N \in \dyadic$, we have that 
\begin{equs}
\Big\|\Big\| \philinear[{\Blin}, \leqN]\Big\|_{L_t^{\power_1} \Cs_x^{-\kappa}([0, 1] \times \T^2)}\Big\|_{L^\power(\Omega)} \lesssim_{p_1} \power^{\frac{1}{2}}.
\end{equs}
\end{lemma}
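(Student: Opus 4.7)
The plan is to view $\philinear[B,\leqN]$ as a first-order Wiener integral against $\zeta$ via formula \eqref{eq:philinear-formula}, to bound a pointwise variance by a $B$-free expression using the diamagnetic inequality, and then to assemble the $\Cs_x^{-\kappa}$ estimate from the heat-kernel characterization of Besov norms (Lemma \ref{lemma:besov-space-heat-kernel-characterization}) combined with Gaussian hypercontractivity and Minkowski's inequality.

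First, for fixed $(u,t,x) \in (0,1] \times [0,1] \times \T^2$, the smoothed object $(e^{u\Delta} \philinear[B,\leqN])(t,x)$ is a mean-zero Gaussian obtained by testing $\zeta$ against the kernel
\begin{equation*}
K(w) = \int dy' \, \chi_{\leq N}(y - y') \int dx' \, p_u(x, x') \, \massp_B(w'; (t, x')),
\end{equation*}
where $w = (s,y)$ and $w' = (s,y')$. Applying It\^o's isometry, the $L_y^2$-boundedness of the operator $\chi_{\leq N} \ast (\cdot) = P_{\leq N}$, the diamagnetic inequality $|\massp_B| \leq p$ from Lemma \ref{kernel:lem-diamagnetic-heat-kernel}, and the semigroup property of the Euclidean heat kernel, the variance collapses to a $B$-free expression:
\begin{equation*}
\E \bigl| (e^{u\Delta} \philinear[B,\leqN])(t,x) \bigr|^2 = \|K\|_{L_w^2}^2 \lesssim \int_0^t p_{2(t+u-s)}(0,0) \, ds \lesssim 1 + \log(1/u),
\end{equation*}
uniformly in $B$, $N$, $t$, and $x$. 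Since this random variable lies in the first (complex) Wiener chaos, Gaussian hypercontractivity upgrades the variance bound to $\bigl\| (e^{u\Delta} \philinear[B,\leqN])(t,x) \bigr\|_{L^p(\Omega)} \lesssim p^{1/2} (1 + \log(1/u))^{1/2}$ for every $p \geq 1$.

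Next, I would upgrade from pointwise $x$ to $L_x^\infty$. Fixing a large exponent $q$ with $\kappa/2 - 1/q > 0$ and splitting $e^{u\Delta} = e^{(u/2)\Delta} \circ e^{(u/2)\Delta}$, Young's convolution inequality yields $\|(e^{u\Delta} \philinear[B,\leqN])(t)\|_{L_x^\infty} \lesssim u^{-1/q} \|(e^{(u/2)\Delta} \philinear[B,\leqN])(t)\|_{L_x^q}$. Combined with Minkowski's integral inequality (valid for $p \geq q$; for smaller $p$ one simply uses monotonicity of $L^p(\Omega)$-norms), this and the pointwise moment bound produce
\begin{equation*}
\bigl\| \|(e^{u\Delta} \philinear[B,\leqN])(t)\|_{L_x^\infty} \bigr\|_{L^p(\Omega)} \lesssim u^{-1/q} p^{1/2} (1 + \log(1/u))^{1/2}.
\end{equation*}
Discretizing $u$ dyadically and bounding the supremum by a convergent sum inside the heat-kernel characterization of Lemma \ref{lemma:besov-space-heat-kernel-characterization} (the sum converges thanks to the gain $u^{\kappa/2 - 1/q}$) then gives $\bigl\| \|\philinear[B,\leqN](t)\|_{\Cs_x^{-\kappa}} \bigr\|_{L^p(\Omega)} \lesssim p^{1/2}$, uniformly in $t$, $N$, and $B$.

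Finally, Minkowski's integral inequality transfers this to $L_t^{p_1}$: for $p \geq p_1$,
\begin{equation*}
\Bigl\| \bigl\| \philinear[B,\leqN] \bigr\|_{L_t^{p_1} \Cs_x^{-\kappa}} \Bigr\|_{L^p(\Omega)} \leq \bigl\| \, \|\philinear[B,\leqN](t)\|_{\Cs_x^{-\kappa}} \, \bigr\|_{L_t^{p_1} L^p(\Omega)} \lesssim_{p_1} p^{1/2},
\end{equation*}
while for $p < p_1$ monotonicity of $L^p(\Omega)$-norms yields the same bound at the cost of a $p_1$-dependent constant. The main (essentially only) obstacle is the variance estimate: it is crucial to chain the mollifier $P_{\leq N}$ (acting in the $y$-variable) with the diamagnetic inequality so that all $B$-dependence disappears in a single stroke, leaving behind only the Euclidean heat kernel, whose $L^2$-in-spacetime norm produces the $1+\log(1/u)$ divergence absorbed by the $u^{\kappa/2}$ weight in Lemma \ref{lemma:besov-space-heat-kernel-characterization}. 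Once this $B$-free variance estimate is secured, the passage through Besov regularity and time integration is entirely standard.
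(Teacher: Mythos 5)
Your proof is correct and follows essentially the same route as the paper: the crucial $B$-independent second-moment bound via the diamagnetic inequality and Young's convolution, followed by Gaussian hypercontractivity and the heat-kernel characterization of $\Cs_x^{-\kappa}$, and finally Minkowski/H\"older to handle the time integration. The only difference is that you spell out explicitly the $L_x^q\to L_x^\infty$ smoothing, Minkowski interchange, and dyadic-in-$u$ summation, which the paper compresses into the phrase ``standard reductions (see e.g.\ [BC23, Appendix A]).''
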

\begin{proof}
We may assume that $\power_1 \leq \power$, in which case we have by H\"{o}lder
\begin{equs}
\Big\|\Big\| \philinear[{\Blin}, \leqN]\Big\|_{L_t^{\power_1} \Cs_x^{-\kappa}([0, 1] \times \T^2)}\Big\|_{L^{\power}(\Omega)} &\leq \Big\|\Big\| \philinear[{\Blin}, \leqN]\Big\|_{L_t^{\power} \Cs_x^{-\kappa}([0, 1] \times \T^2)}\Big\|_{L^\power(\Omega)} \\
&= \bigg(\int_0^1 dt \Big\| \Big\|\philinear[{\Blin}, \leqN](t)\Big\|_{\Cs_x^{-\kappa}} \Big\|_{L^\power(\Omega)}^\power  \bigg)^{\frac{1}{\power}}.
\end{equs}
Fix $z = (t, x) \in [0, 1] \times \T^2$ and let $u > 0$. By the formula \eqref{eq:philinear-formula} for the covariant linear object, the diamagnetic inequality, and Young's inequality, we have the second moment bound (recall the two uses of $p_u$ from Notation~\ref{notation:heat-kernel-space-only})
\begin{equs}
\Big\| \Big(e^{u \Delta} \philinear[{\Blin}, \leqN]\Big)(z)\Big\|_{L^2(\Omega)} &= \bigg\| \int dx' p_u(x, x') \massp_{{\Blin}, \leq N}(w; z')\bigg\|_{L_w^2([0, t] \times \T^2)} \lesssim \Big\| p_u(w; z)\Big\|_{L_w^2([0, t] \times \T^2)} \lesssim \sqrt{\log u^{-1}}.
\end{equs}
Then by Gaussian hypercontractivity, the heat kernel characterization of Besov spaces (Lemma \ref{lemma:besov-space-heat-kernel-characterization}), and standard reductions (see e.g. \cite[Appendix A]{BC23}), we obtain for any $t \in [0, 1]$
\begin{equs}
\Big\|\Big\|\philinear[{\Blin}, \leqN](t)\Big\|_{\Cs_x^{-\kappa}} \Big\|_{L^\power(\Omega)} \lesssim \power^{\frac{1}{2}}.
\end{equs}
The desired result now follows.
\end{proof}

We proceed to estimate the difference of linear objects at parabolic regularity $1-$, but using one power of ${\Blin}$. For $N \in \dyadic$, $N \geq 2$, define the high-frequency objects
\begin{equs}
\philinear[{\Blin}, N] := \philinear[{\Blin}, \leqN] - \philinear[{\Blin}, \leq \frac{N}{2}],~~ \philinear[N] := \philinear[\leqN] - \philinear[\leq \frac{N}{2}].
\end{equs}
When $N = 1$, define $\philinear[{\Blin}, N] := \philinear[{\Blin},\leqN]$, and similarly for $\philinear[N]$. We state the following proposition, and then work towards its proof. 

\begin{proposition}[Difference of linear objects]\label{prop:difference-objects-bound}
For all $N \in \dyadic$ and $\power \in [1, \infty)$, we have that
\begin{equs}
\bigg\|\Big\|\philinear[{\Blin}, N] - \philinear[N]\Big\|_{C_t^{\kappa} \Cs_x^{1-10\kappa}([0, 1] \times \T^2)}\bigg\|_{L^\power(\Omega)} \lesssim N^{-\kappa} \power^{\frac{1}{2}} \|{\Blin}\|_{\resnorm} \Big(1 + \|{\Blin}\|_{\resnorm}^{10 \kappa}\Big).
\end{equs}
\end{proposition}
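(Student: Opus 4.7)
The plan is to combine the kernel-difference estimates of Section~\ref{section:energy-estimates} with Gaussian hypercontractivity and the frequency-localization properties of the mollifier $\chi_N$. From \eqref{eq:philinear-formula} and Definition~\ref{def:smoothed-covariant-heat-kernel}, one has
\[
(\philinear[{\Blin}, N] - \philinear[N])(z) = \int dw'\, \zeta(w')\, \chi_N *_y (\massp_\Blin - \massp)(w; z),
\]
where $\chi_N *_y$ denotes convolution in the $y$-variable of $w$. Since this is a first-order Wiener chaos, Gaussian hypercontractivity reduces $L^\power(\Omega)$-bounds to $L^2(\Omega)$-bounds with cost $\power^{1/2}$, and Ito isometry reduces the $L^2(\Omega)$-norm to the $L^2_{w'}$-norm of the smoothed kernel difference. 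The same reduction applies after composing with $e^{u\Delta}$ or $\nabla e^{u\Delta}$ in the $z$-variable.

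The gain $N^{-\kappa}$ comes from the frequency localization: since $\chi_N$ is essentially a Littlewood--Paley band at frequency $N$, Parseval gives $\|\chi_N *_y f\|_{L^2_y} \lesssim N^{-\kappa}\|f\|_{H^\kappa_y}$. First I would interpolate Corollary~\ref{cor:pA-p-L2-spacetime-norm} (which bounds $\|\massp_\Blin - \massp\|_{L^2_w}$ by $\|\Blin\|_{\resnorm}$) against Corollary~\ref{cor:gradient-pA-p-l2-spacetime-bound} (which bounds $\|e^{u\Delta}_x \nabla_y(\massp_\Blin - \massp)\|_{L^2_w}$ by $\|\Blin\|_{\resnorm}\sqrt{\log u^{-1}}$) via $H^\kappa_y \hookrightarrow (L^2_y)^{1-\kappa}(H^1_y)^\kappa$. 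After combining with the $\chi_N$-gain, this yields a pointwise-in-$z$ bound of order $N^{-\kappa}\,\power^{1/2}\,\|\Blin\|_{\resnorm}(\log u^{-1})^{\kappa/2}$ on the $L^\power(\Omega)$-norm of $e^{u\Delta}(\philinear[{\Blin}, N] - \philinear[N])(z)$.

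To reach the $\Cs_x^{1-10\kappa}$-norm, I would use the heat-kernel characterization in Lemma~\ref{lemma:besov-space-heat-kernel-characterization}, which requires controlling $u^{5\kappa}\|\nabla_x e^{u\Delta}(\philinear[{\Blin}, N] - \philinear[N])\|_{L^\infty_x}$ uniformly in $u \in (0, 1]$. The identity $\nabla_x \massp_\Blin = \covd_{\Blin(z)} \massp_\Blin - \icomplex \Blin(z) \massp_\Blin$ converts the Euclidean gradient into a covariant derivative at the cost of $\|\Blin\|_{L^\infty} \lesssim \|\Blin\|_{\resnorm}$; the covariant piece is then handled via Lemma~\ref{lemma:covd-pA-expansion}, which rewrites $\covd_{\Blin(z)} \massp_\Blin$ in terms of $\covd_{-\Blin(w)} \massp_\Blin$ (whose $L^2_w$-behaviour is controlled by the dual energy estimates of Corollary~\ref{kernel:cor-energy-estimate-dual} and Lemma~\ref{lemma:covd-pa-l2-spacetime-norm}) plus curvature corrections absorbed by the definition of $\|\Blin\|_{\resnorm}$. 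Reaching the Euclidean regularity target $1 - 10\kappa$ forces transferring up to $10\kappa$ derivatives between covariant and Euclidean frames at a total cost of $\|\Blin\|_{\resnorm}^{10\kappa}$, which accounts for the factor $(1 + \|\Blin\|_{\resnorm}^{10\kappa})$. The supremum over $x$ is handled by standard Gaussian concentration arguments at a logarithmic cost, and the time regularity $C_t^\kappa$ follows from a Kolmogorov-type argument: Ito isometry applied to $(\philinear[{\Blin}, N] - \philinear[N])(t) - (\philinear[{\Blin}, N] - \philinear[N])(s)$ reduces matters to a $|t-s|^{2\kappa}$-bound on the kernel difference, which is obtained from parabolic time-regularity of the covariant heat kernel.

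The hard part will be extracting only one power of $\|\Blin\|_{\resnorm}$ while reaching the near-critical spatial regularity $1 - 10\kappa$: the energy estimates are uniform in $\Blin$ but only control covariant derivatives directly, and each conversion to a Euclidean derivative costs a power of $\Blin$. Carefully balancing these costs against the available covariant smoothness is the technical heart of the argument and the source of the auxiliary factor $(1 + \|\Blin\|_{\resnorm}^{10\kappa})$.
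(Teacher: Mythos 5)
Your overall strategy — reduce to second moments via Gaussian hypercontractivity and It\^o isometry, then control the kernel difference through the covariant energy estimates of Section~\ref{section:energy-estimates} — matches the paper's in spirit, and you correctly identify Corollary~\ref{cor:pA-p-L2-spacetime-norm}, Corollary~\ref{cor:gradient-pA-p-l2-spacetime-bound}, Lemma~\ref{lemma:covd-pA-expansion}, and Lemma~\ref{lemma:besov-space-heat-kernel-characterization} as the key tools. However, the paper organizes the proof around three \emph{separate} lemmas — a spatial-regularity estimate at $\Cs_x^{1-\kappa}$ with no $N$-gain and one power of $\|\Blin\|_{\resnorm}$ (Lemma~\ref{lemma:difference-linear-object-spatial-regularity}), a time-regularity estimate at $C_t^{1-\kappa}\Cs_x^{-1-\kappa}$ with up to \emph{two} powers of $\Blin$ (Lemma~\ref{lemma:difference-linear-object-time-regularity}), and an $N$-regularity estimate with a full $N^{-1}$ gain at low regularity (Lemma~\ref{lemma:difference-linear-object-N-regularity}) — and then interpolates these three with exponents $(1-3\kappa,\,2\kappa,\,\kappa)$. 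Your sketch tries to accomplish everything in a single pass, and this is where the gaps appear.

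The most concrete gap is the time regularity. You invoke ``a Kolmogorov-type argument'' and ``parabolic time-regularity of the covariant heat kernel,'' but this glosses over a genuine obstacle: the kernel difference $\massp_\Blin(w;(t,x)) - \massp_\Blin(w;(s,x))$ is singular as $|t-s|\to 0$ near the diagonal, and a naive second-moment bound on the time-increment does not close. The paper circumvents this by working \emph{non}-covariantly: one writes $(\partial_t - \Delta + 1)(\philinear[\Blin,\leq N] - \philinear[\leq N]) = 2\icomplex\,\partial_j(\Blin^j\,\philinear[\Blin,\leq N]) - \icomplex\,(\partial_j\Blin^j)\philinear[\Blin,\leq N] - |\Blin|^2\,\philinear[\Blin,\leq N]$, applies Duhamel, and uses Schauder. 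Because the leading forcing term carries a $\partial_j$, the resulting Schauder estimate lands in $C_t^{1-\kappa}\Cs_x^{-1-\kappa}$ — not $\Cs_x^{1-\kappa}$ — and costs up to $\|\Blin\|_{L^\infty}^2$. Your proposal does not identify this derivative loss, nor the reason why the time regularity step should be done at negative spatial regularity, and therefore would need a substantially different route to be made rigorous.

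A related point concerns the factor $(1+\|\Blin\|_{\resnorm}^{10\kappa})$. You attribute it to ``transferring up to $10\kappa$ derivatives between covariant and Euclidean frames,'' but this is not where it comes from in the paper: the spatial-regularity lemma already reaches $\Cs_x^{1-\kappa}$ (better than $\Cs_x^{1-10\kappa}$) at a single power of $\|\Blin\|_{\resnorm}$, so there is no derivative transfer needed there. The extra small power of $\Blin$ enters because the time-regularity lemma is quadratic in $\Blin$, and that quadratic term is raised to the small interpolation exponent $\lambda_2 = 2\kappa$. This is a structural feature of the three-way interpolation that your scheme does not reproduce. Your Parseval route to the $N^{-\kappa}$ gain (via $\|\chi_N *_y f\|_{L^2_y}\lesssim N^{-\kappa}\|f\|_{H^\kappa_y}$) is a plausible alternative to the paper's integration-by-parts identity \eqref{eq:high-frequency-kernel-convolution-identity} (which gains a full $N^{-1}$ at low regularity and is then weighted by $\lambda_3 = \kappa$), but as written you gain $N^{-\kappa}$ only at the low-regularity step and do not explain how to carry it through the high-regularity and time-regularity estimates — the interpolation structure is precisely what resolves this, and it is missing from your argument.
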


Before beginning towards the proof of Proposition \ref{prop:difference-objects-bound}, we first note that Theorem \ref{intro:thm-cshe}\ref{item:thm:-difference-in-linear-objects} follows from Lemma~\ref{lemma:A-independent-bound-covariant-linear-object} and Proposition \ref{prop:difference-objects-bound}.

\begin{proof}[Proof of Theorem \ref{intro:thm-cshe}\ref{item:thm:-difference-in-linear-objects}]
By the triangle inequality and combining the classical estimates on non-covariant objects (Proposition \ref{prop:classical-estimate-non-covariant-object}) and the ${\Blin}$-independent estimate on the covariant linear object (Lemma \ref{lemma:A-independent-bound-covariant-linear-object}), we have that for all $N \in \dyadic$, $\kappa > 0$,
\begin{equs}
\bigg\| \Big\|\philinear[{\Blin}, N]-\philinear[N]\Big\|_{L_t^{\frac{1}{\kappa}} \Cs_x^{-\kappa}([0, 1] \times \T^2)}\bigg\|_{L^\power(\Omega)} \lesssim \power^{\frac{1}{2}}.
\end{equs}
The desired result now follows by interpolating this with the estimate given by Proposition \ref{prop:difference-objects-bound}.
\end{proof}

Next, we begin towards the proof of Proposition \ref{prop:difference-objects-bound}. We split the proof into three main lemmas: a spatial regularity estimate, a time regularity estimate, and an $N$-regularity estimate (that is, a bound which allows us to sum in $N$).

\begin{lemma}[Spatial regularity]\label{lemma:difference-linear-object-spatial-regularity}
For all $N \in \dyadic$, $\power, \power_1 \in [1, \infty)$, we have that
\begin{equs}
\bigg\| \Big\|\philinear[{\Blin}, \leqN] - \philinear[\leqN]\Big\|_{L_t^{\power_1} \Cs_x^{1-\kappa}([0, 1] \times \T^2)}\bigg\|_{L^\power(\Omega)} \lesssim_{p_1} \power^{\frac{1}{2}} \|{\Blin}\|_{\resnorm}
\end{equs}
\end{lemma}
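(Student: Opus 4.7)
By Gaussian hypercontractivity, standard chaining to pass from pointwise-in-$x$ control to $L_x^\infty$ control, and the heat-kernel characterization of Besov spaces (Lemma~\ref{lemma:besov-space-heat-kernel-characterization}), it suffices to show that for any $t\in[0,1]$, $x\in\T^2$, and $u\in(0,1]$,
\begin{equs}
\Big\|\nabla_x e^{u\Delta_x}\big(\philinear[\Blin,\leqN]-\philinear[\leqN]\big)(t,x)\Big\|_{L^2(\Omega)} \lesssim u^{-\kappa/2}\,\|\Blin\|_{\resnorm}\,\bigl(\log u^{-1}\bigr)^{C},
\end{equs}
together with the (easier) estimate at $u=1$ without $\nabla_x$ handling the $L^\infty_x$ part. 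Once these pointwise moments are established, Fubini integrates the $L^{p_1}_t$-norm against $L^\power(\Omega)$ harmlessly. By the It\^{o} isometry, and absorbing the mollification $\chi_{\leq N}$ via Young's convolution inequality, the second moment reduces to controlling
\begin{equs}
\Big\|\int dx'\, p_u(x,x')\,\nabla_{x'}\bigl(\massp_{\Blin}(w;z')-\massp(w;z')\bigr)\Big\|_{L_w^2([0,t]\times\T^2)},
\end{equs}
where the $\nabla_{x'}$ arises from integration by parts of $\nabla_x p_u(x,x')=-\nabla_{x'}p_u(x,x')$.

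Since $\nabla_x p(w;z)=-\nabla_y p(w;z)$ for the flat kernel, the formula from Lemma~\ref{lemma:covd-pA-expansion} applied to $p_{\Blin}$, together with the identity $\nabla=\covd_{\Blin}-i\Blin$, yields
\begin{equs}
\nabla_{x'}\bigl(p_{\Blin}-p\bigr)(w;z') = -\nabla_y\bigl(p_{\Blin}-p\bigr)(w;z') + \icomplex\bigl(\Blin(w)-\Blin(z')\bigr)p_{\Blin}(w;z') + (\mathrm{IV}) + (\mathrm{V}),
\end{equs}
where $(\mathrm{IV})$ involves an intermediate-time integral with $F_\Blin$ and $\covd_\Blin p_\Blin$, and $(\mathrm{V})$ involves $\ptl_t\Blin+\ptl_j F_\Blin^{kj}$. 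After convolving with $p_u(x,x')$, the first term is exactly the expression controlled by Corollary~\ref{cor:gradient-pA-p-l2-spacetime-bound}, yielding $\lesssim \|\Blin\|_{\resnorm}\sqrt{\log u^{-1}}$. The pointwise term with $\Blin(w)-\Blin(z')$ is bounded by $2\|\Blin\|_{L^\infty}$ times $\|p_{u+(t-s)}(y,x)\|_{L_w^2}\lesssim\sqrt{\log u^{-1}}$ using the diamagnetic inequality. Term $(\mathrm{V})$ is handled directly using the semigroup property: bounding $|p_{\Blin}|\leq p$ twice and integrating the spatial variable $x_v$ first yields a pointwise factor of $p_{u+(t-s)}(y,x)$ times $\|\ptl_t\Blin+\ptl_jF_\Blin^{kj}\|_{L_t^1 L_x^\infty}\lesssim\|\Blin\|_{\resnorm}$.

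The main obstacle is term $(\mathrm{IV})=2\icomplex\int dv\,g(v;x)F_\Blin^{kj}(v)\,\covd_{\Blin(v),j}p_{\Blin}(w;v)$ with $g(v;x):=\int dx'\,p_u(x,x')p_{\Blin}(v;z')$. Here one must extract both a power of $\|\Blin\|_{\resnorm}$ (through $F_\Blin$) and the correct $u^{-\kappa/2}$-decay. The strategy is to dualize against a test function $G\in L_w^2$ and use that $\phi(v):=\int dw\,G(w)p_{\Blin}(w;v)$ solves the covariant heat equation with $L_w^2$-forcing; the standard $L^2$-energy identity then gives $\|\covd_\Blin\phi\|_{L_v^2}\lesssim\|G\|_{L_w^2}$. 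Combining this with the diamagnetic bound $|g(v;x)|\leq p_{u+(t-t_v)}(x_v,x)$ and $\|F_\Blin\|_{L_t^2L_x^\infty}\lesssim\|\Blin\|_{\resnorm}$ through a H\"older triple-product, the naive estimate yields $u^{-1/2}$. To recover $u^{-\kappa/2}$, one refines this by splitting the $v$-integration into the regimes $t-t_v\leq u$ and $t-t_v>u$, and applying Corollary~\ref{cor:dual-two-sided-time-weighted-estimate} and Lemma~\ref{lemma:covd-pa-l2-spacetime-norm} with carefully chosen time-weights — interpolating between the singular behavior of $\covd_\Blin p_\Blin(w;v)$ near $t_v=s$ and the logarithmic blow-up of $\|g(v;x)\|_{L_{x_v}^2}$ near $t_v=t$. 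This interpolation is the most delicate step and is where the specific form of the $\resnorm$-norm (in particular $\|F_\Blin\|_{L_t^2L_x^\infty}$ and $\|\ptl_j F_\Blin^{kj}\|_{L_t^1L_x^\infty}$) is used in an essential way.
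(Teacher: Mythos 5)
Your decomposition matches the paper's: reduce to second moments, apply Lemma~\ref{lemma:covd-pA-expansion} to expand $\nabla_{x'}\massp_\Blin(w;z')$ into four terms, handle the $-\nabla_y(\massp_\Blin-\massp)$ term via Corollary~\ref{cor:gradient-pA-p-l2-spacetime-bound}, the $(\Blin(w)-\Blin(z'))\massp_\Blin$ term via the diamagnetic inequality, and the $H^k = \ptl_t\Blin^k+\ptl_j F_\Blin^{kj}$ term via the semigroup property. All of these agree with the paper. Where you diverge is the treatment of term (IV), the $\int dv\, p_\Blin(v;z')\, F_\Blin(v)\,\covd_{\Blin(v)}p_\Blin(w;v)$ piece, and this divergence is a genuine gap.

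The paper handles (IV) in one shot by applying Corollary~\ref{cor:dual-endpoint-time-weighted-estimate} with $K(w)=p_u(w;z)$, $t_2=t+u$, and crucially the tiny exponent $\alpha=(-\log u)^{-1}$. The exponent is chosen so that the weight $u^{-\alpha/2}$ (which would ordinarily diverge) equals $u^{(\log u)^{-1}/2}\lesssim 1$, while the prefactor $\alpha^{-1/2}$ in the corollary delivers exactly $\sqrt{\log u^{-1}}$. This yields directly $\lesssim\sqrt{\log u^{-1}}\,\|F_\Blin\|_{L_t^2 L_x^\infty}$ with no splitting and no interpolation, and the bound is actually sharper than the $u^{-\kappa/2}(\log u^{-1})^C$ target you set for yourself. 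Your proposal instead states the naive estimate "yields $u^{-1/2}$" and then defers to a "delicate interpolation" by splitting the $v$-integration into the regions $t-t_v\le u$ and $t-t_v>u$. But you do not demonstrate that this split can be carried out to recover $u^{-\kappa/2}$ — in particular, you do not identify which time-weighted energy estimate you would invoke with which exponents, nor verify that the contributions combine to a bound that is at worst $u^{-\kappa/2}\log^C u^{-1}$ times $\|F_\Blin\|_{L_t^2 L_x^\infty}$. As stated, the key step is a sketch of a harder strategy rather than an argument, and it is precisely this step where the only nontrivial idea of the lemma lives. To repair it, replace the splitting plan by the single application of Corollary~\ref{cor:dual-endpoint-time-weighted-estimate} with the logarithmically-small $\alpha$; everything else in your write-up is correct.
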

\begin{proof}
By standard reductions (see e.g. \cite[Appendix A]{BC23}), it suffices to show the following second moment bounds given $z = (t, x) \in [0, 1] \times \T^2$, $u > 0$:
\begin{equs}
\Big\| \philinear[{\Blin}, \leqN](z) - \philinear[\leqN](z)\Big\|_{L^2(\Omega)} &\lesssim \|{\Blin}\|_{L_z^\infty([0, t] \times \T^2)} + \|\ptl_j {\Blin}^j \|_{L_t^2 L_x^\infty([0, t] \times \T^2)}, \label{eq:difference-l2-bound} \\
\Big\| \nabla e^{u \Delta} \Big( \philinear[{\Blin}, \leqN] - \philinear[\leqN]\Big)(z)\Big\|_{L^2(\Omega)} &\lesssim \sqrt{\log u^{-1}} \|{\Blin}\|_{\resnorm} \label{eq:gradient-difference-l2-bound}.
\end{equs}
The inequality \eqref{eq:difference-l2-bound} follows directly from Corollary \ref{cor:pA-p-L2-spacetime-norm} and the fact that 
\begin{equs}
\Big\| \philinear[{\Blin}, \leqN](z) - \philinear[\leqN](z)\Big\|_{L^2(\Omega)}  = \Big\| \massp_{{\Blin}, \leq N}(w; z) - \massp_{\leq N}(w; z)\Big\|_{L_w^2(J)}. 
\end{equs}

Next, we show \eqref{eq:gradient-difference-l2-bound}. We have that
\begin{equs}
\Big\| \nabla e^{u \Delta} \Big(\philinear[{\Blin}, \leqN] - \philinear[\leqN]\Big)(z)\Big\|_{L^2(\Omega)} = \bigg\| \int dx' p_u(x, x') \nabla_{x'} \big(\massp_{\Blin}(w; z') - \massp(w; z')\big)\bigg\|_{L_w^2(J)}.
\end{equs}
By Lemma \ref{lemma:covd-pA-expansion}, we have that
\begin{equs}
\nabla_{x'} \massp_{\Blin}(w; z') = -\nabla_{y'} \massp_{\Blin}(w; z') + \icomplex ({\Blin}(w) - {\Blin}(z')) \massp_{\Blin}(w; z') &+ 2\icomplex e^{-(t-s)}\int dv p_{\Blin}(v; z') F_{\Blin}(v) \covd_{{\Blin}(v)} p_{\Blin}(w; v) \\
&+ \icomplex e^{-(t-s)} \int dv p_{\Blin}(v; z') H(v) p_{\Blin}(w; v),
\end{equs}
where $H^k = \ptl_t {\Blin}^k + \ptl_j F_{\Blin}^{kj}$. We bound the contribution coming from the four terms separately. For the first term, by Corollary \ref{cor:gradient-pA-p-l2-spacetime-bound}, we have that
\begin{equs}
\bigg\| \int dx' p_u(x, x') \nabla_{y'} \big(\massp_{\Blin}(w; z') - \massp(w; z')\big)\bigg\|_{L_w^2} \lesssim \big(\|{\Blin}\|_{L_z^\infty} + \|\ptl_j {\Blin}^j\|_{L_t^2 L_x^\infty}\big) \sqrt{\log u^{-1}},
\end{equs}
which is acceptable. For the second term, we may bound
\begin{equs}
\bigg\| \int dx' p_u(x, x') ({\Blin}(w) - {\Blin}(z')) \massp_{\Blin}(w; z')\bigg\|_{L_w^2} \lesssim \|{\Blin}\|_{L_z^\infty} \big\| p_u(w; z)\big\|_{L_z^2} \lesssim \|{\Blin}\|_{L_z^\infty} \sqrt{\log u^{-1}}.
\end{equs}
For the third term, by the dual endpoint time-weighted estimate (Corollary \ref{cor:dual-endpoint-time-weighted-estimate}) with $K(w) = p_u(w; z)$, $t_2 = t + u$, and $\alpha = (-\log u)^{-1}$, we have that (using that $u^{(\log u)^{-1}} \lesssim 1$ in the third inequality)
\begin{equs}
\bigg\| \int dv &\int dx' p_u(x, x') p_{\Blin}(v; z') F_{\Blin}(v) \covd_{{\Blin}(v)} p_{\Blin}(w; v)\bigg\|_{L_w^2}  \\
&\lesssim \bigg\| (t - s + u)^{-\alpha} \int dv \int dx' p_u(x, x') p_{\Blin}(v; z') F_{\Blin}(v) \covd_{{\Blin}(v)} p_{\Blin}(w; v)\bigg\|_{L_w^2} \\
&\lesssim \alpha^{-\frac{1}{2}} \bigg\|(t-t(v) + u)^{-\alpha} p_u(v; z)^{-\frac{1}{2}} p_u(v; z) F_{\Blin}(v) \bigg\|_{L_v^2} \\
&\lesssim \alpha^{-\frac{1}{2}} \Big\|p_u(v; z)^{\frac{1}{2}} F_{\Blin}(v)\Big\|_{L_v^2} \lesssim \alpha^{-\frac{1}{2}} \Big\|F_{\Blin}\Big\|_{L_t^2 L_x^\infty} \Big\|p_u(v; z)^{\frac{1}{2}} \Big\|_{L_t^\infty L_x^2} \lesssim \sqrt{\log u^{-1}} \Big\|F_{\Blin}\Big\|_{L_t^2 L_x^\infty},
\end{equs}
which is acceptable. For the final term, first note by the semigroup property of the heat kernel, we have the pointwise bound for fixed $w, z$,
\begin{equs}
\bigg|\int dv \int dx' p_u(x, x') p_{\Blin}(v; z') H(v) p_{\Blin}(w; v)\bigg| \leq p_u(w; z) \|H\|_{L_t^1 L_x^\infty([s, t] \times \T^2)} \leq p_u(w; z)\|H\|_{L_t^1 L_x^\infty([0, t] \times \T^2)}.
\end{equs}
Since $\|p_u(w; z)\|_{L_w^2} \lesssim \sqrt{\log u^{-1}}$, we obtain an acceptable bound upon taking the $L_w^2$ norm of the LHS above.
The estimate \eqref{eq:gradient-difference-l2-bound} now follows by combining the previous few estimates.
\end{proof}

Next, we discuss the time regularity estimate. Because we will only need an epsilon of time regularity, we find it most straightforward to just work non-covariantly for this estimate. As a consequence, our estimate will ultimately involve two powers of ${\Blin}$ instead of just one power, but this is good enough for our purposes. Before getting to the time regularity estimate (Lemma \ref{lemma:difference-linear-object-time-regularity}), we prove two preliminary lemmas.

\begin{lemma}\label{lemma:f-philinear-spacetime-norm-bound}
Let $f \colon [0, 1] \times \T^2 \ra \C$ be a deterministic function. For all $N \in \dyadic$, $\power, \power_1 \in [1, \infty)$, we have that
\begin{equs}
\bigg\|  \Big\|f\philinear[{\Blin}, \leqN] \Big\|_{L_t^{\power_1}\Cs_x^{-\kappa}([0, 1] \times \T^2)}\bigg\|_{L^\power(\Omega)} \lesssim_{p_1} \power^{\frac{1}{2}} \|f\|_{L_z^{\infty}([0, 1] \times \T^2)}.
\end{equs}
\end{lemma}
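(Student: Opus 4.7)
The plan is to follow the same template as the proof of Lemma \ref{lemma:A-independent-bound-covariant-linear-object}, with the multiplier $f$ absorbed at the point where one applies the diamagnetic inequality. First, assume without loss of generality that $\power_1 \leq \power$, so that by H\"older in time
\begin{equs}
\Big\|\|f\philinear[{\Blin},\leqN]\|_{L_t^{\power_1}\Cs_x^{-\kappa}}\Big\|_{L^\power(\Omega)} \leq \bigg(\int_0^1 dt_0 \Big\|\|f(t_0)\philinear[{\Blin},\leqN](t_0)\|_{\Cs_x^{-\kappa}}\Big\|_{L^\power(\Omega)}^{\power}\bigg)^{1/\power}.
\end{equs}
The problem therefore reduces to proving, uniformly in $t_0 \in [0,1]$, the pointwise-in-time estimate $\|\|f(t_0)\philinear[{\Blin},\leqN](t_0)\|_{\Cs_x^{-\kappa}}\|_{L^\power(\Omega)} \lesssim \power^{1/2}\|f\|_{L_z^\infty}$.

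By the heat-kernel characterization of Besov spaces (Lemma \ref{lemma:besov-space-heat-kernel-characterization}), by standard reductions (see e.g.\ \cite[Appendix A]{BC23}), and by Gaussian hypercontractivity (since $f\philinear[{\Blin},\leqN](t_0,\cdot)$ lives in the first Wiener chaos of $\zeta$), it suffices to establish the second-moment bound
\begin{equs}
\Big\|e^{u\Delta}\bigl(f(t_0)\philinear[{\Blin},\leqN](t_0)\bigr)(x_0)\Big\|_{L^2(\Omega)}^2 \lesssim \|f\|_{L_z^\infty}^2 \log u^{-1},
\end{equs}
uniformly in $u\in (0,1]$, $z_0 = (t_0,x_0)\in [0,1]\times\T^2$, and $N \in \dyadic$.

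For this, we use the representation \eqref{eq:philinear-formula} of $\philinear[{\Blin},\leqN]$ together with Definition \ref{def:smoothed-covariant-heat-kernel} of $\massp_{{\Blin},\leq N}$ to write
\begin{equs}
e^{u\Delta}\bigl(f(t_0)\philinear[{\Blin},\leqN](t_0)\bigr)(x_0) = \int dw\, \zeta(w)\int dy'\,\moll_{\leq N}(y-y')\,G(w';t_0),
\end{equs}
where $w' = (s,y')$ and
\begin{equs}
G(w';t_0) := \int dx\,p_u(x_0,x)\,f(t_0,x)\,\massp_{\Blin}(w';(t_0,x)).
\end{equs}
Taking $L^2(\Omega)$-norm, and then applying Minkowski (in $w$) together with Young's convolution inequality in $y'$ (using $\|\moll_{\leq N}\|_{L^1} \lesssim 1$), yields
\begin{equs}
\Big\|e^{u\Delta}\bigl(f(t_0)\philinear[{\Blin},\leqN](t_0)\bigr)(x_0)\Big\|_{L^2(\Omega)} \lesssim \|G(\tilde w;t_0)\|_{L^2_{\tilde w}([0,t_0]\times \T^2)}.
\end{equs}
The key step is to dominate $G$ by a non-covariant object: since $|f(t_0,x)|\leq \|f\|_{L_z^\infty}$ pointwise and the diamagnetic inequality (Lemma \ref{kernel:lem-diamagnetic-heat-kernel}) gives $|\massp_{\Blin}(\tilde w;(t_0,x))|\leq p(\tilde w;(t_0,x))$, we obtain the pointwise bound $|G(\tilde w;t_0)|\leq \|f\|_{L_z^\infty}\,p_u(\tilde w;z_0)$, using the semigroup property in the $x$-variable. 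The desired bound then follows from the standard heat-kernel estimate $\|p_u(\tilde w;z_0)\|_{L^2_{\tilde w}([0,t_0]\times\T^2)}\lesssim \sqrt{\log u^{-1}}$.

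There is no real obstacle here: the deterministic factor $f$ is crushed at the pointwise level by $\|f\|_{L_z^\infty}$ right after the diamagnetic inequality is invoked, and the remainder of the argument is identical to Lemma \ref{lemma:A-independent-bound-covariant-linear-object}. The only mildly subtle point is the correct use of Young's inequality in the $y'$-convolution to eliminate $\moll_{\leq N}$ uniformly in $N$, which is why the bound is independent of $N$.
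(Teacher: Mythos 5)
Your proposal is correct and follows essentially the same route as the paper's proof: reduce to a second-moment bound on $e^{u\Delta}(f\,\philinear[{\Blin},\leqN])(z_0)$, absorb $f$ via $\|f\|_{L_z^\infty}$, dominate $|\massp_{\Blin}|$ by $p$ using the diamagnetic inequality, eliminate $\moll_{\leq N}$ uniformly in $N$ by Young, invoke Gaussian hypercontractivity and the heat-kernel Besov characterization, and finish with Hölder in time under $\power_1\leq\power$. The only difference is cosmetic ordering (you apply Hölder first, the paper proves the pointwise-in-time estimate first); the mathematical content is identical.
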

\begin{proof}
For $z = (t, x) \in [0, 1] \times \T^2$, $u > 0$, we have that
\begin{equs}
\Big\|e^{u \Delta} \big(f\philinear[{\Blin},\leqN]\big) (z)\Big\|_{L^2(\Omega)} &= \bigg\| \int dx' p_u(x, x') f(z') \massp_{\Blin, \leq N}(w; z')\bigg\|_{L_w^2} \\
&\lesssim \|f\|_{L_z^\infty} \|p_u(w; z)\|_{L_w^2} \lesssim \|f\|_{L_z^\infty} \sqrt{\log u^{-1}}.
\end{equs}
From this, it follows by standard arguments that for any $t \in [0, 1]$,
\begin{equs}
\bigg\| \Big\|f(t) \philinear[{\Blin},\leqN](t)\Big\|_{\Cs_x^{-\kappa}} \bigg\|_{L^\power(\Omega)} \lesssim \power^{\frac{1}{2}} \|f\|_{L_z^\infty}.
\end{equs}
Now, we may assume that $\power_1 \leq \power$, in which case by H\"{o}lder, we have that
\begin{equs}
\bigg\|  \Big\|f\philinear[{\Blin}, \leqN] \Big\|_{L_t^{\power_1}\Cs_x^{-\kappa}([0, 1] \times \T^2)}\bigg\|_{L^{\power}(\Omega)} &\leq \bigg(\int_0^1 dt \bigg\|\Big\|f(t) \philinear[{\Blin},\leqN](t)\Big\|_{\Cs_x^{-\kappa}}\bigg\|_{L^{\power}(\Omega)}^\power\bigg)^{\frac{1}{\power}} \lesssim {\power}^{\frac{1}{2}} \|f\|_{L_z^\infty},
\end{equs}
as desired.
\end{proof}

\begin{lemma}\label{lemma:f-times-philinear-stochastic-estimates}
Let $f \colon [0, 1] \times \T^2 \ra \C$ be a deterministic function. For all $N \in \dyadic$, $\power \in [1, \infty)$, we have that 
\begin{equs}
\bigg\| \Big\| \mDuh\Big(f \philinear[{\Blin},\leqN] \Big)\Big\|_{C_t^{1- \kappa} \Cs_x^{-\kappa}([0, 1] \times \T^2)} \bigg\|_{L^{\power}(\Omega)} \lesssim {\power}^{\frac{1}{2}} \|f\|_{L_z^{\infty}([0, 1] \times \T^2)},
\end{equs}
where $\mDuh$ is the Duhamel operator corresponding to the massive Laplacian $\Delta - 1$.
\end{lemma}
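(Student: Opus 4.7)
The strategy is to reduce the probabilistic statement to a deterministic parabolic Schauder-type estimate, using Lemma \ref{lemma:f-philinear-spacetime-norm-bound} as the only stochastic input. Fix $p_1 = p_1(\kappa) \geq 10/\kappa$ and let $g := f \philinear[{\Blin},\leqN]$. By Lemma \ref{lemma:f-philinear-spacetime-norm-bound},
\begin{equs}
\Big\| \|g\|_{L_t^{p_1}\Cs_x^{-\kappa}([0,1]\times \T^2)} \Big\|_{L^\power(\Omega)} \lesssim \power^{\frac{1}{2}} \|f\|_{L_z^\infty},
\end{equs}
so it suffices to establish the deterministic estimate
\begin{equs}\label{eq:proposal-schauder}
\|\mDuh(g)\|_{C_t^{1-\kappa}\Cs_x^{-\kappa}([0,1]\times \T^2)} \lesssim \|g\|_{L_t^{p_1}\Cs_x^{-\kappa}([0,1]\times \T^2)}.
\end{equs}

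To prove \eqref{eq:proposal-schauder}, I would first bound the $C_t^0\Cs_x^{-\kappa}$ part by H\"older's inequality in time together with the $\Cs_x^{-\kappa}$-boundedness of $e^{u(\Delta-1)}$, giving
\begin{equs}
\|\mDuh(g)(t)\|_{\Cs_x^{-\kappa}} \leq \int_0^t e^{-(t-\tau)} \|g(\tau)\|_{\Cs_x^{-\kappa}} d\tau \lesssim \|g\|_{L_t^{p_1}\Cs_x^{-\kappa}}.
\end{equs}
For the $C_t^{1-\kappa}$ seminorm, I would split the time increment via the semigroup property: for $0 \leq s < t \leq 1$,
\begin{equs}
\mDuh(g)(t) - \mDuh(g)(s) = \int_s^t e^{(t-\tau)(\Delta-1)} g(\tau) d\tau + \big(e^{(t-s)(\Delta-1)} - \mrm{Id}\big)\mDuh(g)(s).
\end{equs}
The first term is directly controlled by H\"older in time, yielding a bound of order $(t-s)^{1-1/p_1}\|g\|_{L_t^{p_1}\Cs_x^{-\kappa}} \leq (t-s)^{1-\kappa}\|g\|_{L_t^{p_1}\Cs_x^{-\kappa}}$ by our choice of $p_1$.

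For the second term, I would use the interpolation bound $\|(e^{u(\Delta-1)} - \mrm{Id})h\|_{\Cs_x^{-\kappa}} \lesssim u^{1-\kappa}\|h\|_{\Cs_x^{2-3\kappa}}$, obtained by interpolating the trivial estimate $\|(e^{u(\Delta-1)}-\mrm{Id})h\|_{\Cs_x^{-\kappa}} \lesssim \|h\|_{\Cs_x^{-\kappa}}$ with the Taylor-expansion bound $\|(e^{u(\Delta-1)}-\mrm{Id})h\|_{\Cs_x^{-\kappa}} \lesssim u\|h\|_{\Cs_x^{2-\kappa}}$. Then I would control $\|\mDuh(g)(s)\|_{\Cs_x^{2-3\kappa}}$ using the smoothing estimate of Lemma \ref{lemma:heat-flow-smoothing} and H\"older in time:
\begin{equs}
\|\mDuh(g)(s)\|_{\Cs_x^{2-3\kappa}} \lesssim \int_0^s (s-\tau)^{-(1-\kappa)} e^{-(s-\tau)} \|g(\tau)\|_{\Cs_x^{-\kappa}} d\tau \lesssim \|g\|_{L_t^{p_1}\Cs_x^{-\kappa}},
\end{equs}
where the time integral converges precisely because we chose $p_1 > 1/\kappa$ (so that $p_1'(1-\kappa) < 1$). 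Combining the two bounds on the split yields \eqref{eq:proposal-schauder}, and taking $L^\power(\Omega)$ norms completes the proof.

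There is no real obstacle here: the estimate is a standard parabolic Schauder trade-off (almost one full derivative in time versus two in space), and the main point is simply that we have enough integrability in time from Lemma \ref{lemma:f-philinear-spacetime-norm-bound} to absorb the borderline singularity $(s-\tau)^{-(1-\kappa)}$ produced by the smoothing inequality. The mild loss coming from raising $p_1$ is absorbed into the $\kappa$-dependent implicit constant, which is compatible with the statement of the lemma.
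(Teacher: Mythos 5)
Your proof is correct and follows the same approach as the paper: reduce to the $L_t^{p_1}\Cs_x^{-\kappa}$ norm via Lemma \ref{lemma:f-philinear-spacetime-norm-bound}, then apply a deterministic Schauder estimate for $\mDuh$. The paper simply cites "standard Schauder estimates" with $p_1 = 2/\kappa$ and leaves the Schauder argument implicit; you have spelled it out correctly (the choice $p_1 \geq 10/\kappa$ versus $2/\kappa$ is immaterial, as both exceed $1/\kappa$).
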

\begin{proof}
By standard Schauder estimates, we have that for $\power_1 = \frac{2}{\kappa}$,
\begin{equs}
\Big\|\mDuh(f \philinear[{\Blin}, \leqN])\Big\|_{C_t^{1-\kappa} \Cs_x^{-\kappa}} \lesssim \Big\|f \philinear[{\Blin},\leqN]\Big\|_{L_t^{\power_1} \Cs_x^{-\kappa}}.
\end{equs}
We now finish by Lemma \ref{lemma:f-philinear-spacetime-norm-bound}.
\end{proof}

\begin{lemma}[Time regularity]\label{lemma:difference-linear-object-time-regularity}
For all $N \in \dyadic$, $\power \in [1, \infty)$, we have that
\begin{equs}
\bigg\| \Big\| \philinear[{\Blin}, \leqN] - \philinear[\leqN]\Big\|_{C_t^{1-\kappa} \Cs_x^{-1-\kappa}([0, 1] \times \T^2)}\bigg\|_{L^{\power}(\Omega)} \lesssim {\power}^{\frac{1}{2}} \big(\|{\Blin}\|_{L_z^\infty([0, 1] \times \T^2)} + \|\ptl_j \Blin^j\|_{L_z^\infty([0, 1] \times \T^2)} + \|{\Blin}\|_{L_z^\infty([0, 1] \times \T^2)}^2\big).
\end{equs}
\end{lemma}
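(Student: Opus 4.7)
The plan is to work with the non-covariant massive heat semigroup, which is adequate because we only need an epsilon of time regularity and we can afford a second power of $\|\Blin\|_{L_z^\infty}$ on the right-hand side. Since both $\philinear[\Blin,\leq N]$ and $\philinear[\leq N]$ satisfy massive, zero-initial-data forced equations with the same noise forcing $\zeta_{\leq N}$, subtracting them and applying Lemma \ref{prelim:lem-derivatives}\ref{prelim:item-difference} (with $A=\Blin$ and $B=0$) yields the representation
\begin{equation*}
\philinear[\Blin,\leq N] - \philinear[\leq N] = \mDuh\Bigl[\, 2\icomplex\, \partial^j\!\bigl(\Blin_j \philinear[\Blin,\leq N]\bigr) - \icomplex\, \bigl(\partial^j \Blin_j\bigr) \philinear[\Blin,\leq N] - |\Blin|^2\, \philinear[\Blin,\leq N]\,\Bigr],
\end{equation*}
where $\mDuh$ denotes the Duhamel operator for $\partial_t - \Delta + 1$. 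This is the key structural reduction: it expresses the difference as a Duhamel integral whose integrand is a product of a deterministic factor in $L_z^\infty$ and the covariant linear object.

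I then plan to bound each of the three terms separately using Lemma \ref{lemma:f-times-philinear-stochastic-estimates}. For the divergence term, $\partial^j$ commutes with $\mDuh$, so pulling the derivative outside and paying one unit of spatial regularity gives
\begin{equation*}
\bigl\| \mDuh\bigl[\partial^j(\Blin_j \philinear[\Blin,\leq N])\bigr] \bigr\|_{C_t^{1-\kappa}\Cs_x^{-1-\kappa}} \lesssim \bigl\| \mDuh\bigl[\Blin_j \philinear[\Blin,\leq N]\bigr] \bigr\|_{C_t^{1-\kappa}\Cs_x^{-\kappa}},
\end{equation*}
and Lemma \ref{lemma:f-times-philinear-stochastic-estimates} applied with $f=\Blin_j$ controls the right-hand side by $\power^{1/2} \|\Blin\|_{L_z^\infty}$. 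The second term is estimated directly in $C_t^{1-\kappa}\Cs_x^{-\kappa} \hookrightarrow C_t^{1-\kappa}\Cs_x^{-1-\kappa}$ by Lemma \ref{lemma:f-times-philinear-stochastic-estimates} with $f = -\icomplex(\partial^j \Blin_j)$, producing the $\power^{1/2}\|\partial^j \Blin_j\|_{L_z^\infty}$ contribution; the third term is treated analogously with $f=-|\Blin|^2$, contributing $\power^{1/2}\|\Blin\|_{L_z^\infty}^2$.

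Combining these three bounds via the triangle inequality in $L^\power(\Omega)$ yields exactly the right-hand side in the statement. I do not anticipate a real obstacle: the whole argument rests on the algebraic identity for $\covd_\Blin^j \covd_{\Blin,j} - \Delta$ and on Lemma \ref{lemma:f-times-philinear-stochastic-estimates}, which has already done the stochastic work. The only delicate point is accounting, namely that the extra spatial derivative appearing in the divergence term costs exactly the unit of regularity we budgeted for by working in $\Cs_x^{-1-\kappa}$ rather than $\Cs_x^{-\kappa}$, and that the second-power term $|\Blin|^2 \philinear[\Blin,\leq N]$ is the source of the $\|\Blin\|_{L_z^\infty}^2$ contribution in the stated bound.
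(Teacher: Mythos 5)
Your proposal is correct and follows exactly the same route as the paper's proof: rewrite the difference via Lemma \ref{prelim:lem-derivatives}\ref{prelim:item-difference} as $2\icomplex\,\partial_j\mDuh(\Blin^j\philinear[\Blin,\leqN]) - \icomplex\,\mDuh((\partial_j\Blin^j)\philinear[\Blin,\leqN]) - \mDuh(|\Blin|^2\philinear[\Blin,\leqN])$, pull the spatial derivative out of the Duhamel operator for the first term, and apply Lemma \ref{lemma:f-times-philinear-stochastic-estimates} termwise. The bookkeeping of regularity (losing one spatial derivative to land in $\Cs_x^{-1-\kappa}$) and the attribution of the three powers of $\Blin$ to the three terms also match the paper.
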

\begin{proof}
Observe that
\begin{equs}
(\ptl_t - \Delta + 1) (\philinear[{\Blin},\leqN] - \philinear[\leqN]) = (\covd_{\Blin}^j \covd_{{\Blin}, j} -\Delta)\philinear[{\Blin},\leqN] = 2\icomplex \ptl_j ({\Blin}^j \philinear[{\Blin},\leqN]) - \icomplex (\ptl_j \Blin^j) \philinear[B, \leqN] - |{\Blin}|^2 \philinear[{\Blin},\leqN] .
\end{equs}
Thus
\begin{equs}
\philinear[{\Blin}, \leqN] - \philinear[\leqN] =2\icomplex \ptl_j \mDuh ({\Blin}^j \philinear[{\Blin}, \leqN]) - \icomplex \mDuh ((\ptl_j \Blin^j) \philinear[B, \leqN]) - \mDuh (|{\Blin}|^2 \philinear[{\Blin}, \leqN]).
\end{equs}
The desired result now follows by applying Lemma \ref{lemma:f-times-philinear-stochastic-estimates} to each of the three terms on the right hand side above.
\end{proof}

Finally, we discuss the $N$-regularity estimate. Recall from Definition \ref{def:mollifiers} that our mollifier $\moll_{\leq N}$ is of the form $\moll_{\leq N}(y) = N^2 \moll(N y)$. Thinking of $N$ as a continuous parameter, note that
\begin{equs}
\nabla \moll_{\leq N}(y) &= N^3 \nabla \moll(Ny), \\
\ptl_N \moll_{\leq N}(y) &= 2N \moll(Ny) + N^2 \nabla \moll(Ny) \cdot y = 2 N^{-1} \moll_{\leq N}(y) + N^{-1} \nabla \moll_{\leq N}(y) \cdot y.
\end{equs}
Thus,
\begin{equs}
\moll_N(y) = \moll_{\leq N}(y) - \moll_{\leq N/2}(y) = \int_{\frac{N}{2}}^N dM \Big( 2M^{-1} \moll_{\leq M}(y) + M^{-1} \nabla \moll_{\leq M}(y) \cdot y\Big) .
\end{equs}
Then by integration by parts, given a kernel $K(y)$, we have the following high-frequency convolution identity:
\begin{equs}\label{eq:high-frequency-kernel-convolution-identity}
(\moll_N * K)(y) = \int_{\frac{N}{2}}^N dM M^{-1} \int dy' \moll_{\leq M}(y - y') (y - y') \cdot \nabla K(y').
\end{equs}

\begin{lemma}[$N$-regularity]\label{lemma:difference-linear-object-N-regularity}
For all $N \in \dyadic$, $\power, \power_1 \in [1, \infty)$, we have that
\begin{equs}
\bigg\| \Big\| \philinear[{\Blin}, N] - \philinear[N] \Big\|_{L_t^{\power_1} \Cs_x^{-\kappa}([0, 1] \times \T^2)} \bigg\|_{L^{\power}(\Omega)} \lesssim_{\power_1} {\power}^{\frac{1}{2}} N^{-1} \big(\|{\Blin}\|_{L_z^\infty([0, 1] \times \T^2)} + \|\ptl_j {\Blin}^j\|_{L_t^2 L_x^\infty([0, 1] \times \T^2)}\big).
\end{equs}
\end{lemma}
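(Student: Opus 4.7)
The plan is to reduce to a pointwise second-moment estimate on the covariance kernel, use the high-frequency convolution identity \eqref{eq:high-frequency-kernel-convolution-identity} to extract a spatial derivative with a gain of $N^{-1}$, and finally apply a gradient estimate in the spirit of Corollary \ref{cor:gradient-pA-p-l2-spacetime-bound}. By Gaussian hypercontractivity and the arguments of Lemmas \ref{lemma:A-independent-bound-covariant-linear-object} and \ref{lemma:difference-linear-object-spatial-regularity}, using that $\philinear[\Blin, N] - \philinear[N]$ is essentially frequency-localized at scale $N$ (so that the $\Cs_x^{-\kappa}$ norm absorbs a factor of $N^{-\kappa}$), this first step reduces the problem to showing, uniformly in $z = (t, x) \in [0, 1] \times \T^2$,
\begin{equs}
\|\massp_{\Blin, N}(\cdot; z) - \massp_N(\cdot; z)\|_{L^2_w([0, t] \times \T^2)} \lesssim N^{-1} \sqrt{\log N} \cdot \big(\|\Blin\|_{L_z^\infty} + \|\ptl_j \Blin^j\|_{L_t^2 L_x^\infty}\big).
\end{equs}
A $\sqrt{\log N}$ loss is admissible here since it is absorbed by the $N^{-\kappa}$ factor in the final $\Cs_x^{-\kappa}$ bound.

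Next, I apply \eqref{eq:high-frequency-kernel-convolution-identity} to the spatial kernel $K(y) := (\massp_\Blin - \massp)((s, y); z)$ (with $s$ and $z$ fixed), and write $(y-y')\moll_{\leq M}(y-y') = M^{-1} \phi_M(y-y')$, where $\phi_M$ is a smooth, mean-zero, $L^1$-normalized mollifier at scale $M^{-1}$. This yields
\begin{equs}
(\massp_{\Blin, N} - \massp_N)(w; z) = \int_{N/2}^N \frac{dM}{M^2} \int dy' \, \phi_M(y-y') \cdot \nabla_{y'}(\massp_\Blin - \massp)(w'; z).
\end{equs}
Since $\int_{N/2}^N M^{-2} dM \sim N^{-1}$, the pointwise bound reduces to showing, uniformly in $M \in [N/2, N]$, the gradient estimate
\begin{equs}
\bigg\|\int dy' \, \phi_M(y-y') \cdot \nabla_{y'}(\massp_\Blin - \massp)(w'; z)\bigg\|_{L^2_w} \lesssim \sqrt{\log M} \cdot \big(\|\Blin\|_{L_z^\infty} + \|\ptl_j \Blin^j\|_{L_t^2 L_x^\infty}\big).
\end{equs}

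To prove this gradient estimate, I would integrate by parts in $y'$ to rewrite the integrand as $\nabla_y\big[\phi_M *_{y'} (\massp_\Blin - \massp)(\cdot; z)\big]$. Defining $\tilde{\Phi}(w) := \int dy' \phi_M(y-y') \massp_\Blin(w'; z)$ and $\tilde{\Psi}(w) := \int dy' \phi_M(y-y') \massp(w'; z)$, and using Lemma \ref{lemma:heat-kernel-time-reversal} together with a direct calculation (involving integration by parts on the Laplacian term arising from the covariant Laplacian), one finds that $\tilde{\Phi} - \tilde{\Psi}$ satisfies a forced backwards heat equation $(\ptl_s + \Delta_y - 1)(\tilde{\Phi} - \tilde{\Psi}) = R$ in the $w$-variable, with forcing $R$ linear in $\Blin$ and $\ptl_j \Blin^j$ and involving $\phi_M$-convolutions against $\nabla \massp_\Blin$ and $\massp_\Blin$. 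After time-reversal, a standard energy estimate yields $\|\nabla_y(\tilde{\Phi} - \tilde{\Psi})\|_{L^2_w} \lesssim \|R\|_{L^1_s L^2_y}$, and one controls the latter by $\sqrt{\log M} \cdot (\|\Blin\|_{L_z^\infty} + \|\ptl_j \Blin^j\|_{L_t^2 L_x^\infty})$ via the diamagnetic inequality and the standard $L^2$-bound on the $\phi_M$-smoothed heat kernel.

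The main obstacle is proving this last gradient estimate, since it does not follow directly from Corollary \ref{cor:gradient-pA-p-l2-spacetime-bound}: in that corollary the mollification is in the $z$-spatial variable (opposite side of the kernel from the gradient), whereas here both the mollification and the gradient act in the $w$-spatial variable. Consequently $\phi_M$ no longer commutes with the covariant Laplacian (which depends on the spatial variable through $\Blin$), and the forcing $R$ contains additional nontrivial terms such as $\phi_M *_{y'}(\Blin^j \nabla_{y', j} p_\Blin)$ and $\phi_M *_{y'}((\ptl_j \Blin^j) p_\Blin)$ that must be estimated directly. Each such term is however linear in $\Blin$ or $\ptl_j \Blin^j$, so bounding $R$ in $L^1_s L^2_y$ uses only a single power of the corresponding $\Blin$-seminorm, with one factor of $\sqrt{\log M}$ coming from the heat kernel.
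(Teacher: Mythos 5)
Your first reduction is where the argument breaks, and the later steps are also problematic. The claim that $\philinear[\Blin, N] - \philinear[N]$ is ``essentially frequency-localized at scale $N$'' in $x$ is not correct. The noise mollifier $\moll_N$ acts on the $w$-spatial variable, whereas the $\Cs_x^{-\kappa}$-regularity is measured in the $z$-spatial variable. For the Euclidean kernel $\massp$, translation invariance links the two sides, so $\philinear[N]$ is indeed frequency-localized in $x$; but $\massp_\Blin(w;z)$ is not translation invariant, and the covariant heat flow spreads frequencies, so $\philinear[\Blin, N]$ is not frequency-localized in $x$. Consequently you cannot trade a factor $N^{-\kappa}$ for the $\Cs_x^{-\kappa}$-norm, and the pointwise $L^2_w$ bound $\|\massp_{\Blin, N}(\cdot;z) - \massp_N(\cdot;z)\|_{L^2_w} \lesssim N^{-1}\sqrt{\log N}(\cdots)$ is not a valid reduction of the lemma. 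The paper instead keeps the Besov heat-kernel characterization (Lemma~\ref{lemma:besov-space-heat-kernel-characterization}): one bounds the second moment of $e^{u\Delta}\big(\philinear[\Blin, N] - \philinear[N]\big)(z)$ and absorbs the resulting $\sqrt{\log u^{-1}}$ with the $u^{\kappa/2}$ weight.

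Once the $e^{u\Delta}$-smoothing on the $z$-side is retained, the ``obstacle'' you worry about disappears. After the high-frequency convolution identity \eqref{eq:high-frequency-kernel-convolution-identity} and Young's inequality with $\tilde\moll_{\leq M}$ (uniformly $L^1$-bounded), you are left with exactly $\big\|\int dx'\, p_u(x,x')\,\nabla_y(\massp_\Blin - \massp)(w;z')\big\|_{L^2_w}$: mollification by $p_u$ on the $z$-side and the gradient on the $w$-side, which is precisely the quantity controlled by Corollary~\ref{cor:gradient-pA-p-l2-spacetime-bound}. No new gradient estimate with both mollification and gradient on the $w$-side is needed.

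Your proposed gradient estimate would also not give the claimed bound even if the reduction were valid. Since $\phi_M$ does not commute with $\covd_\Blin^j \covd_{\Blin,j}$, your $\tilde\Phi$ does not solve a clean backward covariant heat equation; if you instead force the equation $(\ptl_s + \Delta_y - 1)(\tilde\Phi - \tilde\Psi) = R$, then $R$ contains not only the terms you list but also $\phi_M *_{y'}\big(|\Blin|^2 p_\Blin\big)$, which produces a quadratic dependence $\|\Blin\|_{L^\infty}^2$ rather than the linear dependence required. The term $\phi_M *_{y'}(\Blin^j \ptl_j p_\Blin)$ is also not controlled by $\sqrt{\log M}$ in $L^1_s L^2_y$ (one gets $\log M$), since there is no diamagnetic inequality for $\nabla p_\Blin$. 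The paper avoids both issues by keeping the comparison covariant: Lemma~\ref{lemma:difference-covariant-and-euclidean-heat-kernel} absorbs the $|\Blin|^2$ term through integration by parts against $\covd_\Blin$, yielding linear $\Blin$-dependence, and the $p_u$-smoothing on the $z$-side commutes with the backward covariant equation in $w$, so no commutator terms arise.
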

\begin{proof}
From standard reductions (see e.g. \cite[Appendix A]{BC23}), it suffices to show the following second moment estimate for $u > 0$, $z \in [0, 1] \times \T^2$:
\begin{equs}
\Big\| e^{u\Delta}\Big(\philinear[{\Blin}, N] - \philinear[N]\Big)(z)\Big\|_{L^2(\Omega)} \lesssim N^{-1} \sqrt{\log u^{-1}} \big(\|{\Blin}\|_{L_z^\infty([0, 1] \times \T^2)} + \|\ptl_j {\Blin}^j\|_{L_t^2 L_x^\infty([0, 1] \times \T^2)}\big).
\end{equs}
Let $\tilde{\moll}_{\leq N}(y) := \moll_{\leq N}(y) Ny$. By \eqref{eq:high-frequency-kernel-convolution-identity}, we may write
\begin{equs}
e^{u\Delta}\Big(\philinear[{\Blin}, N] - \philinear[N]\Big)(z) = \int_{\frac{N}{2}}^N dM M^{-2} \int dw \zeta(w) \int dy' \int dx' p_u(x, x') \tilde{\moll}_{\leq M}(y-y') \cdot \nabla_{y'} \big(\massp_{\Blin}(w'; z') - \massp(w'; z')\big)
\end{equs}
Note that $\|\tilde{\moll}_{\leq N}\|_{L^1} \lesssim 1$. Thus by Young's inequality, we have that (and applying Corollary \ref{cor:gradient-pA-p-l2-spacetime-bound} in the second inequality)
\begin{equs}
\Big\| e^{u\Delta}\Big(\philinear[{\Blin}, N] - \philinear[N]\Big)(z)\Big\|_{L^2(\Omega)} &\lesssim N^{-2} \int_{\frac{N}{2}}^N dM \bigg\|\int dx' p_u(x, x') \nabla_y \big( \massp_{\Blin}(w; z') - \massp(w; z')\big)\bigg\|_{L_w^2} \\
&\lesssim (\|{\Blin}\|_{L_z^\infty} + \|\ptl_j {\Blin}^j \|_{L_t^2 L_x^\infty})
N^{-2} \int_{\frac{N}{2}}^N dM \sqrt{\log u^{-1}} \\
&\lesssim N^{-1} \sqrt{\log u^{-1}}  (\|{\Blin}\|_{L_z^\infty} + \|\ptl_j {\Blin}^j \|_{L_t^2 L_x^\infty}),
\end{equs}
as desired.
\end{proof}

Finally, we note that Proposition \ref{prop:difference-objects-bound} follows from the three estimates by interpolation.

\begin{proof}[Proof of Proposition \ref{prop:difference-objects-bound}]
Let $\lambda_1 = 1-3\kappa, \lambda_2 = 2\kappa, \lambda_3 = \kappa$, so that $\lambda_1 + \lambda_2 = \lambda_3 = 1$. By interpolation, we have that for a general space-time function $f$,
\begin{equs}
\|f\|_{C_t^\kappa \Cs_x^{1-10\kappa}} \leq \|f\|_{L_t^{1/\kappa} \Cs_x^{1-\kappa}}^{\lambda_1} \|f\|_{C_t^{1-\kappa} \Cs_x^{-1-\kappa}}^{\lambda_2} \|f\|_{L_t^{1/\kappa^2} \Cs_x^{-\kappa^2}}^{\lambda_3}.
\end{equs}
By combining this and Lemmas \ref{lemma:difference-linear-object-spatial-regularity}, \ref{lemma:difference-linear-object-time-regularity}, and \ref{lemma:difference-linear-object-N-regularity}, we obtain that
\begin{equs}
\bigg\| \Big\|\philinear[{\Blin}, N] - \philinear[N]\Big\|_{C_t^{\kappa} \Cs_x^{1-10\kappa}([0, 1] \times \T^2)}\bigg\|_{L^{\power}(\Omega)} \lesssim N^{-\kappa} {\power}^{\frac{1}{2}} \|{\Blin}\|_{\resnorm} \Big(1 + \|{\Blin}\|_{\resnorm}^{10\kappa}\Big),
\end{equs}
as desired.
\end{proof}

To close off this subsection, we state and prove the following variant of Theorem \ref{intro:thm-cshe}\ref{item:thm-cshe-polynomial} for the linear object, which will be needed in Sections \ref{section:Abelian-Higgs} and \ref{section:decay}.

\begin{lemma}\label{lemma:covariant-linear-object-mollified-noise-L-infty-bound}
For all $N \in \dyadic$, $\power \in [1, \infty)$, we have that
\begin{equs}
\bigg\|\Big\| \philinear[\Blin,\leqN]\Big\|_{L_t^\infty L_x^\infty([0, 1] \times \T^2)} \bigg\|_{L^{\power}(\Omega)} \lesssim {\power}^{\frac{1}{2}} N^{\frac{\kappa}{2}} (1 + \|\Blin\|_{\resnorm}^\frac{\kappa}{2}).
\end{equs}
\end{lemma}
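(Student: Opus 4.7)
The plan is to split $\philinear[\Blin,\leqN] = \philinear[\leqN] + (\philinear[\Blin,\leqN] - \philinear[\leqN])$ and bound each summand separately, using the Besov embedding $\Cs_x^{\kappa/2}\hookrightarrow L_x^\infty$ to upgrade positive-regularity estimates to $L_x^\infty$ control.

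For the non-covariant piece $\philinear[\leqN]$, I would obtain a bound of order $p^{1/2}N^{\kappa/2}$ with no $\Blin$ dependence via direct Gaussian arguments. For fixed $u\in(0,1]$ and $z=(t,x)\in[0,1]\times\T^2$, the variable $e^{u\Delta_x}\philinear[\leqN](z)$ is a centered Gaussian with variance $\int dw\,|e^{u\Delta_x}\massp_{\leq N}(w;z)|^2$, which by the semigroup property reduces to an integral of a squared smoothed heat kernel and is bounded by $\lesssim\log(\min(u^{-1},N^2))\lesssim\log N$. Combining Gaussian hypercontractivity with a standard chaining/net argument on $[0,1]\times\T^2$ (where the net error is controlled using the smoothing from $e^{u\Delta}$) upgrades this to $\|\|e^{u\Delta}\philinear[\leqN]\|_{L_t^\infty L_x^\infty}\|_{L^p(\Omega)}\lesssim p^{1/2}\sqrt{\log N}$. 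Invoking the heat kernel characterization of Besov spaces (Lemma~\ref{lemma:besov-space-heat-kernel-characterization}) and the elementary bound $\sqrt{\log N}\lesssim N^{\kappa/4}$, one concludes the claimed bound for $\philinear[\leqN]$ after Besov embedding.

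For the difference $\philinear[\Blin,\leqN]-\philinear[\leqN]$, I would apply Theorem~\ref{intro:thm-cshe}\ref{item:thm:-difference-in-linear-objects} with $\alpha=\kappa/2$, obtaining
\begin{equs}
\E\Big[\sup_{N\in\dyadic}\big\|\philinear[\Blin,\leqN]-\philinear[\leqN]\big\|_{C_t^0\Cs_x^{\kappa/2}}^p\Big]^{1/p}\lesssim p^{1/2}\|\Blin\|_{\resnorm}^{\kappa/2}\bigl(1+\|\Blin\|_{\resnorm}^{\kappa}\bigr),
\end{equs}
and once again invoking $\Cs_x^{\kappa/2}\hookrightarrow L_x^\infty$. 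Combined with the non-covariant estimate and the bookkeeping $\|\Blin\|_{\resnorm}^{\kappa/2}(1+\|\Blin\|_{\resnorm}^{\kappa})\lesssim 1+\|\Blin\|_{\resnorm}^{\kappa/2}$ on the relevant regime (with any excess $\Blin$-power absorbed into the $N^{\kappa/2}$ factor at the cost of slightly enlarging constants), one obtains the final bound.

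The main obstacle is the first step: ensuring the chaining argument on the Gaussian field $e^{u\Delta}\philinear[\leqN]$ genuinely produces only the $\sqrt{\log N}$ scale rather than a larger factor hidden in the chaining entropy. The diamagnetic inequality (Lemma~\ref{kernel:lem-diamagnetic-heat-kernel}) plays the central role, as it is precisely what makes the pointwise variance bound uniform in $\Blin$ and reduces the analysis of $\philinear[\Blin,\leqN]$ to that of the non-covariant object. This uniformity is what prevents the first step from picking up any $\Blin$-dependence, so that all $\Blin$-dependence in the final estimate is funneled through Theorem~\ref{intro:thm-cshe}\ref{item:thm:-difference-in-linear-objects} in the second step.
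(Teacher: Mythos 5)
Your proposal is correct and follows essentially the same route as the paper: decompose $\philinear[\Blin,\leqN] = \philinear[\leqN] + (\philinear[\Blin,\leqN] - \philinear[\leqN])$, bound the first piece by classical Gaussian arguments giving $\power^{1/2}N^{\kappa/2}$, and bound the second via Theorem~\ref{intro:thm-cshe}\ref{item:thm:-difference-in-linear-objects} together with the embedding $\Cs_x^{\kappa/2}\hookrightarrow L_x^\infty$. One small remark on your closing paragraph: in the decomposition you chose, the diamagnetic inequality has no role to play in the first step, since $\philinear[\leqN]$ has no dependence on $\Blin$ whatsoever — the variance bound $\lesssim\log N$ there is purely a computation for the Euclidean heat kernel. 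The diamagnetic inequality does enter, but only inside the black-box proof of Theorem~\ref{intro:thm-cshe}\ref{item:thm:-difference-in-linear-objects} (and Lemma~\ref{lemma:A-independent-bound-covariant-linear-object}), not in your step 1; your chaining "obstacle" there is routine. Also note that invoking the theorem with $\alpha=\kappa/2$ actually produces $\|\Blin\|_{\resnorm}^{\kappa/2}(1+\|\Blin\|_{\resnorm}^\kappa)\lesssim 1+\|\Blin\|_{\resnorm}^{3\kappa/2}$, not $1+\|\Blin\|_{\resnorm}^{\kappa/2}$; this cannot be absorbed into $N^{\kappa/2}$ since $N$ and $\|\Blin\|_{\resnorm}$ are independent parameters, but the discrepancy is equally present in the paper's one-line proof and is immaterial for all downstream uses.
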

\begin{proof}
By classical arguments, we may show that
\begin{equs}
\bigg\|\Big\| \philinear[\leqN]\Big\|_{L_t^\infty L_x^\infty([0, 1] \times \T^2)} \bigg\|_{L^{\power}(\Omega)} \lesssim {\power}^{\frac{1}{2}}N^{\frac{\kappa}{2}} .
\end{equs}
We now finish by combining this with Theorem \ref{intro:thm-cshe}\ref{item:thm:-difference-in-linear-objects}.
\end{proof}

\subsection{Non-resonant part of derivative nonlinearity}\label{section:derivative-nonlinearity-non-resonant}

In this subsection, we analyze the non-resonant part of the derivative nonlinearity, which will form part of the proof of Theorem \ref{intro:thm-cshe}\ref{item:cshe-derivative-nonlinearity}. To be precise, by the product formula for stochastic integrals (see e.g. \cite[Proposition 1.1.3]{Nua06}), we have that
\begin{equs}\label{eq:resonant-non-resonant-decomp}
\Duh\Big[ \ovl{\philinear[\Blin, \leqN]} \covd_{\Blin(z)} \philinear[\Blin, \leq N]\Big] = \Duh\Big[ \biglcol\,\ovl{\philinear[\Blin, \leqN]} \covd_{B(z)} \philinear[\Blin, \leq N]\bigrcol\Big] + \Duh\Big[ \E\Big[\ovl{\philinear[\Blin, \leqN]} \covd_{\Blin(z)} \philinear[\Blin, \leq N]\Big] \Big],
\end{equs}
where by definition the first term on the right-hand side (which we refer to as the non-resonant part) is a multiple stochastic integral given by
\begin{equs}\label{eq:non-resonant-part-multiple-stochastic-integral}
\hspace{-5mm}\Duh\Big[ \biglcol\,\ovl{\philinear[\Blin, \leqN]} \covd_{B(z)} \philinear[\Blin, \leq N]\bigrcol\Big](z_0) = \int dw_1 dw_2 \ovl{\zeta(w_1)} \zeta(w_2) \int dz p(z; z_0) \ovl{\massp_{\Blin, \leq N}(w_1; z)} \covd_{\Blin(z)} \massp_{\Blin, \leq N}(w_2; z).
\end{equs}
Here, and in the following, integrals such as $\int dw_1 dw_2 \ovl{\zeta(w_1)} \zeta(w_2) (\cdots)$ denote multiple stochastic integrals with respect the white noise $\zeta$ (see \cite[Chapter 1]{Nua06}). The second term on the right-hand side of \eqref{eq:resonant-non-resonant-decomp} is what we refer to as the resonant part, and will be studied in Subsection \ref{subsection:resonant-derivative-nonlinearity}.

We now state the main result of this subsection, which gives an estimate of the non-resonant part at regularity $1-$, and which essentially does not depend on ${\Blin}$. (As we will see in the proof, the ${\Blin}$-dependence only arises when we want to gain a tiny power of $N$, so that we can sum in $N$.) 

\begin{proposition}[Estimate of non-resonant part of derivative nonlinearity]\label{prop:derivative-nonlinearity-non-resonant}
For all $\power \in [1, \infty)$, we have that
\begin{equs}
\Bigg\|\sup_{N \in \dyadic} \bigg\| \Im \Duh\Big[\biglcol\,\ovl{\philinear[{\Blin}, \leqN]} \covd_{\Blin} \philinear[{\Blin}, \leqN]\bigrcol\,\Big]\bigg\|_{C_t^{\kappa} \Cs_x^{1-10\kappa} \cap C_t^{\frac{1}{2}-5\kappa} \Cs_x^{2\kappa}([0, 1] \times \T^2)} \Bigg\|_{L^{\power}(\Omega)}\lesssim \power \big(1 + \|{\Blin}\|_{\resnorm}^{\kappa}\big).
\end{equs}
\end{proposition}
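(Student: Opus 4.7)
I would begin by observing that the non-resonant Duhamel integral in \eqref{eq:non-resonant-part-multiple-stochastic-integral} is a double stochastic integral living in the second Wiener chaos. Gaussian hypercontractivity then gives $\|X\|_{L^\power(\Omega)} \lesssim \power \, \|X\|_{L^2(\Omega)}$, which accounts for the linear $\power$-dependence in the target bound. Combined with the heat-kernel characterization of Besov spaces (Lemma \ref{lemma:besov-space-heat-kernel-characterization}) and standard reductions, this reduces matters to pointwise-in-$z_0$ second moment bounds for the smoothed objects $\nabla e^{u\Delta}_{x_0} \Duh[\cdots](z_0)$ (as a function of $u > 0$), together with analogous second moment bounds for the time increments $\Duh[\cdots](t_0 + h, x_0) - \Duh[\cdots](t_0, x_0)$. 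By It\^{o} isometry, each such bound is equivalent to an $L^2_{w_1, w_2}$-estimate on a kernel of the form
\begin{equs}
K_{N, u}(w_1, w_2; z_0) := \int dz \, \bigl(\nabla e^{u\Delta}_{x_0} p \bigr)(z; z_0) \, \overline{\massp_{\Blin, \leq N}(w_1; z)} \, \covd_{\Blin(z)} \massp_{\Blin, \leq N}(w_2; z),
\end{equs}
and its natural time-regularity variants.

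The heart of the proof is the pointwise-in-$z_0$ kernel estimate \eqref{intro:eq-cshe-non-resonant}, namely $\sup_{z_0} \|K_N\|_{L^2_{w_1, w_2}} \lesssim 1$ with a constant essentially independent of $\Blin$. I would prove this by invoking the dual weighted energy estimate (Corollary \ref{kernel:cor-energy-estimate-dual}), or more generally its two-sided time-weighted refinement (Corollary \ref{cor:dual-two-sided-time-weighted-estimate}), with weight chosen to be the backwards heat solution $K(z) = p(z; z_0)$. This choice plays a double role: $p(z; z_0)$ is precisely the kernel entering the Duhamel integral, so using it as the weight absorbs the singularity at $z \to z_0$, while simultaneously it is a valid weight for the covariant monotonicity formula (Proposition \ref{prop:monotonicity}). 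The remaining $\massp_{\Blin}$-factors are then pointwise controlled by the standard heat kernel via the diamagnetic inequality (Lemma \ref{kernel:lem-diamagnetic-heat-kernel}), which is how one decouples the estimate from $\Blin$. For the spatial $C_x^{1-10\kappa}$ regularity, the extra spatial derivative is routed either onto the smoothing kernel $p_u$ (paying $u^{-1/2}$, matched by the $u^{5\kappa}$ weight from Lemma \ref{lemma:besov-space-heat-kernel-characterization}) or onto the covariant heat kernel, in which case $\Blin$-dependence is handled by Lemma \ref{lemma:covd-pA-expansion} and Corollary \ref{cor:gradient-pA-p-l2-spacetime-bound}. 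For the time regularity $C_t^{1/2 - 5\kappa} C_x^{2\kappa}$, I use the Duhamel-difference identity $\Duh[f](t_0 + h) - \Duh[f](t_0) = \int_{t_0}^{t_0 + h} e^{(t_0 + h - s)\Delta} f(s) \, ds + (e^{h\Delta} - 1) \Duh[f](t_0)$ and apply the same second-moment machinery to each piece.

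Uniformity and summability over $N \in \dyadic$ are then obtained from the high-frequency derivative identity \eqref{eq:high-frequency-kernel-convolution-identity}: writing $\moll_N$ as an integral involving $y \cdot \nabla \moll_{\leq M}$ transfers an extra spatial derivative onto the kernel at a cost of a factor $N^{-1}$. When this derivative lands on a factor of $\massp_{\Blin}$, an extra power of $\|\Blin\|_{\resnorm}$ is produced (via Corollary \ref{cor:gradient-pA-p-l2-spacetime-bound} or Lemma \ref{lemma:covd-pA-expansion}); interpolating this $\|\Blin\|_{\resnorm}$-losing bound with the $\Blin$-free bound above converts the $N^{-1}$ gain into the desired $N^{-\kappa}(1 + \|\Blin\|_{\resnorm}^\kappa)$ loss, which is summable.

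The main obstacle is the $\Blin$-independent kernel estimate itself. A direct Cauchy--Schwarz in $z$ followed by the diamagnetic inequality reduces matters to $\int |p(w; z)|^2 \, dw$, which diverges logarithmically due to the short-time singularity of the heat kernel. This loss must be avoided by routing the singularity into the weighted energy estimate itself, which is precisely why the two-parameter family of dual weighted estimates in Section \ref{section:energy-estimates} was developed: the extra flexibility in the weight exponents allows every step to be performed without paying a large power of $\Blin$. The delicate interplay of the $\nabla e^{u\Delta}$-smoothing, the mollifiers $\moll_{\leq N}$, the covariant derivative $\covd_\Blin$, and the weighted energy estimates at the kernel level is the main technical challenge.
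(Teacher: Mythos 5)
Your overall strategy is sound at the top level — reduce to second-moment kernel estimates via Gaussian hypercontractivity (which gives the linear $\power$-dependence for a second chaos), control the spatial and time regularity via the heat-kernel characterization and Duhamel estimates, obtain a $\Blin$-free bound from weighted energy estimates with weight $p(\cdot;z_0)$, and trade the $\frac{1}{N}$-gain from \eqref{eq:high-frequency-kernel-convolution-identity} against a power of $\|\Blin\|_{\resnorm}$ to sum in $N$. All of these ingredients do appear in the paper's proof.

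However, there is a genuine gap: you are missing the integration-by-parts step (Lemmas \ref{lemma:qkernel-ibp-identity} and \ref{lemma:integration-by-parts-identity}), and this step is essential. You take the informal estimate \eqref{intro:eq-cshe-non-resonant} as the heart of the matter and propose to prove it by fixing one $w$-variable and applying a dual weighted energy estimate (Corollary \ref{cor:dual-two-sided-time-weighted-estimate} or \ref{kernel:cor-energy-estimate-dual}) in the other. But those estimates all have a built-in time-ordering requirement: to control $\|\int dz\, H(z)\, \covd_{\Blin(z)} p_\Blin(w_2;z)\|_{L^2_{w_2}(J)}$ one needs $J$ to be an interval whose left endpoint is $s_1$ (after $w_1 = (s_1, y_1)$ is fixed), which is only sensible when $s_1 < s_2$. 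Without that ordering, one is forced to take $J = [0, t_0]$ and the weight degrades from $(t-s_1)^{\lambda}$ to $t^{\lambda}$, at which point the subsequent $L^2_{w_1}$ integral of $\|p(w_1;\cdot)\|_{L^2_z}$ diverges. The region $s_1 > s_2$ also cannot be treated by symmetry, because the covariant derivative $\covd_{\Blin(z)}$ sits asymmetrically on the $w_2$-factor, and there is no dual $L^2$-estimate in Section \ref{section:energy-estimates} for the undifferentiated factor $p_\Blin(w_1;z)$. The paper's Lemma \ref{lemma:qkernel-ibp-identity} resolves this by moving the derivative in $z$ onto the other heat kernel modulo a pure gradient in $x_0$; after taking imaginary parts and swapping $w_1 \leftrightarrow w_2$, the $s_1 > s_2$ region is absorbed into the time-ordered piece $D_{1, \leq N}$ (yielding Lemma \ref{lemma:derivative-nonlinearity-non-resonant-part-L2-estimates} with the constraint $s_1 < s_2$), and the leftover is $D_{2, \leq N}$, a gradient of $\Duh\bigl[\biglcol\,|\philinear[\Blin,\leq N]|^2\bigrcol\bigr]$ handled by standard means (Lemma \ref{lemma:derivative-nonlinearity-non-resonant-pure-gradient-estimate}). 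The paper's footnote after \eqref{intro:eq-cshe-non-resonant} warns precisely that the schematic form is not the one used in Section \ref{section:derivative-nonlinearity-non-resonant}; the decomposition $D_{1, \leq N} + D_{2, \leq N}$ is the correct form. Your proof plan needs to incorporate this decomposition before the weighted energy estimates can be applied.
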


We begin towards the proof of Proposition \ref{prop:derivative-nonlinearity-non-resonant}. For $z_0 \in [0, 1] \times \T^2$, $N \in \dyadic$, define the kernels
\begin{equs}
\qkernel_{z_0} (w_1, w_2) &:= \int dz p(z; z_0) \ovl{\massp_{\Blin}(w_1; z)} \covd_{{\Blin}(z)} \massp_{\Blin}(w_2; z), \label{eq:def-qkernel} \\
\qkernel_{z_0, \leq N}(w_1, w_2) &:= \int dy_1' dy_2' \moll_{\leq N}(y_1 - y_1') \moll_{\leq N}(y_2 - y_2') \qkernel_{z_0}(w_1', w_2').
\end{equs}
It follows from \eqref{eq:non-resonant-part-multiple-stochastic-integral} that
\begin{equs}
\Duh\Big(\mrm{Im}\Big(\biglcol\,&\ovl{\philinear[{\Blin}, \leqN] }\covd_{\Blin}\philinear[{\Blin}, \leqN]\bigrcol\,\Big)\Big)(z_0) = \Im\bigg( \int dw_1 dw_2 \ovl{\zeta(w_1)} \zeta(w_2) \qkernel_{z_0, \leq N} (w_1, w_2)\bigg).
\end{equs}
Next, we note some preliminary reductions.

\begin{lemma}[Integration by parts identity]\label{lemma:qkernel-ibp-identity}
We have that
\begin{equs}
Q_{z_0}(w_1, w_2) = -\ovl{Q_{z_0}(w_2, w_1)} + \nabla_{x_0} \int p(z; z_0) \ovl{\massp_{\Blin}(w_1; z)} \massp_{\Blin}(w_2; z).
\end{equs}
\end{lemma}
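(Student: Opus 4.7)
The identity follows from the product rule for covariant derivatives combined with integration by parts in the spatial variable of $z$. The plan is as follows. First, I would apply the product formula from Lemma~\ref{prelim:lem-derivatives}\ref{prelim:item-product} with $A = B(z)$, $\phi = \massp_{\Blin}(w_1;z)$, $\psi = \massp_{\Blin}(w_2;z)$, viewing all functions as functions of the spatial variable $x$ in $z=(t,x)$. This gives the pointwise identity
\begin{equs}
\ovl{\massp_{\Blin}(w_1;z)} \covd_{\Blin(z)} \massp_{\Blin}(w_2;z) = \nabla_x\big(\ovl{\massp_{\Blin}(w_1;z)} \massp_{\Blin}(w_2;z)\big) - \ovl{\covd_{\Blin(z)} \massp_{\Blin}(w_1;z)}\, \massp_{\Blin}(w_2;z).
\end{equs}

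Next, I would multiply both sides by $p(z;z_0)$ and integrate in $z$. For the total-derivative term, an integration by parts in the $x$-variable (there are no boundary terms since we are on $\T^2$) converts it to $-\int dz\, (\nabla_x p(z;z_0)) \ovl{\massp_{\Blin}(w_1;z)}\massp_{\Blin}(w_2;z)$. The key observation is then that the Euclidean heat kernel on the torus is translation-invariant, so $\nabla_x p(z;z_0) = -\nabla_{x_0} p(z;z_0)$, which allows us to pull the gradient out as $\nabla_{x_0}$ applied to the remaining integral.

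For the remaining term $-\int dz\, p(z;z_0) \ovl{\covd_{\Blin(z)} \massp_{\Blin}(w_1;z)}\,\massp_{\Blin}(w_2;z)$, I would use that $p(z;z_0)$ is real and factor the complex conjugation outside, swapping the roles of $w_1$ and $w_2$ to recognize the expression as $-\ovl{Q_{z_0}(w_2,w_1)}$ by the definition \eqref{eq:def-qkernel}. Combining the two terms yields the claimed identity.

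The calculation is entirely routine; there is no essential obstacle, since the statement is a symmetry property of $Q_{z_0}$ modulo an $x_0$-gradient boundary term, and the covariant product rule is perfectly suited to produce it. The only minor point to double-check is the sign conventions for $\nabla_x$ versus $\nabla_{x_0}$ on the heat kernel and the placement of the complex conjugates, both of which are handled by the translation invariance of $p$ and the reality of $p$, respectively.
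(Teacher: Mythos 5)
Your proof is correct and uses essentially the same ingredients as the paper's: the covariant product rule from Lemma~\ref{prelim:lem-derivatives}\ref{prelim:item-product}, integration by parts in the $z$-variable, and the translation invariance $\nabla_x p(z;z_0) = -\nabla_{x_0}p(z;z_0)$. The paper orders the steps slightly differently (it first integrates by parts moving $\covd_{\Blin(z)}$ onto $p(z;z_0)\massp_{\Blin}(w_1;z)$ and then applies the product rule, rather than applying the product rule first), but the mechanism and conclusion are identical.
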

\begin{proof}
This follows directly from integration by parts and the identities 
\begin{align*}
\covd_{{\Blin}(z)} (p(z; z_0) \massp_{\Blin}(w_1; z)) &= (\nabla_x p(z; z_0)) \massp_{\Blin}(w_1; z) + p(z; z_0) \covd_{{\Blin}(z)} \massp_{\Blin}(w_1; z),\\
\nabla_x p(z; z_0) &= - \nabla_{x_0} p(z; z_0). \qedhere
\end{align*}
\end{proof}

\begin{lemma}\label{lemma:integration-by-parts-identity}
We have that
\begin{equs}
\Duh\Big(\mrm{Im}\Big(\biglcol\,\ovl{\philinear[{\Blin}, \leqN] }&\covd_{\Blin}\philinear[{\Blin}, \leqN]\bigrcol\,\Big)\Big)(z_0) = 2 \Im\bigg( \int dw_1 dw_2 \ind(s_1 < s_2) \ovl{\zeta(w_1)} \zeta(w_2) \qkernel_{z_0, \leq N}(w_1, w_2)\bigg)  \\
& +\nabla_{x_0} \Im\bigg(\int dw_1 dw_2 \ind(s_1 > s_2) \ovl{\zeta(w_1)} \zeta(w_2) \int dz p(z; z_0) \ovl{\massp_{{\Blin}, \leq N} (w_1; z)} \massp_{{\Blin}, \leq N}(w_2; z)\bigg) .
\end{equs}
\end{lemma}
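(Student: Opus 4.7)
The plan is to start from the multiple stochastic integral representation
\begin{equs}
\Duh\Big(\Im\Big(\biglcol\,\ovl{\philinear[{\Blin}, \leqN] }\covd_{\Blin}\philinear[{\Blin}, \leqN]\bigrcol\,\Big)\Big)(z_0) = \Im\bigg( \int dw_1 dw_2 \ovl{\zeta(w_1)} \zeta(w_2) \qkernel_{z_0, \leq N} (w_1, w_2)\bigg),
\end{equs}
which follows directly from \eqref{eq:non-resonant-part-multiple-stochastic-integral} and the definition of $\qkernel_{z_0,\leq N}$. I would then split the $(w_1,w_2)$-domain of integration into the two half-spaces $\{s_1 < s_2\}$ and $\{s_1 > s_2\}$ (the diagonal has measure zero), producing two stochastic integrals.

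On the half-space $\{s_1 > s_2\}$, the idea is to apply the pointwise integration by parts identity of Lemma~\ref{lemma:qkernel-ibp-identity} inside the $y_1', y_2'$-convolutions that define $\qkernel_{z_0,\leq N}$. Since the mollifier $\moll_{\leq N}$ is real and acts symmetrically on the two spatial variables, this yields
\begin{equs}
\qkernel_{z_0,\leq N}(w_1,w_2) = -\ovl{\qkernel_{z_0,\leq N}(w_2,w_1)} + \nabla_{x_0} \int dz \, p(z;z_0) \ovl{\massp_{\Blin,\leq N}(w_1;z)}\massp_{\Blin,\leq N}(w_2;z).
\end{equs}
The gradient term can be pulled outside the stochastic integral, giving exactly the second summand on the right-hand side of the statement (since $\nabla_{x_0}$ commutes with $\Im$).

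For the first term $-\ovl{\qkernel_{z_0,\leq N}(w_2,w_1)}$, I would perform the change of variables $w_1 \leftrightarrow w_2$ inside the stochastic integral. This converts $\ind(s_1 > s_2)$ into $\ind(s_1 < s_2)$ and sends $\ovl{\zeta(w_1)}\zeta(w_2)$ to $\ovl{\zeta(w_2)}\zeta(w_1)$, so the integrand becomes $-\ovl{\ovl{\zeta(w_1)}\zeta(w_2)\qkernel_{z_0,\leq N}(w_1,w_2)}$. After applying $\Im$, the complex conjugate introduces a sign that cancels the existing minus, producing a second copy of the $\{s_1<s_2\}$ term. Adding this to the original $\{s_1<s_2\}$ piece doubles it and gives the factor $2$ in the statement.

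The only subtlety I would watch for is that all these manipulations make sense as multiple stochastic integrals rather than pathwise integrals. This is not a real obstacle: the kernels $\qkernel_{z_0,\leq N}(w_1,w_2)$ are deterministic and square-integrable in $(w_1,w_2)$ (by the heat kernel estimates of Section~\ref{section:monotonicity}, combined with the diamagnetic inequality), so the $It\hat{o}$-type isometry and Fubini for double stochastic integrals (see e.g.\ \cite[Chapter 1]{Nua06}) justify both the splitting by the indicator functions of $\{s_1 \gtrless s_2\}$ and the change of variables $w_1 \leftrightarrow w_2$. Everything else is algebraic manipulation of Lemma~\ref{lemma:qkernel-ibp-identity}.
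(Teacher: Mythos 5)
Your proof is correct and follows essentially the same route as the paper's: split the stochastic integral along $\{s_1<s_2\}$ and $\{s_1>s_2\}$, apply the pointwise identity of Lemma~\ref{lemma:qkernel-ibp-identity} (which commutes with the real mollifier convolutions) on the latter region, pull $\nabla_{x_0}$ out, and convert the $-\ovl{\qkernel_{z_0,\leq N}(w_2,w_1)}$ piece into a second copy of the $\{s_1<s_2\}$ term by the change of variables $w_1\leftrightarrow w_2$ together with $\Im(-\ovl{X})=\Im(X)$. Your remark about justifying the manipulations at the level of multiple stochastic integrals is appropriate and correctly identifies $L^2$-integrability of the deterministic kernel as the only requirement.
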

\begin{proof}
Applying the integration by parts identity for $\qkernel_{z_0}$ (Lemma \ref{lemma:qkernel-ibp-identity}), we have that
\begin{equs}
\int dw_1 dw_2 \ind(s_1 > s_2) \ovl{\zeta(w_1)} \zeta(w_2) \qkernel_{z_0, \leq N}(w_1, w_2) = &- \int dw_1 dw_2 \ind(s_1 > s_2) \ovl{\zeta(w_1)} \zeta(w_2) \ovl{\qkernel_{z_0, \leq N}(w_2, w_1)} \\
&+ \int dw_1 dw_2 \ind(s_1 > s_2) \ovl{\zeta(w_1)} \zeta(w_2) \nabla_{x_0} F_{z_0, \leq N}(w_1, w_2),
\end{equs}
where 
\begin{equs}
F_{z_0, \leq N}(w_1, w_2) := \int dz p(z; z_0) \ovl{\massp_{{\Blin}, \leq N}(w_1; z)} \massp_{{\Blin}, \leq N}(w_2; z).
\end{equs}
Taking imaginary parts and swapping $w_1 \leftrightarrow w_2$, we have that
\begin{equs}
&\, - \Im\bigg(\int dw_1 dw_2 \ind(s_1 > s_2) \ovl{\zeta(w_1)} \zeta(w_2) \ovl{Q_{z_0, \leq N}(w_2, w_1)}\bigg) \\
=&\, -\Im\bigg(\int dw_1 dw_2 \ind(s_2 > s_1) \ovl{\zeta(w_2)} \zeta(w_1) \ovl{\qkernel_{z_0, \leq N}(w_1, w_2)}\bigg) \\
=&\, \Im\bigg(\int dw_1 dw_2 \ind(s_1 < s_2) \ovl{\zeta(w_1)} \zeta(w_2) \qkernel_{z_0, \leq N}(w_1, w_2)\bigg).
\end{equs}
To finish, observe that
\begin{equation*}
\int dw_1 dw_2 \ind(s_1 > s_2) \ovl{\zeta(w_1)} \zeta(w_2) \nabla_{x_0} F_{z_0, \leq N}(w_1, w_2) = \nabla_{x_0} \int dw_1 dw_2 \ind(s_1 > s_2) \ovl{\zeta(w_1)} \zeta(w_2) F_{z_0, \leq N}(w_1, w_2) . \qedhere
\end{equation*}
\end{proof}

By the integration by parts identity from Lemma \ref{lemma:integration-by-parts-identity}, we have that
\begin{equs}\label{eq:derivative-nonlinearity-non-resonant-decomposition}
\Im \Duh\Big[\biglcol\,\ovl{\philinear[{\Blin}, \leqN]} \covd_{\Blin} \philinear[{\Blin}, \leqN]\bigrcol\,\Big] = D_{1, \leq N} + D_{2, \leq N},
\end{equs}
where
\begin{align}
D_{1, \leq N}(z_0) &:= 2 \Im\bigg( \int dw_1 dw_2 \ind(s_1 < s_2) \ovl{\zeta(w_1)} \zeta(w_2) \qkernel_{z_0, \leq N}(w_1, w_2)\bigg) \label{eq:D-1-N-def}\\
D_{2, \leq N}(z_0) &:= \nabla_{x_0} \Im\bigg(\int dw_1 dw_2 \ind(s_1 > s_2) \ovl{\zeta(w_1)} \zeta(w_2) \int dz p(z; z_0) \ovl{\massp_{{\Blin}, \leq N}(w_1; z)} \massp_{{\Blin}, \leq N}(w_2; z)\bigg) .
\end{align}
Observe that $D_{2, \leq N}$ is almost precisely $\nabla \Duh\big(\biglcol\, \big|\,\philinear[{\Blin}, \leq N]\big|^2\bigrcol\,\big)$, except there is the condition $\ind(s_1 > s_2)$ in the multiple stochastic integral defining $D_{2, \leq N}$. However, for obtaining probabilistic bounds, this difference does not matter, as one would proceed to estimate in exactly the same way. We thus have the following result, whose proof is omitted.

\begin{lemma}\label{lemma:derivative-nonlinearity-non-resonant-pure-gradient-estimate}
For all $\power \in [1, \infty)$, we have that
\begin{equs}
\bigg\| \sup_{N \in \dyadic} \Big\| D_{2, \leq N}\Big\|_{C_t^{\kappa} \Cs_x^{1-10\kappa} \cap C_t^{\frac{1}{2}-5\kappa} \Cs_x^{2\kappa}([0, 1] \times \T^2)} \bigg\|_{L^{\power}(\Omega)} \lesssim \power (1 + \|{\Blin}\|_{\resnorm}^\kappa).
\end{equs}
\end{lemma}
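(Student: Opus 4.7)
The plan is to treat $D_{2,\leq N}$ as a second-chaos object and reduce to deterministic estimates of its kernel via Gaussian hypercontractivity. Observe that $D_{2,\leq N}(z_0)$ is the imaginary part of a double stochastic integral with deterministic kernel
\begin{equs}
\nabla_{x_0} F_{z_0, \leq N}(w_1, w_2) := \nabla_{x_0} \int dz\, p(z; z_0) \ovl{\massp_{\Blin, \leq N}(w_1; z)} \massp_{\Blin, \leq N}(w_2; z),
\end{equs}
restricted to $\{s_1 > s_2\}$. Since the covariant derivative in the original expression has been shifted onto $p(z;z_0)$ by the integration by parts in Lemma \ref{lemma:integration-by-parts-identity}, no covariant derivatives of $\massp_{\Blin}$ remain, so the diamagnetic inequality (Lemma \ref{kernel:lem-diamagnetic-heat-kernel}) gives the pointwise bound $|\massp_{\Blin, \leq N}(w_i; z)| \leq p(w_i; z)$, which is \emph{completely independent of $\Blin$}. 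This is the structural reason we only pay the harmless $\|\Blin\|_{\resnorm}^{\kappa}$ below.

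For each fixed $N$, I would apply the heat-kernel characterization of Besov spaces (Lemma \ref{lemma:besov-space-heat-kernel-characterization}) together with standard Kolmogorov-type reductions (e.g. \cite[Appendix A]{BC23}) to bound the $C_t^{\kappa} \Cs_x^{1-10\kappa} \cap C_t^{\frac{1}{2}-5\kappa} \Cs_x^{2\kappa}$-norm by a supremum of $L^2(\Omega)$-norms of $e^{u\Delta} D_{2, \leq N}(z_0)$ (and its discrete time-differences) with a sufficient power of $\log u^{-1}$. By Gaussian hypercontractivity for elements of the second chaos, all $L^p(\Omega)$-moments are controlled by the second moment times $p$. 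The It\^{o} isometry for double stochastic integrals then reduces everything to computing the $L^2_{w_1, w_2}$-norm of $e^{u\Delta}_{x_0} \nabla_{x_0} F_{z_0, \leq N}(w_1, w_2)$, which after applying $|\massp_{\Blin, \leq N}| \leq p$ and the semigroup property of $p$, reduces to a purely Euclidean heat-kernel computation that gives the desired parabolic regularity (at most $2-$ minus a small loss coming from the renormalized square and the one spatial derivative). The time-regularity norms are handled analogously by writing $D_{2,\leq N}(t_1) - D_{2,\leq N}(t_2)$ and exploiting the standard gain of $|t_1 - t_2|^{\alpha}$ from the Duhamel structure.

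The remaining issue is the supremum over dyadic $N$. For a single $N$, the above yields a bound that is $\Blin$-independent (and hence better than required), but to sum in $N$ I need a tiny quantitative gain. Here I would estimate the high-frequency difference $D_{2, N} := D_{2,\leq N} - D_{2,\leq N/2}$ using the high-frequency convolution identity \eqref{eq:high-frequency-kernel-convolution-identity} for $\moll_N$, just as in the proof of Lemma \ref{lemma:difference-linear-object-N-regularity}: this produces one derivative acting on $\massp_{\Blin}$, which via Corollary \ref{cor:gradient-pA-p-l2-spacetime-bound} (or a slight variant tailored to the expression $\nabla_{x_0} F_{z_0}$) costs one power of $N^{-1}$ and at most one power of $\|\Blin\|_{\resnorm} + \|\partial_j \Blin^j\|_{L_t^2 L_x^\infty}$. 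Interpolating this $N$-regularity gain against the uniform $\Blin$-independent bound yields a factor $N^{-c\kappa}(1+\|\Blin\|_{\resnorm}^{\kappa})$, which is summable over dyadic $N$ and gives the stated bound.

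The main potential obstacle is verifying that the stochastic kernel estimates still close in the $C_t^{\frac{1}{2} - 5\kappa} \Cs_x^{2\kappa}$-norm, since this requires genuine time-Hölder regularity of a distribution-valued object; however, this is standard for second-chaos elements controlled by Duhamel integrals of the Wick square, and the time differences can be absorbed by a short-time energy loss in $t_1 - t_2$ paired with a long-time $L^2$-bound of the kernel, both yielded by the same Euclidean heat-kernel estimates used for the spatial regularity.
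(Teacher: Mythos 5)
Your proposal is correct and matches the paper's intended (omitted) argument: the paper observes that $D_{2,\leq N}$ is, up to the immaterial time-ordering indicator $\ind(s_1>s_2)$, equal to $\nabla\Duh$ applied to the covariant Wick square, and states that ``one would proceed to estimate in exactly the same way.'' The ingredients you lay out---diamagnetic inequality for a $\Blin$-uniform pointwise kernel bound, Gaussian hypercontractivity, the heat-kernel characterization of Besov spaces, and the high-frequency convolution identity trading an $N^{-1}$ gain against one power of $\Blin$ followed by interpolation---are precisely the machinery the paper already deploys for its Wick-square estimates (Proposition~\ref{prop:classical-estimate-non-covariant-object}, Lemmas~\ref{lemma:A-independent-bound-covariant-linear-object} and~\ref{lemma:difference-linear-object-N-regularity}, and, for the $N$-regularity step applied to covariant kernels rather than their differences with the Euclidean kernel, the template of Lemma~\ref{lemma:derivative-nonlinearity-high-frequency-estimate}).
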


Define
\begin{equs}
D_{1, N} := D_{1, \leq N} - D_{1, \leq \frac{N}{2}}.
\end{equs}

Next, we state the bound for $D_{1, N}$ whose proof we will work towards in the remainder of this subsection.

\begin{proposition}\label{prop:D-1-N-bound}
For all $N \in \dyadic$, $\power \in [1, \infty)$, we have that
\begin{equs}
\bigg\| \Big\|D_{1, N}\Big\|_{C_t^{\kappa} \Cs_x^{1-10\kappa} \cap C_t^{\frac{1}{2}-5\kappa} \Cs_x^{2\kappa}([0, 1] \times \T^2)} \bigg\|_{L^{\power}(\Omega)} \lesssim \power N^{-\kappa} \|{\Blin}\|_{L_z^\infty([0, 1] \times \T^2)}^{\kappa} .
\end{equs}
\end{proposition}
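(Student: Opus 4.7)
The plan is to prove Proposition \ref{prop:D-1-N-bound} by reducing to a deterministic $L^2$ estimate on the double-stochastic-integral kernel $\qkernel_{z_0, N}$ and then interpolating between two complementary kernel bounds. Since $D_{1,N}(z_0)$ is a multiple Wiener integral of order $2$ in $\zeta$, Gaussian hypercontractivity gives $\|D_{1,N}(z_0)\|_{L^{\power}(\Omega)} \lesssim \power \, \|\qkernel_{z_0, N}\|_{L^2_{w_1, w_2}}$, and the same bound holds with $\nabla_{x_0} e^{u \Delta_{x_0}}$ or space-time difference operators applied on both sides. Combined with the heat-kernel characterization of Besov spaces (Lemma \ref{lemma:besov-space-heat-kernel-characterization}) and a standard Kolmogorov continuity argument, exactly as in the proofs of Lemmas \ref{lemma:difference-linear-object-spatial-regularity} and \ref{lemma:difference-linear-object-N-regularity}, the proposition reduces to the pointwise-in-$z_0$, uniform-in-$u \in (0,1]$ deterministic estimate
\[
\Big\|\nabla_{x_0} e^{u\Delta_{x_0}} \qkernel_{z_0, N}(w_1, w_2)\Big\|_{L^2_{w_1, w_2}} \lesssim N^{-\kappa} \|\Blin\|_{L^\infty_z}^\kappa \sqrt{\log u^{-1}}.
\]
Parabolic scaling then handles the time-H\"older portion of both norms, since $C_t^\kappa \Cs_x^{1-10\kappa}$ and $C_t^{\frac{1}{2}-5\kappa} \Cs_x^{2\kappa}$ share the same parabolic regularity $1 - O(\kappa)$.

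Next, I would establish a $\Blin$-independent kernel bound $\|\nabla_{x_0} e^{u\Delta_{x_0}} \qkernel_{z_0}\|_{L^2_{w_1, w_2}} \lesssim \sqrt{\log u^{-1}}$. Let the gradient $\nabla_{x_0}$ fall on the outer factor $e^{u \Delta_{x_0}} p(z; z_0)$ and apply the dual endpoint time-weighted estimate (Corollary \ref{cor:dual-endpoint-time-weighted-estimate}) in the $w_2$ variable to the resulting expression $\int dz\, H_{w_1}(z)\, \covd_{\Blin(z)} \massp_\Blin(w_2; z)$ with $H_{w_1}(z) := (\nabla_{x_0} e^{u\Delta_{x_0}} p)(z;z_0)\, \ovl{\massp_\Blin(w_1; z)}$. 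Choosing the exponent $\alpha$ in Corollary \ref{cor:dual-endpoint-time-weighted-estimate} of order $(\log u^{-1})^{-1}$, exactly as in the proof of Corollary \ref{cor:gradient-pA-p-l2-spacetime-bound}, produces a weighted $L^2_z$ norm of $H_{w_1}(z)$. Dominating $|\massp_\Blin(w_1; z)| \leq p(w_1; z)$ by the diamagnetic inequality (Lemma \ref{kernel:lem-diamagnetic-heat-kernel}) and then integrating in $w_1$ against the resulting Euclidean heat-kernel weights yields the desired $\Blin$-independent bound. This is the double-kernel analog of \eqref{intro:eq-cshe-non-resonant} described in the introduction.

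The complementary bound gains $N^{-1}$ at the cost of one factor of $\|\Blin\|_{L^\infty_z}$. Applying the high-frequency convolution identity \eqref{eq:high-frequency-kernel-convolution-identity} to, say, the $y_1$-mollification writes $\qkernel_{z_0, N}$ as an average over $M \in [N/2, N]$ of kernels involving a Euclidean gradient $\nabla_{y_1'}\ovl{\massp_\Blin(w_1'; z)}$ and an explicit $N^{-1}$ prefactor. By Lemma \ref{lemma:covd-pA-expansion}, this Euclidean gradient equals $-\covd_{-\Blin(w_1')} \ovl{\massp_\Blin(w_1'; z)}$ plus explicit lower-order terms that each carry one factor of $\Blin$, of $F_\Blin$, or of $\partial_t \Blin^k + \partial_j F_\Blin^{kj}$, integrated against further copies of $\massp_\Blin$. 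The covariant-gradient term is controlled by the same strategy as before, but now using Corollary \ref{cor:dual-two-sided-weighted-estimate} in parallel in both the $w_1$ and $w_2$ variables; the correction terms are bounded pointwise using the diamagnetic inequality, each producing at most one factor of the $\resnorm$-norm of $\Blin$ times an $O(\sqrt{\log u^{-1}})$ integral of Euclidean heat kernels, following the template of Corollary \ref{cor:gradient-pA-p-l2-spacetime-bound}. Integrating over $M \in [N/2, N]$ in \eqref{eq:high-frequency-kernel-convolution-identity} gives the claimed $N^{-1}(1 + \|\Blin\|_{L^\infty_z})$ estimate, and geometric interpolation of the two kernel bounds at exponent $\kappa$ concludes the proof.

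The main obstacle will be the coordination of weights between the two parallel applications of the dual time-weighted estimates: namely, setting up the exponents in Corollary \ref{cor:dual-two-sided-weighted-estimate} (respectively Corollary \ref{cor:dual-endpoint-time-weighted-estimate}) so that the single outer factor $p(z; z_0)$ (or its $x_0$-smoothed gradient) simultaneously absorbs the dualization weights coming from the $L^2_{w_1}$ and $L^2_{w_2}$ integrations while remaining integrable in $z$. A naive application that first dualizes the $w_2$ integral and then simply bounds $|\massp_\Blin(w_1;z)|\leq p(w_1;z)$ yields a divergent $\int_0^t ds_1\,(t-s_1)^{-1}$ singularity at $w_1 \to z$, so the time-weights in the two estimates must be chosen jointly. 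This is the two-variable analog of the coordination already present in Lemma \ref{lemma:two-sided-time-weighted-estimate} and in the proof of Corollary \ref{cor:gradient-pA-p-l2-spacetime-bound}; modulo this bookkeeping, the argument is a direct adaptation of the methods developed in Subsection \ref{section:energy-estimates}.
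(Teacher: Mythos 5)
Your reduction to the two complementary kernel bounds — a $\Blin$-independent bound on $\|\nabla_{x_0} e^{u\Delta_{x_0}} \qkernel_{z_0}\|_{L^2_{w_1,w_2}}$ of order $\sqrt{\log u^{-1}}$ via the dual time-weighted estimates, and a bound gaining $N^{-1}$ at the cost of one power of $\Blin$ via the identity \eqref{eq:high-frequency-kernel-convolution-identity} and Lemma \ref{lemma:covd-pA-expansion} — is the correct core of the argument, and the final geometric interpolation at exponent $\kappa$ is also how the paper concludes. This matches the strategy of the paper's Lemmas \ref{lemma:derivative-nonlinearity-nonresonant-spatial-regularity-estimate} and \ref{lemma:derivative-nonlinearity-high-frequency-estimate}.

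However, there is a genuine gap: you reduce the entire proposition to the single spatial kernel estimate $\|\nabla_{x_0} e^{u\Delta_{x_0}} \qkernel_{z_0,N}\|_{L^2_{w_1,w_2}} \lesssim N^{-\kappa}\|\Blin\|_{L^\infty_z}^\kappa\sqrt{\log u^{-1}}$ and then assert that ``parabolic scaling'' handles the time-H\"older component of the $C_t^{\kappa}\Cs_x^{1-10\kappa}\cap C_t^{\frac{1}{2}-5\kappa}\Cs_x^{2\kappa}$ norm. This does not follow. A bound on $\nabla_{x_0} e^{u\Delta_{x_0}}\qkernel_{z_0,N}$, uniformly in $t_0$ and $u$, only controls the spatial Besov regularity of $D_{1,N}(t_0,\cdot)$ for each fixed $t_0$; it gives no control on the temporal increments $D_{1,N}(t_0,x_0) - D_{1,N}(t_0',x_0)$. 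The random field $D_{1,N}$ is a Duhamel integral against a second-order Wiener chaos, not a caloric function in $z_0$, so time regularity cannot be read off spatial regularity without estimating the inhomogeneity, and your proposal never does. The paper devotes a separate argument (Lemma \ref{lemma:derivative-nonlinearity-nonresonant-time-regularity-estimate}) to exactly this: it splits $Q_{z_0} - Q_{z_0'}$ into a piece supported on $t \in (t_0',t_0)$, which is handled by the two-sided time-weighted estimate, and a piece where $\partial_{t_0}p(z;z_0) = \Delta_{x_0}p(z;z_0)$ converts the time difference into high-order spatial derivatives (precisely the higher-$j$ estimates of Lemma \ref{lemma:derivative-nonlinearity-non-resonant-part-L2-estimates}), which are then resummed via $f - e^{\Delta}f = -\int_0^1 du\,\Delta e^{u\Delta}f$. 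This is a nontrivial page of work that your ``parabolic scaling'' gesture does not replace. Relatedly, the paper's final interpolation is a three-way interpolation — spatial ($L_t^p\Cs_x^{1-\kappa}$), temporal ($C_t^{1/2-\kappa}\Cs_x^{-\kappa}$), and $N$-regularity — whereas your two-bound kernel interpolation cannot produce the temporal H\"older exponents $\kappa$ and $\frac{1}{2}-5\kappa$ appearing in the statement. To repair the proof you would either need to supply the missing time-difference argument, or push the higher-$j$ spatial estimates through the $\partial_{t_0}p=\Delta_{x_0}p$ / $f-e^\Delta f$ machinery as the paper does.
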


Before proceeding with the proof of Proposition \ref{prop:D-1-N-bound}, we first note that Proposition \ref{prop:derivative-nonlinearity-non-resonant} follows directly from Lemma \ref{lemma:derivative-nonlinearity-non-resonant-pure-gradient-estimate} and Proposition \ref{prop:D-1-N-bound}.
\begin{proof}[Proof of Proposition \ref{prop:derivative-nonlinearity-non-resonant}]
This follows directly by combining the decomposition \eqref{eq:derivative-nonlinearity-non-resonant-decomposition} with Lemma \ref{lemma:derivative-nonlinearity-non-resonant-pure-gradient-estimate} and Proposition \ref{prop:D-1-N-bound}.
\end{proof}

Similar to our final bound on the difference of linear objects (Proposition \ref{prop:difference-objects-bound}), in order to prove Proposition~\ref{prop:D-1-N-bound}, we will separately prove spatial regularity, time regularity, and $N$-regularity estimates for $D_{1, \leq N}$, and then combine them all in the end by interpolation. The key result underlying all of these results is the following technical estimate, which gives a completely ${\Blin}$-independent bound. As we will see, this is possible because of the covariant energy estimates of Section \ref{section:monotonicity}.

\begin{lemma}\label{lemma:derivative-nonlinearity-non-resonant-part-L2-estimates}
Let $t_0 > 0$, let $x_0 \in \T^2$, and let $z_0 = (t_0, x_0)$. Then, we have that
\begin{equs}
\Big\|\qkernel_{z_0}(w_1, w_2)\Big\|_{L_{w_1}^2 L_{w_2}^2(0 < s_1 < s_2 < t_0)} \lesssim t_0^{\frac{1}{2}}.
\end{equs}
Additionally, for any $j \geq 1$, $u \in (0, 1]$, $\varep \in (0, 1)$, we have that
\begin{equs}
\bigg\| \nabla_{x_0}^j \int dx_1 p_u(x_0, x_1) Q_{(t_0, x_1)}(w_1, w_2) \bigg\|_{L_{w_1}^2 L_{w_2}^2(0 < s_1 < s_2 < t_0)} &\lesssim_{\varep, j} t_0^{\varep} u^{\frac{1-j}{2}-\varep} .
\end{equs}
\end{lemma}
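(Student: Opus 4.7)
The heart of the matter is that the $L^2$-integrations in $w_1$ and $w_2$ must be carried out together with the constraint $s_1<s_2$, since naive pointwise heat kernel bounds lead to the logarithmically divergent integrals $\int dw_1 \,|\massp_B(w_1;z)|^2$ and $\int dw_2\,|\covd_{B(z)} \massp_B(w_2;z)|^2$. The plan is to expand $|Q_{z_0}|^2$, integrate the two noise variables separately exploiting the constraint, and recombine the bounds. Specifically, I write
\begin{equs}
\|Q_{z_0}\|^2_{L_{w_1}^2 L_{w_2}^2(s_1<s_2)} = \int dw_1 dw_2 \,\ind(s_1<s_2) \int dz\, d\tilde z\, p(z;z_0)p(\tilde z;z_0)\overline{\massp_B(w_1;z)}\massp_B(w_1;\tilde z)\covd_{B(z)}\massp_B(w_2;z)\overline{\covd_{B(\tilde z)}\massp_B(w_2;\tilde z)}.
\end{equs}

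The $w_1$-integration is performed first, because the constraint $s_1<s_2$ gives access to the semigroup decomposition $\massp_B(w_1;z)=\int dy' \massp_B(w_1;(s_2,y'))\massp_B((s_2,y');z)$ valid for $s_1<s_2<t$. Applying the diamagnetic inequality (Lemma~\ref{kernel:lem-diamagnetic-heat-kernel}) together with the semigroup identity $\int dy_1\, p(w_1;z)p(w_1;\tilde z)=p_{t+\tilde t-2s_1}(x,\tilde x)$, and using the exponential damping from the massive term, the $w_1$-integration with $s_1\in(0,s_2)$ gets bounded pointwise in $(z,\tilde z)$ by $\int_{t+\tilde t-2s_2}^{\infty} e^{-u} p_u(x,\tilde x)\,du$, which is dominated by the Green's function $G_{\rm mass}(x,\tilde x)$ of $(1-\Delta)$ on $\T^2$. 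Since $G_{\rm mass}\in L^p(\T^2)$ for all $p<\infty$, the $w_1$-divergence is tamed.

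For the $w_2$-integration, I use Cauchy–Schwarz in $(z,\tilde z)$ together with the covariant operator estimate derived from the monotonicity formula (Lemma~\ref{kernel:lem-energy-estimate}), which asserts that $H \mapsto \int dz\, H(z)\,\covd_{B(z)}\massp_B(w;z)$ maps $L^2_z \to L^2_w$ with operator norm $\lesssim 1$ on the massive covariant heat equation. Combined with the $w_1$-bound above, this reduces the task to bounding an integral of the form $\int dz\,d\tilde z\, p(z;z_0)p(\tilde z;z_0)G_{\rm mass}(x,\tilde x)$ times an $L^2$ quantity. Using $G_{\rm mass}*p_v \leq e^v G_{\rm mass}$ and integrating in $t,\tilde t\in[0,t_0]$, this factor is bounded by $\lesssim t_0$, from which the final $t_0^{1/2}$ estimate follows.

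For the second estimate, the key point is that $\nabla_{x_0}^j$ acts only on the Euclidean heat factor $p(z;z_0)$, leaving all covariant pieces untouched. Passing derivatives through the smoothing kernel $p_u(x_0,x_1)$ gains a factor of $u^{-(j-1)/2}$ by standard heat-kernel estimates, and a small power $t_0^\varepsilon$ is paid to absorb the residual singularity $(t_0-t)^{-1/2}$ via weighted versions of the same bilinear argument (here Corollary~\ref{cor:dual-two-sided-weighted-estimate} with weight $K(w)=p(w;z_0)$ is convenient). The structural argument is otherwise identical to Step~1.

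\textbf{Main obstacle.} The chief technical challenge lies in the fact that both marginal integrals $\int dw_1|\massp_B(w_1;z)|^2$ and $\int dw_2|\covd_B\massp_B(w_2;z)|^2$ are genuinely divergent, so one cannot hope to separate the two integrations naively. The essential mechanism is that the constraint $s_1<s_2$ coupled with the semigroup identity converts the $w_1$-integration into the bounded massive Green's function, while simultaneously the $w_2$-integration can be dualized against the covariant $L^2$-operator bound from the monotonicity formula; neither of these devices is available in isolation, but both together yield the sharp $t_0^{1/2}$ decay.
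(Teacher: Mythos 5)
Your proposed proof takes a genuinely different route from the paper's, and I believe there is a gap in the $w_2$-integration step.

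\textbf{The ordering problem.} The paper fixes $w_1$, performs the $w_2$-integration \emph{first} via the dual two-sided weighted estimate (Corollary \ref{cor:dual-two-sided-weighted-estimate}, with weight $K(z) = p(z;z_0)$ and $\lambda = 1/4$), and then integrates over $w_1$ using the diamagnetic inequality. This order is essential: the covariant cancellation in $\covd_{\Blin(z)}\massp_\Blin(w_2;z)$ must be exploited \emph{before} the diamagnetic inequality destroys the oscillation. You reverse the order. After you bound the $w_1$-integral pointwise by the massive Green's function $G(x,\tilde x)$, you have taken absolute values, so the residual $w_2$-integral has the form
\begin{equation*}
\int dw_2 \int dz\,d\tilde z\; p(z;z_0)p(\tilde z;z_0)\,G(x,\tilde x)\,\big|\covd_{\Blin(z)}\massp_\Blin(w_2;z)\big|\big|\covd_{\Blin(\tilde z)}\massp_\Blin(w_2;\tilde z)\big|,
\end{equation*}
which is \emph{not} of the shape $\|\int dz\,H(z)\covd_{\Blin(z)}\massp_\Blin(w_2;z)\|^2_{L^2_{w_2}}$ needed for the covariant operator estimate. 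Attempting to disentangle $z$ from $\tilde z$ (e.g.\ by writing $G = \sum_k \lambda_k e_k\otimes e_k$ and pulling each mode through the operator bound) produces a factor $\int dz\,p(z;z_0)^2 G(x,x)$ with $G(x,x) = \infty$, while AM-GM on the two covariant-derivative factors hits exactly the divergence $\int dw_2\,|\covd_\Blin\massp_\Blin(w_2;z)|^2 = \infty$ that you correctly identified as the central obstruction. The constraint $s_1<s_2$, which you spend on the $w_1$-step, is in the paper's argument spent instead on the $w_2$-step: it makes the lower endpoint of the interval $J_{w_1}=[s_1,t_0]\times\T^2$ depend on $s_1$, so that the weight $(t-s_1)^{1/4}$ regularizes the subsequent $w_1$-integral.

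\textbf{A citation issue.} You invoke Lemma \ref{kernel:lem-energy-estimate} as giving an unweighted $L^2_z\to L^2_w$ bound for $H\mapsto\int dz\,H(z)\,\covd_{\Blin(z)}\massp_\Blin(w;z)$. That lemma gives a weighted $L^1_t L^2_x\to L^2_t L^2_x$ bound (primal) whose dual (Corollary \ref{kernel:cor-energy-estimate-dual}) lands in $L^\infty_s L^2_y$, not $L^2_w$. The estimates of $L^2\to L^2$ type in this paper are the time-weighted ones (Lemma \ref{lemma:two-sided-time-weighted-estimate}, Corollary \ref{cor:dual-two-sided-weighted-estimate}, Lemma \ref{kernel:lem-time-weighted-dual}), and those time weights are precisely what your argument omits. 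Even setting aside the structural issue above, invoking any unweighted $L^2\to L^2$ bound with $H(z) = p(z;z_0)\overline{\massp_\Blin(w_1;z)}$ would produce $\|p(z;z_0)p(w_1;z)\|^2_{L^2_z}\sim\int_{s_1}^{t_0}\frac{dt}{(t_0-t)(t-s_1)}$, divergent at both endpoints; the two-sided weight $(t-s_1)^{1/4}(t_0-t)^{-1/4}$ is exactly what absorbs these.
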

\begin{proof}
Fix $w_1$ and define $J_{w_1} := [s_1, t_0] \times \T^2$. Recalling the definition of $\qkernel_{z_0}$ (equation \eqref{eq:def-qkernel}), by the dual two-sided weighted estimate (Corollary \ref{cor:dual-two-sided-weighted-estimate} with say $\lambda = \frac{1}{4}$), we have that
\begin{equs}
\Big\|Q_{z_0}(w_1, w_2)\Big\|_{L_{w_2}^2(J_{w_1})}  &\lesssim \Big\|(t - s_1)^{\frac{1}{4}} (t_0 - t)^{-\frac{1}{4}} p(z; z_0)^{-\frac{1}{2}} p(z; z_0) \ovl{p_{\Blin}(w_1; z)} \Big\|_{L_z^2(J_{w_1})} .
\end{equs}
From this, it follows that
\begin{equs}
\Big\|Q_{z_0}(w_1, w_2)\Big\|_{L_{w_1}^2 L_{w_2}^2(s_1 < s_2)} &\lesssim \Big\|(t - s_1)^{\frac{1}{4}} (t_0 - t)^{-\frac{1}{4}} p(z; z_0)^{-\frac{1}{2}} p(z; z_0) p(w_1; z)\Big\|_{L_z^2 L_{w_1}^2(0 < s_1 < t < t_0)} \\
&\lesssim \Big\| t^{\frac{1}{4}} (t_0 - t)^{-\frac{1}{4}} p(z; z_0)^{\frac{1}{2}} \Big\|_{L_z^2} \lesssim t_0^{\frac{1}{2}}.
\end{equs}
The first desired result now follows.

For the second desired result, first consider the case $j = 1$. We again apply the dual two-sided weighted estimate but now with $2\lambda = \varep$, to obtain
\begin{equs}
\bigg\|\nabla_{x_0} &\int dx_1 p_u(x_0, x_1) Q_{(t_0, x_1)} (w_1, w_2)\bigg\|_{L_{w_1}^2 L_{w_2}^2(s_1 < s_2)} \\
&\lesssim \bigg\|(t-s_1)^{\lambda} (t_0 - t)^{-\lambda}  p_u(z; z_0)^{-\frac{1}{2}} \nabla_{x_0}p_u (z; z_0) p(w_1; z)\bigg\|_{L_z^2 L_{w_1}^2(s_1 < t)} \\
&\lesssim \Big\|t^\lambda (t_0 - t)^{-\lambda} p_u(z; z_0)^{-\frac{1}{2}} \nabla_{x_0} p_u(z; z_0)\Big\|_{L_z^2} \lesssim \bigg(\int_0^t t^{2\lambda} (t_0 - t)^{-2\lambda} (t_0 - t + u)^{-1}\bigg)^{\frac{1}{2}} \lesssim u^{-2\lambda} t_0^{2\lambda}.
\end{equs}
Here, we used that, at least morally, $|\nabla_{x_0} p_u(z; z_0)|$ is bounded by $(t_0 - t + u)^{-\frac{1}{2}} p_u(z; z_0)$.
The second desired result when $j=1$ now follows.

The result for general $j \geq 1$ follows similarly, except we give up an additional $u^{-\frac{1}{2}}$ for each additional derivative.
\end{proof}

\begin{remark}\label{remark:modified-Q-estimates}
As is clear from the proof, the estimates of Lemma \ref{lemma:derivative-nonlinearity-non-resonant-part-L2-estimates} hold if instead of $Q_{z_0}(w_1, w_2)$, we had the slightly modified version
\begin{equs}
\tilde{Q}_{z_0}(w_1, w_2) := \int dz \ind(t < t_0') p(z; z_0) \ovl{\massp_{\Blin}(w_1; z)} \covd_{\Blin(z)} \massp_{\Blin} (w_2; z),
\end{equs} 
where $t_0'$ is some additional parameter. The additional $t < t_0'$ constraint would not affect any of the steps in the proof, as we would just bound $\ind(t < t_0') \leq 1$.
\end{remark}

The spatial regularity estimate now directly follows from Lemma \ref{lemma:derivative-nonlinearity-non-resonant-part-L2-estimates}.

\begin{lemma}[Spatial regularity estimate]\label{lemma:derivative-nonlinearity-nonresonant-spatial-regularity-estimate}
For any $N \in \dyadic$, $t \in [0, 1]$, $\power \in [1, \infty)$, we have that
\begin{equs}
\bigg\| \Big\| D_{1, \leq N}(t)\Big\|_{L_t^p\Cs_x^{1-\kappa}([0, 1] \times \T^2)} \bigg\|_{L^{\power}(\Omega)} \lesssim \power.
\end{equs}
\end{lemma}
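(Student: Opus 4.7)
The plan is to exploit the fact that $D_{1, \leq N}$ is a second-order Wiener chaos, so that a single pointwise $L^2(\Omega)$ estimate coming from the $L^2$ isometry for double stochastic integrals will, via Gaussian hypercontractivity and a Kolmogorov-type argument, give the full result. Concretely, I would first reduce the $\Cs_x^{1-\kappa}$ bound to a pointwise estimate using the heat kernel characterization of Besov spaces (Lemma~\ref{lemma:besov-space-heat-kernel-characterization}) with $\beta=\kappa$: it suffices to control, uniformly in $N\in\dyadic$, $t_0\in[0,1]$, $u\in(0,1]$, and $x_0\in\T^2$, the quantities $\|e^{\Delta}D_{1,\leq N}(t_0,\cdot)(x_0)\|_{L^p(\Omega)}$ and $u^{\kappa/2}\|\nabla e^{u\Delta}D_{1,\leq N}(t_0,\cdot)(x_0)\|_{L^p(\Omega)}$. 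The former is easier (no derivative), so I focus on the latter.

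For fixed $z_0=(t_0,x_0)$ and $u\in(0,1]$, the quantity $\nabla_{x_0} e^{u\Delta_{x_0}}D_{1,\leq N}(z_0)$ is a second-order multiple Wiener--It\^{o} integral with kernel
\begin{equs}
\tilde{K}_{z_0,u,\leq N}(w_1,w_2) := \int dx_1\, p_u(x_0,x_1)\, \nabla_{x_1} Q_{(t_0,x_1),\leq N}(w_1,w_2)
\end{equs}
restricted to $s_1<s_2$. By the $L^2(\Omega)$ isometry, together with Young's convolution inequality to strip off the two copies of $\moll_{\leq N}$ present in $Q_{\cdot,\leq N}$, one gets $\|\nabla e^{u\Delta} D_{1,\leq N}(z_0)\|_{L^2(\Omega)}\lesssim \|\tilde{K}_{z_0,u}\|_{L^2_{w_1}L^2_{w_2}(s_1<s_2)}$, where $\tilde K_{z_0,u}$ is defined with $Q_{\cdot}$ in place of $Q_{\cdot,\leq N}$. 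Applying Lemma~\ref{lemma:derivative-nonlinearity-non-resonant-part-L2-estimates} with $j=1$ and $\varep=\kappa/4$ then yields $\|\tilde{K}_{z_0,u}\|_{L^2_{w_1}L^2_{w_2}(s_1<s_2)}\lesssim t_0^{\kappa/4}u^{-\kappa/4}\lesssim u^{-\kappa/4}$, so that $u^{\kappa/2}\|\nabla e^{u\Delta}D_{1,\leq N}(z_0)\|_{L^2(\Omega)}\lesssim u^{\kappa/4}\lesssim 1$ uniformly in all the parameters.

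Gaussian hypercontractivity for second-order chaos then upgrades this to $u^{\kappa/2}\|\nabla e^{u\Delta}D_{1,\leq N}(z_0)\|_{L^p(\Omega)}\lesssim p$. To promote the pointwise bound to the required $L^\infty_{x_0,u}$ supremum inside the Besov norm, I would invoke a standard Kolmogorov / metric-entropy argument as in \cite[Appendix A]{BC23}: the ingredients are precisely analogous difference estimates for $\tilde K_{z_0,u}$ in $x_0$ and $u$, which follow from Lemma~\ref{lemma:derivative-nonlinearity-non-resonant-part-L2-estimates} with $j=2$ (for $x_0$-increments, giving one more factor of $u^{-1/2}$) and from a direct $\partial_u$ computation on $p_u$. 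This procedure loses only logarithmic factors in $u$ and polynomial factors in $p$ already absorbed, so the final bound remains $\lesssim p$. Finally, taking the $L_t^p$ norm in $t_0\in[0,1]$ is trivial since the pointwise estimate is uniform in $t_0$.

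The main nontrivial step is the kernel estimate already handled by Lemma~\ref{lemma:derivative-nonlinearity-non-resonant-part-L2-estimates}, which in turn relies on the covariant monotonicity machinery of Section~\ref{section:monotonicity} (in particular, the dual two-sided weighted estimate, Corollary~\ref{cor:dual-two-sided-weighted-estimate}). In other words, all the work has been packaged into this earlier lemma, and the proof of Lemma~\ref{lemma:derivative-nonlinearity-nonresonant-spatial-regularity-estimate} itself is essentially a short deterministic-to-probabilistic bookkeeping step; no dependence on $\|\Blin\|_{\resnorm}$ appears because the covariant energy estimates have already been used to absorb it. The only mild subtlety is ensuring that the additional noise mollification $\chi_{\leq N}$ in $Q_{z_0,\leq N}$ does not worsen the bounds, but this is immediate from Young's inequality since $\|\chi_{\leq N}\|_{L^1}\lesssim 1$ uniformly in $N$.
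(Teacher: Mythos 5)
Your proposal is correct and follows essentially the same route as the paper: reduce via the heat-kernel characterization of $\Cs_x^{1-\kappa}$, control the second-moment of the Wiener chaos kernel using the first estimate of Lemma~\ref{lemma:derivative-nonlinearity-non-resonant-part-L2-estimates} (no derivative) and the second estimate with $j=1$ (one derivative), absorb the mollification by Young, and upgrade via hypercontractivity plus the standard Kolmogorov-type reductions from \cite[Appendix A]{BC23}. The paper compresses the hypercontractivity and metric-entropy bookkeeping into the phrase ``standard reductions,'' which you have simply spelled out; there is no substantive difference.
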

\begin{proof}
From the definition \eqref{eq:D-1-N-def} of $D_{1, \leq N}$, note that
\begin{equs}
D_{1, \leq N}(z_0) &= 2\Im\int dw_1 dw_2 \ind(s_1 < s_2) \ovl{\zeta(w_1)} \zeta(w_2) \qkernel_{z_0, \leq N}(w_1, w_2), \\
\nabla e^{u \Delta} D_{1, \leq N}(z_0) &= 2\Im\int dw_1 dw_2 \ind(s_1 < s_2) \ovl{\zeta(w_1)} \zeta(w_2) \nabla_{x_0} \int dx_0' p_u(x_0, x_0') \qkernel_{z_0', \leq N}(w_1, w_2),
\end{equs}
The result now follows by standard reductions (see e.g. \cite[Appendix A]{BC23}), the heat kernel characterization of Besov spaces (Lemma \ref{lemma:besov-space-heat-kernel-characterization}), the first estimate of Lemma \ref{lemma:derivative-nonlinearity-non-resonant-part-L2-estimates}, and the second estimate of the same lemma with $j = 1$. 
\end{proof}

Next, we proceed to show the following time regularity estimate for $D_{1, \leq N}$.

\begin{lemma}[Time regularity estimate]\label{lemma:derivative-nonlinearity-nonresonant-time-regularity-estimate}
Let $0 < t_0' < t_0 \leq 1$,  $x_0 \in \T^2$, $z_0 = (t_0, x_0)$, and $z_0' = (t_0', x_0)$. Then, we have that for all $\power \in [1, \infty)$ and $N \in \dyadic$ that
\begin{equs}
\Big\|D_{1, \leq N}(z_0) - D_{1, \leq N}(z_0')\Big\|_{L^{\power}(\Omega)} \lesssim \power |t_0 - t_0'|^{\frac{1}{2}-\kappa}.
\end{equs}
\end{lemma}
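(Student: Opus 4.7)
The plan is to combine Gaussian hypercontractivity with the $L^2$ isometry for double Wiener integrals and then reduce matters to a deterministic kernel estimate, in the spirit of the proof of Lemma \ref{lemma:derivative-nonlinearity-nonresonant-spatial-regularity-estimate}. Since $D_{1, \leq N}(z_0)$ belongs to the second Wiener chaos with respect to $\zeta$, hypercontractivity produces the linear factor of $\power$ and reduces the claim to the $L^2(\Omega)$-bound $\|D_{1, \leq N}(z_0) - D_{1, \leq N}(z_0')\|_{L^2(\Omega)} \lesssim |t_0 - t_0'|^{1/2 - \kappa}$. By the isometry, and using Young's convolution inequality to absorb the mollifications $\moll_{\leq N}$, this in turn reduces to the deterministic bound
\begin{equs}
\|\qkernel_{z_0}(w_1, w_2) - \qkernel_{z_0'}(w_1, w_2)\|_{L^2_{w_1} L^2_{w_2}(0 < s_1 < s_2 < t_0)} \lesssim |t_0 - t_0'|^{1/2 - \kappa}.
\end{equs}
The natural split is $\qkernel_{z_0} - \qkernel_{z_0'} = \qkernel^{(1)} + \qkernel^{(2)}$, where $\qkernel^{(1)}$ restricts the $z$-integral to $t \in (t_0', t_0)$ with kernel $p(z; z_0)$, and $\qkernel^{(2)}$ restricts to $t < t_0'$ with the heat-kernel difference $p(z; z_0) - p(z; z_0')$ in place of $p(z; z_0)$.

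Both pieces will be estimated via the dual two-sided time-weighted estimate (Corollary \ref{cor:dual-two-sided-time-weighted-estimate}) with a small weight parameter $\lambda \sim \kappa$, following the template of the proof of Lemma \ref{lemma:derivative-nonlinearity-non-resonant-part-L2-estimates}. For $\qkernel^{(1)}$, I apply the corollary on $J = [s_1, t_0] \times \T^2$ with $K(w_2) = p(w_2; z_0)$ and $\alpha = \beta = \kappa$, so that the right-hand side weight is $(t - s_1)^{\kappa/2}(t_0 - t)^{-\kappa/2}$. Squaring, taking $L^2_{w_1}$, and using the diamagnetic inequality together with $\int dy_1\, p(w_1; z)^2 \lesssim (t - s_1)^{-1}$ gives a factor of $\kappa^{-1} t^\kappa$ from the $(s_1, y_1)$-integration; the remaining $t$-integration is controlled by $\int_{t_0'}^{t_0} (t_0 - t)^{-\kappa} t^\kappa\, dt \lesssim \kappa^{-1}|t_0 - t_0'|^{1 - \kappa}$, yielding $\|\qkernel^{(1)}\|_{L^2_{w_1} L^2_{w_2}} \lesssim |t_0 - t_0'|^{(1-\kappa)/2}$, which is more than sufficient.

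For $\qkernel^{(2)}$, I apply the corollary on $J = [s_1, t_0'] \times \T^2$ with $\alpha = \beta = \kappa$ and the symmetric backward heat flow $K(w_2) = p(w_2; z_0) + p(w_2; z_0')$; the left-hand side weight $(t_0' - s_2)^{-1/2} K(w_2)^{-1/2}$ is uniformly bounded below since $p(w_2; z_0') \lesssim (t_0' - s_2)^{-1}$. Using the elementary inequality $(a - b)^2 \leq (a + b)|a - b|$ with $a = p(z; z_0)$, $b = p(z; z_0')$ to absorb the resulting factor $K(z)^{-1}$, the problem reduces to controlling the spatial $L^1$-norm $\int dx\, |p(z; z_0) - p(z; z_0')|$. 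Interpolating between the trivial bound $\int dx\, |p(z; z_0) - p(z; z_0')| \leq 2$ at $\alpha = 0$ and the fundamental-theorem-of-calculus bound $\int dx\,|p(z; z_0) - p(z; z_0')| \lesssim (t_0 - t_0')(t_0' - t)^{-1}$ at $\alpha = 1$ (the latter following from the Euclidean heat-kernel identity $\ptl_r p(z; (r, x_0)) = \Delta_{x_0} p(z; (r, x_0))$ and the crude bound $\int dx\,|\ptl_r p(z; (r, x_0))| \lesssim (r - t)^{-1}$), one obtains $\int dx\, |p(z; z_0) - p(z; z_0')| \lesssim (t_0 - t_0')^\alpha (t_0' - t)^{-\alpha}$ for every $\alpha \in [0, 1]$. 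Choosing $\alpha = 1 - 2\kappa$, the final $t$-integral becomes $\int_0^{t_0'} (t_0' - t)^{-(1-\kappa)} t^\kappa\, dt \lesssim \kappa^{-1}(t_0')^{2\kappa}$, which yields $\|\qkernel^{(2)}\|_{L^2_{w_1} L^2_{w_2}} \lesssim |t_0 - t_0'|^{1/2 - \kappa}$.

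The main obstacle is precisely the Part II estimate: a direct application of Lemma \ref{lemma:derivative-nonlinearity-non-resonant-part-L2-estimates} (or its time-restricted modification from Remark \ref{remark:modified-Q-estimates}) would only yield regularity $1/4$. The critical improvement to $1/2 - \kappa$ hinges on two interlocking ingredients: first, the symmetric choice $K(w_2) = p(w_2; z_0) + p(w_2; z_0')$ which avoids the amplification of Gaussian tails that would result from using only one of the two heat kernels in the weight; and second, exploiting the $L^1_x$ time regularity of the Euclidean heat kernel at the almost-critical exponent $\alpha = 1 - 2\kappa$.
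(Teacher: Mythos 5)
Your proof is correct in its main steps and the reduction and splitting strategy match the paper's; the treatment of $\qkernel^{(1)}$ (the paper's $Q^2_{t_0, t_0', x_0, \leq N}$, restricted to $t \in (t_0', t_0)$) is substantively the same as the paper's via the dual two-sided time-weighted estimate.

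Where you genuinely diverge is in the heavier piece $\qkernel^{(2)}$ (the paper's $Q^1_{t_0, t_0', x_0, \leq N}$, involving the difference kernel $p(z;z_0) - p(z;z_0')$ for $t < t_0'$). The paper writes this difference via $\partial_{t_0} p = \Delta_{x_0} p$ as a $\tau$-integral, inserts the heat-flow identity $f = e^{\Delta}f - \int_0^1 du\, \Delta e^{u\Delta} f$ in the $x_0$-variable, applies Lemma~\ref{lemma:derivative-nonlinearity-non-resonant-part-L2-estimates} with $j=4$ and $j=2$ to the two regimes, interpolates between the resulting $u$-bounds, and integrates over $u$. Your argument replaces all of this with the symmetric backward heat weight $K(w_2) = p(w_2;z_0) + p(w_2;z_0')$, the pointwise inequality $(a-b)^2 \leq (a+b)|a-b|$ to cancel $K(z)^{-1}$ against the difference kernel, and interpolation of the elementary $L^1_x$ bound $\int dx\,|p(z;z_0)-p(z;z_0')| \lesssim \min\bigl(1, (t_0-t_0')(t_0'-t)^{-1}\bigr)$ at the near-critical exponent $\alpha = 1 - 2\kappa$. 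Both arguments ultimately hinge on the dual covariant energy estimate to extinguish the $\covd_{\Blin} p_{\Blin}(w_2;\cdot)$ factor at zero cost in $\Blin$, but yours avoids the auxiliary heat-flow decomposition and the need to track higher spatial derivatives $\nabla_{x_0}^j$ with $j=2,4$; what it costs you is the mildly delicate verification that $(t_0' - s_2)^{-1/2}K(w_2)^{-1/2} \gtrsim 1$ (valid, since $K(w_2) \lesssim (t_0' - s_2)^{-1}$) so that the weighted $L^2_{w_2}$ norm dominates the unweighted one. One small inaccuracy: your claim that applying Lemma~\ref{lemma:derivative-nonlinearity-non-resonant-part-L2-estimates} directly would only yield regularity $1/4$ is not something the paper asserts, and is not needed for your argument to stand; the rest is sound.
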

\begin{proof}
By Gaussian hypercontractivity, it suffices to take $\power = 2$. Recalling the definition \eqref{eq:D-1-N-def} of $D_{1, \leq N}$, we have that
\begin{equs}
D_{1, \leq N}(z_0) - D_{1, \leq N}(z_0') = 2 \Im \int dw_1 dw_2 \ind(s_1 < s_2) \ovl{\zeta(w_1)} \zeta(w_2) (Q_{z_0, \leq N}(w_1, w_2) - Q_{z_0', \leq N}(w_1, w_2)).
\end{equs}
Thus, it suffices to show
\begin{equs}\label{eq:time-difference-second-moment-intermediate-estimate}
\bigg\|Q_{z_0, \leq N}(w_1, w_2) - Q_{z_0', \leq N}(w_1, w_2)\bigg\|_{L_{w_1}^2 L_{w_2}^2(s_1 < s_2)} \lesssim |t_0 - t_0'|^{\frac{1}{2}-\kappa}.
\end{equs}
We split
\begin{equs}
Q_{z_0, \leq N}(w_1, w_2) - Q_{z_0', \leq N}(w_1, w_2) &= Q^1_{t_0, t_0', x_0
, \leq N}(w_1, w_2) + Q^2_{t_0, t_0', x_0, \leq N}(w_1, w_2) \\
&:= \int dz \ind(t < t_0') (p(z; z_0) - p(z; z_0')) \ovl{\massp_{{\Blin}, \leq N}(w_1; z)} \covd_{{\Blin}(z)} \massp_{{\Blin}, \leq N}(w_2; z) \\
&~~~+\int dz \ind(t_0' < t < t_0) p(z; z_0) \ovl{\massp_{{\Blin}, \leq N}(w_1; z)} \covd_{{\Blin}(z)} \massp_{{\Blin}, \leq N}(w_2; z).
\end{equs}
We first bound $Q^2_{t_0, t_0', x_0 \leq N}$. Fix $w_1 = (s_1, y_1)$ with $s_1 \in (0, t_0)$. Let $J_{w_1} := [s_1, t_0] \times \T^2$. By the dual two-sided time-weighted estimate Corollary \ref{cor:dual-two-sided-time-weighted-estimate}, we have that for $\lambda \in (0, \frac{1}{2})$,
\begin{equs}
\Big\| Q^2_{t_0, t_0', x_0 \leq N}(w_1, w_2)\Big\|_{L_{w_2}^2(J_{w_1})} &\lesssim \Big\|\ind(t_0' < t < t_0) (t - s_1)^\lambda (t_0 - t)^{-\lambda} p(z; z_0)^{-\frac{1}{2}} p(z; z_0) \ovl{p_{{\Blin}, \leq N}(w_1; z)} \Big\|_{L_z^2(J_{w_1})} 
\end{equs}
Taking $\lambda = \kappa$, we obtain 
\begin{equs}\label{eq:Q-2-intermediate-bound}
\Big\| Q^2_{t_0, t_0', x_0 \leq N}(w_1, w_2)\Big\|_{L_{w_1}^2 L_{w_2}^2(s_1 < s_2)} \lesssim \Big\|\ind(t_0' < t < t_0) t^\kappa (t_0 - t)^{-\kappa} p(z; z_0)^{\frac{1}{2}} \Big\|_{L_z^2} \lesssim  (t_0 - t_0')^{\frac{1}{2}-\kappa} t_0^\kappa,
\end{equs}
which is acceptable. Next, we bound $Q_{t_0, t_0', x_0, \leq N}^1$. Using that $\ptl_{t_0} p(z; z_0) = \Delta_{x_0} p(z; z_0)$, we may express
\begin{equs}\label{eq:Q-t-time-decomposition}
Q_{t_0, t_0', x_0, \leq N}^1(w_1, w_2) = \int_{t_0'}^{t_0} d \tau  \int dz \ind(t < t_0')\Delta_{x_0} p (z; (\tau, x_0)) \ovl{\massp_{{\Blin}, \leq N}(w_1; z)} \covd_{{\Blin}(z)} \massp_{{\Blin}, \leq N}(w_2; z).
\end{equs}
Next, observe that for a spatial function $f \colon\T^2 \ra \C$, we have that $f - e^\Delta f = -\int_0^1 du \ptl_u e^{u\Delta} f = -\int_0^1 du \Delta e^{u \Delta}f$. We may thus express (in the following the heat kernel acts on the $x_0$ variable and we omit the $w_1, w_2$ variables)
\begin{equs}\label{eq:Q-1-heat-kernel-decomposition}
Q_{t_0, t_0', x_0, \leq N}^1 = e^{\Delta} Q_{t_0, t_0', x_0, \leq N}^1 - \int_0^1 du \Delta e^{u \Delta} Q_{t_0, t_0', x_0, \leq N}^1.
\end{equs}
Fix $u \in (0, 1)$. We have by \eqref{eq:Q-t-time-decomposition} that
\begin{equs}
\Delta e^{u \Delta} Q_{t_0, t_0', x_0, \leq N}^1(w_1, w_2) &=  \int_{t_0'}^{t_0} d \tau  \int dz \ind(t < t_0')\Delta_{x_0}^2 p_u (z; (\tau, x_0)) \ovl{\massp_{{\Blin}, \leq N}(w_1; z)} \covd_{{\Blin}(z)} \massp_{{\Blin}, \leq N}(w_2; z) .
\end{equs}
By Minkowski's integral inequality and the second estimate of Lemma \ref{lemma:derivative-nonlinearity-non-resonant-part-L2-estimates} with $j = 4$ (and Remark \ref{remark:modified-Q-estimates}), we have that
\begin{equs}
\bigg\| &\int_{t_0'}^{t_0} d \tau  \int dz \ind(t < t_0')\Delta_{x_0}^2 p_u (z; (\tau, x_0)) \ovl{\massp_{{\Blin}, \leq N}(w_1; z)} \covd_{{\Blin}(z)} \massp_{{\Blin}, \leq N}(w_2; z) \bigg\|_{L_{w_1}^2 L_{w_2}^2(s_1 < s_2)} \\
&\leq  \int_{t_0'}^{t_)} d\tau \bigg\| \int dz \ind(t < t_0')\Delta_{x_0}^2 p_u (z; (\tau, x_0)) \ovl{\massp_{{\Blin}, \leq N}(w_1; z)} \covd_{{\Blin}(z)} \massp_{{\Blin}, \leq N}(w_2; z)  \bigg\|_{L_{w_1}^2 L_{w_2}^2(s_1 < s_2)} \\
&\lesssim \int_{t_0'}^{t_0} d\tau \tau^\kappa u^{-\frac{3}{2}-\kappa} \leq (t_0 - t_0') t_0^\kappa u^{-\frac{3}{2}-\kappa}.
\end{equs}
On the other hand, by the definition of $Q_{t_0, t_0', x_0, \leq N}^1$, the triangle inequality, and the second estimate of Lemma \ref{lemma:derivative-nonlinearity-non-resonant-part-L2-estimates} with $j = 2$ (and Remark \ref{remark:modified-Q-estimates}), we may also bound
\begin{equs}
\bigg\|\Delta e^{u \Delta} Q_{t_0, t_0', x_0, \leq N}^1(w_1, w_2)\bigg\|_{L_{w_1}^2 L_{w_2}^2(s_1 < s_2)} \lesssim t_0^{\frac{\kappa}{2}} u^{-\frac{1}{2}-\frac{\kappa}{2}}.
\end{equs}
Interpolating the two bounds, we obtain (for some $0 < \kappa_1 < \kappa$)
\begin{equs}
\bigg\|\Delta e^{u \Delta} Q_{t_0, t_0', x_0, \leq N}^1\bigg\|_{L_{w_1}^2 L_{w_2}^2(s_1 < s_2)} \lesssim (t_0 - t_0')^{\frac{1}{2}-\kappa} u^{-(1-\kappa_1)}.
\end{equs}
From this, it follows that
\begin{equs}\label{eq:Q-1-first-intermediate-bound}
\bigg\| \int_0^1 du \Delta e^{u \Delta} Q_{t_0, t_0', x_0, \leq N}^1\bigg\|_{L_{w_1}^2 L_{w_2}^2(s_1 < s_2)} &\leq \int_0^1 du \bigg\| \Delta e^{u \Delta} Q_{t_0, t_0', x_0, \leq N}^1\bigg\|_{L_{w_1}^2 L_{w_2}^2(s_1 < s_2)} \\
&\lesssim (t_0 - t_0')^{\frac{1}{2}-\kappa} .
\end{equs}
Now, by a similar argument, we may show that
\begin{equs}\label{eq:Q-1-second-intermediate-bound}
\bigg\| e^{\Delta} Q_{t_0, t_0', x_0, \leq N}^1\bigg\|_{L_{w_1}^2 L_{w_2}^2(s_1 < s_2)} \lesssim t_0 - t_0'.
\end{equs}
The estimate \eqref{eq:time-difference-second-moment-intermediate-estimate} now follows by combining \eqref{eq:Q-2-intermediate-bound}, \eqref{eq:Q-1-heat-kernel-decomposition}, \eqref{eq:Q-1-first-intermediate-bound}, and \eqref{eq:Q-1-second-intermediate-bound}.
\end{proof}

\begin{lemma}[$N$-regularity estimate]\label{lemma:derivative-nonlinearity-high-frequency-estimate}
For all $z_0 = (t_0, x_0)$, $\power \in [1, \infty)$, $N \in \dyadic$, $u > 0$, we have that
\begin{equs}
\Big\| (e^{u \Delta}D_{1, N})(z_0)\Big\|_{L^{\power}(\Omega)} \lesssim \power \big(1 + \|{\Blin}\|_{L_z^\infty([0, t_0] \times \T^2)}\big) N^{-1}\sqrt{\log N} \sqrt{\log u^{-1}}.
\end{equs}
\end{lemma}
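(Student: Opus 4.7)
The plan is to exploit Gaussian hypercontractivity to reduce to an $L^2(\Omega)$ bound, invoke the $L^2$-isometry for multiple stochastic integrals, use the high-frequency convolution identity \eqref{eq:high-frequency-kernel-convolution-identity} to extract the factor of $N^{-1}$, and then bound the resulting deterministic kernel via integration by parts in $x$ combined with the mixed-derivative energy estimate Lemma \ref{lemma:two-derivative-energy-estimate}. The logarithmic factors $\sqrt{\log N}$ and $\sqrt{\log u^{-1}}$ will emerge from the surviving $\moll_{\leq N}$-mollifier and the $p_u$-weight respectively, and the crucial point is to keep them separated under their square roots.

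Concretely, since $D_{1, N}$ lies in the second Wiener chaos, hypercontractivity reduces matters to $\power = 2$. The smoothed object $(e^{u\Delta} D_{1, N})(z_0)$ is a second-order stochastic integral with kernel
\[
K_N(w_1, w_2) = \int dx_0' \, p_u(x_0, x_0') \bigl[\qkernel_{(t_0, x_0'), \leq N}(w_1, w_2) - \qkernel_{(t_0, x_0'), \leq N/2}(w_1, w_2)\bigr],
\]
and the $L^2$-isometry gives $\|(e^{u\Delta} D_{1, N})(z_0)\|_{L^2(\Omega)}^2 \lesssim \|K_N\|_{L^2_{w_1, w_2}(s_1 < s_2 < t_0)}^2$. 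Splitting the difference of mollifier products as $\moll_N \otimes \moll_{\leq N} + \moll_{\leq N/2} \otimes \moll_N$ (the second summand is symmetric to the first), I would apply the high-frequency identity \eqref{eq:high-frequency-kernel-convolution-identity} to the $y_1$-convolution with $\moll_N$. After Minkowski and Young, this produces
\[
\|(\moll_N \otimes \moll_{\leq N}) *_{y_1, y_2} \qkernel_\cdot\|_{L^2_{w_1, w_2}} \lesssim N^{-1} \bigl\|\nabla_{y_1} \tilde K_{\leq N}\bigr\|_{L^2_{w_1, w_2}(s_1 < s_2 < t_0)},
\]
with $\tilde K_{\leq N}(w_1, w_2) := \int dz \, p_u(z; z_0) \overline{\massp_\Blin(w_1; z)} \covd_{\Blin(z)} \massp_{\Blin, \leq N}(w_2; z)$ retaining the mollifier on the $y_2$-side.

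To estimate $\|\nabla_{y_1} \tilde K_{\leq N}\|_{L^2}$, I use $\nabla_{y_1} \overline{\massp_\Blin(w_1; z)} = \overline{\covd_{-\Blin(w_1)} \massp_\Blin(w_1; z)} - \icomplex \Blin(w_1) \overline{\massp_\Blin(w_1; z)}$ to split $\nabla_{y_1} \tilde K_{\leq N}$ into a covariant piece and a piece carrying one explicit power of $\Blin(w_1)$. For the covariant piece I integrate by parts in $x$ using $\int dx \, p_u \overline{F_1} \covd_\Blin F_2 = -\int dx (\nabla p_u) \overline{F_1} F_2 - \int dx \, p_u \overline{\covd_\Blin F_1} F_2$; the main resulting term contains the mixed covariant derivative $\covd_{\Blin(z)} \covd_{-\Blin(w_1)} \massp_\Blin(w_1; z)$, to which Lemma \ref{lemma:two-derivative-energy-estimate} applies fiberwise in $w_1$ with $H(z) = p_u(z; z_0) \massp_{\Blin, \leq N}(w_2; z)$. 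By the diamagnetic inequality and a direct Fubini calculation,
\[
\int dw_2 \|H\|_{L^2_z}^2 \lesssim \int_0^{t_0} (t_0 + u - t)^{-1} \log(N^2 t + 1) \, dt \lesssim \log N \cdot \log u^{-1},
\]
where the $\log N$ comes from the short-time regularization of $p_{\leq N}(w_2; z)^2$ and the $\log u^{-1}$ from the $p_u$-weight near $t = t_0$. The boundary term involving $\nabla p_u$ and the explicit-$\Blin(w_1)$ piece are handled by the same bilinear-form technique via Corollaries \ref{kernel:cor-energy-estimate-dual} and \ref{cor:dual-endpoint-time-weighted-estimate}, with the latter absorbing one power of $\|\Blin\|_{L^\infty}$.

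The main technical subtlety is ensuring each logarithmic loss appears under its own square root. A naive application of the energy estimate $\|p_u^{1/2} \phi\|_{L^\infty_t L^2_x} \lesssim \sqrt{\log u^{-1}} \|G\|_{L^2}$ to both factors of a bilinear form $\int p_u \overline{\phi_1} \phi_2$ yields $\log u^{-1}$, which is too large compared to $\sqrt{\log N \cdot \log u^{-1}}$ when $u \ll N^{-1}$. The cure is to keep the mollification $\moll_{\leq N}$ on the $y_2$-side throughout, converting the would-be divergence $\int dw \, p(w; z)^2 = \infty$ into $\int dw \, p_{\leq N}(w; z)^2 \lesssim \log N$, and to use Lemma \ref{lemma:two-derivative-energy-estimate} fiberwise on the $w_1$-side so that no additional $p_u$-weighted energy estimate is needed there. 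This balanced use of the covariant monotonicity-based estimates — as opposed to any Gaussian perturbative comparison which would cost extra powers of $\|\Blin\|_{L^\infty}$ — is the heart of the argument.
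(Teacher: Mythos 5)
Your proposal captures the two essential structural moves of the paper's proof — the high-frequency convolution identity \eqref{eq:high-frequency-kernel-convolution-identity} to harvest the factor $N^{-1}$, and the replacement of a naive two-sided $L^\infty_t L^2_x$ energy estimate (which would give $\log u^{-1}$) by a one-sided $L^2$-type covariant estimate combined with the $\log N$ coming from $\int dw_2\, p_{\leq N}(w_2;z)^2$. The route you take is, however, genuinely different on the term where the high-frequency mollifier sits on the $y_1$-variable. You split $\nabla_{y_1} = \covd_{-\Blin(w_1)} + \icomplex\Blin(w_1)$ \emph{first}, then integrate by parts in $x$ to build the mixed derivative $\covd_{\Blin(z)}\covd_{-\Blin(w_1)}\massp_\Blin(w_1;z)$ so that Lemma \ref{lemma:two-derivative-energy-estimate} applies fiberwise in $w_1$. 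The paper instead first applies the dual initial time-weighted estimate (Lemma \ref{kernel:lem-time-weighted-dual}) over $w_2$, paying a factor $(t-s_1)^{1/2}$, and only \emph{then} splits $\nabla_{y_1}$, handling the covariant piece with Lemma \ref{lemma:covd-pa-l2-spacetime-norm}. Both routes produce the $\sqrt{\log N}\sqrt{\log u^{-1}}$ bound for the respective main terms. Your use of Lemma \ref{lemma:two-derivative-energy-estimate} for this summand is an appealing unification, since the paper only invokes that lemma on the other summand (mollifier on $y_2$), where no integration by parts is required because $\nabla_{y_2}$ already lands on $\covd_{\Blin(z)}\massp_\Blin(w_2;z)$.

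There is a genuine gap at the boundary term produced by your integration by parts, namely
\begin{equation*}
\int dz\, \bigl(\nabla_x p_u(z;z_0)\bigr)\, \overline{\covd_{-\Blin(w_1)}\massp_\Blin(w_1;z)}\; \massp_{\Blin,\leq N}(w_2;z).
\end{equation*}
Here the $\covd_{\Blin(z)}$ has been consumed, so the $w_2$-kernel carries no covariant derivative. Corollaries \ref{kernel:cor-energy-estimate-dual} and \ref{cor:dual-endpoint-time-weighted-estimate}, which you invoke, both estimate expressions of the form $\int dz\, H(z)\,\covd_{A(z)}p_A(w;z)$ and therefore do not apply to either factor of this integrand as written. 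Moreover, a naive Cauchy--Schwarz in $z$ here reintroduces exactly the double logarithm you warned against, because you are forced to estimate $\|\covd_{-\Blin(w_1)}\massp_\Blin(w_1;z)\|_{L^2_{w_1}}$ and $\|p_{\leq N}(w_2;z)\|_{L^2_{w_2}}$ independently. The paper sidesteps this entirely: by applying Lemma \ref{kernel:lem-time-weighted-dual} over $w_2$ before touching $\nabla_{y_1}$, the $\covd_{\Blin(z)}$ derivative is spent in a single stroke, there is no integration by parts in $x$, and the surviving $(t-s_1)^{1/2}$ weight makes the $\Blin(w_1)$-piece and the covariant piece both summable over $w_1$ without further log losses. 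If you wish to salvage the integration-by-parts route, you would need a time-reversed, $w$-dual analogue of the initial-time-weighted estimate to exploit the $\covd_{-\Blin(w_1)}$ on the $w_1$-kernel; this is available in principle via Lemma \ref{lemma:heat-kernel-time-reversal}, but it is not the statement of the corollaries you cite and would need to be worked out. Finally, note that the claimed ``symmetry'' between the two summands of $D_{1,N}$ is not literal, since $\covd_{\Blin(z)}$ only ever acts on the $w_2$-kernel; but as explained above, the summand with the mollifier on $y_2$ is actually the easier one, where Lemma \ref{lemma:two-derivative-energy-estimate} applies without any integration by parts, so this imprecision does not affect the viability of the argument.
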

\begin{proof}
Recall that $D_{1, N} = D_{1, \leq N} - D_{1, \frac{N}{2}}$. When we take this difference of $D_{1, \leq N}$ and $D_{1, \leq \frac{N}{2}}$, one of the linear objects must enter at high frequency. In particular, we have that
\begin{equs}\label{eq:D-1-N-decomp}
&D_{1, N}(z_0)  = \\
&\,\,\,\int dw_1 dw_{2} \ind(s_1 < s_2) \ovl{\zeta(w_1)} \zeta(w_2) \int dz p (z; z_0)\int dy_1' \moll_N(y_1 - y_1') \ovl{\massp_{\Blin}(w_1'; z)} \covd_{{\Blin}(z)} \massp_{{\Blin}, \leq N}(w_2; z) \\
&+\int dw_1 dw_{2} \ind(s_1 < s_2) \ovl{\zeta(w_1)} \zeta(w_2) \int dz p(z; z_0) \ovl{\massp_{{\Blin}, \leq\frac{N}{2}}(w_1; z)} \int dy_2' \moll_N(y_2 - y_2') \covd_{{\Blin}(z)} \massp_{{\Blin}}(w_2'; z),
\end{equs}
This also implies that the smoothed version $(e^{u \Delta} D_{1, N})(z_0)$  is given by almost the same formula above, except the $p(z; z_0)$ is replaced by $p_u(z; z_0)$. We bound the $L_{w_1}^2 L_{w_2}^2$ norm of each term separately. Recalling the high frequency convolution identity \eqref{eq:high-frequency-kernel-convolution-identity}, we have that
\begin{equs}
\bigg\| \int d&z p_u(z; z_0)\int dy_1' \moll_N(y_1 - y_1') \ovl{\massp_{\Blin}(w_1'; z)} \covd_{{\Blin}(z)} \massp_{{\Blin}, \leq N}(w_2; z)\bigg\|_{L_{w_1}^2 L_{w_2}^2(s_1 < s_2)} \\
&\leq \int_{\frac{N}{2}}^N dM M^{-2} \bigg\| \int dz p_u(z; z_0) \int dy_1' \tilde{\moll}_M(y_1 - y_1') \cdot \ovl{\nabla_{y_1'} p_{\Blin}(w_1'; z)} \covd_{{\Blin}(z)} p_{{\Blin}, \leq N}(w_2; z)\bigg\|_{L_{w_1}^2 L_{w_2}^2(s_1 < s_2)},
\end{equs}
where $\tilde{\moll}_M(y) = \moll_{\leq M}(y) M y$. Fix $M$ and $w_1$. By the dual time-weighted estimate Corollary \ref{kernel:lem-time-weighted-dual}, we have that
\begin{equs}
\bigg\| \int dz p_u(z; z_0) \int dy_1' \tilde{\moll}_M(y_1 - y_1') &\cdot \ovl{\nabla_{y_1'} p_{\Blin}(w_1'; z)} \covd_{{\Blin}(z)} p_{{\Blin}, \leq N}(w_2; z)\bigg\|_{L_{w_2}^2([s_1, t_0] \times \T^2)} \\
&\lesssim \bigg\| p_u(z; z_0) (t-s_1)^{\frac{1}{2}}  \int dy_1' \tilde{\moll}_M(y_1 - y_1') \cdot \ovl{\nabla_{y'} p_{\Blin}(w_1'; z)} \bigg\|_{L_z^2([s_1, t_0] \times \T^2)}.
\end{equs}
Taking the $L_{w_1}^2$ norm and then exchanging integration, we reduce to bounding
\begin{equs}
\bigg\| p_u(z; z_0)  (t-s)^{\frac{1}{2}} \int dy_1' &\tilde{\moll}_M(y_1 - y_1') \cdot \ovl{\nabla_{y'} p_{\Blin}(w_1'; z)} \bigg\|_{L_z^2 L_{w_1}^2(s_1 < t)} \leq \\
&\bigg\| p_u(z; z_0)  (t-s_1)^{\frac{1}{2}} \int dy_1' \tilde{\moll}_M(y_1 - y_1') \cdot \ovl{\covd_{-{\Blin}(w_1')} p_{\Blin}(w_1'; z)} \bigg\|_{L_z^2 L_{w_1}^2(s_1 < t)} \label{eq:intermediate-high-frequency-term-1} \\
&+ \bigg\| p_u(z; z_0) (t-s_1)^{\frac{1}{2}}  \int dy_1' \tilde{\moll}_M(y_1 - y_1') \cdot {\Blin}(w_1')\ovl{p_{\Blin}(w_1'; z)} \bigg\|_{L_z^2 L_{w_1}^2(s_1 < t)}. \label{eq:intermediate-high-frequency-term-2}
\end{equs}
To bound \eqref{eq:intermediate-high-frequency-term-1}, we apply the last estimate of Lemma \ref{lemma:covd-pa-l2-spacetime-norm}, combined with the presence of the mollifier $\tilde{\moll}_M$ which prevents a divergence, at the cost of a $\sqrt{\log M}$ term. To bound \eqref{eq:intermediate-high-frequency-term-2}, we use the diamagnetic inequality and standard estimates. In summary, we may obtain:
\begin{equs}
\eqref{eq:intermediate-high-frequency-term-1} \lesssim \sqrt{\log M} \|p_u(z; z_0)\|_{L_z^2} \lesssim \sqrt{\log M} \sqrt{\log u^{-1}}, ~~~~~ \eqref{eq:intermediate-high-frequency-term-2} \lesssim \|{\Blin}(z)\|_{L_z^\infty}\sqrt{\log u^{-1}}.
\end{equs}
Since both terms are acceptable, this takes care of the first term arising from the decomposition \eqref{eq:D-1-N-decomp} of $D_{1, N}$.

Next, we bound the second term arising from \eqref{eq:D-1-N-decomp}. Again, it suffices to fix $M$ and bound
\begin{equs}
\bigg\|\int dz& p_u(z; z_0) \ovl{p_{{\Blin}, \frac{N}{2}}(w_1; z)} \nabla_{y_2} \covd_{{\Blin}(z)} p_{\Blin}(w_2; z)\bigg\|_{L_{w_1}^2 L_{w_2}^2(s_1 < s_2)} \\
&\leq \bigg\|\int dz p_u(z; z_0) \ovl{p_{{\Blin}, \frac{N}{2}}(w_1; z)} \covd_{-{\Blin}(w_2)} \covd_{{\Blin}(z)} p_{\Blin}(w_2; z)\bigg\|_{L_{w_1}^2 L_{w_2}^2(s_1 < s_2)} \\
&+ \bigg\|\int dz p_u(z; z_0) \ovl{p_{{\Blin}, \frac{N}{2}}(w_1; z)}  {\Blin}(w_2) \covd_{{\Blin}(z)} p_{\Blin}(w_2; z)\bigg\|_{L_{w_1}^2 L_{w_2}^2(s_1 < s_2)}. 
\end{equs}
By the two derivative energy estimate Lemma \ref{lemma:two-derivative-energy-estimate}, the first term is bounded by
\begin{equs}
\bigg\| p_u(z; z_0) \ovl{p_{{\Blin}, \frac{N}{2}}(w_1; z)}\bigg\|_{L_{w_1}^2 L_z^2(s_1 < t)} &= \bigg\| p_u(z; z_0) \ovl{p_{{\Blin}, \frac{N}{2}}(w_1; z)}\bigg\|_{L_{z}^2 L_{w_1}^2(s_1 < t)} \lesssim \sqrt{\log N} \sqrt{\log u^{-1}},
\end{equs}
which is acceptable. The second term may be handled by using $\|{\Blin}\|_{L_z^\infty}$ and applying energy estimates in a similar manner.
\end{proof}

Finally, we can prove Proposition \ref{prop:D-1-N-bound}.

\begin{proof}[Proof of Proposition \ref{prop:D-1-N-bound}]
There is some choice $\lambda_1, \lambda_2 \in [0, 1-\kappa]$ with $\lambda_1 + \lambda_2 = 1-\kappa$ such that for functions $f \colon [0, 1] \times \T^2 \ra \C$, we have that
\begin{equs}
\|f\|_{C_t^{\kappa} \Cs_x^{1-10\kappa}} \leq \|f\|_{L_t^{\frac{2}{\kappa}}\Cs_x^{1-\frac{\kappa}{2}}}^{\lambda_1} \|f\|_{C_t^{\frac{1}{2}-\frac{\kappa}{4}} \Cs_x^{-\frac{\kappa}{4}}}^{\lambda_2} \|f\|_{L_t^{-\frac{2}{\kappa}} \Cs_x^{-\frac{\kappa}{2}}}^{\kappa}.
\end{equs}
This is because the left-hand side has parabolic regularity $1-8\kappa$, while the three norms on the right-hand side respectively have parabolic regularity $1-\frac{3\kappa}{2}$, $1 - \frac{3\kappa}{4}$, $-\frac{3\kappa}{2}$. We may also bound $\|f\|_{C_t^{\frac{1}{2}-5\kappa} \Cs_x^{2\kappa}}$ with a possibly different choice of $\lambda_1, \lambda_2$, due to similar considerations. The desired result now follows by combining this with Lemmas \ref{lemma:derivative-nonlinearity-nonresonant-spatial-regularity-estimate}, \ref{lemma:derivative-nonlinearity-nonresonant-time-regularity-estimate}, and \ref{lemma:derivative-nonlinearity-high-frequency-estimate}.
\end{proof}

\subsection{Resonant part of derivative nonlinearity}\label{subsection:resonant-derivative-nonlinearity}

In this section, we control the resonant part of the derivative nonlinearity. Combined with Proposition \ref{prop:derivative-nonlinearity-non-resonant}, this will complete the proof of Theorem \ref{intro:thm-cshe}\ref{item:cshe-derivative-nonlinearity}.

The following central idea underlies the arguments of this subsection. First, in the case when ${\Blin}$ is constant, we can actually compute the limit\footnote{Moreover, this limit is exactly matching the counterterm needed to ensure gauge covariance (Theorem \ref{thm:gauge-covariance}).} of the resonant part as the noise smoothing parameter $N \toinf$ (Lemma~\ref{lemma:constant-A-limit-resonance}). On the other hand, the whole difficulty with estimating the resonant part is in understanding the behavior at short distances. If we assume control on derivatives of $\Blin$, then  ${\Blin}$ is approximately constant at these scales. Naturally, we will thus proceed by a perturbation argument based on Corollary \ref{cor:p-A-u-expansion} in order to control the difference in resonant parts with general ${\Blin}$ and constant ${\Blin}$. This allows us to reduce to the case of constant ${\Blin}$, which we understand perfectly well.

The argument outlined in the preceding paragraph will ultimately give an estimate which depends on one power of ${\Blin}$ (Lemma \ref{lemma:resonant-part-A-dependent-estimate}). In order to reduce the ${\Blin}$-dependence as much as possible (recall that this is crucial for us to control the growth of solutions to \eqref{intro:eq-SAH}), we will interpolate this estimate with one which essentially does not depend on ${\Blin}$ (Lemma \ref{lemma:non-A-dependent-estimate}).

\begin{notation}[Localization]
In this section, whenever we write $|t-s| \sim \rho$, we mean that $|t-s| \in [\frac{\rho}{2}, \rho]$. When we write $|t-s| \lesssim \rho$, we mean that $|t-s| \in [0, 10\rho]$. In particular, $|t-s| \sim \rho$ implies $|t-s| \lesssim \rho$.
\end{notation}

\begin{remark}
We are able to use sharp cutoffs in our localization here because we will never need control on derivatives of the cutoff function $\ind(|t-s| \sim \rho)$.
\end{remark}

\begin{definition} 
For $N \in \dyadic$, define $R_{{\Blin}, \leq N}$ to be the resonant part of $\Im\big(\ovl{\philinear[{\Blin}, \leqN]} \covd_{\Blin} \philinear[{\Blin}, \leqN]\big)$, i.e.
\begin{equs}
R_{{\Blin}, \leq N}(z) := \int dw \int dy_1 dy_2 \moll_{\leq N}(y - y_1) \chi_{\leq N}(y - y_2) \Im\Big(\ovl{\massp_{\Blin}(w_1; z)} \covd_{{\Blin}(z)} \massp_{\Blin}(w_2; z)\Big) .
\end{equs}
For $\rho > 0$, define also the localized version
\begin{equs}
R_{{\Blin}, \leq N}^\rho(z) :=  \int dw \ind(|t-s| \sim \rho) \int dy_1 dy_2 \moll_{\leq N}(y - y_1) \moll_{\leq N}(y - y_2)  \Im \Big( \ovl{\massp_{\Blin}(w_1; z)} \covd_{{\Blin}(z)} \massp_{\Blin}(w_2; z)\Big) .
\end{equs}
\end{definition}

Observe that we may integrate out the $y$ variable and write
\begin{equs}\label{eq:resonant-part-y-integrated-out}
R_{{\Blin}, \leq N}^\rho(z) = \int_0^t ds \ind(|t-s| \sim \rho) \int dy_1 dy_2 \moll_{\leq N}^{*(2)}(y_1 - y_2) \Im\Big(\ovl{\massp_{\Blin}(w_1; z)} \covd_{{\Blin}(z)} \massp_{\Blin}(w_2; z)\Big). 
\end{equs}

We will gradually build towards the main result of this section, which we state as follows. Recall the norm $\|\cdot\|_{T, \resnorm}$ from Definition \ref{intro:def-norm}.

\begin{proposition}[Resonant part of derivative nonlinearity]\label{cshe:prop-resonant}
Let $T \in [0, 1]$, $\alpha \in [0, 1)$, $N \in \dyadic$. For any $\lambda \in (\frac{1}{2}, 1]$, we have that
\begin{align*}
\Big\| \Duh\Big(R_{{\Blin}, \leq N}\Big) - &\Duh\Big(\gaugerenorm {\Blin}\Big)\Big\|_{C_t^0 \Cs_x^\alpha \cap C_t^{\alpha/2} \Cs_x^0([0, T] \times \T^2)} \lesssim_{\alpha, \lambda} \allowdisplaybreaks[0]\\
&T^{\frac{1+\lambda-\alpha}{2}}(1 + \|{\Blin}\|_{T, \resnorm} + N^{-1} \|{\Blin}\|_{T, \resnorm}^2)^\lambda (1 + N^{-1} \|{\Blin}\|_{T, \resnorm})^{1-\lambda} + N^{-1} T^{\frac{1-\alpha}{2}}\|{\Blin}\|_{T, \resnorm}^3.
\end{align*}
\end{proposition}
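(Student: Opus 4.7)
The plan is to decompose $R_{\Blin, \leq N} - \gaugerenorm \Blin$ into dyadic pieces in the time-separation variable $|t-s|$, prove two complementary estimates on each piece (one covariant and essentially $\Blin$-independent, one perturbative with one power of $\Blin$), interpolate between them, and then apply the weighted Duhamel estimate of Lemma~\ref{prelim:lem-Duhamel-weighted} to pass to the target H\"{o}lder--Besov norm.

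\textbf{Dyadic decomposition and the two estimates.} First I would write $R_{\Blin,\leq N} = \sum_{\rho \in \dyadic,\, \rho \leq 1} R_{\Blin,\leq N}^\rho$ and analyze each scale separately. For each $\rho$ I plan to establish two bounds. The first is a covariant estimate giving a pointwise control on the integrand of order $\rho^{-1/2}$ (equivalently, an $L_s^1 L_y^\infty$-type bound of order $\rho^{1/2}$), essentially independent of $\Blin$. This should follow from Cauchy--Schwarz in $y_1, y_2$ in the representation \eqref{eq:resonant-part-y-integrated-out}, combined with the localized $L^2$-bounds on $\covd_A p_A$ from Lemma~\ref{lemma:covd-pa-l2-spacetime-norm} and the diamagnetic inequality (Lemma~\ref{kernel:lem-diamagnetic-heat-kernel}) applied to the other heat kernel factor. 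The second estimate is perturbative: it compares $R_{\Blin,\leq N}^\rho(z)$ to its constant-$\Blin$ analog at the base point $z$, producing a bound of order $\rho^{1/2}(1 + \|\Blin\|_{T,\resnorm})$ at the cost of one power of $\|\Blin\|_{T,\resnorm}$.

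\textbf{Reduction to a constant connection.} To prove the perturbative estimate, I would freeze $\Blin$ at $z$ by interpolating along $A_u(w) := (1-u)\Blin(z) + u\Blin(w)$, so that $A_0 \equiv \Blin(z)$ and $A_1 = \Blin$. By Corollary~\ref{cor:p-A-u-expansion}, the difference $R_{\Blin,\leq N}^\rho(z) - R_{\Blin(z),\leq N}^\rho(z)$ can be written as $\int_0^1 du\, \ptl_u R_{A_u,\leq N}^\rho(z)$, where each resulting term contains either $\Blin(v)-\Blin(z)$ or $\ptl_j \Blin^j(v)$. Since the effective range of $v$ is $|t_v-t| \lesssim \rho$, the fundamental theorem of calculus combined with the $\|\cdot\|_{T,\resnorm}$-norm (which controls $\nabla \Blin \in L_t^2 L_x^\infty$ and $\ptl_j \Blin^j \in L_t^\infty L_x^\infty$) yields the advertised $\rho^{1/2}(1+\|\Blin\|_{T,\resnorm})$ gain; higher-order contributions will produce the $N^{-1}\|\Blin\|_{T,\resnorm}^2$ correction appearing in the main term of the proposition. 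For the remaining constant-$\Blin$ piece $R_{\Blin(z),\leq N}^\rho(z)$, I would perform the real-space analog of the Fourier calculation of Lemma~\ref{lemma:resonant-parts-converge-fourier-calculation} (compare Remark~\ref{remark:two-calculations}): in this case the kernel reduces to an explicit Gaussian with drift $\Blin(z)$, and after summing over $\rho$ and sending $N \to \infty$ the leading contribution is exactly $\gaugerenorm \Blin(z)$, with the mollifier-induced remainder giving rise to the $N^{-1} T^{(1-\alpha)/2} \|\Blin\|_{T,\resnorm}^3$ error term.

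\textbf{Interpolation, summation, and Duhamel.} For each $\rho$, I would interpolate the covariant and perturbative estimates with weights $1-\lambda$ and $\lambda$ respectively, obtaining a bound of order $\rho^{\lambda - 1/2}(1 + \|\Blin\|_{T,\resnorm})^\lambda$ on the $\rho$-piece. The condition $\lambda > 1/2$ is precisely what makes $\sum_\rho \rho^{\lambda-1/2}$ a convergent geometric series, and this explains the range $\lambda \in (1/2, 1]$ stated in the proposition. After summing in $\rho$ and combining with the constant-$\Blin$ limit from the previous step to reconstruct the $\gaugerenorm \Blin$ counterterm, the resulting estimate on $R_{\Blin,\leq N} - \gaugerenorm \Blin$ is fed into Lemma~\ref{prelim:lem-Duhamel-weighted} with suitably chosen exponents, which converts it into the required $C_t^0 \Cs_x^\alpha \cap C_t^{\alpha/2} \Cs_x^0$-bound and produces the $T^{(1+\lambda-\alpha)/2}$ factor.

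\textbf{Main obstacle.} The principal technical difficulty lies in the real-space resonance calculation that identifies the constant-$\Blin$ limit with $\gaugerenorm \Blin$. Unlike the Fourier-based derivation of $\gaugerenorm$ in Section~\ref{section:gauge-covariance}, here the calculation must be carried out using heat kernel representations, since switching to Fourier space would destroy the covariant structure that makes the energy estimates of Section~\ref{section:energy-estimates} available in the first place. Extracting the exact coefficient $1/(8\pi)$ from short-time heat kernel asymptotics while tracking the $\rho$- and $N$-dependent errors uniformly in $\Blin$---so that the main term involves only linear dependence on $\|\Blin\|_{T,\resnorm}$ and the cubic remainder comes with an $N^{-1}$ gain---is the most delicate aspect, and ultimately drives the precise structure of the bound.
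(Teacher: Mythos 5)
Your proposal matches the paper's own proof essentially step for step: the dyadic decomposition in the time-separation $|t-s|$, the pair of complementary estimates per scale (the covariant $\rho^{-1/2}$ bound via Cauchy--Schwarz, Lemma~\ref{lemma:covd-pa-l2-spacetime-norm}, and the diamagnetic inequality, corresponding to Lemma~\ref{lemma:non-A-dependent-estimate}; and the perturbative $\rho^{1/2}(1+\|\Blin\|_{T,\resnorm})$ bound obtained by freezing $\Blin$ at the base point via Corollary~\ref{cor:p-A-u-expansion}, corresponding to Lemma~\ref{lemma:resonant-part-A-dependent-estimate}), the real-space identification of the constant-$\Blin$ limit with $\gaugerenorm\Blin$ including the $N^{-1}|b|^3 t^{-1/2}$ error rate (Lemma~\ref{lemma:constant-A-limit-resonance}), the interpolation with weights $1-\lambda$ and $\lambda$, the geometric summation for $\lambda>\frac12$, and the use of the weighted Duhamel estimate. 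This is the same argument as in the paper, with only cosmetic differences such as the direction of the interpolation parameter and whether Duhamel is applied before or after the dyadic summation.
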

\begin{remark}
\begin{enumerate}[label=(\roman*)]
\item In Section \ref{section:Abelian-Higgs}, $N$ will be chosen\footnote{To be precise, the frequency-truncation parameter $L$ in \eqref{AH:eq-L} will be chosen much larger than the size of the linear heat flow $\Blin$ in \eqref{AH:eq-Blin}.} to be much larger than ${\Blin}$, and one should therefore regard any term in the estimate with an $N^{-1}$-factor as negligible. With this heuristic, the estimate simplifies to
$ T^{\frac{1+\lambda-\alpha}{2}} (1 + \|{\Blin}\|_{T, \resnorm})^\lambda$. 
We will later take $\lambda = \frac{1}{2}+$ and $\alpha = 0+$, for which the estimate becomes 
\begin{equs}\label{eq:resonant-part-estimate-moral-bound}
T^{\frac{3}{4}-} (1 + \|{\Blin}\|_{T, \resnorm}^{\frac{1}{2}+}).
\end{equs}
The precise powers here will play a key role in the proof of Proposition \ref{decay:prop-short}.
\item The counterterm $\Duh(\gaugerenorm {\Blin})$ is crucial here, because if we were to just estimate $\Duh(R_{{\Blin}, \leq N})$, we would not be able to obtain a bound with powers as in \eqref{eq:resonant-part-estimate-moral-bound}. This is because at best we could hope to bound $\Duh(\gaugerenorm {\Blin})$ by $T \|{\Blin}\|$ for some norm of ${\Blin}$. Thus $\Duh(\gaugerenorm {\Blin})$ plays two roles for us in this paper: (1) it ensures gauge covariance (Theorem \ref{thm:gauge-covariance}) (2) it allows us to obtain the estimate \eqref{eq:resonant-part-estimate-moral-bound} which has good (enough) dependence on ${\Blin}$.
\end{enumerate}
\end{remark}

Before starting with the proof of Proposition \ref{cshe:prop-resonant}, we note that Theorem \ref{intro:thm-cshe}\ref{item:cshe-derivative-nonlinearity} now immediately follows.

\begin{proof}[Proof of Theorem \ref{intro:thm-cshe}\ref{item:cshe-derivative-nonlinearity}]
This follows by combining Propositions \ref{prop:derivative-nonlinearity-non-resonant} and \ref{cshe:prop-resonant}.
\end{proof}

We begin towards the proof of Proposition \ref{cshe:prop-resonant}. We first proceeds towards Definition \ref{def:resgauge}, which we will later see is precisely the formula for the resonant part when ${\Blin}$ is constant. Recall from Definition \ref{def:mollifiers} the Euclidean mollifiers $\eucmoll$ and $\eucmoll_{\leq N} = N^2 \eucmoll(N^2 \cdot)$, the latter of which was used to define $\moll_{\leq N}$ via periodization.

\begin{definition}[Euclidean heat 
 kernel]
Let $\eucheat_t(x)$ be the Euclidean heat kernel, i.e.
\begin{equs}
\eucheat_t(x) = \frac{1}{4\pi t} 
\exp\bigg(-\frac{|x|^2}{4\pi t}\bigg), ~~ x \in \R^2, t > 0.
\end{equs}
\end{definition}

\begin{definition}\label{def:resgauge}
For $b \in \R^2$, $t > 0$, and $N \in \dyadic > 0$, define 
\begin{equs}
\resgauge{\leq N}(b, t) := -\int_0^t ds e^{-2(t-s)} \int_{(\R^2)^2} dy_1 dy_2 (\eucmoll_{\leq N})^{*(2)} (y_1 - y_2) \eucheat_{t-s}(y_1) (\nabla \eucheat_{t-s})(y_2) \Im(e^{\icomplex (y_2 - y_1) \cdot b}).
\end{equs}
For $\rho > 0$, define also the localized version
\begin{equs}
&\resgauge{\leq N}^\rho(b, t) := \\
&-\int_0^t ds \ind(|t-s| \sim \rho) e^{-2(t-s)} \int_{(\R^2)^2} dy_1 dy_2 (\eucmoll_{\leq N})^{*(2)} (y_1 - y_2) \eucheat_{t-s}(y_1) (\nabla \eucheat_{t-s})(y_2) \Im(e^{\icomplex (y_2 - y_1) \cdot b}).
\end{equs}
\end{definition}

Proposition \ref{cshe:prop-resonant} will follow from combining three lemmas: (1) a convergence result for $\resgauge{\leq N}$ (Lemma \ref{lemma:constant-A-limit-resonance}) (2) an estimate with minimal ${\Blin}$-dependence (Lemma \ref{lemma:non-A-dependent-estimate}), (3) a ${\Blin}$-dependent estimate with good dependence on the localization parameter $\rho$ (Lemma \ref{lemma:resonant-part-A-dependent-estimate}). We next state these three lemmas and then discuss how Proposition \ref{cshe:prop-resonant} follows. After that, we prove the three lemmas.

\begin{lemma}[Limiting resonance]\label{lemma:constant-A-limit-resonance}
For all $b \in \R^2$ and $t > 0$, we have that
\begin{equs}
\lim_{N \toinf} \resgauge{\leq N}(b, t) = \gaugerenorm b.
\end{equs}
Moreover, the convergence happens at rate $O(N^{-1} |b| t^{-\frac{1}{2}} + N^{-2}|b|^3) = O(N^{-1} |b|^3 t^{-\frac{1}{2}})$. 
\end{lemma}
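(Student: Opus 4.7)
The plan is to evaluate $\resgauge{\leq N}(b,t)$ explicitly by performing the spatial Gaussian integrals and then expanding in powers of $b$, using odd/even symmetry to isolate the nonvanishing contributions. The first step carries out the $y_1$ and $y_2$ integrals: the identity $\int \eucheat_\tau(y+z)\,\nabla\eucheat_\tau(y)\,dy = -\nabla\eucheat_{2\tau}(z) = (z/4\tau)\eucheat_{2\tau}(z)$ (which follows from $\eucheat_\tau * \eucheat_\tau = \eucheat_{2\tau}$ together with parity of $\eucheat_\tau$) reduces the double spatial integral to a single one in $z := y_1 - y_2$. Equivalently, and this is the cleanest form I would actually work with, passing to Fourier with the conventions $\widehat{\eucheat_t}(\xi) = (2\pi)^{-2} e^{-t|\xi|^2}$ and $\widehat{(\eucmoll_{\leq N})^{*(2)}}(\xi) = (2\pi)^{-2}\rho(\xi/N)^2$, and then performing the time integral $\int_0^t e^{-2\tau(1+|\xi-b|^2)}d\tau$, I would obtain the compact representation
\[
\resgauge{\leq N}(b,t) \;=\; -\frac{1}{2(2\pi)^2}\int_{\R^2}\!d\xi\;G(\xi - b)\,\rho(\xi/N)^2, \qquad G(\xi) := \xi\,\frac{1 - e^{-2t(1+|\xi|^2)}}{1+|\xi|^2}.
\]

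The second step is symmetry. Since $G$ is odd and $\rho(\cdot/N)^2$ radial, $\int G(\eta)\rho(\eta/N)^2\,d\eta = 0$; substituting $\eta = \xi - b$ and using the identity
\[
\rho\big((\eta+b)/N\big)^2 - \rho(\eta/N)^2 \;=\; \frac{1}{N}\int_0^1\!ds\,\nabla(\rho^2)\big((\eta+sb)/N\big)\cdot b
\]
followed by the rescaling $u = (\eta + sb)/N$, the expression becomes an integral of $NG(Nu - sb)$ tested against $\nabla(\rho^2)(u)\cdot b$, supported on $\{|u|\in[1,9/8]\}$ (the annulus on which $\nabla(\rho^2)$ is nonzero, since $\rho\equiv 1$ near the origin). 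On that annulus I would use the expansion
\[
NG(Nu) \;=\; \frac{u}{|u|^2} + E_N(u,t), \quad E_N(u,t) \;=\; -\frac{u\bigl(1 + N^2|u|^2 e^{-2t(1+N^2|u|^2)}\bigr)}{|u|^2(1+N^2|u|^2)}.
\]
The leading piece, combined with the radial identity $\int_{\R^2} u_j u_k |u|^{-3} f(|u|)\,du = \pi\delta_{jk}\int_0^\infty f(r)\,dr$ together with $\int_0^\infty(\rho^2)'(r)\,dr = \rho^2(\infty)-\rho^2(0) = -1$, produces the main contribution
\[
-\frac{1}{2(2\pi)^2}\int\frac{u\,(u\cdot b)}{|u|^3}(\rho^2)'(|u|)\,du \;=\; -\frac{1}{2(2\pi)^2}\cdot(-\pi b) \;=\; \gaugerenorm\,b,
\]
which reproduces the Fourier-side arithmetic already performed in Lemma~\ref{lemma:resonant-parts-converge-fourier-calculation}, as anticipated by Remark~\ref{remark:two-calculations}.

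The third step is error accounting. The $1/(1+N^2|u|^2)$ piece of $E_N$ contributes $O(|b|/N^2)$, while the exponential piece is bounded by $N^{-1}t^{-1/2}$ in both regimes $tN^2 \leq 1$ (where trivially $t^{-1/2}\geq N$ and $e^{-tN^2}\leq 1$) and $tN^2 \geq 1$ (where the elementary estimate $\log(Nt^{1/2}) \leq tN^2$ gives $e^{-tN^2}\leq N^{-1}t^{-1/2}$), producing the stated $O(|b|/(N\sqrt{t}))$ contribution. The $sb$-shift in $NG(Nu - sb)$ together with higher-order Taylor terms in $b$ organizes into a formal expansion whose even-order terms vanish by the same parity argument (since $G$ is odd and each additional derivative of $\rho^2$ flips parity), so that the leading correction is cubic; it is of size $O(|b|^3/N^2)$, as $G(Nu)\sim N^{-1}|u|^{-2}$ and each additional derivative of $\rho^2$ brings a factor of $1/N$. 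Collecting everything yields the claimed rate $O(N^{-1}|b|t^{-1/2} + N^{-2}|b|^3)$. The main obstacle I anticipate is precisely the tracking of the $t^{-1/2}$ factor through the transitional regime $tN^2\sim 1$, where the exponential $e^{-tN^2}$ is neither close to $1$ nor negligible; everything else is a careful bookkeeping of Gaussian integrals and radial-symmetry identities.
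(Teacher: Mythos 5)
Your Fourier-side derivation is correct and a genuinely different route from the paper's, which stays in real space. The paper performs the $s$-integral first, rescales $u \mapsto Nu$, and exploits the global elementary bound $|\sin x - x| = O(|x|^3)$ (valid for all $x$) together with the radial symmetry and unit integral of $(\eucmoll)^{*(2)}$ to extract the leading $\gaugerenorm b$ and a cubic error uniform in $b$. You instead pass to Fourier, where the compact support of $\nabla(\rho^2)$ on the annulus $\{1 \leq |u| \leq 9/8\}$ lets the difference-quotient and radial-identity computations isolate the main term cleanly; I have verified that your representation $\resgauge{\leq N}(b,t) = -\frac{1}{2(2\pi)^2}\int G(\xi-b)\rho(\xi/N)^2\,d\xi$ is correct (up to the parity-flip $\xi \mapsto -\xi$ relating it to the form $\frac{1}{2(2\pi)^2}\int G(\eta)\rho((\eta-b)/N)^2\,d\eta$), that the leading constant works out to $\frac{\pi}{2(2\pi)^2} = \gaugerenorm$, and that your explicit $E_N$ satisfies $NG(Nu) = u/|u|^2 + E_N(u,t)$. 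Your observation that this duplicates the Fourier arithmetic of Lemma~\ref{lemma:resonant-parts-converge-fourier-calculation} is exactly what Remark~\ref{remark:two-calculations} points out; the paper deliberately keeps both a Fourier and a real-space version.

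The one place your argument is looser than the paper's is the cubic error from the $sb$-shift. Parity of the Taylor coefficients \emph{at $b=0$} indeed kills the even-order terms, but the remainder in the third-order Taylor formula involves $F'''(sb)$ for $s \in (0,1]$, where parity no longer applies, and $F'''(sb)$ samples $G$ at points $Nu - sb$ with $|u| \sim 1$. When $|b| \lesssim N$ these points stay at distance $\sim N$ from the origin, $|G| \lesssim N^{-1}$, and one cleanly gets $|F'''| \lesssim N^{-2}$ hence $O(|b|^3/N^2)$ — your conclusion. When $|b| \gtrsim N$, however, the argument of $G$ can pass through the region where $|G| \sim 1$ at $s \sim N/|b|$, and the naive bound degrades to $O(|b|^3/N)$. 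The paper avoids this entirely because $|\sin x - x| \lesssim |x|^3$ holds globally. You would need to either (i) show the effective width in $s$ of the problematic region is $\sim 1/|b|$ and reintegrate, or (ii) dispose of $|b| \gtrsim N$ separately by noting that support considerations on $\rho((\eta+b)/N)^2$ give $|\resgauge{\leq N}(b,t)| \lesssim N^2/|b|$, so both sides are $O(|b|^3/N^2)$ trivially in that regime. In the paper's applications $b = \Blin(z)$ with $|\Blin| \ll N$, so this is a technical rather than a substantive gap — but as written your error analysis is not closed for all $b$, whereas the paper's is.
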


\begin{lemma}[Estimate with minimal ${\Blin}$-dependence]\label{lemma:non-A-dependent-estimate}
Let $T \in [0, 1]$, $\alpha \in [0, 1)$, $\rho \in (0, 1]$, and $N \in \dyadic$. Then, we have that 
\begin{equs}
\Big\| \Duh\Big(R_{{\Blin}, \leq N}^\rho\Big) - \Duh\Big(\resgauge{\leq N}^\rho({\Blin}(z), t)\Big)\Big\|_{C_t^0 \Cs_x^\alpha \cap C_t^{\alpha/2} \Cs_x^0([0, T] \times \T^2)} \lesssim \rho^{-\frac{1}{2}} T^{1-\frac{\alpha}{2}}\big(1 + N^{-1} \|{\Blin}\|_{L_t^\infty L_x^\infty}\big).
\end{equs}
\end{lemma}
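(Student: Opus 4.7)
The strategy is a perturbation argument comparing $R^\rho_{\Blin,\leq N}(z)$ with the analogous resonance for the constant reference connection $\Blin_c \equiv \Blin(z)$. For constant $b\in\R^2$ on Euclidean space, the Feynman-Kac-It\^{o} formula (Lemma~\ref{lemma:feynman-kac-ito-formula}) combined with Remark~\ref{remark:covariant-heat-kernel-ito-stratonovich} gives the explicit identity $p_b(w;z) = p(w;z)\,e^{-\icomplex b\cdot(x-y)}$, and a direct calculation then yields
\[
\Im\bigl(\overline{p_b(w_1;z)}\,\covd_b\,p_b(w_2;z)\bigr) = p(w_1;z)\,\nabla_x p(w_2;z)\,\sin\bigl(b\cdot(y_2-y_1)\bigr).
\]
Crucially, the naively dangerous contribution $b\,\Re(\overline{p_b(w_1)}p_b(w_2))$ coming from the $\icomplex b$-part of $\covd_b$ is exactly cancelled by an equal and opposite contribution coming from $\nabla_x$ acting on $e^{-\icomplex b\cdot(x-y)}$. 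After the change of variables $y_i\mapsto x-y_i$ (using that $\eucmoll^{*(2)}$ is radial), the right-hand side above matches exactly the integrand defining $\resgauge^\rho_{\leq N}(\Blin(z),t)$, up to the passage from Euclidean to torus kernels. This is the structural origin of the bound $|\resgauge^\rho_{\leq N}(b,t)|\lesssim N^{-1}|b|\rho^{-1/2}$: the smallness arises from $|\sin(b\cdot(y_2-y_1))|\lesssim N^{-1}|b|$ on the mollifier support $|y_1-y_2|\lesssim N^{-1}$.

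With this understood, decompose
\[
R^\rho_{\Blin,\leq N}(z) - \resgauge^\rho_{\leq N}(\Blin(z),t) = \mathcal{E}_{\mathrm{pert}}(z) + \mathcal{E}_{\mathrm{per}}(z),
\]
where $\mathcal{E}_{\mathrm{pert}}$ is the error from replacing $\Blin$ by $\Blin_c$ inside the bilinear integrand of $R^\rho$, and $\mathcal{E}_{\mathrm{per}}$ is the remaining periodization error between torus and Euclidean pictures. Because $|t-s|\sim\rho\leq 1$ while the torus has radius $\pi$, standard Gaussian tail estimates yield $|\mathcal{E}_{\mathrm{per}}(z)|\lesssim e^{-c/\rho}$, which is dominated by $\rho^{-1/2}$. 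For $\mathcal{E}_{\mathrm{pert}}$, I would apply Corollary~\ref{cor:p-A-u-expansion} along the linear path $\Blin_u = (1-u)\Blin_c + u\Blin$ to express both $p_\Blin - p_{\Blin_c}$ and $\covd_{\Blin(z)}(p_\Blin - p_{\Blin_c})$ as $u$-integrals of insertions $(\Blin(v)-\Blin(z))_j\covd^j_{\Blin_u(v)}p_{\Blin_u}$ and $(\partial_j\Blin^j)(v)\,p_{\Blin_u}$. After substituting into the bilinear integrand, Young's inequality absorbs the mollifier $\moll^{*(2)}_{\leq N}$, and Cauchy-Schwarz in $(y_1,y_2)$ and then in $s$ reduces the pointwise bound to a product of localized $L^2_w$-norms. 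The diamagnetic inequality (Lemma~\ref{kernel:lem-diamagnetic-heat-kernel}) bounds $|p_{\Blin_u}|$ uniformly by $p$, and the covariant $L^2$-energy estimates of Lemma~\ref{lemma:covd-pa-l2-spacetime-norm} — combined with the expansion of Lemma~\ref{lemma:covd-pA-expansion} used to trade $\covd_{\Blin(z)}$ for $\covd_{-\Blin(w)}$ in the dual formulation — supply the factor $\rho^{-1/2}$ uniformly in $\Blin$, with the benign $N^{-1}\|\Blin\|_{L^\infty}$-correction arising from the same mollifier-oscillation mechanism identified in the constant-$\Blin$ calculation.

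The resulting pointwise bound $\|R^\rho_{\Blin,\leq N}(t,\cdot) - \resgauge^\rho_{\leq N}(\Blin(t,\cdot),t)\|_{L^\infty_x}\lesssim \rho^{-1/2}\bigl(1+N^{-1}\|\Blin\|_{L^\infty_{t,x}}\bigr)$ is then promoted to the claimed $C^0_t\Cs^\alpha_x \cap C^{\alpha/2}_t\Cs^0_x$-bound by invoking Lemma~\ref{prelim:lem-Duhamel-weighted} with $\theta=0$ and $q=r=\infty$, which yields the Schauder gain $T^{1-\alpha/2}$. The principal technical obstacle lies in the perturbation step: one must carefully exploit the algebraic cancellation described in the first paragraph — which is what prevents the $b\,\Re(\overline{p_b(w_1)}p_b(w_2))$ contribution from generating an $\|\Blin\|_{L^\infty}$-factor — together with the sharp localized $L^2$-energy estimates, so that the final bound depends on $\Blin$ only through the additive $N^{-1}\|\Blin\|_{L^\infty}$-correction, rather than through any positive power of $\|\Blin\|_\resnorm$ that would be fatal for the interpolation argument later carried out in Proposition~\ref{cshe:prop-resonant}.
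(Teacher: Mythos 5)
Your proposed decomposition $R^\rho_{\Blin,\leq N} - \resgauge^\rho_{\leq N}(\Blin(z),t) = \mathcal{E}_{\mathrm{pert}} + \mathcal{E}_{\mathrm{per}}$, with $\mathcal{E}_{\mathrm{pert}}$ controlled via the $u$-perturbation of Corollary~\ref{cor:p-A-u-expansion}, has a genuine gap: that perturbation integrand necessarily carries factors of $(\Blin(v) - \Blin(z))$ and $\partial_j \Blin^j(v)$, and any bound on it will therefore involve $\|\nabla\Blin\|$, $\|\partial_t\Blin\|$, or $\|\partial_j\Blin^j\|$ — i.e., a positive power of $\|\Blin\|_{\resnorm}$, not the mere $N^{-1}\|\Blin\|_{L^\infty}$-correction the lemma requires. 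There is no way the algebraic cancellation you identify in the constant-connection computation, which lives entirely at the $u=1$ endpoint, survives the $u$-integral: once you differentiate in $u$ you pick up differences and divergences of $\Blin$. What you have actually sketched is the strategy of Lemma~\ref{lemma:resonant-part-A-dependent-estimate}, which produces $\rho^{+1/2}(1 + \|\Blin\|_{T,\resnorm} + N^{-1}\|\Blin\|^2_{T,\resnorm})$ — good in $\rho$, bad in $\Blin$ — and you cannot get the complementary $\rho^{-1/2}(1 + N^{-1}\|\Blin\|_{L^\infty})$ bound from that argument.

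The paper's proof is structurally simpler and avoids the issue entirely: it never compares $R^\rho_{\Blin}$ to the constant-connection resonance. One applies the triangle inequality to the two terms separately. The $\Duh(\resgauge^\rho_{\leq N}(\Blin(z),t))$-piece is estimated by Lemma~\ref{lemma:resgauge-easy-bound} (the $|\sin(b\cdot(y_2-y_1))|\lesssim N^{-1}|b|$ observation you correctly make) plus classical Schauder, giving the $N^{-1}\rho^{-1/2}T^{1-\alpha/2}\|\Blin\|_{L^\infty_{t,x}}$ contribution. For $\Duh(R^\rho_{\Blin,\leq N})$, one reduces via the Duhamel estimate (Lemma~\ref{prelim:lem-Duhamel-weighted} with $\theta = 0$) to the pointwise bound
\begin{equs}
\big|R^\rho_{\Blin,\leq N}(z)\big| \leq \big\|(t-s)^{-1}p_{\Blin}(w;z)\big\|_{L^2_w(|t-s|\sim\rho)}\,\big\|(t-s)\,\covd_{\Blin(z)}p_{\Blin}(w;z)\big\|_{L^2_w(|t-s|\sim\rho)} \lesssim \rho^{-1}\cdot\rho^{1/2} = \rho^{-1/2},
\end{equs}
obtained from Cauchy–Schwarz after absorbing the mollifier, the diamagnetic inequality $|p_{\Blin}|\leq p$, and the covariant energy estimate of Lemma~\ref{lemma:covd-pa-l2-spacetime-norm}; this bound is \emph{exactly} uniform in $\Blin$ because none of these tools ever sees a derivative of $\Blin$. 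The key insight you are missing is that $|R^\rho_{\Blin,\leq N}|\lesssim\rho^{-1/2}$ can be proven directly and uniformly in $\Blin$, with no comparison to a reference connection, so that no perturbation — and hence no derivative of $\Blin$ — ever enters this lemma.
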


\begin{lemma}[Estimate using one power of ${\Blin}$]\label{lemma:resonant-part-A-dependent-estimate}
Let $T \in [0, 1]$, $\alpha \in [0, 1)$, $\rho \in (0, 1]$, and $N \in \dyadic$. Then, we have that 
\begin{align*}
&\, \Big\|\Duh\Big(R_{{\Blin}, \leq N}^\rho\Big) - \Duh\Big(\resgauge{\leq N}^\rho({\Blin}(z), t)\Big)\Big\|_{C_t^0 \Cs_x^\alpha \cap C_t^{\alpha/2} \Cs_x^0([0, T] \times \T^2)}  \\
\lesssim&\, \rho^{\frac{1}{2}} T^{\frac{1-\alpha}{2}}  \big(1 + \|{\Blin}\|_{T, \resnorm} + N^{-1} \|{\Blin}\|_{T, \resnorm}^2\big).
\end{align*}
\end{lemma}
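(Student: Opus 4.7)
\smallskip

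The plan is to run a perturbation argument interpolating between the general connection $\Blin$ and the constant connection $\Blin(z)$ at the evaluation point. For each fixed $z = (t,x)$, define $A_u(w) := (1-u)\Blin(z) + u \Blin(w)$ for $u \in [0,1]$, so that $A_0 \equiv \Blin(z)$ and $A_1 = \Blin$. The first reduction is to verify that, when evaluated at the constant connection $b$, the object $R^\rho_{b, \leq N}(z)$ agrees with the Euclidean-heat-kernel expression $\resgauge{\leq N}^\rho(b, t)$ up to an acceptable error. This follows from the gauge identity $p_b(w; z) = e^{-\icomplex b \cdot (x-y)} p(w; z)$ (valid on $\R^2$), plus the fact that the torus kernel differs from the Euclidean kernel by periodic images which are exponentially small on the scale $|t-s| \leq \rho \leq 1$; these periodization errors are absorbed into the conclusion by crude bounds.

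Next, I would write
\begin{equation*}
R^\rho_{\Blin, \leq N}(z) - R^\rho_{\Blin(z), \leq N}(z) = \int_0^1 du\, \partial_u R^\rho_{A_u, \leq N}(z).
\end{equation*}
Since $R^\rho_{A_u, \leq N}(z)$ is a bilinear pairing of $\ovl{\massp_{A_u}(w_1; z)}$ and $\covd_{A_u}\massp_{A_u}(w_2; z)$ convolved against the mollified delta kernel $\moll_{\leq N}^{*(2)}(y_1 - y_2)$ on the strip $\{|t-s| \sim \rho\}$, the $u$-derivative produces three groups of terms: (I) differentiating one of the factors $\massp_{A_u}$, which by Corollary \ref{cor:covariant-heat-kernel-derivative-varying-connection} inserts an extra spacetime integration $\int dv\, p_{A_u}(v;z)$ together with a factor $2\icomplex (\Blin^j(v) - \Blin^j(z)) \covd_{A_u(v), j} p_{A_u}(w_j; v)$; (II) a term of the same shape but with the factor $\icomplex \partial_j \Blin^j(v)$; and (III) differentiating $\covd_{A_u}$ itself, giving an arithmetic factor $\icomplex (\Blin(w_2) - \Blin(z))$.

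For types (I) and (III), the key gain is that the difference $\Blin(\cdot) - \Blin(z)$ is small on the relevant support. Splitting
\begin{equation*}
\Blin(s, y) - \Blin(t, x) = \big(\Blin(s, y) - \Blin(s, x)\big) + \big(\Blin(s, x) - \Blin(t, x)\big),
\end{equation*}
the first piece is pointwise controlled by $|y-x| \cdot \|\nabla \Blin\|_{L_x^\infty}$ and pairs via Cauchy--Schwarz with the Gaussian decay of $p_{A_u}(w; z)$ in the $y$-variable, contributing a factor $\sim \rho^{1/2} \|\nabla \Blin\|_{L_t^2 L_x^\infty}$ after the $L_t^2$ integration on the strip of length $\rho$; the second piece integrates to $\int_s^t \|\partial_t \Blin\|_{L_x^\infty}\, dt'$, whose combination with the $\int ds \ind(|t-s|\sim\rho)$ yields a factor $\rho \|\partial_t \Blin\|_{L_t^1 L_x^\infty}$, again $\geq \rho^{1/2}$ for $\rho \leq 1$. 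Plugging this into the weighted energy estimates (Lemma \ref{lemma:two-sided-time-weighted-estimate}, Corollary \ref{cor:dual-two-sided-time-weighted-estimate}, and the localized bounds of Lemma \ref{lemma:covd-pa-l2-spacetime-norm}) with weight $K(z_0) = p(z; z_0)$ coming from the outer Duhamel kernel, and using duality as in Corollary \ref{kernel:cor-energy-estimate-dual}, one obtains spacetime $L^2$ bounds of order $\rho^{1/2} \|\Blin\|_{T,\resnorm}$. The $L^\infty_{t_0} L^\infty_{x_0}$ estimate is then converted to the $C^0_t \Cs_x^\alpha \cap C^{\alpha/2}_t \Cs_x^0$ bound via Lemma \ref{prelim:lem-Duhamel-weighted}, which contributes the factor $T^{(1-\alpha)/2}$. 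Term (II) requires no smallness in $\Blin$, only the bound $\|\partial_j \Blin^j\|_{L_t^\infty L_x^\infty} \leq \|\Blin\|_{T,\resnorm}$; the $\rho^{1/2}$ gain here comes from the extra $v$-integration and Cauchy--Schwarz on the strip of length $\rho$. Finally, the quadratic contribution $N^{-1} \|\Blin\|_{T,\resnorm}^2$ arises from the $-|A_u - \Blin(z)|^2 \phi$ piece of the expansion in Lemma \ref{prelim:lem-derivatives}\ref{prelim:item-difference}, where the missing gradient is supplied by an $N^{-1}$ coming from the mollifier $\moll_{\leq N}^{*(2)}$.

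The main obstacle is bookkeeping: organizing the terms from $\partial_u$ and from iterating Corollary \ref{cor:covariant-heat-kernel-derivative-varying-connection} into a form where each application of a covariant energy estimate produces the right power of $\rho$ without losing the linear-in-$\|\Blin\|_{T,\resnorm}$ dependence. In particular, one must carefully distribute the $p(z; z_0)$-weight across the three spacetime variables $w_1, w_2, v$ so that the weighted estimates of Section \ref{section:energy-estimates} apply and the $\rho^{-1/2}$ singularity of Lemma \ref{lemma:covd-pa-l2-spacetime-norm} gets converted into $\rho^{+1/2}$ through the additional $v$-integration over a strip of length $\rho$.
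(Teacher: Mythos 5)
Your overall strategy is the right one and matches the paper's at a high level: interpolate between the general connection and the constant one $\Blin(z)$ (your parameterization is the reverse of the paper's, $A_u = \Blin_{1-u}$, which is immaterial), identify the constant-connection resonance with $\resgauge{\leq N}^\rho$ via the explicit formula for the periodized Euclidean kernel (Lemma \ref{lemma:constant-A-heat-kernel-formulas}, Lemma \ref{lemma:constant-A-bounded-resonance}), and control $\partial_u R^\rho_{A_u}$ via the perturbation formula (Corollary \ref{cor:covariant-heat-kernel-derivative-varying-connection}, specialized to this path in Corollary \ref{cor:p-A-u-expansion}) together with the covariant energy estimates. However, there is a genuine structural gap at the first step.

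The paper does \emph{not} perturb $R^\rho_{\Blin,\leq N}$ directly. It first applies the expansion of $\covd_{\Blin(z)} p_\Blin(w;z)$ from Lemma \ref{lemma:covd-pA-expansion} to split $R^\rho_{\Blin,\leq N} = R^{\rho,1} + R^{\rho,2} + R^{\rho,3}$, where $R^{\rho,1}$ carries the covariant derivative $\covd_{-\Blin(w_2)}$ in the $y_2$-variable, $R^{\rho,2}$ carries the curvature $F_\Blin$, and $R^{\rho,3}$ carries $H^k = \partial_t\Blin^k + \partial_j F_\Blin^{kj}$. The latter two are bounded directly (Lemmas \ref{lemma:pA-FA-covd-pA-localized-estimate} and \ref{lemma:p-H-p-contribution}) and only $R^{\rho,1}$ is perturbed. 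This preliminary step is not cosmetic. If you differentiate $R^\rho_{A_u}(z)$ in $u$ without it, then with your parameterization $A_u(z) = \Blin(z)$ is constant in $u$, so $\covd_{A_u(z)}$ produces no $u$-derivative at all, and your ``type (III)'' term with the factor $\icomplex(\Blin(w_2) - \Blin(z))$ simply does not arise from your setup — that factor can only come from differentiating $\covd_{-\Blin_u(w_2)}$ in $u$, which presupposes the covariant derivative has already been moved onto the $w_2$-variable. Moreover, without the preliminary move, the term where $\partial_u$ hits $\massp_{A_u}(w_2;z)$ puts you in the position of having to expand $\covd_{\Blin(z)}\big(\int dv\, p_{A_u}(v;z) \cdots\big)$, which requires applying Lemma \ref{lemma:covd-pA-expansion} again inside the $v$-integral and produces further curvature and $H$-contributions from $A_u$ that your account never mentions.

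Two further points. First, after the paper perturbs $R^{\rho,1}$, the term $\eqref{eq:perturb-intermediate-2}$ with $\covd_{-\Blin_u(w_2)}$ hitting $\partial_u \massp_{\Blin_u}(w_2;z)$ is handled by integrating by parts \emph{onto the mollifier} $\moll_{\leq N}^{*(2)}(y_1-y_2)$ and then swapping $\covd_{\Blin_u(w_2)}$ for $\covd_{-\Blin_u(w_1)}$, with the commutator factor $\icomplex(\Blin_u(w_2) - \Blin_u(w_1))$ as the error; this step is absent from your sketch and is what prevents two covariant derivatives from landing on the same kernel. Second, your attribution of the $N^{-1}\|\Blin\|_{T,\resnorm}^2$ term to the $-|A_u - \Blin(z)|^2\phi$ piece of Lemma \ref{prelim:lem-derivatives}\ref{prelim:item-difference} is wrong: that identity is never used in this proof (and the perturbation formula, being a first-order $u$-derivative, produces no such quadratic term). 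The quadratic contribution actually arises in Lemma \ref{lemma:perturbation-argument-ibp-error-term}, where the commutator factor $|\Blin_u(w_2) - \Blin_u(w_1)| \lesssim |y_1-y_2|\|\nabla\Blin\|_{L_x^\infty}$ paired against the mollifier moment $\int |\moll_{\leq N}^{*(2)}(y)||y|\,dy \lesssim N^{-1}$ supplies the $N^{-1}\|\nabla\Blin\|$, which then multiplies the bounds for $\partial_u\massp_{\Blin_u}$ from Lemma \ref{lemma:ptl-u-p-Au-estimate}, which carry another power of $\|\Blin\|_{T,\resnorm}$.
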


Assuming these three lemmas, we now prove Proposition \ref{cshe:prop-resonant}.

\begin{proof}[Proof of Proposition \ref{cshe:prop-resonant}]
By Lemma \ref{lemma:constant-A-limit-resonance}, we have the pointwise bound
\begin{equs}
\Big| \resgauge{\leq N}({\Blin}(z), t) - \frac{{\Blin}(z)}{8\pi}\Big| \lesssim \frac{1}{N}|{\Blin}(z)|^3 t^{-\frac{1}{2}},
\end{equs}
which leads to an acceptable contribution after applying the Duhamel integral and using classical Schauder estimates. Thus it suffices to bound $\Duh(R_{{\Blin}, \leq N}) - \Duh(\resgauge{\leq N}({\Blin}, t))$. By summing in $\rho$ and interpolating the estimate with minimal ${\Blin}$-dependence (Lemma \ref{lemma:non-A-dependent-estimate}) and the ${\Blin}$-dependent estimate (Lemma \ref{lemma:resonant-part-A-dependent-estimate}), we have that 
\begin{equs}
\Big\|\Duh&(R_{B, \leq N}) - \Duh(\resgauge{\leq N}({\Blin}, t))\Big\|_{C_t^0 \Cs_x^\alpha \cap C_t^{\alpha/2} \Cs_x^0} \\
&\lesssim \sum_{\substack{\rho^{-1} \in \dyadic \\ \rho \leq T}} \Big(\rho^{-\frac{1}{2}} T^{1-\frac{\alpha}{2}}(1 + N^{-1} \|{\Blin}\|_{T, \resnorm})\Big)^{1-\lambda} \Big( \rho^{\frac{1}{2}} T^{\frac{1-\alpha}{2}}(1 + \|{\Blin}\|_{T, \resnorm} + N^{-1} \|{\Blin}\|_{T, \resnorm}^2)\Big)^{\lambda} \\
&\lesssim \sum_{\substack{\rho^{-1} \in \dyadic \\ \rho \leq T}} \rho^{\lambda - \frac{1}{2}} T^{1 - \frac{\alpha + \lambda}{2}}  (1 + N^{-1} \|{\Blin}\|_{T, \resnorm})^{1-\lambda} (1 + \|{\Blin}\|_{T, \resnorm} + N^{-1} \|{\Blin}\|_{T, \resnorm}^2)^\lambda \\
&\lesssim T^{\frac{1+\lambda-\alpha}{2}}(1 + \|{\Blin}\|_{T, \resnorm} + N^{-1} \|{\Blin}\|_{T, \resnorm}^2)^\lambda (1 + N^{-1} \|{\Blin}\|_{T, \resnorm})^{1-\lambda}.
\end{equs}
The desired result now follows.
\end{proof}

The remainder of this subsection is devoted to the proofs of Lemmas \ref{lemma:constant-A-limit-resonance}-\ref{lemma:resonant-part-A-dependent-estimate}. First, we prove Lemma \ref{lemma:constant-A-limit-resonance}.

\begin{proof}[Proof of Lemma \ref{lemma:constant-A-limit-resonance}]
For brevity, let $R_{\leq N} = \resgauge{\leq N}(b, t)$. Recalling the definition of $R_{\leq N}$ (Definition~\ref{def:resgauge}) and changing variables $u = y_2 - y_1$, $w = y_1$, we obtain
\begin{equs}
R_{\leq N} &=  -\int_0^t ds e^{-2(t-s)}  \int_{(\R^2)^2} du dw (\eucmoll_{\leq N})^{*(2)}(u) \eucheat_{t-s}(w) (\nabla \eucheat_{t-s})(w + u) \Im(e^{\icomplex u \cdot b}) \\
&= - \int_0^t ds e^{-2(t-s)} \int_{\R^2} du (\eucmoll_{\leq N})^{*(2)}(u) \sin(u \cdot b) \int_{\R^2} dw \eucheat_{t-s}(w) (\nabla \eucheat_{t-s})(w+u) \\
&= -\int_0^t ds e^{-2(t-s)} \int_{\R^2} du (\eucmoll_{\leq N})^{*(2)}(u) \sin(u\cdot b) (\eucheat_{t-s} * \nabla \eucheat_{t-s})(u) \\
&= -\int_0^t ds e^{-2(t-s)}  \int_{\R^2}  du (\eucmoll_{\leq N})^{*(2)}(u) \sin(u \cdot b) (\nabla \eucheat_{2(t-s)})(u) \\
&=\frac{1}{32\pi} \int_0^t ds e^{-2(t-s)} 
 \int_{\R^2} du (\eucmoll_{\leq N})^{*(2)}(u) \sin(u \cdot b) \frac{u}{(t-s)^2} \exp\bigg(-\frac{|u|^2}{8(t-s)}\bigg).
\end{equs}
We note that, in going from the third to the fourth line, we used the semi-group property of the heat kernel. 
Fixing $u$ and performing the $s$-integral, and using the bounds 
\begin{equs}
1 - e^{-2(t-s)} \leq 2(t-s), ~~ \int_1^\infty \frac{dr}{r} e^{-c r} \lesssim \log (1 + c^{-1}), ~~ c \in (0, \infty),
\end{equs}
we may compute
\begin{equs}
\int_0^t ds \frac{e^{-2(t-s)}}{(t-s)^2} \exp\bigg(-\frac{|u|^2}{8(t-s)}\bigg) &= \int_{\frac{1}{t}}^\infty dr \exp\bigg(-\frac{|u|^2 r}{8}\bigg) + O\big(\log(1 + t |u|^{-2})\big)  \\
&= \frac{8}{|u|^2} \exp\bigg(-\frac{|u|^2}{8t}\bigg) +  O\big(\log(1 + t |u|^{-2})\big). 
\end{equs}
Inserting this, we further obtain
\begin{equs}
R_{\leq N} = \frac{1}{4\pi} \int_{\R^2} du (\eucmoll_{\leq N})^{*(2)}(u) \frac{\sin(u \cdot b)} {|u|^2} u \exp\bigg(-\frac{|u|^2}{8t}\bigg) + \int_{\R^2} du (\eucmoll_{\leq N})^{*(2)}(u) O\Big( |\sin(u \cdot b) u| \log(1 + t |u|^{-2})\Big).
\end{equs}
Since $(\eucmoll_{\leq N})^{*(2)}(\cdot) = N^2 (\eucmoll)^{*(2)}(N \cdot)$, we further obtain (using that $1 - \exp(-c N^{-2} |u|^2) \leq c^{\frac{1}{2}} N^{-1} |u|$, $|\sin(x)| \leq |x|$, and $|\sin(x) - x| = O(x^3)$)
\begin{equs}
R_{\leq N} &= \frac{1}{4\pi} \int_{\R^2} du (\eucmoll)^{*(2)} (u) \frac{\sin(N^{-1} u \cdot b)}{N^{-1}  |u|^2} u \exp\bigg(-\frac{N^{-2} |u|^2}{8t}\bigg) + O\big(|b| N^{-2} \log(1 + tN^2)\big) \\
&= \frac{1}{4\pi} \int_{\R^2} du (\eucmoll)^{*(2)}(u) \frac{\sin(N^{-1}u \cdot b)}{N^{-1} |u|^2} u + O\big(N^{-1} |b| t^{-\frac{1}{2}}) +  O(|b| N^{-2} \log(1 + tN^2)\big) \\
&= \frac{1}{4\pi} \int_{\R^2} du (\eucmoll)^{*(2)}(u) \frac{(u \cdot b) u}{|u|^2} + O(N^{-1} |b| t^{-\frac{1}{2}}) + O(N^{-2} |b|^3) +  O\big(N^{-1} |b| t^{-\frac{1}{2}}).
\end{equs}
To finish, note that 
\begin{equs}
\frac{1}{4\pi} \int_{\R^2} du (\eucmoll)^{*(2)}(u) \frac{(u \cdot b) u}{|u|^2} = \frac{1}{4\pi} \int_{\R^2} du (\eucmoll)^{*(2)}(u) \frac{u u^T}{|u|^2} b .
\end{equs}
Since $\eucmoll$ is radial and integrates to $1$, we have that
\begin{equs}
\int_{\R^2} du (\eucmoll)^{*(2)}(u) \frac{uu^T}{|u|^2} = \frac{1}{2} I,
\end{equs}
which follows by the following two observations: (1) since $\eucmoll$ (and thus also $(\eucmoll)^{*(2)}$) is radial, one sees that the above must commute with all rotation matrices, and thus must be a multiple of the identity. (2) To determine the multiple, simply compute its trace to be 1, by using that $\Tr(uu^T) = \Tr(u^T u) = |u|^2$ (and the assumption that $\eucmoll$ integrates to $1$). The desired result now follows.
\end{proof}

Next, we prove the estimate with minimal ${\Blin}$-dependence (Lemma \ref{lemma:non-A-dependent-estimate}). Before doing so, we need the following preliminary lemma.

\begin{lemma}\label{lemma:resgauge-easy-bound}
For $\rho > 0$, $b \in \R^2$, $t > 0$, $N \in \dyadic$, we have that 
\begin{equs}
\Big| \resgauge{\leq N}^\rho(b, t)\Big| \lesssim \frac{1}{N} \rho^{-\frac{1}{2}} |b|.
\end{equs}
\end{lemma}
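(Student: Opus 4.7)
The plan is to reduce the integral defining $\resgauge{\leq N}^\rho$ to a single convolution against $\nabla \eucheat_{2(t-s)}$, exploit the small-argument bound for $\sin$ to extract the factor of $|b|$, and then use the rescaling of the mollifier to extract the factor of $N^{-1}$. The localization to $|t-s|\sim \rho$ will then produce the claimed $\rho^{-1/2}$.

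Concretely, first I would change variables $u = y_2 - y_1$, $w = y_1$ inside the double integral. Exactly as in the proof of Lemma \ref{lemma:constant-A-limit-resonance}, the $w$-integration collapses using the semigroup identity $\eucheat_{t-s} \ast \nabla\eucheat_{t-s} = \nabla \eucheat_{2(t-s)}$, giving
\begin{equs}
\resgauge{\leq N}^\rho(b,t) = -\int_0^t ds\, \ind(|t-s|\sim \rho) e^{-2(t-s)} \int_{\R^2} du\, (\eucmoll_{\leq N})^{*(2)}(u) \sin(u\cdot b) (\nabla \eucheat_{2(t-s)})(u).
\end{equs}
Bounding $|\sin(u\cdot b)| \leq |u||b|$ immediately produces the factor of $|b|$.

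Next, I would rescale using $(\eucmoll_{\leq N})^{*(2)}(u) = N^2 (\eucmoll)^{*(2)}(Nu)$ and the substitution $v = Nu$, which replaces $|u|$ by $N^{-1}|v|$ while leaving $(\eucmoll)^{*(2)}(v)\,dv$ as a Schwartz weight with $\int |v|\,(\eucmoll)^{*(2)}(v) dv \lesssim 1$. Combined with the pointwise bound $|\nabla \eucheat_\tau(x)| \lesssim \tau^{-3/2}$ evaluated at $\tau = 2(t-s)$, this yields
\begin{equs}
\bigg|\int_{\R^2} du\, (\eucmoll_{\leq N})^{*(2)}(u) \sin(u\cdot b) (\nabla \eucheat_{2(t-s)})(u)\bigg| \lesssim N^{-1} |b| (t-s)^{-3/2}.
\end{equs}

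Finally, integrating in $s$ against the indicator $\ind(|t-s|\sim \rho)$ (whose support has length $\sim \rho$) contributes $\rho \cdot \rho^{-3/2} = \rho^{-1/2}$, giving $|\resgauge{\leq N}^\rho(b,t)| \lesssim N^{-1}\rho^{-1/2}|b|$ as claimed. There is no genuine obstacle here, since everything reduces to the pointwise bound on $\nabla\eucheat_{2(t-s)}$ and the Schwartz decay of $(\eucmoll)^{*(2)}$; the only bookkeeping point is to keep track of where each of the three factors $|b|$, $N^{-1}$, $\rho^{-1/2}$ originates, so that one can be confident this estimate is sharp in the regime $\rho \gtrsim N^{-2}$ relevant later.
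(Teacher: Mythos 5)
Your proof is correct and rests on the same three ingredients as the paper's: the bound $|\sin(u\cdot b)|\leq |u||b|$, the scaling $\int |u||(\eucmoll_{\leq N})^{*(2)}(u)|\,du\lesssim N^{-1}$, and heat-kernel blow-up at scale $(t-s)\sim\rho$ combined with the length-$\sim\rho$ time window. The only organizational difference is that you first collapse $\eucheat_{t-s}\ast\nabla\eucheat_{t-s}=\nabla\eucheat_{2(t-s)}$ (as in Lemma \ref{lemma:constant-A-limit-resonance}) and then use $\|\nabla\eucheat_{2(t-s)}\|_{L^\infty}\lesssim\rho^{-3/2}$, whereas the paper keeps the two heat-kernel factors separate and splits them as $\|\eucheat_{t-s}\|_{L^\infty}\cdot\|\nabla\eucheat_{t-s}\|_{L^1}\lesssim\rho^{-1}\cdot\rho^{-1/2}$; both bookkeepings yield $\rho^{-3/2}$ and hence $\rho^{-1/2}$ after the $s$-integral.
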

\begin{proof}
For $|t-s| \sim \rho$, we may bound $\| \eucheat_{t-s}\|_{L_x^\infty} \lesssim \rho^{-1}$ and $\|\nabla \eucheat_{t-s}\|_{L_x^1} \lesssim \rho^{-\frac{1}{2}}$, and thus we obtain
\begin{align*}
\Big| \resgauge{\leq N}^\rho(b, t)\Big| &\lesssim |b| \rho^{-1} \int_0^t ds \ind(|t-s| \sim \rho) \int_{(\R^2)^2} dy_1 dy_2 |(\eucmoll_{\leq N})^{*(2)}(y_1 - y_2)|  |\nabla  \eucheat_{t-s}(y_1)| |y_1 - y_2| \\
&= |b| \rho^{-1} \int_0^t ds \ind(|t-s| \sim \rho) \int dy 
|(\eucmoll_{\leq N})^{*(2)}(y)| |y| \int dy' |\nabla \eucheat_{t-s}(y' - y)| \\
&\lesssim |b| \frac{1}{N} \rho^{-\frac{3}{2}} \int_0^t ds \ind(|t-s| \sim \rho) = \frac{1}{N} \rho^{-\frac{1}{2}} |b|. \qedhere
\end{align*}
\end{proof}

\begin{proof}[Proof of Lemma \ref{lemma:non-A-dependent-estimate}]
By Lemma \ref{lemma:resgauge-easy-bound} and classical Schauder estimates, we have that
\begin{equs}
\Big\|\Duh\Big(\resgauge{\leq N}^\rho({\Blin}(z), t)\Big)\Big\|_{C_t^0 \Cs_x^\alpha \cap C_t^{\alpha/2} \Cs_x^0([0, T] \times \T^2)} \lesssim \frac{1}{N} \rho^{-\frac{1}{2}} T^{1 -\frac{\alpha}{2}} \|{\Blin}\|_{L_t^\infty L_x^\infty}.
\end{equs}
We next handle $\Duh(R_{{\Blin}, \leq N}^\rho)$. By the Duhamel integral estimate (Lemma \ref{prelim:lem-Duhamel-weighted} with $\theta = 0$), it suffices to show that
\begin{equs}
\big\|R_{\Blin, \leq N}^\rho\big\|_{L_t^2 L_x^\infty([0, T] \times \T^2)} \lesssim \rho^{-\frac{1}{2}} T^{\frac{1}{2}} .
\end{equs}
Recalling \eqref{eq:resonant-part-y-integrated-out}, applying Cauchy-Schwarz, and then applying Lemma \ref{lemma:covd-pa-l2-spacetime-norm}, we may obtain the following pointwise bound in $z$:
\begin{equs}
|R_{\Blin, \leq N}^\rho(z)| &\leq \big\| (t-s)^{-1} p_{\Blin}(w; z)\big \|_{L_w^2(|t-s| \sim \rho)} \big\|(t-s) \covd_{\Blin(z)} p_{\Blin}(w; z)\big\|_{L_w^2(|t-s| \sim \rho)} \\
&\lesssim \rho^{-1} \rho^{\frac{1}{2}} = \rho^{-\frac{1}{2}}.
\end{equs}
Taking $L_t^2 L_x^\infty$, we then obtain the desired bound.
\end{proof}

The remainder of this subsection is devoted towards the proof of Lemma \ref{lemma:resonant-part-A-dependent-estimate}. This lemma will follow by combining many intermediate technical results. To help the reader see the structure of the proof, we prefer to directly give the proof of Lemma \ref{lemma:resonant-part-A-dependent-estimate} first, and then state and prove all the intermediate technical results which are needed.

\begin{proof}[Proof of Lemma \ref{lemma:resonant-part-A-dependent-estimate}]
By the Duhamel integral estimate (Lemma \ref{prelim:lem-Duhamel-weighted} with $\theta = 0$), it suffices to show the following:
\begin{equs}
\Big\|R_{{\Blin}, \leq N}^\rho(z) - \resgauge{\leq N}^\rho({\Blin}(z), t)\Big\|_{L_t^2 L_x^\infty} \lesssim \rho^{\frac{1}{2}} \big(1 + \|{\Blin}\|_{T, \resnorm} + N^{-1} \|{\Blin}\|_{T, \resnorm}^2 \big) .
\end{equs}
For brevity, let $H^k := \ptl_t {\Blin}^k + \ptl_j F_{\Blin}^{kj}$.
Inserting the formula for $\covd_{{\Blin}(z)} p_{\Blin}(w; z)$ given by Lemma \ref{lemma:covd-pA-expansion}, we have that
\begin{equs}
&R_{{\Blin}, \leq N}^\rho(z) =\\
&-\int_0^t ds \ind(|t-s| \sim \rho)\int dy_1 dy_2 \moll_{\leq N}^{*(2)}(y_1 - y_2) \Im\Big( \ovl{\massp_{\Blin}(w_1; z)} \covd_{-{\Blin}(w_2)} \massp_{\Blin}(w_2; z)\Big)    \\
&+\int_0^t  ds \ind(|t-s| \sim \rho) \int dy_1 dy_2 \moll_{\leq N}^{*(2)}(y_1 - y_2)\Im\bigg(2\icomplex  \ovl{\massp_{\Blin}(w_1; z)} \int dv \massp_{\Blin}(v; z) F_{\Blin}(v) \covd_{{\Blin}(v)} \massp_{\Blin}(w_2; v)\bigg) \\
&+ \int_0^t ds \ind(|t-s| \sim \rho) \int dy_1 dy_2 \moll_{\leq N}^{*(2)} (y_1 - y_2) \Im\bigg(\icomplex \ovl{\massp_{\Blin}(w_1; z)} \int dv \massp_{\Blin}(v; z) H(v) \massp_{\Blin}(w_2; v)\bigg) \\
&=: R_{{\Blin}, \leq N}^{\rho, 1}(z) + R_{{\Blin}, \leq N}^{\rho, 2}(z) + R_{{\Blin}, \leq N}^{\rho, 3}(z). \label{eq:R-rho-Blin-defs}
\end{equs}
Lemma \ref{lemma:pA-FA-covd-pA-localized-estimate} gives an acceptable bound for $R_{{\Blin}, \leq N}^{\rho, 2}$, while Lemma \ref{lemma:p-H-p-contribution} gives an acceptable bound for $R_{{\Blin}, \leq N}^{\rho, 3}$. To bound the first term, define ${\Blin}_u(w) := {\Blin}_{u, z}(w) := (1-u) {\Blin}(w) + u {\Blin}(z)$. Note that ${\Blin}_0 = {\Blin}$. We decompose
\begin{equs}
R_{{\Blin}_0, \leq N}^{\rho, 1}(z) - \resgauge{\leq N}^{\rho}({\Blin}(z), t) = R_{{\Blin}_1, \leq N}^{\rho, 1}(z) -\resgauge{\leq N}^{\rho}({\Blin}(z), t) - \int_0^1  \ptl_u R_{{\Blin}_u, \leq N}^{\rho, 1}(z) du.
\end{equs}
Note that ${\Blin}_1$ is constant with value $\Blin(z)$. Thus by Lemma \ref{lemma:constant-A-bounded-resonance}, we have that 
\begin{equs}\label{eq:R-Blin-1-estimate}
|R_{{\Blin_1}, \leq N}^{\rho, 1}(z) - \resgauge{\leq N}^\rho({\Blin}(z), t)| \lesssim \rho^{10}\big(1 + N^{-10} |{\Blin}(z)|\big),
\end{equs}
which is acceptable upon taking $L_t^2 L_x^\infty$. Next, using that $\ptl_u {\Blin}_u(w) = {\Blin}(z) - {\Blin}(w)$, we have that 
\begin{equs}
\ptl_u &R_{{\Blin}_u, \leq N}^{\rho, 1}(z) = \\
&-\int_0^t ds \ind(|t-s| \sim \rho) \int dy_1 dy_2 \moll_{\leq N}^{*(2)}(y_1 - y_2) \Im\Big( \ovl{\ptl_u \massp_{{\Blin}_u}(w_1; z)} \covd_{-{\Blin}_u(w_2)} \massp_{{\Blin}_u}(w_2; z) \Big)  \label{eq:perturb-intermediate-1}\\
&- \int_0^t ds \ind(|t-s| \sim \rho) \int dy_1 dy_2 \moll_{\leq N}^{*(2)}(y_1 - y_2) \Im\Big( \ovl{\massp_{{\Blin}_u}(w_1; z)} \covd_{-{\Blin}_u(w_2)} \ptl_u \massp_{{\Blin}_u}(w_2; z) \Big) \label{eq:perturb-intermediate-2} \\
&-\int_0^t ds \ind(|t-s| \sim \rho) \int dy_1 dy_2 \moll_{\leq N}^{*(2)}(y_1 - y_2) \Im\Big(\icomplex \ovl{\massp_{{\Blin}_u}(w_1; z)} \massp_{{\Blin}_u}(w_2; z)) ({\Blin}(w_2) - {\Blin}(z))\Big) \label{eq:perturb-intermediate-3}.
\end{equs}
Lemmas \ref{lemma:ptl-u-p-Au--D-Au-estimate} and \ref{lemma:p-squared-difference-A-estimate} give estimates of \eqref{eq:perturb-intermediate-1} and \eqref{eq:perturb-intermediate-3}, which we then integrate over $u \in (0, 1)$ to obtain an acceptable estimate. Thus, we focus on \eqref{eq:perturb-intermediate-2}. By integration by parts in the $y_2$ variable, we have that
\begin{equs}\label{eq:perturb-intermediate-2-ibp}
\eqref{eq:perturb-intermediate-2} = \int_0^t ds \ind(|t-s| \sim \rho) \int dy_1 dy_2 \covd_{{\Blin}_u(w_2)} \moll_{\leq N}^{*(2)}(y_1 - y_2) \Im\Big(\ovl{\massp_{{\Blin}_u}(w_1; z)} \ptl_u \massp_{{\Blin}_u}(w_2; z)\Big).
\end{equs}
Since $\nabla_{y_2}\moll_{\leq N}^{*(2)}(y_1 - y_2) = - \nabla_{y_1} \moll_{\leq N}^{*(2)}(y_1 - y_2)$, we have that
\begin{equs}
\covd_{{\Blin}_u(w_2)} \moll_{\leq N}^{*(2)}(y_1 - y_2) &= -\covd_{-{\Blin}_u(w_1)} \moll_{\leq N}^{*(2)}(y_1 - y_2) + (\covd_{{\Blin}_u(w_2)} + \covd_{-{\Blin}_u(w_1)})  \moll_{\leq N}^{*(2)}(y_1 - y_2)\\
&= -\covd_{-{\Blin}_u(w_1)} \moll_{\leq N}^{*(2)}(y_1 - y_2) + \icomplex ({\Blin}_u(w_2) - {\Blin}_u(w_1)) \moll_{\leq N}^{*(2)}(y_1 - y_2).
\end{equs}
Inserting this identity into \eqref{eq:perturb-intermediate-2-ibp}, we may write $\eqref{eq:perturb-intermediate-2-ibp} = I_1 + I_2$.
To handle $I_1$, we may integrate by parts again (this time in the $y_1$ variable) to place the derivative $\covd_{-{\Blin}_u(w_1)}$ on $\massp_{{\Blin}_u}(w_1; z)$, which in fact shows that $I_1 = \eqref{eq:perturb-intermediate-1}$, which we have previously already estimated. Finally, Lemma \ref{lemma:perturbation-argument-ibp-error-term} gives an estimate of $I_{2}$, which leads to an acceptable contribution after integrating over $u \in (0, 1)$.
\end{proof}

In the remainder of this subsection, we prove all the intermediate results which were cited in the proof of Lemma \ref{lemma:resonant-part-A-dependent-estimate}. The next lemma gives explicit formulas for the covariant heat kernel and its covariant derivatives when ${\Blin}$ is constant. Recall that $\eucheat$ is the heat kernel on $\R^2$. 

\begin{lemma}\label{lemma:constant-A-heat-kernel-formulas}
Suppose ${\Blin} \equiv b \in \R^2$ is constant. Then for any $w, z$, we have that
\begin{equs}
p_{\Blin}(w; z) &= \sum_{n \in \Z^2} \eucheat_{t-s}(y-x + 2\pi n) e^{\icomplex (y - x + 2\pi n) \cdot b}, \\
\covd_{-{\Blin}(w)} p_{\Blin}(w; z) &= \sum_{n \in \Z^2} (\nabla \eucheat_{t-s})(y-x+ 2\pi n)  e^{\icomplex (y - x + 2\pi n) \cdot b}.
\end{equs}
Here, we identify $x, y \in \T^2$ with elements of $[-\pi, \pi]^2$.
\end{lemma}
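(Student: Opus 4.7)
The plan is to verify the formula for $p_\Blin$ directly, then deduce the formula for $\covd_{-\Blin(w)} p_\Blin(w;z)$ by differentiating termwise. The first step is to handle the model case on $\R^2$: when $\Blin \equiv b$ is constant, the ansatz $u(t,x) = e^{-\icomplex b \cdot x} v(t,x)$ yields $\covd_b^j\phi = e^{-\icomplex b \cdot x}\partial^j v$ and hence $\covd_b^j \covd_{b,j} u = e^{-\icomplex b \cdot x} \Delta v$, so the covariant heat equation reduces to the usual heat equation. Tracking initial data, this gives the Euclidean covariant heat kernel
\begin{equs}
p_b^{\R^2}((s,y);(t,x)) = \eucheat_{t-s}(y-x)\, e^{\icomplex (y-x) \cdot b}.
\end{equs}

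Second, let $q(w;z)$ denote the right-hand side of the claimed formula for $p_\Blin$. Each summand equals $p_b^{\R^2}((s, y + 2\pi n);(t, x))$, so each separately solves the covariant heat equation on $(s,\infty) \times \R^2$ in the $z$-variable. The Gaussian decay of $\eucheat_{t-s}$ together with its first and second derivatives makes the series converge absolutely and uniformly on $\{t \geq s + \varepsilon\}$ for any $\varepsilon > 0$, so $q$ itself solves $(\partial_t - \covd_b^j \covd_{b,j}) q = 0$ for $t > s$. Moreover, shifting $x \mapsto x + 2\pi m$ or $y \mapsto y + 2\pi m$ for $m \in \Z^2$ merely reindexes the sum, so $q$ descends to a kernel on $\T^2 \times \T^2$. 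For the initial condition, a standard approximate-identity argument shows that as $t \downarrow s$, the $n = 0$ summand converges to $\delta(x-y)$ on $\T^2$, while the remaining summands tend to zero uniformly. Uniqueness of the covariant heat kernel, which follows from applying the energy estimate of Lemma~\ref{kernel:lem-energy-estimate} to the difference of two candidates, then yields $p_\Blin = q$.

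Finally, since $\Blin(w) \equiv b$ is constant, $\covd_{-\Blin(w)}^j = \partial_{y^j} - \icomplex b^j$. Differentiating $q$ termwise (again justified by Gaussian decay of $\nabla \eucheat_{t-s}$) gives
\begin{equs}
\partial_{y^j} q(w;z) = \sum_{n \in \Z^2} (\partial_j \eucheat_{t-s})(y-x+2\pi n)\, e^{\icomplex (y-x+2\pi n)\cdot b} + \icomplex b^j q(w;z),
\end{equs}
and the $\icomplex b^j q$-term is exactly cancelled by the $-\icomplex b^j q$-contribution coming from the connection, leaving the claimed expression. No real obstacle is anticipated in carrying this out; the only points requiring any care are the justification of termwise differentiation and the verification of the initial condition, both of which are routine.
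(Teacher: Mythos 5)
Your proof is correct, but it takes a genuinely different route from the paper. The paper derives the first identity directly from the Feynman--Kac--It\^{o} formula (Lemma \ref{lemma:feynman-kac-ito-formula}): when $\Blin \equiv b$ is constant, the Stratonovich integral in the exponent collapses to $b\cdot(W_t - W_s)$ (Remark \ref{remark:covariant-heat-kernel-ito-stratonovich}), and the $\T^2$-valued Brownian bridge is the periodization of the $\R^2$-valued one, which produces the sum over $n \in \Z^2$ in one line; the second identity is then obtained by explicit differentiation. You instead use the gauge-transformation ansatz $u = e^{-\icomplex b\cdot x} v$ to reduce the constant-connection covariant heat equation to the flat heat equation on $\R^2$, periodize, verify the PDE and initial condition termwise, and invoke uniqueness via the energy estimate of Lemma \ref{kernel:lem-energy-estimate}. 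Your argument is more elementary in that it avoids stochastic calculus entirely, at the cost of a slightly longer verification (convergence of the periodized series, the approximate-identity step, and the uniqueness argument); the paper's approach is shorter once the Feynman--Kac--It\^{o} machinery is in place, and keeps the exposition consistent with the probabilistic representation used elsewhere in Section \ref{section:monotonicity}. Both are sound; one small presentational note on yours is that when invoking Lemma \ref{kernel:lem-energy-estimate} for uniqueness, the two candidates have singular (delta) initial data rather than zero initial data, so one should really apply the energy estimate to the difference on $[s+\varepsilon, T]$ with the common value at time $s+\varepsilon$ as initial data and then let $\varepsilon \downarrow 0$ --- but as you say, this is routine.
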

\begin{proof}
If ${\Blin} \equiv b \in \R^2$, then there is the explicit formula for the covariant heat kernel 
\begin{equs}
p_{\Blin}(w; z) = \sum_{n \in \Z^2}  \eucheat_{t-s}(y-x + 2\pi n) e^{\icomplex (y - x + 2\pi n)\cdot b},
\end{equs}
which follows directly from the Feynman-Kac-It\^{o} formula (Lemma \ref{lemma:feynman-kac-ito-formula} and Remark \ref{remark:covariant-heat-kernel-ito-stratonovich}), and the fact that $\T^2$-valued Brownian motion may be viewed as the periodiziation of $\R^2$-valued Brownian motion. Given this formula, the second identity follows by explicit computation.
\end{proof}

\begin{lemma}\label{lemma:constant-A-bounded-resonance}
For all $z = (t, x)\in [0, 1] \times \T^2$, $b \in \R^2$, $\rho > 0$, and  $N \in \dyadic$, we have that for constant ${\Blin} \equiv b$,
\begin{equs}
\bigg| \int_0^t ds& \ind(|t-s| \sim \rho) e^{-2(t-s)} \int dy_1 dy_2 \moll_{\leq N}^{*(2)}(y_1 - y_2) \Im\Big(\ovl{p_{\Blin}(w_1; z)} \covd_{-{\Blin}(w_2)} p_{\Blin}(w_2; z)\Big) + \resgauge{\leq N}^\rho(b, t)\bigg| \\
&\lesssim \rho^{10} \big(1 + N^{-10} |b|\big).
\end{equs}
\begin{remark}
To connect back to the proof of Lemma \ref{lemma:resonant-part-A-dependent-estimate}, from \eqref{eq:R-rho-Blin-defs} we have that
\begin{equs}
R_{B_1, \leq N}^{\rho, 1}(z) = -  \int_0^t ds \ind(|t-s| \sim \rho) e^{-2(t-s)} \int dy_1 dy_2 \moll_{\leq N}^{*(2)}(y_1 - y_2) \Im\Big(\ovl{p_{\Blin_1}(w_1; z)} \covd_{-{\Blin_1}(w_2)} p_{\Blin_1}(w_2; z)\Big).
\end{equs}
Thus, Lemma \ref{lemma:constant-A-bounded-resonance} implies the estimate (recall that $\Blin_1$ has constant value $\Blin(z)$)
\begin{equs}
\big|-R_{\Blin_1, \leq N}^{\rho, 1}(z) + \resgauge{\leq N}^{\rho}(\Blin(z), t)\big| \lesssim \rho^{10}\big(1 + N^{-10}|B(z)|\big),
\end{equs}
which is what was claimed in \eqref{eq:R-Blin-1-estimate}.
\end{remark}
\end{lemma}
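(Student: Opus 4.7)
The plan is to substitute the explicit formulas of Lemma~\ref{lemma:constant-A-heat-kernel-formulas} for constant $\Blin \equiv b$ into the left-hand side, unfold the periodic spatial integration to an integral over $\R^2 \times \R^2$, and identify the zeroth-order translate in the resulting sum as exactly $-\resgauge{\leq N}^\rho(b,t)$; the remaining translates will be exponentially small in $\rho^{-1}$ and can be handled by Gaussian/Schwartz decay.

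Using Lemma~\ref{lemma:constant-A-heat-kernel-formulas} together with the identity $\moll_{\leq N}^{*(2)}(u) = \sum_{m \in \Z^2} \eucmoll_{\leq N}^{*(2)}(u + 2\pi m)$ (which follows from $\moll_{\leq N} = \sum_n \eucmoll_{\leq N}(\cdot + 2\pi n)$ and a direct unfolding), the spatial integral expands into a triply-indexed sum over $n_1, n_2, m \in \Z^2$. The change of variables $v_i := y_i - x + 2\pi n_i$ converts the $y_i$-integral over a fundamental domain together with the $n_i$-sum into an integral over $v_i \in \R^2$, and the reindexing $m' := m + n_2 - n_1$ (using $y_1 - y_2 + 2\pi m = v_1 - v_2 + 2\pi m'$) collapses the remaining sums into a single sum over $m'$. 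The resulting expression is
\begin{equs}
\int_0^t ds\, \ind(|t-s|\sim \rho)\, e^{-2(t-s)} \sum_{m' \in \Z^2} \int_{\R^2 \times \R^2} dv_1\, dv_2\, \eucmoll_{\leq N}^{*(2)}(v_1 - v_2 + 2\pi m')\, \eucheat_{t-s}(v_1) (\nabla \eucheat_{t-s})(v_2) \Im\bigl(e^{\icomplex (v_2 - v_1) \cdot b}\bigr),
\end{equs}
and by Definition~\ref{def:resgauge} the $m'=0$ summand equals $-\resgauge{\leq N}^\rho(b,t)$, cancelling the second term on the left-hand side of the statement.

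For each $m' \neq 0$ the effective support of $\eucmoll_{\leq N}^{*(2)}(\cdot + 2\pi m')$ forces $|v_1 - v_2| \gtrsim 1$, hence $\max(|v_1|,|v_2|) \gtrsim |m'|$. Splitting into the region $\max(|v_1|,|v_2|) \geq \pi|m'|/2$ and its complement, and using (i) the Gaussian tail $\eucheat_{t-s}(v) \lesssim \rho^{-1}\exp(-c|v|^2/\rho)$ together with the elementary inequality $\exp(-c/\rho) \lesssim_K \rho^K$ for $\rho \leq 1$ on the main region, and (ii) the Schwartz decay $|\eucmoll_{\leq N}^{*(2)}(x)| \lesssim_K N^2(1+N|x|)^{-K}$ on the complementary region (where $|v_1 - v_2 + 2\pi m'| \geq \pi|m'|$), each summand is bounded by $C_K \rho^K |m'|^{-K}$ for arbitrarily large $K$; the $|b|$-factor arises from the inequality $|\Im(e^{\icomplex \alpha})| \leq \min(1,|\alpha|)$ applied with $\alpha = (v_2 - v_1)\cdot b$ when convenient. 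The main obstacle is the bookkeeping in the unfolding step: one must verify that the substitution $v_i = y_i - x + 2\pi n_i$ together with the reindexing $m' = m + n_2 - n_1$ genuinely collapses the triple sum to a single sum over $m'$ with an integrand independent of $n_1$, so that the $n_1$-sum can be absorbed into the $v_1$-integral. Once this is in place, summation over $m' \neq 0$ is a routine decay calculation.
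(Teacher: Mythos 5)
Your unfolding approach is valid and gives a genuinely cleaner decomposition than the paper's. To verify the reindexing: the map $(y_1, y_2, n_1, n_2, m) \mapsto (v_1, v_2, m')$ with $v_i = y_i - x + 2\pi n_i$ and $m' = m + n_2 - n_1$ is a bijection from $([-\pi,\pi]^2)^2 \times (\Z^2)^3$ onto $(\R^2)^2 \times \Z^2$ (each $v_i$ determines $(y_i, n_i)$ uniquely, and then $m = m' - n_2 + n_1$), so the triple sum does indeed collapse to a single sum over $m'$, with the $m' = 0$ term exactly equal to $-\resgauge{\leq N}^\rho(b,t)$. The paper instead fixes $x = 0$ up front, keeps the triple sum over $(n, n_1, n_2)$, handles $\max(|n_1|,|n_2|) > 0$ by Gaussian decay, handles $n \neq 0$, $n_1 = n_2 = 0$ by splitting on $\max(|y_1|,|y_2|) \lessgtr \rho^{1/4}$, and must separately bound the mismatch $I(0,0,0) + \resgauge{\leq N}^\rho$ coming from the difference between $(\R^2)^2$ and $([-\pi,\pi]^2)^2$. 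Your version avoids that last case entirely, at the cost of the bookkeeping you flag; in return the case analysis for $m' \neq 0$ is exactly the dichotomy $\max(|v_1|,|v_2|) \gtrless \pi|m'|/2$, which is cleaner than the paper's two-stage split.

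One imprecision: your claim that each summand is bounded by $C_K \rho^K |m'|^{-K}$ for arbitrary $K$ does not literally hold on the complementary region $\max(|v_1|,|v_2|) < \pi|m'|/2$. There the Schwartz decay of $\eucmoll_{\leq N}^{*(2)}$ yields an $(N|m'|)^{-K}$ factor, but the only positive powers of $\rho$ available are the single factor from $\int_0^t \ind(|t-s|\sim\rho)\,ds$ and one of $\rho^{\pm 1/2}$ from the heat-kernel $L^1$ norms, so the per-summand bound is roughly $\rho^{1/2}\, |b|\, (N|m'|)^{-K}$ after applying $|\Im(e^{\icomplex\alpha})| \leq |\alpha|$, not $\rho^K$ of anything. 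This is harmless: summing in $m'$ then produces $\rho^{1/2}|b| N^{-10}$, which dominates $\rho^{10}N^{-10}|b|$ for small $\rho$ but is still more than acceptable for the application in Lemma~\ref{lemma:resonant-part-A-dependent-estimate} (where the bound is only compared against $\rho^{1/2}$ after taking $L_t^2 L_x^\infty$). Note that the paper's own proof has the same feature — its $I_1(n,0,0)$ estimate produces $|b|\,\rho\,N^{-10}|n|^{-10}$, again not $\rho^{10}$ — so the stated bound $\rho^{10}(1+N^{-10}|b|)$ is itself a mild overstatement of what the argument delivers; you are not worse off than the original.
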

\begin{proof}
By Lemma \ref{lemma:constant-A-heat-kernel-formulas} and a change of variables argument, we may take $x = 0$ (this is consistent with the fact that $B \equiv b$ is constant, so that one might expect the resonant part to be translation-invariant). Inserting the formulas from Lemma \ref{lemma:constant-A-heat-kernel-formulas} for $x = 0$, and using the fact that $\moll_{\leq N}^{*(2)}$ is the periodization of $(\eucmoll_{\leq N})^{*(2)}$ (recall Definition \ref{def:mollifiers}), we obtain the following sum of integrals over $[0, t] \times ([-\pi, \pi]^2)^2$:
\begin{equs}
&\int_0^t ds \ind(|t-s| \sim \rho) e^{-2(t-s)} \int dy_1 dy_2 \moll_{\leq N}^{*(2)}(y_1 - y_2) \Im\Big(\ovl{p_{\Blin}(w_1; z)} \covd_{-{\Blin}(w_2)} p_{\Blin}(w_2; z)\Big) = \\
&\sum_{n, n_1, n_2 \in \Z^2} \int_0^t ds \ind(|t-s| \sim \rho)  e^{-2(t-s)} \int_{([-\pi, \pi]^2)^2} dy_1 dy_2 (\eucmoll_{\leq N})^{*(2)}(y_1 - y_2 + 2\pi n) ~\times \\
&\eucheat_{t-s}(y_1 + 2\pi n_1) \nabla \eucheat_{t-s}(y_2 + 2\pi n_2) \Im\big(e^{\icomplex (y_2 - y_1) \cdot b} e^{\icomplex 2\pi (n_2 - n_1) \cdot b}\big).
\end{equs}
Let $I(n, n_1, n_2)$ be the summand corresponding to $n, n_1, n_2$ in the above sum. We proceed to argue that the contribution when at least one of the $n, n_1, n_2 \neq 0$ is acceptable. (Intuitively, this must be the case as the whole problem is at short distances.) Then, we will argue that the contribution of $I(0, 0, 0) + \resgauge{\leq N}^\rho(b, t)$ is also acceptable.

First, when one of $n_1, n_2 \neq 0$, we have the simple pointwise bound $|\eucheat_{t-s} (y_1 + 2\pi n_1) \nabla \eucheat_{t-s}(y_2 + 2\pi n_2)| \lesssim \rho^{10} \exp(-c (|n_1|^2 + |n_2|^2) / \rho)$ (say). We thus obtain
\begin{align*}
&\sum_{\substack{n, n_1, n_2 \\ \max(|n_1|, |n_2|) > 0}} |I(n, n_1, n_2)| \\
&\lesssim \rho^{10} \int_0^t ds \ind(|t-s| \sim \rho) \exp\Big(-c \frac{|n_1|^2 + |n_2|^2}{\rho}\Big) \int_{([-\pi, \pi]^2)^2} dy_1 dy_2 \sum_n |(\eucmoll_{\leq N})^{*(2)}(y_1 - y_2 + 2\pi n)|  \lesssim \rho^{10},
\end{align*}
which is acceptable. Next, we focus on the case $n_1 = n_2 = 0$. Consider $n \neq 0$. We split into cases $\max(|y_1|, |y_2|) \leq \rho^{\frac{1}{4}}$, $\max(|y_1|, |y_2|) > \rho^{\frac{1}{4}}$. Split the integral $I(n, 0, 0) =: I_1(n, 0, 0) + I_2(n, 0, 0)$ according to these two cases. In the latter case, due to the bound $\exp(-\rho^{-\frac{1}{2}}) \lesssim \rho^{100}$, we have the simple bound
\begin{equs}
\sum_{n \neq 0} |I_2(n, 0, 0)| \lesssim \rho^{10} \int_0^t ds \ind(|t-s| \sim \rho) \int_{([-\pi, \pi]^2)^2} dy_1 dy_2 \sum_{n \neq 0} |(\eucmoll_{\leq N})^{*(2)} (y_1 - y_2 + 2\pi n)| \lesssim \rho^{10}.
\end{equs}
In the former case, we use that when $\max(|y_1|, |y_2|) \leq \rho^{\frac{1}{4}}$, we have that $|y_1 - y_2 + 2\pi n|\geq 2\pi |n| - 2 \rho^{\frac{1}{4}} \gtrsim |n|$, and thus using the rapid decay of $(\eucmoll_{\leq N})^{*(2)}$ as well as the bound $|\Im(e^{\icomplex (y_2 -y_1) \cdot b})| \leq |y_2 - y_1| |b|$, we obtain
\begin{align*}
&|I_1(n, 0, 0)| \\
&\lesssim |b| N^{-10} |n|^{-10} \int_0^t ds \ind(|t-s| \sim \rho) \int_{([-\pi, \pi]^2)^2} dy_1 dy_2 |\eucheat_{t-s}(y_1)| |\nabla \eucheat_{t-s}(y_2)| |y_2 - y_1| \lesssim |b| \rho N^{-10} |n|^{-10}.
\end{align*}
This yields an acceptable contribution upon summing over $n \neq 0$. Finally, in the case $n = n_1 = n_2 = 0$, observe that with $D = (\R^2)^2 - ([-\pi, \pi]^2)^2$, we have that (recalling the definition of $\resgauge{\leq N}^\rho(b,t)$ (Definition \ref{def:resgauge}), which comes with a minus sign)
\begin{equs}
I(0, &0, 0) + \resgauge{\leq N}^\rho(b, t)  \\
&= -\int_0^t ds \ind(|t-s| \sim \rho) e^{-2(t-s)}  \int_{D} dy_1 dy_2 (\eucmoll_{\leq N})^{*(2)}(y_1 - y_2) \eucheat_{t-s}(y_1) (\nabla \eucheat_{t-s})(y_2) \Im(e^{\icomplex(y_2 - y_1) \cdot b}).
\end{equs}
This gives a contribution bounded by $\rho^{10}$ (say) since one of $|y_1|, |y_2| \gtrsim 1$ on the region $D$, so that one of the heat kernel factors has very rapid decay in $\rho$.
\end{proof}

\begin{lemma}\label{lemma:ptl-u-p-Au-estimate}
Let $t,\rho \in (0, 1]$, $x\in \T^2$, and $z = (t, x)$. For any $u \in (0, 1)$, we have that
\begin{equs}
\bigg\|\ind(|t-s| \lesssim \rho) (t-s)^{-\frac{3}{4}} \ptl_u \massp_{{\Blin}_u}(w; z)\bigg\|_{L_w^2([0, t] \times \T^2)} \lesssim ~~&\bigg(\int_0^t dt_v \ind(|t-t_v| \lesssim \rho) (t-t_v)^{-\frac{3}{2}} \bigg(\int_{t_v}^t d\tau \|\ptl_\tau {\Blin}\|_{L_x^\infty}\bigg)^2\bigg)^{\frac{1}{2}} \\
&+ \rho^{\frac{1}{4}} \big(\|\nabla {\Blin}(t)\|_{L_x^\infty} + \|\ptl_j {\Blin}^j\|_{L_z^\infty([0, t] \times \T^2)}\big). 
\end{equs}
Moreover,
\begin{equs}
\bigg\|\bigg(\int_0^t dt_v \ind(|t-t_v| \lesssim \rho) (t-t_v)^{-\frac{3}{2}} \bigg(\int_{t_v}^t d\tau \|\ptl_\tau {\Blin}\|_{L_x^\infty}\bigg)^2\bigg)^{\frac{1}{2}} \bigg\|_{L_t^2([0, t_0])} \lesssim \rho^{\frac{1}{4}}\|\ptl_t {\Blin}\|_{L_t^1 L_x^\infty([0, t] \times \T^2)}.
\end{equs}
\end{lemma}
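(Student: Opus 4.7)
I will start by applying Corollary \ref{cor:p-A-u-expansion} (with $z_0 = z$) to write $\ptl_u \massp_{\Blin_u}(w;z) = I_1(w;z) + I_2(w;z)$, where
\begin{equs}
I_1 := 2\icomplex \int dv\, \massp_{\Blin_u}(v;z)(\Blin(v)-\Blin(z))_j \covd_{\Blin_u(v)}^j \massp_{\Blin_u}(w;v), \quad I_2 := \icomplex \int dv\, \massp_{\Blin_u}(v;z)\ptl_j\Blin^j(v)\massp_{\Blin_u}(w;v).
\end{equs}
The term $I_1$ carries an explicit covariant derivative and will be handled via a dual energy estimate, whereas $I_2$ lacks such structure and I will instead bound it pointwise using the diamagnetic inequality together with the semigroup property. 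Since $\massp_{\Blin_u}(w;v)$ vanishes for $s\geq t_v$, the indicator $\ind(|t-s|\lesssim\rho)$ on the LHS forces $|t-t_v|\lesssim\rho$ throughout the $v$-integration, and I will insert this indicator freely into the bounds below.

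For $I_1$, I plan to apply the dual endpoint time-weighted estimate \eqref{eq:dual-endpoint-time-weighted-estimate} with $t_2 = t$, $K\equiv 1$, and $\alpha = \tfrac12$ (with $v$ playing the role of $z$), yielding
\begin{equs}
\big\|\ind(|t-s|\lesssim\rho)(t-s)^{-3/4} I_1\big\|_{L_w^2} \lesssim \big\|(t-t_v)^{-1/4}\ind(|t-t_v|\lesssim\rho)\massp_{\Blin_u}(v;z)(\Blin(v)-\Blin(z))\big\|_{L_v^2}.
\end{equs}
After invoking the diamagnetic inequality to replace $|\massp_{\Blin_u}|$ by $p$, I will decompose $\Blin(v)-\Blin(z) = (\Blin(t_v,x_v)-\Blin(t,x_v))+(\Blin(t,x_v)-\Blin(t,x))$. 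The time-difference piece, bounded by $\int_{t_v}^t\|\ptl_\tau\Blin\|_{L_x^\infty}d\tau$ and combined with $\|p(v;z)\|_{L_{x_v}^2}^2\lesssim(t-t_v)^{-1}$, will produce exactly the first RHS term of the lemma. The space-difference piece, controlled via $|x_v-x|\,\|\nabla\Blin(t)\|_{L_x^\infty}$ together with the Gaussian identity $\|p(v;z)|x_v-x|\|_{L_{x_v}^2}\lesssim 1$, will deliver the $\rho^{1/4}\|\nabla\Blin(t)\|_{L_x^\infty}$ contribution after the $dt_v$-integration.

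For $I_2$, I will use the pointwise bound
\begin{equs}
|I_2| \leq \|\ptl_j\Blin^j\|_{L_z^\infty}\int dv\, p(v;z)p(w;v) = \|\ptl_j\Blin^j\|_{L_z^\infty}(t-s)p(w;z),
\end{equs}
obtained by diamagnetic control and then the spatial semigroup identity $\int dx_v\,p(v;z)p(w;v)=p(w;z)$, followed by integration of $t_v$ over $(s,t)$. Multiplying by $\ind(|t-s|\lesssim\rho)(t-s)^{-3/4}$ and taking $L_w^2$ then reduces, via $\|p(w;z)\|_{L_y^2}^2\lesssim(t-s)^{-1}$, to $\int_{|t-s|\lesssim\rho}(t-s)^{-1/2}ds\sim\rho^{1/2}$, giving the $\rho^{1/4}\|\ptl_j\Blin^j\|_{L_z^\infty}$ contribution. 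The main subtlety I anticipate here is that a naive $L_w^2$-estimate on $I_2$ (without first exploiting the full spacetime semigroup cancellation) yields a divergent $\int_0^\rho u^{-3/2}du$; it is precisely the extra $(t-s)$ factor gained from the $t_v$-integration that averts this divergence.

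For the moreover bound, set $\Phi(t_v,t):=\int_{t_v}^t\|\ptl_\tau\Blin\|_{L_x^\infty}d\tau$ and $L := \|\ptl_t\Blin\|_{L_t^1L_x^\infty([0,t_0]\times\T^2)}$. After squaring, applying Fubini, and substituting $u=t-t_v$, the key step will be to bound $\int_0^{t_0-u}dt_v\,\Phi(t_v,t_v+u)^2$ for each fixed $u\in(0,10\rho)$. Viewing $\Phi(\cdot,\cdot+u)$ as the convolution of $\|\ptl_\tau\Blin\|_{L_x^\infty}$ against $\ind_{[0,u]}$ gives $\|\Phi(\cdot,\cdot+u)\|_{L_{t_v}^1}\leq uL$ and $\|\Phi(\cdot,\cdot+u)\|_{L_{t_v}^\infty}\leq L$; interpolating via $\|f\|_{L^2}^2\leq\|f\|_{L^1}\|f\|_{L^\infty}$ then yields $\|\Phi(\cdot,\cdot+u)\|_{L_{t_v}^2}^2\leq uL^2$. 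The remaining $u$-integral, $\int_0^{10\rho}u^{-3/2}\cdot uL^2\,du\sim L^2\rho^{1/2}$, will give the claimed $\rho^{1/4}L$ bound upon taking a square root.
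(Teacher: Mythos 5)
Your proof is correct and follows essentially the same route as the paper's: expand $\ptl_u\massp_{\Blin_u}$ via Corollary \ref{cor:p-A-u-expansion}, bound the $\ptl_j\Blin^j$-term pointwise by diamagnetic control and the semigroup property, and bound the $(\Blin(v)-\Blin(z))$-term via the dual endpoint time-weighted estimate with $\alpha=\tfrac12$, $K\equiv 1$, followed by the time/space decomposition of $\Blin(v)-\Blin(z)$. Your observation that the kernel's support in $t_v$ forces $|t-t_v|\lesssim\rho$ whenever $|t-s|\lesssim\rho$, so the localizing indicator propagates through the dual estimate, is exactly the (unstated but needed) point in the paper's argument.

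For the ``moreover'' bound you take a slightly different route than the paper: rather than first bounding $\Phi^2\leq L\Phi$ and then swapping the $dt\,dt_v$ and $d\tau$ integrals, you change variables $u=t-t_v$, interpret $\Phi(\cdot,\cdot+u)$ as a convolution of $\|\ptl_\tau\Blin\|_{L_x^\infty}$ against $\ind_{[0,u]}$, and use $\|f\|_{L^2}^2\leq\|f\|_{L^1}\|f\|_{L^\infty}$ together with Young's inequality to get $\|\Phi(\cdot,\cdot+u)\|_{L^2}^2\leq uL^2$. Both give the same $\rho^{1/2}L^2$ after integrating $u^{-3/2}\cdot u$ over $(0,10\rho)$; the paper's Cauchy-Schwarz/Fubini version is marginally shorter, while your convolution viewpoint makes the extraction of the factor $u$ a bit more transparent. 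Either is fine.
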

\begin{proof}
For brevity, let $J_\rho := \{w \in [0, t] \times \T^2 : |t-s| \lesssim \rho\}$. Recalling the formula for $\ptl_u \massp_{{\Blin}_u}$ from Corollary~\ref{cor:p-A-u-expansion}, we have that
\begin{equs}
\bigg\|(t-s)^{-\frac{3}{4}} \ptl_u &\massp_{{\Blin}_u}(w; z)\bigg\|_{L_w^2(J_\rho)} \\
&\lesssim \bigg\| (t-s)^{-\frac{3}{4}} \int dv p_{{\Blin}_u}(v; z) ({\Blin}(v) - {\Blin}(z)) \covd_{{\Blin}_u(v)} p_{{\Blin}_u}(w; v)\bigg\|_{L_w^2(J_\rho)} \label{eq:ptl-u-p-Au-estimate-intermediate-1} \\
&+ \bigg\|(t-s)^{-\frac{3}{4}} \int dv \massp_{{\Blin}_u}(v; z) (\ptl_j {\Blin}^j)(v)\massp_{{\Blin}_u}(w; v)\bigg\|_{L_w^2(J_\rho)}. \label{eq:ptl-u-p-Au-estimate-intermediate-2}
\end{equs}
To bound \eqref{eq:ptl-u-p-Au-estimate-intermediate-2}, we use the following pointwise estimate for fixed $w, z$ (here we apply the diamagnetic inequality and the semigroup property of the heat kernel):
\begin{equs}
\bigg|\int dv \massp_{{\Blin}_u}(v; z) (\ptl_j {\Blin}^j)(v) \massp_{{\Blin}_u}(w; v)\bigg| &\leq p(w; z) \int_s^t dt_v \|\ptl_j {\Blin}^j(t_v)\|_{L_x^\infty} \leq (t-s) p(w; z)\|\ptl_j {\Blin}^j\|_{L_z^\infty([0, t] \times \T^2)}.
\end{equs}
Note that $\|(t-s)^{\frac{1}{4}} p(w; z)\|_{L_w^2(J_\rho)} \lesssim \rho^{\frac{1}{4}}$. We thus obtain that the contribution from \eqref{eq:ptl-u-p-Au-estimate-intermediate-2} is acceptable.

To bound \eqref{eq:ptl-u-p-Au-estimate-intermediate-1}, we apply the dual endpoint time-weighted estimate \eqref{eq:dual-endpoint-time-weighted-estimate} with $\alpha = \frac{1}{2}$ and $K \equiv 1$, and use that $\|p_{{\Blin}_u}(v; z)\|_{L^\infty_x} \lesssim (t-t_v)^{-1}$, to obtain
\begin{equs}
\eqref{eq:ptl-u-p-Au-estimate-intermediate-1} 
&\lesssim \bigg\|(t - t_v)^{-\frac{1}{4}}p_{{\Blin}_u}(v; z) ({\Blin}(v) - {\Blin}(z))\bigg\|_{L_v^2(J_\rho)} \\
&\lesssim \bigg(\int_0^t dt_v \ind(|t-t_v| \lesssim \rho) (t-t_v)^{-\frac{3}{2}} \int dx_v p(v; z) |{\Blin}(v) - {\Blin}(z)|^2 \bigg)^{\frac{1}{2}} .
\end{equs}
We may bound
\begin{equs}
\int dx_v p(v; z) |B(v) - B(z)|^2 &\lesssim \int dx_v p(v; z) |B(v) - B(t, x_v)|^2 + \int dx_v p(v; z) |B(t, x_v) - B(z)|^2 \\
&\lesssim \bigg(\int_{t_v}^t d\tau \|\ptl_\tau B\|_{L_x^\infty}\bigg)^2 + (t - t_v) \|\nabla B(t)\|_{L_x^\infty}^2.
\end{equs}
Inserting this bound, we thus obtain
\begin{equs}
\eqref{eq:ptl-u-p-Au-estimate-intermediate-1}  \lesssim I_1(t) + I_2(t) := ~~&\bigg(\int_0^t dt_v \ind(|t-t_v| \lesssim \rho) (t-t_v)^{-\frac{3}{2}} \bigg(\int_{t_v}^t d\tau \|\ptl_\tau {\Blin}\|_{L_x^\infty}\bigg)^2\bigg)^{\frac{1}{2}} \\
&+ \bigg(\int_0^t dt_v \ind(|t-t_v| \lesssim \rho) (t-t_v)^{-\frac{1}{2}}\bigg)^{\frac{1}{2}} \|\nabla {\Blin}(t)\|_{L_x^\infty}.
\end{equs}
Observe that $
|I_2(t)| \lesssim \rho^{\frac{1}{4}} \|\nabla {\Blin}(t)\|_{L_x^\infty}$. Combining our bounds for \eqref{eq:ptl-u-p-Au-estimate-intermediate-1} and \ref{eq:ptl-u-p-Au-estimate-intermediate-2}, we obtain the first estimate. For the second, we bound
\begin{equs}
\|I_1\|_{L_t^2([0, t_0])}^2 &\lesssim \|\ptl_t {\Blin}\|_{L_t^1 L_x^\infty([0, t_0] \times \T^2)} \int_0^{t_0} dt \int dt_v \ind(|t-t_v| \lesssim \rho) (t-t_v)^{-\frac{3}{2}} \int_{t_v}^t d\tau \|\ptl_\tau {\Blin}\|_{L_x^\infty} \\
&\lesssim  \|\ptl_t {\Blin}\|_{L_t^1 L_x^\infty([0, t_0] \times \T^2)} \int_0^{t_0} d\tau \|\ptl_\tau {\Blin}\|_{L_x^\infty} \int dt \int dt_v \ind(t_v < \tau < t) \ind(|t-t_v| \lesssim \rho) (t-t_v)^{-\frac{3}{2}} \\
&\lesssim \rho^{\frac{1}{2}} \|\ptl_t {\Blin}\|_{L_t^1 L_x^\infty([0, t_0] \times \T^2)}^2.
\end{equs}
The desired result now follows.
\end{proof}

\begin{lemma}\label{lemma:perturbation-argument-ibp-error-term}
For all $u \in (0, 1)$, $t_0, \rho \in (0, 1]$, and $N \in \dyadic$, we have that
\begin{equs}
\bigg\| &\int_0^t ds \ind(|t-s| \sim \rho) \int dy_1 dy_2 |\moll_{\leq N}^{*(2)}(y_1 - y_2)| |{\Blin}(w_2) - {\Blin}(w_1)|  |\massp_{{\Blin}_u}(w_1; z) \ptl_u \massp_{{\Blin}_u}(w_2; z)|\bigg\|_{L_t^2 L_x^\infty([0, t_0] \times\T^2)}  \\
&\lesssim \frac{\rho^{\frac{1}{2}} }{N}\|\nabla {\Blin}\|_{L_t^2 L_x^\infty([0, t_0] \times \T^2)} \Big(\|\ptl_t {\Blin}\|_{L_t^1 L_x^\infty([0, t_0] \times \T^2)} + \|\nabla {\Blin}\|_{L_t^2 L_x^\infty([0, t_0] \times \T^2)} + \|\ptl_j \Blin^j \|_{L_t^\infty L_x^\infty([0, t_0] \times \T^2)}\Big).
\end{equs}
\end{lemma}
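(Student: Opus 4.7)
The plan is to exploit two structural features of the integrand. First, the mollifier $\moll_{\leq N}^{*(2)}(y_1 - y_2)$ localizes the spatial integration to $|y_1 - y_2| \lesssim N^{-1}$, so that the weighted kernel $\tilde{\moll}(y) := |y| \moll_{\leq N}^{*(2)}(y)$ satisfies $\|\tilde{\moll}\|_{L^1(\R^2)} \lesssim N^{-1}$, which will produce the explicit $N^{-1}$ factor in the bound. Second, since $w_1 = (s, y_1)$ and $w_2 = (s, y_2)$ share the same time coordinate, I can use the spatial difference estimate
\begin{equs}
|\Blin(w_2) - \Blin(w_1)| \leq \|\nabla \Blin(s)\|_{L^\infty_x} |y_1 - y_2|,
\end{equs}
which, after a subsequent time integration, will produce the factor $\|\nabla \Blin\|_{L_t^2 L_x^\infty}$.

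Combining these two bounds with the diamagnetic inequality $|\massp_{\Blin_u}(w_1; z)| \leq p(w_1; z)$, I would reduce to bounding
\begin{equs}
F(z) \lesssim \int_0^t ds \ind(|t-s| \sim \rho) \|\nabla \Blin(s)\|_{L^\infty_x} \int dy_1 dy_2 \tilde{\moll}(y_1-y_2) p(w_1; z) |\ptl_u \massp_{\Blin_u}(w_2; z)|.
\end{equs}
Applying Cauchy-Schwarz in $(y_1, y_2)$ and using that, for any $y_2$,
\begin{equs}
\int dy_1 \tilde{\moll}(y_1 - y_2) p((s,y_1); z)^2 \lesssim N^{-1} (t-s)^{-1}
\end{equs}
by Young's inequality and the $L^2_y$-bound on the heat kernel (with the analogous statement where $p$ is replaced by $\ptl_u\massp_{\Blin_u}$ giving the $L_y^2$ norm of the latter), leads to
\begin{equs}
F(z) \lesssim N^{-1} \int_0^t ds \ind(|t-s| \sim \rho) \|\nabla \Blin(s)\|_{L^\infty_x} (t-s)^{-1/2} \|\ptl_u \massp_{\Blin_u}(w; z)\|_{L^2_y}.
\end{equs}

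Finally, I would apply Cauchy-Schwarz in $s$, splitting the $(t-s)^{-1/2}$-weight as $(t-s)^{1/4} \cdot (t-s)^{-3/4}$, to obtain the pointwise estimate
\begin{equs}
F(z) \lesssim N^{-1} \rho^{1/4} \|\nabla \Blin\|_{L^2_t L^\infty_x} \cdot \big\|\ind(|t-s| \sim \rho) (t-s)^{-3/4} \ptl_u \massp_{\Blin_u}(w; z)\big\|_{L^2_w}.
\end{equs}
The $(t-s)^{-3/4}$ weight is precisely the one treated in Lemma \ref{lemma:ptl-u-p-Au-estimate}, whose first estimate bounds the $L_w^2$-norm by $I_1(t) + I_2(t)$; then taking the $L_t^2 L_x^\infty$-norm over $[0,t_0]\times\T^2$, I would invoke the pointwise-in-$t$ bound on $I_2$ (times $t_0^{1/2}\leq 1$) and the second estimate of Lemma \ref{lemma:ptl-u-p-Au-estimate} giving $\|I_1\|_{L_t^2} \lesssim \rho^{1/4} \|\ptl_t \Blin\|_{L_t^1 L_x^\infty}$, yielding the additional $\rho^{1/4}$ factor together with the three $\Blin$-norms appearing in the claim.

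The main obstacle will be calibrating the Cauchy-Schwarz applications so that the desired $\rho^{1/2}$ appears as the product of two $\rho^{1/4}$-gains: one from Cauchy-Schwarz in $s$ (balancing $(t-s)^{1/4}$ against $(t-s)^{-3/4}$), and one from the $L_t^2$-estimate in Lemma \ref{lemma:ptl-u-p-Au-estimate}. Any different splitting of weights would either fail to match the $(t-s)^{-3/4}$ weight available in that lemma, or waste powers of $\rho$. Beyond this bookkeeping issue, the argument is essentially mechanical once the reduction via the two structural features above is in place.
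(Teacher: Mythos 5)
Your proof is correct and follows the same strategy as the paper: the $N^{-1}$ gain from the mollifier together with the mean-value bound $|\Blin(w_2)-\Blin(w_1)|\leq\|\nabla\Blin(s)\|_{L_x^\infty}|y_1-y_2|$ (valid since $w_1,w_2$ share the time coordinate $s$), Cauchy--Schwarz with the $(t-s)^{1/4}\,/\,(t-s)^{-3/4}$ weight split calibrated to the first estimate of Lemma~\ref{lemma:ptl-u-p-Au-estimate}, and then the $L_t^2$-bound on $I_1$ from the second estimate of that lemma after passing to $L_t^2 L_x^\infty([0,t_0]\times\T^2)$. One small bookkeeping correction: the displayed intermediate inequality $\int dy_1\,\tilde\moll(y_1-y_2)p(w_1;z)^2\lesssim N^{-1}(t-s)^{-1}$ ``for any $y_2$'' is not true pointwise in $y_2$ (Young with $\|\tilde\moll\|_{L^1}\|p\|_{L^\infty}^2$ gives only $N^{-1}(t-s)^{-2}$); what Young and the $L^2_{y_1}$-bound on $p$ actually give --- and what your Cauchy--Schwarz in $(y_1,y_2)$ produces --- is the bound on the \emph{double} integral $\int dy_1\,dy_2\,\tilde\moll(y_1-y_2)p(w_1;z)^2=\|\tilde\moll\|_{L^1}\|p(\cdot;z)\|_{L^2_{y_1}}^2\lesssim N^{-1}(t-s)^{-1}$, which is exactly what is needed and makes the stated reduction $F(z)\lesssim N^{-1}\int ds\,\ind(|t-s|\sim\rho)\|\nabla\Blin(s)\|_{L_x^\infty}(t-s)^{-1/2}\|\ptl_u\massp_{\Blin_u}(s,\cdot;z)\|_{L^2_{y}}$ correct.
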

\begin{proof}
Let $t \in [0, t_0]$ and $z = (t, x)$. Let $J_\rho := \{w \in [0, t] \times \T^2 : |t-s| \sim \rho\}$. We have the pointwise bound in $w_2$
\begin{equs}
f(w_2) := & \int dy_1 |\moll_{\leq N}^{*(2)}(y_1 - y_2)| |{\Blin}_u(w_2) - {\Blin}_u(w_1)| |\massp_{{\Blin}_u}(w_1; z)| \\
&\lesssim \|\nabla {\Blin}(s)\|_{L_x^\infty} \int dy_1 |\moll_{\leq N}^{*(2)} (y_1 - y_2)| |y_1 - y_2| p(w_1; z) \lesssim \frac{1}{N}(t-s)^{-1} \|\nabla {\Blin}(s)\|_{L_x^\infty}.
\end{equs}
Thus by Cauchy-Schwarz and Lemma \ref{lemma:ptl-u-p-Au-estimate}, we obtain the pointwise bound in $z$:
\begin{equs}
\bigg| &\int_0^t ds \ind(|t-s| \sim \rho) \int dy_1 dy_2 |\moll_{\leq N}^{*(2)}(y_1 - y_2)| |{\Blin}(w_2) - {\Blin}(w_1)|  |\massp_{{\Blin}_u}(w_1; z) \ptl_u \massp_{{\Blin}_u}(w_2; z)|\bigg| \\
&\lesssim \bigg\|(t-s)^{\frac{3}{4}} f(w_2)\bigg\|_{L_{w_2}^2(J_\rho)} \bigg\|(t-s)^{-\frac{3}{4}} \ptl_u \massp_{{\Blin}_u}(w_2; z)\bigg\|_{L_{w_2}^2(J_\rho)} \\
&\lesssim \Big(\rho^{\frac{1}{4}} N^{-1} \|\nabla {\Blin}\|_{L_t^2 L_x^\infty([0, t] \times \T^2)} \Big) \Big( I_1(t) + \rho^{\frac{1}{4}} \big(\|\nabla {\Blin}(t)\|_{L_x^\infty} + \|\ptl_j B^j\|_{L_t^\infty L_x^\infty([0, t] \times \T^2)}\big)\Big),
\end{equs}
where $I_1(t)$ is such that $\|I_1\|_{L_t^2} \lesssim \rho^{\frac{1}{4}} \|\ptl_t {\Blin}\|_{L_t^1 L_x^\infty}$. Taking $L_t^2 L_x^\infty$, the desired result now follows.
\end{proof}

\begin{lemma}\label{lemma:pA-FA-covd-pA-localized-estimate}
For all $t_0, \rho \in (0, 1]$, $N \in \dyadic$, we have that
\begin{equs}
\bigg\| \int_0^t ds \ind(|t - s| \sim \rho) \int dy_1 dy_2 \moll_{\leq N}^{*(2)}(y_1 - y_2) \ovl{\massp_{\Blin}(w_1; z)} &\int dv \massp_{\Blin}(v; z) F_{\Blin}(v) \covd_{{\Blin}(v)} \massp_{\Blin}(w_2; v)\bigg\|_{L_t^2 L_x^\infty([0, t_0] \times \T^2)}  \\
&\lesssim \rho^{\frac{1}{2}} \|F_{\Blin}\|_{L_t^2 L_x^\infty([0, t_0] \times \T^2)}.
\end{equs}
\end{lemma}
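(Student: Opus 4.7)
The plan has three steps. First, I would apply a Cauchy--Schwarz in $(y_1, y_2)$ with $\moll_{\leq N}^{*(2)}$ as weight; using the diamagnetic inequality $|\massp_\Blin(w; z)| \leq p(w; z)$, together with $\|p(s, \cdot; z)\|_{L_y^2} \lesssim (t-s)^{-1/2}$ and $\int dy_1\, |\moll_{\leq N}^{*(2)}(y_1 - y_2)| \lesssim 1$, this reduces the problem to the pointwise bound
\begin{equs}
|\mathrm{LHS}(z)| \lesssim \int_0^t ds\, \ind(|t-s|\sim \rho)\, (t-s)^{-1/2}\, \|I(s, \cdot; z)\|_{L_y^2},
\end{equs}
where $I(w, z) := \int dv\, \massp_\Blin(v; z) F_\Blin(v) \covd_{\Blin(v)} \massp_\Blin(w; v)$.

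The main step is to establish, for each $s < t$, the estimate $\|I(s, \cdot; z)\|_{L_y^2} \lesssim (t-s)^{-1/2} \|F_\Blin\|_{L_t^2 L_x^\infty([s, t] \times \T^2)}$. I would proceed by duality: for $\psi$ with $\|\psi\|_{L_y^2} \leq 1$, the function $\Phi_\psi(v) := \int dy\, \psi(y) \massp_\Blin((s, y); v)$, $t_v > s$, solves the massive covariant heat equation with initial data $\psi$ at time $s$. A Fubini exchange rewrites the pairing as $\int dv\, \massp_\Blin(v; z) F_\Blin(v) \covd_{\Blin(v)} \Phi_\psi(v)$, and I would split this via Cauchy--Schwarz in $v$ with the weight $K(v) := p(v; z)$ (which solves the backwards heat equation in $v$). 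The factor involving $F_\Blin$ is controlled by $\|K^{-1/2} \massp_\Blin F_\Blin\|_{L_v^2((s, t))}^2 \leq \int_s^t dt_v\, \|F_\Blin(t_v)\|_{L_x^\infty}^2$, which uses $|\massp_\Blin|^2 / p \leq p$ and then the normalization $\int dx_v\, p(v; z) = 1$. The factor involving $\Phi_\psi$ is controlled by the covariant monotonicity formula (Proposition \ref{prop:monotonicity}) with weight $K$, treating the mass term as forcing; upon integrating in $t_v$ over $(s, t)$ and discarding the nonnegative boundary contribution $|\Phi_\psi(t, x)|^2/2$ produced as $t_v \to t^-$ (when $p(t_v, \cdot; t, x)$ concentrates to a delta at $x$), one obtains
\begin{equs}
\|K^{1/2} \covd_\Blin \Phi_\psi\|_{L_v^2((s, t) \times \T^2)}^2 \leq \tfrac{1}{2} \int p(s, y; t, x)\, |\psi(y)|^2\, dy \lesssim (t-s)^{-1} \|\psi\|_{L_y^2}^2,
\end{equs}
which yields the claimed $(t-s)^{-1/2}$ factor after taking a supremum in $\psi$.

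Combining these two estimates and using $\int \ind(|t-s|\sim \rho)(t-s)^{-1}\, ds \lesssim 1$ gives the pointwise bound $|\mathrm{LHS}(t, x)| \lesssim \|F_\Blin\|_{L_t^2 L_x^\infty([t - 10\rho, t] \times \T^2)}$; squaring, integrating in $t \in [0, t_0]$, and swapping the order of integration via Fubini produces an extra factor $\lesssim \rho$, and hence the desired $\rho^{1/2}$ power in $\|\mathrm{LHS}\|_{L_t^2 L_x^\infty}$. The main obstacle is choosing the correct weight: a direct application of the dual energy estimates of Section \ref{section:energy-estimates} with $K \equiv 1$ (e.g.\ Corollary \ref{kernel:cor-energy-estimate-dual}) fails to exploit the localization $|t - s|\sim \rho$ and would lose the needed power of $\rho$. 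The choice $K = p(\cdot; z)$ is what simultaneously tames the $(t - t_v)^{-1}$ blow-up of $\int dx_v\, p(v; z)^2 |F_\Blin(v)|^2$ on the forcing side and the $(t - s)^{-1}$ blow-up of $\int p(s, \cdot; z) |\psi|^2$ on the initial-data side, all while leaving the estimate essentially independent of $\Blin$ beyond the diamagnetic bound $|\massp_\Blin| \leq p$.
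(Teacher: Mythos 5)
Your proof is correct and follows essentially the same route as the paper: Cauchy--Schwarz in $y$ after Young with the mollifier, the diamagnetic inequality, the weighted covariant energy estimate with the key choice $K(v) = p(v;z)$, and a final Fubini in time to convert the $|t-s|\sim\rho$ localization into the $\rho^{1/2}$ gain. The only organizational difference is that you fix $s$, dualize, and apply the monotonicity formula directly to bound $\|I(s,\cdot;z)\|_{L_y^2}$, whereas the paper packages the time-weighted Cauchy--Schwarz over all of $w=(s,y)$ into a single application of Corollary~\ref{cor:dual-endpoint-time-weighted-estimate} (itself derived from the same monotonicity formula).
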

\begin{proof}
Fix $t \in [0, t_0]$ and $z = (t, x)$. Define $J_\rho = \{w \in [0, t] \times \T^2 : |t-s| \lesssim \rho\}$. For $\alpha = \frac{1}{2}$ (say), we bound by Cauchy-Schwarz and the dual endpoint time-weighted estimate (Corollary \ref{cor:dual-endpoint-time-weighted-estimate})
\begin{equs}
\bigg| \int_0^t &ds\ind(|t - s| \sim \rho) \int dy_1 dy_2 \moll_{\leq N}^{*(2)}(y_1 - y_2) \ovl{\massp_{\Blin}(w_1; z)} \int dv \massp_{\Blin}(v; z) F_{\Blin}(v) \covd_{{\Blin}(v)} \massp_{\Blin}(w_2; v)\bigg| \\
&\lesssim \bigg\| p_{\Blin}(w; z) (t-s)^{\frac{\alpha}{2}} \bigg\|_{L_w^2(J_\rho)} \bigg\| (t-s)^{-\frac{\alpha}{2}} \int dv p_{\Blin}(v; z) F_{\Blin}(v) \covd_{{\Blin}(v)} p_{\Blin}(w; v) \bigg\|_{L_w^2(J_\rho)} \\
&\lesssim \rho^{\frac{\alpha}{2}} \bigg\|(t-t_v)^{-\frac{\alpha}{2}} p(v; z)^{\frac{1}{2}} F_{\Blin}(v)\bigg\|_{L_v^2(J_\rho)} \\
&\lesssim \rho^{\frac{\alpha}{2}} \bigg(\int_0^t dt_v \ind(|t-t_v| \lesssim \rho) (t-t_v)^{-\alpha} \|F_{\Blin}(t_v)\|_{L_x^\infty}^2 \bigg)^{\frac{1}{2}}.
\end{equs}
Thus taking $L_t^2 L_x^\infty$, we obtain the bound
\begin{equs}
\rho^{\frac{\alpha}{2}} \bigg( \int_0^{t_0} dt \int_0^t dt_v\ind(|t-t_v| \lesssim \rho) (t-t_v)^{-\alpha} \|F_{\Blin}(t_v)\|_{L_x^\infty}^2 \bigg)^{\frac{1}{2}} \lesssim \rho^{\frac{\alpha}{2}} \rho^{\frac{1-\alpha}{2}} \|F_{\Blin}\|_{L_t^2 L_x^\infty([0, t_0] \times \T^2)},
\end{equs}
as desired.
\end{proof}

\begin{lemma}\label{lemma:p-H-p-contribution}
For all $t_0, \rho \in (0, 1]$, $N \in \dyadic$, we have that
\begin{equs}
\bigg\| \int_0^t ds \ind(|t-s| \sim \rho) \int dy_1 dy_2 \moll_{\leq N}^{*(2)} (y_1 - y_2) \Im\bigg(\icomplex \ovl{\massp_{\Blin}(w_1; z)} &\int dv \massp_{\Blin}(v; z) H(v) \massp_{\Blin}(w_2; v)\bigg)\bigg\|_{L_t^2 L_x^\infty([0, t_0] \times \T^2)} \\
&\lesssim \rho^{\frac{1}{2}}\|H\|_{L_t^1 L_x^\infty([0, t_0] \times \T^2)} .
\end{equs}
\end{lemma}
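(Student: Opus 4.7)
The plan is to reduce all the covariant heat kernels to ordinary Euclidean heat kernels via the diamagnetic inequality (Lemma~\ref{kernel:lem-diamagnetic-heat-kernel}), then exploit the semigroup identity $\int dx_v\, p(v; z)\, p(w_2; v) = p(w_2; z)$ to eliminate the inner spacetime variable $v$, and finally use Young's convolution inequality in the temporal variable to extract the $\rho^{\frac{1}{2}}$ gain. For brevity, let $X(z)$ denote the expression whose $L_t^2 L_x^\infty$ norm we want to bound, and write
\begin{equs}
X(z) = \int_0^t ds\, \ind(|t-s|\sim \rho) \int dy_1\, dy_2\, \moll_{\leq N}^{*(2)}(y_1 - y_2)\, \Im\bigl(\icomplex\, \ovl{\massp_\Blin(w_1; z)}\, f(w_2)\bigr),
\end{equs}
where $f(w_2) := \int dv\, \massp_{\Blin}(v; z)\, H(v)\, \massp_{\Blin}(w_2; v)$.

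The first step is to obtain the pointwise bound $|f(w_2)| \leq G(s, t)\, p(w_2; z)$, with $G(s, t) := \int_s^t \|H(\tau)\|_{L_x^\infty}\, d\tau$; this follows by applying the diamagnetic inequality to each of the two kernels in $f$ and then performing the $x_v$-integral via the semigroup property. Substituting this back into $X(z)$, bounding $|\massp_\Blin(w_1; z)| \leq p(w_1; z)$, and estimating the $y_1, y_2$ convolution by Young's inequality together with $\|p(s,\cdot;t,x)\|_{L_y^2}^2 \lesssim (t-s)^{-1}$, we obtain
\begin{equs}
|X(z)| \lesssim \int_0^t ds\, \ind(|t-s| \sim \rho)\, G(s, t)\, (t-s)^{-1} \lesssim \rho^{-1}\int_0^t ds\, \ind(|t-s|\sim\rho)\, G(s, t).
\end{equs}

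A short Fubini computation (interchanging the $s$ and $\tau$ integrals) gives $\int_0^t ds\, \ind(|t-s|\sim\rho)\, G(s, t) \lesssim \rho \int_{(t-\rho)_+}^t \|H(\tau)\|_{L_x^\infty}\, d\tau$, so the $\rho^{-1}$ and the $\rho$ cancel and we are left with the pointwise bound $|X(z)| \lesssim \int_{(t-\rho)_+}^t \|H(\tau)\|_{L_x^\infty}\, d\tau$, which is independent of $x$. Taking the $L_t^2$-norm over $[0, t_0]$ and recognizing the right-hand side as the convolution $(\ind_{[0, \rho]} * \|H(\cdot)\|_{L_x^\infty})(t)$, Young's convolution inequality in the form $L^2 * L^1 \hookrightarrow L^2$ yields
\begin{equs}
\|X\|_{L_t^2 L_x^\infty([0, t_0] \times \T^2)} \leq \|\ind_{[0, \rho]}\|_{L_t^2} \cdot \|H\|_{L_t^1 L_x^\infty([0, t_0] \times \T^2)} \lesssim \rho^{\frac{1}{2}}\, \|H\|_{L_t^1 L_x^\infty([0, t_0] \times \T^2)},
\end{equs}
which is the claimed estimate. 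The only subtle point, and the one I expect to require the most care in the writeup, is to preserve the localization between $s$ and the time-support of $H$ through $G(s,t)$ until the very last step: collapsing $G(s, t)$ to $\|H\|_{L_t^1 L_x^\infty([0, t_0])}$ too early would turn the $s$-integral of $(t-s)^{-1}$ into an $O(1)$ quantity and destroy the $\rho^{\frac{1}{2}}$ gain entirely.
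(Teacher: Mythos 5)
Your proof is correct and follows essentially the same line of argument as the paper: diamagnetic inequality to reduce to Euclidean kernels, semigroup property plus Young/Cauchy--Schwarz in the spatial variables to obtain the pointwise bound $|X(z)| \lesssim \int_{(t-\rho)_+}^t \|H(\tau)\|_{L_x^\infty}\,d\tau$, and then a $\rho^{1/2}$-gain from the time average over the window. The only cosmetic difference is in the final step: you invoke Young's convolution inequality $L^1 * L^2 \hookrightarrow L^2$ in time, while the paper bounds the $L_t^1$ and $L_t^\infty$ norms of the same quantity and interpolates; both are equivalent here.
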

\begin{proof}
Let $t \in [0, t_0]$ and let $J_\rho := \{w \in [0, t] \times \T^2 : |t-s| \sim \rho\}$. For fixed $z = (t, x)$ we may obtain the pointwise bound (applying the semigroup property of the heat kernel in the second inequality)
\begin{equs}
\bigg| &\int_0^t ds \ind(|t-s| \sim \rho) \int dy_1 dy_2 \moll_{\leq N}^{*(2)} (y_1 - y_2) \Im\bigg(\icomplex \ovl{\massp_{\Blin}(w_1; z)} \int dv \massp_{\Blin}(v; z) H(v) \massp_{\Blin}(w_2; v)\bigg)\bigg| \\
&\leq  \int dv \ind(|t-t_v| \lesssim \rho) p(v; z) |H(v)| \int_0^t ds \ind(|t-s| \sim \rho) \int dy_1 dy_2 |\moll_{\leq N}^{*(2)}(y_1 - y_2)| p(w_1; z) p(w_2; v) \\
&\lesssim \int dv \ind(|t-t_v| \lesssim \rho) p(v; z) |H(v)| \int_0^t ds \ind(|t-s| \sim \rho) p_{t + t_v - 2s}(x, x_v) \\
&\lesssim  \int dv \ind(|t-t_v| \lesssim \rho) p(v; z) |H(v)| 
\leq \int dt_v \ind(|t-t_v| \lesssim \rho) \|H(t_v)\|_{L_x^\infty}.
\end{equs}
We may thus bound the $L_t^1 L_x^\infty$ norm by
\begin{equs}
\int_0^{t_0} dt \int dt_v \ind(|t-t_v| \lesssim \rho) \|H(t_v)\|_{L_x^\infty} \lesssim \rho \|H\|_{L_t^1 L_x^\infty([0, t_0] \times \T^2)}.
\end{equs}
We may also bound the $L_t^\infty L_x^\infty$ norm by $\|H\|_{L_t^1 L_x^\infty([0, t_0] \times \T^2)}$. We finish by interpolation.
\end{proof}

\begin{lemma}\label{lemma:ptl-u-p-Au--D-Au-estimate}
For all $u \in (0, 1)$, $t_0, \rho \in (0, 1]$, $N \in \dyadic$, we have that 
\begin{equs}
\bigg\| \int_0^t ds \int dy_1 dy_2 \moll_{\leq N}^{*(2)} &(y_1 - y_2)\ind(|t-s| \sim \rho) \ovl{\ptl_u \massp_{{\Blin}_u}(w_1; z)} \covd_{-{\Blin}_u(w_2)} \massp_{{\Blin}_u}(w_2;z)\bigg\|_{L_t^2 L_x^\infty([0, t_0] \times \T^2)} \\
&\lesssim \rho^{\frac{1}{2}} \big(\|\ptl_t {\Blin}\|_{L_t^1 L_x^\infty([0, t_0] \times \T^2)} + \|\nabla {\Blin}\|_{L_t^2 L_x^\infty([0, t_0] \times \T^2)} + \|\ptl_j \Blin^j\|_{L_t^\infty L_x^\infty([0, t_0] \times \T^2)} \big).
\end{equs}
\end{lemma}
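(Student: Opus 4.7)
The plan is to apply Cauchy--Schwarz in the $w_2$ variable (after first integrating out $y_1$) so that the expression splits into a factor involving $\covd_{-\Blin_u(w_2)}\massp_{\Blin_u}$ and a factor involving $\ptl_u \massp_{\Blin_u}$, each of which is already controlled by a technical lemma proved earlier in this subsection. Specifically, fix $z = (t,x)$ with $t \in [0,t_0]$ and set $J_\rho := \{w \in [0,t]\times\T^2 : |t-s| \sim \rho\}$. Define
\begin{equs}
g(w_2) := \int dy_1\,\moll_{\leq N}^{*(2)}(y_1-y_2)\,\ovl{\ptl_u \massp_{\Blin_u}(w_1;z)}.
\end{equs}
Since $\|\moll_{\leq N}^{*(2)}\|_{L^1_y} \lesssim 1$, Young's convolution inequality gives, at each fixed time $s$,
\begin{equs}
\|g(s,\cdot)\|_{L^2_{y_2}} \lesssim \|\ptl_u \massp_{\Blin_u}(s,\cdot;z)\|_{L^2_y}.
\end{equs}

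Applying Cauchy--Schwarz in the $(s,y_2)$-integration and then inserting the weights $(t-s)^{3/4}$ and $(t-s)^{-3/4}$, the integral in the statement of the lemma is bounded pointwise in $z$ by
\begin{equs}
\Big\|(t-s)^{3/4}\,\covd_{-\Blin_u(w_2)} \massp_{\Blin_u}(w_2;z)\Big\|_{L^2_{w_2}(J_\rho)} \cdot \Big\|(t-s)^{-3/4}\,\ptl_u \massp_{\Blin_u}(w;z)\Big\|_{L^2_w(J_\rho)}.
\end{equs}
For the first factor, the localized bound from Lemma \ref{lemma:covd-pa-l2-spacetime-norm} (the dual version for $\covd_{-\Blin}$) gives
$\|\ind(|t-s|\sim\rho)\,\covd_{-\Blin_u(w)} p_{\Blin_u}(w;z)\|_{L^2_w} \lesssim \rho^{-1/2}$, and since $(t-s)^{3/4} \lesssim \rho^{3/4}$ on $J_\rho$, the first factor is bounded by $\rho^{1/4}$. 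For the second factor, Lemma \ref{lemma:ptl-u-p-Au-estimate} gives
\begin{equs}
\Big\|(t-s)^{-3/4}\,\ptl_u\massp_{\Blin_u}(w;z)\Big\|_{L^2_w(J_\rho)} \lesssim I_1(t) + \rho^{1/4}\big(\|\nabla\Blin(t)\|_{L^\infty_x} + \|\ptl_j\Blin^j\|_{L^\infty_z([0,t]\times\T^2)}\big),
\end{equs}
where $\|I_1\|_{L^2_t([0,t_0])} \lesssim \rho^{1/4}\|\ptl_t\Blin\|_{L^1_t L^\infty_x([0,t_0]\times\T^2)}$.

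Multiplying these two bounds yields the pointwise-in-$z$ estimate
\begin{equs}
|I(z)| \lesssim \rho^{1/4}\,I_1(t) + \rho^{1/2}\big(\|\nabla \Blin(t)\|_{L^\infty_x} + \|\ptl_j \Blin^j\|_{L^\infty_z([0,t]\times\T^2)}\big),
\end{equs}
whose right-hand side is $x$-independent. Taking $L^2_t L^\infty_x$ on $[0,t_0]$, the $I_1$ contribution produces $\rho^{1/2}\|\ptl_t\Blin\|_{L^1_t L^\infty_x}$, the $\nabla\Blin(t)$ term gives $\rho^{1/2}\|\nabla\Blin\|_{L^2_t L^\infty_x}$, and the $\ptl_j \Blin^j$ term gives $\rho^{1/2}\,t_0^{1/2}\|\ptl_j\Blin^j\|_{L^\infty_z} \lesssim \rho^{1/2}\|\ptl_j\Blin^j\|_{L^\infty_t L^\infty_x}$ since $t_0 \leq 1$. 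Summing these matches the claimed bound.

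The main obstacle, already anticipated and handled in Lemma \ref{lemma:ptl-u-p-Au-estimate}, is the bookkeeping of the $\ptl_u \massp_{\Blin_u}$ factor: naively this expression involves a spacetime integral over $v$ coupling $\Blin(v)-\Blin(z)$ with a covariant derivative of a heat kernel, and a crude bound would give too large a power of $\|\Blin\|_{\resnorm}$ or too singular a power of $\rho$. The key input we rely on is the weighted $L^2_w$ bound with the $(t-s)^{-3/4}$ weight, which is designed precisely to pair with the $(t-s)^{3/4}$ weight available from the localized covariant-derivative estimate. Choosing these matched weights (rather than, say, $(t-s)^{1/2}$ and $(t-s)^{-1/2}$) is what converts the bounds into integrable norms of $\ptl_t \Blin$ and $\nabla \Blin$ in time, yielding the correct $\rho^{1/2}$ scaling and linear dependence on $\Blin$.
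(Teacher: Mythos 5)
Your proposal is correct and takes essentially the same approach as the paper: Cauchy--Schwarz in $w$ with the matched time-weights $(t-s)^{3/4}$ and $(t-s)^{-3/4}$ (the paper's $\alpha=\tfrac12$), followed by Lemma \ref{lemma:covd-pa-l2-spacetime-norm} for the $\covd_{-\Blin_u}$ factor and Lemma \ref{lemma:ptl-u-p-Au-estimate} for the $\ptl_u \massp_{\Blin_u}$ factor. The only cosmetic difference is that you obtain the $\rho^{1/4}$ bound for the $\covd$ factor by combining the localized estimate of Lemma \ref{lemma:covd-pa-l2-spacetime-norm} with the pointwise bound $(t-s)^{3/4}\lesssim \rho^{3/4}$, whereas the paper applies the weighted (first) estimate of that lemma directly; these are equivalent.
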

\begin{proof}
Let $t \in [0, t_0]$ and $z = (t, x)$. Let $J := [0, t] \times \T^2$ and $J_\rho := \{w \in J : |t-s| \lesssim \rho\}$. Take $\alpha = \frac{1}{2}$. By Cauchy-Schwarz, the $L^2$ covariant derivative estimate Lemma \ref{lemma:covd-pa-l2-spacetime-norm}, and Lemma \ref{lemma:ptl-u-p-Au-estimate},
we have the pointwise bound in $z$:
\begin{equs}
\bigg| \int_0^t ds \int dy_1 dy_2 &\moll_{\leq N}^{*(2)}(y_1 - y_2)\ind(|t-s| \sim \rho) \ovl{\ptl_u \massp_{{\Blin}_u}(w_1; z)} \covd_{-{\Blin}_u(w_2)} \massp_{{\Blin}_u}(w_2;z)\bigg|   \\
&\leq \bigg\|(t-s)^{-\frac{\alpha+1}{2}} \ptl_u p_{{\Blin}_u}(w; z)\bigg\|_{L_w^2(J_\rho)} \bigg\| (t-s)^{\frac{\alpha+1}{2}} \covd_{-{\Blin}_u(w)} p_{{\Blin}_u}(w; v)\bigg\|_{L_w^2(J_\rho)} \\
&\lesssim \rho^{\frac{1}{4}} \bigg\|(t-s)^{-\frac{\alpha+1}{2}} \ptl_u p_{{\Blin}_u}(w; z)\bigg\|_{L_w^2(J_\rho)} \lesssim \rho^{\frac{1}{4}}(I_1(t) + \rho^{\frac{1}{4}} \|\nabla {\Blin}(t)\|_{L_x^\infty} + \rho^{\frac{1}{4}} \|\ptl_j \Blin^j\|_{L_t^\infty L_x^\infty}),
\end{equs}
where $\|I_1\|_{L_t^2} \lesssim \rho^{\frac{1}{4}} \|\ptl_t {\Blin}\|_{L_t^1 L_x^\infty}$. Taking $L_t^2 L_x^\infty$, the desired result now follows.
\end{proof}

\begin{lemma}\label{lemma:p-squared-difference-A-estimate}
For all $t_0, \rho \in (0, 1]$, $N \in \dyadic$, we have that
\begin{equs}
\bigg\| \int_0^t ds \ind(|t-s| \sim \rho) \int dy_1 dy_2 |\moll_{\leq N}^{*(2)}(y_1 - y_2)| &p(w_1; z) p(w_2; z) |{\Blin}(w_2) - {\Blin}(z)| \bigg\|_{L_t^2 L_x^\infty([0, t_0] \times \T^2)}  \\
&\lesssim\rho^{\frac{1}{2}} \big(\|\ptl_t {\Blin}\|_{L_t^1 L_x^\infty([0, t_0] \times \T^2)} + \|\nabla {\Blin}\|_{L_t^2 L_x^\infty([0, t_0] \times \T^2)}\big) .
\end{equs}
\end{lemma}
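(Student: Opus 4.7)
The plan is to bound the integrand pointwise in $z = (t, x)$ by separating the temporal and spatial variation of $\Blin$, and then to exploit the localization $|t-s| \sim \rho$ via two elementary integral computations.

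First, split using the triangle inequality
\begin{equs}
|\Blin(w_2) - \Blin(z)| \leq |\Blin(s, y_2) - \Blin(t, y_2)| + |\Blin(t, y_2) - \Blin(t, x)| \leq \int_s^t \|\ptl_\tau \Blin\|_{L_x^\infty} d\tau + \|\nabla \Blin(t)\|_{L_x^\infty} |y_2 - x|,
\end{equs}
so that the quantity in question splits as $I_1(z) + I_2(z)$. For the $I_1$ piece, the spatial variation bound is independent of $y_2$, so we simply use $\|\moll_{\leq N}^{*(2)}\|_{L^1} \lesssim 1$, Young's inequality, and $\|p_{t-s}(\cdot, x)\|_{L_y^\infty} \lesssim (t-s)^{-1}$ to estimate
\begin{equs}
\int dy_1 \, |\moll_{\leq N}^{*(2)}(y_1 - y_2)| \, p(w_1; z) \lesssim (t-s)^{-1},
\end{equs}
and then integrate against $p(w_2; z)$ in $y_2$ (which is $L^1$-normalized). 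For $I_2$, we perform the same $y_1$-integration, but now we keep the Gaussian weight $p(w_2; z)$ in $y_2$ to absorb the factor $|y_2 - x|$ via the standard moment estimate $\int dy_2 \, p(w_2; z) |y_2 - x| \lesssim (t-s)^{1/2}$. This yields the pointwise bounds
\begin{equs}
I_1(z) \lesssim \int_0^t ds \, \ind(|t-s| \sim \rho) (t-s)^{-1} \int_s^t \|\ptl_\tau \Blin\|_{L_x^\infty} d\tau, \quad I_2(z) \lesssim \|\nabla \Blin(t)\|_{L_x^\infty} \int_0^t ds \, \ind(|t-s|\sim\rho) (t-s)^{-1/2}.
\end{equs}

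For $I_2$, the remaining $s$-integral is trivially bounded by $\rho^{1/2}$, and taking $L_t^2$ of $\rho^{1/2} \|\nabla \Blin(t)\|_{L_x^\infty}$ gives the desired $\rho^{1/2} \|\nabla \Blin\|_{L_t^2 L_x^\infty}$ contribution. For $I_1$, note that since $s \in [t-\rho, t-\rho/2]$ we have $\int_0^t ds \, \ind(|t-s|\sim\rho)(t-s)^{-1} \lesssim 1$, so Fubini and this localization yield
\begin{equs}
I_1(z) \lesssim (\ind_{[0,\rho]} * g)(t), \quad g(\tau) := \|\ptl_\tau \Blin(\tau)\|_{L_x^\infty},
\end{equs}
and hence by Young's convolution inequality $L^2 * L^1 \hookrightarrow L^2$ in time,
\begin{equs}
\|I_1\|_{L_t^2([0, t_0])} \leq \|\ind_{[0, \rho]}\|_{L_t^2} \|g\|_{L_t^1} \lesssim \rho^{1/2} \|\ptl_t \Blin\|_{L_t^1 L_x^\infty([0, t_0] \times \T^2)}.
\end{equs}
Combining the two contributions (and taking $L_x^\infty$ trivially since all bounds are already uniform in $x$) gives the lemma.

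The only subtle step is the application of Young's $L^2 * L^1 \hookrightarrow L^2$ rather than Cauchy--Schwarz in $\tau$, which is crucial: a naive Cauchy--Schwarz on $I_1(t) \leq \int_{t-\rho}^t g(\tau) d\tau$ would cost a factor $\rho^{1/2}$ and upgrade the norm of $\ptl_t \Blin$ from $L_t^1$ to $L_t^2$, producing a bound with the wrong dependence. No other step presents any real difficulty, since the $y_1$-convolution against $|\moll_{\leq N}^{*(2)}|$ is entirely absorbed by Young's inequality and never interacts with the $\Blin$-dependent factors.
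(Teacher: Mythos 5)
Your proof is correct and follows the same overall strategy as the paper's: split $|\Blin(w_2) - \Blin(z)|$ by triangle inequality into a temporal piece $\int_s^t \|\ptl_\tau \Blin\|_{L_x^\infty} d\tau$ and a spatial piece $\|\nabla \Blin(t)\|_{L_x^\infty}|y_2 - x|$, integrate out $y_1$ using $\|p(w_1;z)\|_{L_{y_1}^\infty} \lesssim (t-s)^{-1}$ and $\|\moll_{\leq N}^{*(2)}\|_{L^1} \lesssim 1$, and treat the spatial piece via the Gaussian first-moment bound. The only genuine difference is the treatment of $I_1$ at the end: the paper keeps the factor $(t-s)^{-1} \sim \rho^{-1}$ and then interpolates separately computed $L_t^1$ and $L_t^\infty$ bounds to reach the $L_t^2$ norm, whereas you first use Fubini to reduce the expression to a convolution $I_1(t) \lesssim (\ind_{[0,\rho]} * g)(t)$ (absorbing the $\int \ind(|t-s|\sim\rho)(t-s)^{-1}\,ds \lesssim 1$ into the constant), and then invoke Young's inequality $L^2 * L^1 \hookrightarrow L^2$. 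These two routes are essentially equivalent --- Young's inequality at these exponents is proved by exactly this $L^1$/$L^\infty$ interpolation --- but your version is a touch cleaner since it collapses the two endpoint computations into one step. Your concluding remark about why Cauchy--Schwarz in $\tau$ would be fatal (upgrading $L_t^1$ to $L_t^2$ in the $\ptl_t \Blin$ norm) is a correct and useful observation.
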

\begin{proof}
Let $t \in [0, t_0]$ and fix $z = (t, x)$. We may bound (using that $\|p(w_1; z)\|_{L_{y_1}^\infty} \lesssim (t-s)^{-1}$ and then integrating out the $y_1$ variable)
\begin{equs}
\int_0^t ds &\ind(|t-s| \sim \rho) \int dy_1 dy_2 |\moll_{\leq N}^{*(2)}(y_1 - y_2)| p(w_1; z) p(w_2; z) |{\Blin}(w_2) - {\Blin}(z)| \\
&\lesssim \int_0^t ds \ind(|t-s| \sim \rho) (t-s)^{-1} \int dy_2 p(w_2; z) |B(w_2) - B(z)|.
\end{equs}
For fixed $s$, we may bound
\begin{equs}
\int dy_2 p(w_2; z) |B(w_2) - B(z)| &\leq \int dy_2 p(w_2; z) |B(s, y_2) - B(t, y_2)| + \int dy_2 p(w_2; z) |B(t, y_2) - B(z)| \\
&\lesssim \int_s^t d\tau \|\ptl_\tau B\|_{L_x^\infty} + (t - s)^{\frac{1}{2}} \|\nabla B(t)\|_{L_x^\infty}.
\end{equs}
Inserting this bound, we obtain
\begin{equs}
\int_0^t ds &\ind(|t-s| \sim \rho) \int dy_1 dy_2 |\moll_{\leq N}^{*(2)}(y_1 - y_2)| p(w_1; z) p(w_2; z) |{\Blin}(w_2) - {\Blin}(z)| \\
&\lesssim \rho^{\frac{1}{2}} \|\nabla B(t)\|_{L_x^\infty} + \rho^{-1} \int_0^t ds \ind(|t-s| \sim \rho) \int_s^t d\tau \|\ptl_\tau B\|_{L_x^\infty}.
\end{equs}
To finish, observe that the $L_t^2 L_x^\infty$ norm of the first term is clearly acceptable. The bound for the second term follows by interpolating the following two bounds:
\begin{align*}
\bigg\|\rho^{-1} \int_0^t ds \ind(|t-s| \sim \rho) \int_s^t d\tau \|\ptl_\tau \Blin\|_{L_x^\infty}\bigg\|_{L_t^1 L_x^\infty([0, t_0 ] \times \T^2)} &\lesssim \rho \|\ptl_t B\|_{L_t^1 L_x^\infty([0, t_0] \times \T^2)} \\
\bigg\|\rho^{-1} \int_0^t ds \ind(|t-s| \sim \rho) \int_s^t d\tau \|\ptl_\tau \Blin\|_{L_x^\infty}\bigg\|_{L_t^\infty L_x^\infty([0, t_0 ] \times \T^2)} &\lesssim  \|\ptl_t B\|_{L_t^1 L_x^\infty([0, t_0] \times \T^2)}. \qedhere
\end{align*}
\end{proof}

\section{Covariant estimates for the Abelian-Higgs equations}\label{section:Abelian-Higgs}

In this section, we obtain covariant estimates for the stochastic Abelian-Higgs equations from \eqref{intro:eq-SAH}. In Subsection \ref{section:AH-Ansatz} and \ref{section:AH-hypothesis}, we state our Ansatz for the connection one-form $A$ and scalar field $\phi$. Furthermore, we state probabilistic and continuity hypotheses (Hypothesis \ref{AH:hypothesis-probabilistic} and \ref{AH:hypothesis-continuity}), which will be used throughout this section. In Subsection \ref{section:AH-psi}, we obtain covariant estimates of the scalar field $\phi$. The main ingredient will be the covariant monotonicity formula for the covariant heat equation (Proposition \ref{prop:monotonicity}). In Subsection \ref{section:AH-Z}, we then control the connection one-form $A$. The main ingredients will be our earlier covariant, stochastic estimate of the derivative-nonlinearity (Theorem \ref{intro:thm-cshe}, or more precisely, Propositions \ref{prop:derivative-nonlinearity-non-resonant} and \ref{cshe:prop-resonant}) and the covariant estimates from Subsection \ref{section:AH-psi}.

\subsection{Ansatz}\label{section:AH-Ansatz} 

We consider the stochastic Abelian-Higgs equations given by 
\begin{align}
\big(\partial_t - \Delta\big) A &= - \leray \Im \Big( \overline{\phi} \,  \covd_A \phi \Big) + \gaugerenorm A + \leray \xi, 
\label{AH:eq-evolution-A} \\ 
\big(\partial_t - \biglcol \,  \covd_{A}^j \covd_{A,j} \, \bigrcol  \big) \phi &=
-\biglcol \, |\phi|^{q-1} \phi \,  \bigrcol + \zeta,
\label{AH:eq-evolution-phi}
\end{align}
which have been shown to be locally well-posed in Proposition \ref{prop:lwp-para-sah}. That is, we directly work with the limiting dynamics of the smooth approximations from \eqref{intro:eq-SAH-smooth}. In \eqref{AH:eq-evolution-phi}, $\biglcol \, |\phi|^{q-1} \phi \,  \bigrcol$ is the Wick-ordered nonlinearity and 
\begin{equation*}
\biglcol \,  \covd_{A}^j \covd_{A,j} \, \bigrcol = \partial_j \partial^j + 2 \icomplex A^j \partial_j - \hspace{-0.2ex} \biglcol\,  | A |^2 \, \bigrcol 
\end{equation*}
is the Wick-ordered, covariant Laplacian. 

\begin{remark}\label{remark:limiting-A-nonlinearity}
Recall from Remark \ref{remark:local-theory-remarks}\ref{item:A-equation-no-diverging-counterterm} that the $A$-equation in fact has no diverging counterterms, and thus we somewhat informally write $\ovl{\phi} \covd_A\phi$, which should be understood as the $N \toinf$ limit of the frequency-truncated analog $\ovl{\phi_{\leq N}} \covd_{A_{\leq N}} \phi_{\leq N}$. Since the bulk of our analysis in Sections \ref{section:Abelian-Higgs} and \ref{section:decay} will be on a low-frequency component of $\phi$, this consideration will only really enter in Appendix \ref{section:high}.
\end{remark}

We let $t_0\geq 0$ be an initial time, let $\nregA_0 \in \Cs_x^{-\kappa}(\T^2 \rightarrow \R^2)$ and $\pregA_0 \in \Cs_x^\eta(\T^2 \rightarrow \R^2)$ be initial connection one-forms satisfying the Coulomb gauge condition and let 
$\nregphi_0\in \GCs^{-\kappa}(\T^2\rightarrow \C)$ and $\pregphi_0 \in L_x^r(\T^2 \rightarrow \C)$ be initial scalar fields. In the initial data, the superscripts $\nreg$ and $\preg$ represent terms with negative and positive (or at least non-negative) regularities, respectively. Then, we impose the initial conditions 
\begin{equation}\label{AH:eq-initial}
A(t_0) = \nregA_0 + \pregA_0 \qquad \text{and} \qquad \phi(t_0) = \nregphi_0 + \pregphi_0. 
\end{equation}

\begin{remark}[\protect{On $(\nregA_0,\nregphi_0)$ versus $(\pregA_0,\pregphi_0)$}]\label{AH:rem-initial}
The initial data $(\nregA_0,\nregphi_0)$ in \eqref{AH:eq-initial} will be chosen as (gauge transformed) linear stochastic objects, see e.g. the proofs of Lemma \ref{decay:lem-exponential-growth} and Lemma \ref{decay:lem-decay-unit}. While $\nregA_0$ and $\nregphi_0$ therefore will have negative regularity, both $\nregA_0$ and $\nregphi_0$ will always be, at least morally, of size $\sim 1$. Due to this, the impact of $\nregA_0$ and $\nregphi_0$ on the dynamics will be rather insignificant. In contrast to $\nregA_0$ and $\nregphi_0$, $\pregA_0$ and $\pregphi_0$ will have positive (or at least non-negative) regularities. However, the size of $\pregA_0$ and $\pregphi_0$ in our argument can be large, and the impact of $\pregA_0$ and $\pregphi_0$ on the dynamics can therefore be substantial. Overall, $\pregA_0$ and $\pregphi_0$ will play a much more important role in this article than $\nregA_0$ and $\nregphi_0$. 
\end{remark}

We let $L \in \dyadic$ be a parameter, which remains to be chosen. Throughout this section, we assume that $L$ satisfies the condition
\begin{equation}\label{AH:eq-L}
L \geq C_0 \max \bigg(
1,\| \pregA_0 \|_{\Cs_x^\eta}^{\frac{2}{\eta_3}}, \| \pregphi_0 \|_{L_x^r}^{\frac{2}{\eta_3}} \bigg),
\end{equation}
where $C_0$ is as in Subsection \ref{section:parameters}.
The factors of $2$ in the exponents in \eqref{AH:eq-L} are not essential, but the reader should keep in mind that the exponents are of size $\sim \eta_3^{-1}$.
Furthermore, we assume that 
\begin{equation}\label{AH:eq-L-nreg}
\| \nregA_0 \|_{\Cs_x^{-\kappa}} \leq L^{\kappa} \qquad \text{and} \qquad 
\| \nregphi_0 \|_{\GCs^{-\kappa}} \leq L^{\kappa}.
\end{equation}

\begin{remark}[Size of $L$] 
As discussed in Remark \ref{AH:rem-initial}, the size of $\|\pregA_0\|_{\Cs_x^\eta}$ and $\|\pregphi_0\|_{L_x^r}$ can be arbitrarily large, but the size of $\| \nregA_0 \|_{\Cs_x^{-\kappa}}$ and 
$\| \nregphi_0 \|_{\GCs^{-\kappa}}$ is essentially $\sim 1$. As a result, we will be able to choose $L$ such that $L^{\kappa}$ and $L^{\kappa_j}$, where $1\leq j \leq 4$, will essentially be of size $\sim 1$, and yet \eqref{AH:eq-L} is still satisfied. 
\end{remark}

In the following, we use $L$ as a threshold between low and high-frequencies. Due to \eqref{AH:eq-L}, the contributions of all high-frequency terms to \eqref{AH:eq-evolution-A}-\eqref{AH:eq-evolution-phi} will be negligible, and we encourage the reader to focus entirely on the low-frequency terms. We now derive our Ansatz for the connection one-form $A$ and the scalar field $\phi$.

\subsubsection{\protect{Ansatz for $A$}}
First, we define the linear stochastic object $\initiallinear[t_0]\colon [t_0,\infty) \times \T^2 \rightarrow \R^2$ as the solution of 
\begin{equation}\label{AH:eq-A-linear}
\big( \partial_t - \Delta +1 \big) \initiallinear[t_0] = \leray \xi, \qquad \initiallinear[t_0] (t_0)=0. 
\end{equation}
The subscript $t_0$ in $\initiallinear[t_0]$ indicates that the zero initial condition is imposed at the initial time $t=t_0$. In the current section, the initial time is always given by $t_0$, and we therefore simply write 
\begin{equation}\label{AH:eq-linear-simplified}
\linear[] := \initiallinear[t_0]. 
\end{equation}
In Section \ref{section:decay-unit}, however, we iterate in time, and then the dependence of $\initiallinear[t_0]$ on the initial time $t_0$ will be important. Second, we define $\Blin\colon[t_0,\infty) \times \T^2\rightarrow \R^2$ and $\Slin\colon [t_0,\infty) \times \T^2 \rightarrow \R^2$ as the linear heat flow of $\pregA_0$ and $\nregA_0$, respectively. To be more precise, we define $\Blin$ and $\Slin$ as the solutions of
\begin{alignat}{3}
\big( \partial_t - \Delta \big) \Blin &=0, \qquad & \qquad \Blin(t_0)=\pregA_0, \label{AH:eq-Blin} \\ 
\big( \partial_t - \Delta \big) \Slin &=0, \qquad & \qquad \Slin(t_0)=\nregA_0. \label{AH:eq-Slin}
\end{alignat}
We note that, due to the size constraint in \eqref{AH:eq-L-nreg}, the impact of $\Slin$ on the dynamics is rather insignificant compared to the impact of $\Blin$. On first reading, we encourage the reader to ignore all terms involving $\Slin$. Equipped with $\linear[]$, $\Blin$, and $\Slin$, our Ansatz for $A$ then takes the form 
\begin{equs}\label{AH:eq-decomposition-A}
A = \linear[] + \Blin + \Slin + Z,
\end{equs}
where $Z$ is a nonlinear remainder. Due to \eqref{AH:eq-evolution-A}, \eqref{AH:eq-A-linear}, \eqref{AH:eq-Blin}, and \eqref{AH:eq-Slin}, the nonlinear remainder $Z$ then satisfies the nonlinear heat equation 
\begin{equs}\label{AH:eq-Z}
(\partial_t - \Delta) Z = - \leray \Im \Big( \overline{\phi}\, \covd_A \phi \Big) + \gaugerenorm A + \linear[], \qquad Z(t_0)=0. 
\end{equs}
For later use, we also define
\begin{alignat}{3}
\linear[\lo] &:= P_{\leq L} \linear[], \qquad & \qquad \linear[\hi] &:= \linear[] - \linear[\lo], \label{AH:eq-A-convenient-1} \\ 
\Slin_{\lo} &:= P_{\leq L} \Slin, \qquad    \qquad & \qquad         \Slin_{\hi} &:= \Slin - \Slin_{\lo}, 
\label{AH:eq-A-convenient-2} \\ 
A_{\lo} &:= \linear[\lo] + \Blin  + \Slin_{\lo} + Z. \label{AH:eq-A-convenient-3} 
\end{alignat}

\subsubsection{\protect{Ansatz for $\phi$}} 
We first define a covariant linear stochastic object, which is defined using a covariant stochastic heat equation. As mentioned in Remark \ref{intro:rem-cshe}.\ref{intro:item-choice-B}, the covariant stochastic object is not defined using the connection one-form $A$, but rather\footnote{It may also be possible to work with the covariant stochastic heat equation based on $\scalebox{0.8}{$\linear[\lo]$}+\Blin$ or 
$\scalebox{0.8}{$\linear[\lo]$}+\Slin_{\lo}+\Blin$. However, since the interactions between $\scalebox{0.8}{$\linear[\lo]$}$ and $\scalebox{0.8}{$\philinear[\Blin,\lo]$}$  
as well as between $\Slin_{\lo}$ and $\scalebox{0.8}{$\philinear[\Blin,\lo]$}$ can easily be handled perturbatively, we chose to work with the covariant stochastic heat equation induced by $\Blin$.}
using the connection one-form $\Blin$. The reason is that $A$ and $\zeta\big|_{[t_0,\infty)}$ are probabilistically dependent, whereas $\Blin$ and $\zeta\big|_{[t_0,\infty)}$ are probabilistically independent.
To be more precise, we define the covariant, low-frequency linear stochastic object $\philinear[\Blin,t_0,\lo]$ as the solution of 
\begin{equs}\label{AH:eq-philinear-lo}
\big(\partial_t - \covd_{\Blin}^j \covd_{\Blin,j} +1  \big) \, \initialphilinear[\Blin,t_0,\lo] = P_{\leq L} \zeta,  \qquad 
\initialphilinear[\Blin,t_0,\lo](t_0) =0.
\end{equs}
We also introduce the non-covariant linear stochastic object $\philinear[t_0]$ and non-covariant, high-frequency linear stochastic $\initialphilinear[t_0,\hi]$ as the solutions of 
\begin{alignat}{3}
\big( \partial_t - \partial^j \partial_j +1  \big) \, \initialphilinear[t_0] &=  \zeta,  \qquad & \qquad 
\philinear[t_0](t_0) &=0,  \label{AH:eq-philinear} \\ 
\big( \partial_t - \partial^j \partial_j +1  \big) \, \initialphilinear[t_0,\hi] &=  P_{>L}\zeta,  \qquad & \qquad 
\philinear[t_0,\hi](t_0) &=0. \label{AH:eq-philinear-hi}
\end{alignat}
Similar as in \eqref{AH:eq-linear-simplified}, we also write
\begin{equation}\label{AH:eq-philinear-simplified}
\philinear[\Blin,\lo] := \initialphilinear[\Blin,t_0,\lo],  \qquad \philinear[\hi] := \initialphilinear[t_0,\hi], \qquad 
\text{and} \qquad \philinear[] := \initialphilinear[t_0].
\end{equation}
Due to our choice of $L$ in \eqref{AH:eq-L}, all interactions involving $\philinear[\hi]$ can be handled perturbatively. On first reading, we therefore encourage the reader to ignore all contributions of $\philinear[\hi]$. Equipped with $\philinear[\Blin,\lo]$ and $\philinear[\hi]$, we then write the solution $\phi$ as 
\begin{equation}\label{AH:eq-decomposition-phi-first}
\phi = \philinear[\Blin,\lo] + \philinear[\hi] + \psi .
\end{equation}
From the evolution equations \eqref{AH:eq-evolution-phi}, \eqref{AH:eq-philinear-lo}, and \eqref{AH:eq-philinear-hi}, it then follows that the nonlinear remainder $\psi$ solves
\begin{align}
\big( \partial_t - \hspace{-0.2ex} \biglcol \,  \covd_{A}^j \covd_{A,j} \, \bigrcol \hspace{-0.2ex} + 1\big) \psi 
&= \big(\hspace{-0.2ex} \biglcol \,  \covd_{A}^j \covd_{A,j} \, \bigrcol \hspace{-0.2ex} - \covd_{\Blin}^j \covd_{\Blin,j}  \big) \,  \philinear[\Blin,\lo] 
\label{AH:eq-psi-e1}\\
&+  \big( \hspace{-0.2ex} \biglcol \,  \covd_{A}^j \covd_{A,j} \, \bigrcol \hspace{-0.2ex} - \partial^j \partial_j  \big) \, \philinear[\hi]  \label{AH:eq-psi-e2}\\
&+ \philinear[\Blin,\lo] + \philinear[\hi] + \psi \label{AH:eq-psi-e2p} \\
&- \biglcol \, \big| \philinear[\Blin,\lo] + \philinear[\hi] + \psi \big|^{q-1} \big( \philinear[\Blin,\lo] + \philinear[\hi] + \psi\big) \,  \bigrcol \label{AH:eq-psi-e3} 
\end{align}
and has the initial data
\begin{equation}\label{AH:eq-psi-initial}
\psi(t_0) = \nregphi_0 +\pregphi_0. 
\end{equation}
In order to control $\psi$, we would like to use the covariant monotonicity formula from Proposition \ref{prop:monotonicity}, which can only yield good estimates of $\psi$ if its covariant derivatives have finite $L_t^2 L_x^2$-norms. However, due to the negative regularity of the initial data $\nregphi_0$ in \eqref{AH:eq-psi-initial} and the negative regularity of $\philinear[\hi]$ in \eqref{AH:eq-psi-e2}, the $L_t^2 L_x^2$-norm of covariant derivatives of $\psi$ has to be infinite. To circumvent these problems, we decompose $\psi$ as
\begin{equs}\label{AH:eq-decomposition-psi}
\psi = \psi_{\lo} + \psi_{\hi} + \varphi_{\hi},
\end{equs}
which are defined as follows: First, we define $\varphi_{\hi}$ as the solution of 
\begin{equs}
\big(\partial_t - \partial^j \partial_j + 1  \big)  \, \varphi_{\hi} =0, \qquad \varphi_{\hi}(t_0) = P_{>L} \nregphi_0.  \label{eq:AH-eq-varphi-hi}
\end{equs} 
That is, we define $\varphi_\hi$ as the linear heat flow of the high-frequency component of $\nregphi_0$.  Next, we define $\psi_{\hi}$ as a solution of a non-covariant heat equation which absorbs contributions containing at least one factor of $\linear[\hi]$, $\Slin_{\hi}$, $\philinear[\hi]$, or $\varphi_{\hi}$. To be more precise, we define $\psi_{\hi}$ as the solution of evolution equation 
\begin{align}
& \big(\partial_t - \partial^j \partial_j + 1  \big)  \, \psi_{\hi} \notag \\
=&\, \big( \hspace{-0.2ex} \biglcol \,  \covd_{A}^j \covd_{A,j} \, \bigrcol \hspace{-0.2ex} - \covd_{A_{\lo}}^j \covd_{A_{\lo},j}  - 2 \sigma_{\leq L}^2\big) \big( \philinear[\Blin,\lo] + \psi_{\lo} + \psi_{\hi} \big) 
\label{AH:eq-psi-hi-e1}\\ 
+&\,\big( \hspace{-0.2ex} \biglcol \,  \covd_{A}^j \covd_{A,j} \, \bigrcol \hspace{-0.2ex} - \partial^j \partial_j  +1  \big) \, \big( \philinear[\hi]  + \varphi_{\hi} \big) \label{AH:eq-psi-hi-e2} \\
-&\, \Big(  \biglcol \, \big| \philinear[\Blin,\lo] + \philinear[\hi] + \psi \big|^{q-1} \big( \philinear[\Blin,\lo] + \philinear[\hi] + \psi \big) \,  \bigrcol 
- \biglcol \, \big| \philinear[\Blin,\lo] + \psi_{\lo} + \psi_{\hi} \big|^{q-1} \big( \philinear[\Blin,\lo]  + \psi_{\lo} + \psi_{\hi} \big) \,  \bigrcol \Big).  \label{AH:eq-psi-hi-e3}
\end{align}
with initial data 
\begin{equation*}
    \psi_{\hi}(t_0) = 0.
\end{equation*}
\begin{remark}\label{remark:wick-ordered-abuse-of-notation}
With a slight abuse of notation, we use expressions of the form $\biglcol \, |\varphi|^{q-1} \varphi\, \bigrcol $ for two different Wick-ordered nonlinearities. The term $\biglcol \, | \philinear[\Blin,\lo] + \philinear[\hi] + \psi |^{q-1} ( \philinear[\Blin,\lo] + \philinear[\hi] + \psi ) \,  \bigrcol$ refers to the limiting Wick-ordered nonlinearity, whereas 
$\biglcol \, | \philinear[\Blin,\lo]  +\psi_{\lo} + \psi_{\hi}|^{q-1} ( \philinear[\Blin,\lo]  + \psi_{\lo} + \psi_{\hi} ) \,  \bigrcol$ refers to the Wick-ordered nonlinearity corresponding to the frequency-truncation parameter $L$ (and hence variance $\sigma_{\leq L}^2$). In the following, all Wick-ordered nonlinearities involving $\philinear[\Blin,\lo] + \philinear[\hi]$ or $\philinear[\lo] + \philinear[\hi]$ will be limiting Wick-ordered nonlinearities, whereas all Wick-ordered nonlinearities involving only $\philinear[\Blin,\lo]$ or $\philinear[\lo]$ will be defined using the variance $\sigma_{\leq L}^2$.
\end{remark}
Similar as for $\philinear[\hi]$, our choice of $L$ from \eqref{AH:eq-L} allows us to treat all interactions involving $\varphi_{\hi}$ or $\psi_{\hi}$ perturbatively, and such interactions may therefore be ignored on first reading. The evolution equations for $\psi$ and $\psi_{\hi}$ from \eqref{AH:eq-psi-e1}-\eqref{AH:eq-psi-e3} and \eqref{AH:eq-psi-hi-e1}-\eqref{AH:eq-psi-hi-e2} then determine the evolution equation for $\psi_{\lo}$, which is given by 
\begin{align}
\big(\partial_t - \covd_{A_{\lo}}^j \covd_{A_{\lo},j} - 2 \sigma_{\leq L}^2 +1 \big)  \, \psi_{\lo} 
&= \big( \covd_{A_{\lo}}^j \covd_{A_{\lo},j} - \covd_{\Blin}^j \covd_{\Blin,j}  +2  \sigma_{\leq L}^2 \big) \, \philinear[\Blin,\lo] \label{AH:eq-psi-lo-original-e1} \\ 
&+ \big( \covd_{A_{\lo}}^j \covd_{A_{\lo},j} - \partial^j \partial_j  + 2 \sigma_{\leq L}^2 \big) \psi_{\hi} \label{AH:eq-psi-lo-original-e2} \\ 
&+ \philinear[\Blin,\lo]  + \psi_{\lo} +\psi_{\hi} \label{AH:eq-psi-lo-original-e2p} \\
&-  \biglcol \, \big| \philinear[\Blin,\lo]  + \psi_{\lo} + \psi_{\hi} \big|^{q-1} \big( \philinear[\Blin,\lo]  + \psi_{\lo} + \psi_{\hi} \big) \,  \bigrcol . \label{AH:eq-psi-lo-original-e3}
\end{align}
Furthermore, the initial condition for $\psi_{\lo}$ is given by 
\begin{equation}\label{AH:eq-psi-lo-initial}
\psi_{\lo}(t_0)= P_{\leq L} \nregphi_0 + \pregphi_0. 
\end{equation}

\begin{remark}
In the initial condition \eqref{AH:eq-psi-lo-initial}, we combined $P_{\leq L} \nregphi_0$ and $\pregphi_0$.
In contrast, in the initial conditions in \eqref{AH:eq-Blin} and \eqref{AH:eq-Slin}, we completely separated the terms $\nregA_0$ and $\pregA_0$. The reason for this is that nonlinear remainders in $\phi$ will be controlled in $L_x^r$, whereas nonlinear remainders in $A$ will be controlled in $\Beta$.
Due to the additional $\eta$-derivatives, including $P_{\leq L} \nregA_0$ in the initial condition for  $\Blin$ would lead to unacceptable losses.
\end{remark}

Due to our choice of $L$ from \eqref{AH:eq-L} and due to $\sigma_{\leq L}^2 \sim \log(L)$, the $\sigma_{\leq L}^2$-terms in 
\eqref{AH:eq-psi-lo-original-e1}-\eqref{AH:eq-psi-lo-original-e3} can be treated perturbatively. For this reason, we no longer view them as a renormalization of the $\covd_{A_{\lo}}^j \covd_{A_{\lo},j}$-operator, but rather as a perturbative term.\footnote{Using similar arguments, it is possible to treat the massive $+\psi_{\lo}$-term on the left-hand side of \eqref{AH:eq-psi-lo-original-e1} perturbatively. However, in order to keep our notation consistent with the definitions of the stochastic objects, we keep the massive $+\psi_{\lo}$-term on the left-hand side of \eqref{AH:eq-psi-lo-original-e1}.} We also group together all terms in \eqref{AH:eq-psi-lo-original-e2p} with the corresponding $\sigma_{\leq L}^2$-terms. Furthermore, we write 
\begin{equation}\label{AH:eq-power-decomposition}
\begin{aligned}
& \biglcol \, \big| \philinear[\Blin,\lo]  + \psi_{\lo} + \psi_{\hi} \big|^{q-1} \big( \philinear[\Blin,\lo]  + \psi_{\lo} + \psi_{\hi} \big) \,  \bigrcol \\ 
=&\,  |\psi_{\lo}|^{q-1} \psi_{\lo} + \Big( \biglcol \, \big| \philinear[\Blin,\lo]  + \psi_{\lo} + \psi_{\hi} \big|^{q-1} \big( \philinear[\Blin,\lo] + \psi_{\lo} + \psi_{\hi} \big) \,  \bigrcol  - |\psi_{\lo}|^{q-1} \psi_{\lo}\Big),
\end{aligned}
\end{equation}
since the first summand in \eqref{AH:eq-power-decomposition} will be treated non-perturbatively (see the proof of Proposition \ref{AH:prop-psi-lo-estimate}). Finally, we use the identity $\Blin=A_{\lo} - \big( \linear[\lo]+ \Slin_{\lo} + Z \big)$ and Lemma \ref{prelim:lem-derivatives}.\ref{prelim:item-difference} to write all derivatives in terms of $\covd_{A_{\lo}}$, which will be crucial for our covariant energy estimates. All in all, we reformulate \eqref{AH:eq-psi-lo-original-e1}-\eqref{AH:eq-psi-lo-original-e3} as 
\begin{align}
& \big(\partial_t - \covd_{A_{\lo}}^j \covd_{A_{\lo},j} +1  \big)  \, \psi_{\lo} + |\psi_{\lo}|^{q-1} \psi_{\lo}  \notag \\ 
=&\,  2 \icomplex \covd_{A_{\lo}}^j \Big( \big( \linear[\lo] + \Slin_{\lo}  + Z \big)_j \, \philinear[\Blin,\lo] + \big( \linear[\lo] + B + \Slin_{\lo}  + Z \big)_j \psi_{\hi} \Big) \label{AH:eq-psi-lo-e1} \\ 
+&\,  \big| \linear[\lo] + \Slin_{\lo}  + Z \big|^2 \, \philinear[\Blin,\lo] 
+ \big| \linear[\lo] + B + \Slin_{\lo}  +  Z \big|^2 \psi_{\hi} 
+ \big( 2 \sigma_{\leq L}^2 +1 \big) \big( \philinear[\Blin,\lo] + \psi_{\hi} \big) \label{AH:eq-psi-lo-e2} \\ 
+&\,   \big( 2 \sigma_{\leq L}^2 +1 \big) \psi_{\lo}  
- \Big( \biglcol \, \big| \philinear[\Blin,\lo] + \psi_{\lo} + \psi_{\hi} \big|^{q-1} \big( \philinear[\Blin,\lo]  + \psi_{\lo} + \psi_{\hi} \big) \,  \bigrcol 
- |\psi_{\lo}|^{q-1} \psi_{\lo}  \Big) . \label{AH:eq-psi-lo-e3}
\end{align}
We remark that the terms in \eqref{AH:eq-psi-lo-e1}-\eqref{AH:eq-psi-lo-e2} are grouped together differently than in \eqref{AH:eq-psi-lo-original-e1}-\eqref{AH:eq-psi-lo-original-e2}, since we grouped all terms with $\covd_{A_{\lo}}$-operators together.\\

In total, we have introduced the nine terms
\begin{equs}\label{AH:eq-term-collection}
\Big( \linear[], \Blin, \Slin, Z, \philinear[\Blin,\lo], \philinear[\hi],  \psi_{\lo},\psi_{\hi}, \varphi_{\hi} \Big). 
\end{equs}
Since the six terms $\linear[]$, $\Blin$, $\Slin$, $\philinear[\Blin,\lo]$, $\philinear[\hi]$, and $\varphi_{\hi}$ have already been estimated, it remains to control the three terms $Z$, $\psi_{\lo}$, and $\psi_{\hi}$, which is the main subject of this section (and Appendix \ref{section:high}).

\subsection{Continuity and probabilistic hypothesis}\label{section:AH-hypothesis}

In order to obtain our nonlinear estimates for $Z$, $\psi_{\lo}$, and $\psi_{\hi}$, we require several probabilistic estimates of our stochastic objects. For expository purposes, it is convenient to collect all required probabilistic estimates in a probabilistic hypothesis (Hypothesis \ref{AH:hypothesis-probabilistic}), which can easily be referenced.

\begin{hypothesis}[Probabilistic hypothesis]\label{AH:hypothesis-probabilistic}
Let $t_0\in [0,\infty)$, let $0<\tau\leq 1$, and let $L\in \dyadic$. Then, we make the following assumptions: 
\begin{enumerate}[label=(\roman*)]
\item\label{AH:item-probabilistic-A-linear} (Linear stochastic objects) It holds that 
\begin{equation*}
\max\Big(\big\|\,  \linear[\lo]\big\|_{L_t^\infty L_x^\infty([t_0,t_0+\tau])}, 
\big\|\,  \linear[]\big\|_{L_t^\infty \Cs_x^{-\kappa}([t_0,t_0+\tau])},
\big\|\,  \philinear[]\big\|_{L_t^\infty \GCs^{-\kappa}([t_0,t_0+\tau])} \Big) \leq L^{\kappa}. 
\end{equation*}
\item\label{AH:item-probabilistic-covariant-linear} (Covariant linear stochastic object $\philinear[\Blin,\lo]$) It holds that  $\big\|\,  \philinear[\Blin, \lo]\big\|_{L_t^\infty L_x^\infty([t_0,t_0+\tau])}\leq L^\kappa$.
\item\label{AH:item-probabilistic-derivative} (Derivative nonlinearity) It holds that 
\begin{equs}\label{AH:eq-probabilistic-derivative}
\Big\| \leray \Im \Duh \Big[ \, \overline{\philinear[\Blin,\lo]} \covd_{\Blin} \philinear[\Blin,\lo] - \tfrac{1}{8\pi} \Blin  \Big] \Big\|_{C_t^0 \Cs_x^{2\eta} \cap C_t^{\eta} \Cs_x^0([t_0, t_0 + \tau])}
\leq \tau^{\frac{3}{4}-\eta} \big\| \Blin(t_0) \big\|_{\Cs_x^\eta}^{\frac{1}{2}+\eta} + L^\kappa. 
\end{equs}
\item\label{AH:item-high} (High-frequency estimates) The high-frequency probabilistic hypothesis (Hypothesis \ref{high:hypothesis-probabilistic}) is satisfied.
\end{enumerate}
\end{hypothesis}

\begin{remark} 
While estimates of the stochastic objects $\linear$ and $\philinear$ in Hypothesis \ref{AH:hypothesis-probabilistic}.\ref{AH:item-probabilistic-A-linear} will not be needed until the proof of \mbox{Proposition \ref{decay:prop-short}}, but it is convenient to state them here. The reasons for postponing the statement of the probabilistic estimates in Hypothesis \ref{AH:hypothesis-probabilistic}.\ref{AH:item-high} until Appendix \ref{section:high} are two-fold: First, the estimates will only be directly used in Appendix \ref{section:high}, and not appear in this section. Second, the estimates are rather complicated, since they contain all estimates needed in the local theory of \eqref{AH:eq-evolution-A}-\eqref{AH:eq-evolution-phi}.  
\end{remark}

In the next definition, we introduce the event determined by the estimates in Hypothesis \ref{AH:hypothesis-probabilistic}. 

\begin{definition}[Event for probabilistic hypothesis]\label{decay:def-probabilistic-event}
Let $t_0 \geq 0$, let $\tau \in (0,1)$, let $L\in \dyadic$, let $\pregA_0 \in \Cs_x^\eta$,
and let $\Blin$ be as in \eqref{AH:eq-Blin}. Then, we define
\begin{equation*}
E\big( t_0, \tau, L, \pregA_0 \big)
\end{equation*}
as the event on which the estimates in Hypothesis \ref{AH:hypothesis-probabilistic} are satisfied.
\end{definition}

In the next proposition, we prove that the probabilistic hypothesis (Hypothesis \ref{AH:hypothesis-probabilistic}) is satisfied with high probability. 

\begin{proposition}\label{AH:prop-probabilistic}
Let $T_0 \geq 0$ be a stopping time, let $\tau \colon (\Omega,\Fc)\rightarrow (0, \infty)$, let $L\colon (\Omega,\Fc)\rightarrow \dyadic$, and let $\pregA_0\colon (\Omega,\Fc)\rightarrow \Cs_x^\eta$. Furthermore, assume that $\tau$, $L$, and $\pregA_0$ are $\Fc_{T_0}$-measurable and, similar as in \eqref{AH:eq-L}, assume that 
\begin{equation}\label{AH:eq-probabilistic-L-condition}
L \geq C_0 \max\Big( 1 , \big\| \pregA_0 \big\|_{\Cs_x^\eta}^{\frac{2}{\eta_3}}\Big) \quad \text{ and } \quad \tau \leq c_0. 
\end{equation}
Then, it holds that
\begin{equation}\label{AH:eq-probabilistic}
\bP \Big( \big( \Omega \backslash E\big( T_0, \tau, L, \pregA_0 \big) \big) \Big| \Fc_{T_0} \Big) 
\leq c^{-1} \exp \Big( - c L^{\frac{\kappa}{2}} \Big).
\end{equation}
\end{proposition}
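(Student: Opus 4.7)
The strategy is to reduce the conditional probability bound to a deterministic statement via the strong Markov property, and then verify each item of Hypothesis \ref{AH:hypothesis-probabilistic} by applying Theorem \ref{intro:thm-cshe} (and classical estimates for non-covariant objects), combined with Markov's inequality for a large power $\power$.

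\emph{Step 1 (Reduction to deterministic parameters).} Since $T_0$ is a stopping time and $\tau, L, \pregA_0$ are $\Fc_{T_0}$-measurable, the strong Markov property for space-time white noise implies that, conditional on $\Fc_{T_0}$, the restrictions of $\xi$ and $\zeta$ to $[T_0, T_0 + 1]$ are again white noises independent of $\Fc_{T_0}$. By a standard disintegration argument, it then suffices to prove the unconditional estimate
\begin{equation*}
\bP\big(\Omega \setminus E(t_0, \tau, L, \pregA_0)\big) \leq c^{-1}\exp(-c L^{\kappa/2})
\end{equation*}
for each deterministic tuple $(t_0, \tau, L, \pregA_0)$ satisfying \eqref{AH:eq-probabilistic-L-condition}. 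In this deterministic regime, $\Blin(t) = e^{(t-t_0)\Delta} \pregA_0$ is deterministic, and standard Schauder/heat-flow estimates on $[t_0, t_0+\tau]$ bound the norm in Definition \ref{intro:def-norm} by
\begin{equation*}
\|\Blin\|_{\tau, \resnorm} \lesssim \|\pregA_0\|_{\Cs_x^\eta} + \tau^{\varepsilon}\|\pregA_0\|_{\Cs_x^\eta} + \|\pregA_0\|_{\Cs_x^\eta} \lesssim \|\pregA_0\|_{\Cs_x^\eta} \leq L^{\eta_3/2},
\end{equation*}
where in the last inequality we used \eqref{AH:eq-probabilistic-L-condition}. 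In particular, $\|\Blin\|_{\tau, \resnorm}^{O(\kappa)} \ll L^{\kappa/2}$, which is the key point enabling us to absorb $\Blin$-dependent factors into the $L^\kappa$-budget.

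\emph{Step 2 (Items \ref{AH:item-probabilistic-A-linear}, \ref{AH:item-probabilistic-covariant-linear}).} For the non-covariant objects $\linear$ and $\philinear$, the bounds $\power^{1/2}$ on their $L^\power(\Omega)$-norms follow from classical estimates for Gaussian stochastic objects (as in Proposition \ref{prop:classical-estimate-non-covariant-object} and standard heat-kernel bounds). For the covariant object $\philinear[\Blin, \lo]$, which equals $\philinear[\Blin, \leq L]$ up to an $L$-dependent mollification encoded in \eqref{AH:eq-philinear-lo}, we apply Lemma \ref{lemma:covariant-linear-object-mollified-noise-L-infty-bound} with $N = L$ to obtain
\begin{equation*}
\Big\| \|\philinear[\Blin, \lo]\|_{L_t^\infty L_x^\infty} \Big\|_{L^\power(\Omega)} \lesssim \power^{1/2} L^{\kappa/2}\big(1 + \|\Blin\|_{\tau,\resnorm}^{\kappa/2}\big) \lesssim \power^{1/2} L^{\kappa/2 + \eta_3\kappa/4}.
\end{equation*}
Choosing $\power = L^{\kappa/2}$ and applying Markov's inequality produces the desired sub-Gaussian tail $\leq \exp(-cL^{\kappa/2})$, since the right-hand side remains $\ll L^\kappa$.

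\emph{Step 3 (Item \ref{AH:item-probabilistic-derivative}).} This is the most delicate step, since we must recover both the $\tau^{3/4-\eta}$ time decay and the exponent $\tfrac{1}{2}+\eta$ on $\|\Blin(t_0)\|_{\Cs_x^\eta}$. We revisit the proof of Theorem \ref{intro:thm-cshe}\ref{item:cshe-derivative-nonlinearity}, splitting into non-resonant and resonant parts. For the non-resonant part, Proposition \ref{prop:derivative-nonlinearity-non-resonant} (applied on the interval $[t_0, t_0+\tau]$ by trivial time-translation) gives an $L^\power(\Omega)$-bound of the form $\power \tau^{\varepsilon}(1+\|\Blin\|_{\tau,\resnorm}^\kappa)$, which is $\leq L^\kappa$ once $\power \sim L^{\kappa/2}$. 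For the resonant part, Proposition \ref{cshe:prop-resonant} combined with Remark \ref{intro:rem-cshe} (taking $\lambda = \tfrac{1}{2} + \eta$ and $\alpha = 2\eta$) yields the deterministic bound
\begin{equation*}
\Big\| \leray \Im \Duh\Big[ \,\overline{\philinear[\Blin,\lo]}\covd_\Blin \philinear[\Blin,\lo] - \gaugerenorm \Blin\Big]\Big\|_{C_t^0 \Cs_x^{2\eta} \cap C_t^\eta \Cs_x^0} \lesssim \tau^{3/4 - O(\eta)}\big(1 + \|\Blin\|_{\tau,\resnorm}\big)^{1/2+\eta} + L^{-1}(\cdots).
\end{equation*}
Monotonicity of the heat flow gives $\|\Blin\|_{\tau,\resnorm} \lesssim \|\Blin(t_0)\|_{\Cs_x^\eta} = \|\pregA_0\|_{\Cs_x^\eta}$, and the residual $L^{-1}$-term is absorbed into $L^\kappa$. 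Combining non-resonant and resonant bounds with Markov's inequality (again at $\power \sim L^{\kappa/2}$) yields \eqref{AH:eq-probabilistic-derivative} on an event of probability $\geq 1 - c^{-1}\exp(-cL^{\kappa/2})$.

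\emph{Step 4 (Item \ref{AH:item-high} and conclusion).} The high-frequency probabilistic hypothesis (Hypothesis \ref{high:hypothesis-probabilistic}) is verified with the same failure probability by the arguments of Appendix \ref{section:high}, which are by now classical since all $P_{>L}$-projected objects have tiny mass due to \eqref{AH:eq-probabilistic-L-condition} and \eqref{AH:eq-probabilistic-L-condition} giving the required smallness. Taking a union bound over the finitely many items of Hypothesis \ref{AH:hypothesis-probabilistic} and carefully tracking constants yields \eqref{AH:eq-probabilistic}. The main obstacle throughout is the sharp exponent management in Step 3, where the $\tfrac{1}{2}+\eta$-power on $\|\Blin(t_0)\|_{\Cs_x^\eta}$ is the precise barrier that our later globalization argument in Section \ref{section:decay} can tolerate; any larger power would break the iteration, so the refined resonant estimate of Proposition \ref{cshe:prop-resonant} (rather than a cruder perturbative bound) is essential.
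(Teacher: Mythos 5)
Your proof is correct and follows essentially the same approach as the paper: reduce to deterministic parameters via the strong Markov property, bound $\|\Blin\|_{\resnorm}\lesssim\|\pregA_0\|_{\Cs_x^\eta}\lesssim L^{\eta_3/2}$, and then verify each item of Hypothesis \ref{AH:hypothesis-probabilistic} from Lemmas \ref{lemma:linear-gc} and \ref{lemma:covariant-linear-object-mollified-noise-L-infty-bound}, Propositions \ref{prop:derivative-nonlinearity-non-resonant} and \ref{cshe:prop-resonant} (with $\alpha=2\eta$, $\lambda=\tfrac{1}{2}+\eta$), and Proposition \ref{high:prop-high-frequency-probabilistic}. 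The only cosmetic difference is that you make the moment-to-tail conversion explicit via Markov's inequality at $\power\sim L^{\kappa/2}$, whereas the paper absorbs this step implicitly; also note that $\philinear[\Blin,\lo]$ in fact equals $\philinear[\Blin,\leq L]$ exactly (since $P_{\leq L}\zeta=\zeta_{\leq L}$ by Definition \ref{def:mollifiers}), so the qualifier ``up to an $L$-dependent mollification'' in your Step~2 is unnecessary.
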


\begin{proof}[Proof of Proposition \ref{AH:prop-probabilistic}:]
By the Strong Markov property of Brownian motion, it suffices to assume that $T_0 = t_0$ is constant. We write $\| \Blin \|_{\resnorm}:= \| \Blin (t_0+\cdot)\|_{1,\resnorm}$, where $\| \cdot \|_{1,\resnorm}$ is as in Definition \ref{intro:def-norm}. Using the linear heat equation \eqref{AH:eq-Blin}, the Coulomb condition $\partial_j B^j=0$, and \eqref{AH:eq-probabilistic-L-condition}, we obtain that 
\begin{equation}\label{AH:eq-probabilistic-p1}
\| \Blin \|_{\resnorm} \lesssim \| \Blin(t_0) \|_{\Cs_x^\eta} =  \| \pregA_0\|_{\Cs_x^\eta} \lesssim L^{\frac{\eta_3}{2}}. 
\end{equation}
In the following, all stated estimates only hold after possibly restricting to events satisfying \eqref{AH:eq-probabilistic}, but this will not be repeated below. The estimate in 
\ref{AH:item-probabilistic-A-linear} follow directly from Lemma \ref{lemma:linear-gc}. The estimate in 
\ref{AH:item-probabilistic-covariant-linear} follows directly from Lemma \ref{lemma:covariant-linear-object-mollified-noise-L-infty-bound}. To obtain the estimate in \ref{AH:item-probabilistic-derivative}, 
we combine Proposition \ref{prop:derivative-nonlinearity-non-resonant} and Proposition \ref{cshe:prop-resonant} (with $\alpha=2\eta$ and $\lambda=\frac{1}{2}+\eta$), which yield
\begin{align}
&\, \Big\| \leray \Im \Duh \Big[ \, \overline{\philinear[\Blin,\lo]} \covd_{\Blin} \philinear[\Blin,\lo] - \tfrac{1}{8\pi} \Blin  \Big] \Big\|_{C_t^0 \Cs_x^{2\eta} \cap C_t^{\eta} \Cs_x^0([t_0, t_0 + \tau])} \notag \\
\lesssim&\, \tau^{\frac{3}{4}-\frac{\eta}{2}} 
\Big( 1 + \| \Blin \|_{\resnorm} + L^{-1} \| \Blin\|_{\resnorm}^2 \Big)^{\frac{1}{2}+\eta}
\Big( 1 + L^{-1} \| \Blin \|_{\resnorm} \Big)^{\frac{1}{2}-\eta}
+ L^{-1}\tau^{\frac{1}{2}-\eta} \| \Blin \|_{\resnorm}^3 + \| \Blin \|_{\resnorm}^{\kappa}. \label{AH:eq-probabilistic-p2}
\end{align}
Due to \eqref{AH:eq-probabilistic-p1}, all terms in \eqref{AH:eq-probabilistic-p2} involving $L^{-1}$-factors are negligible. Together with $\tau \leq c_0$ (so that $\tau^{\frac{\eta}{2}} \ll 1$), this easily implies that
\begin{equation*}
\eqref{AH:eq-probabilistic-p1}\leq\tau^{\frac{3}{4}-\eta} \| \Blin(t_0) \|_{\Cs_x^\eta}^{\frac{1}{2}+\eta} + L^{\kappa}. 
\end{equation*}
Finally, the estimates in \ref{AH:item-high} follow from Proposition \ref{high:prop-high-frequency-probabilistic} below. 
\end{proof}

As mentioned previously, all interactions involving $\linear[\hi]$, $\Slin_{\hi}$, $\philinear[\hi]$, $\varphi_{\hi}$, or $\psi_{\hi}$ are negligible and can be treated perturbatively (see e.g. the proof of Lemma \ref{AH:lem-psi-lo-auxiliary} or Appendix \ref{section:high}). In order to state simple estimates for all such interactions, it is convenient to make the following continuity hypothesis.

\begin{hypothesis}[Continuity hypothesis]\label{AH:hypothesis-continuity}
Let $t_0 \in [0,\infty)$ and let $\tau\in (0,1]$ be a time-scale. Then, the continuity hypothesis consists of the estimates 
\begin{align}
\| Z \|_{C_t^0 \Cs_x^{2\eta} \cap C_t^{\eta} \Cs_x^0([t_0,t_0+\tau])} &\leq L^{\eta_{2}}, \label{AH:eq-continuity-Z} \\  
\| \psi_{\lo} \|_{C_t^0 L_x^r([t_0,t_0+\tau])} &\leq L^{\eta_{2}}, \label{AH:eq-continuity-psi-lo} \\
\| \psi_{\hi} \|_{C_t^0 \Cs_x^\eta([t_0,t_0+\tau])} &\leq L^{\eta_1-\eta}. \label{AH:eq-continuity-psi-hi} 
\end{align}
\end{hypothesis}

\begin{remark}
Our final estimates of $Z$, $\psi_{\lo}$, and $\psi_{\hi}$ will be much stronger than  \eqref{AH:eq-continuity-Z}, \eqref{AH:eq-continuity-psi-lo}, and \eqref{AH:eq-continuity-psi-hi}, see e.g. Lemma \ref{decay:lem-short-bounds} and Lemma \ref{high:lem-para-controlled}. Nevertheless,  \eqref{AH:eq-continuity-Z}, \eqref{AH:eq-continuity-psi-lo}, and \eqref{AH:eq-continuity-psi-hi} will be sufficient to control several error terms involving $\psi_{\hi}$. 
\end{remark}

\subsection{\protect{Estimates of $\psi_{\lo}$}}\label{section:AH-psi}

In this subsection, we prove two estimates for $\psi_{\lo}$. The first estimate is a covariant monotonicity formula for the $L_x^p$-norm of $\psi_{\lo}$, which will be used to obtain decay of $\psi_{\lo}$. The second estimate is a localized version of a covariant derivative estimate, which will be used to control $Z$. In the following, recall the parameter $\nu$ from \eqref{prelim:eq-parameter-new-eta-nu}, which is small but much bigger than $\eta$.

\begin{proposition}[Covariant monotonicity formula for $\psi_{\lo}$]\label{AH:prop-psi-lo-estimate}
Let the probabilistic hypothesis and continuity hypothesis, i.e., Hypothesis \ref{AH:hypothesis-probabilistic} and \ref{AH:hypothesis-continuity}, be satisfied. 
Furthermore, let $2 \leq p \leq r$, let $\delta \in (0,1)$, let $0<\tau^\ast\leq \tau$, and let $t\in [t_0,t_0+\tau^\ast]$. Then, it holds that
\begin{equation}\label{AH:eq-psi-lo-estimate}
\begin{aligned}
&\, \frac{1}{p} \frac{\mathrm{d}}{\mathrm{d}t} \, \big\|  \psi_{\lo}(t)\big\|_{L_x^p}^{p}
+ (1-\delta) \big\| |\psi_{\lo}(t)|^{\frac{p-2}{2}} |\covd_{A_{\lo}} \psi_{\lo}(t)| \big\|_{L_x^2}^2 
+ (1-\delta) \big\| \psi_{\lo}(t) \big\|_{L_x^{p+q-1}}^{p+q-1} \\
\leq& \,  C_{\delta,p,q} \| Z \|_{L_{t,x}^\infty([t_0,t_0+\tau^\ast])}^{\frac{2}{q} (1+\nu) (p+q-1)} + C_{\delta,p,q} L^{\kappaone (p+q-1)}. 
\end{aligned}
\end{equation}
\end{proposition}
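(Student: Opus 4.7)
The plan is to apply the covariant monotonicity formula (Proposition \ref{prop:monotonicity}) with $K\equiv 1$ to the evolution equation \eqref{AH:eq-psi-lo-e1}--\eqref{AH:eq-psi-lo-e3}, and then dissipate the resulting forcing against both the covariant gradient good term and the super-linear damping $\|\psi_\lo\|_{L_x^{p+q-1}}^{p+q-1}$. First I would rewrite the equation schematically as $(\partial_t - \covd_{A_\lo}^j\covd_{A_\lo,j})\psi_\lo = -\psi_\lo - |\psi_\lo|^{q-1}\psi_\lo + G_\lo$, where $G_\lo$ collects all terms on the right-hand side of \eqref{AH:eq-psi-lo-e1}--\eqref{AH:eq-psi-lo-e3} other than the massive and pure-power terms. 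After integrating Proposition \ref{prop:monotonicity} over $\T^2$ (so that the $\Delta(|\psi_\lo|^p/p)$ term vanishes) and using
\begin{equs}
\int|\psi_\lo|^{p-2}\mrm{Re}\bigl(\bar\psi_\lo(-\psi_\lo-|\psi_\lo|^{q-1}\psi_\lo)\bigr) = -\|\psi_\lo\|_{L_x^p}^p - \|\psi_\lo\|_{L_x^{p+q-1}}^{p+q-1},
\end{equs}
one obtains the basic identity
\begin{equs}
\tfrac{1}{p}\partial_t\|\psi_\lo\|_{L_x^p}^p + \|\psi_\lo\|_{L_x^p}^p + \bigl\||\psi_\lo|^{(p-2)/2}\covd_{A_\lo}\psi_\lo\bigr\|_{L_x^2}^2 + \|\psi_\lo\|_{L_x^{p+q-1}}^{p+q-1} \leq \int_{\T^2}|\psi_\lo|^{p-2}\mrm{Re}(\bar\psi_\lo G_\lo)\,dx,
\end{equs}
where I have dropped on the left the non-negative $\tfrac{p-2}{4}$-gradient term.

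For the covariant-derivative sources in \eqref{AH:eq-psi-lo-e1} of the shape $2\icomplex\covd_{A_\lo}^j f_j$ with $f_j=(\linear[\lo]+\Slin_\lo+Z)_j\philinear[\Blin,\lo]$ or $(\linear[\lo]+B+\Slin_\lo+Z)_j\psi_\hi$, I would integrate by parts via the covariant Leibniz rule (Lemma \ref{prelim:lem-derivatives}\ref{prelim:item-product}) to transfer the derivative off $f$ onto $|\psi_\lo|^{p-2}\bar\psi_\lo$. Using the diamagnetic inequality (Lemma \ref{prelim:lem-diamagnetic}) to bound $|\partial(|\psi_\lo|^{p-2})|\lesssim_p|\psi_\lo|^{p-3}|\covd_{A_\lo}\psi_\lo|$, both resulting pieces are controlled by $\int|\psi_\lo|^{p-2}|\covd_{A_\lo}\psi_\lo||f|\,dx$, and Cauchy--Schwarz followed by $\delta$-Young absorbs $\delta\bigl\||\psi_\lo|^{(p-2)/2}\covd_{A_\lo}\psi_\lo\bigr\|_{L_x^2}^2$ into the left at the cost of $C_\delta\int|\psi_\lo|^{p-2}|f|^2$. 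The remaining source terms in \eqref{AH:eq-psi-lo-e2}--\eqref{AH:eq-psi-lo-e3} directly yield integrals of the shape $\int|\psi_\lo|^{p-1}|g|$ or $\int|\psi_\lo|^{p-2}|g|^2$. A second Young's inequality with the exponent pair $((p+q-1)/(p-1),(p+q-1)/q)$ or $((p+q-1)/(p-2),(p+q-1)/(q+1))$ then absorbs $\delta\|\psi_\lo\|_{L_x^{p+q-1}}^{p+q-1}$ into the left-hand side damping and reduces matters to pure norm estimates on $f$ and $g$.

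To evaluate these residual terms, I would invoke Hypothesis \ref{AH:hypothesis-probabilistic} for $\|\linear[\lo]\|_{L_x^\infty},\|\philinear[\Blin,\lo]\|_{L_x^\infty}\leq L^\kappa$ and \eqref{AH:eq-L-nreg} together with Lemma \ref{prelim:lem-heat-flow-bound-decay} for $\|\Slin_\lo\|_{L_x^\infty}\lesssim L^\kappa$, and Hypothesis \ref{AH:hypothesis-continuity} for $\|\psi_\hi\|_{L_x^\infty}\leq L^{\eta_1-\eta}$. For the Wick-ordered remainder in \eqref{AH:eq-psi-lo-e3}, the Hermite-polynomial expansion from Lemma \ref{lemma:complex-hermite-polynomial-expansion} decomposes
\begin{equs}
\biglcol|\philinear[\Blin,\lo]+\psi_\lo+\psi_\hi|^{q-1}(\philinear[\Blin,\lo]+\psi_\lo+\psi_\hi)\bigrcol - |\psi_\lo|^{q-1}\psi_\lo
\end{equs}
into a $k_1=k_2=0$ contribution $|\psi_\lo+\psi_\hi|^{q-1}(\psi_\lo+\psi_\hi)-|\psi_\lo|^{q-1}\psi_\lo$, bounded pointwise by $C_q|\psi_\hi|(|\psi_\lo|+|\psi_\hi|)^{q-1}$, plus terms each carrying a Wick monomial $\biglcol\philinear[\Blin,\lo]^{k_1}\overline{\philinear[\Blin,\lo]}^{k_2}\bigrcol$ bounded by $C_q L^{q\kappa}$ (using $\|\philinear[\Blin,\lo]\|_{L_x^\infty}\leq L^\kappa$ and $\sigma_{\leq L}^2\lesssim\log L\leq L^\kappa$). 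The $Z$-dependent pieces contribute a bound of the form $C_{\delta,p,q}\|Z\|_{L_{t,x}^\infty}^{2(p+q-1)/q}$, while all other pieces, which involve only $L^\kappa$ or $L^{\eta_1-\eta}$-factors raised to powers bounded by $p+q-1$, contribute $C_{\delta,p,q}L^{\kappaone(p+q-1)}$ thanks to $\kappa,\eta_1\ll\kappaone$. The slight inflation $(1+\nu)$ of the $Z$-exponent gives exactly the slack needed to split off the $L^\kappa$-bounded stochastic factors (e.g.\ in $\int|\psi_\lo|^{p-1}|Z|^2|\philinear[\Blin,\lo]|$) via a final Young/Hölder step, replacing the raw exponent $2(p+q-1)/q$ by $(2/q)(1+\nu)(p+q-1)$ plus an additive $L^{\kappaone(p+q-1)}$.

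The main obstacle is that the covariant-derivative source $\covd_{A_\lo}^j f_j$ contains the distributional-regularity object $\philinear[\Blin,\lo]$, so a naive pointwise estimate would cost a whole spatial derivative we do not have. The integration-by-parts step is therefore essential: combining the covariant Leibniz identity $\partial_j(\bar\phi\psi)=\overline{\covd_{A,j}\phi}\psi+\bar\phi\covd_{A,j}\psi$ with the diamagnetic inequality trades that lost derivative for the factor $\covd_{A_\lo}\psi_\lo$ that the monotonicity formula naturally produces on the left-hand side. The delicate part is then the bookkeeping that ensures the $Z$-exponent comes out at $(2/q)(1+\nu)(p+q-1)$, strictly less than $p+q-1$, which is precisely the gap needed to feed this bound into the globalization scheme of Section \ref{section:decay}.
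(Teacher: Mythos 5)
Your proposal is correct and follows essentially the same route as the paper's proof: you apply Proposition~\ref{prop:monotonicity} with $K\equiv 1$, integrate by parts on the $\covd_{A_{\lo}}^j f_j$ source via the covariant Leibniz rule, use the diamagnetic inequality to control $\partial(|\psi_{\lo}|^{p-2})$, and absorb by Young's inequality against both the covariant gradient and the $L_x^{p+q-1}$ damping. The only organizational difference is that the paper first packages the forcing into $G_{\lo}$ (derivative part) and $H_{\lo}$ (zeroth-order part) and bounds $\|G_{\lo}\|_{L_{t,x}^\infty}\lesssim \|Z\|^{1+\nu}+L^{\kappa(1+\nu)/\nu}$, $\|H_{\lo}\|_{L_{t,x}^\infty}\lesssim \|Z\|^{2(1+\nu)}+L^{2\kappa(1+\nu)/\nu}$ in a separate auxiliary lemma (so the $(1+\nu)$ is introduced there, exactly as you describe), and it bounds the Wick-ordered remainder by a direct pointwise estimate rather than running the Hermite expansion explicitly — but these are presentational, not substantive, differences.
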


We encourage the reader to ignore the $L^{\kappaone}$-term in \eqref{AH:eq-psi-lo-estimate}, which will not be important. The essential aspect of \eqref{AH:eq-psi-lo-estimate} is the exponent $\frac{2}{q}$ of the $\| Z\|_{L_{t,x}^\infty}$-norm, since it will play a crucial role in Section \ref{section:decay}.

\begin{proposition}[Localized estimate of $\psi_{\lo}$]\label{AH:prop-psi-lo-localized}
Let the probabilistic hypothesis and continuity hypothesis, i.e., Hypothesis \ref{AH:hypothesis-probabilistic} and \ref{AH:hypothesis-continuity}, be satisfied. 
Furthermore, let $t_1\in (t_0,t_0+\tau)$, let $\rho:=t_1-t_0$, and let $x_1 \in \T^2$. Then, it holds that  
\begin{equation}\label{AH:psi-lo-localized}
\begin{aligned}
\big\| p^{\frac{1}{2}}(t,x;t_1,x_1) \covd_{A_{\lo}}\psi_{\lo} \big\|_{L_{t,x}^2([t_0,t_1])} 
\lesssim  \rho^{-\frac{1}{r}} \big\| \psi_{\lo} \big\|_{L_t^\infty L_x^r([t_0,t_1])} 
+ \rho^{\frac{1}{2}} 
 \| Z \|_{L_{t,x}^\infty([t_0,t_1])}^{\frac{q+1}{q} (1+\nu)} 
+  \rho^{\frac{1}{2}}  L^\kappaone.
\end{aligned}
\end{equation}
\end{proposition}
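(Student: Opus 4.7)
The plan is to apply the covariant monotonicity formula of Proposition~\ref{prop:monotonicity} to $\psi_{\lo}$ with the weight $K(t,x):=p(t,x;t_1,x_1)$, which satisfies $(\partial_t+\Delta)K=0$ on $[t_0,t_1)$ and thus plays the role of the backwards-heat-equation kernel in the proposition. I first rewrite \eqref{AH:eq-psi-lo-e1}--\eqref{AH:eq-psi-lo-e3} as $(\partial_t-\covd_{A_{\lo}}^{j}\covd_{A_{\lo},j})\psi_{\lo}=G$ with $G:=F-\psi_{\lo}-|\psi_{\lo}|^{q-1}\psi_{\lo}$, moving the mass and the defocusing nonlinearity from the left-hand side into the forcing. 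Integrating the pointwise identity (case $p=2$) over $[t_0,t_1^{\ast}]\times\T^2$ for $t_1^{\ast}<t_1$, using integration by parts to cancel the $(\Delta K)|\psi_{\lo}|^2/2$ and $K\Delta(|\psi_{\lo}|^2/2)$ terms, dropping the non-negative boundary term at $t_1^{\ast}$, and passing to the limit $t_1^{\ast}\uparrow t_1$ yields
\begin{equation*}
\bigl\|K^{1/2}\covd_{A_{\lo}}\psi_{\lo}\bigr\|_{L_{t,x}^2([t_0,t_1])}^{2}+\int K|\psi_{\lo}|^{2}+\int K|\psi_{\lo}|^{q+1}\;\lesssim\;\int_{\T^2} K(t_0)|\psi_{\lo}(t_0)|^{2}+\int K\,\mathrm{Re}(\bar\psi_{\lo}F).
\end{equation*}
The two terms $\int K|\psi_{\lo}|^{2}$ and, crucially, $\int K|\psi_{\lo}|^{q+1}$ are retained on the left-hand side and used as storage. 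Hölder with conjugate exponents $(r/(r-2),r/2)$ and the classical two-dimensional kernel bound $\|p_s\|_{L_x^{r/(r-2)}}\lesssim s^{-2/r}$ yield $\int K(t_0)|\psi_{\lo}(t_0)|^{2}\lesssim\rho^{-2/r}\|\psi_{\lo}\|_{L_t^\infty L_x^r}^{2}$, producing the first summand of \eqref{AH:psi-lo-localized} after taking a square root.

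Next, I estimate $\int K\,\mathrm{Re}(\bar\psi_{\lo}F)$ piece by piece. For the derivative forcing in \eqref{AH:eq-psi-lo-e1}, spatial integration by parts moves $\covd_{A_{\lo}}^{j}$ onto $K\bar\psi_{\lo}$, producing a $K\overline{\covd_{A_{\lo}}\psi_{\lo}}(\linear[\lo]+\Slin_{\lo}+Z)_{j}\philinear[\Blin,\lo]$ piece that is absorbed by kickback against $\tfrac{1}{10}\|K^{1/2}\covd_{A_{\lo}}\psi_{\lo}\|_{L^2}^{2}$ (leaving a residual $\int K|\linear[\lo]+\Slin_{\lo}+Z|^{2}|\philinear[\Blin,\lo]|^{2}$), together with a $(\nabla K)\bar\psi_{\lo}(\linear[\lo]+\Slin_{\lo}+Z)\philinear[\Blin,\lo]$ piece estimated via $\|\nabla p_s\|_{L_x^{r/(r-1)}}\lesssim s^{-1/2-1/r}$, so that $\|\nabla K\|_{L_t^1 L_x^{r/(r-1)}}\lesssim \rho^{1/2-1/r}$ and an AM--GM split $(\rho^{-1/r}\|\psi_{\lo}\|_{L_t^\infty L_x^r})\cdot(L^{\kappa}\rho^{1/2}\|Z\|_{L_{t,x}^\infty})$ distributes the result into the allowed squared bounds. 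For the quadratic forcings $|\linear[\lo]+\Slin_{\lo}+Z|^{2}\philinear[\Blin,\lo]$ (and the cousins in \eqref{AH:eq-psi-lo-e2}), I test against $K\bar\psi_{\lo}$ and apply Hölder with conjugate exponents $(q+1,(q+1)/q)$ in the measure $K\,dx\,dt$,
\begin{equation*}
\int K|Z|^{2}|\philinear[\Blin,\lo]||\psi_{\lo}|\;\le\;\Bigl(\int K|\psi_{\lo}|^{q+1}\Bigr)^{\!\!1/(q+1)}\Bigl(\int K\bigl(|Z|^{2}|\philinear[\Blin,\lo]|\bigr)^{(q+1)/q}\Bigr)^{\!\!q/(q+1)},
\end{equation*}
so that Young's inequality absorbs the first factor into the stored $\int K|\psi_{\lo}|^{q+1}$. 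Using $\|\philinear[\Blin,\lo]\|_{L^\infty}\le L^{\kappa}$ from Hypothesis~\ref{AH:hypothesis-probabilistic}.\ref{AH:item-probabilistic-covariant-linear} and $\int K\,dx\,dt=\rho$, only the residual $\rho L^{\kappa(q+1)/q}\|Z\|_{L_{t,x}^\infty}^{2(q+1)/q}$ remains.

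The delicate step is converting this residual into the stated exponent $\tfrac{q+1}{q}(1+\nu)$. Splitting into $\|Z\|_{L_{t,x}^\infty}\lessgtr L^{\kappa_1/2}$ and writing $L^{\kappa}\le \|Z\|_{L^\infty}^{2\kappa/\kappa_1}=\|Z\|_{L^\infty}^{2\nu^{10}}$ in the large-$Z$ case gives
\begin{equation*}
\rho L^{\kappa(q+1)/q}\|Z\|_{L_{t,x}^\infty}^{2(q+1)/q}\;\lesssim\;\rho L^{2\kappa_1}+\rho\|Z\|_{L_{t,x}^\infty}^{2(q+1)/q\,(1+\kappa/\kappa_1)}\;\le\;\rho L^{2\kappa_1}+\rho\|Z\|_{L_{t,x}^\infty}^{2(q+1)/q\,(1+\nu)},
\end{equation*}
where the last inequality uses the hierarchy $\kappa/\kappa_1=\nu^{10}\ll\nu$ from \eqref{prelim:eq-parameter-new-kappa-kappa-j}. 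The $\psi_{\hi}$-containing forcings in \eqref{AH:eq-psi-lo-e1}--\eqref{AH:eq-psi-lo-e2} and the Wick-ordered difference in \eqref{AH:eq-psi-lo-e3}, once expanded in powers of $\psi_{\lo}$, are handled by the same Hölder--Young scheme; the continuity hypothesis \eqref{AH:eq-continuity-psi-hi} supplies a smallness $L^{-\eta_1}$ or $L^{-\eta_2}$ that easily absorbs every high-frequency contribution into the $\rho L^{2\kappa_1}$ reservoir, and the renormalization factor $2\sigma_{\leq L}^{2}+1\lesssim\log L\ll L^{\kappa}$ is harmless by the same token. Summing all contributions and taking a square root gives \eqref{AH:psi-lo-localized}. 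The main obstacle is precisely this exponent bookkeeping: the slack $1+\nu$ in the target is what allows the unavoidable $L^{\kappa}$ losses from the probabilistic hypothesis to be traded against polynomial growth in $\|Z\|_{L^\infty}$ via the two-case split, and the parameter hierarchy $\kappa\ll\kappa_1\ll\nu$ is exactly calibrated to make this trade close.
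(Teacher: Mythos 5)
Your proposal is correct and follows essentially the same approach as the paper: both apply the covariant monotonicity formula with weight $K=p(\cdot;t_1,x_1)$, integrate by parts on the derivative forcing, retain $\int K|\psi_{\lo}|^{q+1}$ as storage, and control the remaining forcings by H\"older--Young together with the trivial bound $\|K\|_{L^1_{t,x}}\lesssim\rho$. The paper packages the forcing estimates and the $L^{\kappa}\|Z\|\lesssim\|Z\|^{1+\nu}+L^{\kappa(1+\nu)/\nu}$ trade (done by Young's inequality rather than your two-case split on $\|Z\|\lessgtr L^{\kappa_1/2}$) into the single auxiliary Lemma~\ref{AH:lem-psi-lo-auxiliary}, and regularizes the singular weight by shifting it to $p(\cdot;t_1+\varepsilon,x_1)$ and letting $\varepsilon\downarrow0$ rather than truncating the time integral at $t_1^{\ast}<t_1$; these are cosmetic differences that lead to the same bound.
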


\begin{remark}
Since Proposition \ref{AH:prop-psi-lo-localized} is proven using the covariant monotonicity formula (Proposition \ref{prop:monotonicity}), the $p^{\frac{1}{2}}$-factor is natural. Together with Lemma \ref{prelim:lem-Duhamel-weighted}, Proposition \ref{AH:prop-psi-lo-localized} can be used to control Duhamel integrals involving $\covd_{A_{\lo}} \psi_{\lo}$, see e.g. Lemma \ref{AH:lem-derivative-psi}.
\end{remark}

Before we turn to the proofs of Proposition \ref{AH:prop-psi-lo-estimate} and Proposition \ref{AH:prop-psi-lo-localized}, we prove the following auxiliary lemma, which concerns the forcing terms in \eqref{AH:eq-psi-lo-e1} and \eqref{AH:eq-psi-lo-e2}. 

\begin{lemma}[Forcing terms in $\psi_{\lo}$-equation]
\label{AH:lem-psi-lo-auxiliary}
Let the probabilistic hypothesis and continuity hypothesis, i.e., Hypothesis \ref{AH:hypothesis-probabilistic} and \ref{AH:hypothesis-continuity}, be satisfied. 
Furthermore, let 
\begin{align*}
G_{\lo,j} &:= 2\icomplex \big( \linear[\lo] + \Slin_{\lo} +  Z \big)_j \, \philinear[\Blin,\lo] + 2\icomplex \big( \linear[\lo] + B + \Slin_{\lo} + Z \big)_j \psi_{\hi}, \\ 
H_{\lo} &:=  \big| \linear[\lo] + \Slin_{\lo} + Z \big|^2 \, \philinear[\Blin,\lo] + \big| \linear[\lo] + B + \Slin_{\lo} +   Z \big|^2 \psi_{\hi} 
+ \big( 2 \sigma_{\leq L}^2 +1 \big) \big( \philinear[\Blin,\lo] + \psi_{\hi} \big). 
\end{align*}
Then, it holds that 
\begin{align}
    \| G_{\lo} \|_{L_{t,x}^\infty}  
   &\lesssim  \| Z \|_{L_{t,x}^\infty}^{1+\nu} + L^{\kappa \frac{1+\nu}{\nu}}   \\ 
   \| H_{\lo} \|_{L_{t,x}^\infty}  
   &\lesssim  \| Z \|_{L_{t,x}^\infty}^{2(1+\nu)} + L^{2\kappa \frac{1+\nu}{\nu}}. 
\end{align}
\end{lemma}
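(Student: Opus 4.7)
The plan is to reduce the estimate to bounding each factor appearing in $G_\lo$ and $H_\lo$ separately in $L_{t,x}^\infty([t_0,t_0+\tau])$, and then applying Young's inequality to absorb mixed terms involving $Z$. The relevant sup-norm bounds come from three sources. First, the probabilistic hypothesis (Hypothesis \ref{AH:hypothesis-probabilistic}\ref{AH:item-probabilistic-A-linear}-\ref{AH:item-probabilistic-covariant-linear}) gives
\begin{equs}
\|\linear[\lo]\|_{L_{t,x}^\infty}\leq L^\kappa, \qquad \|\philinear[\Blin,\lo]\|_{L_{t,x}^\infty}\leq L^\kappa.
\end{equs}
Second, since $\Blin$ and $\Slin$ solve linear heat equations with initial data $\pregA_0$ and $\nregA_0$, the boundedness of the heat flow (Lemma \ref{prelim:lem-heat-flow-bound-decay}) combined with \eqref{AH:eq-L}-\eqref{AH:eq-L-nreg} and a Bernstein estimate for $P_{\leq L}$ yield $\|\Blin\|_{L_{t,x}^\infty}\lesssim \|\pregA_0\|_{\Cs_x^\eta}\leq L^{\eta_3/2}$ and $\|\Slin_{\lo}\|_{L_{t,x}^\infty}\lesssim L^\kappa \|\nregA_0\|_{\Cs_x^{-\kappa}}\lesssim L^{2\kappa}$. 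Third, the continuity hypothesis (Hypothesis \ref{AH:hypothesis-continuity}) provides $\|Z\|_{L_{t,x}^\infty}\leq L^{\eta_2}$ and $\|\psi_\hi\|_{L_{t,x}^\infty}\lesssim L^{\eta_1-\eta}$, and the Wick constant satisfies $\sigma_{\leq L}^2\lesssim \log L\lesssim L^\kappa$.

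For $G_\lo$, the triangle inequality reduces the estimate to two types of summand. The purely deterministic-stochastic piece $\|(\linear[\lo]+\Slin_\lo)\philinear[\Blin,\lo]\|_{L_{t,x}^\infty}\lesssim L^{3\kappa}$ is absorbed into $L^{\kappa(1+\nu)/\nu}$, since $3\nu\leq 1+\nu$ for $\nu\ll 1$. The mixed term involving $Z$ is controlled via
\begin{equs}
\|Z\,\philinear[\Blin,\lo]\|_{L_{t,x}^\infty}\lesssim L^\kappa \|Z\|_{L_{t,x}^\infty},
\end{equs}
and Young's inequality with conjugate exponents $1+\nu$ and $(1+\nu)/\nu$ gives the splitting $L^\kappa\|Z\|_{L_{t,x}^\infty}\lesssim \|Z\|_{L_{t,x}^\infty}^{1+\nu}+L^{\kappa(1+\nu)/\nu}$. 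The $\psi_\hi$-contributions are negligible: collecting the worst exponents yields $L^{\eta_3/2}\cdot L^{\eta_1-\eta}$, whose exponent $\eta_3/2+\eta_1-\eta$ is strictly negative by the parameter ordering \eqref{prelim:eq-parameter-ordering} (recall $\eta_3\ll\eta_1\ll\eta$), so this factor is bounded by $1$, and the $Z\psi_\hi$-term is handled by the same Young's inequality argument as above.

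For $H_\lo$, the strategy is identical, but with the linear combinations appearing squared. The purely deterministic piece $\|(\linear[\lo]+\Slin_\lo)^2\,\philinear[\Blin,\lo]\|_{L_{t,x}^\infty}\lesssim L^{5\kappa}$ and the mass-renormalization contribution $(2\sigma_{\leq L}^2+1)\|\philinear[\Blin,\lo]\|_{L_{t,x}^\infty}\lesssim L^{2\kappa}$ are both absorbed into $L^{2\kappa(1+\nu)/\nu}$. The essential mixed term is
\begin{equs}
\|Z^2\,\philinear[\Blin,\lo]\|_{L_{t,x}^\infty}\lesssim L^\kappa\|Z\|_{L_{t,x}^\infty}^2,
\end{equs}
and Young's inequality applied to the pair $(\|Z\|_{L_{t,x}^\infty}^2, L^\kappa)$ with exponents $1+\nu$ and $(1+\nu)/\nu$ produces $\|Z\|_{L_{t,x}^\infty}^{2(1+\nu)}+L^{\kappa(1+\nu)/\nu}\leq \|Z\|_{L_{t,x}^\infty}^{2(1+\nu)}+L^{2\kappa(1+\nu)/\nu}$. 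The $\psi_\hi$-terms are again negligible because the exponents $\eta_3+\eta_1-\eta$ and $2\kappa+\eta_1-\eta$ are strictly negative, and the cross term $\|Z^2\psi_\hi\|_{L_{t,x}^\infty}$ is treated by Young's inequality as in the $\|Z^2\philinear[\Blin,\lo]\|$ case.

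The argument is essentially bookkeeping: there is no cancellation or stochastic analysis beyond invoking the stated hypotheses. The only thing requiring care is tracking the parameter ordering \eqref{prelim:eq-parameter-ordering} to confirm that every error exponent attached to a $\psi_\hi$-factor is strictly negative, so that those contributions never need to be absorbed into the stated right-hand side.
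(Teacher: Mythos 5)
Your proof is correct and follows essentially the same approach as the paper: bound each factor appearing in $G_{\lo}$ and $H_{\lo}$ separately in $L_{t,x}^\infty$ using the probabilistic hypothesis, the linear heat flow bounds for $\Blin$ and $\Slin_{\lo}$, and the continuity hypothesis, then observe that the $\psi_{\hi}$-contributions are harmless by the parameter ordering and absorb the $L^\kappa\|Z\|_{L^\infty_{t,x}}$-type mixed terms via Young's inequality with conjugate exponents $1+\nu$ and $(1+\nu)/\nu$. The only differences are cosmetic constants in the exponents (you obtain $L^{2\kappa}$ and $L^{\eta_3/2}$ where the paper uses the looser $L^{3\kappa}$ and $L^{\eta_3}$), which are immaterial.
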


\begin{proof} We first recall estimates of $\linear[\lo]$, $\philinear[\Blin,\lo]$, $\Blin$, $\Slin$, and $\sigma_{\leq L}^2$. Due to Hypothesis \ref{AH:hypothesis-probabilistic}, it holds that 
\begin{equation*}
\big\| \linear[\lo] \big\|_{L_{t,x}^\infty} \lesssim L^\kappa, \qquad 
\big\| \philinear[\Blin,\lo] \big\|_{L_{t,x}^\infty} \lesssim L^\kappa, \qquad \text{and} \qquad 
\sigma_{\leq L}^2 \lesssim \log(L). 
\end{equation*}
Furthermore, due to Lemma \ref{prelim:lem-heat-flow-bound-decay}, \eqref{AH:eq-L}, and \eqref{AH:eq-L-nreg}, it also holds that
\begin{equation*}
\big\| \Blin \big\|_{L_{t,x}^\infty} \lesssim \big\| \pregA_0 \big\|_{L_x^\infty} \lesssim L^{\eta_3} 
\qquad \text{and} \qquad 
\big\| \Slin_{\lo} \big\|_{L_{t,x}^\infty} \lesssim \big\| P_{\leq L} \nregA_0 \big\|_{L_x^\infty}\lesssim L^{3\kappa}. 
\end{equation*}
Using the algebra property of $L_{t,x}^\infty$, it then follows that 
\begin{align*}
\| G_{\lo} \|_{L_{t,x}^\infty} 
&\lesssim 
     \big\|  \linear[\lo] + \Slin_{\lo} + Z \big\|_{L_{t,x}^\infty} \, \big\| \philinear[\Blin,\lo] \big\|_{L_{t,x}^\infty} 
     + \big\| \linear[\lo] + B + \Slin_{\lo} + Z \big\|_{L_{t,x}^\infty} \big\| \psi_{\hi} \big\|_{L_{t,x}^\infty}  \\
&\lesssim \big( L^{3\kappa}  + \| Z \|_{L_{t,x}^\infty} \big) L^\kappa   + 
\big( L^{\eta_3} + \| Z\|_{L_{t,x}^\infty} \big) \| \psi_{\hi} \|_{L_{t,x}^\infty}. 
\end{align*}
Due to the continuity hypothesis (Hypothesis \ref{AH:hypothesis-continuity}), it holds that 
\begin{align*}
\big( L^{\eta_3} + \| Z\|_{L_{t,x}^\infty} \big) \| \psi_{\hi} \|_{L_{t,x}^\infty}
\lesssim L^{\eta_1+\eta_2 -\eta} \lesssim 1 \lesssim L^{4\kappa}. 
\end{align*}
Using \eqref{prelim:eq-parameter-new-eta-nu} and Young's inequality, we then obtain 
\begin{equation*}
\| G_{\lo} \|_{L_{t,x}^\infty}  \lesssim L^\kappa \| Z \|_{L_{t,x}^\infty} 
+ L^{4\kappa} \lesssim \| Z \|_{L_{t,x}^\infty}^{1+\nu} + L^{\kappa \frac{1+\nu}{\nu}} + L^{4\kappa}
\lesssim \| Z \|_{L_{t,x}^\infty}^{1+\nu} + L^{\kappa \frac{1+\nu}{\nu}},
\end{equation*}
which yields the desired estimate of $G_{\lo}$. The estimate of $H_{\lo}$ is similar, and we omit the details.
\end{proof}

Equipped with Lemma \ref{AH:lem-psi-lo-auxiliary}, we can now turn to the proof of Proposition \ref{AH:prop-psi-lo-estimate}. 

\begin{proof}[Proof of Proposition \ref{AH:prop-psi-lo-estimate}:] 
Using $G_{\lo}$ and $H_{\lo}$ from Lemma \ref{AH:lem-psi-lo-auxiliary}, we can write the evolution equation for $\psi_{\lo}$ from \eqref{AH:eq-psi-lo-e1}-\eqref{AH:eq-psi-lo-e3} as 
\begin{align}
& \big(\partial_t - \covd_{A_{\lo}}^j \covd_{A_{\lo},j} +1   \big)  \, \psi_{\lo} + |\psi_{\lo}|^{q-1} \psi_{\lo}  \notag \\ 
=&\,   \covd_{A_{\lo}}^j G_{\lo,j} + H_{\lo} +  \big( 2 \sigma_{\leq L}^2 +1 \big) \psi_{\lo} \label{AH:eq-psi-lo-estimate-p1} \\ 
-&\,  \Big( \biglcol \, \big| \philinear[\Blin,\lo] + \psi_{\lo} + \psi_{\hi} \big|^{q-1} \big( \philinear[\Blin,\lo]  + \psi_{\lo} + \psi_{\hi} \big) \,  \bigrcol 
- |\psi_{\lo}|^{q-1} \psi_{\lo}  \Big) . \label{AH:eq-psi-lo-estimate-p2}
\end{align}
Using the covariant monotonicity formula from Proposition \ref{prop:monotonicity}, it then follows that 
\begin{align}
&\frac{1}{p} \frac{\mathrm{d}}{\mathrm{d}t} \, \big\|  \psi_{\lo}\big\|_{L_x^p}^{p}
+  \big\| |\psi_{\lo}|^{\frac{p-2}{2}} |\covd_{A_{\lo}} \psi_{\lo}| \big\|_{L_x^2}^2 
+ \tfrac{p-2}{4} \big\| |\psi_{\lo}|^{\frac{p-4}{2}} \nabla |\psi_{\lo}|^2 \big\|_{L_x^2}^2
+ \big\|  \psi_{\lo}\big\|_{L_x^p}^{p} +  \big\| \psi_{\lo} \big\|_{L_x^{p+q-1}}^{p+q-1} \notag \\
=& \Re \int_{\T^2} \dx \,   |\psi_{\lo}|^{p-2} \overline{\psi_{\lo}} \Big( \covd_{A_{\lo}}^j G_{\lo,j} + H_{\lo} + \big( 2 \sigma_{\leq L}^2 +1 \big) \psi_{\lo} \Big) \label{AH:eq-psi-lo-estimate-p3} \\ 
-& \Re \int_{\T^2} \dx \, |\psi_{\lo}|^{p-2} \overline{\psi_{\lo}}  \Big( \biglcol \, \big| \philinear[\Blin,\lo]  + \psi_{\lo} + \psi_{\hi} \big|^{q-1} \big( \philinear[\Blin,\lo] + \psi_{\lo} + \psi_{\hi} \big) \,  \bigrcol 
- |\psi_{\lo}|^{q-1} \psi_{\lo}  \Big). \label{AH:eq-psi-lo-estimate-p4}
\end{align}
We now estimate the contributions of the terms in \eqref{AH:eq-psi-lo-estimate-p3} and \eqref{AH:eq-psi-lo-estimate-p4} separately.\\

\emph{Contribution of the $G_{\lo}$-term:} We first use integration by parts (Lemma \ref{prelim:lem-derivatives}), which yields that 
\begin{equs}
\int_{\T^2} \dx \,   \overline{|\psi_{\lo}|^{p-2} \psi_{\lo}}  \, \covd_{A_{\lo}}^j G_{\lo,j} 
&= - \int_{\T^2} \dx \,  \overline{\covd_{A_{\lo}}^j \big( |\psi_{\lo}|^{p-2} \psi_{\lo}\big)} \,  G_{\lo,j}.
\end{equs}
Using the product formula and diamagnetic inequality (Lemma \ref{prelim:lem-diamagnetic}), it holds that 
\begin{equs}
\big| \covd_{A_{\lo}}^j \big( |\psi_{\lo}|^{p-2} \psi_{\lo}\big) \big| 
\leq \big| \partial^j \big( |\psi_{\lo}|^{p-2}\big) \, \psi_{\lo} \big| 
+ |\psi_{\lo}|^{p-2} \big| \covd_{A_{\lo}}^j \psi_{\lo} \big|
\lesssim |\psi_{\lo}|^{p-2} \big| \covd_{A_{\lo}}^j \psi_{\lo} \big|.
\end{equs}
Together with H\"{o}lder's inequality, we obtain that
\begin{equs}
\Big| \int_{\T^2} \dx \,  \overline{\covd_{A_{\lo}}^j \big( |\psi_{\lo}|^{p-2} \psi_{\lo}\big)} \,  G_{\lo,j} \Big| 
\lesssim \big\| |\psi_{\lo}|^{p-2} \covd_{A_{\lo}} \psi_{\lo} \big\|_{L_x^1} \big\| G_{\lo} \big\|_{L_x^\infty} 
\lesssim \big\| |\psi_{\lo}|^{\frac{p-2}{2}} \covd_{A_{\lo}} \psi_{\lo} \big\|_{L_x^2} 
\big\| \psi_{\lo} \big\|_{L_x^{p-2}}^{\frac{p-2}{2}} \big\| G_{\lo} \big\|_{L_x^\infty} .
\end{equs}
Using Young's inequality, we further obtain that 
\begin{equs}\label{AH:eq-psi-lo-estimate-p5}
&\big\| |\psi_{\lo}|^{\frac{p-2}{2}} \covd_{A_{\lo}} \psi_{\lo} \big\|_{L_x^2} 
\big\| \psi_{\lo} \big\|_{L_x^{p-2}}^{\frac{p-2}{2}} \big\| G_{\lo} \big\|_{L_x^\infty}\\
\leq&\, \delta \big\| |\psi_{\lo}|^{\frac{p-2}{2}} \covd_{A_{\lo}} \psi_{\lo} \big\|_{L_x^2}^2 
+ \delta \big\| \psi_{\lo} \big\|_{L_x^{p-2}}^{p+q-1}
+ C_{\delta,p,q} \big\| G_{\lo} \big\|_{L_x^\infty}^{\frac{2}{q+1} (p+q-1)}.
\end{equs}
The first and second summand in \eqref{AH:eq-psi-lo-estimate-p5} are clearly acceptable. Using Lemma \ref{AH:lem-psi-lo-auxiliary}, the third summand in \eqref{AH:eq-psi-lo-estimate-p5} can be estimated by 
\begin{equation*}
    \big\| G_{\lo} \big\|_{L_x^\infty}^{\frac{2}{q+1} (p+q-1)} 
    \lesssim  \| Z \|_{C_t^0 \Cs_x^\eta}^{\frac{2}{q+1} (1+\nu) (p+q-1)} 
    + L^{\kappa \frac{2}{q+1} \frac{1+\nu}{\nu} (p+q-1)}. 
\end{equation*}
Due to the trivial estimate $\frac{2}{q+1}\leq \frac{2}{q}$ and the definition of $\kappaone$, this also yields an acceptable contribution.\\

\emph{Contribution of the $H_{\lo}$-term:} Using H\"{o}lder's inequality and Young's inequality, we obtain that
\begin{equs}
\Big| Re \int_{\T^2} \dx \,   |\psi_{\lo}|^{p-2} \overline{\psi_{\lo}}  H_{\lo} \Big| 
\leq \| \psi_{\lo} \|_{L_x^{p-1}}^{p-1} \| H_{\lo}\|_{L_x^\infty} 
\leq \delta  \| \psi_{\lo} \|_{L_x^{p-1}}^{p+q-1} 
+ C_{\delta,p,q}  \| H_{\lo}\|_{L_x^\infty}^{\frac{p+q-1}{q}}
\end{equs}
The first summand is clearly acceptable. Using Lemma \ref{AH:lem-psi-lo-auxiliary},  the second summand can be estimated by 
\begin{equs}
 \| H_{\lo}\|_{L_x^\infty}^{\frac{p+q-1}{q}} 
 \lesssim \| Z \|_{C_t^0 \Cs_x^\eta}^{\frac{2}{q} (1+\nu) (p+q-1)} + L^{\frac{2}{q} \kappa \frac{1+\nu}{\nu} (p+q-1)},
\end{equs}
which is acceptable.\\

\emph{Contribution of the $\sigma_{\leq L}^2 \psi_{\lo}$-term:}
Using Young's inequality, we obtain that 
\begin{equs}
  \big( 2 \sigma_{\leq L}^2 +1 \big)    \Big| Re \int_{\T^2} \dx \,   |\psi_{\lo}|^{p-2} \overline{\psi_{\lo}} \psi_{\lo} \Big|  
   =  \big( 2 \sigma_{\leq L}^2 +1 \big)  \| \psi_{\lo} \|_{L_x^p}^p 
   \leq \delta \| \psi_{\lo} \|_{L_x^p}^{p+q-1} + C_{\delta,p,q}  \big( 2 \sigma_{\leq L}^2 +1 \big)^{\frac{p+q-1}{q-1}}.
\end{equs}
Since $\sigma_{\leq L}^2 \sim \log(L)$, this clearly yields an acceptable contribution.\\

\emph{Contribution of power-type nonlinearity:} Using Hypothesis \ref{AH:hypothesis-probabilistic} and \ref{AH:hypothesis-continuity} and using $\sigma_{\leq L}^2\sim \log(L)$, we obtain the pointwise estimate
\begin{equation}\label{AH:eq-psi-lo-estimate-p6} 
\begin{aligned}
&\Big|  \biglcol \, \big| \philinear[\Blin,\lo]  + \psi_{\lo} + \psi_{\hi} \big|^{q-1} \big( \philinear[\Blin,\lo]  + \psi_{\lo} + \psi_{\hi} \big) \,  \bigrcol 
- |\psi_{\lo}|^{q-1} \psi_{\lo}  \Big| \\
\lesssim&\, \Big( \big|  \philinear[\Blin,\lo] \big| + \big| \psi_{\lo} \big| + \big|  \psi_{\hi}\big| + \sigma_{\leq L} \Big)^{q-1}
\times \Big( \big|  \philinear[\Blin,\lo] \big|  + \big|  \psi_{\hi}\big| + \sigma_{\leq L} \Big) 
\lesssim L^\kappa \big( \big| \psi_{\lo} \big| + L^\kappa \big)^{q-1}.
\end{aligned}
\end{equation}
As a result, it follows that 
\begin{align*}
&\,\Big| \eqref{AH:eq-psi-lo-estimate-p4} \Big| 
\lesssim L^\kappa \int_{\T^2} \dx \big( \big| \psi_{\lo} \big| + L^\kappa \big)^{p+q-2}  \\
\lesssim&\,  L^{\kappa} \big( \| \psi_{\lo} \|_{L_x^{p+q-2}} + L^\kappa \big)^{p+q-2} 
\leq \delta \big\| \psi_{\lo} \big\|_{L_x^{p+q-1}}^{p+q-1} + C_{\delta,p,q} L^{\kappa (p+q-1) }. 
\end{align*}
Since $\kappa \leq \kappaone$, this yields an acceptable contribution.
\end{proof}

\begin{proof}[Proof of Proposition \ref{AH:prop-psi-lo-localized}:] 
Let $\delta>0$ be a sufficiently small absolute constant
and, to simplify the notation, let $I:= [t_0,t_1]$. 
Using the forcing terms $G_{\lo}$ and $H_{\lo}$ from Lemma \ref{AH:lem-psi-lo-auxiliary}, we can write the evolution equation for $\psi_{\lo}$ from \eqref{AH:eq-psi-lo-e1}-\eqref{AH:eq-psi-lo-e3} as 
\begin{align}
& \big(\partial_t - \covd_{A_{\lo}}^j \covd_{A_{\lo},j}  +1 \big)  \, \psi_{\lo} + |\psi_{\lo}|^{q-1} \psi_{\lo}  \notag \\ 
=&\,   \covd_{A_{\lo}}^j G_{\lo,j} + H_{\lo} +  \big( 2 \sigma_{\leq L}^2 +1 \big) \psi_{\lo} \label{AH:eq-psi-lo-localized-p1} \\ 
-&\,  \Big( \biglcol \, \big| \philinear[\Blin,\lo]  + \psi_{\lo} + \psi_{\hi} \big|^{q-1} \big( \philinear[\Blin,\lo]  + \psi_{\lo} + \psi_{\hi} \big) \,  \bigrcol 
- |\psi_{\lo}|^{q-1} \psi_{\lo}  \Big) . \label{AH:eq-psi-lo-localized-p2}
\end{align}
Let $\varepsilon>0$ be arbitrary and choose 
\begin{equation*}
K(t,x):= p(t,x;t_1+\varepsilon,x_1), 
\end{equation*}
which is a solution of the backwards heat equation. From the definition of $K$, it directly follows that
\begin{equs}\label{AH:eq-psi-lo-localized-K-estimate}
\big\| K \big\|_{L_t^1 L_x^1(I\times \T^2)} \leq \rho \big\| K \big\|_{L_t^\infty L_x^1(I\times \T^2)} = \rho. 
\end{equs}
Using the monotonicity formula from Proposition \ref{prop:monotonicity}, we obtain that 
\begin{align}
&\, \tfrac{1}{2} \big\| K^{\frac{1}{2}}(t_1) \psi_{\lo}(t_1) \big\|_{L_x^2}^2
+ \big\| K^{\frac{1}{2}} \covd_{A_{\lo}} \psi_{\lo} \big\|_{L_{t,x}^2(I)}^2
+ \big\| K^{\frac{1}{q+1}} \psi_{\lo} \big\|_{L_{t,x}^{q+1}(I)}^{q+1} \label{AH:eq-psi-lo-localized-p3} \\
\leq&\, \tfrac{1}{2} \big\| K^{\frac{1}{2}}(t_0) \psi_{\lo}(t_0) \big\|_{L_x^2}^2 
+ \int_{I} \dt \int \dx\, K \overline{\psi_{\lo}} \big( \covd_{A_{\lo}}^j G_{\lo,j} + H_{\lo} +  ( 2 \sigma_{\leq L}^2 +1 ) \psi_{\lo} \big) \label{AH:eq-psi-lo-localized-p4} \\
-&\, \int_{I} \dt \int \dx \, K \overline{\psi_{\lo}}
 \big( \biglcol \, \big| \philinear[\Blin,\lo]  + \psi_{\lo} + \psi_{\hi} \big|^{q-1} \big( \philinear[\Blin,\lo]  + \psi_{\lo} + \psi_{\hi} \big) \,  \bigrcol 
- |\psi_{\lo}|^{q-1} \psi_{\lo}  \big).  \label{AH:eq-psi-lo-localized-p5}
\end{align}
We now claim that 
\begin{equation}\label{AH:eq-psi-lo-localized-K}
\big\| K^{\frac{1}{2}} \covd_{A_{\lo}} \psi_{\lo} \big\|_{L_{t,x}^2(I)}^2
\lesssim \rho^{-\frac{2}{r}} \big\| \psi_{\lo} \big\|_{L_t^\infty L_x^r(I)}^2
+ \rho
 \| Z \|_{L_{t,x}^\infty(I)}^{2 \frac{q+1}{q} (1+\nu)} 
+  \rho  L^{2\kappaone}.
\end{equation}
Once \eqref{AH:eq-psi-lo-localized-K} has been shown, the desired estimate \eqref{AH:psi-lo-localized} can then be obtained using a limiting argument, i.e., by letting $\varepsilon\downarrow 0$. In order to prove \eqref{AH:eq-psi-lo-localized-K}, we separately estimate the terms in \eqref{AH:eq-psi-lo-localized-p4} and \eqref{AH:eq-psi-lo-localized-p5}. \\ 

\emph{Contribution of the $\psi_{\lo}(t_0)$-term:}
Using H\"{o}lder's inequality, it holds that
\begin{equs}
\big\| K^{\frac{1}{2}}(t_0) \psi_{\lo}(t_0) \big\|_{L_x^2}^2 
\lesssim \big\|  K^{\frac{1}{2}}(t_0) \big\|_{L_x^{\frac{2r}{r-2}}}^2 
\big\| \psi_{\lo}(t_0) \big\|_{L_x^r}^2 \lesssim \rho^{-\frac{2}{r}} \big\| \psi_{\lo} \big\|_{L_t^\infty L_x^r(I)}^2.
\end{equs}
This yields an acceptable contribution to the right-hand side of \eqref{AH:eq-psi-lo-localized-K}. \\ 

\emph{Contribution of the $G_{\lo}$-term:} Using integration by parts (Lemma \ref{prelim:lem-derivatives}), we obtain that 
\begin{equs}\label{AH:eq-psi-lo-localized-p6}
\Big|  \int_{I} \dt \int \dx\, K \overline{\psi_{\lo}} \, \covd_{A_{\lo}}^j G_{\lo,j}  \Big|   
\leq  \Big| \int_{I} \dt \int \dx\, K \overline{\covd_{A_{\lo}}^j \psi_{\lo}} G_{\lo,j} \Big| 
+ \Big| \int_{I} \dt \int \dx\, (\partial^j K) \overline{\psi_{\lo}} G_{\lo,j} \Big|. 
\end{equs}
We first estimate the first summand in \eqref{AH:eq-psi-lo-localized-p6}. Using Young's inequality and Lemma \ref{AH:lem-psi-lo-auxiliary}, we obtain that
\begin{equs}
\Big| \int_{I} \dt \int \dx\, K \overline{\covd_{A_{\lo}}^j \psi_{\lo}} G_{\lo,j} \Big| 
\leq \delta \int_{I} \dt \int \dx\, K |\covd_{A_{\lo}} \psi_{\lo}|^2 + C_{\delta}
\int_{I} \dt \int \dx\, K |G_{\lo}|^2.  
\end{equs}
The first summand can be absorbed in \eqref{AH:eq-psi-lo-localized-p3}. Using Lemma \ref{AH:lem-psi-lo-auxiliary} and \eqref{AH:eq-psi-lo-localized-K-estimate}, the second summand can be estimated by 
\begin{equs}
\int_{I} \dt \int \dx\, K |G_{\lo}|^2 \lesssim
\| K \|_{L_{t,x}^1(I)} \| G_{\lo} \|_{L_{t,x}^\infty(I)}^2
\lesssim \rho \Big( \| Z \|_{L_{t,x}^\infty (I)}^{2(1+\nu)} + L^{2\kappa \frac{1+\nu}{\nu}} \Big).
\end{equs}
Since $1\leq \frac{q+1}{q}$ and $\kappa \frac{1+\nu}{\nu}\leq \kappaone$, this yields an acceptable contribution to the right-hand side of \eqref{AH:eq-psi-lo-localized-K}. We now estimate the second summand in \eqref{AH:eq-psi-lo-localized-p6}. Using H\"{o}lder's and Young's inequality, it holds that
\begin{align*}
&\, \Big| \int_{I} \dt \int \dx\, (\partial^j K) \overline{\psi_{\lo}} G_{\lo,j} \Big|
\lesssim \big\| \partial^j K \big\|_{L_t^1 L_x^{r^\prime}(I)} 
\big\| \psi_{\lo} \big\|_{L_t^\infty L_x^r(I)} \big\| G_{\lo} \big\|_{L_{t,x}^\infty(I)} \\ 
\lesssim&\,  \rho^{\frac{1}{2}-\frac{1}{r}} \big\| \psi_{\lo} \big\|_{L_t^\infty L_x^r(I)} \big\| G_{\lo} \big\|_{L_{t,x}^\infty(I)} 
\lesssim \rho^{-\frac{2}{r}} \big\| \psi_{\lo} \big\|_{L_t^\infty L_x^r(I)}^2 
+ \rho \big\| G_{\lo} \big\|_{L_{t,x}^\infty(I)}^2.
\end{align*}
After estimating $\big\| G_{\lo} \big\|_{L_{t,x}^\infty}$ as before, this also yields an acceptable contribution. \\

\emph{Contribution of the $H_{\lo}$-term:} Using Young's inequality, it holds that 
\begin{equs}
\Big|\int_I \dt \int \dx \, K \overline{\psi_{\lo}} H_{\lo}\Big| 
\leq \delta \int_I \dt \int \dx K |\psi_{\lo}|^{q+1} 
+ C_{\delta,q} \int_I \dt \int \dx K |H_{\lo}|^{\frac{q+1}{q}}. 
\end{equs}
The first summand can be absorbed in \eqref{AH:eq-psi-lo-localized-p3}. Using Lemma \ref{AH:lem-psi-lo-auxiliary}, the second term can be estimated by 
\begin{align*}
&\,\int_I \dt \int \dx K |H_{\lo}|^{\frac{q+1}{q}}
\lesssim \| K \|_{L_{t,x}^1(I)} \| H_{\lo} \|_{L_{t,x}^\infty(I)}^{\frac{q+1}{q}} \\ 
\lesssim&\, \rho \big( \|Z\|_{L_{t,x}^\infty(I)}^{2 (1+\nu)} + L^{2 \kappa  \frac{1+\nu}{\nu}} \big)^{\frac{q+1}{q}}
\lesssim \rho \|Z\|_{L_{t,x}^\infty(I)}^{2 \frac{q+1}{q} (1+\nu)}
+ \rho L^{2\kappa \frac{1+\nu}{\nu} \frac{q+1}{q}}.
\end{align*}
Since $\kappa \frac{1+\nu}{\nu} \frac{q+1}{q}\leq \kappaone$, this yields an acceptable contribution to the right-hand side of \eqref{AH:eq-psi-lo-localized-K}. \\ 

\emph{Contribution of the $\sigma_{\leq L}^2 \psi_{\lo}$-term:} Using Young's inequality, we estimate
\begin{equs}
\big( 2 \sigma_{\leq L}^2 + 1 \big) \Big| \int_{I} \dt \int \dx\, K \overline{\psi_{\lo}} \psi_{\lo}  \Big| 
\leq \delta  \int_{I} \dt \int \dx \, K |\psi_{\lo}|^{q+1} + C_{\delta,q} \big( 2 \sigma_{\leq L}^2 +1 \big)^{\frac{q+1}{q-1}} \int_{I} \dt \int \dx \, K. 
\end{equs}
The first summand can be absorbed in \eqref{AH:eq-psi-lo-localized-p3}. 
Using that $\sigma_{\leq L}^2\sim \log(L)$, the second summand easily yields an acceptable contribution to the right-hand side of \eqref{AH:eq-psi-lo-localized-K}.  \\

\emph{Contribution of the power-type nonlinearity:} Similarly as in \eqref{AH:eq-psi-lo-estimate-p6}, we have that 
\begin{align*}
&\, \Big| \overline{\psi_{\lo}}
 \big( \biglcol \, \big| \philinear[\Blin,\lo]  + \psi_{\lo} + \psi_{\hi} \big|^{q-1} \big( \philinear[\Blin,\lo]  + \psi_{\lo} + \psi_{\hi} \big) \,  \bigrcol 
- |\psi_{\lo}|^{q-1} \psi_{\lo}  \big) \Big| \\
\lesssim&\, L^\kappa \big( |\psi_{\lo}| + L^\kappa \big)^q 
\leq \delta |\psi_{\lo}|^{q+1} + C_{\delta,q} L^{\kappa (q+1)}. 
\end{align*}
As a result, it follows that 
\begin{equation*}
\Big| \eqref{AH:eq-psi-lo-localized-p5} \Big| 
\leq \delta 
 \int_{I} \dt \int \dx \, K  |\psi_{\lo}|^{q+1} 
 + C_{\delta,q}  \int_{I} \dt \int \dx \, K  L^{\kappa (q+1)} 
\leq  \delta 
 \int_{I} \dt \int \dx \, K  |\psi_{\lo}|^{q+1}  +  C_{\delta,q}  \rho   L^{\kappa (q+1)} . 
\end{equation*}
The first summand can be absorbed in \eqref{AH:eq-psi-lo-localized-p3} and the second summand yields an acceptable contribution. 
\end{proof}

\subsection{\protect{Estimates of $Z$}}\label{section:AH-Z} 

In this subsection, we control the remainder $Z$ from the decomposition in \eqref{AH:eq-decomposition-A}, and our main estimate is contained in the following proposition.

\begin{proposition}[Estimate of $Z$]\label{AH:prop-Z}
Let the probabilistic hypothesis and continuity hypothesis, i.e., Hypothesis \ref{AH:hypothesis-probabilistic} and \ref{AH:hypothesis-continuity}, be satisfied, let $0<\tau^\ast \leq \tau$, and let $I:=[t_0,t_0+\tau^\ast]$. Then, it holds that
\begin{equation}\label{AH:eq-Z-estimate}
\begin{aligned}
\big\| Z \big\|_{C_t^0 \Cs_x^{2\eta} \cap C_t^{\eta} \Cs_x^0(I)}
&\lesssim \tau^{\frac{3}{4}-\eta} \| \pregA_0 \|_{\Cs_x^\eta}^{\frac{1}{2}+\eta} 
+  \tau^{\frac{1}{2}-\frac{1}{r}-2\eta} \big\| \psi_{\lo} \big\|_{C_t^0 L_x^r(I)}^2  \\ 
&+ \tau^{1-2\eta} \big( \big\| \psi_{\lo} \big\|_{C_t^0 L_x^r(I)} + L^\kappa \big) \big\| Z \big\|_{C_t^0 \Cs_x^\eta(I)}^{\frac{q+1}{q} (1+\nu)} 
+ L^{2\kappaone}.
\end{aligned}
\end{equation}
\end{proposition}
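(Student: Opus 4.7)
I would start from the Duhamel representation of \eqref{AH:eq-Z},
\begin{equs}
Z(t) = \int_{t_0}^t e^{(t-s)\Delta}\Big(-\leray \Im\big(\overline{\phi}\,\covd_A\phi\big) + \gaugerenorm A + \linear[]\Big)(s)\,ds,
\end{equs}
and organize the integrand using the Ansatz $A = \linear[]+\Blin+\Slin+Z$, $\phi = \philinear[\Blin,\lo]+\philinear[\hi]+\psi$. The main observation is that
\begin{equs}
-\leray\Im(\overline{\phi}\,\covd_A\phi) + \gaugerenorm A = -\leray\Im\big(\overline{\philinear[\Blin,\lo]}\,\covd_\Blin\philinear[\Blin,\lo] - \tfrac{1}{8\pi}\Blin\big) + \gaugerenorm(\linear[]+\Slin+Z) + \mathcal{E},
\end{equs}
where $\mathcal{E}$ collects all remaining cross and bilinear terms. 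The principal piece is controlled directly by \eqref{AH:eq-probabilistic-derivative} in Hypothesis \ref{AH:hypothesis-probabilistic}\ref{AH:item-probabilistic-derivative}, contributing the $\tau^{3/4-\eta}\|\pregA_0\|_{\Cs_x^\eta}^{1/2+\eta}$ term in \eqref{AH:eq-Z-estimate}, while $\gaugerenorm(\linear[]+\Slin+Z)+\linear[]$ together with all interactions involving at least one high-frequency object ($\linear[\hi]$, $\Slin_{\hi}$, $\philinear[\hi]$, $\varphi_{\hi}$, $\psi_{\hi}$) are absorbed, via Hypothesis \ref{AH:hypothesis-probabilistic}\ref{AH:item-high}, Hypothesis \ref{AH:hypothesis-continuity}, and the size prescription \eqref{AH:eq-L}, into the $L^{2\kappaone}$ error term.

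The substantive task is to estimate the remaining low-frequency bilinear contributions: mixed terms of type $\overline{\philinear[\Blin,\lo]}\,\covd_{A_{\lo}}\psi_{\lo}$, $\overline{\psi_{\lo}}\,\covd_{A_{\lo}}\philinear[\Blin,\lo]$, and especially the pure bilinear piece $\overline{\psi_{\lo}}\,\covd_{A_{\lo}}\psi_{\lo}$. For these I would use Lemma \ref{prelim:lem-Duhamel-weighted} with $\alpha = 2\eta$ and $\theta=\tfrac{1}{2}$, so that the $C_t^0 \Cs_x^{2\eta} \cap C_t^{\eta} \Cs_x^0$-norm of the Duhamel integral is controlled by
\begin{equs}\label{ah:plan-weighted}
\tau^{1/2-\eta-2/q^\ast}\sup_{(t_1,x_1)\in I\times\T^2}\big\|p^{1/2}(s,y;t_1,x_1)\,F(s,y)\big\|_{L_s^{q^\ast} L_y^{q^\ast}},
\end{equs}
for a suitable choice of $q^\ast$ close to $2$; the heat kernel weight at the endpoint $(t_1,x_1)$ is precisely the weight appearing in Proposition~\ref{AH:prop-psi-lo-localized}.

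For the hardest piece $F = \overline{\psi_{\lo}}\,\covd_{A_{\lo}}\psi_{\lo}$, Cauchy--Schwarz yields
\begin{equs}
\big\|p^{1/2}\,\overline{\psi_{\lo}}\,\covd_{A_{\lo}}\psi_{\lo}\big\|_{L_{s,y}^2} \leq \big\|p^{1/2}\psi_{\lo}\big\|_{L_{s,y}^{\infty}(L_y^2\text{-avg.})}\cdot \big\|p^{1/2}\covd_{A_{\lo}}\psi_{\lo}\big\|_{L_{s,y}^2},
\end{equs}
where the first factor is controlled by H\"older in space ($L_y^r \times L_y^{r/(r-2)}$ pairing against the kernel $p$), producing a bound of order $\tau^{1/2-1/r}\|\psi_{\lo}\|_{C_t^0 L_x^r}$, and the second factor is bounded by Proposition \ref{AH:prop-psi-lo-localized} by
\begin{equs}
\tau^{-1/r}\|\psi_{\lo}\|_{C_t^0 L_x^r} + \tau^{1/2}\|Z\|_{L_{t,x}^\infty}^{\frac{q+1}{q}(1+\nu)} + \tau^{1/2}L^{\kappaone}.
\end{equs}
Multiplying, inserting into \eqref{ah:plan-weighted}, and using the embedding $C_t^0\Cs_x^{\eta}\hookrightarrow L_{t,x}^\infty$, I obtain precisely the second and third terms of \eqref{AH:eq-Z-estimate}, with an extra $\tau^{-\eta}$ arising from the passage from $C_t^0 C_x^0$ to $C_t^0 \Cs_x^{2\eta}$. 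The mixed terms (one factor $\philinear[\Blin,\lo]$ or $\linear[\lo]$, one factor $\psi_{\lo}$) are handled analogously, using Hypothesis \ref{AH:hypothesis-probabilistic}\ref{AH:item-probabilistic-A-linear}, \ref{AH:item-probabilistic-covariant-linear} to bound the $L_{t,x}^\infty$ factor by $L^\kappa$; these contribute to the $L^\kappa$ appearing inside the third term of \eqref{AH:eq-Z-estimate}.

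The main obstacle I anticipate is the \emph{dependence on} $\Blin$: since covariant derivatives are taken with respect to $A_{\lo}$ rather than $\Blin$, I need Proposition \ref{AH:prop-psi-lo-localized} to give a bound independent of the (potentially large) size of $\Blin$, which it does precisely because it is proved via the covariant monotonicity formula. The bookkeeping of $\tau$ and $L$ powers is then routine but delicate: the crucial point, which must be preserved through the proof, is that the exponent $\frac{q+1}{q}(1+\nu)$ on $\|Z\|$ exceeds $1$, so that together with the prefactor $\tau^{1-2\eta}(\|\psi_{\lo}\|_{C_t^0 L_x^r}+L^\kappa)$ the estimate \eqref{AH:eq-Z-estimate} can later be closed by a bootstrap argument on short time intervals (as exploited in Section~\ref{section:decay}).
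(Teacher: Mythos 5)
Your overall strategy matches the paper's: Duhamel representation of $Z$, the Ansatz-based decomposition of $\ovl{\phi}\covd_A\phi - \gaugerenorm A_{\lo}$ into a principal stochastic piece controlled by Hypothesis~\ref{AH:hypothesis-probabilistic}\ref{AH:item-probabilistic-derivative}, low-frequency bilinear terms controlled via Lemma~\ref{prelim:lem-Duhamel-weighted} combined with the localized monotonicity formula of Proposition~\ref{AH:prop-psi-lo-localized}, and high-frequency interactions absorbed through Hypothesis~\ref{AH:hypothesis-probabilistic}\ref{AH:item-high}. The bookkeeping of $\tau$- and $L$-powers you outline also tracks the paper's Lemmas~\ref{AH:lem-derivative-stochastic}, \ref{AH:lem-derivative-psi}, and \ref{AH:lem-derivative-high}.

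There is one genuine gap. You list both $\overline{\philinear[\Blin,\lo]}\,\covd_{A_{\lo}}\psi_{\lo}$ and $\overline{\psi_{\lo}}\,\covd_{A_{\lo}}\philinear[\Blin,\lo]$ as mixed terms to be ``handled analogously'' by putting one factor in $L_{t,x}^\infty$. This fails for the second term: the machinery you propose (Proposition~\ref{AH:prop-psi-lo-localized}, Cauchy--Schwarz against a heat-kernel weight) only furnishes an $L_t^2L_x^2$-bound on $\covd_{A_{\lo}}\psi_{\lo}$, and there is \emph{no} useful estimate available for $\covd_{A_{\lo}}\philinear[\Blin,\lo]$ in $L_{t,x}^\infty$ — the covariant gradient of the frequency-truncated stochastic object is of order $L^{1+\kappa}$, which is far too large to close the estimate. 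The paper resolves this with the symmetry identity of Lemma~\ref{AH:lem-symmetry}, namely that $\leray\Im(\overline{\phi}\,\covd_A\varphi) = \leray\Im(\overline{\varphi}\,\covd_A\phi)$, which swaps the covariant derivative onto $\psi_{\lo}$ before any estimate is attempted; this relies crucially on the presence of the Leray projection. Without this step, the strategy you describe would stall at precisely this term.
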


\begin{remark}
The time regularity of $Z$ is only needed in Appendix \ref{section:high}, where we prove Lemma \ref{AH:lem-derivative-high}.
\end{remark}

The proof of Proposition \ref{AH:prop-Z} is postponed until the end of the subsection. As a first step towards a proof of Proposition \ref{AH:prop-Z}, we prove the following lemma, which yields the symmetry of the bilinear form $\leray \Im ( \,  \overline{\phi} \,  \covd_A \varphi)$. 

\begin{lemma}[Symmetry]\label{AH:lem-symmetry}
Let $\phi,\varphi\colon \T^2 \rightarrow \C$ and let $A\colon \T^2 \rightarrow \R^2$. Then, it holds that
\begin{equs}
\leray \Im \big( \, \overline{\phi} \, \covd_A \varphi \big) 
=\leray \Im \big( \, \overline{\varphi}\, \covd_A \phi \big). 
\end{equs}
\end{lemma}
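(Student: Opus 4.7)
The plan is to show that the difference
\begin{equs}
\Im\bigl(\overline{\phi}\,\covd_A \varphi\bigr) - \Im\bigl(\overline{\varphi}\,\covd_A \phi\bigr)
\end{equs}
is a pure gradient, at which point an application of $\leray$ (which annihilates gradients by \eqref{prelim:eq-leray}) yields the result. First I would expand $\covd_A = \nabla + \icomplex A$ to decompose each side into a gradient part and an $A$-part:
\begin{equs}
\overline{\phi}\,\covd_A \varphi = \overline{\phi}\,\nabla \varphi + \icomplex A\, \overline{\phi}\varphi,
\qquad
\overline{\varphi}\,\covd_A \phi = \overline{\varphi}\,\nabla \phi + \icomplex A\, \overline{\varphi}\phi.
\end{equs}
Taking imaginary parts, the $A$-terms contribute $A\,\Re(\overline{\phi}\varphi)$ and $A\,\Re(\overline{\varphi}\phi)$ respectively, which are identical since $\Re(\overline{\phi}\varphi) = \Re(\overline{\varphi}\phi)$.

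Next, for the gradient parts, I would use the Leibniz rule $\nabla(\overline{\phi}\varphi) = (\nabla \overline{\phi})\varphi + \overline{\phi}\,\nabla \varphi$ together with the elementary identity $\Im\bigl((\nabla \overline{\phi})\varphi\bigr) = -\Im\bigl(\overline{\varphi}\,\nabla \phi\bigr)$ (which follows from $\Im(\overline{z}) = -\Im(z)$ applied componentwise). Combining these yields
\begin{equs}
\Im\bigl(\overline{\phi}\,\nabla \varphi\bigr) - \Im\bigl(\overline{\varphi}\,\nabla \phi\bigr) = \Im\bigl(\nabla(\overline{\phi}\varphi)\bigr) = \nabla\,\Im(\overline{\phi}\varphi).
\end{equs}
Putting these two computations together, we obtain
\begin{equs}
\Im\bigl(\overline{\phi}\,\covd_A \varphi\bigr) - \Im\bigl(\overline{\varphi}\,\covd_A \phi\bigr) = \nabla\,\Im(\overline{\phi}\varphi).
\end{equs}
The right-hand side is a gradient, so $\leray$ annihilates it and the claim follows.

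Since every step is a direct algebraic manipulation, there is no genuine obstacle here; the only minor point to be careful about is the sign when swapping $\overline{\phi}$ and $\phi$ inside the imaginary part, which I handled above using $\Im(\overline{z}) = -\Im(z)$. Because the lemma is purely pointwise, no regularity assumptions on $\phi,\varphi,A$ beyond those needed to make the expressions meaningful are required.
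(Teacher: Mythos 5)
Your proof is correct and follows essentially the same route as the paper: both arguments identify the difference as a pure gradient (annihilated by $\leray$) using the conjugate identity $\Im \overline{z} = -\Im z$. The only stylistic difference is that you split $\covd_A$ into $\nabla + \icomplex A$ and treat the two pieces separately, whereas the paper applies the covariant Leibniz rule $\nabla(\overline{\phi}\varphi) = \overline{\covd_A\phi}\,\varphi + \overline{\phi}\,\covd_A\varphi$ directly; the underlying computation is the same.
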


\begin{proof} Using the product formula from Lemma \ref{prelim:lem-derivatives}.\ref{prelim:item-product} and using that $\Im \overline{z}=-\Im z$ for all $z\in \C$, we obtain that
\begin{equs}
\leray \Im \big( \, \overline{\phi} \, \covd_A \varphi \big) 
= \leray \Im \big( \nabla  \big( \,  \overline{\phi} \varphi \big) \big) 
- \leray \Im \big( \, \overline{\covd_A \phi} \, \varphi \big)
= \leray \Im \big( \nabla  \big(\,  \overline{\phi} \varphi \big) \big)
+ \leray \Im \big( \, \overline{\varphi}\, \covd_A \phi \big). 
\end{equs}
Since $\leray \nabla =0$, this yields the desired identity.
\end{proof}

We now turn towards estimates of the derivative nonlinearity $\leray \Im ( \,  \overline{\phi} \,  \covd_A \phi)$, which are the subject of the following three lemmas. In Lemma \ref{AH:lem-derivative-stochastic}, we control the 
$\scalebox{0.9}{$\overline{\philinear[\Blin,\lo]}$} \covd_{A_{\lo}} \scalebox{0.9}{$\philinear[\Blin,\lo]$}$-interaction using our estimates for the covariant stochastic heat equation (Theorem \ref{intro:thm-cshe}, or more precisely, Proposition \ref{prop:derivative-nonlinearity-non-resonant} and Proposition \ref{cshe:prop-resonant}). In Lemma \ref{AH:lem-derivative-psi}, we control the 
$\scalebox{0.9}{$\overline{\philinear[\Blin,\lo]}$} \covd_{A_{\lo}} \psi_{\lo}$ and  
$\overline{\psi_{\lo}} \covd_{A_{\lo}} \psi_{\lo}$-interactions 
using our localized covariant energy estimate (Proposition \ref{AH:prop-psi-lo-localized}). Finally, in \mbox{Lemma \ref{AH:lem-derivative-high}}, we control all interactions involving either $\linear[\hi]$, $\Slin_{\hi}$, $\philinear[\hi]$, $\varphi_{\hi}$, or $\psi_{\hi}$ using estimates from the local theory of \eqref{AH:eq-evolution-A}-\eqref{AH:eq-evolution-phi}. 

\begin{lemma}[\protect{The 
$\scalebox{0.9}{$\overline{\philinear[\Blin,\lo]}$}\covd_{A_{\lo}} \scalebox{0.9}{$\philinear[\Blin,\lo]$}$-interaction}]\label{AH:lem-derivative-stochastic} 
Let the probabilistic hypothesis and continuity hypothesis, i.e., Hypothesis \ref{AH:hypothesis-probabilistic} and \ref{AH:hypothesis-continuity}, be satisfied, let $0<\tau^\ast \leq \tau$, and let $I:=[t_0,t_0+\tau^\ast]$. Then, it holds that 
\begin{equation}\label{AH:eq-derivative-stochastic}
\begin{aligned}
\, \Big\| \leray \Im \Duh \Big[ \, \overline{\philinear[\Blin,\lo]} \covd_{A_{\lo}} \philinear[\Blin,\lo] - \tfrac{1}{8\pi} A_{\lo} \Big] \Big\|_{C_t^0 \Cs_x^{2\eta} \cap C_t^{\eta} \Cs_x^0(I)}\lesssim\,  \tau^{\frac{3}{4}-\eta} \big\| \pregA_0 \big\|_{\Cs_x^\eta}^{\frac{1}{2}+\eta}
+ \tau^{1-2\eta} L^{2\kappa} \big\|  Z \big\|_{L_{t,x}^\infty(I)} 
+ L^{6\kappa}.
\end{aligned}
\end{equation}
\end{lemma}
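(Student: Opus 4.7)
The plan is to split the connection appearing in the covariant derivative using the Ansatz \eqref{AH:eq-decomposition-A} and \eqref{AH:eq-A-convenient-3}, namely $A_{\lo} = \Blin + \linear[\lo] + \Slin_{\lo} + Z$, which gives the operator identity $\covd_{A_{\lo}} = \covd_{\Blin} + \icomplex (\linear[\lo] + \Slin_{\lo} + Z)$. Substituting this and rewriting the counterterm $\tfrac{1}{8\pi} A_{\lo} = \tfrac{1}{8\pi}\Blin + \tfrac{1}{8\pi}(\linear[\lo]+\Slin_{\lo}+Z)$ produces the decomposition
\begin{equs}
\overline{\philinear[\Blin,\lo]} \covd_{A_{\lo}} \philinear[\Blin,\lo] - \tfrac{1}{8\pi} A_{\lo} = \Big[ \overline{\philinear[\Blin,\lo]} \covd_{\Blin} \philinear[\Blin,\lo] - \tfrac{1}{8\pi} \Blin \Big] + \Big[ \icomplex \big( \linear[\lo] + \Slin_{\lo} + Z \big) \big| \philinear[\Blin,\lo] \big|^2 - \tfrac{1}{8\pi} \big( \linear[\lo] + \Slin_{\lo} + Z \big) \Big],
\end{equs}
so that the Duhamel integral of the first bracket is exactly what is controlled by Hypothesis~\ref{AH:hypothesis-probabilistic}.\ref{AH:item-probabilistic-derivative}. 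Using $\Blin(t_0) = \pregA_0$, this already produces the first summand $\tau^{\frac{3}{4}-\eta}\|\pregA_0\|_{\Cs_x^\eta}^{\frac{1}{2}+\eta}$ together with part of the final $L^{6\kappa}$ loss in \eqref{AH:eq-derivative-stochastic}.

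For the second bracket, I would apply the Duhamel kernel-weighted bound Lemma~\ref{prelim:lem-Duhamel-weighted} with parameters $\theta = 0$, $q = r = \infty$, $\alpha = 2\eta$, which gives the estimate
\begin{equs}
\big\| \Duh[F] \big\|_{C_t^0 \Cs_x^{2\eta} \cap C_t^{\eta} \Cs_x^0(I)} \lesssim \tau^{1-\eta} \| F \|_{L_{t,x}^\infty(I)},
\end{equs}
and combine it with the $L_{t,x}^\infty$-estimates on the various ingredients. Specifically, Hypothesis~\ref{AH:hypothesis-probabilistic}.\ref{AH:item-probabilistic-A-linear}--\ref{AH:item-probabilistic-covariant-linear} yield $\|\philinear[\Blin,\lo]\|_{L_{t,x}^\infty} + \|\linear[\lo]\|_{L_{t,x}^\infty} \lesssim L^{\kappa}$, while the assumption \eqref{AH:eq-L-nreg} combined with Bernstein's inequality applied to $\Slin_{\lo} = P_{\leq L}\Slin$ and the boundedness of the heat flow on $\Cs_x^{-\kappa}$ (Lemma~\ref{prelim:lem-heat-flow-bound-decay}) gives $\|\Slin_{\lo}\|_{L_{t,x}^\infty} \lesssim L^{2\kappa}$. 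Together these produce the pointwise bound
\begin{equs}
\Big\| \big( \linear[\lo] + \Slin_{\lo} + Z \big) \big| \philinear[\Blin,\lo] \big|^2 - \tfrac{1}{8\pi}\big( \linear[\lo] + \Slin_{\lo} + Z \big) \Big\|_{L_{t,x}^\infty(I)} \lesssim L^{4\kappa} + L^{2\kappa}\| Z \|_{L_{t,x}^\infty(I)}.
\end{equs}
Inserting this in the Duhamel bound and using $\tau^{1-\eta} \leq \tau^{1-2\eta}$ for $\tau \leq 1$ yields the contribution $\tau^{1-2\eta} L^{2\kappa}\|Z\|_{L_{t,x}^\infty}$, while the deterministic constants are all absorbed into a single $L^{6\kappa}$ error, completing the proof.

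There is no real obstacle here beyond careful bookkeeping: the reason the argument is clean is that once the covariant counterterm $\tfrac{1}{8\pi}\Blin$ has been paired with $\overline{\philinear[\Blin,\lo]}\covd_{\Blin}\philinear[\Blin,\lo]$ via the decomposition above, the remaining perturbative contributions are already in the favourable product form $|\philinear[\Blin,\lo]|^2 \cdot (\text{low-regularity pieces})$, so no further smoothing or null-form structure is needed. The only point requiring minor care is that the exponent of $\|Z\|_{L_{t,x}^\infty}$ in the target estimate \eqref{AH:eq-derivative-stochastic} is exactly one, which is naturally produced by the linear dependence on $A_{\lo}$ in the derivative, so the crude $L_{t,x}^\infty$ bound suffices without having to invoke the monotonicity results for $\psi_{\lo}$ from Propositions~\ref{AH:prop-psi-lo-estimate}--\ref{AH:prop-psi-lo-localized}.
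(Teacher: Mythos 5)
Your decomposition is exactly the one the paper uses: writing $\covd_{A_{\lo}}=\covd_{\Blin}+\icomplex(\linear[\lo]+\Slin_{\lo}+Z)$, pairing $\tfrac{1}{8\pi}\Blin$ with the covariant derivative nonlinearity (handled by Hypothesis~\ref{AH:hypothesis-probabilistic}.\ref{AH:item-probabilistic-derivative}), and treating the remainder $(|\philinear[\Blin,\lo]|^2-\tfrac{1}{8\pi})(\linear[\lo]+\Slin_{\lo}+Z)$ via Lemma~\ref{prelim:lem-Duhamel-weighted} and crude $L_{t,x}^\infty$ bounds on the stochastic objects. The argument is correct and coincides with the paper's proof up to cosmetic choices of Duhamel-estimate exponents and the exact power of $L^\kappa$ on $\Slin_{\lo}$, both of which are harmless.
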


As mentioned above, the proof of Lemma \ref{AH:lem-derivative-stochastic} heavily relies\footnote{To be more precise, the proof of Lemma \ref{AH:lem-derivative-stochastic} relies on the estimate \eqref{AH:eq-probabilistic-derivative} from Hypothesis \ref{AH:hypothesis-probabilistic}. Proposition \ref{prop:derivative-nonlinearity-non-resonant} and Proposition \ref{cshe:prop-resonant} are used to prove that the estimate \eqref{AH:eq-probabilistic-derivative} is satisfied with high probability.} on our estimates of the derivative nonlinearity, whose proof occupied the majority of Section \ref{section:cshe}.

\begin{proof} 
Using the decomposition of $A_{\lo}$ from \eqref{AH:eq-A-convenient-3}, we decompose 
\begin{equation*}
\overline{\philinear[\Blin,\lo]} \covd_{A_{\lo}} \philinear[\Blin,\lo] 
- \tfrac{1}{8\pi} A_{\lo}
= \Big( \, \overline{\philinear[\Blin,\lo]} \covd_{\Blin} \philinear[\Blin,\lo] - 
\tfrac{1}{8\pi} \Blin \Big)
+  \Big( \big( \big| \, \philinear[\Blin,\lo] \big|^2 - \tfrac{1}{8\pi} \big) \big( \linear[\lo] + \Slin_{\lo} + Z \big) \Big).
\end{equation*}
Using \ref{AH:item-probabilistic-derivative} from Hypothesis \ref{AH:hypothesis-probabilistic}, we obtain that
\begin{equation*}
\Big\| \leray \Im \Duh \Big[ \, \overline{\philinear[\Blin,\lo]} \covd_{\Blin} \philinear[\Blin,\lo] - \tfrac{1}{8\pi} \Blin \Big] \Big\|_{C_t^0 \Cs_x^{2\eta} \cap C_t^{\eta}\Cs_x^0(I)}
\lesssim \tau^{\frac{3}{4}-\eta} \big\| \Blin(t_0) \big\|_{\Cs_x^\eta}^{\frac{1}{2}+\eta} + L^{\kappa},
\end{equation*}
which is acceptable. Using the Duhamel integral estimate (Lemma \ref{prelim:lem-Duhamel-weighted}), Lemma \ref{prelim:lem-leray}, \eqref{AH:eq-L-nreg}, and the probabilistic hypothesis (Hypothesis \ref{AH:hypothesis-probabilistic}), we also obtain that 
\begin{align*}
&\, \Big\| \leray \Im \Duh \Big[ \Big( \big| \, \philinear[\Blin,\lo] \big|^2 - 
\tfrac{1}{8\pi} \Big) \big( \linear[\lo] + \Slin_{\lo} + Z \big)  \Big] \Big\|_{C_t^0 \Cs_x^{2\eta} \cap C_t^{\eta}\Cs_x^0(I)} \\ 
\lesssim&\,  \Big\| \Big( \big| \, \philinear[\Blin,\lo] \big|^2 - 
\tfrac{1}{8\pi} \Big) \big( \linear[\lo] + \Slin_{\lo} + Z \big) \Big\|_{L_t^{\frac{1}{1-2\eta}}L_x^\infty(I)} \\ 
\lesssim&\, \tau^{1-2\eta} 
\Big( \big\| \, \philinear[\Blin,\lo]\big\|_{L_{t,x}^\infty(I)}^2  + 1 \Big) \Big( \big\| \, \linear[\lo]  \big\|_{L_{t,x}^\infty(I)} + \big\| \Slin_{\lo} \big\|_{L_{t,x}^\infty(I)} + \big\|  Z \big\|_{L_{t,x}^\infty(I)} \Big) 
\lesssim \tau^{1-2\eta} L^{2\kappa} \big( L^{4\kappa} + \big\|  Z \big\|_{L_{t,x}^\infty(I)} \big), 
\end{align*}
which is acceptable.
\end{proof}

\begin{lemma}[The $\overline{\psi_{\lo}} \covd_{A_{\lo}} \psi_{\lo}$ and $\scalebox{0.9}{$\overline{\philinear[\Blin,\lo]}$} \covd_{A_{\lo}} \psi_{\lo}$-interactions]\label{AH:lem-derivative-psi}
Let the probabilistic hypothesis and continuity hypothesis, i.e., Hypothesis \ref{AH:hypothesis-probabilistic} and \ref{AH:hypothesis-continuity}, be satisfied, let $0<\tau^\ast \leq \tau$, and let $I:=[t_0,t_0+\tau^\ast]$.  Then, it holds that 
\begin{equation}\label{AH:eq-derivative-psi}
\begin{aligned}
&\, \Big\| \leray \Im \Duh \Big[ \overline{\psi_{\lo}} \covd_{A_{\lo}} \psi_{\lo} \, \Big] \Big\|_{C_t^0 \Cs_x^{2\eta} \cap C_t^\eta \Cs_x^0 (I)} 
+ \Big\| \leray \Im \Duh \Big[ \, \overline{\philinear[\Blin,\lo]} \covd_{A_{\lo}} \psi_{\lo} \Big] \Big\|_{C_t^0 \Cs_x^{2\eta} \cap C_t^\eta \Cs_x^0 (I)}  \\ 
\lesssim&\,
\tau^{\frac{1}{2}-\frac{1}{r}-2\eta}  \big\| \psi_{\lo} \big\|_{L_t^\infty L_x^r(I)}^2 
+ \tau^{1-2\eta}  \big( \big\| \psi_{\lo} \big\|_{L_t^\infty L_x^r(I)} + L^\kappa \big)
 \| Z \|_{L_t^\infty L_x^\infty(I)}^{\frac{q+1}{q} (1+\nu)}  + \tau^{\frac{1}{2}-\frac{1}{r}-2\eta} L^{2\kappaone}.  
\end{aligned}
\end{equation}
\end{lemma}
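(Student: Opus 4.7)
The two terms on the left-hand side of \eqref{AH:eq-derivative-psi} will both be controlled by combining a direct Cauchy--Schwarz bound on the Duhamel integral (morally the same as Lemma~\ref{prelim:lem-Duhamel-weighted} with $\theta=\frac{1}{2}$) with the localized covariant energy estimate for $\psi_{\lo}$ (Proposition~\ref{AH:prop-psi-lo-localized}). The guiding idea is to split $p(s,y;t,x)=p^{1/2}\cdot p^{1/2}$ in the Duhamel kernel and then apply Cauchy--Schwarz, placing one factor of $p^{1/2}$ on $\covd_{A_{\lo}}\psi_{\lo}$ (so that Proposition~\ref{AH:prop-psi-lo-localized} applies exactly) and the other on the remaining factor ($\psi_{\lo}$ or $\philinear[\Blin,\lo]$). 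Since Lemma~\ref{prelim:lem-Duhamel-weighted} simultaneously controls the $C_t^0 \Cs_x^{2\eta}$ and $C_t^\eta \Cs_x^0$ norms, a single Duhamel estimate gives both quantities on the left of \eqref{AH:eq-derivative-psi}.

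For the $\Duh[\overline{\psi_{\lo}}\covd_{A_{\lo}}\psi_{\lo}]$ term, Cauchy--Schwarz gives the pointwise bound
\begin{equs}
\big|\Duh[\overline{\psi_{\lo}}\covd_{A_{\lo}}\psi_{\lo}](t,x)\big|\leq \big\|p^{1/2}(\cdot;t,x)\psi_{\lo}\big\|_{L_{s,y}^{2}([t_0,t]\times\T^2)}\cdot \big\|p^{1/2}(\cdot;t,x)\covd_{A_{\lo}}\psi_{\lo}\big\|_{L_{s,y}^{2}([t_0,t]\times\T^2)}.
\end{equs}
For the first factor, H\"older in $y$ combined with $\|p^{1/2}(s,\cdot;t,x)\|_{L_y^{2r/(r-2)}}\lesssim (t-s)^{-1/r}$ and $L_s^2$-integration produces $\rho^{1/2-1/r}\|\psi_{\lo}\|_{L_t^\infty L_x^r(I)}$, where $\rho = t-t_0$. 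For the second factor, Proposition~\ref{AH:prop-psi-lo-localized} applied with $(t_1,x_1)=(t,x)$ yields $\rho^{-1/r}\|\psi_{\lo}\|_{L_t^\infty L_x^r(I)}+\rho^{1/2}\|Z\|_{L_{t,x}^\infty(I)}^{\frac{q+1}{q}(1+\nu)}+\rho^{1/2}L^{\kappaone}$. Multiplying the two factors and using $\rho\leq\tau^{*}$ yields a pointwise bound of the form
\begin{equs}
\tau^{*\,\frac{1}{2}-\frac{2}{r}}\|\psi_{\lo}\|_{L_t^\infty L_x^r(I)}^{2}+\tau^{*\,1-\frac{1}{r}}\|\psi_{\lo}\|_{L_t^\infty L_x^r(I)}\Big(\|Z\|_{L_{t,x}^\infty(I)}^{\frac{q+1}{q}(1+\nu)}+L^{\kappaone}\Big),
\end{equs}
which, by the ordering $\tfrac{1}{r}\ll\eta$ from \eqref{prelim:eq-parameter-ordering} and the Young reshaping $\|\psi_{\lo}\|L^{\kappaone}\leq \|\psi_{\lo}\|^{2}+L^{2\kappaone}$, is controlled by the right-hand side of \eqref{AH:eq-derivative-psi}. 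The analogous $C_t^0\Cs_x^{2\eta}\cap C_t^\eta\Cs_x^0$ control follows by implementing the same Cauchy--Schwarz split inside Lemma~\ref{prelim:lem-Duhamel-weighted} (with $\theta=\tfrac{1}{2}$ and inner exponents $2$ and $\tfrac{2r}{r+2}$), and then invoking Lemma~\ref{prelim:lem-leray} to absorb the harmless $\leray\Im$.

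The $\Duh[\overline{\philinear[\Blin,\lo]}\covd_{A_{\lo}}\psi_{\lo}]$ term is handled identically, with $\|\philinear[\Blin,\lo]\|_{L_{t,x}^\infty(I)}\leq L^\kappa$ from Hypothesis~\ref{AH:hypothesis-probabilistic}\ref{AH:item-probabilistic-covariant-linear} replacing the H\"older bound on the first Cauchy--Schwarz factor; this is exactly the source of the $L^\kappa$-term in the middle summand of \eqref{AH:eq-derivative-psi}, while the residual contribution $L^\kappa\cdot L^{\kappaone}\leq L^{2\kappaone}$ is absorbed using the parameter ordering in \eqref{prelim:eq-parameter-new-kappa-kappa-j}. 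The main technical subtlety is the apparent $\rho^{-1/r}$ blowup of Proposition~\ref{AH:prop-psi-lo-localized} as $t\downarrow t_0$, which alone would render a na\"ive application of Lemma~\ref{prelim:lem-Duhamel-weighted} with $L_{t,x}^\infty$ outer norm infinite. This is precisely offset by the compensating $\rho^{1/2-1/r}$ decay produced by the H\"older analysis of the first Cauchy--Schwarz factor, so that the net $(t-t_0)$-scaling is $\rho^{1/2-2/r}$; uniform control of this quantity on $I\times\T^2$ is guaranteed by the enormous choice $r=\kappa^{-10}$ from \eqref{prelim:eq-parameter-new-r}, which forces $\tfrac{2}{r}\ll\eta$ and hence places all resulting exponents well within the budget on the right-hand side of \eqref{AH:eq-derivative-psi}.
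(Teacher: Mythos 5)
Your overall strategy — Cauchy–Schwarz on the Duhamel kernel, placing one factor with $p^{1/2}\covd_{A_{\lo}}\psi_{\lo}$ so that Proposition~\ref{AH:prop-psi-lo-localized} applies, and the other with $\psi_{\lo}$ or $\philinear[\Blin,\lo]$ — is the right idea, and matches the spirit of the paper's argument. However, there is a gap in the way you propose to convert this into control of the $C_t^0\Cs_x^{2\eta}\cap C_t^\eta\Cs_x^0$ norms.

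Your warm-up pointwise bound effectively distributes the full kernel as $p = p^{1/2}\cdot p^{1/2}$; that is, it corresponds to $\theta=1$ in the language of Lemma~\ref{prelim:lem-Duhamel-weighted}. This is exactly what generates the compensating $\rho^{1/2-1/r}$ from $\|p^{1/2}\psi_{\lo}\|_{L^2_{s,y}}$. But Lemma~\ref{prelim:lem-Duhamel-weighted} explicitly requires $\theta\in[0,1)$ (so that the kernel $p^{1-\theta}$ still retains Gaussian localization after spatial differentiation), and you then say you will apply it with $\theta=\tfrac12$ and inner exponents $(q,r_{\mathrm{Duh}})=\big(2,\tfrac{2r}{r+2}\big)$. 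In that regime only $p^{1/2}$ is available to be distributed, and since Proposition~\ref{AH:prop-psi-lo-localized} controls precisely $\|p^{1/2}\covd_{A_{\lo}}\psi_{\lo}\|_{L^2_{s,y}}$, the entire $p^{1/2}$ must be attached to $\covd_{A_{\lo}}\psi_{\lo}$. The H\"older split $\tfrac{1}{2}=\tfrac{1}{\infty}+\tfrac12$ in $s$ then forces $\psi_{\lo}$ into the unweighted $L_s^\infty L_y^r$, which carries no $\rho$-power whatsoever. Consequently the factor $\rho^{-1/r}$ from Proposition~\ref{AH:prop-psi-lo-localized} is uncompensated, and $\sup_{t_1\in(t_0,t_0+\tau^*]}\rho^{-1/r}\|\psi_{\lo}\|_{L_t^\infty L_x^r}^2=\infty$. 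The compensating decay you describe is a feature of the $\theta=1$ computation only and does not survive the restriction to $\theta<1$ with $q=2$.

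The paper circumvents this by choosing the Duhamel time exponent to be $q=\tfrac{1}{1-2\eta}<2$ (rather than $q=2$): one then H\"olders in $s$ via $1-2\eta=\tfrac{1-4\eta}{2}+\tfrac12$, placing $\psi_{\lo}$ in $L_s^{2/(1-4\eta)}L_y^r$ and $p^{1/2}\covd_{A_{\lo}}\psi_{\lo}$ in $L_s^2 L_y^2$. The conversion $\|\psi_{\lo}\|_{L_s^{2/(1-4\eta)}L_y^r}\lesssim\rho^{\frac12-2\eta}\|\psi_{\lo}\|_{L_s^\infty L_y^r}$ then supplies the needed $\rho$-power — from the time H\"older slack, not from the heat kernel — and $\rho^{\frac12-2\eta}\cdot\rho^{-\frac1r}$ remains positive since $\tfrac1r\ll\eta$. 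To repair your argument you should replace $q=2$ by $q=\tfrac{1}{1-2\eta}$ and obtain the compensating decay from the $L_s^\infty\to L_s^{2/(1-4\eta)}$ conversion rather than from a weight on $\psi_{\lo}$.
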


As mentioned above, the proof of Lemma \ref{AH:lem-derivative-psi} heavily relies on the localized monotonicity formula from Proposition \ref{AH:prop-psi-lo-localized}.

\begin{proof} 
Using Lemma \ref{prelim:lem-Duhamel-weighted}, Lemma \ref{prelim:lem-leray}, and $\frac{1}{r}\ll \eta$, we obtain that
\begin{equation}\label{AH:eq-derivative-psi-p1}
\begin{aligned}
&\, \Big\| \leray \Im \Duh \Big[ \overline{\psi_{\lo}} \covd_{A_{\lo}} \psi_{\lo} \, \Big] \Big\|_{C_t^0 \Cs_x^{2\eta}\cap  C_t^\eta \Cs_x^0  (I)} \\ 
\lesssim&\, \sup_{t_1\in (t_0,t_0+\tau^\ast]} \sup_{x_1\in \T^2} \big\| p^{\frac{1}{2}}(t,x;t_1,x_1) \big( \, \overline{\psi_{\lo}} \covd_{A_{\lo}} \psi_{\lo}\big)(t,x) \big\|_{L_t^{\frac{1}{1-2\eta}} L_x^{\frac{2r}{r+2}}([t_0,t_1])}.
\end{aligned}
\end{equation}
We now fix arbitrary $t_1\in (t_0,t_0+\tau^\ast]$ and $x_1\in \T^2$. Furthermore, we let $\rho:=t_1-t_0$, which clearly satisfies $\rho\leq \tau^\ast\leq \tau$. Using H\"{o}lder's inequality, it then holds that 
\begin{equation}\label{AH:eq-derivative-psi-p11}
\begin{aligned}
&\,  \big\| p^{\frac{1}{2}}(t,x;t_1,x_1) \big( \, \overline{\psi_{\lo}} \covd_{A_{\lo}} \psi_{\lo}\big)(t,x) \big\|_{L_t^{\frac{1}{1-2\eta}} L_x^{\frac{2r}{r+2}}([t_0,t_1])} \\ 
\lesssim&\, \big\| \psi_{\lo} \big\|_{L_t^{\frac{2}{1-4\eta}}L_x^r([t_0,t_1])} 
\big\| p^{\frac{1}{2}}(t,x;t_1,x_1) \big(  \covd_{A_{\lo}} \psi_{\lo}\big)(t,x) \big\|_{L_t^2 L_x^{2}([t_0,t_1])} \\
\lesssim&\, \rho^{\frac{1}{2}-2\eta} \big\| \psi_{\lo} \big\|_{L_t^{\infty} L_x^r([t_0,t_1])}
\big\| p^{\frac{1}{2}}(t,x;t_1,x_1) \big(  \covd_{A_{\lo}} \psi_{\lo}\big)(t,x) \big\|_{ L_t^2 L_x^{2}([t_0,t_1])}.    
\end{aligned}
\end{equation}
Furthermore, due to Proposition \ref{AH:prop-psi-lo-localized}, it holds that 
\begin{equation}\label{AH:eq-derivative-psi-p2}
\begin{aligned}
&\, \big\| p^{\frac{1}{2}}(t,x;t_1,x_1) \big(  \covd_{A_{\lo}} \psi_{\lo}\big)(t,x) \big\|_{L_t^2 L_x^{2}([t_0,t_1])}
\lesssim 
\rho^{-\frac{1}{r}}  \big\| \psi_{\lo} \big\|_{L_t^\infty L_x^r([t_0,t_1])} 
+ \rho^{\frac{1}{2}} \| Z \|_{L_t^\infty L_x^\infty([t_0,t_1])}^{\frac{q+1}{q} (1+\nu)} + \rho^{\frac{1}{2}} L^{\kappaone}. 
\end{aligned}
\end{equation}
By combining \eqref{AH:eq-derivative-psi-p1}, \eqref{AH:eq-derivative-psi-p11} and using Young's inequality, we then obtain that 
\begin{align*}
&\, \Big\| \leray \Im \Duh \Big[ \overline{\psi_{\lo}} \covd_{A_{\lo}} \psi_{\lo} \, \Big] \Big\|_{C_t^0 \Cs_x^{2\eta} \cap C_t^\eta \Cs_x^0 (I)}\\
\lesssim&\, \sup_{0<\rho\leq \tau} \bigg( 
\rho^{\frac{1}{2}-2\eta}  \big\| \psi_{\lo} \big\|_{L_t^\infty L_x^r(I)}  
\times \big( \rho^{-\frac{1}{r}}  \big\| \psi_{\lo} \big\|_{L_t^\infty L_x^r(I)} 
+ \rho^{\frac{1}{2}} \| Z \|_{L_t^\infty L_x^\infty(I)}^{\frac{q+1}{q} (1+\nu)} + \rho^{\frac{1}{2}} L^{\kappaone}  \big) \bigg) \\ 
\lesssim&\, \tau^{\frac{1}{2}-\frac{1}{r}-2\eta}  \big\| \psi_{\lo} \big\|_{L_t^\infty L_x^r(I)}^2 
+ \tau^{1-2\eta}  \big\| \psi_{\lo} \big\|_{L_t^\infty L_x^r(I)}
 \| Z \|_{L_t^\infty L_x^\infty(I)}^{\frac{q+1}{q} (1+\nu)}  + \tau^{1-2\eta} L^{2\kappaone}.
\end{align*}
As a result, we obtain the desired estimate of the first summand in \eqref{AH:eq-derivative-psi}. 
To bound the second summand in \eqref{AH:eq-derivative-psi}, we first recall from  Hypothesis \ref{AH:hypothesis-probabilistic}.\ref{AH:item-probabilistic-covariant-linear} that $\| \philinear[\Blin,\lo]\|_{L_t^\infty L_x^r} \lesssim \| \philinear[\Blin,\lo]\|_{L_t^\infty L_x^\infty} \lesssim L^\kappa$. Using a similar argument as above, it then follows that 
\begin{align*}
   &\,  \Big\| \leray \Im \Duh \Big[ \, \overline{\philinear[\Blin,\lo]} \covd_{A_{\lo}} \psi_{\lo} \Big] \Big\|_{C_t^0 \Cs_x^{2\eta} \cap C_t^\eta \Cs_x^0 (I)} \\ 
   \lesssim&\, \tau^{\frac{1}{2}-2\eta} \big\| \philinear[\Blin,\lo]\big\|_{L_t^\infty L_x^r(I)} \times 
\Big( \tau^{-\frac{1}{r}}  \big\| \psi_{\lo} \big\|_{L_t^\infty L_x^r(I)} 
+ \tau^{\frac{1}{2}} \| Z \|_{L_t^\infty L_x^\infty(I)}^{\frac{q+1}{q} (1+\nu)} + \tau^{\frac{1}{2}} L^{\kappaone}   \Big) \\
\lesssim&\, \tau^{\frac{1}{2}-\frac{1}{r}-2\eta} \| \psi_{\lo} \|_{L_t^\infty L_x^r(I)}^2 
+ \tau^{1-2\eta} L^{\kappa} \| Z \|_{L_t^\infty L_x^\infty(I)}^{\frac{q+1}{q} (1+\nu)} + \tau^{\frac{1}{2}-\frac{1}{r}-2\eta} L^{2\kappaone},
\end{align*}
which implies the desired estimate for the second summand in \eqref{AH:eq-derivative-psi}. 
\end{proof}

In order to simplify the notation in the next lemma, we write 
\begin{equs}\label{AH:eq-phi-lo}
\phi_{\lo} := \philinear[\Blin,\lo] + \psi_{\lo}.
\end{equs}
\begin{lemma}[Interactions involving high-frequency terms]\label{AH:lem-derivative-high}
Let the probabilistic hypothesis and continuity hypothesis, i.e., Hypothesis \ref{AH:hypothesis-probabilistic} and \ref{AH:hypothesis-continuity}, be satisfied, let $0<\tau^\ast \leq \tau$, and let $I:=[t_0,t_0+\tau^\ast]$.  Then, it holds that 
\begin{equs}\label{AH:eq-derivative-high}
\Big\| \leray \Im \Duh \big[ \, \overline{\phi} \covd_A \phi - \overline{\phi}_{\lo} \covd_{A_{\lo}} \phi_{\lo} \big] \Big\|_{C_t^0 \Cs_x^{2\eta} \cap C_t^\eta \Cs_x^0 (I)} 
\lesssim L^{10\eta_1 - \eta}.
\end{equs}
\end{lemma}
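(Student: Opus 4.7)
The plan is to expand the difference $\overline{\phi} \covd_A \phi - \overline{\phi}_{\lo} \covd_{A_{\lo}} \phi_{\lo}$ so that every resulting term contains at least one high-frequency factor from the collection $(\linear[\hi], \Slin_{\hi}, \philinear[\hi], \varphi_{\hi}, \psi_{\hi})$, and then to estimate each such term perturbatively using the high-frequency probabilistic hypothesis (Hypothesis \ref{high:hypothesis-probabilistic}), the continuity hypothesis (Hypothesis \ref{AH:hypothesis-continuity}), and the Duhamel integral estimate (Lemma \ref{prelim:lem-Duhamel-weighted}). Since our choice of $L$ in \eqref{AH:eq-L} makes each high-frequency factor small (of size $L^{-\eta_3/2+}$ after gaining regularity), and all other factors grow at most polynomially in $L$, each contribution will be of size $L^{10\eta_1 - \eta}$ as desired.

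First, I would write, using $A - A_{\lo} = \linear[\hi] + \Slin_{\hi}$, $\phi - \phi_{\lo} = \philinear[\hi] + \psi_{\hi} + \varphi_{\hi}$, and $\covd_A = \covd_{A_{\lo}} + \icomplex(A - A_{\lo})$, the schematic identity
\begin{equs}
\overline{\phi} \covd_A \phi - \overline{\phi_{\lo}} \covd_{A_{\lo}} \phi_{\lo}
= \overline{\phi - \phi_{\lo}}\, \covd_{A_{\lo}} \phi + \overline{\phi_{\lo}}\, \covd_{A_{\lo}} (\phi - \phi_{\lo}) + \icomplex \overline{\phi}\, (A - A_{\lo})\, \phi .
\end{equs}
Each summand contains at least one high-frequency factor. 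However, as emphasized in Remark \ref{remark:limiting-A-nonlinearity}, the left-hand side is the limiting renormalized nonlinearity, so this identity must be interpreted as an equality of limits of frequency-truncated versions. I would therefore match the decomposition above, term by term, against the para-controlled ansatz from the local theory (Section \ref{section:gauge-covariance}), so that all the Wick-ordered objects that appear (such as $\biglcol\, \linear[\hi, j] \overline{\philinear[\hi, k]} \bigrcol$, cubic objects involving $\philinear[\hi]$, and the derivative resonances) are precisely the ones controlled by Hypothesis \ref{high:hypothesis-probabilistic}.

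Next, I would handle the three types of contributions as follows. For purely-stochastic high-frequency interactions (involving only $\linear[\hi]$, $\Slin_{\hi}$, $\philinear[\hi]$), I invoke the high-frequency probabilistic hypothesis, which has been designed precisely to give $L^{-}$ smallness on the relevant $C_t^0 \Cs_x^{2\eta} \cap C_t^\eta \Cs_x^0$-norms after applying $\leray \Im \Duh$. For mixed contributions involving one of $\Blin$, $Z$, $\psi_{\lo}$, or $\philinear[\Blin,\lo]$, I pair the high-frequency factor (which provides the $L$-smallness via \eqref{AH:eq-L-nreg} together with a regularity gain) with a low-regularity factor controlled by the continuity hypothesis (the bounds \eqref{AH:eq-continuity-Z}-\eqref{AH:eq-continuity-psi-hi}), by Hypothesis \ref{AH:hypothesis-probabilistic}, or by Lemma \ref{prelim:lem-heat-flow-bound-decay} applied to $\Blin = e^{(t-t_0)\Delta}\pregA_0$; here I would use standard product estimates (Lemma \ref{prelim:lem-para-besov}), the null-form estimate (Lemma \ref{prelim:lem-null}) to handle Coulomb-gauge interactions like $A_{\lo}^j \partial_j (\phi - \phi_{\lo})$, and the Duhamel estimate (Lemma \ref{prelim:lem-Duhamel-weighted}) to transfer $L_t^p L_x^q$-type bounds on the nonlinearity to the required H\"older-in-time, Besov-in-space output norm. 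For each such pairing, gaining at least $L^{-\eta_3/2+}$ from the high-frequency factor while losing at most $L^{\eta_1}$ from the low-regularity factor (thanks to \eqref{AH:eq-continuity-Z}-\eqref{AH:eq-continuity-psi-hi}) gives a contribution $\lesssim L^{10\eta_1 - \eta}$ by the ordering \eqref{prelim:eq-parameter-ordering}.

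The main obstacle, and the reason this is relegated to Appendix \ref{section:high}, will be the bookkeeping: verifying that the schematic expansion above really does reproduce exactly the cancellations present in the local theory, so that every renormalization counterterm that appears on one side of the identity is matched against a counterterm in Hypothesis \ref{high:hypothesis-probabilistic}. In particular, the resonant parts of $\overline{\philinear[\hi]} \covd \philinear[\Blin,\lo]$ and of $\overline{\philinear[\hi]} \covd \philinear[\hi]$ must be identified with the appropriate cubic and derivative stochastic objects in the enhanced data set. Once this matching is established, the estimates themselves are routine applications of para-product estimates, Schauder estimates, and the high-frequency probabilistic bounds.
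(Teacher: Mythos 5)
Your high-level strategy matches the paper's: expand the difference so that every piece carries a high-frequency factor, then exploit the smallness of those factors via Hypothesis \ref{high:hypothesis-probabilistic}, product estimates, the null-form estimate, and Lemma \ref{prelim:lem-Duhamel-weighted}. But the proposal leaves out the main technical input, and this is not a matter of "routine bookkeeping."

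Concretely, the key step the paper carries out before estimating the difference is Lemma \ref{high:lem-para-controlled}, which establishes the para-controlled structure of $\psi_{\hi}$, $\psi_{\lo}$, and $\philinear[\Blin,\lo]$: each is expressed, up to a remainder of regularity $1+\eta_3$ or better in $L_t^{p/10}\Bc^{1+\eta_3,p}_x$, as an explicit combination of quadratic stochastic objects and para-products of the form $(\Slin+\Blin+Z)^j\parall\partial_j\mDuh[\philinear]$. This lemma is not asserted as part of the local theory but must be proved here by propagating the evolution equations for $\psi_{\hi}$ and $\psi_{\lo}$ through a long chain of product/commutator estimates (equations \eqref{high:eq-para-controlled-p2}--\eqref{high:eq-para-controlled-poly}) using the probabilistic inputs from Hypothesis \ref{high:hypothesis-probabilistic}, including a commutator estimate to trade the time regularity of $B+Z$ for spatial regularity of $\partial_j\mDuh[\philinear[\hi]]$. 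You gesture at this with "match ... against the para-controlled ansatz," but you treat it as given, when in fact it is the bulk of the work; your remaining task, estimating a high-frequency factor against a smooth remainder, only becomes routine \emph{after} the remainder is known to live in $\Cs_x^{1+\eta_3}$-type spaces. A second, smaller issue: your worry about a resonance in $\overline{\philinear[\hi]}\covd\philinear[\Blin,\lo]$ is misplaced -- $\philinear[\hi]$ is driven by $P_{>L}\zeta$ and $\philinear[\Blin,\lo]$ by $P_{\leq L}\zeta$, so the two are independent and this product is non-resonant; the true renormalization subtlety, which the paper resolves by grouping terms as in \eqref{high:eq-decomp-3.1} to use the combined objects \eqref{eq:philinear-laplacian-Duh-philinear-phiquadraticnl}--\eqref{eq:philinear-mixed-quadratic-linear-phiquadraticnl-combined}, concerns $\overline{\philinear}\covd\philinear$, $\overline{\philinear}\,\partial^2\mDuh[\philinear]$, and $\overline{\philinear}\covd\mixedquadratic$, whose counterterms cancel against $|\philinear|^2$-terms. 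Finally, the paper's organization differs slightly from yours: rather than expanding $\overline{\phi}\covd_A\phi - \overline{\phi}_{\lo}\covd_{A_{\lo}}\phi_{\lo}$ algebraically and then substituting para-controlled ansätze, the paper expands $\overline{\phi}\covd_A\phi$ directly into \eqref{high:eq-decomp-1}--\eqref{high:eq-decomp-6} and takes term-by-term differences with the low-frequency analog, which keeps the renormalized combinations grouped from the start and avoids producing ill-defined intermediate terms.
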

Due to the definition of $A_{\lo}$ and $\phi_{\lo}$, all interactions in \eqref{AH:eq-derivative-high} contain at least one factor of $\linear[\hi]$, $\Slin_{\hi}$, $\philinear[\hi]$, $\varphi_{\hi}$, or $\psi_{\hi}$, and can therefore be treated perturbatively. As a result, Lemma \ref{AH:lem-derivative-high} can be proven using perturbative methods from the earlier works \cite{BC23,S21}.  In order to not interrupt the flow of the main argument, we postpone the proof of Lemma \ref{AH:lem-derivative-high} until Appendix \ref{section:high}.\\

Equipped with Lemmas \ref{AH:lem-derivative-stochastic}, \ref{AH:lem-derivative-psi}, and \ref{AH:lem-derivative-high}, we can now prove the main estimate of this subsection.

\begin{proof}[Proof of Proposition \ref{AH:prop-Z}] 
We first recall from \eqref{AH:eq-decomposition-A}, \eqref{AH:eq-Z}, and \eqref{AH:eq-A-convenient-3} that the nonlinear remainder $Z$ is given by
\begin{equs}
Z = - \leray \Im \Duh \big[ \, \overline{\phi} \covd_A \phi \big] + \gaugerenorm\Duh\big[A_{\lo} + \linear[\hi] + \Slin_{\hi} \big] + \Duh \big[ \, \linear[]  \big].
\end{equs}
Using Lemma \ref{prelim:lem-Duhamel-weighted} and Hypothesis \ref{AH:hypothesis-probabilistic}, it follows that
\begin{equs}
\big\| \Duh \big[ \, \linear[]  \big] \big\|_{C_t^0 \Cs_x^{2\eta} \cap C_t^\eta \Cs_x^0 }
\lesssim \big\| \Duh \big[ \, \langle \nabla \rangle^{-2\kappa} \,  \linear[]  \big] \big\|_{C_t^0 \Cs_x^{2\eta+2\kappa} \cap C_t^\eta \Cs_x^{2\kappa}}
\lesssim \big\| \, \langle \nabla \rangle^{-2\kappa} \, \linear[]  \big\|_{L_t^{\frac{1}{1-2\eta}}L_x^\infty}
\lesssim \tau^{1-2\eta} L^{\kappa},
\end{equs}
which is acceptable. A similar argument also works for the $\Duh\big[\, \linear[\hi]\big]$-term and, due to \eqref{AH:eq-L-nreg}, also for the $\Duh\big[\Slin_{\hi} \big]$-term. 
Using our Ansatz for $\phi$ from  \eqref{AH:eq-decomposition-phi-first}, \eqref{AH:eq-decomposition-psi}, and \eqref{AH:eq-phi-lo} and using Lemma \ref{AH:lem-symmetry}, we then decompose
\begin{align*}
 \leray \Im \Big( \, \overline{\phi} \,  \covd_A \phi  \Big) - \gaugerenorm A_{\lo} &=\,   \leray \Im \Big( \, \overline{\phi_{\lo}} \,  \covd_{A_{\lo}} \phi_{\lo}  \Big) - \gaugerenorm A_{\lo}
+ \leray \Im \Big( \, \overline{\phi} \,  \covd_A \phi -  \overline{\phi_{\lo}} \,  \covd_{A_{\lo}} \phi_{\lo}  \Big) \\   &=\, \leray \Im \Big( \, \, \overline{\philinear[\Blin,\lo]} \covd_{A_{\lo}} \, \philinear[\Blin,\lo] \Big) - \gaugerenorm A_{\lo}  
+ 2 \leray \Im \Big( \, \, \overline{\philinear[\Blin,\lo]} \covd_{A_{\lo}} \, \psi_{\lo} \Big) 
\\
&\quad+ \leray \Im \Big( \, \overline{\psi_{\lo}} \,  \covd_{A_{\lo}} \psi_{\lo}  \Big)  
+ \leray \Im \Big( \, \overline{\phi} \,  \covd_A \phi -  \overline{\phi_{\lo}} \,  \covd_{A_{\lo}} \phi_{\lo}  \Big). 
\end{align*}
By combining the estimates from Lemma \ref{AH:lem-derivative-stochastic}, Lemma \ref{AH:lem-derivative-psi}, and Lemma \ref{AH:lem-derivative-high}, we then obtain the desired estimate \eqref{AH:eq-Z-estimate} of $Z$. 
\end{proof}

\section{Decay estimates for Abelian-Higgs equations}\label{section:decay}

In this section, we obtain decay estimates for the stochastic Abelian-Higgs equations. In Subsection \ref{section:decay-short}, we obtain decay estimates on an admissible time-scale $\tau$, which is carefully chosen depending on the size of the initial data. In the proof of the decay estimate on admissible time-scales, we heavily rely on the covariant estimates from Section \ref{section:Abelian-Higgs}. In Subsection \ref{section:decay-unit}, we obtain a decay estimate on unit time-scales by iterating our earlier decay estimate for admissible time-scales. Finally, in Subsection \ref{section:decay-proof}, we prove our main theorem (Theorem \ref{intro:thm-abelian-higgs}).

\subsection{Decay estimate on admissible time-scales}\label{section:decay-short}

In this subsection, we use the notation from Section \ref{section:Abelian-Higgs}, i.e., we use the terms from \eqref{AH:eq-A-linear}, \eqref{AH:eq-Blin}, \eqref{AH:eq-Slin}, \eqref{AH:eq-Z}, 
\eqref{AH:eq-philinear-lo}, \eqref{AH:eq-philinear-hi}, and \eqref{AH:eq-decomposition-psi}.
In order to obtain the gauge invariant, uniform-in-time bounds on the connection one-form $A$ from Theorem \ref{intro:thm-abelian-higgs}.\ref{intro:item-AH-2}, we need to use the decay estimate for the heat flow acting on mean-zero functions from \eqref{prelim:eq-heat-flow-decay}. To this end, we need separate the mean and mean-zero components of $A$ and, as in \eqref{prelim:eq-mean-mean-zero}, we therefore write 
\begin{equation}\label{decay:eq-A-mean-mean-zero}
\sfint A(t) := \frac{1}{(2\pi)^2} \int \dx \, A(t,x) 
\qquad \text{and} \qquad 
\bigdot{A}(t) := A(t) - \sfint A(t). 
\end{equation}
 In addition, due to Remark \ref{prelim:rem-decay-fail}, we need to abandon the $\Cs_x^\alpha$-norms which were used in Section \ref{section:Abelian-Higgs}, and instead work with $\Balpha$-norms. Since there is leeway within the regularity parameters in Proposition \ref{AH:prop-Z}, the switch from $\Cs_x^\alpha$-norms to $\Balpha$-norms will not cause any problems. \\ 

In the following definition, we introduce the parameters $\alpha,\beta$, and $\gamma$ and admissible time-scales $\tau$, which will be important for the rest of this section.

\begin{definition}[The parameters $\alpha$, $\beta$, and $\gamma$ and admissible time-scales $\tau$]\label{decay:def-tau}
We first define parameters $\alpha,\beta$, and $\gamma$ as 
\begin{equation}
\alpha := \frac{3}{4}, \qquad 
\beta := \frac{5}{2}, \qquad \text{and} \qquad 
\gamma = \frac{2}{7}. 
\end{equation}
Then, a time-scale $\tau>0$ is called admissible if the lower and upper bounds
\begin{equation}\label{decay:eq-tau}
\tfrac{1}{4} c_3 \max \Big( \big\| \pregA_0 \big\|_{\Beta}^{\alpha}, \big\| \pregphi_0 \big\|_{L_x^r}^\beta, L^{\kappathree} 
\Big)^{-1}\leq  \tau \leq 4 c_3 \max \Big( \big\| \pregA_0 \big\|_{\Beta}^{\alpha}, \big\| \pregphi_0 \big\|_{L_x^r}^\beta, L^{\kappathree} 
\Big)^{-1}
\end{equation}
are satisfied. Furthermore, a time-scale $\tau>0$ is called upper-admissible if (only) the upper bound is satisfied. 
\end{definition}

While the chosen values of $\alpha$, $\beta$, and $\gamma$ from Definition \ref{decay:def-tau} are rather simple, the choice itself is quite difficult. In the proof of Proposition \ref{decay:prop-short} and in Remark \ref{decay:rem-parameters}, we will see that our argument relies on several conditions involving $\alpha$, $\beta$, and $\gamma$. For expository purposes, we thus rarely insert the explicit values from Definition \ref{decay:def-tau}, and instead keep writing $\alpha$, $\beta$, and $\gamma$. \\

Equipped with \eqref{decay:eq-A-mean-mean-zero} and Definition \ref{decay:def-tau}, we can now state the main estimate of this subsection.

\begin{proposition}[Decay estimate on admissible time-scales]\label{decay:prop-short}
Let $q\geq 3$, let $t_0\in [0,\infty)$, let the initial data be as in \eqref{AH:eq-initial}, 
let $L\in \dyadic$, let $\tau$ be admissible (as defined in Definition \ref{decay:def-tau}), and let 
\begin{equation}\label{decay:eq-short-new}
\pregA := A(t_0+\tau) - \linear[](t_0+\tau) 
\qquad \text{and} \qquad 
\pregphi := \phi(t_0+\tau) - \philinear(t_0+\tau). 
\end{equation}
In addition, let
\eqref{AH:eq-L}, \eqref{AH:eq-L-nreg}, and Hypothesis \ref{AH:hypothesis-probabilistic} be satisfied. Then, we have the estimates 
\begin{align}
\| \pregphi \|_{L_x^r}&\leq 
e^{-c_0 \tau} \max \Big( \| \pregA_0 \|_{\Beta}^\gamma,  \| \pregphi_0 \|_{L_x^r}, C_4 L^{\kappafour} \Big), 
\label{decay:eq-short-phi} \\
\| \pregA \|_{\Beta}^\gamma &\leq 
e^{c_0 \tau} \max \Big(  \| \pregA_0 \|_{\Beta}^\gamma, \| \pregphi_0 \|_{L_x^r}, C_4 L^{\kappafour} \Big). \label{decay:eq-short-A} 
\end{align}
Furthermore, we have the refined estimates 
\begin{align}
\| \pregA \|_{\Beta}^\gamma &\leq e^{-c_0 \tau} \max \Big( e^{2 c_0 \tau} |\sfint \pregA_0 |^\gamma, \| \pregA_0 \|_{\Beta}^\gamma,  \| \pregphi_0 \|_{L_x^r}, C_4 L^{\kappafour} \Big), \label{decay:eq-short-A-refined} \\
| \sfint \pregA |^{\gamma} &\leq 
e^{c_0 \tau} \max\Big( |\sfint \pregA_0 |^\gamma, \| \pregA_0 \|_{\Beta}^{(1-\nu)\gamma},  \| \pregphi_0 \|_{L_x^r}^{1-\nu}, \frac{3}{4} C_4 L^{\kappafour}  \Big). \label{decay:eq-short-A-mean} 
\end{align}
Finally, we have the $L_t^\infty$-estimate 
\begin{equation}\label{decay:eq-short-Linfty}
\max \Big(  \| A \|_{L_t^\infty \Cs_x^{-\kappa}([t_0,t_0+\tau])}^\gamma, \| \phi \|_{L_t^\infty \GCs^{-\kappa}([t_0,t_0+\tau])} \Big)
\leq C_0  \max \Big(  \| \pregA_0 \|_{\Beta}^\gamma, \| \pregphi_0 \|_{L_x^r}, C_4 L^{\kappafour} \Big). 
\end{equation}
\end{proposition}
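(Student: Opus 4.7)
The plan is to exploit the Section \ref{section:AH-Ansatz} ansatz together with the covariant estimates of Section \ref{section:Abelian-Higgs} to extract exponential decay of $\psi_{\lo}$ from the mass $+1$ in its evolution equation, and exponential decay of the mean-zero part of $\Blin$ from the linear heat semigroup. The argument is a continuity (bootstrap) loop on $[t_0, t_0+\tau]$: letting $\tau^\ast \in (0, \tau]$ be the maximal time on which Hypothesis \ref{AH:hypothesis-continuity} holds, I would derive bounds which are strictly stronger than \eqref{AH:eq-continuity-Z}--\eqref{AH:eq-continuity-psi-hi}, forcing $\tau^\ast = \tau$ by continuity of the solution map. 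Concretely, I would apply Proposition \ref{AH:prop-psi-lo-estimate} with $p = r$: the $+1$ mass in \eqref{AH:eq-psi-lo-e1} produces a $\|\psi_{\lo}\|_{L_x^r}^r$-term on the left-hand side of the monotonicity inequality (visible inside the proof of Proposition \ref{AH:prop-psi-lo-estimate}), so that combined with the trivial Hölder bound $\|\psi_{\lo}\|_{L_x^{r+q-1}}^{r+q-1} \gtrsim \|\psi_{\lo}\|_{L_x^r}^{r+q-1}$ on $\T^2$, the quantity $y(t) := \|\psi_{\lo}(t)\|_{L_x^r}^r$ satisfies
\begin{equation*}
\tfrac{1}{r}y'(t) + y(t) + c_{r,q}\, y(t)^{(r+q-1)/r} \leq C_{r,q}\Big(\|Z\|_{L_{t,x}^\infty(I)}^{\frac{2(1+\nu)(r+q-1)}{q}} + L^{\kappaone(r+q-1)}\Big).
\end{equation*}
Gronwall on the linear part yields $\|\psi_{\lo}(t_0+\tau)\|_{L_x^r} \leq e^{-\tau}\|\psi_{\lo}(t_0)\|_{L_x^r} + C\big(\|Z\|^{\frac{2(1+\nu)(r+q-1)}{qr}} + L^{\kappaone}\big)$, with an additional coming-down-from-infinity gain from the superlinear term when $\|\psi_{\lo}(t_0)\|$ is large. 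In parallel, Proposition \ref{AH:prop-Z} controls $\|Z\|_{C_t^0\Cs_x^{2\eta}\cap C_t^\eta\Cs_x^0}$ in terms of $\|\pregA_0\|_{\Cs_x^\eta} \lesssim \|\pregA_0\|_{\Beta}$ (Besov embedding, valid since $\eta \gg 1/r$), $\|\psi_{\lo}\|_{L_t^\infty L_x^r}$, and a kickback-absorbable $Z$-self-interaction. The admissibility condition $\tau \sim \max(\|\pregA_0\|_{\Beta}^{3/4}, \|\pregphi_0\|_{L_x^r}^{5/2}, L^{\kappathree})^{-1}$ is engineered so that this coupled bootstrap closes with bounds of the schematic form $\|Z\| \lesssim M_0^{1/\gamma - \delta}$ and $\|\psi_{\lo}\|_{L_t^\infty L_x^r} \lesssim M_0$, where $M_0 := \max(\|\pregA_0\|_{\Beta}^\gamma, \|\pregphi_0\|_{L_x^r}, C_4 L^{\kappafour})$.

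To translate these into \eqref{decay:eq-short-phi}--\eqref{decay:eq-short-Linfty}, I would unfold the ansatz at $t = t_0+\tau$. For $\pregA = \Blin(t_0+\tau) + \Slin(t_0+\tau) + Z(t_0+\tau)$, the heat-flow bound \eqref{prelim:eq-heat-flow-bound} preserves $\|\Blin\|_{\Beta}$ while the sharp decay \eqref{prelim:eq-heat-flow-decay}---whose constant-$1$ sharpness is precisely why the tailored $\Lcbigdot^p_x$-spaces were introduced (Remark \ref{prelim:rem-decay-fail})---gives $\|\bigdot{\Blin}(t_0+\tau)\|_{\Beta} \leq e^{-c\tau}\|\bigdot{\pregA_0}\|_{\Beta}$, yielding the refined mean-zero estimate \eqref{decay:eq-short-A-refined}; the $\Slin$-contribution is absorbed into $L^{\kappafour}$ via \eqref{AH:eq-L-nreg}, and the $Z$-contribution by the bootstrap combined with the reverse Besov embedding $\Cs_x^{2\eta} \hookrightarrow \Beta$. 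The mean growth \eqref{decay:eq-short-A-mean} follows from $\partial_t \sfint A = \gaugerenorm \sfint A + \sfint(\textup{source})$ after testing \eqref{AH:eq-evolution-A} against $1$, with the $(1-\nu)$-factors providing slack to absorb the source terms. For $\pregphi$, the decomposition
\begin{equation*}
\pregphi(t_0+\tau) = \big(\philinear[\Blin,\lo] - \philinear[\lo]\big)(t_0+\tau) + \psi_{\lo}(t_0+\tau) + \psi_{\hi}(t_0+\tau) + \varphi_{\hi}(t_0+\tau)
\end{equation*}
is controlled termwise: the linear difference by Theorem \ref{intro:thm-cshe}.\ref{item:thm:-difference-in-linear-objects} (absorbed into $L^{\kappafour}$), $\psi_{\lo}$ by the ODE analysis above, and the high-frequency remainders by Hypotheses \ref{AH:hypothesis-probabilistic} and \ref{AH:hypothesis-continuity}. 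The $L_t^\infty$-bound \eqref{decay:eq-short-Linfty} is immediate from the same decomposition and Hypothesis \ref{AH:hypothesis-probabilistic}.\ref{AH:item-probabilistic-A-linear}.

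The main obstacle is closing the coupled continuity loop with the sharp exponents $\alpha = 3/4$, $\beta = 5/2$, $\gamma = 2/7$. The quadratic feedback $\tau^{1/2-1/r-2\eta}\|\psi_{\lo}\|_{L_t^\infty L_x^r}^2$ in \eqref{AH:eq-Z-estimate} prevents a naive decoupling of $\psi_{\lo}$ from $Z$, and the requirement that the source $\|Z\|^{2(1+\nu)/q}$ fed back into the $\psi_{\lo}$-ODE stay bounded by $M_0$ for every odd $q \geq 3$---most stringently at $q = 3$, where the inequality $2\gamma/q \cdot (1/\gamma)_{\textup{from the Z-bound}} \leq 1$ holds only with an extremely tight margin---essentially pins down these specific exponents, as sketched in Remark \ref{decay:rem-parameters}. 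Extracting the genuine $e^{-c_0\tau}$ decay rather than only $1 + O(\tau)$ further requires the $L$-dependent forcing $L^{\kappaone}$ in the $\psi_{\lo}$-ODE to be dominated by the $L^{\kappafour}$-entry in $M_0$, which is exactly why the admissibility condition features the intermediate threshold $L^{\kappathree}$ with the parameter hierarchy $\kappa \ll \kappaone \ll \kappathree \ll \kappafour$ fixed in Subsection \ref{section:parameters}.
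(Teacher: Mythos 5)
The central gap in your proposal is the claim that the exponential decay of $\psi_{\lo}$ comes from the mass $+1$ in its evolution equation. This is structurally incorrect, and it is worth tracing through carefully. In the evolution equation \eqref{AH:eq-psi-lo-original-e1} the left-hand side operator is $\partial_t - \covd_{A_{\lo}}^j \covd_{A_{\lo},j} - 2\sigma_{\leq L}^2 + 1$: the Wick renormalization contributes $-2\sigma_{\leq L}^2 \sim -\log L$, so the \emph{net} mass is $1 - 2\sigma_{\leq L}^2 < 0$ for large $L$, which is a growth term. In the reformulation \eqref{AH:eq-psi-lo-e1}--\eqref{AH:eq-psi-lo-e3} the $+1$ is kept on the left but the full $(2\sigma_{\leq L}^2+1)\psi_{\lo}$ appears as forcing on the right; when you integrate the monotonicity formula, the $+\|\psi_{\lo}\|_{L_x^p}^p$ it produces on the left is exactly cancelled by the $(2\sigma_{\leq L}^2+1)\|\psi_{\lo}\|_{L_x^p}^p$ on the right, and the residual $2\sigma_{\leq L}^2\|\psi_{\lo}\|_{L_x^p}^p$ must itself be absorbed (via Young) into the $\|\psi_{\lo}\|_{L_x^{p+q-1}}^{p+q-1}$ dissipation. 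This is precisely why the stated Proposition \ref{AH:prop-psi-lo-estimate} contains no $\|\psi_{\lo}\|_{L_x^p}^p$ term on its left-hand side, and why the paper's footnote to \eqref{intro:eq-cshe} explicitly notes the mass is unimportant for the damping. Your schematic ODE $\tfrac{1}{r}y' + y + c\,y^{(r+q-1)/r} \leq C$ therefore cannot be obtained from Proposition \ref{AH:prop-psi-lo-estimate}, even upon reopening its proof.

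Even granting the linear $+y$ term hypothetically, it would be quantitatively insufficient. The forcing constant inherited from Proposition \ref{AH:prop-psi-lo-estimate} is $\sim L^{\kappaone}$, and the decrease \eqref{decay:eq-short-phi} must absorb a $C_1 L^{\kappaone}$ surplus (compare \eqref{decay:eq-short-main-estimate-e1}). The mass-driven decrease over a single admissible time-step is $\approx \tau\|\psi_{\lo}(t_0)\|_{L_x^r}$; with $\tau \sim \|\psi_{\lo}(t_0)\|_{L_x^r}^{-\beta}$ and $\beta = \tfrac{5}{2} > 1$ this equals $\|\psi_{\lo}(t_0)\|_{L_x^r}^{1-\beta} \to 0$ as $\|\psi_{\lo}(t_0)\|_{L_x^r} \to \infty$, so it cannot dominate $L^{\kappaone}$ for large initial data. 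What the paper actually uses is the nonlinear Gronwall argument in the second and third steps of its proof: either $\|\psi_{\lo}(t)\|_{L_x^r}$ is already small, or $\tfrac{1}{r}\tfrac{d}{dt}\|\psi_{\lo}(t)\|_{L_x^r}^r \leq -c\,\|\psi_{\lo}(t)\|_{L_x^r}^{r+q-1}$, which yields the data-dependent decay rate $e^{-c\,\|\psi_{\lo}(t_0)\|_{L_x^r}^{q-1}\tau}$ and hence a decrease $\approx \|\psi_{\lo}(t_0)\|_{L_x^r}^{q-\beta}$. Since $q - \beta = \tfrac{1}{2} > 0$ this \emph{grows} with the initial data and can always absorb $L^{\kappaone}$. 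The boost $\|\psi_{\lo}(t_0)\|_{L_x^r}^{q-1}$ in the decay rate, which your proposal has no analogue of, is essential. Your proposal also omits the case split that closes the argument: when $\|\psi_{\lo}(t_0)\|_{L_x^r}^q\tau \lesssim L^{\kappaone}$, the nonlinear decay is not dominant, but then admissibility forces $\tau \sim \|\pregA_0\|_{\Beta}^{-\alpha}$, whence $\|\psi_{\lo}(t_0)\|_{L_x^r} \lesssim \|\pregA_0\|_{\Beta}^{\alpha/q} \lesssim \|\pregA_0\|_{\Beta}^{\gamma}$ using $\alpha/q \leq \gamma$ (valid precisely because $\tfrac{3}{4q} \leq \tfrac{2}{7}$ for $q\geq 3$); this is the paper's fourth step and it is where the exponent $\gamma$ is really pinned down from the $\phi$-side of the estimate. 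The remainder of your sketch (sharp heat-flow decay for $\bigdot{\Blin}$, preservation of $\sfint\Blin$, $Z$-bootstrap, decomposition of $\pregphi$) is in line with the paper, but the $\psi_{\lo}$-decay mechanism must be replaced.
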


The proof of Proposition \ref{decay:prop-short} will occupy the remainder of this subsection. Before we turn to the proof, we make several remarks.

\begin{remark}\label{decay:rem-result} 
We make the following remarks regarding Proposition \ref{decay:prop-short}. 
\begin{enumerate}[label=(\roman*)]
\item\label{decay:item-exponential-factor} (Exponential factors) We recall that $\pregA$ contains the linear heat flow $\Blin(t_0+\tau)=e^{\tau\Delta} \pregA_0$. If the mean of $\pregA_0$ is small compared to the $\Beta$-norm of $\pregA_0$, it follows from Lemma \ref{prelim:lem-heat-flow-bound-decay} that $\| \Blin(t_0+\tau)\|_{\Beta}$ can be bounded by $e^{-c\tau} \| \pregA_0\|_{\Beta}$, where $c$ is a constant depending only on $r$. Since the decay of $\pregA$ only stems from the linear heat flow, this explains the $e^{-c_0 \tau} \| \pregA_0\|_{\Beta}^\gamma$-term in \eqref{decay:eq-short-A-refined}. 
The remaining $e^{-c_0 \tau}$ and 
$e^{c_0 \tau}$-factors in \eqref{decay:eq-short-phi}, 
\eqref{decay:eq-short-A},
\eqref{decay:eq-short-A-refined}, 
and \eqref{decay:eq-short-A-mean} can likely be replaced by other expressions, but the $e^{-c_0 \tau}$ and 
$e^{c_0 \tau}$-factors are very convenient in the proofs of Lemma \ref{decay:lem-exponential-growth} and Lemma \ref{decay:lem-decay-unit}, since they allow us to easily iterate Proposition \ref{decay:prop-short}. 
\item\label{decay:item-mean} (Mean of $\pregA$) Loosely speaking, \eqref{decay:eq-short-A-mean} states that, on its own, the mean grows at most exponentially in time. Furthermore, it states that the impact of $\bigdotpregA_0$ and $\pregphi_0$ on $\sfint \pregA$ can be controlled using a slightly smaller exponent than in \eqref{decay:eq-short-phi}, \eqref{decay:eq-short-A}, and \eqref{decay:eq-short-A-refined}. 
\item\label{decay:item-phi-psi} (Comparing $\pregphi$ and $\psi_{\lo}$) From \eqref{AH:eq-decomposition-phi-first}, 
\eqref{AH:eq-decomposition-psi}, and \eqref{decay:eq-short-new}, it follows that 
\begin{equation*}
\pregphi = \big( \philinear[\Blin,\lo] - \philinear[\lo]\big)(t_0+\tau) + \psi_{\lo}(t_0+\tau) + \psi_{\hi}(t_0+\tau) + \varphi_{\hi}(t_0+\tau). 
\end{equation*}
As we will see in the proof of Proposition \ref{decay:prop-short}, it will be most important to control $\psi_{\lo}(t_0+\tau)$, since the contributions of $\philinear[\Blin,\lo](t_0+\tau)$, $\philinear[\lo](t_0+\tau)$,  $\psi_{\hi}(t_0+\tau)$, and $\varphi_{\hi}(t_0+\tau)$ to the $L_x^r$-norm will be of size $\lesssim L^{2\kappa}$. Thus, the reader may think of $\pregphi$ as being essentially equal to $\psi_{\lo}(t_0+\tau)$.
\item\label{decay:item-coming-down} (Coming down from infinity) For $\phi^4_d$-models, it can be shown that the scalar field comes down from infinity, see e.g. \cite{MW17,MW2020}. In comparison, the stated estimate for the scalar field in \eqref{decay:eq-short-phi} is rather weak, and this has two reasons: First, depending on the relative size of $\pregA_0$ and $\pregphi_0$, the time-scale $\tau$ can be small compared to inverse powers of $\|\psi_{\lo}(t_0)\|_{L_x^r}$. As explained in Remark \ref{decay:rem-ODE} below, there may therefore not be sufficient time for $\psi_{\lo}$ to come down from infinity. Second, as we discussed in \ref{decay:item-exponential-factor}, the estimate \eqref{decay:eq-short-phi} can easily be iterated in time, and this makes \eqref{decay:eq-short-phi} more convenient than more intricate estimates.
\item\label{decay:item-q} (Impact of $q$) It is far from obvious that our previous estimates \eqref{AH:eq-psi-lo-estimate} and \eqref{AH:eq-Z-estimate} can be used to prove the decay estimate \eqref{decay:eq-short-phi}-\eqref{decay:eq-short-A-mean} for small values of $q$, such as $q=3$ or $q=5$. For small values of $q$, both the choice of the parameters in Definition \ref{decay:def-tau} and the proofs of \eqref{decay:eq-short-phi}-\eqref{decay:eq-short-A-mean} are rather delicate. However, it is relatively clear that \eqref{AH:eq-psi-lo-estimate} and \eqref{AH:eq-Z-estimate} can be used to prove \eqref{decay:eq-short-phi}-\eqref{decay:eq-short-A-mean} for sufficiently large values of $q$, such as $q\geq 11$. For large values of $q$, the $\| \psi_{\lo}(t)\|_{L_x^{r+q-1}}^{r+q-1}$-term in \eqref{AH:eq-psi-lo-estimate} leads to a sharp decrease of $\|\psi_{\lo}(t)\|_{L_x^r}^r$. Furthermore, the $(2/q)$-factor in the exponent of the $\| Z\|_{C_t^0 \Cs_x^\eta}$-term in \eqref{AH:eq-psi-lo-estimate} implies that bootstrap arguments involving both \eqref{AH:eq-psi-lo-estimate} and \eqref{AH:eq-Z-estimate} and easily be closed. 
\end{enumerate}
\end{remark}

We also make a remark concerning the ordinary differential equation $y^\prime=-y^q$, which will serve as a guiding principle during the proof of Proposition \ref{decay:prop-short}.

\begin{remark}[On $y^\prime=-y^q$]\label{decay:rem-ODE} 
Since the decay of $\psi_{\lo}$ is driven by the power-type nonlinearity in \eqref{AH:eq-evolution-phi}, it is instructive to consider the ordinary differential equation $y^\prime(t)=-y(t)^q$, where $t\geq 0$ and $y(0)=y_0 \geq 1$. The solution $y(t)$ is explicitly given by 
\begin{equation*}
y(t) =  y_0 \Big( (q-1) y_0^{q-1} t +1 \Big)^{-\frac{1}{q-1}}.  
\end{equation*}
Due to Definition \ref{decay:def-tau}, we now set $t=y_0^{-\beta}$, where $0\leq \beta <\infty$. At a heuristic level, one may then use the following description of $y(t)$:
\begin{equation}\label{decay:eq-ODE}
y(t) \simeq 
\begin{cases}
\begin{tabular}{ll}
$y_0 - \mathcal{O}(1)\hspace{8ex}$                     &\text{if }    $q < \beta <\infty$,  \\
$y_0-y_0^{q-\beta}$       &\text{if }             $q-1<\beta\leq q$,          \\
$e^{-\frac{\ln(q)}{q-1}} y_0$                     &\text{if }  $\beta=q-1$,          \\
$e^{-\frac{\ln(q-1)}{q-1}}  y_0^{\frac{\beta}{q-1}}$                     &\text{if }  $0<\beta<q-1$, \\
$e^{-\frac{\ln(q-1)}{q-1}}$                     &\text{if } $\beta=0$.
\end{tabular}
\end{cases}
\end{equation}
The $q$-dependent pre-factors in \eqref{decay:eq-ODE} are not important for our analysis and can safely be ignored. According to \eqref{decay:eq-ODE}, if $\beta>q$, then $y(t)$ has not decreased by more than a term of order one. If $q-1<\beta\leq q$, then $y(t)$ has decreased by much more than one, but the decrease is much smaller than $y_0$. If $\beta=q-1$, then $y(t)$ has decreased by a constant factor. If $0<\beta<q-1$, then $y(t)$ has changed its magnitude. And finally, if $\beta=0$, then $y(t)$ has come down from infinity. 
\end{remark}

Before we start our estimates, we make two elementary observations regarding admissible time-scales $\tau$. Due to the condition \eqref{AH:eq-L-nreg}, it holds that 
\begin{equation*}
\big| \| \psi_{\lo}(t_0) \|_{L_x^r} - \| \pregphi_0 \|_{L_x^r} \big| 
\leq \| P_{\leq L} \nregphi_0 \|_{L_x^r} \lesssim L^{4\kappa}.
\end{equation*}
Since $\tau$ is admissible, it then directly follows that 
\begin{equation}\label{decay:eq-tau-equivalence}
\tau \sim c_3 \max \Big( \big\| \pregA_0 \big\|_{\Beta}^{\alpha}, \big\| \psi_{\lo}(t_0) \big\|_{L_x^r}^\beta, L^{\kappathree} 
\Big)^{-1}.
\end{equation}
In other words, the $\|\pregphi_0\|_{L_x^r}$-term in Definition \ref{decay:def-tau} can safely be replaced by $\| \psi_{\lo}(t_0)\|_{L_x^r}$. Furthermore, due to \eqref{prelim:eq-parameter-new-kappa-kappa-j}, \eqref{AH:eq-L}, and $\alpha,\beta\leq 20$, it holds that 
\begin{equation}\label{decay:eq-tau-eta}
\tau^{-\eta} \lesssim \max\big( \big\| \pregA_0 \big\|_{\Beta}^{20}, \big\| \pregphi_0 \big\|_{L_x^r}^{20}, L^{\kappathree} \big)^{\eta} 
\lesssim \max \big( L^{10 \eta \eta_3} , L^{\eta \kappathree} \big) = \max \big( L^{\frac{\kappaone}{10}}, L^{\eta \nu^{-20} \kappaone} \big) 
= L^{\frac{\kappaone}{10}}.
\end{equation}
Due to \eqref{decay:eq-tau-eta}, we can later afford to lose $\tau^{-\eta}$-factors, see e.g. the proof of Lemma \ref{decay:lem-exponential-growth}. Before we prove Proposition \ref{decay:prop-short}, we now first prove the following weaker estimate. 

\begin{lemma}[Bounds on upper-admissible time-scales]\label{decay:lem-short-bounds}
Let the same assumptions as in Proposition \ref{decay:prop-short} be satisfied, except that the time-scale $\tau>0$ only needs to be upper-admissible. Then, it holds that 
\begin{align}
\big\| \psi_{\lo} \big\|_{C_t^0 L_x^r([t_0,t_0+\tau])}
&\leq \big\| \psi_{\lo}(t_0) \big\|_{L_x^r} + C_2 L^{\kappatwo}, \label{decay:eq-bounds-psi-lo}\\
\big\| \psi_{\hi} \big\|_{C_t^0 \Cs_x^\eta([t_0,t_0+\tau])} &\leq  L^{\eta_1-\eta} \label{decay:eq-bounds-psi-hi}, \\ 
\big\| Z \big\|_{C_t^0 \Cs_x^{2\eta} \cap C_t^{\eta} \Cs_x^0([t_0,t_0+\tau])}
&\leq \big\| \psi_{\lo}(t_0) \big\|_{L_x^r}^{2-\frac{\beta}{2}+\nu} 
+ C_2 L^{\kappatwo}. \label{decay:eq-bounds-Z}
\end{align}
\end{lemma}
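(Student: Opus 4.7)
The proof will be a continuity/bootstrap argument based on the two main covariant estimates of Section~\ref{section:Abelian-Higgs}, namely Proposition~\ref{AH:prop-psi-lo-estimate} (the covariant monotonicity formula for $\|\psi_{\lo}\|_{L_x^p}^p$) and Proposition~\ref{AH:prop-Z} (the estimate for $Z$). The bootstrap hypothesis will be that the conclusions \eqref{decay:eq-bounds-psi-lo}--\eqref{decay:eq-bounds-Z} hold with the constants doubled on some maximal interval $[t_0, t_*]$ with $t_* \leq t_0 + \tau$. Since $Z(t_0)=0$, $\psi_{\hi}(t_0)=0$, and $\|\psi_{\lo}(t_0)\|_{L_x^r} \leq \|\pregphi_0\|_{L_x^r} + C L^{4\kappa}$, continuity of the solution together with the generous slack between the $L^{\eta_2}$, $L^{\eta_1-\eta}$ thresholds of Hypothesis~\ref{AH:hypothesis-continuity} and the target scale $L^{\kappatwo}$ ensures $t_* > t_0$. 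The goal is then to strictly improve each of the three bounds on $[t_0,t_*]$, which forces $t_* = t_0 + \tau$ by a standard continuity argument.

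For the $\psi_{\lo}$-bound I would integrate Proposition~\ref{AH:prop-psi-lo-estimate} with $p=r$ over $[t_0,t]$, discard the non-negative covariant-derivative and damping terms on the left-hand side, and take $r$-th roots using $(a^r+b^r+c^r)^{1/r}\leq a+b+c$, obtaining
\begin{equs}
\|\psi_{\lo}(t)\|_{L_x^r} \leq \|\psi_{\lo}(t_0)\|_{L_x^r} + C(r\tau)^{1/r} \|Z\|_{L_{t,x}^\infty}^{\frac{2(1+\nu)(r+q-1)}{qr}} + C(r\tau)^{1/r} L^{\kappaone(r+q-1)/r}.
\end{equs}
Since $r=\kappa^{-10}$ is large, both the prefactor $(r\tau)^{1/r}$ and the ratio $(r+q-1)/r$ are essentially equal to $1$, so the effective exponent on $\|Z\|$ is close to $2/q\leq 2/3<1$. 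Substituting the bootstrap bound $\|Z\| \leq 2\|\psi_{\lo}(t_0)\|_{L_x^r}^{2-\beta/2+\nu} + 2C_2 L^{\kappatwo}$ and using $a^{2/q}\leq a+1$, this yields $\|\psi_{\lo}\|_{L_x^r} \leq \|\psi_{\lo}(t_0)\|_{L_x^r} + C L^{\kappatwo}$, which strictly improves the bootstrap once $C_2$ is chosen large enough.

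For the $Z$-bound I would apply Proposition~\ref{AH:prop-Z} with $\tau^\ast = t_*-t_0$ and verify each of its four terms against the target $\|\psi_{\lo}(t_0)\|_{L_x^r}^{2-\beta/2+\nu}+C_2 L^{\kappatwo}$. The term $\tau^{3/4-\eta}\|\pregA_0\|_{\Cs_x^\eta}^{1/2+\eta}$ is handled by admissibility $\tau\lesssim \|\pregA_0\|_{\Beta}^{-3/4}$, which converts $\tau^{3/4}\|\pregA_0\|_{\Beta}^{1/2}$ into the negative power $\|\pregA_0\|_{\Beta}^{-1/16}$; combined with the harmless $\tau^{-\eta}\leq L^{\kappaone/10}$ loss from \eqref{decay:eq-tau-eta} this is $\leq L^{\kappatwo}$. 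The crucial second term $\tau^{1/2-1/r-2\eta}\|\psi_{\lo}\|_{L_x^r}^2$ is precisely where the exponent matching $\beta/2=5/4$ is used: inserting $\tau^{1/2}\lesssim \|\psi_{\lo}(t_0)\|_{L_x^r}^{-\beta/2}$ together with the bootstrap bound on $\psi_{\lo}$ produces exactly $\|\psi_{\lo}(t_0)\|_{L_x^r}^{2-\beta/2+\nu}+L^{\kappatwo}$, the $\nu$-slack absorbing all $\tau^{-\eta}$ losses. The nonlinear term $\tau^{1-2\eta}(\|\psi_{\lo}\|_{L_x^r}+L^\kappa)\|Z\|^{(q+1)(1+\nu)/q}$ has exponent strictly greater than one on $\|Z\|$, so I would close it by a Young kick-back: writing it as $K\cdot\|Z\|^\theta$ with $\theta>1$ and coefficient $K$ quantitatively small by admissibility, the factor $\|Z\|^{\theta-1}$ evaluated in the bootstrap regime shrinks $K$ to $\leq 1/2$, allowing absorption into the left-hand side. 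The residual $L^{2\kappaone}$ is trivially $\leq L^{\kappatwo}$.

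The $\psi_{\hi}$-bound \eqref{decay:eq-bounds-psi-hi} will follow from the perturbative high-frequency analysis of Appendix~\ref{section:high}, since by construction $\psi_{\hi}$ only aggregates contributions containing at least one of the small high-frequency terms $\linear[\hi]$, $\Slin_{\hi}$, $\philinear[\hi]$, $\varphi_{\hi}$, or $\psi_{\hi}$ itself. The \emph{main obstacle} is the delicate exponent arithmetic underlying the choices $\alpha=3/4$, $\beta=5/2$ in Definition~\ref{decay:def-tau}: simultaneously, the condition $\alpha\geq 2/3$ is needed to tame the $\pregA_0$-contribution in the $Z$-estimate, the identity $2-\beta/2=3/4$ forces the $\|\psi_{\lo}\|^2$-contribution to match the target exponent on the right-hand side of \eqref{decay:eq-bounds-Z}, and the constraint $q\geq 3$ (whence $2/q\leq 2/3<1$) is what allows the $\psi_{\lo}$-estimate to close against the bootstrap bound on $Z$. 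Once all three strict improvements are established, continuity extends them to the whole interval $[t_0,t_0+\tau]$, completing the proof.
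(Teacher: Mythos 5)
Your bootstrap structure, your treatment of the $Z$-estimate (including the kick-back using $(q+1)/q > 1$), and your deferral of the $\psi_{\hi}$-estimate to Corollary~\ref{high:cor-psi-hi} all match the paper's approach. However, the $\psi_{\lo}$-step has a genuine gap.

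Discarding the dissipation term $\tfrac12\|\psi_{\lo}(t)\|_{L_x^{r+q-1}}^{r+q-1}$ in Proposition~\ref{AH:prop-psi-lo-estimate} and integrating directly gives
\begin{equation*}
\|\psi_{\lo}(t)\|_{L_x^r} \leq \|\psi_{\lo}(t_0)\|_{L_x^r} + (r\tau C_1)^{1/r}\|Z\|^{\frac{2}{q}(1+\nu)(1+\frac{q-1}{r})} + (r\tau C_1)^{1/r} L^{\kappaone(1+\frac{q-1}{r})}.
\end{equation*}
After substituting the bootstrap bound $\|Z\|\lesssim\|\psi_{\lo}(t_0)\|_{L_x^r}^{2-\beta/2+\nu}+L^{\kappatwo}$, the middle term is of size $\|\psi_{\lo}(t_0)\|_{L_x^r}^{\frac{2}{q}(2-\beta/2)+O(\nu)}$, which for $q=3$, $\beta=5/2$ is $\|\psi_{\lo}(t_0)\|_{L_x^r}^{1/2+O(\nu)}$. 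Your elementary inequality $a^{2/q}\leq a+1$ does not reduce this to $O(L^{\kappatwo})$: it only controls $\|Z\|^{2/q}$ by $\|Z\|+1$, which is still of order $\|\psi_{\lo}(t_0)\|_{L_x^r}^{3/4}$. The only constraint you have on $\|\psi_{\lo}(t_0)\|_{L_x^r}$ in terms of $L$ is $\|\psi_{\lo}(t_0)\|_{L_x^r}\lesssim L^{\eta_3/2}$ from \eqref{AH:eq-L}, and $L^{\eta_3/4}\gg L^{\kappatwo}$. So your argument yields at best $\|\psi_{\lo}(t)\|_{L_x^r}\leq\|\psi_{\lo}(t_0)\|_{L_x^r}+\|\psi_{\lo}(t_0)\|_{L_x^r}^{1/2+O(\nu)}+\ldots$, not the claimed $\|\psi_{\lo}(t_0)\|_{L_x^r}+C_2 L^{\kappatwo}$, and the clean coefficient $1$ on $\|\psi_{\lo}(t_0)\|_{L_x^r}$ is essential for the iteration in Section~\ref{section:decay-unit}.

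The paper avoids this by \emph{not} discarding the dissipation. It evaluates the monotonicity formula at the time $t_{\max}$ where $\|\psi_{\lo}(\cdot)\|_{L_x^r}$ attains its supremum on $[t_0,t_0+\tau^*]$: if $t_{\max}>t_0$, then $\tfrac{d}{dt}\|\psi_{\lo}(t_{\max})\|_{L_x^r}^r\geq 0$, which forces the dissipation $\tfrac12\|\psi_{\lo}(t_{\max})\|_{L_x^{r+q-1}}^{r+q-1}$ to be dominated by the forcing. After taking $(r+q-1)$-th roots and using $L_x^r\hookleftarrow L_x^{r+q-1}$, this yields a bound on the \emph{solution} itself, namely $\|\psi_{\lo}(t_{\max})\|_{L_x^r}\lesssim_{C_1}\|\psi_{\lo}(t_0)\|_{L_x^r}^{\frac{2}{q}(2-\beta/2+\nu)(1+\nu)}+(C_2 L^{\kappatwo})^{\frac{2}{q}(1+\nu)}+L^{\kappaone}$. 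Because the exponent $\frac{2}{q}(2-\tfrac{\beta}{2})+O(\nu)<1$ (this is condition~\eqref{decay:eq-bounds-parameter-1}), Young's inequality gives $\|\psi_{\lo}(t_{\max})\|_{L_x^r}\leq\|\psi_{\lo}(t_0)\|_{L_x^r}+\tfrac12 C_2 L^{\kappatwo}$ once $C_2$ is chosen large. This is the key idea your integration-and-discard argument misses: the dissipation at the maximum point provides an \emph{absolute} bound with subunit exponent, not merely a bound on the increment.
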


We emphasize that the $\nu$-loss in the exponent in \eqref{decay:eq-bounds-Z} will not cause any problems in subsequent estimates, and can therefore be safely ignored.

\begin{proof}
Let $\tau^\ast \in [0,\tau]$ be defined as 
\begin{align*}
\tau^\ast 
:= \inf \Big\{&\,  
\tau^\prime \in [0,\tau] \colon 
\big\| \psi_{\lo} \big\|_{C_t^0 L_x^r([t_0,t_0+\tau^\prime])}
\geq \big\| \psi_{\lo}(t_0) \big\|_{L_x^r} + C_2 L^{\kappatwo}, \,  \big\|\psi_{\hi} \big\|_{C_t^0 \Cs_x^\eta([t_0,t_0+\tau^\prime])} \geq L^{\eta_1-\eta}, \\ 
&\, \big\| Z \big\|_{C_t^0 \Cs_x^{2\eta} \cap C_t^{\eta} \Cs_x^0([t_0,t_0+\tau^\prime])} 
\geq \big\| \psi_{\lo}(t_0) \big\|_{L_x^r}^{2-\frac{\beta}{2}+\nu } + C_2 L^{\kappatwo},  \text{ or } \tau^\prime = \tau \Big\}.
\end{align*}
Our goal is to show that $\tau^\ast=\tau$, which then implies the desired estimates \eqref{decay:eq-bounds-psi-lo}, \eqref{decay:eq-bounds-psi-hi}, and \eqref{decay:eq-bounds-Z}. Using a standard continuity argument (see e.g. \cite[Section 1.3]{Tao06}), it then suffices to prove that  
\begin{align}
\big\| \psi_{\lo} \big\|_{C_t^0 L_x^r([t_0,t_0+\tau^\ast])}
&\leq \big\| \psi_{\lo}(t_0) \big\|_{L_x^r} + \frac{1}{2} C_2 L^{\kappatwo},\label{decay:eq-bounds-p1} \\ 
\big\| \psi_{\hi} \big\|_{C_t^0 \Cs_x^\eta([t_0,t_0+\tau^\ast])}
&\leq \frac{1}{2} L^{\eta_1-\eta}, \label{decay:eq-bounds-q1} \\ 
\big\| Z \big\|_{C_t^0 \Cs_x^{2\eta} \cap C_t^{\eta} \Cs_x^0([t_0,t_0+\tau^\ast])} 
&\leq \big\| \psi_{\lo}(t_0) \big\|_{L_x^r}^{2-\frac{\beta}{2}+\nu } + \frac{1}{2} C_2 L^{\kappatwo}.\label{decay:eq-bounds-p2} 
\end{align} 
We emphasize that the bounds in \eqref{decay:eq-bounds-p1}, \eqref{decay:eq-bounds-q1}, and \eqref{decay:eq-bounds-p2} are stronger than the corresponding bounds in Hypothesis \ref{AH:hypothesis-continuity}, i.e., our previous continuity hypothesis. The proofs of \eqref{decay:eq-bounds-p1}, \eqref{decay:eq-bounds-q1}, and \eqref{decay:eq-bounds-p2} are distributed over the following three steps. \\

\emph{Step 1: Estimate of $\psi_{\lo}$.} 
Using Proposition \ref{AH:prop-psi-lo-estimate} and the definition of $\tau^\ast$, we obtain for all $t\in [t_0,t_0+\tau^\ast]$ that
\begin{equation}\label{decay:eq-bounds-p3}
\begin{aligned}
\frac{1}{r} \frac{\mathrm{d}}{\mathrm{d}t} \| \psi_{\lo}(t)\|_{L_x^r}^r 
&\leq - \frac{1}{2} \| \psi_{\lo}(t) \|_{L_x^{r+q-1}}^{r+q-1}
+ C_1 \| Z \|_{C_t^0 \Cs_x^{2\eta}([t_0,t_0+\tau^\ast])}^{\frac{2}{q} (1+\nu) (r+q-1)} + C_1 L^{\kappaone (r+q-1)} \\
&\leq  - \frac{1}{2} \| \psi_{\lo}(t) \|_{L_x^{r+q-1}}^{r+q-1}
+ C_1 \Big( \| \psi_\lo(t_0) \|_{L_x^r}^{2-\frac{\beta}{2}+\nu } + C_2 L^{\kappatwo} \Big)^{\frac{2}{q} (1+\nu) (r+q-1)} + C_1 L^{\kappaone (r+q-1)}.
\end{aligned}
\end{equation}
Now, let $t_{\max}\in [t_0,t_0+\tau^\ast]$ be chosen such that
\begin{equation}\label{decay:eq-bounds-p4}
\| \psi_{\lo}(t_{\max})\|_{L_x^r}
= \| \psi_{\lo} \|_{C_t^0 L_x^r([t_0,t_0+\tau^\ast])}.
\end{equation}
In the case $t_{\max}=t_0$, it holds that 
$\| \psi_{\lo} \|_{C_t^0 L_x^r([t_0,t_0+\tau^\ast])}=\| \psi_{\lo}(t_0)\|_{L_x^r}$, which directly implies \eqref{decay:eq-bounds-p1}. In the case $t_{\max}>t_0$, the right-hand side of \eqref{decay:eq-bounds-p3} has to be non-negative at $t=t_{\max}$. Using \eqref{decay:eq-bounds-p3}, it follows that 
\begin{equation}\label{decay:eq-bounds-p5}
\begin{aligned}
\| \psi_{\lo}(t_{\max})\|_{L_x^r}
\lesssim \| \psi_{\lo}(t_{\max})\|_{L_x^{r+q-1}}
&\lesssim_{C_1} \| \psi_{\lo}(t_0)\|_{L_x^r}^{ \frac{2}{q} (2-\frac{\beta}{2}+\nu)  (1+\nu)} 
+ \big( C_2 L^{\kappatwo} \big)^{\frac{2}{q} (1+\nu)}
+ L^{\kappaone}.
\end{aligned}
\end{equation}
Since 
\begin{equation}\label{decay:eq-bounds-parameter-1}
\frac{2}{q} \Big( 2 - \frac{\beta}{2} \Big) + \mathcal{O}(\nu) < 1, \qquad \frac{2}{q} + \mathcal{O}(\nu) < 1, 
\end{equation}
and $C_2$ has been chosen as large depending on $C_1$ and the parameters in \eqref{prelim:eq-parameter-new-eta-nu}-\eqref{prelim:eq-parameter-new-r}, the desired estimate \eqref{decay:eq-bounds-p1} can then be obtained from Young's inequality.\\

\emph{Step 2: Estimate of $\psi_{\hi}$.} 
The estimate \eqref{decay:eq-bounds-q1} follows directly from Corollary \ref{high:cor-psi-hi}, since the bounds on $\psi_{\lo}$ and $Z$ in \eqref{decay:eq-bounds-p1} and \eqref{decay:eq-bounds-p2} are stronger than the bounds in Hypothesis \ref{AH:hypothesis-continuity}. \\

\emph{Step 3: Estimate of $Z$.} In the following argument, all factors of $L^\kappa$, $L^\kappaone$, or $L^\kappatwo$ can easily be absorbed using a factor of $\tau^{\frac{\nu}{10}}$, i.e., a small power of $\tau$. On first reading, we therefore encourage the reader to ignore all factors of  $L^\kappa$, $L^\kappaone$, or $L^\kappatwo$ below. 
Using Proposition \ref{AH:prop-Z}, we obtain that
\begin{equation}\label{decay:eq-bounds-p6}
\begin{aligned}
 \big\| Z \big\|_{C_t^0 \Cs_x^{2\eta} \cap C_t^{\eta} \Cs_x^0([t_0,t_0+\tau^\ast])}
&\leq C_1 \tau^{\frac{3}{4}-\eta} \big\| \pregA_0 \big\|_{\Beta}^{\frac{1}{2}+\eta} +  C_1 \tau^{\frac{1}{2}-\frac{1}{r}-2\eta} \big\| \psi_{\lo} \big\|_{C_t^0 L_x^r([t_0,t_0+\tau^\ast])}^2  \\ 
&+ C_1 \tau^{1-2\eta} \big( \big\| \psi_{\lo} \big\|_{C_t^0 L_x^r([t_0,t_0+\tau^\ast])} + L^\kappa \big) \big\| Z \big\|_{C_t^0 \Cs_x^{2\eta}([t_0,t_0+\tau^\ast])}^{\frac{q+1}{q} (1+\nu)} 
+ C_1 L^{2\kappaone}. 
\end{aligned}
\end{equation}
Using that $\tau$ is upper-admissible (see Definition \ref{decay:def-tau}) and using the definition of $\tau^\ast$, we can now estimate the terms in \eqref{decay:eq-bounds-p6} as follows: For the first term, we have 
\begin{align*}
 \tau^{\frac{3}{4}-\eta} \big\| \pregA_0 \big\|_{\Beta}^{\frac{1}{2}+\eta}
\lesssim c_3^{\frac{3}{4}-\eta} \max\big( \big\| \pregA_0 \big\|_{\Beta}^{\alpha},L^{\kappathree}\big)^{-\frac{3}{4}+\eta} 
\big\|  \pregA_0 \big\|_{\Beta}^{\frac{1}{2}+\eta}\lesssim c_3^{\frac{3}{4}-\eta}. 
\end{align*}
In the last inequality, we used the parameter condition 
\begin{equation}\label{decay:eq-bounds-parameter-2}
- \tfrac{3}{4} \alpha + \tfrac{1}{2} + \mathcal{O}(\nu)\leq 0. 
\end{equation}
For the second term, we have that
\begin{align*}
 \tau^{\frac{1}{2}-\frac{1}{r}-2\eta} \big\| \psi_{\lo} \big\|_{C_t^0 L_x^r([t_0,t_0+\tau^\ast])}^2 
&\lesssim c_3^{\frac{1}{2}-\frac{1}{r}-2\eta} 
\max \big( \| \psi_{\lo}(t_0) \|_{L_x^r}^{\beta}, L^{\kappathree} \big)^{-\frac{1}{2}+\frac{1}{r}+2\eta} \max\big(\| \psi_{\lo}(t_0) \|_{L_x^r}, C_2 L^{\kappatwo} \big)^2 \\ 
& \lesssim C_2^2 c_3^{\frac{1}{2}-\frac{1}{r}-2\eta} 
\max \Big( \| \psi_{\lo}(t_0) \|_{L_x^r}^{2-\frac{\beta}{2}+\nu} , L^{-\frac{\nu \kappathree}{100}} \Big).
\end{align*}
In the last inequality, we used that $\nu \kappathree \gg \kappa_2$ and that $\frac{1}{r},\eta \ll \nu$. For the third term, we have that 
\begin{align*}
&\, \tau^{1-2\eta} \max \big( \big\| \psi_{\lo} \big\|_{C_t^0 L_x^r([t_0,t_0+\tau^\ast])},  L^\kappa \big) \big\| Z \big\|_{C_t^0 \Cs_x^{2\eta}([t_0,t_0+\tau^\ast])}^{\frac{q+1}{q} (1+\nu)} \\
\lesssim&\,  c_3^{1-2\eta} 
 \max \big( \| \psi_{\lo}(t_0) \|_{L_x^r}^{\beta}, L^{\kappathree} \big)^{-1+2\eta} 
\max \big( \| \psi_{\lo}(t_0) \|_{L_x^r},  C_2 L^{\kappatwo} \big) 
\max \big( \| \psi_{\lo}(t_0) \|_{L_x^r}^{2-\frac{\beta}{2}+\nu}, C_2 L^{\kappatwo} \big)^{\frac{q+1}{q} (1+\nu)}. 
\end{align*}
Using the parameter condition 
\begin{equation}\label{decay:eq-bounds-parameter-3}
- \beta + 1 + \tfrac{q+1}{q} \big( 2- \tfrac{\beta}{2} \big) + \mathcal{O}(\nu) \leq 2 - \tfrac{\beta}{2}
\end{equation}
and $\nu \kappathree \gg \kappatwo$, this also yields an acceptable contribution. Finally, for the fourth term, it clearly holds that $ L^{2\kappaone}\leq L^{\kappatwo}$, which yields an acceptable contribution. 
This completes our estimates of the terms in \eqref{decay:eq-bounds-p6}, and hence completes our proof of \eqref{decay:eq-bounds-p2}.
\end{proof}

\begin{remark}[Conditions on $\alpha$ and $\beta$ from the proof of Lemma \ref{decay:lem-short-bounds}]\label{decay:rem-parameters-first}
In the following discussion, we omit all $\mathcal{O}(\nu)$-terms. In \eqref{decay:eq-bounds-parameter-2}, we imposed the condition
\begin{equation}\label{decay:eq-alpha-twothirds}
\alpha > \frac{2}{3},
\end{equation}
which was used to bound $\tau^{\frac{3}{4}} \| \pregA_0\|_{\Beta}^{\frac{1}{2}}\lesssim 1$. As a result of \eqref{decay:eq-alpha-twothirds}, the bounds of $\psi_{\lo}$ and $Z$ in Lemma \ref{decay:lem-short-bounds} do not directly\footnote{The bounds of $\psi_{\lo}$ and $Z$ only depend on $\|\pregA_0\|_{\Beta}$ through the $L^{\kappatwo}$-term. Due to our choice of $L$ in  \eqref{AH:eq-L}, the bounds therefore only involve a tiny power of  $\| \pregA_0\|_{\Beta}$.} depend on $\| \pregA_0\|_{\Beta}$. While this is not absolutely necessary for our argument, it simplifies the numerology in the proofs of Proposition \ref{decay:prop-short} and Lemma \ref{decay:lem-short-bounds} significantly. 
In \eqref{decay:eq-bounds-parameter-1} and \eqref{decay:eq-bounds-parameter-3}, we imposed the condition
\begin{equation}\label{decay:eq-beta-mild}
\beta > \max \Big( \frac{q+2}{q+\frac{1}{2}}, 4-q \Big), 
\end{equation}
which is necessary for our continuity argument. Since $q\geq 3$, it is sufficient to impose $\beta>\frac{10}{7}$, which does not lead to any major limitations. 
\end{remark}

\begin{proof}[Proof of Proposition \ref{decay:prop-short}]
In the following proof, our estimates rely on several conditions involving the parameters $\alpha$, $\beta$, and $\gamma$, which will be stated explicitly in \eqref{decay:proof-parameter-1}, \eqref{decay:proof-parameter-2}, \eqref{decay:proof-parameter-3}, \eqref{decay:proof-parameter-4}, and \eqref{decay:proof-parameter-5} below. Using the explicit values of $\alpha$, $\beta$, and $\gamma$ from Definition \ref{decay:def-tau}, each of the conditions can be verified using basic arithmetic. For expository purposes, we split the proof into several steps. \\

\emph{First step: Preparations.}
We first show that 
\begin{align}
 \big\| \pregA - \big( \Blin(t_0+\tau) + Z(t_0+\tau) \big) \big\|_{\Beta}  &\lesssim L^{\kappaone}, \label{decay:eq-prep-1} \\ 
 \big\| \pregphi - \psi_{\lo}(t_0+\tau) \big\|_{L_x^r} &\lesssim L^{\kappaone}, \label{decay:eq-prep-2} \\ 
 \big\| \pregphi_0 - \psi_{\lo}(t_0) \big\|_{L_x^r} &\lesssim L^{\kappaone}. \label{decay:eq-prep-3}
\end{align}
In order to prove \eqref{decay:eq-prep-1}, we use Lemma \ref{lemma:heat-flow-smoothing} and \eqref{AH:eq-L-nreg}, which yield that
\begin{equation*}
\big\| \pregA - \big( \Blin(t_0+\tau) + Z(t_0+\tau) \big) \big\|_{\Beta} 
= \big\| \Slin(t_0+\tau) \big\|_{\Beta} \lesssim \tau^{-(\eta+\kappa)} \big\| \nregA_0 \big\|_{\Cs_x^{-\kappa}} \lesssim \tau^{-(\eta+\kappa)} L^{\kappa}. 
\end{equation*}
Together with \eqref{decay:eq-tau-eta}, this directly yields \eqref{decay:eq-prep-1}. In order to prove \eqref{decay:eq-prep-2}, we first recall that 
\begin{equation*}
\pregphi - \psi_{\lo}(t_0+\tau) = \big( \philinear[\Blin,\lo] - \philinear[\lo] \big)(t_0+\tau) + \varphi_{\hi}(t_0+\tau) + \psi_{\hi}(t_0+\tau). 
\end{equation*}
Using Lemma \ref{lemma:heat-flow-smoothing}, \eqref{AH:eq-L-nreg}, Hypothesis \ref{AH:hypothesis-probabilistic}, and Lemma \ref{decay:lem-short-bounds}, it follows that 
\begin{alignat*}{3}
\big\| \philinear[\Blin,\lo] (t_0+\tau) \big\|_{L_x^\infty} &\lesssim L^{\kappa}, \qquad & \qquad 
\big\| \philinear[\lo] (t_0+\tau) \big\|_{L_x^\infty} &\lesssim L^{\kappa}, \\ 
\big\| \varphi_{\hi}(t_0+\tau) \big\|_{L_x^\infty}&\lesssim \tau^{-\kappa} \big\| \nregphi_0 \|_{\Cs_x^{-\kappa}} 
\lesssim \tau^{-\kappa} L^{\kappa}, \qquad &\text{and} \qquad \big\| \psi_{\hi}(t_0+\tau) \big\|_{L_x^\infty} &\lesssim L^{\eta_1-\eta}. 
\end{alignat*}
From \eqref{decay:eq-tau-eta}, it directly follows that $\tau^{-1}\lesssim L$, which then implies \eqref{decay:eq-prep-2}. Finally, in order to prove \eqref{decay:eq-prep-3}, we simply use \eqref{AH:eq-L-nreg}. Thus, this completes the proof of \eqref{decay:eq-prep-1}, \eqref{decay:eq-prep-2}, and \eqref{decay:eq-prep-3}. 
In order to prove \eqref{decay:eq-short-phi}, \eqref{decay:eq-short-A}, \eqref{decay:eq-short-A-refined}, and \eqref{decay:eq-short-A-mean}, it then suffices to prove that\footnote{In our reduction to \eqref{decay:eq-short-main-estimate-e1}-\eqref{decay:eq-short-main-estimate-e4}, we do not use \eqref{decay:eq-prep-1}, since it would increase the length of the inequalities. Instead, \eqref{decay:eq-prep-1} will only be used in the proofs of \eqref{decay:eq-short-main-estimate-e1}-\eqref{decay:eq-short-main-estimate-e4}.} 
\begin{align}
 \| \psi_{\lo}(t_0+\tau) \|_{L_x^r}+ C_1 L^{\kappaone}
&\leq e^{-c_0 \tau } \max \Big( \| \pregA_0 \|_{\Beta}^\gamma, \| \psi_{\lo}(t_0) \|_{L_x^r}, C_4 L^{\kappafour} \Big), \label{decay:eq-short-main-estimate-e1} \\ 
 \| \pregA \|_{\Beta}^\gamma  + C_1 L^{\kappaone} 
 &\leq e^{c_0 \tau} \max \Big( \| \pregA_0 \|_{\Beta}^\gamma, \| \psi_{\lo}(t_0) \|_{L_x^r}, C_4 L^{\kappafour} \Big), 
\label{decay:eq-short-main-estimate-e2} \\ 
\| \pregA \|_{\Beta}^\gamma + C_1 L^{\kappaone} &\leq e^{-c_0 \tau}  \max \Big( e^{2 c_0 \tau} |\sfint \pregA_0 |^\gamma, \| \pregA_0 \|_{\Beta}^\gamma, \| \psi_{\lo}(t_0) \|_{L_x^r}, C_4 L^{\kappafour} \Big), 
\label{decay:eq-short-main-estimate-e3} \\
| \sfint \pregA|^\gamma + C_1 L^{\kappaone} &\leq e^{c_0 \tau}  \max \Big( |\sfint \pregA_0 |^\gamma, \| \pregA_0 \|_{\Beta}^{(1-\nu)\gamma}, \| \psi_{\lo}(t_0) \|_{L_x^r}^{1-\nu}, \frac{3}{4} C_4 L^{\kappafour} \Big).
\label{decay:eq-short-main-estimate-e4}
\end{align}
At first sight, the $L^{\kappaone}$-terms on the left-hand sides of \eqref{decay:eq-short-main-estimate-e1}-\eqref{decay:eq-short-main-estimate-e4} may appear insignificant, but they should be taken seriously. For example, when the maximum on the right hand side of \eqref{decay:eq-short-main-estimate-e1} is achieved by the $\psi_{\lo}(t_0+\tau)$-term, it forces us to show that $\| \psi_{\lo}(t)\|_{L_x^r}$ decreases by more than one, which imposes restrictions on the choice of the parameter $\beta$ (see Remark \ref{decay:rem-ODE}). 

We further note that the estimates \eqref{decay:eq-short-main-estimate-e1}-\eqref{decay:eq-short-main-estimate-e4} holds if both $\|\pregA_0\|_{\Beta}^\gamma$ and $\|\psi_{\lo}(t_0)\|_{L_x^r}$ are almost bounded. Indeed, if $\|\pregA_0\|_{\Beta}^\gamma \leq \frac{1}{2} C_4 L^{\kappafour}$ and  $\|\psi_{\lo}(t_0)\|_{L_x^r}\leq \frac{1}{2} C_4 L^{\kappafour}$, then  \eqref{decay:eq-short-main-estimate-e1}-\eqref{decay:eq-short-main-estimate-e4} follow from Lemma \ref{decay:lem-short-bounds}. For this reason, we can assume that
\begin{equation}\label{decay:proof-first-1}
\max \big( \|\pregA_0\|_{\Beta}^\gamma, \|\psi_{\lo}(t_0)\|_{L_x^r} \big) 
\geq \tfrac{1}{2} C_4 L^{\kappafour}. 
\end{equation}
As a result of \eqref{decay:proof-first-1}, it follows that the $L^{\kappathree}$-argument in \eqref{decay:eq-tau-equivalence} is irrelevant, and it therefore follows that  
\begin{equation}\label{decay:proof-first-2}
\tau \sim  c_3 \max \Big( \big\| \pregA_0 \big\|_{\Beta}^{\alpha}, \big\| \psi_{\lo}(t_0) \big\|_{L_x^r}^\beta 
\Big)^{-1}.
\end{equation}
We now recall several estimates of $B$, $Z$, and $\psi_{\lo}$, in which we restrict to $t\in [t_0,t_0+\tau]$. From Lemma \ref{prelim:lem-heat-flow-bound-decay}, we obtain that 
\begin{equation}\label{decay:proof-Blin}
\big\| \Blin(t) \big\|_{\Beta} 
\leq \big\| \Blin(t_0) \big\|_{\Beta}, 
\quad 
\big\| \bigdot{\Blin}(t) \big\|_{\Beta}
\leq e^{-c(t-t_0)} \big\| \bigdot{\Blin}(t_0) \big\|_{\Beta}, \quad \text{and} \quad
\big| \sfint \Blin(t) \big| = \big| \sfint \Blin(t_0) \big|, 
\end{equation}
where $c=c(r)$ is a constant depending only on $r$.
From Lemma \ref{decay:lem-short-bounds}, we obtain that 
\begin{equation}\label{decay:proof-Z}
\big\| Z(t) \big\|_{\Cs_x^{2\eta}} 
\leq \big\| \psi_{\lo}(t_0) \big\|_{L_x^r}^{2-\frac{\beta}{2}+\nu} + C_2 L^{\kappatwo}. 
\end{equation}
Finally, from Proposition \ref{AH:prop-psi-lo-estimate}, we obtain that 
\begin{equation}\label{decay:proof-psilo}
\begin{aligned}
&\, \frac{1}{r} \frac{\mathrm{d}}{\mathrm{d}t} \| \psi_{\lo}(t)\|_{L_x^r}^r
+ \frac{1}{2} \| \psi_{\lo}(t) \|_{L_x^{r+q-1}}^{r+q-1} \\ 
\leq&\, 
C_1 \Big( \| \psi_\lo(t_0) \|_{L_x^r}^{2-\frac{\beta}{2}+\nu } + C_2 L^{\kappatwo} \Big)^{\frac{2}{q} (1+\nu) (r+q-1)} + C_1 L^{\kappaone (r+q-1)}.    
\end{aligned}
\end{equation}

\emph{Second step: A decay estimate for $\psi_{\lo}$.}
In this step, we prove that 
\begin{equation}\label{decay:proof-second-1}
\begin{aligned}
\big\| \psi_{\lo}(t_0+\tau) \big\|_{L_x^r}
\leq \max \Big( e^{-\frac{1}{2^{q+1}} \| \psi_{\lo}(t_0) \|_{L_x^r}^{q-1} \tau} \big\| \psi_{\lo}(t_0) \big\|_{L_x^r},
\frac{1}{2} \big\| \psi_{\lo}(t_0) \big\|_{L_x^r}, \frac{1}{2} C_4 L^{\kappafour} \Big). 
\end{aligned}
\end{equation}
In order to prove \eqref{decay:proof-second-1}, we first recall that the parameter condition 
\begin{equation}\label{decay:proof-parameter-1}
\tfrac{2}{q} \big( 2 - \tfrac{\beta}{2} \big) + \mathcal{O}(\nu) \leq 1,
\end{equation}
which was previously used in \eqref{decay:eq-bounds-parameter-1}, is satisfied.  Using \eqref{decay:proof-psilo} and using the H\"{o}lder estimate $\| \psi_{\lo}\|_{L_x^{r}}^{r+q-1}\leq C \| \psi_{\lo}\|_{L_x^{r+q-1}}^{r+q-1}$, where $C$ only depends on $q$ and $r$,  we then obtain for all $t\in [t_0,t_0+\tau]$ that either
\begin{align*}
\big\| \psi_{\lo}(t) \big\|_{L_x^r} 
&\leq \tfrac{1}{2} \max\big(  \big\| \psi_{\lo}(t_0) \big\|_{L_x^r},  C_4 L^{\kappafour}  \big) \\ 
\text{or} \qquad 
\tfrac{1}{r} \tfrac{\mathrm{d}}{\mathrm{d}t}  \big\| \psi_{\lo}(t) \big\|_{L_x^r}^r 
&\leq - \tfrac{1}{4C} \big\| \psi_{\lo}(t) \big\|_{L_x^r}^{r+q-1}
\leq - \tfrac{1}{2^{q+1}C} \big\| \psi_{\lo}(t_0) \big\|_{L_x^r}^{q-1} \big\| \psi_{\lo}(t) \big\|_{L_x^r}^{r}.
\end{align*}
Using a continuity argument and Gronwall's inequality, it then follows for all $t\in [t_0,t_0+\tau]$ that
\begin{equation*}
    \big\| \psi_{\lo}(t) \big\|_{L_x^r}
\leq \max \Big( e^{-\frac{1}{2^{q+1} C} \| \psi_{\lo}(t_0) \|_{L_x^r}^{q-1} (t - t_0)} \big\| \psi_{\lo}(t_0) \big\|_{L_x^r},
\frac{1}{2} \big\| \psi_{\lo}(t_0) \big\|_{L_x^r}, \frac{1}{2} C_4 L^{\kappafour} \Big). 
\end{equation*}
After choosing $t=t_0+\tau$, this completes the proof of \eqref{decay:proof-second-1}. \\

\emph{Third step: Another decay estimate for $\psi_{\lo}$.} 
In this step, we prove that if 
\begin{equation}\label{decay:proof-third-1}
\big\| \psi_{\lo}(t_0) \big\|_{L_x^r}^{q} \tau \geq 2^{q+4} C C_1 L^{\kappaone},  
\end{equation}
then it holds that 
\begin{equation}\label{decay:proof-third-2} 
\big\| \psi_{\lo}(t_0+\tau) \big\|_{L_x^r} + C_1 L^{\kappaone}
\leq e^{-c_0 \tau} \max \big( \big\| \psi_{\lo}(t_0) \|_{L_x^r}, C_4 L^{\kappafour} \big). 
\end{equation}
In order to prove \eqref{decay:proof-third-2}, we first use \eqref{decay:proof-second-1} from the second step of this proof, which implies that 
\begin{equation}\label{decay:proof-third-3} 
\big\| \psi_{\lo}(t_0+\tau) \big\|_{L_x^r} + C_1 L^{\kappaone}
\leq \max \Big( e^{-\frac{1}{2^{q+1}C} \| \psi_{\lo}(t_0) \|_{L_x^r}^{q-1} \tau } \big\| \psi_{\lo}(t_0) \big\|_{L_x^r},
\tfrac{1}{2} \big\| \psi_{\lo}(t_0) \big\|_{L_x^r}, \tfrac{1}{2} C_4 L^{\kappafour} \Big) + C_1 L^{\kappaone}. 
\end{equation}
If the maximum in \eqref{decay:proof-third-3} is achieved by the second or third argument, then it is easy to see that the $L^{\kappaone}$-term can be absorbed. Indeed, since $\tau\leq1$, it holds that $e^{-c_0\tau} \geq \frac{3}{4}$, and therefore 
\begin{align*}
\max \Big( \tfrac{1}{2} \big\| \psi_{\lo}(t_0) \big\|_{L_x^r}, \tfrac{1}{2} C_4 L^{\kappafour} \Big) + C_1 L^{\kappaone}  
&\leq \big( \tfrac{1}{2} + \tfrac{C_1}{C_4} \big)
\max \Big(  \big\| \psi_{\lo}(t_0) \big\|_{L_x^r},  C_4 L^{\kappafour} \Big) \\
&\leq e^{-c_0\tau} \max \Big(  \big\| \psi_{\lo}(t_0) \big\|_{L_x^r},  C_4 L^{\kappafour} \Big). 
\end{align*}
Thus, we may assume that the maximum in \eqref{decay:proof-third-3} is achieved by the first argument. In particular, we may assume that 
\begin{equation}\label{decay:proof-third-4}
\big\| \psi_{\lo}(t_0) \big\|_{L_x^r} \geq \tfrac{1}{2} C_4 L^{\kappafour}.   
\end{equation}
In order to absorb the $L^{\kappaone}$-term, we then need to show that 
\begin{equation*}
e^{-\frac{1}{2^{q+1}C} \| \psi_{\lo}(t_0) \|_{L_x^r}^{q-1} \tau } \big\| \psi_{\lo}(t_0) \big\|_{L_x^r}  + C_1 L^{\kappaone} \leq e^{-c_0 \tau} \big\| \psi_{\lo}(t_0)\big\|_{L_x^r}.
\end{equation*}
Equivalently, we need to show that 
\begin{equation}\label{decay:proof-third-5}
\Big( e^{-c_0 \tau} - e^{-\frac{1}{2^{q+1}C} \| \psi_{\lo}(t_0) \|_{L_x^r}^{q-1} \tau } \Big) \big\| \psi_{\lo}(t_0) \big\|_{L_x^r} \geq C_1 L^{\kappaone}. 
\end{equation}
In order to prove \eqref{decay:proof-third-5}, we distinguish two regimes. In the regime $\| \psi_{\lo}(t_0)\|_{L_x^r}^{-(q-1)}\leq \tau \leq 1$, we estimate
\begin{equation*}
\Big( e^{-c_0 \tau} - e^{-\frac{1}{2^{q+1}C} \| \psi_{\lo}(t_0) \|_{L_x^r}^{q-1} \tau } \Big) \big\| \psi_{\lo}(t_0) \big\|_{L_x^r} 
\geq \Big( e^{-c_0} - e^{-\frac{1}{2^{q+1}C}} \Big) \big\| \psi_{\lo}(t_0) \big\|_{L_x^r} . 
\end{equation*}
Using the lower bound on $\psi_{\lo}(t_0)$ from \eqref{decay:proof-third-4} and the definition of $c_0$, this implies \eqref{decay:proof-third-5}, and it therefore remains to treat the regime
$\tau \leq \| \psi_{\lo}(t_0)\|_{L_x^r}^{-(q-1)}$. To this end, we recall the elementary inequality $e^{-y}-e^{-z}\geq e^{-1} (z-y)$, which holds for all $0\leq y \leq z \leq 1$. From this inequality, we obtain that 
\begin{align*}
\Big( e^{-c_0 \tau} - e^{-\frac{1}{2^{q+1}C} \| \psi_{\lo}(t_0) \|_{L_x^r}^{q-1} \tau } \Big) \big\| \psi_{\lo}(t_0) \big\|_{L_x^r} 
\geq  e^{-1} \Big( \tfrac{1}{2^{q+1}C}   \| \psi_{\lo}(t_0) \|_{L_x^r}^{q-1} - c_0 \Big) \tau \| \psi_{\lo}(t_0) \|_{L_x^r} 
\geq \tfrac{1}{2^{q+2} C e}  \| \psi_{\lo}(t_0) \|_{L_x^r}^q \tau. 
\end{align*}
Due to our assumption on $\tau$ from \eqref{decay:proof-third-1}, this implies the desired estimate \eqref{decay:proof-third-5}.\\

\emph{Fourth step: Final estimate for $\psi_{\lo}$.} 
In this step, we prove \eqref{decay:eq-short-main-estimate-e1}, i.e., the desired estimate for $\psi_{\lo}$. If the condition in \eqref{decay:proof-third-1} is satisfied, then \eqref{decay:eq-short-main-estimate-e1} follows directly from \eqref{decay:proof-third-2}. It therefore only remains to treat the case
\begin{equation}\label{decay:proof-fourth-q1}
\big\| \psi_{\lo}(t_0) \big\|_{L_x^r}^{q} \tau \leq 2^{q+4} C C_1 L^{\kappaone}. 
\end{equation}
Due to the parameter condition 
\begin{equation}\label{decay:proof-parameter-2}
q- \beta + \mathcal{O}(\nu) \geq 0,
\end{equation}
the inequality in \eqref{decay:proof-fourth-q1} cannot be satisfied in the case $\tau \sim c_3 \| \psi_{\lo}(t_0) \|_{L_x^r}^{-\beta}$.  Due to \eqref{decay:proof-first-2} and \eqref{decay:proof-fourth-q1}, it must therefore hold that 
\begin{equation}\label{decay:proof-fourth-q2}
\| \psi_{\lo}(t_0) \|_{L_x^r} \leq \big( 2^{q+4} C C_1 L^{\kappaone} \tau^{-1} \big)^{\frac{1}{q}} \sim \big( 2^{q+4}  C C_1 C_3 L^{\kappaone} \| \pregA_0 \|_{\Beta}^{\alpha} \big)^{\frac{1}{q}}. 
\end{equation}
Using the parameter condition
\begin{equation}\label{decay:proof-parameter-3}
\frac{\alpha}{q} + \mathcal{O}(\nu) \leq \gamma, 
\end{equation}
using Lemma \ref{decay:lem-short-bounds}, and using Young's inequality, it then follows that
\begin{equation}\label{decay:proof-fourth-q3}
\begin{aligned}
  \| \psi_{\lo}(t_0+\tau) \|_{L_x^r}  + C_1 L^{\kappaone} 
  &\lesssim \big( 2^{q+4} C C_1 C_3 L^\kappa \| \pregA_0 \|_{\Beta}^{\alpha} \big)^{\frac{1}{q}} + 2 C_2 L^{\kappatwo}  \\
  &\leq \tfrac{1}{2} \max \big( \| \pregA_0\|_{\Beta}^\gamma, C_4 L^{\kappafour} \big).
\end{aligned}
\end{equation}
From \eqref{decay:proof-fourth-q3}, we then directly obtain \eqref{decay:eq-short-main-estimate-e1}. \\ 

\emph{Fifth step: Estimate of $\pregA$.} 
 In this step, we prove the desired estimate \eqref{decay:eq-short-main-estimate-e2}, whose proof requires a case distinction. In the case $\| \pregA_0\|_{\Beta}^\alpha \leq \| \psi_{\lo}(t_0)\|_{L_x^r}^\beta$, we use \eqref{decay:eq-prep-1}, the first estimate in \eqref{decay:proof-Blin}, and \eqref{decay:proof-Z}, which yield that
\begin{equation}\label{decay:proof-fifth-q2}
\begin{aligned}
\big\| \pregA \big\|_{\Beta} + C_2 L^{\kappatwo}
&\leq \big\| \pregA_0 \big\|_{\Beta}
+ \big\| \psi_{\lo}(t_0) \big\|_{L_x^r}^{(2-\frac{\beta}{2}+\nu)} + 2 C_2 L^{\kappatwo} \\
&\leq  \big\| \psi_{\lo}(t_0) \big\|_{L_x^r}^{\frac{\beta}{\alpha}}
+  \big\| \psi_{\lo}(t_0) \big\|_{L_x^r}^{(2-\frac{\beta}{2}+\nu)} +2  C_2 L^{\kappatwo}.  
\end{aligned}
\end{equation}
Using the parameter condition
\begin{equation}\label{decay:proof-parameter-4} 
\max\big( \tfrac{\beta}{\alpha}, 2 - \tfrac{\beta}{2} \big) \gamma + \mathcal{O}(\nu) \leq 1
\end{equation}
and Young's inequality, we then obtain that
\begin{equation}\label{decay:proof-fifth-q3}
\eqref{decay:proof-fifth-q2} \leq 2^{-\frac{1}{\gamma}} \max\Big( \| \psi_{\lo}(t_0)\|_{L_x^r}^{1-\nu}, \big( C_4 L^{\kappafour} \big)^{1-\nu} \Big)^{\frac{1}{\gamma}}.
\end{equation}
From \eqref{decay:proof-fifth-q3}, we then directly obtain the desired estimate  \eqref{decay:eq-short-main-estimate-e2}.  In the case $\| \pregA_0\|_{\Beta}^\alpha \geq \| \psi_{\lo}(t_0)\|_{L_x^r}^\beta$, we only use the first inequality in \eqref{decay:proof-fifth-q2}. Then, the proof of \eqref{decay:eq-short-main-estimate-e2} can be reduced to the two inequalities 
\begin{equation}
    \big\| \psi_{\lo}(t_0) \big\|_{L_x^r}^{2-\frac{\beta}{2}+\nu}  
    \leq \tfrac{1}{2} c_3 \tau \big\| \pregA_0 \big\|_{\Beta}^{1-\nu}
    \qquad \text{and} \qquad 
    2 C_2 L^{\kappatwo} \leq \tfrac{1}{2} c_3 \tau \big\| \pregA_0 \big\|_{\Beta}^{1-\nu}.
    \label{decay:proof-fifth-q4}
\end{equation}
Both of the inequalities in \eqref{decay:proof-fifth-q4} can easily be obtained from \eqref{decay:proof-first-1}, $\tau \sim c_3 \| \pregA_0 \|_{\Beta}^{-\alpha}$, and the parameter conditions 
\begin{equation}\label{decay:proof-parameter-5}
\tfrac{\alpha}{\beta} \big( 2 - \tfrac{\beta}{2} \big) + \mathcal{O}(\nu) \leq 1-\alpha 
\qquad \text{and} \qquad 
1- \alpha \geq \mathcal{O}(\nu).
\end{equation}

\emph{Sixth step: Refined estimates of $\pregA$.} We now prove \eqref{decay:eq-short-main-estimate-e3} and \eqref{decay:eq-short-main-estimate-e4}, which are obtained from a modification of the proof of \eqref{decay:eq-short-main-estimate-e2}. In the case $\| \pregA_0\|_{\Beta}^\alpha \leq \| \psi_{\lo}(t_0)\|_{L_x^r}^\beta$, we argue exactly as in \eqref{decay:proof-fifth-q2} and \eqref{decay:proof-fifth-q3}, which then yield \eqref{decay:eq-short-main-estimate-e3} and \eqref{decay:eq-short-main-estimate-e4}. In the case $\| \pregA_0\|_{\Beta}^\alpha \geq \| \psi_{\lo}(t_0)\|_{L_x^r}^\beta$, we use the second and third inequality in \eqref{decay:proof-Blin}, \eqref{decay:proof-Z}, and \eqref{decay:proof-fifth-q4}, which imply that 
\begin{align}
\big\| \bigdotpregA \big\|_{\Beta} + C_2 L^{\kappatwo}
&\leq e^{-c \tau} \big\| \bigdotpregA_0 \big\|_{\Beta} + c_3 \tau \big\| \pregA_0 \big\|_{\Beta}^{1-\nu}, \label{decay:proof-sixth-1}\\
\big| \sfint \pregA \big| + C_2 L^{\kappatwo}
&\leq \big| \sfint \pregA_0 \big|  + c_3 \tau \big\| \pregA_0 \big\|_{\Beta}^{1-\nu}. \label{decay:proof-sixth-2}
\end{align}
Compared to the previous step, the important difference is the $e^{-c\tau}$-factor in \eqref{decay:proof-sixth-1}. 
From \eqref{decay:proof-sixth-2}, we directly obtain \eqref{decay:eq-short-main-estimate-e4}, and it therefore only remains to prove \eqref{decay:eq-short-main-estimate-e3}. From Definition \ref{prelim:def-besov},  we obtain that 
\begin{align}
&\big\| \pregA \big\|_{\Beta}  + C_2 L^{\kappatwo} \notag \\
=&\, \max \Big(\big\| \bigdotpregA \big\|_{\Beta},  \big| \sfint \pregA \big|\Big) + C_2 L^{\kappatwo}  \notag \\
\leq&\, \max \Big( e^{-c \tau} \big\| \bigdotpregA_0 \big\|_{\Beta}, \big| \sfint \pregA_0 \big|  \Big) + c_3 \tau \big\| \pregA_0 \big\|_{\Beta} \notag \\ 
\leq&\,  \max \Big( e^{-c \tau} \big\| \bigdotpregA_0 \big\|_{\Beta}, \big| \sfint \pregA_0 \big|  \Big) + c_3 \tau 
 \max \Big( \big\| \bigdotpregA_0 \big\|_{\Beta}, \big| \sfint \pregA_0 \big|  \Big). \label{decay:proof-sixth-3}
\end{align}
If $e^{-c \tau} \big\| \bigdotpregA_0 \big\|_{\Beta} \geq  \big| \sfint \pregA_0 \big|$, one can estimate
\begin{equation*}
\eqref{decay:proof-sixth-3}
\leq \big( e^{-c \tau} +  c_3 \tau  \big) \big\| \bigdotpregA_0 \big\|_{\Beta} \leq e^{-(c-2c_3) \tau}\big\| \bigdotpregA_0 \big\|_{\Beta}  \leq  e^{-(c-2c_3) \tau}\big\| \pregA_0 \big\|_{\Beta}.
\end{equation*}
Since $c_0,c_3\ll c$, this directly implies \eqref{decay:eq-short-main-estimate-e3}. Alternatively, if $e^{-c \tau} \big\| \bigdotpregA_0 \big\|_{\Beta} \leq  \big| \sfint \pregA_0 \big|$, one can estimate
\begin{equation*}
\eqref{decay:proof-sixth-3}
\leq  \big( 1 + c_3 \tau e^{c\tau} \big) \big| \sfint \pregA_0 \big| 
\leq  \big( 1 + 2 c_3 \tau  \big) \big| \sfint \pregA_0 \big| 
\leq e^{4c_3 \tau} \big| \sfint \pregA_0 \big|,
\end{equation*}
where we used $e^{c\tau}\leq 2$. Due to the $\sfint \pregA_0$-term on the right-hand side of \eqref{decay:eq-short-main-estimate-e3} and the parameter condition $c_3 \ll c_0$, this directly implies \eqref{decay:eq-short-main-estimate-e3}. \\

\emph{Seventh step: $L_t^\infty$-estimate.} 
It only remains to prove the $L_t^\infty$-estimate \eqref{decay:eq-short-Linfty}, which is rather simple. Using Lemma \ref{prelim:lem-heat-flow-bound-decay}, Hypothesis \ref{AH:hypothesis-probabilistic}, \eqref{decay:proof-Z}, \eqref{decay:proof-parameter-4}, and the embeddings $\Cs_x^{2\eta}\hookrightarrow \Beta \hookrightarrow \Cs_x^{-\kappa}$, we obtain on the time-interval $[t_0,t_0+\tau]$ that
\begin{equation}\label{decay:eq-short-Linfty-p1}
\begin{aligned}
 \big\| A \big\|_{L_t^\infty \Cs_x^{-\kappa}}^\gamma 
\lesssim&\,  \big\| \, \linear \big\|_{L_t^\infty \Cs_x^{-\kappa}}^\gamma
+ \big\| \Blin \big\|_{L_t^\infty \Beta}^\gamma 
+ \big\| \Slin \big\|_{L_t^\infty \Cs_x^{-\kappa}}^\gamma 
+ \big\| Z \big\|_{L_t^\infty \Cs_x^{2\eta}}^\gamma \\
\leq &\, C_0 \max\Big( \|A_0 \|_{\Beta}^\gamma, \| \pregphi_0 \|_{L_x^r}, C_4 L^{\kappafour} \Big).
\end{aligned}
\end{equation}
Similarly, using Lemma \ref{prelim:lem-heat-flow-bound-decay}, Hypothesis \ref{AH:hypothesis-probabilistic}
and the embeddings $\Cs_x^{\eta} \hookrightarrow L_x^\infty \hookrightarrow L_x^r \hookrightarrow \GCs^{-\kappa}$, we obtain on the time-interval $[t_0,t_0+\tau]$ that
\begin{equation}\label{decay:eq-short-Linfty-p2}
\begin{aligned}
\big\| \phi \big\|_{L_t^\infty \GCs^{-\kappa}}
&\lesssim \big\| \, \philinear[\Blin,\lo] \big\|_{L_t^\infty L_x^\infty}
+  \big\| \, \philinear[\lo] \big\|_{L_t^\infty L_x^\infty}
+  \big\| \, \philinear[] \big\|_{L_t^\infty \GCs^{-\kappa}}
+ \big\| \psi_{\lo} \big\|_{L_t^\infty L_x^r}
+ \big\| \psi_{\hi} \big\|_{L_t^\infty \Cs_x^\eta}
+ \big\| \varphi_{\hi} \big\|_{L_t^\infty \GCs^{-\kappa}} \\
&\leq C_0  \max \Big(  \| \pregA_0 \|_{\Beta}^\gamma, \| \pregphi_0 \|_{L_x^r}, C_4 L^{\kappafour} \Big).
\end{aligned}
\end{equation}
By combining \eqref{decay:eq-short-Linfty-p1} and \eqref{decay:eq-short-Linfty-p2}, we obtain \eqref{decay:eq-short-Linfty}, which completes the proof.
\end{proof}

\begin{remark}[Conditions on $\alpha$, $\beta$, and $\gamma$]\label{decay:rem-parameters} In the following discussion, we omit all $\mathcal{O}(\nu)$-terms. In the proof of Proposition \ref{decay:prop-short}, we imposed the parameter conditions \eqref{decay:proof-parameter-1}, \eqref{decay:proof-parameter-2}, \eqref{decay:proof-parameter-3}, \eqref{decay:proof-parameter-4}, and \eqref{decay:proof-parameter-5}.
In the proof of Lemma \ref{decay:lem-short-bounds}, we previously imposed the parameter conditions \eqref{decay:eq-bounds-parameter-1}, \eqref{decay:eq-bounds-parameter-2}, and \eqref{decay:eq-bounds-parameter-3}. All together, we imposed the parameter conditions
\begin{align}
\frac{2}{3} &<\alpha <1, \label{decay:eq-parameter-collected-1} \\ 
\max \Big( \frac{q+2}{q+\frac{1}{2}}, 4-q \Big) 
&<\beta < q, \label{decay:eq-parameter-collected-2} \\
\max \Big( \frac{\beta}{\alpha}, 2 - \frac{\beta}{2} \Big) &<\frac{1}{\gamma} <\frac{q}{\alpha},  \label{decay:eq-parameter-collected-3} \\
\frac{2\alpha}{1-\frac{\alpha}{2}} &< \beta. \label{decay:eq-parameter-collected-4}
\end{align}
The most significant conditions are the lower bound in \eqref{decay:eq-parameter-collected-1}, the upper bound in \eqref{decay:eq-parameter-collected-2}, and \eqref{decay:eq-parameter-collected-4}, which imply that
\begin{equation}\label{decay:eq-parameter-collected-5}
2 < \beta < q. 
\end{equation}
In the important case $q=3$, this prevents\footnote{As discussed in Remark \ref{decay:rem-parameters-first}, it may be possible to relax the condition $\alpha>\frac{2}{3}$, and it therefore may be possible to relax the lower bound in \eqref{decay:eq-parameter-collected-5}.} us from choosing $\beta=q-1$. Even in the case $\tau \sim c_1 \|\psi_{\lo}(t_0)\|_{L_x^r}^{-\beta}$, Remark \ref{decay:rem-ODE} suggests that the size of $\| \psi_{\lo}(t)\|_{L_x^r}$ may not decrease by a constant multiple of $\| \psi_{\lo}(t_0)\|_{L_x^r}$, which illustrates the delicate nature of Proposition \ref{decay:prop-short}.  
\end{remark}

\begin{remark}[Polynomial growth]\label{decay:rem-polynomial}
We note that, in the proof of \eqref{decay:eq-short-A}, there is room in the exponents, see e.g. \eqref{decay:proof-fifth-q2}-\eqref{decay:proof-fifth-q3} and \eqref{decay:proof-fifth-q4}-\eqref{decay:proof-parameter-5}. For this reason, it should be possible to prove a variant of \eqref{decay:eq-short-A} which leads to polynomial rather than exponential growth in Theorem \ref{intro:thm-abelian-higgs}.\ref{intro:item-AH-1}. However, since our main interest lies in the gauge invariant, uniform-in-time bounds in Theorem \ref{intro:thm-abelian-higgs}.\ref{intro:item-AH-2}, we do not pursue this further.
\end{remark}

At the end of this subsection, we state and prove a crude growth estimate. The crude growth estimate is only needed to circumvent a technical problem in the proofs of Lemma \ref{decay:lem-exponential-growth} and Lemma \ref{decay:lem-decay-unit}, and can be safely skipped on first reading. 

\begin{lemma}[Crude growth estimate]\label{decay:lem-crude}
Let the same assumptions as in Proposition \ref{decay:prop-short} be satisfied, except that the time-scale $\tau>0$ only needs to be upper-admissible. Then, it holds that 
\begin{equation}\label{decay:eq-crude}
\max \Big( \| \pregA \|_{\Beta}^\gamma, \| \pregphi \|_{L_x^r} \Big) 
\leq e^{c_1}  \max \Big( \| \pregA_0 \|_{\Beta}^\gamma,  \| \pregphi_0 \|_{L_x^r}, C_4 L^{\kappafour} , \tau^{-\eta}\Big).
\end{equation}
Furthermore, it holds that 
\begin{equation}\label{decay:eq-crude-mean}
| \sfint \pregA |^{\gamma} \leq 
e^{c_1} \max\Big( |\sfint \pregA_0 |^\gamma, \| \pregA_0 \|_{\Beta}^{(1-\nu)\gamma},  \| \pregphi_0 \|_{L_x^r}^{1-\nu}, \frac{3}{4} C_4 L^{\kappafour} \Big) 
\end{equation} 
and 
\begin{equation}\label{decay:eq-crude-Linfty}
\max \Big(  \| A \|_{L_t^\infty \Cs_x^{-\kappa}([t_0,t_0+\tau])}^\gamma, \| \phi \|_{L_t^\infty \GCs^{-\kappa}([t_0,t_0+\tau])} \Big)
\leq C_0  \max \Big(  \| \pregA_0 \|_{\Beta}^\gamma, \| \pregphi_0 \|_{L_x^r}, C_4 L^{\kappafour} \Big). 
\end{equation}
\end{lemma}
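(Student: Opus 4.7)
The plan is to combine Lemma \ref{decay:lem-short-bounds} (which was already stated for merely upper-admissible $\tau$) with the term-by-term ansatz from Subsection \ref{section:AH-Ansatz}, relaxing the use of \eqref{decay:eq-tau-eta} wherever the original proof of Proposition \ref{decay:prop-short} invoked the lower bound on $\tau$.

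First, I would dispose of \eqref{decay:eq-crude-Linfty}, since the seventh step of the proof of Proposition \ref{decay:prop-short} carries over verbatim: it only invokes Lemma \ref{prelim:lem-heat-flow-bound-decay}, the probabilistic hypothesis (Hypothesis \ref{AH:hypothesis-probabilistic}), and the bounds of Lemma \ref{decay:lem-short-bounds}, none of which require the lower admissibility bound on $\tau$. Next, for \eqref{decay:eq-crude}, I would decompose
\begin{equs}
\pregA = \Blin(t_0+\tau) + \Slin(t_0+\tau) + Z(t_0+\tau),
\qquad
\pregphi = \big(\philinear[\Blin,\lo] - \philinear[\lo]\big)(t_0+\tau) + \psi_{\lo}(t_0+\tau) + \psi_{\hi}(t_0+\tau) + \varphi_{\hi}(t_0+\tau),
\end{equs}
and estimate each summand. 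The terms $\Blin(t_0+\tau)$ and $\psi_{\lo}(t_0+\tau)$ are controlled by Lemma \ref{prelim:lem-heat-flow-bound-decay} and Lemma \ref{decay:lem-short-bounds} respectively, producing the first two arguments of the maximum on the right-hand side of \eqref{decay:eq-crude}. The contribution of $Z(t_0+\tau)$ from \eqref{decay:eq-bounds-Z} is bounded by $\|\psi_{\lo}(t_0)\|_{L_x^r}^{2-\beta/2+\nu} + C_2 L^{\kappatwo}$, and since $(2-\beta/2)\gamma = \tfrac{3}{4}\cdot \tfrac{2}{7} < 1$, Young's inequality allows its $\gamma$-power to be absorbed into $\|\pregphi_0\|_{L_x^r} + L^{\kappafour}$ (after applying \eqref{decay:eq-prep-3}).

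The remaining terms $\Slin(t_0+\tau)$, $(\philinear[\Blin,\lo] - \philinear[\lo])(t_0+\tau)$, $\psi_{\hi}(t_0+\tau)$ and $\varphi_{\hi}(t_0+\tau)$ are the low-regularity pieces: by Lemma \ref{lemma:heat-flow-smoothing}, the condition \eqref{AH:eq-L-nreg}, Hypothesis \ref{AH:hypothesis-probabilistic}, and \eqref{decay:eq-bounds-psi-hi}, each contributes at most $\tau^{-(\eta+\kappa)} L^{\kappa} + L^{\eta_1-\eta}$. To absorb the factor $\tau^{-(\eta+\kappa)\gamma} L^{\kappa \gamma}$ into $\max(L^{\kappafour},\tau^{-\eta})$, I would split into the two regimes $\tau^{-1}\leq L$ and $\tau^{-1} > L$: in the first regime, the estimate collapses to $L^{2\kappa\gamma}\leq L^{\kappafour}$; in the second regime, we have $\tau^{-\kappa}L^{\kappa} \leq \tau^{-2\kappa}$ so that $\tau^{-(\eta+\kappa)\gamma} L^{\kappa\gamma} \leq \tau^{-(\eta+2\kappa)\gamma} \leq \tau^{-\eta}$, using $\gamma \leq \eta/(\eta+2\kappa)$, which holds since $\kappa \ll \eta$ and $\gamma = 2/7 < 1$.

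Finally, for \eqref{decay:eq-crude-mean}, I would mimic the sixth step of the proof of Proposition \ref{decay:prop-short}. The key identity $\sfint \Blin(t_0+\tau)=\sfint \Blin(t_0)=\sfint \pregA_0$ from \eqref{decay:proof-Blin} decouples the mean from the oscillatory components, and combined with the bounds of Lemma \ref{decay:lem-short-bounds} on $Z(t_0+\tau)$ and the crude bound on $\Slin(t_0+\tau)$ gives the inequality with $e^{c_1}$ in place of $e^{c_0 \tau}$; since we only ask for a constant prefactor rather than a sharp decay factor, the parameter conditions \eqref{decay:proof-parameter-4}--\eqref{decay:proof-parameter-5} used in Proposition \ref{decay:prop-short} deliver the result without further difficulty. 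The main technical obstacle is the careful tracking of the $\tau^{-\kappa}$-losses from $\Slin$ and $\varphi_{\hi}$ when the lower admissibility bound (and hence \eqref{decay:eq-tau-eta}) is unavailable; the splitting argument above is the cleanest way to handle this.
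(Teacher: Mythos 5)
Your decomposition of $\pregA$ and $\pregphi$ and the dispatching of the $\Blin$, $Z$, $\psi_{\lo}$, $\psi_{\hi}$, and $(\philinear[\Blin,\lo]-\philinear[\lo])$ terms all match the paper's proof. The gap is in the handling of the low-regularity pieces $\Slin(t_0+\tau)$ and $\varphi_{\hi}(t_0+\tau)$. Your regime split on $\tau^{-1}\lessgtr L$ is the wrong dichotomy, and in regime~1 ($\tau^{-1}\leq L$) the claim that ``the estimate collapses to $L^{2\kappa\gamma}\leq L^{\kappafour}$'' is false as stated. In regime~1, after bounding $\tau^{-\kappa}L^{\kappa}\leq L^{2\kappa}$ and $\tau^{-\eta\gamma}\leq\tau^{-\eta}$, you are left with $\tau^{-\eta}L^{2\kappa\gamma}$, and this product does \emph{not} reduce to a pure $L$-comparison: for $\tau^{-1}\in\big(L^{\kappafour/\eta},L\big]$ one has $\tau^{-\eta}L^{2\kappa\gamma} > \tau^{-\eta} \geq \max\big(L^{\kappafour},\tau^{-\eta}\big)$, so the bound fails. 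What saves the argument is that $\gamma<1$ leaves genuine room in the exponent of $\tau$, not of $L$. The natural split is on $\tau^{-\eta}\lessgtr L^{\kappafour}$ (matching the two branches of the maximum): when $\tau^{-\eta}\leq L^{\kappafour}$ everything is swallowed by $L^{\kappafour\gamma(1+\kappa/\eta)+\kappa\gamma}\ll L^{\kappafour}$; when $\tau^{-\eta}>L^{\kappafour}$ one writes $\tau^{-(\eta+\kappa)\gamma}L^{\kappa\gamma} = \tau^{-\eta}\cdot\tau^{\eta(1-\gamma)-\kappa\gamma}L^{\kappa\gamma}$ and uses $\tau^{\eta(1-\gamma)-\kappa\gamma}\leq L^{-\kappafour(1-\gamma-\kappa\gamma/\eta)}\ll L^{-\kappa\gamma}$. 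Equivalently, one can avoid the split entirely via $\max(a,b)\geq a^{\theta}b^{1-\theta}$ with, say, $\theta=\tfrac12$, using $(\eta+\kappa)\gamma<\eta/2$ and $\kappa\gamma<\kappafour/2$. Either repair is short, but your written argument does not contain it.

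There is a secondary issue in your treatment of \eqref{decay:eq-crude-mean}. You propose to feed ``the crude bound on $\Slin(t_0+\tau)$'' into the sixth-step argument, but the crude bound is exactly the $\tau^{-(\eta+\kappa)}L^{\kappa}$ estimate, and \eqref{decay:eq-crude-mean} has no $\tau^{-\eta}$ on the right-hand side to absorb this loss. What is actually needed, and what the paper uses, is that the heat flow preserves the mean, so $\sfint\Slin(t_0+\tau)=\sfint\Slin(t_0)=\sfint\nregA_0$, which is bounded by $\|\nregA_0\|_{\Cs_x^{-\kappa}}\leq L^{\kappa}$ with no loss in $\tau$ (and similarly for $\varphi_{\hi}$ in the $\GCs^{-\kappa}$-norm, which is why \eqref{decay:eq-crude-Linfty} is also $\tau^{-\eta}$-free). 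You correctly invoke mean-preservation for $\Blin$ but not for $\Slin$, where it is equally essential.
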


We remark that, since Lemma \ref{decay:lem-crude} will only be used once in the proofs of Lemma \ref{decay:lem-exponential-growth} and Lemma \ref{decay:lem-decay-unit}, the $e^{c_1}$-factor in \eqref{decay:eq-crude} does not cause any difficulties. 

\begin{proof}
We only prove \eqref{decay:eq-crude}, since \eqref{decay:eq-crude-mean} and \eqref{decay:eq-crude-Linfty} can then be obtained using similar arguments as in the sixth and seventh step of the proof of Proposition \ref{decay:prop-short}. We note that the $\tau^{-\eta}$ term is needed in neither \eqref{decay:eq-crude-mean} nor \eqref{decay:eq-crude-Linfty}, since $|\sfint S(t_0 + \tau)|$, $\|\Slin\|_{L_t^\infty \Cs_x^{-\kappa}([t_0, t_0+\tau])}$, and $\|\varphi_{\hi}\|_{L_t^\infty \GCs^{-\kappa}([t_0, t_0 + \tau])}$ are controlled by $\|\Slin(t_0)\|_{\Cs_x^{-\kappa}}$ and  $\|\nregphi_0\|_{\GCs^{-\kappa}}$.  Due to \eqref{AH:eq-decomposition-A}, \eqref{AH:eq-decomposition-phi-first}, \eqref{AH:eq-decomposition-psi}, and \eqref{decay:eq-short-new}, it suffices to prove the five estimates 
\begin{align}
\| \Blin \|_{C_t^0 \Beta([t_0,t_0+\tau])}^\gamma &\leq \| \pregA_0 \|_{\Beta}^{\gamma}, \label{decay:eq-crude-p1} \\ 
\| Z \|_{C_t^0 \Beta([t_0,t_0+\tau])}^{\gamma} &\leq c_2 \max \big( \| \pregphi_0 \|_{L_x^r}, C_4 L^{\kappafour} \big), \label{decay:eq-crude-p2} \\
\| \philinear[\Blin,\lo] - \philinear[\lo] \|_{C_t^0 L_x^r([t_0,t_0+\tau])} 
&\leq c_2 C_4 L^{\kappafour}, \label{decay:eq-crude-p3} \\ 
\| \psi_{\lo} \|_{C_t^0 L_x^r([t_0,t_0+\tau])} &\leq e^{c_2} \max \big( \| \pregphi_0 \|_{L_x^r}, C_4 L^{\kappafour} \big), \label{decay:eq-crude-p4} \\ 
\| \psi_{\hi} \|_{C_t^0 L_x^r([t_0,t_0+\tau])} &\leq c_2 C_4 L^{\kappafour}, 
\label{decay:eq-crude-p5}
\end{align}
which are uniform over the time-interval $[t_0,t_0+\tau]$, and the two estimates 
\begin{align}
\| \Slin(t_0+\tau) \|_{\Beta}^\gamma &\leq c_2 \max\big( C_4 L^{\kappafour}, \tau^{-\eta} \big), \label{decay:eq-crude-p6} \\ 
\| \varphi_{\hi}(t_0 + \tau) \|_{L_x^r} &\leq c_2 \max\big( C_4 L^{\kappafour}, \tau^{-\eta} \big), \label{decay:eq-crude-p7}
\end{align}
which only concern the time $t=t_0+\tau$. In the following, we often use that $C_4$ is sufficiently large depending on $c_2$, which will not be repeated below. The first estimate \eqref{decay:eq-crude-p1} follows directly from Lemma \ref{prelim:lem-heat-flow-bound-decay}. The second estimate \eqref{decay:eq-crude-p2} follows directly from Lemma \ref{decay:lem-short-bounds} and the parameter condition \eqref{decay:proof-parameter-4}. The third estimate \eqref{decay:eq-crude-p3} follows directly from Hypothesis \ref{AH:hypothesis-probabilistic}. The fourth and fifth estimate \eqref{decay:eq-crude-p4} and \eqref{decay:eq-crude-p5} follow directly from Lemma \ref{decay:lem-short-bounds}. In order to obtain the sixth estimate \eqref{decay:eq-crude-p6}, we use Lemma \ref{lemma:heat-flow-smoothing} and \eqref{AH:eq-L-nreg}, which yield that 
\begin{equation*}
\| S(t_0+\tau) \|_{\Beta} \lesssim \tau^{-(\eta+\kappa)} \| \nregA_0 \|_{\Cs_x^{-\kappa}} \lesssim \tau^{-(\eta+\kappa)}  L^{\kappa}.
\end{equation*}
Since $\kappa$ is much smaller than $\kappafour$ and $\eta$ and $\gamma=\frac{2}{7}$ is smaller than one, this then implies \eqref{decay:eq-crude-p6}. Similarly, using Lemma  \ref{lemma:heat-flow-smoothing} and \eqref{AH:eq-L-nreg}, we also obtain
\begin{equation*}
\| \varphi_{\hi}(t_0+\tau) \|_{L_x^r} \lesssim \tau^{-\kappa} \| \nregphi_0 \|_{\Cs_x^{-\kappa}} \lesssim \tau^{-\kappa}  L^{\kappa}.
\end{equation*}
Since $\kappa$ is much smaller than $\kappafour$ and $\eta$, this implies \eqref{decay:eq-crude-p7}. 
\end{proof}

\subsection{Decay estimates on unit time-scales}\label{section:decay-unit}

We now estimate our solution on unit time-scales, which is primarily done by iterating our estimates on admissible time-scales from Proposition \ref{decay:prop-short}. Since we iterate in time, it is important to keep track of the initial time in the definition of our stochastic objects from \eqref{AH:eq-A-linear}, \eqref{AH:eq-philinear-lo}, and \eqref{AH:eq-philinear-hi}. For instance, in Lemma \ref{decay:lem-exponential-growth} and Lemma \ref{decay:lem-decay-unit}, the statement involves $\initiallinear[\bfzero]$ rather than $\initiallinear[t_0]$.
For expository purposes, we first prove a growth estimate (Lemma \ref{decay:lem-exponential-growth}), which is not invariant under the discrete gauge symmetry from \eqref{intro:eq-group-action-Zd}. Then, we prove a gauge invariant decay estimate (Lemma \ref{decay:lem-decay-unit}). 

\begin{lemma}[Exponential growth estimate]\label{decay:lem-exponential-growth} Let $t_0 \geq 0$, let $\pregA_0\colon (\Omega,\Fc) \rightarrow \Beta$, and let $\pregphi_0\colon (\Omega,\Fc) \rightarrow L_x^r$. Furthermore, assume that $\pregA_0$ and $\pregphi_0$ are $\Fc_{t_0}$-measurable. Then, the solution of the stochastic Abelian-Higgs equations \eqref{AH:eq-evolution-A}-\eqref{AH:eq-evolution-phi} with initial data 
\begin{equs}
A(t_0) = \initiallinear[\bfzero] (t_0) + \pregA_0 \qquad \text{and} \qquad 
\phi(t_0) = \initialphilinear[\bfzero] (t_0) + \pregphi_0
\end{equs}
almost surely exists on the time-interval $[t_0,t_0+1]$. Furthermore, for all $p\geq 1$, we have the growth estimate
\begin{equation}\label{decay:eq-growth}
\begin{aligned}
&\, \E \Big[ \max \Big( \big\| A(t_0+1) - \initiallinear[\bfzero] (t_0+1) \big\|_{\Beta}^\gamma, 
\big\| \phi(t_0+1) - \initialphilinear[\bfzero] (t_0+1) \big\|_{L_x^r} \Big)^p \Big]^{\frac{1}{p}} \\ 
\leq&\, e^{2c_0}  \E \Big[ \max \Big( \big\| A(t_0) - \initiallinear[\bfzero] (t_0) \big\|_{\Beta}^\gamma, 
\big\| \phi(t_0) - \initialphilinear[\bfzero] (t_0) \big\|_{L_x^r} \Big)^p \Big]^{\frac{1}{p}} 
+ C_7 \, p^{\frac{1}{\kappa}}.
\end{aligned}
\end{equation}
In addition, we also have the $L_t^\infty$-estimate 
\begin{equation}\label{decay:eq-growth-linfty}
\begin{aligned}
&\, \E \Big[ \max \Big( \big\| A \big\|_{L_t^\infty \Cs_x^{-\kappa}([t_0,t_0+1])}^\gamma, 
\big\| \phi \big\|_{L_t^\infty \GCs^{-\kappa}([t_0,t_0+1])} \Big)^p \Big]^{\frac{1}{p}} \\ 
\leq&\, C_0 e^{2c_0}  \E \Big[ \max \Big( \big\| A(t_0) - \initiallinear[\bfzero] (t_0) \big\|_{\Beta}^\gamma, 
\big\| \phi(t_0) - \initialphilinear[\bfzero] (t_0) \big\|_{L_x^r} \Big)^p \Big]^{\frac{1}{p}} 
+ C_0 C_7 \, p^{\frac{1}{\kappa}}.
\end{aligned}
\end{equation}
\end{lemma}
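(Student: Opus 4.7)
The plan is to iterate Proposition~\ref{decay:prop-short} across $[t_0,t_0+1]$ on a good event, and then convert the resulting deterministic bound into a moment bound via Lemma~\ref{prelim:lem-comparison}. For a probabilistic parameter $\lambda \geq 1$ (ultimately of size $\sim p^{1/2}$), I would choose an $\mc{F}_{t_0}$-measurable $L \in \dyadic$ of the form $L \sim \max\big(\max(\|\pregA_0\|_{\Beta}, \|\pregphi_0\|_{L_x^r})^{C/\eta_3}, \lambda^{4/\kappa}\big)$, which ensures both \eqref{AH:eq-L} and $L^{\kappa/2} \gtrsim \lambda^2$. I would then define stopping times $t_0 = T_0 < T_1 < \cdots < T_K = t_0+1$ with $\tau_k := T_{k+1} - T_k$ admissible (in the sense of Definition~\ref{decay:def-tau}) relative to the running data $\pregA(T_k) := A(T_k) - \initiallinear[\bfzero](T_k)$ and $\pregphi(T_k) := \phi(T_k) - \initialphilinear[\bfzero](T_k)$, with the last $\tau_k$ possibly only upper-admissible (handled via Lemma~\ref{decay:lem-crude}).

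At each step I would apply Proposition~\ref{decay:prop-short} starting at $T_k$, with $(\nregA_0, \pregA_0, \nregphi_0, \pregphi_0) = (\initiallinear[\bfzero](T_k), \pregA(T_k), \initialphilinear[\bfzero](T_k), \pregphi(T_k))$. Its output $\pregA^{(k+1)} := A(T_{k+1}) - \initiallinear[T_k](T_{k+1})$ is related to the desired $\pregA(T_{k+1})$ by $\pregA(T_{k+1}) = \pregA^{(k+1)} - e^{\tau_k(\Delta-1)}\initiallinear[\bfzero](T_k)$, and on the event $\{\|\initiallinear[\bfzero]\|_{L_t^\infty \Cs_x^{-\kappa}([0,t_0+1])} \leq \lambda \leq L^\kappa\}$ this correction is controlled in $\Beta$ by $\lesssim \tau_k^{-(\eta+\kappa)/2}L^\kappa \lesssim L^{2\kappa}$ using \eqref{decay:eq-tau-eta} and Lemma~\ref{lemma:heat-flow-smoothing}; an analogous bound holds for $\pregphi$. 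Combining these corrections with \eqref{decay:eq-short-phi} and \eqref{decay:eq-short-A}, the quantity
\begin{equs}
N_k := \max\big(\|\pregA(T_k)\|_{\Beta}^\gamma, \|\pregphi(T_k)\|_{L_x^r}, C_4 L^{\kappafour}\big)
\end{equs}
satisfies $N_{k+1} \leq e^{c_0 \tau_k} N_k$, so telescoping with $\sum_k \tau_k = 1$ yields $N_K \leq e^{c_0} N_0$, giving the deterministic version of \eqref{decay:eq-growth}. The $L_t^\infty$-bound \eqref{decay:eq-growth-linfty} follows from applying \eqref{decay:eq-short-Linfty} on each subinterval and taking maxima.

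For the probability bound I would invoke Proposition~\ref{AH:prop-probabilistic} conditionally on $\mc{F}_{T_k}$ at each step, each giving conditional failure probability $\leq c^{-1}\exp(-cL^{\kappa/2}) \leq c^{-1}\exp(-c\lambda^2)$. Since $N_k \leq e^{c_0}N_0$ throughout and $\tau_k \gtrsim N_k^{-\alpha/\gamma}$, the number of steps $K$ is polynomial in $L$; combined with the event controlling the linear stochastic objects by $\lambda$ (which has Gaussian tails), a union bound produces an event $E_\lambda$ with $\bP(\Omega \setminus E_\lambda \mid \mc{F}_{t_0}) \leq \exp(-\lambda^2)$. On $E_\lambda$, the estimate $X \leq e^{c_0} Y + C_4 L^{\kappafour}$ holds with $X, Y$ the random variables in \eqref{decay:eq-growth}, and the bound $L^{\kappafour} \lesssim Y + \lambda^{4\kappafour/\kappa}$ combined with $4\kappafour/\kappa \leq 2/\kappa$ (using $\kappafour \ll 1$ from \eqref{prelim:eq-parameter-ordering}) allows Lemma~\ref{prelim:lem-comparison} with $k = 2/\kappa$ to deliver \eqref{decay:eq-growth}, with the slack $e^{2c_0} - e^{c_0}$ absorbing the linear-in-$Y$ part of the $L^{\kappafour}$-error.

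The main obstacle is coordinating the iteration so that both the accumulated failure probability (summed over $K$ steps) and the additive error $C_4 L^{\kappafour}$ remain subleading uniformly in the random size of the initial data. This hinges on the ``self-improving'' property that $N_k$ grows only by the constant $e^{c_0}$ across the unit interval: it keeps $K$ polynomial in $L$ (so the union bound survives) and keeps $L^{\kappafour}$ small enough to be absorbed into the $p^{1/\kappa}$-term of the moment bound. In particular, the parameter hierarchy $\kappafour \ll \eta_3$ from \eqref{prelim:eq-parameter-ordering} is exactly what makes this absorption possible.
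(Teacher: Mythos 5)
Your proposal is correct and takes essentially the same approach as the paper's proof: iterate Proposition~\ref{decay:prop-short} on a good event (using Lemma~\ref{decay:lem-crude} for the final, possibly non-admissible step), control the failure probability by conditioning via Proposition~\ref{AH:prop-probabilistic} and taking a union bound over the (polynomially many) iteration steps, and convert to moment bounds via Lemma~\ref{prelim:lem-comparison}. The only departures are cosmetic bookkeeping choices, such as controlling $\initiallinear[\bfzero]$ through an extra event of size $\lambda$ rather than folding $\|\initiallinear[\bfzero](t_0)\|_{\Cs_x^{-\kappa}}$ into the definition of $L$, and leaving the iteration count as ``polynomial in $L$'' rather than deriving the explicit bound $\tilde J \sim L^{10\eta_3}$.
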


\begin{proof} 
Since \eqref{decay:eq-growth-linfty} can easily be obtained by combining the argument below with \eqref{decay:eq-short-Linfty} and \eqref{decay:eq-crude-Linfty}, we only prove \eqref{decay:eq-growth}. While the proof of \eqref{decay:eq-growth} is technical, the idea behind it is rather simple: We just iterate Proposition~\ref{decay:prop-short} and, once we are close to the time $t_0+1$, use Lemma~\ref{decay:lem-crude} for the last step in the iteration. The only reason for using Lemma~\ref{decay:lem-crude} in the last step, rather than to keep using Proposition~\ref{decay:prop-short}, is that otherwise the iteration may take us to times larger than $t_0+1$.
For expository purposes, we separate the proof of \eqref{decay:eq-growth} into four steps.\\

\emph{First step: Setup.}
We first recall that $(\Fc_t)_{t\geq 0}$ is the filtration generated by the space-time white noises $\xi$ and $\zeta$. Then, 
we let $\lambda\in \dyadic$ and let $L$ be defined as the smallest dyadic number satisfying  
\begin{equation}\label{decay:eq-growth-p1}
L \geq C_5 \max \Big( \lambda^{\frac{4}{\kappa}}, \big\| \pregA_0 \big\|_{\Beta}^{\frac{20}{\eta_3}}, 
\big\| \pregphi_0 \big\|_{L_x^r}^{\frac{20}{\eta_3}}, \big\| \initiallinear[\bfzero] (t_0) \big\|_{\Cs_x^{-\kappa}}^{\frac{2}{\kappa}}, \big\| \initialphilinear[\bfzero](t_0) \big\|_{\GCs^{-\kappa}}^{\frac{2}{\kappa}} \Big).
\end{equation}
We note that, by definition, $L$ is $\Fc_{t_0}$-measurable. 
The number of total steps $J\in \mathbb{N}\medcup \{ \infty \}$, time-scales $(\tau_j)_{j=0}^{J-1} \subseteq (0,1)$, times $(t_j)_{j=1}^{J}\subseteq [t_0,t_0+1]$, connection one-forms $(\pregA_j)_{j=1}^J$, and scalar fields $(\pregphi_j)_{j=1}^J$ are then iteratively defined as follows: Let $j\geq 1$ and assume that $\tau_k$ has been defined for all $0\leq k \leq j-2$ and $t_k$, $\pregA_k$, and $\pregphi_k$ have been defined for all $1\leq k \leq j-1$. In the case $t_{j-1}=t_0+1$, we stop the iteration, which simply means that we set $J:=j-1$. In the case $t_{j-1}<t_0+1$, we first define $\tau_{j-1}^\ast$ as
\begin{equation}\label{decay:eq-growth-q1}
\tau_{j-1}^\ast := c_3 \max \Big( \big\| \pregA_{j-1} \big\|_{\Beta}^\alpha, \big\| \pregphi_{j-1} \big\|_{L_x^r}^\beta, C_4 L^{\kappafour} \Big)^{-1}.
\end{equation}
Then, we define $\tau_{j-1}$ as\footnote{The reason behind \eqref{decay:eq-growth-q1} and \eqref{decay:eq-growth-q2} is to obtain the inequality in \eqref{decay:eq-growth-q4}. This makes sure that the last time-scale $\tau_{J-1}$ is not too small, and this allows us to control the growth from the last time-step using Lemma \ref{decay:lem-crude}.}
\begin{equation}\label{decay:eq-growth-q2}
\tau_{j-1} 
:= 
\begin{cases}
\begin{tabular}{ll}
$\tau_{j-1}^\ast$ &if $t_{j-1}+2\tau_{j-1}^\ast <t_0+1$,\\
$(t_0+1)-t_{j-1}$  &if  $t_{j-1} + 2 \tau_{j-1}^\ast \geq t_0+1$.
\end{tabular}
\end{cases}
\end{equation}
In the first or second case in \eqref{decay:eq-growth-q2}, the time-scale $\tau_{j-1}$ is admissible or upper-admissible, respectively.  We note that, by definition, $\tau_{j-1}$ is $\Fc_{t_{j-1}}$-measurable.  Furthermore, we define  
\begin{equation}\label{decay:eq-growth-q3}
t_j := t_{j-1} + \tau_{j-1}, \qquad \pregA_j := A(t_j) - \initiallinear[t_{j-1}] (t_j) \qquad \text{and} \qquad 
\pregphi_j := \phi(t_j) - \initialphilinear[t_{j-1}] (t_j),
\end{equation}
where $\initiallinear[t_{j-1}] (t_j) $ and $\initialphilinear[t_{j-1}] (t_j)$ are the linear stochastic objects with zero initial data at time $t=t_{j-1}$.
We note that, if the second case in \eqref{decay:eq-growth-q2} occurs, then the iteration terminates after that step. In that case, it then holds that
\begin{equation*}
t_0 +1 = t_{J} = t_{J-2} + \tau_{J-2} + \tau_{J-1} \qquad \text{and} \qquad 
t_0 + 1 > t_{J-2} + 2 \tau_{J-2}, 
\end{equation*}
which implies 
\begin{equation}\label{decay:eq-growth-q4} 
\tau_{J-1} \geq \tau_{J-2}.
\end{equation}
In addition, for all $0\leq j \leq J-1$, we define the event 
\begin{equation}\label{decay:eq-growth-p3}
E^{\lambda}_j := E \big( t_j, \tau_j, L, \pregA_j),
\end{equation}
where $E(\cdot)$ is as in Definition \ref{decay:def-probabilistic-event}. For  all $0\leq j \leq J-1$, we also let 
\begin{equation}\label{decay:eq-growth-p4}
E^\lambda_{\leq j} := \bigcap_{k=0}^j E^\lambda_k \qquad \text{and} \qquad E^\lambda := E^{\lambda}_{\leq J-1}.
\end{equation}
Finally, we define 
\begin{equation*}
\pregA := A(t_0+1) - \initiallinear[\bfzero](t_0+1) 
\qquad \text{and} \qquad 
\pregphi := \phi(t_0+1) - \initialphilinear[\bfzero](t_0+1),
\end{equation*}

\emph{Second step: Bounds on $A$ and $\phi$.} We now claim that, for all $0\leq j\leq J-1$, it holds that 
\begin{equation}\label{decay:eq-growth-p5}
\begin{aligned}
&\ind_{E^{\lambda}_{\leq j}} \max \Big( \big\| \pregA_{j+1} \big\|_{\Beta}^\gamma, \big\| \pregphi_{j+1} \big\|_{L_x^r} \Big) \\ 
\leq&\, e^{c_0 (t_{j+1}-t_0)} \big( \ind_{j<J-1} + e^{c_1} \ind_{j=J-1} \big) \max \Big(  \big\| \pregA_0 \big\|_{\Beta}^\gamma, \big\| \pregphi_0 \big\|_{L_x^r}, C_4 L^{\kappafour} \Big).    
\end{aligned}
\end{equation}
We remark that \eqref{decay:eq-growth-p5} directly implies that, on the event $E^\lambda$, the time-scales $(\tau_j)_{j=0}^{J-1}$ are uniformly bounded below and, therefore, the number of total steps $J$ is finite. 
In order to obtain \eqref{decay:eq-growth-p5}, we use Proposition \ref{decay:prop-short} in the first case from \eqref{decay:eq-growth-q2} and use Lemma \ref{decay:lem-crude} in the second case from \eqref{decay:eq-growth-q2}. Therefore, we need to verify that the conditions \eqref{AH:eq-L} and \eqref{AH:eq-L-nreg} are satisfied at every step of the iteration and that the $\tau$-term in \eqref{decay:eq-crude} is irrelevant. For this, it suffices to verify that, for all $0\leq j \leq J-1$, 
\begin{align}
\ind_{E^{\lambda}_{\leq j-1}} \max \Big( 1, \big\| \pregA_j \big\|_{\Beta}^{\frac{2}{\eta_3}}, \big\| \pregphi_j \big\|_{L_x^r}^{\frac{2}{\eta_3}} \Big) &\leq L, \label{decay:eq-growth-p5p} \\ 
 \ind_{E^{\lambda}_{\leq j-1}}  \max \Big( \big\| \initialphilinear[t_{j-1}] (t_j) \big\|_{\GCs^{-\kappa}}, 
\big\| \initiallinear[t_{j-1}] (t_j) \big\|_{\Cs_x^{-\kappa}} \Big)&\leq L^{\kappa}, 
\label{decay:eq-growth-p5pp} \\ 
  \ind_{E^{\lambda}_{\leq J-1}} \tau_{J-1}^{-\eta} &\leq C_3 L^{\kappathree}. \label{decay:eq-growth-p5ppp} 
\end{align}
The first bound \eqref{decay:eq-growth-p5p} follows from the previous step in the iteration procedure, i.e., \eqref{decay:eq-growth-p5} with $j$ replaced by $j-1$. The $L^{\kappafour}$-term in \eqref{decay:eq-growth-p5} causes no problems in \eqref{decay:eq-growth-p5p} due to the inequality 
\begin{equation*}
\big( 2 C_4 L^{\kappafour}\big)^{\frac{20}{\eta_3}} \leq L, 
\end{equation*}
which holds since $L\geq C_5$ is large and $\kappafour$ is much smaller than $\eta_3$. The second bound \eqref{decay:eq-growth-p5pp} holds due to the inclusion $E^\lambda_{\leq j-1}\subseteq E^\lambda_{j-1}$ and the definition of $E^\lambda_{j-1}$ (see Definition \ref{decay:def-probabilistic-event}).
The third bound \eqref{decay:eq-growth-p5ppp} can be obtained by first using \eqref{decay:eq-growth-q4}, then using \eqref{decay:eq-growth-p5p} for $j=J-2$, and finally using that $\eta$ is much smaller than the quotient~$\frac{\kappathree}{\eta_3}=100\nu^{-20}\eta$. \\

As a direct consequence of \eqref{decay:eq-growth-p5}, we obtain that
\begin{equation}\label{decay:eq-growth-end-time}
\ind_{E^{\lambda}} \max \Big( \big\| \pregA_J \big\|_{\Beta}^\gamma, \big\| \pregphi_J \big\|_{L_x^r} \Big)
\leq e^{c_0+c_1} \max \Big(  \big\| \pregA_0 \big\|_{\Beta}^\gamma, \big\| \pregphi_0 \big\|_{L_x^r}, C_4 L^{\kappafour} \Big).
\end{equation}
From the definitions of $\pregA_J$, $\pregA$, $\pregphi_J$, and $\pregphi$, we have that
\begin{align*}
\pregA_J - \pregA 
&= \initiallinear[\bfzero](t_0+1) - \initiallinear[t_{J-1}](t_0+1) = e^{\tau_{J-1} (\Delta-1)} \initiallinear[\bfzero](t_{J-1})  \\ 
\text{and} \qquad \qquad 
\pregphi_J - \pregphi &= \initialphilinear[\bfzero](t_0+1) - \initialphilinear[t_{J-1}](t_0+1) = e^{\tau_{J-1} (\Delta-1)} \initialphilinear[\bfzero](t_{J-1}). 
\end{align*}
Using Lemma \ref{lemma:heat-flow-smoothing} and \eqref{decay:eq-growth-p5ppp}, it follows that
\begin{equation}\label{decay:eq-growth-p8}
\big\| \pregA_J - \pregA \big\|_{\Beta} \lesssim \tau_{J-1}^{-\eta-\kappa} \big\| \initiallinear[\bfzero](t_{J-1}) \big\|_{\Cs_x^{-\kappa}}
\lesssim (C_3 L^{\kappathree})^2 \big\| \initiallinear[\bfzero] \big\|_{L_t^\infty \Cs_x^{-\kappa}([t_0,t_0+1])}.
\end{equation}
Similarly, it also holds that
\begin{equation}\label{decay:eq-growth-p9}
\big\| \pregphi_J - \pregphi \big\|_{L_x^r} \lesssim \tau_{J-1}^{-\kappa} \big\| \initialphilinear[\bfzero] \big\|_{L_t^\infty \Cs_x^{-\kappa}([t_0,t_0+1])}
 \lesssim   (C_3 L^{\kappathree})^2  \big\| \initialphilinear[\bfzero] \big\|_{L_t^\infty \GCs^{-\kappa}([t_0,t_0+1])}.
\end{equation}
By combining \eqref{decay:eq-growth-end-time}, \eqref{decay:eq-growth-p8}, and \eqref{decay:eq-growth-p9}
and using Young's inequality, we obtain that 
\begin{equation}\label{decay:eq-growth-p10}
\begin{aligned}
   &\, \ind_{E^{\lambda}} \max \Big( \big\| \pregA \big\|_{\Beta}^\gamma, \big\| \pregphi \big\|_{L_x^r} \Big) \\
\leq&\, e^{c_0+c_1} \max \Big(  \big\| \pregA_0 \big\|_{\Beta}^\gamma, \big\| \pregphi_0 \big\|_{L_x^r} \Big)
+ 2 C_4 L^{\kappafour} + C_4 \max\Big( \big\| \initiallinear[\bfzero] \big\|_{L_t^\infty \Cs_x^{-\kappa}([t_0,t_0+1])},\big\| \initialphilinear[\bfzero] \big\|_{L_t^\infty \GCs^{-\kappa}([t_0,t_0+1])} \Big)^2.
\end{aligned}
\end{equation}
The estimate \eqref{decay:eq-growth-p10} is almost in the desired form, and it only remains to express the right-hand side in terms of $\lambda$ rather than $L$. Using the definition of $L$, using Young's inequality, and using that $\kappafour$ is much smaller than $\eta_3$, we estimate
\begin{align*}
2C_4 L^{\kappafour}
&\lesssim C_4 C_5 \max \Big( \lambda^{\frac{4\kappafour}{\kappa}}, \big\| \pregA_0 \big\|_{\Beta}^{\frac{20\kappafour}{\eta_3}}, 
\big\| \pregphi_0 \big\|_{L_x^r}^{\frac{20\kappafour}{\eta_3}}, \big\| \initiallinear[\bfzero] (t_0) \big\|_{\Cs_x^{-\kappa}}^{\frac{2\kappafour}{\kappa}}, \big\| \initialphilinear[\bfzero](t_0) \big\|_{\GCs^{-\kappa}}^{\frac{2\kappafour}{\kappa}} \Big) \\
&\leq c_1 \max \Big(  \big\| \pregA_0 \big\|_{\Beta}^\gamma, \big\| \pregphi_0 \big\|_{L_x^r} \Big) 
 + \tfrac{1}{2} C_6 \max\Big( \big\| \initiallinear[\bfzero] \big\|_{L_t^\infty \Cs_x^{-\kappa}([t_0,t_0+1])},\big\| \initialphilinear[\bfzero] \big\|_{L_t^\infty \GCs^{-\kappa}([t_0,t_0+1])} \Big)^{\frac{2\kappafour}{\kappa}}
 + C_6 \lambda^{\frac{4\kappafour}{\kappa}}. 
\end{align*}
By inserting this back into \eqref{decay:eq-growth-p10} and using that $c_1$ is much smaller than $c_0$, we then obtain that 
\begin{equation}\label{decay:eq-growth-p11}
\begin{aligned}
   &\, \ind_{E^{\lambda}} \max \Big( \big\| \pregA \big\|_{\Beta}^\gamma, \big\| \pregphi \big\|_{L_x^r} \Big) \\
\leq&\, e^{2c_0} \max \Big(  \big\| \pregA_0 \big\|_{\Beta}^\gamma, \big\| \pregphi_0 \big\|_{L_x^r} \Big)
+ C_6 \max\Big( \big\| \initiallinear[\bfzero] \big\|_{L_t^\infty \Cs_x^{-\kappa}([t_0,t_0+1])},\big\| \initialphilinear[\bfzero] \big\|_{L_t^\infty \GCs^{-\kappa}([t_0,t_0+1])} \Big)^{\frac{2\kappafour}{\kappa}} + C_6 \lambda^{\frac{4\kappafour}{\kappa}}. 
\end{aligned}
\end{equation}

\emph{Third step: Probability of $E^\lambda$.} 
For notational convenience, we first extend our definitions of the times $t_j$ and events $E^\lambda_j$ and $E^\lambda_{\leq j}$ to all $j\geq 0$, rather than just $0\leq j\leq J$ or $0\leq j\leq J-1$, respectively.
We define
\begin{equation}\label{decay:eq-growth-r1} 
\widetilde{t}_j := 
\begin{cases} 
\begin{tabular}{ll}
$t_j$ &if $j\leq J$, \\ 
$t_0+1$ &if $j>J$, 
\end{tabular}
\end{cases} \quad 
\widetilde{E}^\lambda_j := 
\begin{cases} 
\begin{tabular}{ll}
$E^\lambda_j$ &if $j\leq J-1$, \\ 
$\Omega$ &if $j>J-1$, 
\end{tabular}
\end{cases}
\quad \text{and} \qquad \quad 
\widetilde{E}^\lambda_{\leq j} := \bigcap_{k=0}^j \widetilde{E}^\lambda_{k}.
\end{equation}
We note that, by definition, it holds that $J\geq j+1$ if and only if $\widetilde{t}_j<t_0+1$. Furthermore, we define $\widetilde{J} := c_3^{-1} C_4 L^{10\eta_3}$. We now show for all $j\geq \widetilde{J}$ that 
\begin{equation}\label{decay:eq-growth-r2}
\big\{ J - 1 \geq j   \big\} \medcap \widetilde{E}^\lambda_{\leq j-1} = \emptyset.
\end{equation}
Indeed, on the event $\{ J - 1 \geq j \} \medcap \widetilde{E}^\lambda_{\leq j-1}$, it follows from \eqref{decay:eq-growth-q1}, \eqref{decay:eq-growth-q2}, \eqref{decay:eq-growth-q4}, and \eqref{decay:eq-growth-p5p} that 
\begin{equation}\label{decay:eq-growth-r3}
\min_{0\leq k \leq j-1} \tau_k \geq c_3 \max_{0\leq k \leq j-1} \max \Big( \big\| \pregA_k \big\|_{\Beta}^\alpha, \big\| \pregphi_k \big\|_{L_x^r}^\beta, C_4 L^{\kappafour} \Big)^{-1} 
\geq c_3 C_4^{-1} L^{-10\eta_3} = \frac{1}{\widetilde{J}},
\end{equation}
where we also used that $\alpha,\beta\leq 20$. On the event $\{ J - 1 \geq j \} \medcap \widetilde{E}^\lambda_{\leq j-1}$, it also holds that 
\begin{equation}\label{decay:eq-growth-r4}
\min_{0\leq k \leq j-1} \tau_k \leq \frac{t_j - t_0}{j} < \frac{(t_0+1)-t_0}{j}=\frac{1}{j}.
\end{equation}
In the case  $j\geq \widetilde{J}$, the estimates in \eqref{decay:eq-growth-r3} and \eqref{decay:eq-growth-r4} are in contradiction, which then implies \eqref{decay:eq-growth-r2}. From \eqref{decay:eq-growth-r1} and \eqref{decay:eq-growth-r2}, it follows that 
\begin{equation}\label{decay:eq-growth-r5} 
\Omega \backslash E^\lambda = \big( \Omega \backslash E_0^\lambda \big) \medcup \bigcup_{j=1}^{J-1} \big( E^\lambda_{\leq j-1} \backslash E^\lambda_j \big) 
= \big( \Omega \backslash \widetilde{E}_0^\lambda \big) \medcup \bigcup_{j=1}^{\widetilde{J}-1}
\big( \widetilde{E}^\lambda_{\leq j-1} \backslash \widetilde{E}^\lambda_j \big).
\end{equation}
From Proposition \ref{AH:prop-probabilistic} and \eqref{decay:eq-growth-p5p}, we also obtain for all $j\geq 0$ that 
\begin{equation}\label{decay:eq-growth-r6}
\ind_{\widetilde{E}^\lambda_{\leq j-1}} \mathbb{P} \Big( \big( \Omega \backslash \widetilde{E}^\lambda_j \big) \Big| \Fc_{\widetilde{t}_j} \Big) \leq c^{-1} \exp \big( - c L^{\frac{\kappa}{2}} \big),
\end{equation}
where $c$ is an absolute constant depending only on the parameters from  \eqref{prelim:eq-parameter-new-eta-nu}-\eqref{prelim:eq-parameter-new-r}. By combining \eqref{decay:eq-growth-r5} and \eqref{decay:eq-growth-r6}, we then obtain that 
\begin{align*}
\bP \Big( \big( \Omega \backslash E^\lambda \big) \Big| \Fc_{t_0} \Big) 
&\leq \bP \Big( \big( \Omega \backslash \widetilde{E}^\lambda_0 \big) \Big| \Fc_{\widetilde{t}_0} \Big) 
+ \sum_{j=1}^{\widetilde{J}-1} \bP \Big( \big( \widetilde{E}_{\leq j-1}^\lambda \backslash \widetilde{E}_j^\lambda \big) \Big| \Fc_{\widetilde{t}_0} \Big) \\
&\leq  \bP \Big( \big( \Omega \backslash \widetilde{E}^\lambda_0 \big) \Big| \Fc_{\widetilde{t}_0} \Big) 
+ \sum_{j=1}^{\widetilde{J}-1} \E \Big[ \ind_{\widetilde{E}^\lambda_{\leq j-1}} \bP \Big( \big( \Omega \backslash \widetilde{E}_j^\lambda \big) \Big| \Fc_{\widetilde{t}_j} \Big) \Big| \Fc_{\widetilde{t}_0} \Big] \\
&\leq c^{-1} \widetilde{J} \exp \big( -c L^{\frac{\kappa}{2}} \big).
\end{align*}
Using the definitions of $\widetilde{J}$ and $L$ and using that $C_5$ is large depending on $c_3$, $c_4$, and the parameters from \eqref{prelim:eq-parameter-new-eta-nu}-\eqref{prelim:eq-parameter-new-r}, we then obtain that 
\begin{equation*}
c^{-1} \widetilde{J} \exp \big( -c L^{\frac{\kappa}{2}} \big) \leq c^{-1} c_3^{-1} C_4 L^{10\eta_3} \exp \big( -c L^{\frac{\kappa}{2}} \big) \leq \exp \big( - \tfrac{1}{2} c  L^{\frac{\kappa}{2}} \big)  
\leq \exp\big( - \lambda^2 \big).
\end{equation*}
As a result, it follows that 
\begin{equation}\label{decay:eq-growth-final-probability}
\bP \Big( \big( \Omega \backslash E^\lambda \big) \Big| \Fc_{t_0} \Big)  \leq \exp\big( - \lambda^2 \big).
\end{equation}

\emph{Fourth step: Conclusion.} 
We now define the non-negative random variables  
\begin{align*}
X&:= \max\Big( \big\| \pregA \big\|_{\Beta}^\gamma, \big\| \pregphi\big\|_{L_x^r} \Big) \\  
\qquad \text{and} \qquad 
Y&:= e^{2c_0} \max \Big(  \big\| \pregA_0 \big\|_{\Beta}^\gamma, \big\| \pregphi_0 \big\|_{L_x^r} \Big)
+ C_6 \max\Big( \big\| \initiallinear[\bfzero] \big\|_{L_t^\infty \Cs_x^{-\kappa}([t_0,t_0+1])},\big\| \initialphilinear[\bfzero] \big\|_{L_t^\infty \GCs^{-\kappa}([t_0,t_0+1])} \Big)^{\frac{2\kappafour}{\kappa}}.
\end{align*}
Due to our estimates from \eqref{decay:eq-growth-p11} and \eqref{decay:eq-growth-final-probability}, the conditions in \eqref{prelim:eq-comparison} from Lemma \ref{prelim:lem-comparison} are satisfied. As a result, we obtain from Lemma \ref{prelim:lem-comparison} that
\begin{align*}
 \E \big[ X^p \big]^{\frac{1}{p}} 
\leq  \E \big[ Y^p \big]^{\frac{1}{p}}
+ \frac{1}{2} C_7  p^{\frac{4\kappafour}{\kappa}} 
\leq e^{2c_0} \E \Big[ \max \Big(  \big\| \pregA_0 \big\|_{\Beta}^\gamma, \big\| \pregphi_0 \big\|_{L_x^r} \Big)^p \Big]^{\frac{1}{p}} +  C_7  p^{\frac{4\kappafour}{\kappa}},
\end{align*}
where the last inequality was obtained using the triangle inequality in $L^p(\Omega)$ and Lemma \ref{lemma:linear-gc}. 
Since $\kappafour$ is much smaller than one, this implies the desired estimate \eqref{decay:eq-growth}.
\end{proof}

As a direct consequence of Lemma \ref{decay:lem-exponential-growth} (and the local theory discussed in Section \ref{section:gauge-covariance}), we already obtain the following corollary. 

\begin{corollary}[Global well-posedness]\label{decay:cor-gwp}
Let $A_0 \in \Beta(\T^2 \rightarrow \R^2)$ and let $\phi_0 \in L_x^r(\T^2\rightarrow \C)$.
Then, the stochastic Abelian-Higgs equations \eqref{AH:eq-evolution-A}-\eqref{AH:eq-evolution-phi} have a global solution 
\begin{equation*}
(A,\phi) \in C_{t,\textup{loc}} \Cs_x^{-\kappa}([0,\infty)\times \T^2 \rightarrow \R^2 \times \C), 
\end{equation*}
which coincides with the limit of the smooth approximations $(A_{\leq N},\phi_{\leq N})$ obtained from \eqref{intro:eq-SAH-smooth}. 
\end{corollary}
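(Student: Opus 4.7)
The plan is to iterate Lemma \ref{decay:lem-exponential-growth} across successive unit time-intervals $[n, n+1]$ for $n = 0, 1, 2, \ldots$ to obtain a global solution almost surely, and then to identify this solution with the $N \to \infty$ limit of the smooth approximations via Corollary \ref{cor:blowup-characterization}.

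At the base step $n=0$, the initial conditions in \eqref{AH:eq-A-linear} and \eqref{AH:eq-philinear} force $\initiallinear[\bfzero](0) = 0$ and $\initialphilinear[\bfzero](0) = 0$, so the decomposition required by Lemma \ref{decay:lem-exponential-growth} reads simply $\pregA_0 := A_0 \in \Beta$ and $\pregphi_0 := \phi_0 \in L_x^r$, both deterministic and hence $\Fc_0$-measurable. The lemma produces, almost surely, a solution $(A,\phi) \in C_t^0 \Cs_x^{-\kappa}([0,1] \times \T^2)$ of \eqref{AH:eq-evolution-A}-\eqref{AH:eq-evolution-phi}, and the moment bound \eqref{decay:eq-growth} combined with Markov's inequality guarantees that $\pregA_1 := A(1) - \initiallinear[\bfzero](1) \in \Beta$ and $\pregphi_1 := \phi(1) - \initialphilinear[\bfzero](1) \in L_x^r$ almost surely. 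For the inductive step, suppose $(A,\phi)$ has been constructed on $[0,n]$ with $\pregA_n \in \Beta$ and $\pregphi_n \in L_x^r$ almost surely; these random variables are $\Fc_n$-measurable since $(A(n), \phi(n))$, $\initiallinear[\bfzero](n)$, and $\initialphilinear[\bfzero](n)$ are all adapted. Applying Lemma \ref{decay:lem-exponential-growth} at $t_0 = n$ with this initial data extends $(A,\phi)$ almost surely to $[n, n+1]$ and produces $\pregA_{n+1} \in \Beta$, $\pregphi_{n+1} \in L_x^r$ almost surely. Intersecting countably many almost-sure events yields a single global object in $C_{t,\textup{loc}} \Cs_x^{-\kappa}([0,\infty) \times \T^2)$ that solves \eqref{AH:eq-evolution-A}-\eqref{AH:eq-evolution-phi} on every finite interval.

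To identify this solution with $\lim_{N \to \infty}(A_{\leq N}, \phi_{\leq N})$, I would invoke Corollary \ref{cor:blowup-characterization} with $\Cgauge = \gaugerenorm$: since $(A,\phi) = \solutionmap(A_0, \phi_0, \xi, \zeta; \gaugerenorm)$ now exists on $[0,T]$ for every finite $T > 0$, the corollary ensures that $(A_{\leq N}, \phi_{\leq N})$ exists on $[0,T]$ for $N$ large enough and converges to $(A, \phi)$ in $C_t^0 \Cs_x^{-\kappa}([0,T] \times \T^2)$. Consistency of the iterative construction across subintervals follows from local uniqueness in $C_t^0 \Cs_x^{-\kappa}$, which is inherited from Proposition \ref{prop:lwp-para-sah}.

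The main difficulty is not a single conceptual step but careful bookkeeping: verifying $\Fc_n$-measurability of the initial data at each iteration, controlling the countably many null sets arising from the almost-sure statements (handled by a single countable intersection), and ensuring that the object pieced together from the restart-of-clock constructions at $t_0 = 0, 1, 2, \ldots$ genuinely agrees on overlaps with the single global solution whose existence the statement asserts. The last point is the most subtle, but becomes automatic from the identification with the common $N \to \infty$ limit once local uniqueness is invoked.
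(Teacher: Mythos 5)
Your argument is correct and coincides with the paper's intended proof: the paper disposes of Corollary~\ref{decay:cor-gwp} in one sentence as a ``direct consequence of Lemma~\ref{decay:lem-exponential-growth} and the local theory,'' and your proposal is exactly that unit-interval iteration (initializing with $\pregA_0 = A_0$, $\pregphi_0 = \phi_0$ since $\initiallinear[\bfzero](0) = \initialphilinear[\bfzero](0) = 0$, and propagating $\Fc_n$-measurability and a.s.\ membership in $\Beta \times L_x^r$ via \eqref{decay:eq-growth}), followed by the identification with $\lim_N (A_{\leq N}, \phi_{\leq N})$ through Corollary~\ref{cor:blowup-characterization}. The only slight over-caution is the worry about ``agreement on overlaps'': the unit intervals meet only at their endpoints, and consistency there is immediate from local uniqueness (Proposition~\ref{prop:lwp-para-sah}), as you note.
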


The growth in Lemma \ref{decay:lem-exponential-growth} is due to the choice of norms in \eqref{decay:eq-growth}, which are not invariant under the discrete gauge symmetry from \eqref{intro:eq-group-action-Zd}. By working only with gauge invariant norms, it is possible to obtain a decay estimate, which is the subject of our next lemma.

\begin{lemma}[Gauge invariant exponential decay estimate]\label{decay:lem-decay-unit} 
Let $(A,\phi)$ be as in Corollary \ref{decay:cor-gwp}, i.e., a global solution of \eqref{intro:eq-SAH}. Furthermore, let $t_0\geq 0$ and let $\nregphi_0\colon \T^2 \rightarrow \C$ be an $\Fc_{t_0}$-measurable random distribution such that, for all $p\geq 2$, 
\begin{equation}\label{decay:eq-gauge-varphi-0}
\E \big[ \| \nregphi_0 \|_{\GCs^{-\kappa}(\T^2 \rightarrow \C)}^p \big]^{\frac{1}{p}} \leq C_0 p^{\frac{1}{2}}.
\end{equation}
Then, there exists an $\Fc_{t_0+1}$-measurable random distribution $\nregphi \colon \T^2\rightarrow \C$ such that, for all $p\geq 2$, 
\begin{equation}\label{decay:eq-gauge-varphi}
\E \big[ \| \nregphi \|_{\GCs^{-\kappa}(\T^2 \rightarrow \C)}^p \big]^{\frac{1}{p}} \leq C_0 p^{\frac{1}{2}}
\end{equation}
and 
\begin{equation}\label{decay:eq-gauge-decay}
\begin{aligned}
&\, \E \Big[ \max \Big( \big\| A(t_0+1) - \initiallinear[\bfzero] (t_0+1) \big\|_{\GBeta}^\gamma, 
\big\| \phi(t_0+1) - \nregphi \big\|_{L_x^r} \Big)^p \Big]^{\frac{1}{p}} \\ 
\leq&\, e^{-\frac{1}{2}c_0}  \E \Big[ \max \Big( \big\| A(t_0) - \initiallinear[\bfzero](t_0) \big\|_{\GBeta}^\gamma, 
\big\| \phi(t_0) - \nregphi_0 \big\|_{L_x^r} \Big)^p \Big]^{\frac{1}{p}} 
+ C_7 \, p^{\frac{1}{\kappa}}.
\end{aligned}
\end{equation}
Furthermore, we have the $L_t^\infty$-estimate
\begin{equation}\label{decay:eq-gauge-decay-linfty}
\begin{aligned}
&\, \E \Big[ \max \Big( \big\| A \big\|_{L_t^\infty \GCs^{-\kappa}([t_0,t_0+1])}^\gamma, 
\big\| \phi \big\|_{L_t^\infty \GCs^{-\kappa}([t_0,t_0+1])} \Big)^p \Big]^{\frac{1}{p}} \\ 
\leq&\, C_0  \E \Big[ \max \Big( \big\| A(t_0) - \initiallinear[\bfzero] (t_0) \big\|_{\GBeta}^\gamma, 
\big\| \phi(t_0) - \nregphi_0 \big\|_{L_x^r} \Big)^p \Big]^{\frac{1}{p}} 
+ C_0 C_7 \, p^{\frac{1}{\kappa}}.
\end{aligned}
\end{equation}
\end{lemma}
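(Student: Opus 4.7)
The proof follows the structure of Lemma~\ref{decay:lem-exponential-growth} with two essential modifications that exploit the gauge symmetry~\eqref{intro:eq-group-action-Zd}. First, we apply an $\Fc_{t_0}$-measurable gauge transformation at the initial time to reduce to the case where the mean of $A-\initiallinear[\bfzero]$ lies in the fundamental domain of $\Z^2$. Second, throughout the iteration of Proposition~\ref{decay:prop-short} we invoke the refined estimates \eqref{decay:eq-short-A-refined} and \eqref{decay:eq-short-A-mean} in place of \eqref{decay:eq-short-A}, which together yield exponential decay provided the mean remains controlled. Since the $L_t^\infty$-estimate \eqref{decay:eq-gauge-decay-linfty} follows by the same argument as \eqref{decay:eq-growth-linfty} (using that the $\GCs^{-\kappa}$-norm is bounded by the $\Cs_x^{-\kappa}$-norm of any gauge-equivalent representative), we concentrate on \eqref{decay:eq-gauge-decay}.

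\textbf{Gauge reduction.} Choose $n_0\in\Z^2$, measurable with respect to $\Fc_{t_0}$, such that $|\sfint(A(t_0)-\initiallinear[\bfzero](t_0))+n_0|\leq \sqrt{2}\pi$, and set $(\tilde A,\tilde\phi):=(A+n_0,e^{-\icomplex n_0\cdot x}\phi)$ together with $\tilde{\nregphi}_0:=e^{-\icomplex n_0\cdot x}\nregphi_0$. Theorem~\ref{thm:gauge-covariance} ensures that $(\tilde A,\tilde\phi)$ almost surely solves \eqref{intro:eq-SAH} on $[t_0,\infty)$ with noise $(\xi,\zeta^{n_0})$, where $\zeta^{n_0}$ has the same distribution as $\zeta$. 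Moreover, gauge invariance of the relevant norms yields
\begin{equs}
\|\tilde A(t_0)-\initiallinear[\bfzero](t_0)\|_{\Beta}&\leq \sqrt{2}\pi+\|A(t_0)-\initiallinear[\bfzero](t_0)\|_{\GBeta},\qquad |\sfint(\tilde A(t_0)-\initiallinear[\bfzero](t_0))|\leq\sqrt{2}\pi,\\
\|\tilde{\nregphi}_0\|_{\GCs^{-\kappa}}&=\|\nregphi_0\|_{\GCs^{-\kappa}},\qquad \|\tilde\phi(t_0)-\tilde{\nregphi}_0\|_{L_x^r}=\|\phi(t_0)-\nregphi_0\|_{L_x^r}.
\end{equs}

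\textbf{Iteration with refined decay.} Run the iteration from the proof of Lemma~\ref{decay:lem-exponential-growth} on $(\tilde A,\tilde\phi)$, defining the stopping times $t_j$, truncation $L$, data $(\tilde{\pregA}_j,\tilde{\pregphi}_j)$, and good events $E^\lambda_j$ exactly as in \eqref{decay:eq-growth-p1}--\eqref{decay:eq-growth-p4}; at each admissible step invoke \eqref{decay:eq-short-A-refined}, \eqref{decay:eq-short-A-mean}, and \eqref{decay:eq-short-phi}. A short induction on \eqref{decay:eq-short-A-mean}, using the crucial fact that the exponent $(1-\nu)\gamma$ is strictly smaller than $\gamma$ and that the total elapsed time is bounded by $1$, shows that $|\sfint\tilde{\pregA}_j|^\gamma$ remains bounded by a constant $C_\ast$ depending only on the parameters of Subsection~\ref{section:parameters}; consequently the $e^{2c_0\tau_j}|\sfint\tilde{\pregA}_j|^\gamma$-term in \eqref{decay:eq-short-A-refined} is at most $e^{2c_0}C_\ast\leq \tfrac{1}{2}C_4L^{\kappafour}$ and is absorbed into the final argument of the maximum. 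Iterating over all $j$ and summing $\sum_j\tau_j\leq 1$ then produces, on the good event $E^\lambda$,
\begin{equs}
\max\Big(\|\tilde{\pregA}_J\|_{\Beta}^\gamma,\|\tilde{\pregphi}_J\|_{L_x^r}\Big)\leq e^{-c_0}\max\Big(\|\tilde{\pregA}_0\|_{\Beta}^\gamma,\|\tilde{\pregphi}_0\|_{L_x^r},C_4L^{\kappafour}\Big),
\end{equs}
and the passage from $t_J$ to $t_0+1$ is handled exactly as in the second step of the proof of Lemma~\ref{decay:lem-exponential-growth}.

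\textbf{Definition of $\nregphi$ and conclusion.} Define $\nregphi:=e^{\icomplex n_0\cdot x}\bigl(\tilde{\initialphilinear}[t_{J-1}](t_0+1)+e^{\tau_{J-1}(\Delta-1)}\,\tilde{\nregphi}^{\mathrm{prop}}(t_{J-1})\bigr)$, where $\tilde{\nregphi}^{\mathrm{prop}}(t_{J-1})$ denotes the rough part at time $t_{J-1}$ emerging from the iteration (the sum of propagated $\tilde{\nregphi}_0$ and the accumulated linear objects on $[t_0,t_{J-1}]$). By construction, $\phi(t_0+1)-\nregphi$ equals $e^{\icomplex n_0\cdot x}$ times the quantity controlled in the preceding paragraph, and the gauge invariance of $\GCs^{-\kappa}$, the Gaussian bounds from Lemma~\ref{lemma:linear-gc} on $\initialphilinear[\cdot]$, heat-kernel smoothing, and the hypothesis \eqref{decay:eq-gauge-varphi-0} together yield \eqref{decay:eq-gauge-varphi}. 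Applying Lemma~\ref{prelim:lem-comparison} exactly as in the fourth step of the proof of Lemma~\ref{decay:lem-exponential-growth} (with the probability estimate $\bP(\Omega\setminus E^\lambda\mid\Fc_{t_0})\leq e^{-\lambda^2}$ obtained identically from Proposition~\ref{AH:prop-probabilistic}) then yields \eqref{decay:eq-gauge-decay}. The main obstacle is the inductive bound on $|\sfint\tilde{\pregA}_j|^\gamma$: each application of \eqref{decay:eq-short-A-mean} introduces a $\|\tilde{\pregA}_j\|_{\Beta}^{(1-\nu)\gamma}$-contribution that must be balanced against the decay afforded by \eqref{decay:eq-short-A-refined}, and it is essential that the exponent $(1-\nu)\gamma<\gamma$ prevents these contributions from blowing up under iteration over a bounded time horizon.
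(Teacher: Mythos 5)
Your overall architecture — gauge transform at $t_0$ to bring $\sfint(A(t_0)-\initiallinear[\bfzero](t_0))$ into a bounded set, then iterate Proposition~\ref{decay:prop-short} using the refined estimates \eqref{decay:eq-short-A-refined} and \eqref{decay:eq-short-A-mean} in place of \eqref{decay:eq-short-A} — matches the paper's proof. However, there is a concrete error in the key step that controls the mean.

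You claim that iterating \eqref{decay:eq-short-A-mean} shows $|\sfint\tilde{\pregA}_j|^\gamma$ is bounded by a constant $C_\ast$ depending only on the parameters of Subsection~\ref{section:parameters}, and that consequently the term $e^{2c_0\tau_j}|\sfint\tilde{\pregA}_j|^\gamma$ in \eqref{decay:eq-short-A-refined} can be absorbed into the $C_4 L^{\kappafour}$-argument of the maximum. This is false. The estimate \eqref{decay:eq-short-A-mean} bounds $|\sfint\pregA|^\gamma$ by a maximum that includes $\|\pregA_0\|_{\Beta}^{(1-\nu)\gamma}$ and $\|\pregphi_0\|_{L_x^r}^{1-\nu}$. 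If $\|\pregA_0\|_{\Beta}$ is large, then already after one step $|\sfint\pregA_1|^\gamma$ can be as large as $\|\pregA_0\|_{\Beta}^{(1-\nu)\gamma}$, which tends to infinity as $\|\pregA_0\|_{\Beta}\to\infty$; the $(1-\nu)$-exponent shrinks the contribution multiplicatively, not to a bounded quantity. Hence the mean is \emph{not} bounded by a parameter-only constant, and your absorption into $\tfrac12 C_4 L^{\kappafour}$ does not go through (indeed $L$ itself is defined in terms of $\pregA_0$, so even that "argument of the max" grows).

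What the paper actually does (and what is needed) is weaker but sufficient: first establish a non-decaying iteration bound of the form
\begin{equation*}
\ind_{E^\lambda} \max_{j=1,\ldots,J} \max\big(\|\pregA_j\|_{\Beta}^\gamma, \|\pregphi_j\|_{L_x^r}\big) \leq e^{c_0+c_1}\max\big(\|\pregA_0\|_{\Beta}^\gamma, \|\pregphi_0\|_{L_x^r}, C_4 L^{\kappafour}\big),
\end{equation*}
then feed this into \eqref{decay:eq-short-A-mean} together with the initial smallness $|\sfint\pregA_0|\leq 2$ to obtain
\begin{equation*}
\ind_{E^\lambda} \max_{j=1,\ldots,J} |\sfint\pregA_j|^\gamma \leq \tfrac{7}{8}\max\big(\|\pregA_0\|_{\Beta}^\gamma, \|\pregphi_0\|_{L_x^r}, C_4 L^{\kappafour}\big),
\end{equation*}
i.e.\ the mean is bounded by a strictly sub-unit multiple of the \emph{initial} maximum, not by a constant. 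Here the $(1-\nu)$-exponent combined with the largeness of $C_4$ (and Young's inequality) is what produces the factor $\tfrac{7}{8}<1$. Only then does the iteration of \eqref{decay:eq-short-A-refined} work: the term $e^{2c_0\tau}|\sfint\pregA_j|^\gamma \leq e^{2c_0\tau}\cdot\tfrac{7}{8}\max(\ldots)$ never exceeds the maximum because $e^{2c_0\tau}\leq e^{2c_0}\leq\tfrac{8}{7}$ for $c_0$ small, so the $e^{-c_0\tau}$-decay prefactor in \eqref{decay:eq-short-A-refined} survives the iteration. Your claim skips this two-stage argument (growth bound first, then mean bound proportional to the initial data), and the shortcut you propose cannot be repaired as written.
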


\begin{proof}
Since \eqref{decay:eq-gauge-decay-linfty} can easily be obtained by combining the argument below with \eqref{decay:eq-short-Linfty}, we only prove \eqref{decay:eq-gauge-decay}.
For expository purposes, we split the argument into two steps. \\ 

\emph{First step: Using gauge covariance.}
Let $n_0 \colon (\Omega,\Fc)\rightarrow \Z^2$ be any $\Fc_{t_0}$-measurable random integer vector. Then, both the assumption in \eqref{decay:eq-gauge-varphi-0} and estimates in \eqref{decay:eq-gauge-varphi} and \eqref{decay:eq-gauge-decay} are invariant under the gauge transformation
\begin{equation}
\big( \nregphi_0, \nregphi , A , \phi \big) \mapsto \big( e^{-\icomplex n_0 x} \nregphi_0, e^{-\icomplex n_0 x}  \nregphi, A + n_0, e^{-\icomplex n_0 x}  \phi \big).  
\end{equation}
From the gauge covariance of our solution theory, i.e., Theorem \ref{thm:gauge-covariance}, we also have that $(A+n_0, e^{-\icomplex n_0 x} \phi)$ is a solution of the stochastic Abelian-Higgs equations with stochastic forcing $\xi$ and $e^{-\icomplex n_0 x} \zeta$ on the time-interval $[t_0,t_0+1]$. Since $n_0$ is $\Fc_{t_0}$-measurable, it further holds that $\Law(e^{-\icomplex n_0 x} \zeta|_{[t_0,t_0+1]})=\Law(\zeta|_{[t_0,t_0+1]})$, i.e., the stochastic forcing $\zeta|_{[t_0,t_0+1]}$ is gauge invariant in law. 
After possibly replacing $\nregphi_0,\nregphi,A,\phi$, and $\zeta$ with their gauge transformed counterparts, we may therefore assume that, almost surely, 
\begin{equation}\label{decay:eq-gauge-decay-p1}
\Big| \sfint \big( A - \initiallinear[\bfzero] \big) (t_0) \Big| \leq 2. 
\end{equation}
Due to Lemma \ref{lemma:linear-gc}, $\initialphilinear[\bfzero](t_0+1)$ satisfies \eqref{decay:eq-gauge-varphi}, and it therefore now suffices to prove that 
\begin{equation}\label{decay:eq-gauge-decay-p2}
\begin{aligned}
&\, \E \Big[ \max \Big( \big\| A(t_0+1) - \initiallinear[\bfzero] (t_0+1) \big\|_{\Beta}^\gamma, 
\big\| \phi(t_0+1) - \initialphilinear[\bfzero] (t_0+1) \big\|_{L_x^r} \Big)^p \Big]^{\frac{1}{p}} \\ 
\leq&\, e^{-\frac{1}{2}c_0}  \E \Big[ \max \Big( \big\| A(t_0) - \initiallinear[\bfzero]  (t_0) \big\|_{\Beta}^\gamma, 
\big\| \phi(t_0) - \nregphi_0 \big\|_{L_x^r} \Big)^p \Big]^{\frac{1}{p}} 
+ \tfrac{1}{2} C_7 \, p^{\frac{1}{\kappa}}.
\end{aligned}
\end{equation}
We note that, since the $\GBeta$-norm and $C_7$ have been replaced by the $\Beta$-norm and $\tfrac{1}{2} C_7$, respectively, \eqref{decay:eq-gauge-decay-p2} is slightly stronger than \eqref{decay:eq-gauge-decay}. In particular, the contribution of the remaining mean from \eqref{decay:eq-gauge-decay-p1} on the right-hand side of \eqref{decay:eq-gauge-decay-p2} is irrelevant. \\

\emph{Second step: Modification of the proof of Lemma \ref{decay:lem-exponential-growth}.}
The rest of the proof of Lemma \ref{decay:lem-decay-unit} is similar as in the proof of Lemma \ref{decay:lem-exponential-growth}, and we only sketch the remaining argument. To simplify the notation, we set
\begin{alignat*}{3}
\pregA_0 &:= A(t_0)-\initiallinear[\bfzero](t_0),  \qquad & \qquad 
\pregA &:=A(t_0+1)-\initiallinear[\bfzero](t_0+1), \\ 
\pregphi_0 &:= \phi(t_0) - \nregphi_0,   \qquad & \qquad 
\quad \pregphi &:= \phi(t_0+1) - \initialphilinear[\bfzero](t_0+1).
\end{alignat*}
Similar as in \eqref{decay:eq-growth-p1}, we let $L$ be the smallest dyadic integer satisfying 
\begin{equation*}
L \geq C_5 \max \Big( \lambda^{\frac{4}{\kappa}}, \big\| \pregA_0 \big\|_{\Beta}^{\frac{2}{\eta_3}}, 
\big\| \pregphi_0 \big\|_{L_x^r}^{\frac{2}{\eta_3}}, \big\| \initiallinear[\bfzero] (t_0) \big\|_{\Cs_x^{-\kappa}}^{\frac{2}{\kappa}}, \big\| \nregphi_0 \big\|_{\GCs^{-\kappa}}^{\frac{2}{\kappa}} \Big).
\end{equation*}
We let the number of total steps $J$, time-scales $(\tau_j)_{j=0}^{J-1}$, times  $(t_j)_{j=1}^J$, connection one-forms $(\pregA_j)_{j=1}^J$, and scalar fields $(\pregphi_j)_{j=1}^J$ be defined as in the proof of Lemma \ref{decay:lem-exponential-growth}, i.e., as in \eqref{decay:eq-growth-q1}-\eqref{decay:eq-growth-q3}. We also let the events $(E^\lambda_j)_{j=0}^{J-1}$, $(E^\lambda_{\leq j})_{j=0}^{J-1}$,  and $E^\lambda$ be defined as in the proof of Lemma \ref{decay:lem-exponential-growth}, i.e., as in \eqref{decay:eq-growth-p3}-\eqref{decay:eq-growth-p4}. \\

As in the proof of \eqref{decay:eq-growth-p5}, we can use the estimates \eqref{decay:eq-short-phi} and \eqref{decay:eq-short-A} from Proposition \ref{decay:prop-short} and \eqref{decay:eq-crude} from Lemma \ref{decay:lem-crude} to obtain that
\begin{equation}\label{decay:eq-gauge-decay-p3}
 \ind_{E^\lambda} \max_{j=1,\hdots,J} \max \Big( \big\| \pregA_j \big\|_{\Beta}^\gamma, \big\| \pregphi_j \big\|_{L_x^r} \Big) 
\leq e^{c_0+c_1} \max \Big( \big\| \pregA_0 \big\|_{\Beta}^\gamma, \big\| \pregphi_0 \big\|_{L_x^r}, C_4 L^{\kappafour} \Big). 
\end{equation}
By using \eqref{decay:eq-short-A-mean} from Proposition \ref{decay:prop-short}, using \eqref{decay:eq-crude-mean} from Lemma \ref{decay:lem-crude}, using \eqref{decay:eq-gauge-decay-p1}, and using \eqref{decay:eq-gauge-decay-p3}, we obtain that
\begin{equation}\label{decay:eq-gauge-decay-p4}
\begin{aligned}
 &\, \ind_{E^\lambda} \max_{j=1,\hdots,J} \big| \sfint \pregA_j \big|^\gamma \\
\leq&\,  e^{c_0+c_1} \max \Big( 2^{\gamma}, \big( e^{c_0+c_1} \big\| \pregA_0 \big\|_{\Beta}\big)^{\gamma (1-\nu)}, \big( e^{c_0+c_1} \big\| \pregphi_0 \big\|_{L_x^r}\big)^{1-\nu},  e^{c_0+c_1} \frac{3}{4} C_4 L^{\kappafour} \Big). 
\end{aligned}
\end{equation}
Since $C_4$ has been chosen as sufficiently large depending on $\nu$, and $c_0, c_1$ are sufficiently small, \eqref{decay:eq-gauge-decay-p4} directly implies that 
\begin{equation}\label{decay:eq-gauge-decay-p5}
 \ind_{E^\lambda} \max_{j=1,\hdots,J} \big| \sfint \pregA_j \big|^\gamma 
 \leq \frac{7}{8} \max \Big( \big\| \pregA_0 \big\|_{\Beta}^\gamma, \big\| \pregphi_0 \big\|_{L_x^r}, C_4 L^{\kappafour} \Big). 
\end{equation}
Equipped with \eqref{decay:eq-gauge-decay-p5}, we now iterate our estimates \eqref{decay:eq-short-phi} and \eqref{decay:eq-short-A-refined} from Proposition \ref{decay:prop-short} and our estimate \eqref{decay:eq-crude} from Lemma \ref{decay:lem-crude}, which yields that 
\begin{equation}\label{decay:eq-gauge-decay-p6}
\begin{aligned}
& \ind_{E^\lambda} \max \Big( \big\| \pregA_j \big\|_{\Beta}^\gamma, \big\| \pregphi_j \big\|_{L_x^r} \Big) \\
\leq&\, e^{-c_0 (t_j-t_0)} \Big( \ind_{j\leq J-1} + e^{c_1} e^{c_0 (t_J-t_{J-1})} \ind_{j=J}\Big) \max \Big( \big\| \pregA_0 \big\|_{\Beta}^\gamma, \big\| \pregphi_0 \big\|_{L_x^r}, e^{c_0 (t_j-t_0)} C_4 L^{\kappafour} \Big)
\end{aligned}
\end{equation}
for all $0\leq j \leq J$. We emphasize that, due to \eqref{decay:eq-gauge-decay-p5}, the means  $\sfint \pregA_j$ do not prohibit the decay in \eqref{decay:eq-gauge-decay-p6}. 
To simplify the expression in \eqref{decay:eq-gauge-decay-p6}, 
we recall from \eqref{decay:eq-growth-q1}-\eqref{decay:eq-growth-q3} that 
$t_{J}-t_{J-1}=\tau_{J-1} \leq 2 \tau_{J-1}^\ast\leq 2c_3$, which implies that 
\begin{equation*}
\ind_{E^\lambda} \max \Big( \big\| \pregA_j \big\|_{\Beta}^\gamma, \big\| \pregphi_j \big\|_{L_x^r} \Big) 
\leq e^{-c_0 (t_j-t_0)} e^{2c_1}
\max \Big( \big\| \pregA_0 \big\|_{\Beta}^\gamma, \big\| \pregphi_0 \big\|_{L_x^r}, e^{c_0 (t_j-t_0)} C_4 L^{\kappafour} \Big)
\end{equation*}
for all $0\leq j \leq J$.
By arguing as in the proof of \eqref{decay:eq-growth-p11}, we then obtain that 
\begin{equation}\label{decay:eq-gauge-decay-p7}
\begin{aligned}
&\, \ind_{E^\lambda} \max \Big( \big\| A(t_0+1) - \linear (t_0+1) \big\|_{\Beta}^\gamma, \big\| \phi(t_0+1) - \initialphilinear[\bfzero](t_0+1) \big\|_{L_x^r} \Big) \\ 
\leq&\, e^{-\frac{1}{2}c_0} \max \Big( \big\| \pregA_0 \big\|_{\Beta}^\gamma, \big\| \pregphi_0 \big\|_{L_x^r} \Big) \\
+&\,  C_6 \max\Big( \big\| \initiallinear[\bfzero] \big\|_{L_t^\infty \Cs_x^{-\kappa}([t_0,t_0+1])},\big\| \initialphilinear[\bfzero] \big\|_{L_t^\infty \GCs^{-\kappa}([t_0,t_0+1])}, \big\| \nregphi_0 \big\|_{\GCs^{-\kappa}} \Big)^{\frac{2\kappafour}{\kappa}} + C_6 \lambda^{\frac{4\kappafour}{\kappa}}. 
\end{aligned}
\end{equation}
Due to our assumption \eqref{decay:eq-gauge-varphi-0}, 
the $\nregphi_0$-term in \eqref{decay:eq-gauge-decay-p7} satisfies the same moment estimates as the $\initiallinear[\bfzero]$ and $\initialphilinear[\bfzero]$-terms. The remainder of the proof of 
\eqref{decay:eq-gauge-decay} is therefore exactly as in the proof of 
\eqref{decay:eq-growth}, and we omit the details. 
\end{proof}

\subsection{Proof of Theorem \ref{intro:thm-abelian-higgs}}\label{section:decay-proof}
The main theorem of this article (Theorem \ref{intro:thm-abelian-higgs}) now directly follows from our earlier results. The global well-posedness of the stochastic Abelian-Higgs model \eqref{intro:eq-SAH} follows from \mbox{Corollary \ref{decay:cor-gwp}}. The exponential bound in Theorem \ref{intro:thm-abelian-higgs}.\ref{intro:item-AH-1} follows from \mbox{Lemma \ref{decay:lem-exponential-growth}}. 
To be more precise, the exponential bound in Theorem \ref{intro:thm-abelian-higgs}.\ref{intro:item-AH-1} is obtained by first iterating \eqref{decay:eq-growth} and then using \eqref{decay:eq-growth-linfty} once. The gauge invariant, uniform-in-time bound in Theorem \ref{intro:thm-abelian-higgs}.\ref{intro:item-AH-2} follows similarly  from \mbox{Lemma \ref{decay:lem-decay-unit}}. Finally, the gauge covariance in Theorem \ref{intro:thm-abelian-higgs}.\ref{intro:item-AH-3} follows from Theorem \ref{thm:gauge-covariance}. 

\begin{appendix}

\section{Miscellaneous}\label{appendix:misc}

In this appendix, we collect some miscellaneous results.

\begin{proof}[Proof of the Duhamel integral estimate with heat-kernel weights (Lemma \ref{prelim:lem-Duhamel-weighted}):] To simplify the notation, we write 
\begin{equation*}
\| F \| := \big\| p^\theta(s,y;t,x) F(s,y) \big\|_{L_t^\infty L_x^\infty L_s^q L_y^r(I\times \T^2 \times I \times \T^2)}.
\end{equation*}
Furthermore, we let $(\chi_\delta)_{\delta \in 2^{-\mathbb{N}_0}}$ be a smooth partition of unity such that, for each $\delta \in 2^{-\mathbb{N}_0}$, $\chi_\delta$ is supported on the interval $[\frac{\delta}{2},2\delta]$ and satisfies the estimate $|\chi_\delta^\prime|\lesssim \delta^{-1}$. Using the partition of unity, we then decompose 
\begin{align*}
\Duh \big[ F \big](t,x) 
= \sum_{\delta \in 2^{-\mathbb{N}_0}}\Duh_\delta \big[ F \big](t,x)
:=  \sum_{\delta \in 2^{-\mathbb{N}_0}} \iint \ds \dy \, \chi_\delta(t-s) p(s,y;t,x) F(s,y). 
\end{align*}
For all $\delta \in 2^{-\mathbb{N}_0}$, we now prove the three estimates
\begin{align}
\big\| \Duh_\delta \big[ F \big] \big\|_{L_t^\infty L_x^\infty}
&\lesssim \delta^{1+\theta- \frac{1}{q}-\frac{1}{r}} \| F\|, \label{prelim:eq-Duhamel-p1} \\ 
\big\| \nabla_x \Duh_\delta \big[ F \big] \big\|_{L_t^\infty L_x^\infty}
&\lesssim \delta^{\frac{1}{2}+\theta- \frac{1}{q}-\frac{1}{r}} \| F\|,\label{prelim:eq-Duhamel-p2}\\ 
\big\| \partial_t \Duh_\delta \big[ F \big] \big\|_{L_t^\infty L_x^\infty}
&\lesssim \delta^{\theta- \frac{1}{q}-\frac{1}{r}} \| F\|. \label{prelim:eq-Duhamel-p3}
\end{align}
In order to prove \eqref{prelim:eq-Duhamel-p1}, we first use H\"{o}lder's inequality, which yields 
\begin{equation}\label{prelim:eq-Duhamel-p4}
\big\| \Duh_\delta \big[ F \big] \big\|_{L_t^\infty L_x^\infty} 
\lesssim \big\| \chi_\delta(t-s) p^{1-\theta}(s,y;t,x) \big\|_{L_t^\infty L_x^\infty L_s^{q^\prime} L_y^{r^\prime}}
\| F \|. 
\end{equation}
Using a direct calculation, we obtain that 
\begin{align*}
 \big\| \chi_\delta(t-s) p^{1-\theta}(s,y;t,x) \big\|_{L_t^\infty L_x^\infty L_s^{q^\prime} L_y^{r^\prime}} 
\lesssim&\,  \Big\| \mathbf{1}\big\{ |t-s|\sim \delta\big\} (t-s)^{-1+\theta} 
\exp\Big( - (1-\theta) \tfrac{|x-y|^2}{2(t-s)} \Big)  \Big\|_{L_t^\infty L_x^\infty L_s^{q^\prime} L_y^{r^\prime}} \\ 
\lesssim&\,   \Big\| \mathbf{1}\big\{ |t-s|\sim \delta\big\} (t-s)^{-1+\theta+\frac{1}{r^\prime}} \Big\|_{L_t^\infty L_x^\infty L_s^{q^\prime}}\\
\lesssim&\, \delta^{-1+\theta+\frac{1}{q^\prime}+\frac{1}{r^\prime}} = \delta^{1+\theta-\frac{1}{q}-\frac{1}{r}}. 
\end{align*}
By inserting this into \eqref{prelim:eq-Duhamel-p4}, we obtain \eqref{prelim:eq-Duhamel-p1}. The proofs of \eqref{prelim:eq-Duhamel-p2} and \eqref{prelim:eq-Duhamel-p3} are similar and use that
\begin{align*}
\Big\| \nabla_x \Big( \chi_\delta(t-s)  \nabla_x p(s,y;t,x) \Big) p^{-\theta}(s,y;t,x) \Big\|_{L_t^\infty L_x^\infty L_s^{q^\prime} L_y^{r^\prime}} 
&\lesssim \delta^{\frac{1}{2}+\theta-\frac{1}{q}-\frac{1}{r}},\\ 
\Big\| \partial_t \Big( \chi_\delta(t-s) p(s,y;t,x) \Big) p^{-\theta}(s,y;t,x) \Big\|_{L_t^\infty L_x^\infty L_s^{q^\prime} L_y^{r^\prime}} 
&\lesssim \delta^{\theta-\frac{1}{q}-\frac{1}{r}}. 
\end{align*}
Equipped with \eqref{prelim:eq-Duhamel-p1}-\eqref{prelim:eq-Duhamel-p3}, we can now prove the desired estimate \eqref{prelim:eq-Duhamel-weighted}. For all $\delta \in 2^{-\mathbb{N}_0}$ and $\alpha\in [0,1]$, it holds that 
\begin{equation}\label{prelim:eq-Duhamel-p5}
\begin{aligned}
&\, \big\| \Duh_\delta\big[ F \big] \big\|_{C_t^0 C_x^\alpha}
\leq \big\| \Duh_\delta\big[ F \big]\big\|_{C_t^0 C_x^0}^{1-\alpha}
\big\| \Duh_\delta\big[ F \big]\big\|_{C_t^0 C_x^1}^{\alpha} \\ 
\leq&\,  \big( \delta^{1+\theta- \frac{1}{q}-\frac{1}{r}} \big)^{1-\alpha}
\big( \delta^{\frac{1}{2}+\theta- \frac{1}{q}-\frac{1}{r}} \big)^{\alpha}
\| F \|
= \delta^{1+\theta- \frac{1}{q}-\frac{1}{r}-\frac{\alpha}{2}} \| F \|. 
\end{aligned}
\end{equation}
Due to our assumption \eqref{prelim:eq-Duhamel-p3}, we can sum \eqref{prelim:eq-Duhamel-p5} over all $\delta \in 2^{-\mathbb{N}_0}$, which then yields the desired $C_t^0 C_x^\alpha$-estimate in \eqref{prelim:eq-Duhamel-weighted} (the factor of $|I|^{\gamma}$ arises since we may restrict to summing over $\delta \lesssim |I|$). The proof of the $C_t^{\frac{\alpha}{2}} C_x^0$-estimate is similar.
\end{proof}

\begin{proof}[Proof of complex Hermite polynomial expansion (Lemma \ref{lemma:complex-hermite-polynomial-expansion})]
We proceed by induction. The claim trivially holds if $m$ or  $n$ is zero. Now suppose the claim has been shown for $H_{m, n}$, as well as all $H_{m_1, n_1}$ for $m_1 \leq m$ and $n_1 \leq n$. We show the claim for $H_{m+1, n}$. We omit the proof for $H_{m, n+1}$ since it will be analogous. By the definition of $H_{m+1, n}$, we have that
\begin{equs}
H_{m+1, n}(z+w, \ovl{z+w}) = (z+w) H_{m, n}(z+w, \ovl{z+w}) - n \sigma^2 H_{m, n-1}(z+w, \ovl{z+w}).
\end{equs}
By the inductive assumption, we have that
\begin{equs}
(z+w) H_{m, n}(z+w, \ovl{z+w}) = &~~\sum_{k_1 = 0}^m \sum_{k_2 = 0}^n \binom{m}{k_1} \binom{n}{k_2} w^{m+1-k_1} \bar{w}^{n-k_2} H_{k_1, k_2}(z, \bar{z}) \\
&+ \sum_{k_1 = 0}^m \sum_{k_2 = 0}^n \binom{m}{k_1} \binom{n}{k_2} w^{m-k_1} \bar{w}^{n-k_2} z H_{k_1, k_2}(z, \bar{z}) =: I_1 + I_2,
\end{equs}
as well as (using that $\binom{n-1}{k_2} n = \binom{n}{k_2+1} (k_2 + 1)$)
\begin{equs}
n\sigma^2 H_{m, n-1}(z+w, \ovl{z+w}) &= \sum_{k_1 = 0}^m \sum_{k_2=0}^{n-1} \binom{m}{k_1} \binom{n-1}{k_2}n \sigma^2 w^{m-k_1} \ovl{w}^{n-1-k_2} H_{k_1, k_2}(z, \bar{z}) \\
&= \sum_{k_1 = 0}^m \sum_{k_2=0}^{n-1} \binom{m}{k_1} \binom{n}{k_2+1} (k_2 + 1) \sigma^2 w^{m-k_1} \ovl{w}^{n-1-k_2} H_{k_1, k_2}(z, \bar{z}) \\
&= \sum_{k_1 = 0}^m \sum_{k_2=1}^{n} \binom{m}{k_1} \binom{n}{k_2} k_2 \sigma^2 w^{m-k_1} \ovl{w}^{n-k_2} H_{k_1, k_2-1}(z, \bar{z}) =: I_3.
\end{equs}
Again using the definition of $H_{m, n}$, we have that
\begin{equs}
I_2 - I_3 &= \sum_{k_1 =0}^m \binom{m}{k_1} w^{m-k_1} \bar{w}^n z H_{k_1, 0}(z, \bar{z}) + \sum_{k_1 = 0}^m \sum_{k_2 = 1}^n \binom{m}{k_1} \binom{n}{k_2} w^{m-k_1} \bar{w}^{n-k_2} H_{k_1 + 1, k_2}(z, \bar{z}) \\
&= \sum_{k_1=1}^{m+1} \binom{m}{k_1 -1} \binom{n}{0} w^{m+1-k_1} \bar{w}^n H_{k_1, 0}(z, \bar{z}) + \sum_{k_1 = 1}^{m+1}  \sum_{k_2 = 1}^n \binom{m}{k_1 - 1} \binom{n}{k_2} w^{m+1-k_1} \bar{w}^{n-k_2} H_{k_1, k_2}(z, \bar{z}) \\
&= \sum_{k_1=1}^{m+1} \sum_{k_2 = 0}^n \binom{m}{k_1 - 1} \binom{n}{k_2} w^{m+1-k_1} \bar{w}^{n-k_2} H_{k_1, k_2}(z, \bar{z}).
\end{equs}
Next, observe that
\begin{equs}
I_1 &= \sum_{k_2=0}^n \binom{m+1}{0} \binom{n}{k_2} w^{m+1} \bar{w}^{n-k_2} H_{0, k_2}(z, \bar{z}) + \sum_{k_1=1}^m \sum_{k_2 = 0}^n \binom{m}{k_1} \binom{n}{k_2} w^{m+1-k_1} \bar{w}^{n-k_2} H_{k_1, k_2}(z, \bar{z}) .
\end{equs}
To finish, we add together the expressions for $I_1$ and $I_2 - I_3$, and use the identities $\binom{m+1}{k_1} = \binom{m}{k_1} + \binom{m}{k_1 - 1}$ for $1 \leq k_1 \leq m$, and $\binom{m}{m} = \binom{m+1}{m+1}$.
\end{proof}

\begin{proof}[Proof of the Feynman-Kac-It\^{o} formula (Lemma \ref{lemma:feynman-kac-ito-formula})]
First, suppose $A$ is smooth. Let $T > 0$, $\phi_0 \in C^\infty(\T^2\rightarrow \C)$, and consider a solution to the covariant heat equation:
\begin{equs}\label{eq:feynman-kac-proof-intermediate-1}
(\ptl_t - \covd_A^j \covd_{A, j}) \phi = 0,  ~~ \phi(0) = \phi_0.
\end{equs}
Let $\E_x$ denote expectation with respect to a Brownian motion $W$ in $\T^2$ of rate $2$ (i.e. $dW_t^i dW_t^i = 2dt$ for each $i \in [2]$). 
Define the process
\begin{equs}
P_t := \exp\bigg(\icomplex \int_0^t A(T-u, W_u) \circ dW_u\bigg),
\end{equs}
where the stochastic integral is a Stratonovich integral. By the chain rule for Stratonovich integrals, It\^{o}-Stratonovich conversion, and the It\^{o} formula, we may compute
\begin{equs}
dP_t &= \icomplex P_t A(T - t, W_t) \circ dW_t \\
&= \icomplex P_t A(T - t, W_t) \cdot dW_t + \frac{1}{2} \icomplex d(P_t A_j(T - t, W_t)) dW_t^j \\
&= P_t \Big( \icomplex A(T - t, W_t) \cdot dW_t + \icomplex (\ptl_j A^j)(T - t, W_t) dt - |A(T - t, W_t)|^2 dt\Big).
\end{equs}
Here, $A(T - t, W_t) \cdot dW_t = A_j(T - t, W_t) dW_t^j$ is an It\^{o} differential. Then again by the It\^{o} formula, we may compute that (here $(\covd_A \phi) \cdot dW_t = (\covd_{A, j} \phi) dW_t^j$)
\begin{equs}
d(P_t \phi(T - t, W_t)) &= P_t (\covd_A \phi)(T - t, W_t) \cdot dW_t - P_t \Big((\ptl_t - \covd_{A(T-t)}^j \covd_{A(T-t), j}) \phi\Big)(T - t, W_t) dt \\
&= P_t (\covd_A \phi)(T - t, W_t) \cdot dW_t.
\end{equs}
This shows that $P_t \phi(T - t, W_t)$ is a martingale, and thus
\begin{equs}
\phi(T, x) &= \E_x [P_0 \phi(T - 0, W_0)] = \E_x [P_T \phi(0, W_T)] \\
&=\int dy p(0, x; T, y) \phi_0(y) \E_{(0, x) \ra (T, y)} \bigg[\exp\Big(\icomplex \int_0^T A(T - u, W_u) \circ dW_u\Big) \bigg].
\end{equs}
By time-reversal properties of Brownian bridge and Stratonovich integrals, we further obtain
\begin{equs}
\phi(T, x) = \int dy p(0, y; T, x) \phi_0(y) \E_{(0, y) \ra (T, x)} \bigg[ \exp\Big(-\icomplex \int_0^T A(u, W_u) \circ dW_u \Big) \bigg].
\end{equs}
Finally, since $A$ is smooth, we may apply It\^{o}-Stratonovich conversion to obtain
\begin{equs}
\int_0^T A(u, W_u) \circ dW_u  = \int_0^T A(u, W_u) \cdot dW_u + \int_0^T (\ptl_j A^j)(u, W_u) du.
\end{equs}
The previous two displays imply the desired formula for the covariant heat kernel, in the case that $A$ is smooth.

In the case of general $A : [0, T] \times \T^2 \ra \R^2$ continuous with $\ptl_j A^j : [0, T] \times \T^2 \ra \R$ continuous, we may take a sequence of smooth $\{A_n\}_{n \geq 1}$ such that
\begin{equs}
\lim_{n \toinf} \|A_n - A\|_{C_t^0 C_x^0([0, T] \times \T^2)} + \|\ptl_j (A_n^j - A^j)\|_{C_t^0 C_x^0([0, T] \times \T^2)} = 0.
\end{equs}
Let $\phi_n$ denote the solution to \eqref{eq:feynman-kac-proof-intermediate-1} with $A$ replaced by $A_n$. By standard Schauder estimates, we have that $\|\phi_n - \phi\|_{C_t^0 C_x^0([0, T] \times \T^2)} \ra 0$. By combining this with our previous result applied to $A_n$, as well as It\^{o} isometry and the convergence of the $A_n$, we may obtain
\begin{equs}
\phi(T, x) = \int dy p(0, y; T, x) \phi_0(y) \E_{(0, y) \ra (T, x)} \bigg[\exp\Big(-\icomplex \int_0^T A(u, W_u) \cdot dW_u - \icomplex \int_0^T (\ptl_j A^j)(u, W_u) du\Big) \bigg],
\end{equs}
which implies the desired formula for the heat kernel.
\end{proof}

We now state and prove a variant of our estimates for linear stochastic objects from Proposition \ref{prop:control-of-enhanced-data-set}. 

\begin{lemma}\label{lemma:linear-gc}
There exists a constant $C=C(\kappa)\geq 1$ such that, for all $t_0 \geq 0$ and $\power\geq 2$, it holds that 
\begin{equation}\label{eq:linear-gc}
\E \Big[ \big\| \initiallinear[\bfzero] \big\|_{C_t^0 \Cs_x^{-\kappa}([t_0,t_0+1]\times \T^2 \rightarrow \R^2)}^{\power} \Big]^{\frac{1}{\power}} + \E \Big[ \big\| \initialphilinear[\bfzero] \big\|_{C_t^0 \GCs^{-\kappa}([t_0,t_0+1]\times \T^2 \rightarrow \C)}^{\power} \Big]^{\frac{1}{\power}} \leq C \sqrt{\power},
\end{equation}
where  $\initiallinear[\bfzero]$ and $\initialphilinear[\bfzero]$ are linear stochastic object with zero initial data at time $t=0$ and $\GCs^{-\kappa}$ is as in \eqref{intro:eq-gauge-invariant-norms}. 
\end{lemma}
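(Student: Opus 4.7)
The bound for $\initiallinear[\bfzero]$ in $C_t^0 \Cs_x^{-\kappa}$ is a standard Gaussian linear-object estimate: Gaussian hypercontractivity reduces the $L^p(\Omega)$ moment to $\sqrt{p}$ times an $L^2(\Omega)$-bound, and the heat-kernel characterization of Besov norms (Lemma~\ref{lemma:besov-space-heat-kernel-characterization}) reduces this to the uniform bound $\sup_{u \in (0, 1]} u^\kappa \E|e^{u\Delta}\initiallinear[\bfzero](t, x)|^2 \lesssim 1$, valid on $[t_0, t_0+1] \times \T^2$ uniformly in $t_0 \geq 0$ (since the Fourier variances $(1 - e^{-2t\langle k\rangle^2})/(2\langle k\rangle^2)$ are always bounded by $1/(2\langle k\rangle^2)$). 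Time-continuity upgrades to $C_t^0$ by a Kolmogorov argument.

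The substance of the lemma is the bound for $\initialphilinear[\bfzero]$ in the gauge-invariant norm $\GCs^{-\kappa}$, which involves a supremum over $n \in \Z^2$. Writing $\initialphilinear[\bfzero](t, x) = (2\pi)^{-1}\sum_{k \in \Z^2}X_k(t)e_k(x)$ with $X_k(t) := \int_0^t e^{-(t-s)\langle k\rangle^2}dW_\zeta(s, k)$ independent complex Gaussians of variance $\leq (2\langle k\rangle^2)^{-1}$, a direct Fourier computation yields
\begin{equation*}
e^{u\Delta}(e^{-\icomplex n \cdot x}\initialphilinear[\bfzero](t))(x) = \tfrac{e^{-\icomplex n \cdot x}}{2\pi}\sum_{k \in \Z^2} e^{-u|k-n|^2}X_k(t) e_k(x),
\end{equation*}
and hence the pointwise second-moment bound
\begin{equation*}
\sigma_{u,n}^2 := \sup_{(t, x)} \E\big|e^{u\Delta}(e^{-\icomplex n \cdot x}\initialphilinear[\bfzero](t))(x)\big|^2 \lesssim \sum_{k \in \Z^2}\frac{e^{-2u|k-n|^2}}{\langle k\rangle^2} \lesssim \min\Big(\log\tfrac{1}{u},\ \tfrac{1}{u|n|^2}\Big).
\end{equation*}
The improved bound $1/(u|n|^2)$ when $u|n|^2 \geq 1$ is crucial and arises because the Gaussian mask $e^{-u|k-n|^2}$ concentrates near $k = n$, where $\langle k\rangle^{-2}$ is small for large $|n|$.

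Combining $\sigma_{u, n}^2$ with a chaining argument (the smoothed object has spatial scale $\sqrt{u}$, giving $\sim 1/u$ essentially independent points in $\T^2$ and a $\log(1/u)$-factor) and optimizing in $u \in (0, 1]$ against the Besov weight $u^{\kappa/2}$ from Lemma~\ref{lemma:besov-space-heat-kernel-characterization}, one obtains that $X_n := \|e^{-\icomplex n \cdot x}\initialphilinear[\bfzero]\|_{C_t^0\Cs_x^{-\kappa}([t_0, t_0+1] \times \T^2)}$, being a supremum of centered Gaussian fields indexed by $(u, t, x)$, satisfies by Borell--TIS and Dudley
\begin{equation*}
\E X_n \lesssim \tilde\sigma_n \quad \text{and} \quad \bP\big(X_n > \E X_n + \lambda\big) \leq \exp\!\big(-\lambda^2/(2\tilde\sigma_n^2)\big),
\end{equation*}
with variance proxy $\tilde\sigma_n^2 \lesssim \max\!\big(1,\ \langle n\rangle^{-2\kappa}(\log\langle n\rangle)^2\big)$.

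Finally, to control $\sup_n X_n = \|\initialphilinear[\bfzero]\|_{C_t^0\GCs^{-\kappa}}$, I union-bound over $n \in \Z^2$: since $\tilde\sigma_n^{-2} \gtrsim \langle n\rangle^{2\kappa}/(\log\langle n\rangle)^2$, the sum $\sum_{n \in \Z^2} \exp(-c\lambda^2/\tilde\sigma_n^2)$ is finite for $\lambda \geq 1$ and is $\lesssim \exp(-c'\lambda^2)$ for $\lambda$ large. This yields the sub-Gaussian tail bound $\bP(\|\initialphilinear[\bfzero]\|_{C_t^0\GCs^{-\kappa}} > C + \lambda) \lesssim \exp(-c\lambda^2)$, hence the desired moment bound $\lesssim \sqrt{p}$. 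The main obstacle is the supremum over the countably infinite index set $n \in \Z^2$ inherent in the gauge-invariant norm; it is overcome precisely by the polynomial-in-$|n|$ decay of $\tilde\sigma_n^2$, which is the fundamental reason the gauge-invariant norm of the linear object is almost surely finite.
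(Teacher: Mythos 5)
Your proposal is correct in substance and captures the key idea, but it takes a genuinely different route from the paper. The paper reduces, via Littlewood--Paley decomposition and the ``standard reductions'' of \cite[Appendix A]{BC23} (Gaussian hypercontractivity plus a countable union bound over dyadic scales), to a single second-moment estimate
\begin{equation*}
\sup_{t\in [t_0,t_0+1]} \E \Big[ \big\| e^{-\icomplex m x} P_N \initialphilinear[\bfzero] \big\|_{H_x^{-\kappa}}^2 \Big] \lesssim \max\big( \langle m \rangle, N \big)^{-2\kappa},
\end{equation*}
whose simultaneous decay in $m$ and $N$ makes the double union bound over the gauge shift $m$ and the LP scale $N$ summable. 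You instead work with the heat-kernel characterization of $\Cs_x^{-\kappa}$, compute a pointwise variance $\sigma_{u,n}^2$ of the heat-smoothed, gauge-shifted field, and then invoke Borell--TIS/Dudley to control the supremum and union-bound over $n$. Both approaches hinge on exactly the same mechanism: polynomial decay in the gauge shift $n$ of the variance of the modulated object, which is what makes the supremum over $\Z^2$ finite. Your route is more self-contained (it avoids citing the reduction machinery of \cite{BC23}), at the cost of handling the supremum over $(u,t,x)$ directly via chaining; the paper's route front-loads that work into the citation.

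Two points that would need tightening. First, the intermediate bound $\sigma_{u,n}^2 \lesssim \min(\log(1/u),\, 1/(u|n|^2))$ is not quite right in the crossover regime $u|n|^2 \sim 1$: the contribution of $|k| \lesssim |n|/2$ gives an extra $e^{-c\,u|n|^2}\log\langle n\rangle$, so the correct statement is roughly $\sigma_{u,n}^2 \lesssim \min(\log(1/u),\, (u\langle n\rangle^2)^{-1}) + e^{-c\,u\langle n\rangle^2}\log\langle n\rangle$. This does not affect the conclusion: after multiplying by the Besov weight $u^\kappa$ and taking $\sup_{u\in(0,1]}$ one still gets $\lesssim \langle n\rangle^{-2\kappa}\log\langle n\rangle$, which is polynomially decaying, so the union bound survives; but the one-line $\min$ bound as stated is false. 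Second, in ``variance proxy $\tilde\sigma_n^2 \lesssim \max(1, \langle n\rangle^{-2\kappa}(\log\langle n\rangle)^2)$'' you need $\min$ rather than $\max$: with $\max$ the bound degenerates to a constant and the subsequent inference $\tilde\sigma_n^{-2} \gtrsim \langle n\rangle^{2\kappa}/(\log\langle n\rangle)^2$ (and therefore summability of the union bound) does not follow.
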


\begin{proof}[Proof of Lemma \ref{lemma:linear-gc}]
The first estimate in \eqref{eq:linear-gc} follows directly from the proof of \cite[Lemma 5.6]{BC23}. Using standard reductions (see e.g. \cite[Appendix A]{BC23}), it suffices to prove for all $m\in \Z^2$ and $N\geq 1$ that 
\begin{equation}\label{eq:linear-gc-p1}
\sup_{t\in [t_0,t_0+1]} \E \Big[ \big\| e^{-\icomplex m x} P_N \initialphilinear[\bfzero] \big\|_{H_x^{-\kappa}}^2 \Big] \lesssim \max\big( \langle m \rangle, N \big)^{-2\kappa}. 
\end{equation}
Using the same argument as in the proof of \cite[Lemma 5.6]{BC23}, it holds that 
\begin{equation*}
\E \Big[ \big\| e^{-\icomplex m x} P_N \initialphilinear[\bfzero] \big\|_{H_x^{-\kappa}}^2 \Big]
\lesssim \sum_{\substack{n\in \Z^2\colon \\ |n|\sim N}}
\langle m-n \rangle^{-2\kappa} \langle n \rangle^{-2}. 
\end{equation*}
Since the sum can be estimated by the right-hand side of \eqref{eq:linear-gc-p1}, this completes the proof.
\end{proof}

\section{High-frequency interactions in the derivative-nonlinearity}\label{section:high}

In this section, we present the proof of Lemma \ref{AH:lem-derivative-high}, which controls all interactions in the derivative nonlinearity $\leray \Im \big( \overline{\phi} \covd_A \phi \big)$ involving high-frequency terms. The proof of Lemma \ref{AH:lem-derivative-high} relies on the perturbative methods from the earlier works \cite{BC23,S21}, and will be distributed over the following three subsections. For notational simplicity, throughout this section we assume that $t_0 = 0$.

\subsection{High-frequency probabilistic hypothesis}

In this subsection, we examine the probabilistic estimates which are needed for the proof of Lemma \ref{AH:lem-derivative-high}. First, throughout this section, we will primarily work with the massive Duhamel operator, which we define as
\begin{equs}
\mDuh(f)(t) := \int_0^t e^{(t-s)(\Delta - 1)} f(s) ds.
\end{equs}
Next, we introduce the following stochastic objects. 

\begin{definition}[Further stochastic objects]\label{high:def-stochastic}
Recall the low and high-frequency linear objects $\linear[\lo] = P_{\leq L} \linear$, $\linear[\hi] = \linear - \linear[\lo]$, $\philinear[\lo] = P_{\leq L} \philinear$, $\philinear[\hi] = \philinear - \philinear[\lo]$. We define the mixed quadratic objects
\begin{alignat*}{3}
\mixedquadratic[\lo,\lo]
&= \mDuh \Big[ \linear[\lo][r][j] \partial_j \philinear[\lo] \Big], 
\qquad \text{and}& \qquad \mixedquadratic[\lo,\hi]
&= \mDuh \Big[ \linear[\lo][r][j] \partial_j \philinear[\hi] \Big], \\
\mixedquadratic[\hi,\lo]
&= \mDuh \Big[ \linear[\hi][r][j] \partial_j \philinear[\lo] \Big],
\qquad \text{and}& \qquad \mixedquadratic[\hi,\hi]
&= \mDuh \Big[ \linear[\hi][r][j] \partial_j \philinear[\hi] \Big].
\end{alignat*}
To simplify the notation, we then define
\begin{equs}
\mixedquadratic[\lo]
:= \mixedquadratic[\lo,\lo]
\quad \text{and} \quad 
\mixedquadratic[\hi]
:= \mixedquadratic[\hi,\lo] + \mixedquadratic[\lo,\hi] + \mixedquadratic[\hi,\hi].
\end{equs}
Similarly, we define $\Aquadratic[\lo]$ and $\Aquadratic[\hi]$. Next, we define the Wick-ordered quadratic nonlinearities 
\begin{equs}
\Aquadraticnl &:= \biglcol\, \big|\linear\big|^2\, \bigrcol, 
\qquad 
\Aquadraticnl[\lo] := \big| \linear[\leqL] \big|^2 - 2 \sigma_{\leq L}^2, 
\qquad \text{and} \qquad 
\Aquadraticnl[\hi] := \Aquadraticnl  - \Aquadraticnl[\lo], 
\end{equs}
and analogously for $\phiquadraticnl$, $\phiquadraticnl[\lo]$, $\phiquadraticnl[\hi]$.
We define the Wick-ordered polynomial nonlinearities
\begin{equs}
\big(\biglcol\, \philinear[][r][j] \ovl{\philinear[][r][k]} \bigrcol\big)_{\hi} := \biglcol \, \philinear[][r][j] \ovl{\philinear[][r][k]} \bigrcol - \biglcol\, \philinear[\lo][r][j] \ovl{\philinear[\lo][r][k]} \bigrcol, ~~ 0 \leq j \leq \frac{q+1}{2}, 0 \leq k \leq \frac{q-1}{2}.
\end{equs}
In the above, we keep to the convention specified in Remark \ref{remark:wick-ordered-abuse-of-notation}, in that we use the limiting Wick-ordered nonlinearity if $\philinear$ is involved, and otherwise we use the Wick-ordered nonlinearity with variance $\sigma^2_{\leq L}$ if $\philinear[\lo]$ is involved.

Finally, because we are working with the limiting equation in Sections \ref{section:Abelian-Higgs} and \ref{section:decay} (recall Remark \ref{remark:limiting-A-nonlinearity}), we need to be careful to define combinations of stochastic objects whose resonances cancel. Define the combined objects
\begin{equs}
4 \ovl{\philinear} \ptl_i^2 \mDuh\big[\,\philinear\big] + \big|\, \philinear\big|^2 &:= \lim_{N \toinf} 4 \ovl{\philinear[\leqN]} \ptl_i^2 \mDuh\big[\,\philinear[\leqN]\big] + \big|\,\philinear[\leqN]\big|^2, ~~ i \in [2], \\
\ovl{\philinear} \ptl_i \ptl_j \mDuh\big[\,\philinear\big] &:= \lim_{N \toinf} \ovl{\philinear[\leqN]} \ptl_i \ptl_j \mDuh\big[\,\philinear[\leqN]\big], ~~ i \neq j \in [2], \\
4 \ovl{\philinear} \covd \mixedquadratic + \linear \big|\,\philinear\big|^2 &:= \lim_{N \toinf} 4 \ovl{\philinear[\leqN]} \covd \mixedquadratic[\leqN] + \linear[\leqN] \big|\,\philinear[\leqN]\big|^2 .
\end{equs}
Here, the limits are in $C_t^0 \Cs_x^{-\kappa}([0, 1] \times \T^2)$. The fact that these limits almost surely exist follows from Proposition \ref{prop:control-of-enhanced-data-set}\footnote{Technically, Proposition \ref{prop:control-of-enhanced-data-set} is for stationary objects, whereas the objects in Appendix \ref{section:high} are non-stationary. However, the arguments from \cite{BC23} that are cited in the proof sketch of Proposition \ref{prop:control-of-enhanced-data-set} can also be modified to handle the non-stationary setting.} and the fact that in the first and third displays, the objects are precisely combined so that their diverging counterterms cancel each other (recall the definition of the enhanced data set \eqref{eq:enhanced-data-set}).
\end{definition}

Similar as in Section \ref{section:Abelian-Higgs}, we now collect several probabilistic estimates in a single hypothesis.

\begin{remark}
We note here that as in Sections \ref{section:cshe}-\ref{section:decay}, we work with non-stationary objects, for the convenience of our globalization argument (recall Remark \ref{remark:local-theory-remarks}\ref{item:stationary-objects}). On the other hand, in Definition \ref{high:def-stochastic}, we defined the Wick-ordered objects by using the stationary counterterm.  This discrepancy is the reason for certain time-weighted estimates appearing in Hypothesis \ref{high:hypothesis-probabilistic} (recall Remark \ref{intro:rem-cshe}\ref{item:time-weights} for a similar remark).
\end{remark}

\begin{hypothesis}[High-frequency probabilistic hypothesis]\label{high:hypothesis-probabilistic} 
Let $\tau \in (0,1]$ be a time-scale. We then assume that for all $-\frac{1}{2}\leq \alpha \leq -10\kappa$, the following estimates are satisfied. Here, all norms are on $[0, \tau] \times \T^2$.
\begingroup
\allowdisplaybreaks
\begin{alignat}{3}
\big\| \philinear[\hi] \big\|_{C_t^0 \Cs_x^\alpha} &\leq L^{\kappa+\alpha},  \qquad & \qquad 
\big\| \philinear[\lo] \big\|_{C_t^0 L_x^\infty} &\leq L^{\kappa}, 
\label{high:eq-probabilistic-philinear} \\ 
\big\| \linear[\hi] \big\|_{C_t^0 \Cs_x^\alpha} &\leq L^{\kappa+\alpha},  \qquad & \qquad 
\big\| \linear[\lo] \big\|_{C_t^0 L_x^\infty} &\leq L^{\kappa}, 
\label{high:eq-probabilistic-Alinear} \\
\big\|\philinear[B, \lo] - \philinear[\lo]\big\|_{C_t^0 \Cs_x^{\frac{1}{2}}} &\leq L^{\eta_3}, \label{high:eq-probabilistic-difference-linear-objects}\\
\big\| t^\kappa \Aquadraticnl[\hi] \big\|_{C_t^0 \Cs_x^\alpha} &\leq L^{2\kappa+\alpha}, \label{high:eq-probabilistic-quadratic} \\ 
\big\| \linear[\hi] \philinear[\lo] \big\|_{C_t^0 \Cs_x^\alpha} + \big\| \linear[\lo] \philinear[\hi] \big\|_{C_t^0 \Cs_x^\alpha} &\leq L^{2\kappa+\alpha}, \qquad & \qquad 
\big\| \linear \philinear[\hi] \big\|_{C_t^0 \Cs_x^\alpha} &\leq L^{2\kappa+\alpha},
\label{high:eq-probabilistic-linear-linear} \\
\big\| t^\kappa \Aquadraticnl[\hi] \philinear[\lo] \big\|_{C_t^0 \Cs_x^\alpha} &\leq L^{3\kappa+\alpha}, \qquad & \qquad 
\big\| t^\kappa \Aquadraticnl \philinear[\hi] \big\|_{C_t^0 \Cs_x^\alpha} &\leq L^{3\kappa+\alpha}, 
\label{high:eq-probabilistic-quadratic-linear} \\
\big\| \Aquadratic[\hi]\big\|_{C_t^0 \Cs_x^{1+\alpha} \cap C_t^{\frac{1}{4}} \Cs_x^{\frac{1}{2}+\alpha}} &\leq L^{2\kappa + \alpha}, \label{high:eq-probabilistic-Aquadratic}\\
\big\| \mixedquadratic[\hi]\big\|_{C_t^0 \Cs_x^{1+\alpha}} &\leq L^{2\kappa + \alpha}, \label{high:eq-probabilistic-mixed-quadratic}\\
\big\| t^\kappa \linear[\hi] \phiquadraticnl \big\|_{C_t^0 \Cs_x^\alpha} &\leq L^{3\kappa + \alpha}, \qquad & \qquad \big\|t^\kappa \linear[\lo] \phiquadraticnl[\hi]\big\|_{C_t^0 \Cs_x^\alpha} &\leq L^{3\kappa + \alpha}, \label{high:eq-probabilistic-linear-phiquadratic}\\
\Big\|t^\kappa \big(\biglcol\,\philinear[][r][j_1] \ovl{\philinear[][r][j_2]}\bigrcol\big)_{\hi}\Big\|_{C_t^0 \Cs_x^{\alpha}} &\leq L^{q\kappa - \alpha}, \qquad & \qquad \Big\| t^\kappa \biglcol \, \philinear[\lo][r][j_1] \ovl{\philinear[\lo][r][j_2]} \bigrcol \Big\|_{C_t^0 L_x^\infty} &\leq L^{q \kappa} , \label{high:eq-probabilistic-polynomial}  \\
\bigg\|\philinear[\lo] \covd \ptl_j \mDuh\big(\philinear[\hi]\big)\bigg\|_{C_t^0 \Cs_x^\alpha} &\leq L^{2\kappa + \alpha}, \qquad & \qquad  \bigg\|\philinear[\hi] \covd \ptl_j \mDuh\big(\philinear[\lo]\big)\bigg\|_{C_t^0 \Cs_x^\alpha} &\leq L^{2\kappa + \alpha}. \label{high:eq-probabilistic-quadratic-no-resonance} 
\end{alignat} 
\endgroup
In \eqref{high:eq-probabilistic-polynomial}, the parameters $j_1, j_2$ satisfy $0 \leq j_1 \leq \frac{q+1}{2}$, $0 \leq j_2 \leq \frac{q-1}{2}$. Additionally, we assume that
\begin{align}
\bigg\| 4 \ovl{\philinear} \ptl_i^2 \mDuh\big(\,\philinear\big) + \big|\,\philinear\big|^2 - \Big(4 \ovl{\philinear[\lo]} \ptl_i^2 \mDuh\big(\,\philinear[\lo]\big) + \big|\, \philinear[\lo]\big|^2\Big) \bigg\|_{C_t^0 \Cs_x^{\alpha}} &\leq L^{2\kappa + \alpha}, \quad i \in [2], \label{eq:philinear-laplacian-Duh-philinear-phiquadraticnl}\\
\bigg\|\ovl{\philinear} \ptl_i \ptl_j \mDuh\big(\,\philinear\big) - \ovl{\philinear[\lo]} \ptl_i \ptl_j \mDuh\big(\,\philinear[\lo]\big)\bigg\|_{C_t^0 \Cs_x^{\alpha}} &\leq L^{2\kappa + \alpha}, \quad i \neq j \in [2], \label{eq:philinear-laplacian-Duh-philiner}\\
\bigg\|4 \ovl{\philinear} \covd \mixedquadratic + \linear \big|\,\philinear\big|^2 - \Big(4 \ovl{\philinear[\lo]} \covd \mixedquadratic[\lo] + \linear[\lo] \big|\, \philinear[\lo]\big|^2\Big)\bigg\|_{C_t^0 \Cs_x^\alpha} &\leq L^{3\kappa + \alpha} .\label{eq:philinear-mixed-quadratic-linear-phiquadraticnl-combined}
\end{align}
\end{hypothesis}

The following proposition shows that the high-frequency probabilistic hypothesis (Hypothesis \ref{high:hypothesis-probabilistic}) is satisfied on an event with high probability. 

\begin{proposition}[Probabilistic estimates for high-frequency terms]\label{high:prop-high-frequency-probabilistic}
Let $\EventHi \subseteq \Omega$ be the event defined by the conditions in  \eqref{high:eq-probabilistic-philinear}-\eqref{eq:philinear-mixed-quadratic-linear-phiquadraticnl-combined} above, and let $c>0$ be a sufficiently small constant. Then, it holds that 
\begin{equs}
\mathbb{P} \big( \Omega \backslash \EventHi \big) 
\leq c^{-1} \exp\big(- c L^\kappa \big).
\end{equs}
\end{proposition}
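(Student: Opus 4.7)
The plan is to view Hypothesis \ref{high:hypothesis-probabilistic} as a long but essentially standard list of probabilistic estimates for stochastic objects of bounded Wiener chaos degree, each of which has already been controlled on unit time intervals in \cite{BC23} (cf.\ the blueprint of Proposition \ref{prop:control-of-enhanced-data-set}). The only two new features here are (i) the presence of a high-frequency projection $P_{>L}$ on one or more factors, which should yield negative powers of $L$ in the bounds, and (ii) the inhomogeneous (non-stationary) initial data at $t=0$, which is already accounted for by the time-weighted norms. The overall strategy is: for each object $X$ in the list, establish a moment bound of the form $\|\|X\|_{C_t^0 \Cs_x^\alpha}\|_{L^p(\Omega)} \lesssim p^{k/2} L^{(\mathrm{deg}\cdot\kappa)+\alpha}$, where $k$ is the Wiener chaos degree, and then apply Chebyshev with a suitable choice of $p$ combined with a union bound over the (finitely many, $q$-dependent) estimates listed.

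The moment bounds are carried out uniformly, along the same lines as in Section \ref{section:cshe}: reduce to a second-moment / Gaussian hypercontractivity argument via the standard Besov-space reductions of \cite[Appendix A]{BC23}, compute the resulting second moment as a space-time convolution of Euclidean heat kernels paired with the mollifier $\chi_{\leq L}$ or its complement, and use the frequency restriction $P_{>L}$ to gain the factor $L^\alpha$ (for $\alpha \leq 0$). For instance, for \eqref{high:eq-probabilistic-philinear}, the second-moment computation of $e^{u\Delta}P_N \philinear[\hi](t,x)$ for $N \gtrsim L$ gives $L^{2\alpha} \log u^{-1}$; for the quadratic objects such as \eqref{high:eq-probabilistic-quadratic}, \eqref{high:eq-probabilistic-linear-linear}, \eqref{high:eq-probabilistic-Aquadratic}, \eqref{high:eq-probabilistic-mixed-quadratic} the corresponding bounds arise from two-fold Wiener contractions in which at least one factor carries the projection to $|n|\gtrsim L$; and the degree-$k$ polynomial bounds in \eqref{high:eq-probabilistic-polynomial} follow from the classical estimate of Proposition \ref{prop:classical-estimate-non-covariant-object}, with the time weight $t^\kappa$ absorbing the discrepancy between our non-stationary convention and the stationary Wick-ordering used to define $\sigma^2_{\leq L}$.

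Three items require slightly more care. The estimate \eqref{high:eq-probabilistic-difference-linear-objects} for $\philinear[B,\lo] - \philinear[\lo]$ is not of the standard high-frequency form; instead it is an instance of Theorem \ref{intro:thm-cshe}\ref{item:thm:-difference-in-linear-objects} applied with $\alpha = \tfrac{1}{2}$ on the time interval $[0,1]$, combined with the deterministic bound $\|B\|_{\resnorm} \lesssim L^{\eta_3/2}$ already derived in \eqref{AH:eq-probabilistic-p1} from the assumption $\|\pregA_0\|_{\Cs_x^\eta} \lesssim L^{\eta_3/2}$. The combined objects \eqref{eq:philinear-laplacian-Duh-philinear-phiquadraticnl}, \eqref{eq:philinear-laplacian-Duh-philiner}, \eqref{eq:philinear-mixed-quadratic-linear-phiquadraticnl-combined} require the exact cancellation of the diverging resonant counterterms at the low/high decomposition: since $\sigma^2_{\leq L}$ is the same constant appearing on both sides, the subtraction makes the expression a genuine element of the second (resp.\ third) Wiener chaos, to which the same scheme applies. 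These are completely parallel to \cite[Lemmas 5.18 and 5.21]{BC23} (referenced in the proof sketch of Proposition \ref{prop:control-of-enhanced-data-set}), modulo the trivial replacement of full Littlewood--Paley projections by our low/high cut at frequency $L$.

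The final step is to turn the moment bounds into the stated exponential tail. For each estimate indexed by a chaos degree $k \leq q+1$, Chebyshev's inequality at moment $p \gtrsim L^{\kappa/k}$ converts the moment bound $p^{k/2} L^{(\mathrm{deg}\cdot\kappa)+\alpha}$ into a tail of the form $\exp(-c L^\kappa)$, after using that the factor of $L^{\kappa+\alpha}$ on the right-hand side of e.g.\ \eqref{high:eq-probabilistic-philinear} leaves room of size $L^{\kappa/2}$ over the square-root of the second moment (since $\alpha \leq -10\kappa$). Summing over the finitely many values of $\alpha$ needed (it suffices to check a dyadic net in $\alpha \in [-\tfrac{1}{2},-10\kappa]$ and then interpolate), the finitely many $(i,j)$ or $(j_1,j_2)$-indices, and the finitely many bullets of the hypothesis, we obtain the stated bound $\mathbb{P}(\Omega \setminus \EventHi) \leq c^{-1} \exp(-c L^\kappa)$. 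The main obstacle in a full write-up is bookkeeping: checking for each of the $\sim 20$ displayed inequalities that the choice of $p$, the Wiener chaos degree, and the exponent on $L$ line up correctly; the underlying stochastic analysis itself is routine given the machinery already developed in Sections \ref{section:monotonicity}--\ref{section:cshe} and in \cite{BC23}.
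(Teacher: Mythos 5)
Your proposal follows essentially the same route as the paper's (very brief) proof: it handles \eqref{high:eq-probabilistic-difference-linear-objects} via Theorem \ref{intro:thm-cshe}\ref{item:thm:-difference-in-linear-objects} together with the deterministic bound $\|\Blin\|_{\resnorm}\lesssim L^{\eta_3/2}$, reduces the remaining estimates to the frequency-localized moment bounds of \cite[Section~5]{BC23} while noting that the $P_{>L}$ restriction trades regularity for powers of $L^{-1}$ and that the $t^\kappa$-weight compensates for the non-stationary Wick convention, and concludes by Gaussian hypercontractivity, Chebyshev, and a union bound. The paper's proof merely states this as a proof sketch with references; your version fills in the same skeleton with some explicit moment exponents and the Chebyshev bookkeeping, but there is no methodological difference.
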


\begin{proof}
The estimate \eqref{high:eq-probabilistic-difference-linear-objects} follows from Theorem \ref{intro:thm-cshe}\ref{item:thm:-difference-in-linear-objects}. All other estimates follow from small variations of the estimates in \cite[Section 5]{BC23}. The estimates in the previous paper were proven where everything was frequency-localized, and we always had a power of the maximum frequency in the denominator of our bound. Thus, if we are willing to give up on regularity, we gain a corresponding power of the maximum frequency, which is at least $L$ for all the high-frequency objects in Hypothesis \ref{high:hypothesis-probabilistic}. This explains why we have the power of $L^\alpha$ in all the estimates of high-frequency objects. Of course, technically \cite{BC23} dealt with stationary objects, but the same arguments work for non-stationary objects, at the expense of a small $t^\kappa$ time weight for any object which requires renormalization. The proof sketch of Proposition \ref{prop:control-of-enhanced-data-set} also gives the correspondence between the objects in Hypothesis \ref{high:hypothesis-probabilistic} and the results in the previous paper. 
\end{proof}

\subsection{\protect{Non-covariant estimates of $\psi_{\lo}$ and $\psi_{\hi}$}}

In Section \ref{section:cshe} and Section \ref{section:Abelian-Higgs}, all of our estimates of $\scalebox{0.9}{$\philinear[\Blin,\lo]$}$ and $\psi_{\lo}$ were obtained using covariant methods. In contrast, the following estimates of $\scalebox{0.9}{$\philinear[\Blin,\lo]$}$, $\psi_{\lo}$, and $\psi_{\hi}$ will be obtained using non-covariant methods. This is possible since all interactions involving high-frequency terms, such as the interactions in Lemma \ref{AH:lem-derivative-high}, exhibit gains in $L$, which can be used to pay for factors of $\| B(0)\|_{\Cs_x^\eta}$.\\

In the first lemma of this subsection, we bound non-covariant derivatives of $\psi_{\lo}$.

\begin{lemma}[Non-covariant estimate of $\psi_{\lo}$]\label{high:lem-psi-lo}
Let the probabilistic hypothesis and continuity hypothesis (Hypothesis \ref{AH:hypothesis-probabilistic} and \ref{AH:hypothesis-continuity}) be satisfied and let $1\leq p \leq \frac{r}{\eta r+(1-2\eta)}$. Then, it holds that 
\begin{equs}\label{high:eq-psi-lo}
\big\| \psi_{\lo} \big\|_{L_t^p \Bc^{2\eta,p}_x} 
+ \big\| \psi_{\lo} \big\|_{L_t^2 H_x^1} \lesssim L^{2\eta_2}. 
\end{equs}
\end{lemma}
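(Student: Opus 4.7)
My plan is to first establish the $L_t^2 H_x^1$ part of \eqref{high:eq-psi-lo} using the covariant monotonicity formula (Proposition~\ref{AH:prop-psi-lo-estimate}), and then derive the Besov bound by interpolating the $L_t^\infty L_x^r$ control from the continuity hypothesis (Hypothesis~\ref{AH:hypothesis-continuity}) with the $L_t^2 H_x^1$ control. The key point is that, although we are after a non-covariant estimate, we may still obtain it from the covariant monotonicity formula and then convert covariant derivatives to standard derivatives via the pointwise identity $|\nabla \psi_\lo| \leq |\covd_{A_\lo} \psi_\lo| + |A_\lo| |\psi_\lo|$.

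For the first step, I would apply Proposition~\ref{AH:prop-psi-lo-estimate} with $p=2$ and $\delta=\tfrac{1}{2}$, then integrate over $[t_0,t_0+\tau]$. Using the continuity hypothesis bound $\|Z\|_{L_{t,x}^\infty}\leq L^{\eta_2}$, together with $\|\psi_{\lo}(t_0)\|_{L_x^2}\lesssim L^{\eta_3/2}$ (which follows from \eqref{AH:eq-L} and \eqref{AH:eq-L-nreg}), and recalling $\tau\leq 1$, this yields
\[
\|\covd_{A_\lo}\psi_{\lo}\|_{L_t^2 L_x^2}^2 \lesssim L^{2\eta_2} + \tau L^{\frac{2(q+1)(1+\nu)}{q}\eta_2} + \tau L^{\kappa_1(q+1)} \lesssim L^{2(1+1/q+\nu)\eta_2}.
\]
To pass to non-covariant derivatives, I would bound $\|A_\lo\|_{L_{t,x}^\infty}\leq \|\linear[\lo]\|_{L_{t,x}^\infty}+\|B\|_{L_{t,x}^\infty}+\|S_\lo\|_{L_{t,x}^\infty}+\|Z\|_{L_{t,x}^\infty}\leq L^{\eta_2}$ (the $B$-contribution being bounded by $L^{\eta_3/2}$ via Lemma~\ref{prelim:lem-heat-flow-bound-decay} and \eqref{AH:eq-L}, and the remaining pieces by Hypothesis~\ref{AH:hypothesis-probabilistic} and Hypothesis~\ref{AH:hypothesis-continuity}) and $\|\psi_{\lo}\|_{L_t^2 L_x^2}\lesssim \tau^{1/2} \|\psi_{\lo}\|_{L_t^\infty L_x^r}\lesssim L^{\eta_2}$. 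Combining these, $\|\nabla\psi_{\lo}\|_{L_t^2 L_x^2}\leq \|\covd_{A_\lo}\psi_{\lo}\|_{L_t^2 L_x^2}+\|A_\lo\|_{L_{t,x}^\infty}\|\psi_{\lo}\|_{L_t^2 L_x^2}\lesssim L^{2\eta_2}$.

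For the Besov estimate, observe that the condition $\tfrac{1}{p}=\eta+\tfrac{1-2\eta}{r}$ is precisely the Sobolev interpolation index between $L_x^r$ and $H_x^1$. I would therefore prove the pointwise (in time) inequality
\[
\|f\|_{\Bc_x^{2\eta,p}} \lesssim \|f\|_{L_x^r}^{1-2\eta}\|f\|_{H_x^1}^{2\eta}
\]
by a standard Littlewood--Paley argument: for each dyadic $N$, one bounds $\|P_N f\|_{L_x^p}$ via Bernstein either by $N^{2/r-2/p}\|f\|_{L_x^r}$ (low $N$) or by $N^{-2/p}\|f\|_{H_x^1}$ (high $N$), and then optimizes the cutoff. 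Integrating the $p$-th power in time with H\"{o}lder, and using that the condition $p\leq r/(\eta r+1-2\eta)$ is equivalent to $\eta p \leq 1$, one obtains
\[
\|\psi_{\lo}\|_{L_t^p \Bc_x^{2\eta,p}} \lesssim \|\psi_{\lo}\|_{L_t^\infty L_x^r}^{1-2\eta}\, \tau^{1/p-\eta}\|\psi_{\lo}\|_{L_t^2 H_x^1}^{2\eta} \leq L^{\eta_2(1-2\eta)}\cdot L^{4\eta\eta_2} \leq L^{2\eta_2},
\]
where we used $\tau\leq 1$ at the end.

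The main technical care will be the numerology: I must track all the $L$-exponents carefully to make sure the final estimate is indeed bounded by $L^{2\eta_2}$. The potentially worrisome loss is the exponent $\tfrac{2(q+1)(1+\nu)}{q}$ from the monotonicity formula (which is roughly $8/3$ for $q=3$), but this is absorbed thanks to the accompanying factor of $\tau\leq 1$ and the fact that the outer exponent $2\eta$ in the interpolation is very small, so a bound of the form $L^{2(1+1/q)\eta_2}$ on $\|\nabla\psi_{\lo}\|_{L_t^2 L_x^2}$ still yields $L^{O(\eta\eta_2)}$ when raised to the $2\eta$-power, which is negligible compared to $L^{2\eta_2}$.
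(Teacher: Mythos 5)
Your proposal is correct and follows essentially the same route as the paper's proof: you obtain the $L_t^2H_x^1$ bound from the covariant monotonicity formula (Proposition~\ref{AH:prop-psi-lo-estimate} with $p=2$) plus the pointwise conversion $|\nabla\psi_{\lo}|\leq|\covd_{A_{\lo}}\psi_{\lo}|+|A_{\lo}||\psi_{\lo}|$ using $L^\infty$ control on $A_{\lo}$, and then interpolate the $L_t^\infty L_x^r$ bound from Hypothesis~\ref{AH:hypothesis-continuity} with the $L_t^2H_x^1$ bound, using that $\frac{1}{p}=\eta+\frac{1-2\eta}{r}$ is exactly the interpolation index at $\theta=2\eta$. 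The only cosmetic differences are that you bound $\|\psi_{\lo}(t_0)\|_{L_x^2}$ by $L^{\eta_3/2}$ from \eqref{AH:eq-L}--\eqref{AH:eq-L-nreg} rather than by $L^{\eta_2}$ from the continuity hypothesis, and that you perform the Gagliardo--Nirenberg interpolation pointwise in time and then apply H\"older in $t$ (picking up a harmless $\tau^{1/p-\eta}\leq 1$ factor), whereas the paper phrases it as a single spacetime interpolation; neither affects the conclusion.
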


\begin{proof}
We first note that, due to the probabilistic and continuity hypothesis and the choice of $L$, 
\begin{equs}\label{high:eq-psi-lo-p0}
\| \philinear[\lo] \|_{L_t^\infty L_x^\infty} \lesssim L^\kappa, \quad 
\| \psi_{\lo} \|_{L_t^\infty L_x^r} \lesssim L^{\eta_2}, \quad 
\| B \|_{C_t^0 \Cs_x^\eta} \lesssim \| A(0) \|_{\Cs_x^\eta} \lesssim L^{\eta_3}, \quad \text{and} \quad
\| Z \|_{C_t^0 \Cs_x^\eta} \lesssim L^{\eta_2}. 
\end{equs}
Using Proposition \ref{AH:prop-psi-lo-estimate} and \eqref{high:eq-psi-lo-p0}, we then obtain that
\begin{equs}\label{high:eq-psi-lo-p1} 
\big\| \covd_{A_{\lo}} \psi_{\lo} \big\|_{L_{t,x}^2}
&\lesssim \big\| \psi_{\lo}(0) \big\|_{L_x^2} 
+  \| Z(0) \|_{C_t^0 \Cs_x^\eta}^{\frac{q+1}{q} (1+\nu)} + L^{\kappaone (q+1)} 
\lesssim L^{\eta_2} + L^{\eta_2 \frac{q+1}{q} (1+\nu)} + L^{\kappaone (q+1)}  \lesssim L^{2\eta_2}. \qquad 
\end{equs}
Furthermore, using \eqref{high:eq-psi-lo-p0}, we also have that
\begin{equs}\label{high:eq-psi-lo-p2}
\big\| A_{\lo} \psi_{\lo} \big\|_{L_{t,x}^2} 
\lesssim \Big( \big\| \linear[\lo] \big\|_{L_{t,x}^\infty} 
+ \big\| \Blin \big\|_{L_{t,x}^\infty} 
+ \big\| Z  \big\|_{L_{t,x}^\infty} \Big)
\big\| \psi_{\lo} \big\|_{L_t^\infty L_x^2}
\lesssim L^{2\eta_2}.
\end{equs}
By combining \eqref{high:eq-psi-lo-p0}, \eqref{high:eq-psi-lo-p1} , and \eqref{high:eq-psi-lo-p2}, it then follows that
\begin{equs}
\big\| \psi_{\lo} \big\|_{L_t^2 H_x^1}
\lesssim \big\| \covd_{A_{\lo}}  \psi_{\lo} \big\|_{L_{t,x}^2} 
+ \big\| A_{\lo} \psi_{\lo} \big\|_{L_{t,x}^2} 
+ \big\| \psi_{\lo} \big\|_{L_{t,x}^2}
\lesssim L^{2\eta_2},
\end{equs}
which yields the $L_t^2 H_x^1$-estimate in \eqref{high:eq-psi-lo}. In order to obtain the $L_t^p \Bc^{2\eta,p}_x$-estimate in \eqref{high:eq-psi-lo}, we first note that
\begin{equation*}
p \leq \frac{r}{\eta r+(1-2\eta)} \leq \frac{1}{\eta}. 
\end{equation*}
Using interpolation, it then follows that 
\begin{equs}
\big\| \psi_{\lo} \big\|_{L_t^p \Bc^{2\eta,p}_x} 
\leq \big\| \psi_{\lo} \big\|_{L_t^{\frac{1}{\eta}}\Bc^{2\eta,\frac{r}{\eta r+(1-2\eta)}}_x} 
\lesssim \big\| \psi_{\lo} \big\|_{L_t^\infty L_x^r}^{1-2\eta} 
\big\| \psi_{\lo} \big\|_{L_t^2 H_x^1}^{2\eta} \lesssim L^{2\eta_2},
\end{equs}
which yields the $L_t^p \Bc^{2\eta,p}_x$-estimate in \eqref{high:eq-psi-lo}.
\end{proof}

While our estimates of $\psi_{\lo}$ and $\psi_{\hi}$
(from Hypothesis \ref{AH:hypothesis-continuity} and Lemma \ref{high:lem-psi-lo}) can be used to control  $\overline{\psi}_{\lo} \covd \psi_{\hi}$, these estimates cannot be used to control 
$\overline{\scalebox{0.8}{$\philinear[\Blin,\lo]$}} \covd \psi_{\lo}$ or
$\overline{\scalebox{0.8}{$\philinear[\Blin,\lo]$}}\covd \psi_{\hi}$. For the latter interactions, we need more information on the structure of $\psi_{\lo}$ and $\psi_{\hi}$, which is obtained in the following lemma. 

\begin{lemma}[\protect{Para-controlled structure of $\psi_{\hi}$, $\psi_{\lo}$, and $\philinear[\Blin,\lo]$}]\label{high:lem-para-controlled}
Let the probabilistic and continuity hypothesis be satisfied and let $100q \leq p\leq \frac{r}{\eta r+(1-2\eta)}$. Then, it holds that 
\begin{align}
\Big\| \psi_{\hi} - 2 \icomplex\, \mixedquadratic[\hi] - 2\icomplex \big( S + B+ Z\big)^j \parall \partial_j \mDuh \big[ \philinear[\hi] \big]  - 2\icomplex \Slin_{\hi} \parall \ptl_j \mDuh\big[ \philinear[\lo]\big] \Big\|_{L_t^{p/10} \Bc^{1+\eta_3,p}_x} & \lesssim L^{\eta_1-\eta}, \label{high:eq-para-controlled-e1} \\ 
\Big\| \psi_{\lo} - e^{t(\Delta-1)} \psi_{\lo}(0) - 2 \icomplex \,  \mixedquadratic[\lo] - 2\icomplex   (\Slin_{\lo} + Z)^j \parall \partial_j \mDuh \big[ \philinear[\lo] \big] \Big\|_{L_t^{p/10} \Bc^{1+\eta+\eta_3,p}_x} & \lesssim L^{\eta_1}, \label{high:eq-para-controlled-e2} \\ 
\Big\| \philinear[\Blin,\lo] - \philinear[\lo] - 2\icomplex  B^j \parall \partial_j \mDuh \big[ \philinear[\lo] \big]  \Big\|_{L_t^{p/10} \Bc^{1+\eta+\eta_3,p}_x} & \lesssim L^{\eta_1}. \label{high:eq-para-controlled-e3}
\end{align}
\end{lemma}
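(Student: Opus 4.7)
The plan is to establish each of the three para-controlled decompositions by writing out the Duhamel representation of the corresponding object, identifying the resonant/singular contributions (which match precisely the explicit terms that are subtracted in the statement), and then showing that all remaining contributions lie in $L_t^{p/10}\Bc^{1+\eta+\eta_3,p}_x$ or $L_t^{p/10}\Bc^{1+\eta_3,p}_x$ by combining the high-frequency probabilistic hypothesis (Hypothesis \ref{high:hypothesis-probabilistic}), the continuity hypothesis (Hypothesis \ref{AH:hypothesis-continuity}), the non-covariant estimate for $\psi_\lo$ from Lemma \ref{high:lem-psi-lo}, and standard paraproduct/Schauder/commutator estimates.

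I would start with \eqref{high:eq-para-controlled-e3}, which is the cleanest. Subtracting the defining equations \eqref{AH:eq-philinear-lo} and \eqref{AH:eq-philinear} and using Lemma \ref{prelim:lem-derivatives}.\ref{prelim:item-difference} together with the Coulomb condition $\partial_j B^j=0$ yields
\begin{equs}
(\partial_t-\Delta+1)\bigl(\philinear[\Blin,\lo]-\philinear[\lo]\bigr)=2\icomplex B^j\partial_j \philinear[\Blin,\lo]-|B|^2 \philinear[\Blin,\lo].
\end{equs}
Replacing $\philinear[\Blin,\lo]$ by $\philinear[\lo]+(\philinear[\Blin,\lo]-\philinear[\lo])$ on the right, the remainder $\philinear[\Blin,\lo]-\philinear[\lo]$ is already controlled in $C_t^0\Cs_x^{1/2}$ by \eqref{high:eq-probabilistic-difference-linear-objects}, and $|B|^2\philinear[\Blin,\lo]$ is smooth in space. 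Decomposing $2\icomplex B^j\partial_j\philinear[\lo]=2\icomplex B^j\parall\partial_j\philinear[\lo]+2\icomplex B^j\parasim\partial_j\philinear[\lo]+2\icomplex B^j\paragg\partial_j\philinear[\lo]$ and using Lemma \ref{prelim:lem-para-besov}, the $\parasim$ and $\paragg$ pieces already gain enough regularity to be placed directly in the error term after applying $\mDuh$. For the $\parall$ piece, I would use the standard para-controlled commutator $\mDuh[B^j\parall\partial_j\philinear[\lo]]-B^j\parall\partial_j\mDuh[\philinear[\lo]]$, which is smoother than either factor (cf.\ the commutator in \eqref{gauge:eq-commutator-term}), using the time regularity of $B$ supplied by the heat equation.

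Next I would prove \eqref{high:eq-para-controlled-e2}. Starting from \eqref{AH:eq-psi-lo-original-e1}-\eqref{AH:eq-psi-lo-original-e3} and writing $A_\lo=\linear[\lo]+B+\Slin_\lo+Z$, I expand $\covd_{A_\lo}^j\covd_{A_\lo,j}\philinear[\Blin,\lo]-\covd_B^j\covd_{B,j}\philinear[\Blin,\lo]$ using Lemma \ref{prelim:lem-derivatives}.\ref{prelim:item-difference}; the only singular contribution is $2\icomplex(\linear[\lo]+\Slin_\lo+Z)^j\partial_j\philinear[\Blin,\lo]$, since everything else is smoother in space or carries $L$-gains from $\linear[\lo]$ Wick-quadratic type terms. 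Replacing $\philinear[\Blin,\lo]$ by $\philinear[\lo]$ at the cost of an $L^{1/2}$-regular remainder, the $\linear[\lo]^j\partial_j\philinear[\lo]$ piece is exactly $\mixedquadratic[\lo]$, and paraproduct decomposition on $(\Slin_\lo+Z)^j\partial_j\philinear[\lo]$ together with the same commutator trick produces the $(\Slin_\lo+Z)^j\parall\partial_j\mDuh[\philinear[\lo]]$ term. The power-type nonlinearity is handled via the Wick expansion and Hypothesis \ref{high:hypothesis-probabilistic}, and the covariant-Laplacian interactions with $\psi_\hi$ are already in the good space by the continuity hypothesis and \eqref{high:eq-probabilistic-philinear}.

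The most delicate of the three is \eqref{high:eq-para-controlled-e1}, because here the high-frequency Wick-ordered objects appear, and because the limiting equation must be handled via the combined objects from Definition \ref{high:def-stochastic}. Starting from \eqref{AH:eq-psi-hi-e1}-\eqref{AH:eq-psi-hi-e3}, the singular pieces split into three families: (i) $2\icomplex(\linear+B+\Slin+Z)^j\partial_j\philinear[\hi]$, whose $\linear^j\partial_j\philinear[\hi]$ part produces $\mixedquadratic[\hi]$ (with the $\lo,\hi$/$\hi,\lo$/$\hi,\hi$ pieces grouped as in Definition \ref{high:def-stochastic}) and whose $(B+Z)^j\partial_j\philinear[\hi]$ part yields the para-controlled $(B+Z)^j\parall\partial_j\mDuh[\philinear[\hi]]$ term after a commutator; (ii) the high-frequency piece $2\icomplex \Slin_\hi^j \partial_j \philinear[\lo]$ coming from the same expansion applied to $\philinear[\lo]$ inside $(\covd_A^j \covd_{A,j}-\covd_{A_\lo}^j\covd_{A_\lo,j})\philinear[\Blin,\lo]$, producing $\Slin_\hi\parall\partial_j\mDuh[\philinear[\lo]]$; (iii) the Wick-ordered polynomial nonlinearity differences in \eqref{AH:eq-psi-hi-e3}, controlled directly by the polynomial and mixed bounds \eqref{high:eq-probabilistic-polynomial}-\eqref{eq:philinear-mixed-quadratic-linear-phiquadraticnl-combined}. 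The hardest part will be the careful bookkeeping that shows every other term either gains a factor of $L^{-\eta_3}$ (from a high-frequency object) or an improved regularity exponent, so that it fits into the $L^{\eta_1-\eta}$ bound on the right-hand side; this is where the null-form estimate (Lemma \ref{prelim:lem-null}) must be invoked for the $A^j\partial_j$-type interactions, since the Coulomb condition is essential to avoid losing a derivative.
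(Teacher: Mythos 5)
Your proposal follows essentially the same route as the paper: start from the relevant evolution equations, peel off the singular contributions that match the explicitly subtracted para-controlled terms, and bound everything else via the high-frequency probabilistic hypothesis, paraproduct/null-form estimates, and a commutator bound. The paper's proof only works out \eqref{high:eq-para-controlled-e1} in detail (declaring \eqref{high:eq-para-controlled-e2} and \eqref{high:eq-para-controlled-e3} similar), whereas you sketch all three, but the mechanics are the same.

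Two small organizational differences worth noting. First, the paper introduces the shorthand $\tilde\psi = \philinear[\Blin,\lo] - \philinear[\lo] + \psi_\lo + \psi_\hi$, whose $L_t^p\Bc_x^{\eta,p}$-bound (from Lemma~\ref{high:lem-psi-lo} and the continuity hypothesis) drastically shortens the bookkeeping, especially for the Wick-ordered polynomial difference which it writes as a Hermite expansion in $\tilde\psi$ and $\varphi_\hi$; you replace $\philinear[\Blin,\lo]$ by $\philinear[\lo]+(\philinear[\Blin,\lo]-\philinear[\lo])$ term by term instead, which works but requires tracking more pieces. Second, for the $\Slin_\hi$ contribution you propose producing the para-controlled term $\Slin_\hi\parall\partial_j\mDuh[\philinear[\lo]]$ via a commutator; the paper instead observes that both $\mDuh[\Slin_\hi^j\partial_j\philinear[\lo]]$ and $\Slin_\hi\parall\partial_j\mDuh[\philinear[\lo]]$ are already of size $\lesssim L^{2\kappa+\eta_3-\eta}\ll L^{\eta_1-\eta}$ in $\Cs_x^{1+\eta_3}$, so no commutator is needed and that subtracted term is in fact absorbable into the error. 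Both routes are valid; the paper's is marginally simpler. One minor slip: the relevant gain per high-frequency factor is $L^{-\eta}$ (not $L^{-\eta_3}$), coming from Hypothesis~\ref{high:hypothesis-probabilistic} with $\alpha\approx-\eta$; also $\mixedquadratic[\hi]$ collects the $\hi,\lo$ piece from $\linear[\hi]^j\partial_j\philinear[\lo]$ and the $\lo,\hi$ and $\hi,\hi$ pieces from $\linear^j\partial_j\philinear[\hi]$, rather than all from the latter. Neither affects the validity of the argument.
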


\begin{remark} In contrast to the right-hand side of \eqref{high:eq-para-controlled-e1}, the right-hand sides of \eqref{high:eq-para-controlled-e2} and \eqref{high:eq-para-controlled-e3} contain no $L^{-\eta}$-factors, and are in fact unbounded in $L$. Nevertheless, in interactions of $\philinear[\Blin,\lo]$ and $\psi_{\lo}$ with $\philinear[\hi]$, the higher-regularity norm on the left-hand sides of \eqref{high:eq-para-controlled-e2} and \eqref{high:eq-para-controlled-e3} will still allow us to gain a $L^{-\eta}$-factor.
\end{remark}

\begin{proof} We only prove \eqref{high:eq-para-controlled-e1}, since the proofs of \eqref{high:eq-para-controlled-e2} and \eqref{high:eq-para-controlled-e3} are similar. 
To simplify the notation, we first let 
\begin{equs}
\psitilde:= \philinear[\Blin,\lo]-\philinear[\lo]+\psi_{\lo}+\psi_{\hi}.
\end{equs}
By Lemma \ref{high:lem-psi-lo} and Hypotheses \ref{AH:hypothesis-probabilistic}, \eqref{high:hypothesis-probabilistic}, and \eqref{AH:hypothesis-continuity}, we have that
\begin{equs}
\big\| \philinear[\Blin,\lo] - \philinear[\lo] \big\|_{C_t^0 \Cs_x^\eta} \leq L^{\eta_3}, \quad
\big\| \psi_{\lo} \big\|_{L_t^p \Bc_x^{2\eta,p}}
\lesssim L^{2\eta_2}, \quad \text{and} \quad 
\big\| \psi_{\hi} \big\|_{C_t^0 \Cs_x^{\eta}} \leq L^{\eta_1-\eta}, 
\end{equs} 
Since $C_t^0 \Cs_x^{\eta}\hookrightarrow L_t^p \Bc_x^{\eta, p}$ it then follows that
\begin{equs}\label{high:eq-para-controlled-p1}
\big\| \psitilde \big\|_{L_t^p \Bc_x^{\eta, p}}
\lesssim L^{2\eta_2}. 
\end{equs}
By \eqref{AH:eq-L-nreg}, \eqref{AH:eq-Slin}, the heat flow smoothing estimate (Lemma \ref{lemma:heat-flow-smoothing}), and the fact that $p < \frac{1}{\eta}$, we have that
\begingroup
\allowdisplaybreaks
\begin{equs}
\|S\|_{L_t^p \Cs_x^\eta} \lesssim \|t^{-\eta}\|_{L_t^p} \|\Slin(0)\|_{\Cs_x^{-\kappa}} \lesssim L^\kappa, \big\|\Slin_{\hi}\big\|_{L_t^p \Cs_x^{\eta}} &\lesssim \big\|t^{-\eta}\big\|_{L_t^p} \big\|P_{> L} \Slin(0)\big\|_{\Cs_x^{-\eta}} \lesssim L^{\kappa - \eta} ,\label{eq:Slin-estimate}\\
\big\|\varphi_{\hi}\big\|_{L_t^p\Cs_x^\eta} &\lesssim L^{\kappa - \eta} .\label{eq:varphi-hi-estimate}
\end{equs}
\endgroup
Now, by \eqref{high:eq-probabilistic-philinear}, \eqref{eq:Slin-estimate}, and product estimates (Lemma \ref{prelim:lem-para-besov}), we have that
\begin{equs}
\big\|\Slin_{\hi} \parall \ptl_j \mDuh\big[ \philinear[\lo]\big]\big\|_{L_t^p \Cs_x^{1+\eta_3}} \lesssim \|\Slin_{\hi}\|_{L_t^p \Cs_x^\eta} \|\philinear[\lo]\big\|_{L_t^\infty \Cs_x^{\eta_3}} \lesssim L^{\kappa - \eta} \times L^{\kappa + \eta_3} \lesssim L^{2\kappa + \eta_3 - \eta},
\end{equs}
which is acceptable. Next, using the evolution equation for $\psi_{\hi}$ from \eqref{AH:eq-psi-hi-e1}-\eqref{AH:eq-psi-hi-e3} and the decomposition of $A$ from \eqref{AH:eq-decomposition-A}, it then follows that
\begin{align}
\big( \partial_t - \partial^j \partial_j  + 1\big) \psi_{\hi}
&= 2 \icomplex \, \big(\linear[\hi][r][j] + \Slin_{\hi}^j\big) \partial_j \philinear[\lo] 
+ 2 \icomplex \, \big(\linear[\hi][r][j] + \Slin_{\hi}^j\big) \partial_j \psitilde 
\label{high:eq-para-controlled-p2} \\ 
&- \big(\Aquadraticnl[\hi] + |\Slin|^2 - |\Slin_{\lo}|^2\big) \philinear[\lo] - \big(\Aquadraticnl[\hi] + |\Slin|^2 - |\Slin_{\lo}|^2\big) \psitilde 
\label{high:eq-para-controlled-p3} \\ 
&- 2 \big( \linear[\hi][r][j] S_j + \linear[\lo][r][j] \Slin_{\hi, j}\big) \philinear[\lo] - 2\big( \linear[\hi][r][j] S_j + \linear[\lo][r][j] \Slin_{\hi, j}\big) \psitilde \label{high:eq-paracontrolled-linear-Slin}\\
&- 2 \, \big(\linear[\hi][r][j] + \Slin_{\hi}^j\big) \big(B+Z \big)_j \philinear[\lo]
- 2 \, \big(\linear[\hi][r][j] + \Slin_{\hi}^j\big) \big( B+Z \big)_j  \psitilde 
\label{high:eq-para-controlled-p4} \\ 
&+ 2\icomplex \, \linear[][r][j] \partial_j \philinear[\hi] + 2\icomplex \, \linear[][r][j] \partial_j\varphi_{\hi}  
+ 2\icomplex \, \big(S + B + Z \big)^j \partial_j \big(\philinear[\hi] + \varphi_{\hi}\big) 
\label{high:eq-para-controlled-p5} \\
&- \Aquadraticnl(\philinear[\hi] + \varphi_{\hi})
- 2 \big(S + B +Z \big)_j \linear[][r][j] \big(\philinear[\hi] + \varphi_{\hi}\big)
- \big|S+ B +Z \big|^2 \, \big(\philinear[\hi] + \varphi_{\hi}\big)
\label{high:eq-para-controlled-p6} \\
&+ \philinear[\hi] + \varphi_{\hi} \label{high:eq-para-controlled-philinear-varphi}\\
&+ \mrm{poly}, \label{high:eq-para-controlled-poly}
\end{align}
where $\mrm{poly}$ is the difference in poynomial terms given by \eqref{AH:eq-psi-hi-e3}. We now control the contributions of \eqref{high:eq-para-controlled-p2}-\eqref{high:eq-para-controlled-poly} to the Duhamel integral separately. \\

\emph{Contribution of \eqref{high:eq-para-controlled-p2}:} Due to Definition \ref{high:def-stochastic}, the contribution of the $2\icomplex \linear[\hi][r][j] \ptl_j \philinear[\lo]$ part of the first summand in \eqref{high:eq-para-controlled-p2} equals $2\icomplex \, \mixedquadratic[\hi,\lo]$, which is contained in the argument on the left-hand side of \eqref{high:eq-para-controlled-e2}. Next, using the null-form estimate (Lemma \ref{prelim:lem-null}) and \eqref{eq:Slin-estimate}, we have that
\begin{equs}
\big\|\Slin_{\hi}^j \ptl_j \philinear[\lo]\big\|_{L_t^{p} \Cs_x^{-1+\eta_2}} \lesssim \|\Slin_{\hi}\|_{L_t^p \Cs_x^\eta} \|\philinear[\lo]\|_{L_t^\infty \Cs_x^{2\eta_2}} \lesssim L^{\kappa - \eta} \times L^{\kappa + 2\eta_2} = L^{2\kappa + 2\eta_2 - \eta},
\end{equs}
which is acceptable. For the second summand in \eqref{high:eq-para-controlled-p2}, using the null-form estimate (Lemma \ref{prelim:lem-null}), \eqref{high:eq-probabilistic-Alinear}, and \eqref{high:eq-para-controlled-p1}, we have that
\begin{equs}
\big\| \linear[\hi][r][j] \partial_j \tilde{\psi} \big\|_{L_t^p \Bc_x^{-1+\eta_2, p}} &\lesssim \|\linear[\hi]\|_{L_t^\infty \Cs_x^{-(\eta-2\eta_2)}} \|\tilde{\psi} \|_{L_t^p \Bc_x^{\eta, p}} \lesssim L^{\kappa +2\eta_2 - \eta} \times L^{2\eta_2} = L^{\kappa + 4\eta_2 - \eta},
\end{equs}
which is acceptable, and by the null-form estimate (Lemma \ref{prelim:lem-null}), \eqref{eq:varphi-hi-estimate}, and \eqref{high:eq-para-controlled-p1}, we have that
\begin{equs}
\big\|\Slin_{\hi}^j \ptl_j \psitilde\big\|_{L_t^{\frac{p}{2}} \Bc_x^{-1 +\eta, p}} \lesssim \|\Slin_{\hi}\|_{L_t^{p} \Cs_x^\eta} \|\psitilde\|_{L_t^p \Bc_x^{\eta, p}} \lesssim L^{\kappa - \eta} \times L^{2\eta_2} = L^{\kappa + 2\eta_2 -\eta},
\end{equs}
which is acceptable.\\

\emph{Contribution of \eqref{high:eq-para-controlled-p3}:} Using \eqref{high:eq-probabilistic-quadratic-linear}, we have that 
\begin{equs}
\big\| t^\kappa \Aquadraticnl[\hi] \, \philinear[\lo] \big\|_{C_t^0 \Cs_x^{-\eta}} \leq L^{3\kappa-\eta},
\end{equs}
which is acceptable. Using the product estimate (Lemma \ref{prelim:lem-para-besov}), \eqref{high:eq-probabilistic-quadratic}, and \eqref{high:eq-para-controlled-p1}, we have that 
\begin{equs}
\big\| t^\kappa \Aquadraticnl[\hi] \, \tilde{\psi}\big\|_{L_t^p \Bc_x^{-\eta,p}} &\lesssim \|t^\kappa \Aquadraticnl[\hi] \big\|_{L_t^\infty \Cs_x^{-(\eta-\eta_2)}} \|\tilde{\psi}\|_{L_t^p \Bc_x^{\eta, p}} \lesssim L^{\kappa +\eta_2 -\eta} \times L^{2\eta_2}  = L^{\kappa + 3\eta_2 - \eta},
\end{equs}
which is acceptable. The terms involving $|\Slin|^2 - |\Slin_{\lo}|^2$ may be bounded using \eqref{eq:Slin-estimate} and product estimates; we omit the arguments here.\\

\emph{Contribution of \eqref{high:eq-paracontrolled-linear-Slin}:}
All terms may be estimated via \eqref{high:eq-probabilistic-Alinear}, \eqref{high:eq-probabilistic-linear-linear}, \eqref{high:eq-para-controlled-p1}, \eqref{eq:Slin-estimate}, and product estimates.
\\

\emph{Contribution of \eqref{high:eq-para-controlled-p4}:} Any term involving $\Slin_{\hi}$ may be estimated using \eqref{eq:Slin-estimate} and product estimates, thus we omit the argument for those terms. Using product estimates (Lemma \ref{prelim:lem-para-besov}), we have that 
\begin{equation}\label{high:eq-para-controlled-p7}
\begin{aligned}
&\big\| \linear[\hi][r][j] \big( B+Z \big)_j \philinear[\lo]
+ \linear[\hi][r][j] \big( B + Z \big)_j \psitilde \big\|_{L_t^p \Bc_x^{-\eta, p}} \\
\lesssim&\,  \big\| \linear[\hi] \philinear[\lo] \big\|_{C_t^0 \Cs_x^{\eta_3-\eta}}
\big\| B +Z \big\|_{C_t^0 \Cs_x^\eta}
+ \big\|  \linear[\hi] \big\|_{C_t^0 \Cs_x^{\eta_3-\eta}}
\big\| B +Z \big\|_{C_t^0 \Cs_x^\eta}
\big\| \psitilde \big\|_{L_t^p \Bc_x^{\eta, p}}.
\end{aligned}
\end{equation}
The factors in \eqref{high:eq-para-controlled-p7} can be controlled using the continuity hypothesis (Hypothesis \ref{AH:hypothesis-continuity}),  \eqref{high:eq-probabilistic-philinear},  \eqref{high:eq-probabilistic-linear-linear}, and \eqref{high:eq-para-controlled-p1}, which yield that
\begin{equs}
\eqref{high:eq-para-controlled-p7} 
\lesssim L^{2\kappa+\eta_3-\eta} \times L^{\eta_2}
+ L^{\kappa+\eta_3-\eta} \times L^{\eta_2} \times L^{2\eta_2} \lesssim L^{\eta_1-\eta},
\end{equs}
which is acceptable. \\

\emph{Contribution of \eqref{high:eq-para-controlled-p5}:} Due to Definition \ref{high:def-stochastic}, the contribution of the first summand in \eqref{high:eq-para-controlled-p5} equals $2\icomplex \, \big( \mixedquadratic[\lo,\hi]+\mixedquadratic[\hi,\hi]\big)$, which is contained in the argument on the left-hand side of \eqref{high:eq-para-controlled-p2}. To bound the second summand, we use the null-form estimate (Lemma \ref{prelim:lem-null}) and \eqref{eq:varphi-hi-estimate} to obtain
\begin{equs}
\big\|\linear[][r][j] \ptl_j \varphi_{\hi}\big\|_{L_t^p \Cs_x^{-1+\eta_2}} \lesssim \big\|\linear[][r][j]\big\|_{L_t^\infty \Cs_x^{-\kappa}} \big\|\varphi_{\hi}\big\|_{L_t^p\Cs_x^{\eta}} \lesssim L^\kappa \times L^{\kappa - \eta} = L^{2\kappa - \eta},
\end{equs}
which is acceptable. For the $(S + B + Z)^j \ptl_j \philinear[\hi]$ piece of the third summand in \eqref{high:eq-para-controlled-p5}, we combine it with the paraproduct term on the left-hand side of \eqref{high:eq-para-controlled-e1}, so that it suffices for us to bound the difference
\begin{equs}
\bigg\| \mDuh\big((\Slin + B + Z)^j \ptl_j \philinear[\hi]\big) - (\Slin + B + Z)^j \parall \ptl_j \mDuh\big(\philinear[\hi]\big)\bigg\|_{C_t^0 \Cs_x^{1+\eta_3}} \lesssim L^{\eta_1 - \eta}.
\end{equs}
We split this into two terms:
\begin{equs}\label{high:eq-paracontrolled-p5-commutator}
\hspace{-5mm}\mDuh\big((\Slin +B + Z)^j \paragtrsim \ptl_j \philinear[\hi]\big) + \Big(\mDuh\big((\Slin + B + Z)^j \parall \ptl_j \philinear[\hi]\big) - (\Slin + B + Z)^j \parall \ptl_j \mDuh\big(\philinear[\hi]\big)\Big).
\end{equs}
For the first term, we first use the Coulomb gauge condition to write 
\begin{equs}
(\Slin + B + Z)^j \paragtrsim \ptl_j \philinear[\hi] = \ptl_j \big((\Slin + B + Z)^j \paragtrsim \philinear[\hi]\big),
\end{equs}
and then bound by Schauder and then product estimates
\begin{equs}
\Big\| \mDuh\Big( \ptl_j \big((\Slin + B + Z)^j \paragtrsim \philinear[\hi]\big)\Big)\Big\|_{L_t^p \Cs_x^{1+\eta_3}} &\lesssim \Big\| (\Slin + B + Z)^j \paragtrsim \philinear[\hi]\Big\|_{L_t^p\Cs_x^{\eta_2}} \\
&\lesssim \|\Slin + B + Z\|_{L_t^p \Cs_x^\eta} \|\philinear[\hi]\|_{\Cs_x^{-(\eta - \eta_2)}}  \lesssim L^{\eta_1 - \eta},
\end{equs}
which is acceptable. For the second term in \eqref{high:eq-paracontrolled-p5-commutator}, we apply the commutator estimate \cite[Lemma B.4]{BC23} and the continuity and high-frequency probabilistic hypotheses to obtain
\begin{equation}
\begin{aligned}\label{eq:commutator-estimate-application}
\bigg\|t^{\eta}\Big(\mDuh&\big((\Slin + B + Z)^j \parall \ptl_j \philinear[\hi]\big) - (\Slin + B + Z)^j \parall \ptl_j \mDuh\big(\philinear[\hi]\big)\Big)\bigg\|_{L_t^\infty \Cs_x^{1+\eta_3}} \\
&\lesssim \big(\|\Slin(0)\|_{\Cs_x^{-\kappa}} + \|B + Z\|_{C_t^0 \Cs_x^\eta \cap C_t^{\eta/2} \Cs_x^0}\big) \big\|\ptl_j \philinear[\hi]\big\|_{C_t^0 \Cs_x^{-(\eta - 2\eta_2) - 1}} \lesssim L^{\eta_2} \times L^{\kappa + 2\eta_2 - \eta} \lesssim L^{\eta_1 - \eta}.
\end{aligned}
\end{equation}
Since $p < \frac{1}{\eta}$, we have for a general function $f$ that
\begin{equs}
\|f\|_{L_t^p \Cs_x^{1+\eta_3}} \leq \| t^{-\eta}\|_{L_t^p} \|t^\eta f\|_{L_t^\infty \Cs_x^{1+\eta_3}} \lesssim  \|t^\eta f\|_{L_t^\infty \Cs_x^{1+\eta_3}},
\end{equs}
and thus the estimate in \eqref{eq:commutator-estimate-application} is acceptable. Finally, for the second piece of the third summand (involving $\ptl_j \varphi_{\hi}$), we bound by \eqref{eq:Slin-estimate}, \eqref{AH:eq-L}, \eqref{AH:eq-continuity-Z}, and \eqref{eq:varphi-hi-estimate},
\begin{equs}
\big\|(S + B + Z)^j \ptl_j \varphi_{\hi}\big\|_{L_t^{\frac{p}{2}} \Cs_x^{-1+\eta}} = \big\|\ptl_j \big((S + B + Z)^j \varphi_{\hi}\big)\big\|_{L_t^{\frac{p}{2}} \Cs_x^{-1+\eta}} \lesssim \big\|S + B + Z\big\|_{L_t^p \Cs_x^\eta} \big\|\varphi_{\hi}\big\|_{L_t^p \Cs_x^{\eta}} \lesssim L^{\eta_1 - \eta},
\end{equs}
which is acceptable. \\

\emph{Contributions of \eqref{high:eq-para-controlled-p6} and \eqref{high:eq-para-controlled-philinear-varphi}:} 
The term \eqref{high:eq-para-controlled-p6} may be estimated by product estimates similar to many estimates before, and thus we omit the details. The term \eqref{high:eq-para-controlled-philinear-varphi} follows directly from \eqref{high:eq-probabilistic-philinear} and \eqref{eq:varphi-hi-estimate}.\\



\emph{Contribution of \eqref{high:eq-para-controlled-poly}:}
For brevity, let $k := \frac{q+1}{2}$ in the following.
Recalling that $\mrm{poly}$ is the difference in polynomial terms given by \eqref{AH:eq-psi-hi-e3}, we first apply the Hermite polynomial expansion (Lemma \ref{lemma:complex-hermite-polynomial-expansion}) to obtain
\begin{equs}
\mrm{poly} &= \biglcol \, \big| \philinear + \tilde{\psi} + \varphi_{\hi} \big|^{q-1} (\philinear + \tilde{\psi} + \varphi_{\hi})\big| \, \bigrcol - \biglcol\, \big|\philinear[\lo] + \tilde{\psi}\big|^{q-1} (\philinear[\lo] + \tilde{\psi})\, \bigrcol \\
&= \sum_{j_1=0}^{k} \sum_{j_2 =0}^{k-1} \binom{k}{j_1} \binom{k}{j_2}  \mrm{poly}(j_1, j_2),
\end{equs}
where
\begin{equs}
\mrm{poly}(j_1, j_2) &:= \biglcol\, \philinear[][r][j_1]\ovl{\philinear[][r][j_2]}\bigrcol (\psitilde + \varphi_{\hi})^{k-j_1}  \ovl{(\psitilde + \varphi_{\hi})}^{k -1 - j_2} - \biglcol\,\philinear[\lo][r][j_1] \ovl{\philinear[\lo][r][j_2]}\bigrcol \psitilde^{k - j_1} \ovl{\psitilde}^{k - 1 - j_2} \\
&= \big(\biglcol\, \philinear[][r][j_1]\ovl{\philinear[][r][j_2]}\bigrcol\big)_{\hi} (\psitilde + \varphi_{\hi})^{k-j_1}  \ovl{(\psitilde + \varphi_{\hi})}^{k -1 - j_2} \\
&\quad + \biglcol\,\philinear[\lo][r][j_1] \ovl{\philinear[\lo][r][j_2]}\bigrcol\Big((\psitilde + \varphi_{\hi})^{k-j_1}  \ovl{(\psitilde + \varphi_{\hi})}^{k -1 - j_2} - \psitilde^{k - j_1} \ovl{\psitilde}^{k - 1 - j_2}\Big).
\end{equs}
By \eqref{high:eq-probabilistic-polynomial}, \eqref{high:eq-para-controlled-p1}, \eqref{eq:varphi-hi-estimate}, and product estimates, we have that
\begin{equs}
\Big\| t^\kappa \big(\biglcol\, \philinear[][r][j_1]\ovl{\philinear[][r][j_2]}\bigrcol\big)_{\hi} (\psitilde + \varphi_{\hi})^{k-j_1}  \ovl{(\psitilde + \varphi_{\hi})}^{k -1 - j_2}\Big\|_{L_t^{\frac{p}{q}} \Cs_x^{-(\eta - \eta_2)}} \lesssim L^{q\kappa + \eta_2 - \eta} \|\psitilde + \varphi_{\hi}\|_{L_t^p \Cs_x^{\eta}}^q \lesssim L^{q\kappa + \eta_2 + 2q\eta_2 - \eta },
\end{equs}
which suffices by classical Schauder estimates and our assumption that $p \geq 100q$.
For the second term, note that when we expand out the difference, at least one factor of $\varphi_{\hi}$ must appear, and then using \eqref{high:eq-probabilistic-polynomial} combined with
the estimate \eqref{eq:varphi-hi-estimate}, we obtain
\begin{equs}
\Big\| t^\kappa\biglcol\,\philinear[\lo][r][j_1] \ovl{\philinear[\lo][r][j_2]}\bigrcol\Big((\psitilde + \varphi_{\hi})^{k-j_1}  \ovl{(\psitilde + \varphi_{\hi})}^{k -1 - j_2} - \psitilde^{k - j_1} \ovl{\psitilde}^{k - 1 - j_2}\Big)\Big\|_{L_t^{\frac{p}{q}} \Bc_x^{0, p/q}}  \lesssim L^{q\kappa + 2q\eta_2 } L^{\kappa - \eta} \lesssim L^{\eta_1 - \eta},
\end{equs}
which again suffices by classical Schauder estimates.
\end{proof}

\begin{corollary}[\protect{Bounds on $\psi_{\hi}$}]\label{high:cor-psi-hi}
Let the probabilistic and continuity hypothesis (Hypothesis \ref{AH:hypothesis-probabilistic} and \ref{AH:hypothesis-continuity}) be satisfied. Then, it holds that
\begin{equation}\label{high:eq-psi-hi-estimate}
\big\| \psi_{\hi} \big\|_{C_t^0 \Cs_x^\eta([t_0,t_0+\tau])}\leq \frac{1}{2} L^{\eta_1-\eta}.
\end{equation}
\end{corollary}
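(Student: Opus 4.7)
The plan is to prove this as a direct corollary of the framework developed in Lemma \ref{high:lem-para-controlled}, but aiming only for the coarse norm $C_t^0 \Cs_x^\eta$ rather than the high-regularity para-controlled bound. Since $\psi_{\hi}(t_0)=0$, we write $\psi_{\hi}=\mDuh[F]$ where $F$ denotes the sum of forcings in \eqref{AH:eq-psi-hi-e1}-\eqref{AH:eq-psi-hi-e3}, and then estimate each piece of $F$ in $L_t^p \Cs_x^{-\eta + \mathrm{gain}}$ for some $p$ large, so that the Schauder estimate (Lemma \ref{lemma:heat-flow-smoothing}) on $\mDuh$ yields a $C_t^0 \Cs_x^\eta$-bound with a spare factor of $L^{-\eta}$. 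The key point throughout is that Hypothesis \ref{high:hypothesis-probabilistic} bounds every high-frequency stochastic object in $C_t^0 \Cs_x^\alpha$ by $L^{\kappa+\alpha}$ for $\alpha \in [-\tfrac12,-10\kappa]$, so choosing $\alpha$ slightly negative always buys an $L^{-\eta}$ savings relative to the crude $\Cs_x^{-\kappa}$ bound; this is the source of the required factor $\tfrac{1}{2}$ in the conclusion, after taking $c_2$ small enough depending on $\eta_1,\eta,\kappa$.

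First I group the forcing terms as in the proof of Lemma \ref{high:lem-para-controlled}: (a) purely algebraic products arising from $(\biglcol\,\covd_A^j \covd_{A,j}\,\bigrcol - \covd_{A_{\lo}}^j \covd_{A_{\lo},j} - 2\sigma^2_{\leq L})(\philinear[\Blin,\lo]+\psi_{\lo}+\psi_{\hi})$ together with the Wick-polynomial difference \eqref{AH:eq-psi-hi-e3}; (b) transport-type terms $\linear^j \ptl_j \philinear[\hi]$, $(S+B+Z)^j \ptl_j \philinear[\hi]$, and the $\ptl_j \varphi_\hi$-variants; (c) the mass term $\philinear[\hi]+\varphi_\hi$. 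For groups (a) and (c), I bound each term using product estimates (Lemma \ref{prelim:lem-para-besov}) together with the relevant entries of Hypothesis \ref{high:hypothesis-probabilistic}, namely \eqref{high:eq-probabilistic-quadratic}, \eqref{high:eq-probabilistic-quadratic-linear}, \eqref{high:eq-probabilistic-linear-phiquadratic}, \eqref{high:eq-probabilistic-polynomial}, and the Hermite expansion (Lemma \ref{lemma:complex-hermite-polynomial-expansion}), which forces each summand in the polynomial difference to contain either a factor of $(\biglcol\,\philinear[][r][j]\ovl{\philinear[][r][k]}\bigrcol)_\hi$ or $\varphi_\hi$ (and hence an $L^{-\eta}$ gain, via \eqref{eq:varphi-hi-estimate}).

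The main obstacle, as in Lemma \ref{high:lem-para-controlled}, is group (b), where the derivative lands on the rough stochastic object $\philinear[\hi]$. The piece $\linear^j \ptl_j \philinear[\hi]$ is absorbed into the quadratic stochastic objects $\mixedquadratic[\hi,\lo]+\mixedquadratic[\lo,\hi]+\mixedquadratic[\hi,\hi]=\mixedquadratic[\hi]$ which is already controlled by \eqref{high:eq-probabilistic-mixed-quadratic} at regularity $1+\alpha$. For $(S+B+Z)^j \ptl_j \philinear[\hi]$, I use the Coulomb gauge: $\linear$, $B$, $S$, and hence $Z=A-\linear-B-S$ are all divergence-free, so Lemma \ref{prelim:lem-null} applies, exchanging a derivative on $\philinear[\hi]$ for a derivative on the divergence-free vector field, and placing the term in $L_t^p \Cs_x^{-1+\eta_2}$ at the cost of $L^{\kappa+2\eta_2-\eta}$ (using \eqref{high:eq-probabilistic-Alinear}, \eqref{eq:Slin-estimate}, the continuity bound on $Z$, and $\|B\|_{C_t^0 \Cs_x^\eta}\leq L^{\eta_3}$). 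Schauder on $\mDuh$ then gives $C_t^0 \Cs_x^{1-\eta_2}$ with the same bound, which is $\ll L^{\eta_1-\eta}$.

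Combining all the bounds, one obtains $\|\psi_{\hi}\|_{C_t^0 \Cs_x^\eta([t_0,t_0+\tau])} \le C L^{\eta_1-2\eta}$ for some constant $C$ depending only on the parameters in \eqref{prelim:eq-parameter-new-eta-nu}-\eqref{prelim:eq-parameter-new-r}; since $\eta_1 \gg \eta$ and $L \geq C_0$ is large, this absorbs the prefactor $C$ and yields the claimed $\tfrac{1}{2}L^{\eta_1-\eta}$. The argument reuses essentially every bound from the proof of Lemma \ref{high:lem-para-controlled}, simply terminating earlier because no para-controlled structure is needed at regularity $\eta$, so I expect the write-up to be short and to mostly cite the relevant parts of that proof.
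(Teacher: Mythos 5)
Your proposal is correct in substance but organizes the argument differently from the paper. The paper's proof subtracts off the para-controlled ansatz — it invokes the argument of Lemma \ref{high:lem-para-controlled} to bound the remainder $\psi_{\hi} - 2\icomplex\mixedquadratic[\hi] - 2\icomplex(S+B+Z)^j\parall\ptl_j\mDuh[\philinear[\hi]] - 2\icomplex\Slin_{\hi}\parall\ptl_j\mDuh[\philinear[\lo]]$ in $C_t^0\Cs_x^\eta$ by $\tfrac{1}{10}L^{\eta_1-\eta}$, and then bounds each of the three subtracted terms directly; e.g.\ the middle term via the low$\times$high paraproduct estimate with $\philinear[\hi]$ placed at regularity $-\tfrac12$, which gives a large gain $L^{\kappa-1/2}$. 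Your approach instead writes $\psi_{\hi}=\mDuh[F]$ and estimates $F$ term by term, bypassing the para-controlled ansatz entirely — which is indeed viable at the coarse target regularity $\Cs_x^\eta$. One thing to check more carefully: for $(S+B+Z)^j\ptl_j\philinear[\hi]$ you claim a bound in $L_t^p\Cs_x^{-1+\eta_2}$, but the null-form estimate (Lemma \ref{prelim:lem-null}) requires $\alpha_1+\alpha_2>0$ with $\alpha_1\approx\eta$ the regularity of $S+B+Z$, so $\philinear[\hi]$ can only be taken at regularity $\alpha_2>-\eta$, forcing $\alpha\leq\alpha_2-1<-1-\eta+\eta_2$; the achievable regularity is therefore about $\Cs_x^{-1-\eta+\eta_2}$, not $\Cs_x^{-1+\eta_2}$. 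The resulting bound $L^{\kappa+2\eta_2-\eta}$ is still $\ll L^{\eta_1-\eta}$ since $\kappa+2\eta_2\ll\eta_1$, so the conclusion survives, but the exponent you wrote is not reachable. Your sketch also leaves out the high-frequency transport terms coming from \eqref{AH:eq-psi-hi-e1}, namely $\linear[\hi][r][j]\ptl_j\philinear[\Blin,\lo]$ and $\Slin_{\hi}^j\ptl_j\philinear[\Blin,\lo]$ (the former, after reduction via \eqref{high:eq-probabilistic-difference-linear-objects}, produces the $\mixedquadratic[\hi,\lo]$ piece of $\mixedquadratic[\hi]$; the latter corresponds to the paper's third para-controlled term), and the $c_2$ you invoke plays no role — the $\tfrac12$ prefactor comes from $L\geq C_0$ being large enough to absorb the implicit constant $\lesssim$ against the spare power $L^{-(\eta_1-\kappa-2\eta_2)}$. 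With these small corrections the argument goes through.
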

\begin{remark}
We remark that \eqref{high:eq-psi-hi-estimate} is only better than \eqref{AH:eq-continuity-psi-hi} from Hypothesis \ref{AH:hypothesis-continuity} by a factor of $\frac{1}{2}$. However, this is sufficient to close the continuity argument in the proof of Lemma \ref{decay:lem-short-bounds}. 
\end{remark}
\begin{proof}
By arguing as in the proof of Lemma \ref{high:lem-para-controlled}, we can obtain an estimate like in \eqref{high:eq-para-controlled-e1}, but with $L_t^{p/10} \Bc_x^{1+\eta_3, p}$ replaced by $C_t^0 \Cs_x^\eta$, and with a slightly tighter upper bound (because we can use that $L^{\eta_3} \gg 1$):
\begin{equs}
\Big\| \psi_{\hi} - 2 \icomplex\, \mixedquadratic[\hi] - 2\icomplex \big( S + B+ Z\big)^j \parall \partial_j \mDuh \big[ \philinear[\hi] \big]  - 2\icomplex \Slin_{\hi} \parall \ptl_j \mDuh\big[ \philinear[\lo]\big] \Big\|_{C_t^0 \Cs_x^\eta} \leq \frac{1}{10} L^{\eta_1 - \eta}.
\end{equs}
Thus, it remains to estimate the three terms on the left-hand side above. By \eqref{high:eq-probabilistic-mixed-quadratic}, we may estimate
\begin{equs}
\Big\|2\icomplex \mixedquadratic[\hi]\Big\|_{C_t^0 \Cs_x^\eta} \leq L^{-\frac{1}{4}} \leq \frac{1}{10} L^{\eta_1 - \eta}.
\end{equs}
Next, we estimate by \eqref{AH:eq-L}, \eqref{AH:eq-L-nreg}, the continuity hypothesis \eqref{AH:hypothesis-continuity}, and \eqref{high:eq-probabilistic-philinear},
\begin{equs}
\bigg\|2\icomplex (S + B + Z)^j \parall \ptl_j \mDuh\big[\philinear[\hi]\big]\bigg\|_{C_t^0 \Cs_x^{\eta}} \lesssim \|S + B + Z\|_{C_t^0 \Cs_x^{-\kappa}} \big\|\ptl \mDuh \big[\philinear[\hi]\big] \big\|_{C_t^0 \Cs_x^{\frac{1}{2}}} \lesssim L^{\eta_2} L^{\kappa - \frac{1}{2}} \leq \frac{1}{10} L^{\eta_1 - \eta}. 
\end{equs}
Finally, we estimate by \eqref{AH:eq-L-nreg} and \eqref{high:eq-probabilistic-philinear},
\begin{equs}
\bigg\|2\icomplex \Slin_{\hi} \parall \ptl_j \mDuh\big[\philinear[\lo]\big]\bigg\|_{C_t^0 \Cs_x^\eta} \lesssim \|\Slin_{\hi}\|_{C_t^0 \Cs_x^{-\frac{1}{4}}} \big\|\ptl_j \mDuh\big[\philinear[\lo]\big]\big\|_{C_t^0 \Cs_x^{\frac{1}{2}}} \lesssim \|P_{> L} \Slin(0)\|_{\Cs_x^{-\frac{1}{4}}} L^{\kappa} \lesssim L^{2\kappa - \frac{1}{4}} \leq \frac{1}{10} L^{\eta_1 - \eta}.
\end{equs}
The desired result now follows by combining the estimates.
\end{proof}

\subsection{\protect{Proof of Lemma \ref{AH:lem-derivative-high}}}
We want to show that
\begin{equs}
\bigg\| \leray \Im\mDuh\Big[ \ovl{\phi} \covd_A \phi - \ovl{\phi}_{\lo} \covd_{A_\lo} \phi_{\lo}\Big] \bigg\|_{C_t^0 \Cs_x^{2\eta} \cap C_t^\eta \Cs_x^0} \lesssim L^{10\eta_1 - \eta}.
\end{equs}
Take $p \leq \frac{r}{\eta r + (1-2\eta)}$ sufficiently large. From Lemma \ref{high:lem-para-controlled}, we obtain the decomposition
\begin{align}
\phi &= \philinear[] 
+ 2 \icomplex \, \mixedquadratic 
+ 2 \icomplex \,  (\Slin + B+Z)^j \parall \partial_j \mDuh \big[ \, \philinear \big] 
+ e^{t(\Delta-1)} \psi(0) + \eta_{\lo} + \eta_{\hi}, 
\label{high:eq-proof-1} \\
\phi_{\lo} &= \philinear[\lo] 
+ 2 \icomplex \, \mixedquadratic[\lo]  
+ 2 \icomplex \,  (\Slin_{\lo} + B+Z)^j \parall \partial_j \mDuh \big[ \, \philinear[\lo] \big] 
+ e^{t(\Delta-1)} \psi_{\lo}(0) + \eta_{\lo},\label{high:eq-proof-2}
\end{align}
where the nonlinear remainders $\eta_{\lo}$ and $\eta_{\hi}$ satisfy
\begin{equation}\label{high:eq-proof-3}
\big\| \eta_{\lo} \big\|_{L_t^{p/10} \Bc_x^{1+\eta+\eta_3,p}}
\lesssim L^{\eta_1} \qquad \text{and} \qquad
\big\| \eta_{\hi} \big\|_{L_t^{p/10} \Bc_x^{1+\eta_3,p}}
\lesssim L^{\eta_1-\eta}.
\end{equation}
To simplify the notation, we also define
\begin{equation}\label{high:eq-proof-4}
\chi := \phi - \philinear[], \qquad 
\chi_{\lo} := \phi_{\lo} - \philinear[\lo], \qquad \text{and} \qquad 
\chi_{\hi} := \chi - \chi_{\lo}. 
\end{equation}
By \eqref{high:eq-proof-3}, the embedding $\Bc_x^{1+\eta_3, p} \hookrightarrow \Cs_x^{\frac{1}{2}}$, the high-frequency probabilistic hypothesis \eqref{high:hypothesis-probabilistic}, the continuity hypothesis \eqref{AH:hypothesis-continuity}, the assumption $\psi(0) = \nregphi_0 + \pregphi_0$ with $\nregphi_0, \pregphi_0$ satisfying \eqref{AH:eq-L-nreg}, \eqref{AH:eq-L}, and the heat flow smoothing estimate (Lemma \ref{lemma:heat-flow-smoothing}), we have that
\begin{equs}
\|\chi - e^{t (\Delta-1)} \psi(0)\|_{L_t^{p/10} \Cs_x^{\frac{1}{2}}} \lesssim L^{\eta_1}, ~~ \|e^{t (\Delta-1)} \psi(0)\|_{L_t^{p/10} \Cs_x^{\eta + \eta_3}} \lesssim L^{\eta_3}, \label{eq:chi-estimates}
\end{equs}
as well as the higher regularity estimates (note that $e^{t(\Delta-1)} (\psi(0) - \psi_{\lo}(0)) = \varphi_{\hi}$ by definition)
\begin{align}\label{eq:chi-estimates-high-regularity}
\|\chi_{\lo} - e^{t (\Delta-1)} \psi_{\lo}(0)\|_{L_t^{p/10} \Cs_x^{\frac{1}{2}}} &\lesssim L^{\eta_2} , ~~ \|t^{\frac{1}{4} + \frac{1}{r}} e^{t (\Delta-1)} \psi_{\lo}(0)\|_{L_t^\infty \Cs_x^{\frac{1}{2}}} \lesssim L^{\eta_3} , \\
\|\chi_{\hi} - \varphi_{\hi}\|_{L_t^{p/10} \Cs_x^{\frac{3}{4}}} &\lesssim L^{\eta_1 - \eta}, ~~ \|t^{\frac{3+\eta}{8}}\varphi_{\hi}\|_{L_t^\infty \Cs_x^{\frac{3}{4}}} \lesssim L^{\kappa - \eta}.
\end{align}
We also have by \eqref{AH:eq-L}, \eqref{AH:eq-L-nreg}, \eqref{AH:eq-continuity-Z}, and the heat flow smoothing estimate (Lemma \ref{lemma:heat-flow-smoothing}) that
\begin{equs}\label{eq:S-B-Z-estimate}
\|S + B + Z\|_{L_t^p \Cs_x^{\eta}} \lesssim L^{\eta_2}.    
\end{equs}
Using the decomposition of $\phi$ from \eqref{high:eq-proof-1}, we then obtain a decomposition of $\leray \Im \big(\overline{\phi} \covd_A \phi\big)$, which can be written as 
\begin{align}
    \leray &\Im \big(\overline{\phi} \covd_A \phi\big) 
    \\
    &= \leray \Im \big( \overline{\phi} \covd \phi\big) + \leray \big( A | \phi|^2 \big)\notag \\  
    &= \leray \Im \Big(\overline{\, \philinear} \covd \philinear \Big) \label{high:eq-decomp-1}\\
    &+ \leray \bigg(4  \Im \Big( \icomplex\overline{\, \philinear} \covd \mixedquadratic \Big)
    + \linear \big|\,\philinear\big|^2\bigg) \label{high:eq-decomp-2} \\ 
    &+ \leray \bigg( 4 \Im\Big(\icomplex \ovl{\philinear} \covd \ptl_j \mDuh\big[\,\philinear\big]\Big) (S + B + Z)^j + (S + B + Z)\big|\,\philinear\big|^2\bigg) \label{high:eq-decomp-3.1}\\
    &+ 4\leray \Im \bigg(-\icomplex \ovl{\philinear} \Big((S + B+ Z)^j \paragtrsim \covd \ptl_j \mDuh\big[\,\philinear\big]\Big) + \icomplex \ovl{\philinear} \Big( \covd(S + B+ Z)^j \parall \ptl_j \mDuh\big[\, \philinear\big]\Big)\bigg) \label{high:eq-decomp-3.2}\\
    &+ 2 \leray \Im \Big(  \overline{\, \philinear}  \covd  \big( e^{t(\Delta-1)} \psi(0) + \eta_{\lo} + \eta_{\hi} \big) \Big) \label{high:eq-decomp-4}\\
    &+ \leray \Im \Big( \overline{\chi} \covd \chi \Big) \label{high:eq-decomp-5} \\ 
    &+ \leray \Big( \big(\linear + S + B + Z\big) \big(2 \mrm{Re}\big((\phi - \philinear) \ovl{\philinear}\big) + |\phi-\philinear|^2 \big) \Big). 
    \label{high:eq-decomp-6}
\end{align}
We remark that the second term in \eqref{high:eq-decomp-3.1} may appear to be ill-defined, but one should not view this as a separate term. Instead, we will see that one needs to combine the two terms in \eqref{high:eq-decomp-3.1} and utilize the estimates \eqref{eq:philinear-laplacian-Duh-philinear-phiquadraticnl} and \eqref{eq:philinear-laplacian-Duh-philiner}.

By using the decomposition of $\phi_{\lo}$ from \eqref{high:eq-proof-2}, we also obtain a similar decomposition of $\leray \Im ( \overline{\phi}_{\lo} \covd_{A_{\lo}} \phi_{\lo})$. We now separately address the contributions of \eqref{high:eq-decomp-1}-\eqref{high:eq-decomp-6} to the desired estimate \eqref{AH:eq-derivative-high}. \\

\emph{Contribution of \eqref{high:eq-decomp-1}:} It suffices to prove 
\begin{equs}
\Big\|\Aquadratic - \Aquadratic[\lo]\Big\|_{C_t^0 \Cs_x^{\frac{1}{2}} \cap C_t^{\frac{1}{8}} \Cs_x^{\frac{1}{4}}} \leq L^{-\frac{1}{4}},
\end{equs} which coincides with the probabilistic estimate \eqref{high:eq-probabilistic-Aquadratic}. \\ 

\emph{Contribution of \eqref{high:eq-decomp-2}:}
A sufficient estimate follows directly from \eqref{eq:philinear-mixed-quadratic-linear-phiquadraticnl-combined} and standard Schauder estimates.
\\ 

\emph{Contributions of \eqref{high:eq-decomp-3.1} and \eqref{high:eq-decomp-3.2}:} 
First, to interpret \eqref{high:eq-decomp-3.1}, note that for $i \in [2]$, the $i$th coordinate is given by (ignoring the Leray projection) 
\begin{equs}
\Im\Big(  \icomplex \big(4 \ovl{\philinear} \ptl_i^2 \mDuh\big[\, \philinear\big] + \big|\,\philinear\big|^2\big) \Big) (S + B + Z)_i + 4\Im\big( \icomplex \ovl{\philinear} \ptl_i \ptl_j \mDuh\big[\, \philinear\big]\big) (S + B + Z)^j \ind(j \neq i). 
\end{equs}
Upon taking differences with the low-frequency analog (i.e. with $\philinear[\lo]$ replacing $\philinear$ everywhere), we obtain a sufficient estimate by applying \eqref{eq:philinear-laplacian-Duh-philinear-phiquadraticnl}, \eqref{eq:philinear-laplacian-Duh-philiner}, \eqref{eq:S-B-Z-estimate}, and product estimates. The contribution of \eqref{high:eq-decomp-3.2} may be handled similarly; we omit the details.\\

\emph{Contribution of \eqref{high:eq-decomp-4}:} Due to the Duhamel integral estimate (Lemma \ref{prelim:lem-Duhamel-weighted}), it suffices to prove 
\begin{equs}
\Big\| \overline{\, \philinear[\hi]} \covd e^{t(\Delta-1)} \psi_{\lo}(0) \Big\|_{L_t^{2(1-2\eta)} \Bc_x^{-\eta,r}} 
&\lesssim L^{\kappa+\eta_1-\eta}, ~~ \Big\| \overline{\, \philinear} \covd \varphi_{\hi} \Big\|_{L_t^{2(1-2\eta)} \Bc_x^{-\eta, r}} \lesssim L^{\kappa + \eta_1 - \eta},  \\
\Big\| \overline{\, \philinear[\hi]} \covd  \eta_{\lo} + \overline{\, \philinear}  \covd \eta_{\hi} \Big\|_{L_t^{p/10} \Bc_x^{-\eta,p}}
&\lesssim L^{\kappa+\eta_1-\eta}.
\end{equs}
Recalling that $\psi_{\lo}(0) = P_{\leq L}\nregphi_0 + \pregphi_0$ \eqref{AH:eq-psi-lo-initial}, using our product estimate (Lemma \ref{prelim:lem-para-besov}), the heat flow smoothing estimate (Lemma \ref{lemma:heat-flow-smoothing}), and \eqref{high:eq-probabilistic-philinear}, the $e^{t(\Delta-1)} \psi_{\lo}(0)$-term can be estimated by 
\begin{align*}
&\, \Big\| \overline{\, \philinear[\hi]} \covd e^{t(\Delta-1)} \psi_{\lo}(0) \Big\|_{L_t^{2(1-2\eta)} \Bc_x^{-\eta,r}} 
\lesssim \big\| \, \philinear[\hi] \big\|_{C_t^0 \Cs_x^{-\eta}} 
\big\|  \covd e^{t(\Delta-1)} \psi_{\lo}(0) \big\|_{L_t^{2(1-2\eta)} \Bc_x^{\eta+\eta_3,r}} \\ 
\lesssim&\,  L^{\kappa-\eta} \Big( \big\| t^{-\frac{1+\eta+\eta_3}{2}} \big\|_{L_t^{2(1-2\eta)}} \big\| \pregphi_0 \big\|_{\Bc_x^{0,r}} + \big\|t^{-\frac{1+\eta+\eta_3+\kappa}{2}}\big\|_{L_t^{2(1-2\eta)}} \|\nregphi_0\|_{\Cs_x^{-\kappa}}\Big)
\\
\lesssim&\, L^{\kappa-\eta} (L^{\eta_3} + L^{\kappa}),
\end{align*}
where we used that $\| \pregphi_0 \|_{\Bc_x^{0,r}}\lesssim \| \pregphi_0 \|_{L_x^r} \lesssim L^{\eta_3}$ and $\|\nregphi_0\|_{\Cs_x^{-\kappa}} \leq L^\kappa$ \eqref{AH:eq-L-nreg}. This yields an acceptable contribution. The term $\overline{\, \philinear} \covd \varphi_{\hi}$ may be similarly estimated.
Using our product estimate (Lemma \ref{prelim:lem-para-besov}), \eqref{high:eq-probabilistic-philinear}, and \eqref{high:eq-proof-3}, the $\eta_{\lo}$-term can be estimated by
\begin{equation*}
\Big\| \overline{\, \philinear[\hi]} \covd  \eta_{\lo} \Big\|_{L_t^{p/10} \Bc_x^{-\eta,p}} 
\lesssim \big\| \philinear[\hi] \big\|_{C_t^0 \Cs_x^{-\eta}} \big\| \covd \eta_{\lo} \big\|_{L_t^{p/10} \Bc_x^{\eta+\eta_3,p}}
\lesssim L^{\kappa-\eta} \times L^{\eta_1} = L^{\kappa+\eta_1-\eta},
\end{equation*}
which is acceptable. Similarly, the $\eta_{\hi}$-term can be estimated by 
\begin{equation*}
\Big\| \overline{\, \philinear[]} \covd  \eta_{\hi} \Big\|_{L_t^{p/10} \Bc_x^{-\eta,p}} 
\lesssim \big\| \philinear[] \big\|_{C_t^0 \Cs_x^{-\kappa}} \big\| \covd \eta_{\hi} \big\|_{L_t^{p/10} \Bc_x^{\eta_3,p}}
\lesssim L^{\kappa} \times L^{\eta_1-\eta} = L^{\kappa+\eta_1-\eta},
\end{equation*}
which is also acceptable.\\ 

\emph{Contribution of \eqref{high:eq-decomp-5}:} 
The bound 
\begin{equs}
\Big\|\mDuh\Big(\ovl{\chi} \covd \chi - \ovl{\chi_{\lo}} \covd {\chi_{\lo}} \Big)\Big\|_{C_t^0 \Cs_x^\eta \cap C_t^{\eta/2} \Cs_x^0} \lesssim L^{10 \eta_1 - \eta} 
\end{equs}
easily follows from the high regularity estimates \eqref{eq:chi-estimates-high-regularity} and product estimates. The details are omitted.
\\ 

\emph{Contribution of \eqref{high:eq-decomp-6}:} Upon taking differences with the low-frequency analog, we note that at least one factor of $\linear[\hi]$, $\Slin_{\hi}$, $\chi_{\hi}$, or $\philinear[\hi]$ must appear. Combining this with product estimates as we have done before, we obtain a sufficient bound. The details are omitted. \\

This completes the proof of Lemma \ref{AH:lem-derivative-high}.


\end{appendix}

\bibliography{AH_Library}
\bibliographystyle{myalpha}

\end{document}